\title[Exploration of Kac algebras and lattices of intermediate subfactors]
{Exploration of finite dimensional Kac algebras\\
  and lattices of intermediate subfactors\\ of irreducible inclusions}
\author{Marie-Claude DAVID}
\email{mcld@math.u-psud.fr}
\author{Nicolas M. THIÉRY}
\email{Nicolas.Thiery@u-psud.fr}
\address{Univ Paris-Sud, Laboratoire de Mathématiques d'Orsay\\
  Orsay, F-91405; CNRS, Orsay, F-91405}
\date{\today
  \ifdraft\quad$ $Id: KD.tex 433 2010-12-06 15:00:19Z nthiery $ $\fi
}
\subjclass[2010]{Primary 16T05; Secondary 46L37, 46L65, 16-04}
\keywords{Finite dimensional Kac algebras, Hopf algebras, quantum groupoids, lattices of intermediate
  subfactors, principal graphs,  computer exploration}
\definecolor{green}{RGB}{0,150,0}
\newtheorem{prop}{Proposition}
\newtheorem{thm}{Theorem}
\newtheorem{lem}{Lemma}
\newtheorem{cor}{Corollary}
\newtheorem{theorem}{Theorem}
\newtheorem{proposition}{Proposition}
\newtheorem{corollary}{Corollary}
\newtheorem{lemma}{Lemma}
\newtheorem{conjecture}{Conjecture}
\newtheorem{algorithm}{Algorithm}
\theoremstyle{definition}
\newtheorem{defn}{Definition}
\theoremstyle{remark}
\newtheorem{example}{Example}
\newtheorem{remark}{Remark}
\newtheorem{remarks}{Remarks}
\newtheorem{problem}{Problem}
\def\@thm#1#2#3{%
  \ifhmode\unskip\unskip\par\fi
  \normalfont
  \trivlist
  \let\thmheadnl\relax
  \let\thm@swap\@gobble
  \thm@notefont{\fontseries\mddefault\upshape}%
  \thm@headpunct{.}%
  \thm@headsep 5\p@ plus\p@ minus\p@\relax
  \thm@space@setup
  #1%
  \@topsep \thm@preskip               %
  \@topsepadd \thm@postskip           %
  \def\@tempa{#2}\ifx\@empty\@tempa
    \def\@tempa{\@oparg{\@begintheorem{#3}{}}[]}%
  \else
    \def\@tempa{\@oparg{\@begintheorem{#3}{%
          \@currentlabel
        }}[]}%
  \fi
  \@tempa
}
\newcommand{\TODO}[2][To do: ]{\textcolor{red}{\textbf{#1#2}}}
\newcommand{\TODO}[2][]{}
\newcommand{\an}{{a_{\infty}}}
\newcommand{\C}{\mathbb{C}}
\newcommand{\Q}{\mathbb{Q}}
\newcommand{\R}{\mathbb{R}}
\newcommand{\N}{\mathbb{N}}
\newcommand{\Z}{\mathbb{Z}}
\newcommand{\gap}{\texttt{GAP}\xspace}
\newcommand{\mupad}{\texttt{MuPAD}\xspace}
\newcommand{\mupadcombinat}{\texttt{MuPAD-Combinat}\xspace}
\newcommand{\sacig}{coideal subalgebra\xspace}
\newcommand{\sacigs}{coideal subalgebras\xspace}
\newcommand{\Sacigs}{Coidalgebras\xspace}
\newcommand{\aut}{{\operatorname{Aut}}}
\newcommand{\suchthat}{{\ |\ }}
\renewcommand{\ll}{\operatorname{l}}
\newcommand{\rl}{\operatorname{r}}
\newcommand{\KD}{K\!D}
\newcommand{\KQ}{K\!Q}
\newcommand{\KP}{K\!P}
\newcommand{\KA}{K\!A}
\newcommand{\KB}{K\!B}
\def\Mexin@processline{>{}> \the\verbatim@line\par}
\newenvironment{Mexin} {\vspace{-.5ex}\verbatim\small\addtolength\parskip{-.5ex}\let\verbatim@processline=\Mexin@processline}{\endverbatim}
\newenvironment{Mexout}{\vspace{-1ex}\verbatim\small\addtolength\parskip{-.9ex}}{\endverbatim}
\newenvironment{Mexoutsmall}{\vspace{-.5ex}\verbatim\SMALL\addtolength\parskip{-.9ex}}{\endverbatim}
\def \id { {\rm id\, }}
\def \dim { {\rm dim\, }}
\def \Ad { {\rm Ad\, }}
\begin{document}
\maketitle

\makeatletter
\newskip\@bigflushglue \@bigflushglue = -100pt plus 1fil
\def\bigcenter{\trivlist \bigcentering\item\relax}
\def\bigcentering{\let\\\@centercr\rightskip\@bigflushglue%
\leftskip\@bigflushglue
\parindent\z@\parfillskip\z@skip}
\def\endbigcenter{\endtrivlist}
\makeatother
\begin{abstract}
  We study the four infinite families $\KA(n), \KB(n), \KD(n), \KQ(n)$
  of finite dimensional Hopf (in fact Kac) algebras constructed
  respectively by A.~Masuoka and L.~Vainerman: isomorphisms,
  automorphism groups, self-duality, lattices of \sacigs. We reduce
  the study to $\KD(n)$ by proving that the others are isomorphic to
  $\KD(n)$, its dual, or an index $2$ subalgebra of $\KD(2n)$.
  We derive many examples of lattices of intermediate subfactors of
  the inclusions of depth $2$ associated to those Kac algebras, as
  well as the corresponding principal graphs, which is the original
  motivation.

  Along the way, we extend some general results on the Galois
  correspondence for depth $2$ inclusions, and develop some tools and
  algorithms for the study of twisted group algebras and their
  lattices of \sacigs.  This research was driven by heavy computer
  exploration, whose tools and methodology we describe.
\end{abstract}

\begin{otherlanguage}{french}
\begin{abstract}
  Nous étudions les quatre familles $\KA(n), \KB(n), \KD(n), \KQ(n)$
  d'algèbres de Hopf (en fait de Kac) de dimension finie construites
  respectivement par A.~Masuoka et L.~Vainerman: isomorphismes,
  groupes d'automorphismes, autodualité, treillis des sous-algèbres
  coidéales.  Nous réduisons l'étude à $KD(n)$ en démontrant que les
  autres algèbres sont toutes isomorphes à $\KD(n)$, à sa duale ou à
  une sous-algèbre d'indice $2$ de $\KD(2n)$.
  Nous en déduisons de nombreux exemples de treillis de facteurs
  intermédiaires d'inclusions de profondeur $2$ associées à ces
  algèbres de Kac, ainsi que les graphes principaux correspondants.

  Au cours de cette étude, nous approfondissons des résultats généraux
  sur la correspondance de Galois pour les inclusions de profondeur
  $2$ et donnons des méthodes et algorithmes pour l'analyse des
  algèbres de groupes déformées et leurs treillis des sous-algèbres
  coidéales. Cette recherche a été guidée par une exploration
  informatique intensive, dont nous décrivons les outils et la
  méthodologie.
\end{abstract}
\end{otherlanguage}

\clearpage
\setcounter{tocdepth}{2}
\tableofcontents

\clearpage
\listoffigures

\clearpage
\section{Introduction}

The theory of Kac algebras provides a unified framework for both group
algebras and their duals. In finite dimension this notion coincides
with that of $C^*$-Hopf algebras (see~\ref{KacHopf}).  Those algebras
play an important role in the theory of inclusions of  hyperfinite
factors of type $II_1$; indeed, any irreducible finite index depth 2
subfactor is obtained as fixed point set under the action of some
finite dimensional Kac algebra on the factor.
There furthermore is a Galois-like correspondence between
the lattice of intermediate subfactors and the lattice of coideal
subalgebras of the Kac algebra.  A cocommutative (respectively commutative) Kac
algebra is the group algebra $\C[G]$ of some finite group $G$ (respectively its dual). We
call such a Kac algebra \emph{trivial}; its lattice of \sacigs is simply given by the
lattice of subgroups of $G$.

In~\cite{Watatani.1996}, Yasuo Watatani initiates the study of the
lattices of irreducible subfactors of type $II_1$ factors. He provides
general properties as well as examples. In particular, his work,
completed by Michael Aschbacher~\cite{Aschbacher.2008}, shows that
every lattice with at most six elements derives from group actions and
operations on factors.

Our aim is to study more involved examples of lattices of subfactors,
and therefore of principal graphs. It is our hope that this will
contribute to the general theory by providing a rich ground for
suggesting and testing conjectures. A secondary aim is to evaluate the
potential of computer exploration in that topic, to develop algorithms
and practical tools, and to make them freely available for future
studies.

We always assume the factors to be hyperfinite of type
$II_1$, and the inclusions to be of finite index.
Our approach is to use the Galois correspondence and study instead the
lattices of \sacigs of some non trivial examples of Kac algebras. Only
lattices of depth $2$ irreducible inclusions of type $II_1$
hyperfinite factors are obtained this way. The depth $2$ hypothesis is
not so restrictive though, since D. Nikshych and L. Vainerman showed
in~\cite{Nikshych_Vainerman.2000.2} that any finite depth inclusion is
an intermediate subfactor of a depth $2$ inclusion. There remains only
the irreducibility condition. Luckily, there still exists a Galois
correspondence for non irreducible depth $2$ inclusions, at the price
of considering the action of a finite $C^*$-quantum groupoid instead
of a Kac algebra
(see~\cite{Nikshych_Vainerman.2000.1,David.2005,David.2009}). The
study of more general examples coming from $C^*$-quantum groupoids,
like those constructed from tensor categories by C. Mével, is the
topic of subsequent work.

The first non trivial example of Kac algebra $\KP$ was constructed in
1966 (see~\cite{Kac.Paljutkin.1966}); it is the unique one of dimension
$8$. Later, M. Izumi and H. Kosaki classified
small dimension Kac algebras through factors (see~\cite{Izumi_Kosaki.2002}). In 1998, L. Vainerman
constructed explicitly two infinite families of Kac algebras, which we
denote $\KD(n)$ and $\KQ(n)$, by deformation of the group algebras of
the dihedral groups $D_{2n}$ and of the quaternion groups $Q_{2n}$
respectively (see~\cite{Vainerman.1998}\footnote{Note
  that~\cite{Vainerman.1998} is a follow up on a study initiated in
  1996 with M.~Enock~\cite{Enock_Vainerman.1996}. An analogous
  construction can be found in~\cite{Nikshych.1998}.}). In 2000, A.
Masuoka defined two other infinite families $A_{4n}$ and
$B_{4n}$ (see~\cite[def 3.3]{Masuoka.2000}), which we denote $\KA(n)$ and
$\KB(n)$ for notational consistency.

We present here a detailed study of the structure of those Kac
algebras, with an emphasis on $\KD(n)$. We get some structural
results: $\KQ(n)$ is isomorphic to $\KD(n)$ for $n$ even and to
$\KB(n)$ for $n$ odd, while $\KB(n)$ itself is an index $2$ Kac
subalgebra of $\KD(2n)$; also, $\KA(n)$ is the dual of
$\KD(n)$. Altogether, this is the rationale behind the emphasis on
$\KD(n)$. We prove that $\KD(n)$ and $\KQ(n)$ are self-dual if (and
only if) $n$ is odd, by constructing an explicit isomorphism (the
self-duality of $\KA(n)$ for $n$ odd and of $\KB(n)$ for all $n$ is
readily proved in~\cite{Calinescu_al.2004}). We also describe the
intrinsic groups and automorphism groups of $\KD(n)$ and $\KQ(n)$
(see~\cite{Calinescu_al.2004} for those of $\KA(n)$ and $\KB(n)$).

Then, we turn to the study of the lattice of \sacigs of those Kac
algebras. We describe them completely for $n$ small or prime, and
partially for all $n$. For $KD(n)$, we further obtain a conjectural
complete description of the lattice for $n$ odd, and explain how large
parts can be obtained recursively for $n$ even.
After this study, the lattice of intermediate subfactors of all
irreducible depth $2$ inclusions of index at most $15$ is known. We
derive the principal graphs of certain inclusions of factors;
reciprocally we use classification results on inclusions in some
proofs.

As the first interesting examples are of dimension $12$ or more,
calculations are quickly impractical by hand. Most of the research we
report on in this paper has been driven by computer exploration: it
led to conjectures and hinted to proofs. Furthermore, whenever
possible, the lengthy calculations required in the proofs were
delegated to the computer. Most of the tools we developed for this
research (about 8000 lines of code) are generic and integrated in the
open source \mupadcombinat package, and have readily been reused to
study other algebras.

In Section~\ref{inclusions}, we recall and refine the results
of~\cite{Nikshych_Vainerman.2000.2}, and adapt them to the duality
framework used in~\cite{David.2005}. In order to build the foundations
for our future work on $C^*$-quantum groupoids, the results are first
given in this general setting, at the price of some technical details
in the statements and proofs. The results which are used in the
subsequent sections are then summarized and further refined
in~\ref{irred} in the simpler context of Kac algebras.

In Section~\ref{section.KP}, we describe the complete lattice of
\sacigs of the smallest non trivial Kac algebra, $\KP$, using the
construction of~\cite{Enock_Vainerman.1996}.
In Section~\ref{graphe}, we gather some graphs that are obtained as
principal graphs of intermediate subfactors.

In Section~\ref{section.KD}, we give general results on $\KD(n)$.  We
describe its intrinsic group and its automorphism group; more
generally, we describe the embeddings of $\KD(d)$ into $\KD(n)$.
Along the way, we build some general tools for manipulating twisted
group algebras (isomorphism computations and proof strategies, \sacigs
induced by subgroups, efficient characterization of the algebra of the
intrinsic group). Those tools will be reused extensively in the later
sections.  We list the three dimension $2n$ \sacigs: $K_2$, $K_3$, and
$K_4$, and further prove that $K_2$ is isomorphic to
$L^\infty(D_{n})$. We also describe some \sacigs induced by Jones
projections of subgroups of $D_{2n}$, as well as how to obtain
recursively \sacigs of dimension dividing $2n$.

The properties of $\KD(n)$ depend largely on the parity of $n$. In
Section~\ref{section.KD.odd}, we concentrate on the case $n$ odd. We
show that $\KD(n)$ is then self-dual, by constructing an explicit
isomorphism with its dual.  We present a partial description of the
lattice $\ll(\KD(n))$ which is conjecturally complete. This is proved
for $n$ prime and checked on computer up to $n=51$. We conclude with
some illustrations on $\KD(n)$ for $n=3,5,9,15$.

In Section~\ref{section.KD.even}, we consider $\KD(2m)$. We prove that
$K_3$ and $K_4$ are Kac subalgebras.  In fact $K_4$ is isomorphic to
$\KD(m)$; $K_3$ is further studied in Section~\ref{section.KB}. It
follows that a large part of the lattice of \sacigs of $\KD(2m)$ can
be constructed inductively. We list the dimension $4$ \sacigs of
$\KD(2m)$, and describe the complete lattice of \sacigs for $\KD(4)$
and $\KD(6)$.

In Section~\ref{section.KQ}, we briefly study the algebra
$\KQ(n)$. Its structure mimics that of $\KD(n)$. In fact, we prove
that $\KQ(n)$ is isomorphic to $\KD(n)$ if and only if $n$ is even and
that $\KQ(n)$ is self-dual when $n$ is odd. We give the complete
lattice for $\KQ(n)$ for $n$ prime, and list the \sacigs of dimension
$2$, $4$, $n$, and $2n$, for all $n$. Those results are used in the previous
sections.

In Section~\ref{section.KB}, we show that the Kac subalgebra $K_3$ of
$\KD(2m)$ is isomorphic to $\KB(m)$. We summarize the links between
the families $\KA(n)$, $\KB(n)$, $\KD(n)$, and $\KQ(n)$, and the
\sacigs of dimension $4n$ in $\KD(2n)$. We list the \sacigs of
dimension $2$, $4$, $m$, and $2m$ of $K_3$ in $\KD(2m)$ and describe
the complete lattice of \sacigs for $\KD(8)$.

In Appendix~\ref{section.formulas}, we collect various large formulas
for matrix units, coproducts, unitary cocycles, etc. which are used
throughout the text, and we finish some technical calculations for the
proof of Theorem~\ref{self-dual}.

Finally, in Appendix~\ref{section.computerExploration}, we quickly
describe the computer exploration tools we designed, implemented, and
used, present typical computations, and discuss some exploration
strategies.

\bigskip
\centerline{\sc Acknowledgments}
\medskip

We would like to thank Vaughan Jones for his initial suggestion of
investigating concrete examples of lattices of intermediate
subfactors, and for fruitful questions. We are also very grateful to
Leonid Vainerman for his continuous support and his reactivity in
providing wise suggestions and helpful answers.

This research was partially supported by NSF grant DMS-0652641, and
was driven by computer exploration using the open-source algebraic
combinatorics package \texttt{MuPAD-Combinat}~\cite{MuPAD-Combinat}.

\newpage
\section[depth $2$ inclusions, intermediate subfactors and \sacigs]{Depth
  $2$ inclusions, intermediate subfactors and \sacigs}

\label{inclusions}

In this section, we recall and refine the general results of
D. Nikshych et
L. Vainerman~\cite{Nikshych_Vainerman.2000.2}~\cite{Nikshych_Vainerman.2000.2}
on depth $2$ inclusions and their Galois correspondence.

In order to build the foundations for our future work on finite
dimensional $C^*$-quantum groupoids, we do \emph{not} assume here the
inclusion to be irreducible. Except for some technicalities, this does
not affect the theory. For the convenience of the reader, the results
which are used in the subsequent sections are summarized and further
refined in~\ref{irred} in the simpler context of Kac algebras. This
mostly amounts to replacing $C^*$-quantum groupoids by Kac algebra,
and setting $h=1$.

For the general theory of subfactors, we refer
to~\cite{Jones.1983.Subfactors}, ~\cite{Goodman_Harpe_Jones.1989} et~\cite{Jones_Sunder.1997}.

\subsection{Depth $2$ inclusions and $C^*$-quantum groupoids}
\subsubsection{$C^*$-quantum groupoids}\label{gq}

A finite $C^*$-quantum groupoid is a \emph{weak} finite dimensional
$C^*$-Hopf algebra: the image of the unit by the coproduct is not
required to be $1\otimes1$, and the counit is not necessarily an
homomorphism (see~\cite{Boem_Nill_Szlachanyi.1999},
\cite{Nikshych_Vainerman.2000.1} or~\cite[2.1]{David.2005}). More
precisely:
\begin{defn}
  A \emph{finite $C^*$-quantum groupoid} is a finite dimensional
  $C^*$-algebra $G$ (we denote by $m$ the multiplication, by $1$ the
  unit, and by $^{*}$ the involution) endowed with an coassociative
  coalgebra structure (we denote by $\Delta$ the coproduct, $\varepsilon$ the counit
  and $S$ the antipode) such that:
  \begin{enumerate}[(i)]
    \item $\Delta$ is a $^{*}$-algebra homomorphism from $G$ to $G
      \otimes G$ satisfying:
      $$(\Delta \otimes \id)\Delta(1)=(1\otimes \Delta(1))(\Delta(1)\otimes 1)$$
    \item The counit is a linear map of $G$ to $\mathbb{C}$ satisfying:
      \begin{displaymath}
        \varepsilon(fgh)
        =\varepsilon(fg_{(1)})\varepsilon(g_{(2)}h)\qquad ((f,g,h)\in G^3)
      \end{displaymath}
      Equivalently:
      \begin{displaymath}
        \varepsilon(fgh)=\varepsilon(fg_{(2)})\varepsilon(g_{(1)}h) \qquad ((f,g,h)\in
		G^3))
      \end{displaymath}

    \item The antipode $S$ is an antiautomorphism of algebra and
      coalgebra of $G$ satisfying for all  $g$ in $G$:
      \begin{displaymath}
        m(\id \otimes S)\Delta(g)=(\varepsilon \otimes
	\id)(\Delta(1)(g \otimes 1))
      \end{displaymath}
      or equivalently:
      \begin{displaymath}
        m(S \otimes \id  )\Delta(g)=(\id \otimes \varepsilon)((1 \otimes
		g)\Delta(1))
      \end{displaymath}
  \end{enumerate}
\end{defn}

The \emph{target counital map} $\varepsilon_t$ and \emph{source counital map}
$\varepsilon_s$ are the idempotent homomorphisms defined respectively for all
$g\in G$ by:
\begin{displaymath}
  \varepsilon_t(g)=(\varepsilon \otimes \id)(\Delta(1)(g \otimes 1))
  \quad \text{and} \quad
  \varepsilon_s(g)=(\id \otimes \varepsilon)((1 \otimes g)\Delta(1))
\end{displaymath}

\subsubsection{$C^*$-quantum groupoid associated to an inclusion}
\label{action}

Let $N_{0} \subset N_{1}$ be a depth $2$ inclusion of
$\mathrm{II}_{1}$ factors of finite index $\tau^{-1}$. Consider the
tower
\begin{displaymath}
  N_0 \,\subset \, N_1 \, \stackrel{f_1}{\subset} \, N_2 \,\stackrel{f_2}{\subset} \, N_3 \subset\cdots
\end{displaymath}
obtained by basic construction.
We denote by $tr$ the unique normalized trace of the factors. The relative commutants
$A=N'_{0} \cap N_{2}$ and $B=N'_{1} \cap N_{3}$ are endowed with
dual finite regular\footnote{$S^2$ is the identity on the counital algebras.} $C^*$-quantum groupoid structures thanks to
the duality:
$$\langle a,b\rangle = [N_{1}:N_{0}]^{2} tr(ahf_{2}f_{1}hb)\quad (a\in
A, \;b\in B)$$ where $h$ is the square root of the index of the
restriction of the trace $tr$ to $N'_{1} \cap N_{2}$
(see~\cite{Nikshych_Vainerman.2000.1} or~\cite[3.2]{David.2005}).

The quantum groupoid $B$ acts on $N_2$ in such a way that $N_{3}$
is isomorphic to the crossed product $N_{2}\rtimes B$ and $N_1$
is isomorphic to the subalgebra of the points of $N_2$ fixed under the
action of $B$.  We recall that the inclusion is irreducible if and only if $B$ is a Kac
  algebra. We refer to~\cite{Nikshych_Vainerman.2000.1} for
historical notes on the problem and references to analogous results.

We have shown in~\cite{David.2005} that any finite $C^*$-quantum
groupoid $B$ is isomorphic to a regular $C^*$-quantum groupoid and
acts on the $II_1$ hyperfinite factor $R$.
If $A$ and $B$ are two dual finite regular $C^*$-quantum groupoids, we
can slightly modify the construction of~\cite{David.2005} so that the
obtained tower of $II_1$ hyperfinite factors $N_0 \subset N_1 \subset
N_2 \subset N_3$ endows the relative commutants $M'_0 \cap M_2$ and
$M'_1 \cap M_3$ (respectively isomorphic to $A$ and $B$ as
$C^*$-algebras) with dual finite regular $C^*$-quantum groupoid
structures which are respectively isomorphic to the original ones
(see~\cite{David.2009}).

Therefore we may, without loss of generality, reduce our study to
finite regular $C^*$-quantum groupoids associated to an inclusion.

In this context, there are convenient expressions for the counit, the
source and target counital maps, the antipodes, and the action:
for any element $b\in B$, they satisfy:
$$\varepsilon_{B}(b)=\tau^{-1}tr(hf_{2}hb), \qquad\qquad S_{B}(b) = hj_{2}(h^{-1})j_{2}(b)h^{-1}j_{2}(h),$$
$$\varepsilon^t_{B}(b)=\tau^{-1}E_{N'_1 \cap N_2}(bhf_{2}h^{-1}),\qquad \varepsilon^s_{B}(b)=\tau^{-1}E_{N'_2 \cap N_3}(j_2(b)h^{-1}f_{2}h),$$
where, for any $ n \in
\mathbb N$, $j_{n}$ is the antiautomorphism of $N'_{0} \cap N_{2n}$
defined by:
$$ j_{n}(x)=J_{n}x^*J_{n}\qquad \text{ for } x\in N'_{0} \cap N_{2n}\,,$$
where
 $J_{n}$ is the canonical anti-unitary involution of  $L^2(N_{n},tr)$,
space of the GNS construction of $N_{n}$~\cite[3.1]{David.2005}.

The application $S^2_B$ is the inner automorphism defined by $G=j_2(h^{-2})h^2$.

The action of $B$ on $N_2$ is defined by:

 $$b\triangleright x= \tau^{-1} E_{N_2}(bxhf_{2}h^{-1})\qquad \text{
 for } x\in N_2,b\in B\,.$$
There are analogous formulas for $A$.

The algebra $N_{0}$ is then the algebra of the points of $N_{1}$ which
are fixed under the action of the $C^*$-quantum groupoid $A$. The
application $\Theta_A$: $[x \otimes a] \longmapsto xa$ is a von
Neumann algebra isomorphism from $N_{1}\rtimes A$ to $N_{2}$. The
application $\Theta_B$ from $N_{2}\rtimes B$ to $N_{3}$ is defined
similarly.  In the sequel, we mostly consider the $C^*$-quantum
groupoid $B$, and then set $\Theta=\Theta_B$ for short.

\subsection{Galois correspondence}\label{galois}

In~\cite{Nikshych_Vainerman.2000.2}, D. Nikshych and L. Vainerman show
that the correspondence between finite index depth $2$ inclusions and
finite $C^*$-quantum groupoids is a Galois-type correspondence.

\subsubsection{Involutive left coideal subalgebra}
\label{sacig}
\begin{defn}
  An \textbf{involutive left (resp. right) coideal $*$-subalgebra} (or
  \emph{\sacig} for short) $I$ of $B$ is a unital $C^*$-subalgebra of
  $B$ such that $\Delta(I) \subset B \otimes I$ (resp. $\Delta(I)
  \subset I \otimes B$).

  As in~\cite[3.1]{Nikshych_Vainerman.2000.2}, the lattice of left
  (resp. right) \sacig of $B$ is denoted by $\ll(B)$ (resp. $\rl(B)$).
\end{defn}

\subsubsection{Cross product by a \sacig}

By the definition of a \sacig, the image of $N_2\otimes I$ by $\Theta$
is a von Neumann subalgebra $M_3$ of $N_3$, called \textit{cross
  product of $N_2$ by $I$} and denoted by $M_3=N_2\rtimes I$.

\subsubsection {Intermediate subalgebras of an inclusion}
\label{inter}

Let $\ll(N_{2}\subset N_{3})$ be the lattice of intermediate
subalgebras of an inclusion $N_{2} \subset N_{3}$, that is von Neumann
algebras $M_3$ such that $N_{2}\subset M_3\subset N_{3}$.

In theorem 4.3 of~\cite{Nikshych_Vainerman.2000.2}, D. Nikshych and
L. Vainerman establish an isomorphism between $\ll(N_{2}
\subset N_{3})$ and $\ll(B)$. More precisely, if $M_3$ is an
intermediate von Neumann subalgebra of $N_{2} \subset N_{3}$, then
$I=N'_1 \cap M_3$ is a \sacig of $B$. Reciprocally, if $I$ is a \sacig
of $B$, then $M_3= N_{2}\rtimes I$ is an intermediate von Neumann
subalgebra of $N_{2} \subset N_{3}$.

In~\cite[4.5]{Nikshych_Vainerman.2000.2}, they prove that the
intermediate von Neumann subalgebra is a factor if and only if the
\sacig is \emph{connected}, that is $Z(I)\cap B_s$ is
trivial. \emph{From now on, we only consider connected \sacigs.}

\subsubsection{Quasi-basis}\label{quasibase}

The notion of quasibasis is defined in~\cite[1.2.2]{Watatani.1990} and
made explicit in the special case of depth $2$ inclusions
in~\cite[3.3]{David.2005}.

\begin{prop}
  Let $\{c_t\suchthat t\in T\}$ be a family of matrix units of $I$, normalized
  by $tr(c^{*}_t c_t')=\delta_{t,t'}$. Then, $\{c_t h^{-1} \suchthat t\in T\}$
  is a quasibasis of $M_3 \cap N'_1$ over $N_2\cap N'_1$, and
  therefore a quasibasis of $M_3$ over $N_2$.

  The index $\tau^{-1}_I$ of $N_2$ in $M_3$ is then $\sum_{t\in T} c_t H^{-1}c ^*_t$.
\end{prop}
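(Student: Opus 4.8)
The plan is to adapt the argument of~\cite[3.3]{David.2005}, which establishes the statement for $I=B$, and to observe that it uses only the $C^*$-algebra structure of $I$ together with the crossed-product description $M_3=\Theta(N_2\otimes I)$; replacing $B$ by the \sacig $I$ then makes it go through verbatim. Recall from~\cite[1.2.2]{Watatani.1990} that a quasibasis for a conditional expectation $E\colon M\to N$ is a finite family $(u_t,v_t)_{t\in T}$ in $M$ with $\sum_t u_t\,E(v_t y)=y=\sum_t E(y u_t)\,v_t$ for all $y\in M$, and that the index is $\sum_t u_t v_t$, independently of the chosen quasibasis. Here $E=E_{N_2}$, $u_t=c_t h^{-1}$ and $v_t=(c_t h^{-1})^*=h^{-1}c_t^*$.

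First I would treat the relative-commutant inclusion $N_2\cap N'_1\subset M_3\cap N'_1=I$. Since $h$ is the positive invertible element of $N_2\cap N'_1$ encoding the index of $tr$ on that algebra, one has $h^{\pm1}\in N_2\cap N'_1\subset I$, so the $c_t h^{-1}$ indeed lie in $I$; moreover $E_{N_2}$ carries $B=N'_1\cap N_3$ into $N_2\cap N'_1$ (as $E_{N_2}$ is $N_1$-bimodular and $B\subset N'_1$), and its restriction to $I$ is the conditional expectation $E_{N_2\cap N'_1}$. The core of the proof is therefore the reproducing identity
\[ \sum_{t\in T} c_t h^{-1}\,E_{N_2\cap N'_1}\!\left(h^{-1}c_t^*\,z\right)=z \qquad(z\in I), \]
together with its right-handed analogue.

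To prove it I would substitute the explicit expression of $E_{N_2\cap N'_1}$ coming from Section~\ref{action} (the one underlying the formulas for $\varepsilon^t_B$ and $\varepsilon_B$, where the trace is weighted by $h$). The normalization $tr(c_t^*c_{t'})=\delta_{t,t'}$ says precisely that $\{c_t\}$ is orthonormal in $I$ for $\langle a,b\rangle=tr(a^*b)$, giving the Parseval relation $\sum_t c_t\,tr(c_t^* z)=z$. The two factors $h^{-1}$ in $u_t$ and $v_t$ are there to absorb the $h$-weighting of $E_{N_2\cap N'_1}$: using that $h=h^*$ is central in $N_2\cap N'_1$ and that $E_{N_2\cap N'_1}$ is $N_2\cap N'_1$-bimodular, one pulls the $h^{-1}$'s through $E$ and the weight cancels, reducing the displayed identity to the Parseval relation; the right-handed identity is symmetric. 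I expect the bookkeeping of these $h$-powers---getting the weight in $E_{N_2\cap N'_1}$ and the $h^{-1}$'s to cancel exactly---to be the main obstacle, the one genuinely quantum-groupoid feature of the computation.

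Finally, the transfer to $M_3$ over $N_2$ (the ``therefore'' in the statement) is formal, exactly as in~\cite[3.3]{David.2005}: every $y\in M_3=\Theta(N_2\otimes I)$ can be written $y=ax$ with $a\in I$, $x\in N_2$ (and also as $x'a'$), the quasibasis elements lie in $N'_1\cap M_3$, and $E_{N_2}$ is $N_2$-bimodular with $E_{N_2}(I)\subset N_2\cap N'_1$; one-sided modularity then reduces the identities for $M_3/N_2$ to those already proved for $I/(N_2\cap N'_1)$. The index is read off from Watatani's formula as $\sum_t u_t v_t=\sum_t c_t h^{-1}\,h^{-1}c_t^*=\sum_t c_t h^{-2}c_t^*$, i.e.\ the announced $\sum_t c_t H^{-1}c_t^*$ with $H=h^2$, and its independence of the chosen quasibasis identifies it with the index $\tau^{-1}_I$ of $N_2$ in $M_3$.
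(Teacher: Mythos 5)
Your proposal has the same skeleton as the paper's proof: establish the quasibasis property for the finite-dimensional inclusion $N'_1\cap N_2\subset I=N'_1\cap M_3$, transfer it to $N_2\subset M_3$ using that $M_3$ is generated by $N_2$ and $I$ (both you and the paper lean on~\cite[3.3]{David.2005} here), and read the index off Watatani's formula. The paper, however, does not reprove the finite-dimensional step at all: it is exactly the statement of~\cite[2.4.1]{Watatani.1990}, which is simply cited; and for the index it invokes the Markov property of the trace together with~\cite[2.5.1]{Watatani.1990}, which is what makes the index the \emph{scalar} $\tau_I^{-1}$ (your appeal to independence of the quasibasis, plus factoriality of $M_3$ coming from connectedness of $I$, plays the same role). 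So the only substantive difference is that you propose to redo Watatani's finite-dimensional lemma by hand.

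Concerning that hand computation, the mechanism you describe would not close as stated. The restriction of $E_{N_2}$ to $B$ \emph{is} the plain trace-preserving expectation onto $N'_1\cap N_2$: it carries no $h$-weight (the $h$'s appearing in~\ref{action} weight the counit, antipode and counital maps, not $E_{N_2}$), so there is no ``weight of $E$'' for your two $h^{-1}$'s to cancel. Pulling the $h^{-1}$'s through $E$ by bimodularity produces $\sum_t c_t H^{-1}E(c_t^*z)$, which Parseval alone does not evaluate. To finish, expand $E(x)=\sum_j d_j\,tr(d_j^*x)$ over an orthonormal basis $\{d_j\}$ of $N'_1\cap N_2$ and apply Parseval in $I$ to each term; one is left with $z\sum_j d_j^*H^{-1}d_j$, so the identity to be proved is
\begin{displaymath}
  \sum_j d_j^* H^{-1} d_j = 1\,,
\end{displaymath}
and this is precisely where the definition of $H=h^2$ as the index of the restriction of $tr$ to $N'_1\cap N_2$ enters: writing $N'_1\cap N_2=\oplus_k M_{n_k}(\C)$ with $tr(e^k_{ii})=t_k$, one has $\sum_j d_j^*xd_j=\sum_k t_k^{-1}\mathrm{Tr}_k(x)1_k$ while $H^{-1}=\sum_k (t_k/n_k)1_k$, whence the identity. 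Since you do identify $h$ correctly and flag exactly this bookkeeping as the main obstacle, the gap is repairable; but ``the weight in $E$ cancels, reducing to Parseval'' is not a proof, because the weight it refers to does not exist.
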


\begin{proof}
  By~\cite[2.4.1]{Watatani.1990}, the family $\{c_t h^{-1}\suchthat t\in T\}$
  is a quasibasis of $M_3 \cap N'_1$ over $N_2\cap N'_1$. Since $M_3$
  is generated by $N_2$ and $I$, this family is a quasibasis of $M_3$
  over $N_2$ (see also~\cite[3.3]{David.2005}). Since the trace of the
  factors is a Markov trace, the index of the conditional expectation
  of $M_3$ over $N_2$ is the scalar $\tau^{-1}_I=\sum_{t\in T} c_t
  H^{-1}c ^*_t$~\cite[2.5.1]{Watatani.1990}.
\end{proof}

\begin{cor}
  If the inclusion $N_0 \subset N_1$ is irreducible, the family
  $\{c_t\suchthat t\in T\}$ of normalized matrix units of $I$ is a
  Pimsner-Popa basis of $M_3$ over $N_2$, and the dimension of $I$
  coincides with the index of $N_2$ in $M_3$.
\end{cor}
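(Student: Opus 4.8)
The plan is to specialize the preceding Proposition to the irreducible situation and simplify. The pivotal remark is the one recorded above: the inclusion $N_0\subset N_1$ is irreducible exactly when $B$ is a Kac algebra. In that case the one-step relative commutant $N'_1\cap N_2$ is trivial (it is isomorphic to $N'_0\cap N_1=\C$), so the square root $h$ of the index of the restriction of $tr$ to $N'_1\cap N_2$ equals $1$; since the auxiliary element $H$ is built from $h$, we also get $H=1$. Thus the first step is simply to record these two reductions, $h=1$ and $H=1$.

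With $h=1$, the quasibasis $\{c_t h^{-1}\suchthat t\in T\}$ supplied by the Proposition is literally $\{c_t\suchthat t\in T\}$, and by that Proposition it is a quasibasis of $M_3$ over $N_2$. To upgrade it to a Pimsner-Popa basis I would use irreducibility once more: for $x\in N'_1\cap N_3$ and $n_1\in N_1$ one has $E_{N_2}(x)n_1=E_{N_2}(xn_1)=E_{N_2}(n_1x)=n_1E_{N_2}(x)$, so $E_{N_2}$ carries $I=N'_1\cap M_3$ into $N'_1\cap N_2=\C$ and therefore coincides on $I$ with the trace. Hence the normalization $tr(c^*_t c_{t'})=\delta_{t,t'}$ is exactly orthonormality of $\{c_t\}$ with respect to $E_{N_2}$, which is the defining property of a Pimsner-Popa basis. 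This settles the first assertion.

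For the index, substituting $h=H=1$ into the Proposition gives $\tau^{-1}_I=\sum_{t\in T}c_t c^*_t$, which is a scalar (it is the index of a conditional expectation between factors, the \sacig being connected). I would evaluate that scalar by applying the normalized faithful trace and using traciality together with the normalization:
\begin{displaymath}
  \tau^{-1}_I=tr(\tau^{-1}_I)=\sum_{t\in T}tr(c_t c^*_t)=\sum_{t\in T}tr(c^*_t c_t)=\# T.
\end{displaymath}
A complete family of matrix units of $I$ is a linear basis of $I$, so $\# T=\dim I$, and therefore $\tau^{-1}_I=\dim I$.

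Everything downstream of the reduction is a one-line substitution followed by the short trace computation above, so I expect the only genuine obstacle to be establishing $h=1$ — equivalently, that the counital algebra of $B$ trivializes, or that $N'_1\cap N_2=\C$ — in the irreducible case. This is precisely the point where the hypothesis $N'_0\cap N_1=\C$ is used, and I would justify it through the cited identification of irreducible finite index depth $2$ inclusions with Kac algebra actions.
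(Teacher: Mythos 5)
Your proof is correct and takes essentially the same route as the paper: irreducibility gives $N'_1\cap N_2\cong N'_0\cap N_1=\C$, hence $h=1$ (and $H=1$), and both assertions then follow by specializing the preceding Proposition. The paper's proof stops at this reduction and cites Pimsner--Popa and David for the conclusions, whereas you verify them directly (orthonormality via $E_{N_2}|_I=tr(\cdot)1$ and the trace evaluation $\sum_{t\in T}tr(c_tc_t^*)=\dim I$); the only loose point is calling orthonormality ``the defining property'' of a Pimsner--Popa basis --- the defining property is the expansion (completeness) condition, which you already have from the quasibasis, so nothing is actually missing.
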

\begin{proof}
  In this case, $h=1$. Therefore $[M_3:N_2]$ is the dimension of $I$,
  and the family $\{c_t\suchthat t\in T\}$ is a Pimsner-Popa basis of $M_3$
  over $N_2$ (see~\cite[1.3]{Pimsner_Popa.1986} and~\cite[5.2.1]{David.1996}).
\end{proof}

\subsection{The Jones projection of a \sacig}
\label{pI}

\subsubsection {Restriction of the counit to $I$}
\label{xI}

It has been noticed in~\cite{David.2005} that the counit, the Haar
projection of $B$, and the Jones projection are closely related. This
phenomenon also occurs for \sacigs.
\begin{prop}[{Proposition~\cite[3.5]{Nikshych_Vainerman.2000.2}}]
  The restriction of $\varepsilon$ to $I$ is a positive linear form on
  $I$. There therefore exists a unique positive element $x_I$ of $I$
  such that $\varepsilon(b)=tr(x_Ib)$ for all $b$ in $I$. Then,
  $$\Delta(x_I)= \sum_{t\in T} S(c_t^{*})G \otimes c_t \quad
  \text{et}\quad S(x_I)=x_IG^{-1}\,.$$
\end{prop}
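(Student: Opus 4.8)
The plan is to exploit the characterizing property $\varepsilon(b)=tr(x_Ib)$ for all $b\in I$ and reduce the two identities to general facts about the quantum groupoid $B$ together with the quasibasis supplied by the previous proposition. I would begin by recalling from the preceding material that $\{c_t\suchthat t\in T\}$ is a family of normalized matrix units of $I$, so that it behaves as a quasibasis of $M_3$ over $N_2$, and that $x_I\in I$ is the unique positive element representing the restriction of $\varepsilon$ to $I$ through the trace. The key observation is that $x_I$ is, up to the adjustment by $G=j_2(h^{-2})h^2$ governing $S_B^2$, essentially a Jones-type projection attached to $I$; hence its coproduct and its image under the antipode should be expressible through the matrix units $c_t$ and the antipode $S$, exactly as in the statement.

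For the coproduct formula $\Delta(x_I)=\sum_{t\in T}S(c_t^{*})G\otimes c_t$, I would argue by testing against elements. Since $\Delta(x_I)$ is determined by pairing $(\id\otimes\varphi)\Delta(x_I)$ or by evaluating $\langle \Delta(x_I),\,a\otimes b\rangle$ for suitable $a,b$, the idea is to write $\Delta(x_I)=\sum_t y_t\otimes c_t$ in the basis $\{c_t\}$ of the right tensor factor (legitimate because $I$ is a left coideal, so $\Delta(I)\subset B\otimes I$ and $x_I\in I$), and then identify the coefficients $y_t$. Pairing against $c_t$ in the second slot and using $\varepsilon(c_{t'}^{*}x_I\cdots)$-type expressions together with the defining relation for $x_I$ and the counital identities from the definition of a $C^*$-quantum groupoid should pin down $y_t=S(c_t^{*})G$. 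Here the factor $G$ enters precisely because $S_B^2=\Ad G$ is nontrivial in the non-Kac (weak) setting, and the antipode intertwines the left and right counital structures.

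For the antipode formula $S(x_I)=x_IG^{-1}$, I would apply $S$ to the structural relation defining $x_I$, or equivalently apply $m(S\otimes\id)$ or $m(\id\otimes S)$ to the just-established coproduct formula and use the antipode axioms $m(\id\otimes S)\Delta(g)=\varepsilon_t(g)$ and $m(S\otimes\id)\Delta(g)=\varepsilon_s(g)$. Applying the counit and the relation $\varepsilon\circ S=\varepsilon$ (which holds in this regular setting) together with positivity and uniqueness of $x_I$ should force $S(x_I)=x_IG^{-1}$; the element $G^{-1}$ again records the discrepancy between $S$ and $S^{-1}$ measured by $S^2=\Ad G$.

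The main obstacle I expect is the bookkeeping of the modular element $G=j_2(h^{-2})h^2$ and making sure that the factors of $G$ and $h$ are placed correctly in the non-irreducible (genuinely weak, $h\neq1$) case, where the source and target counital maps $\varepsilon_s,\varepsilon_t$ are not simply $\varepsilon(\cdot)1$. Concretely, the delicate point is to justify the substitution of $S(c_t^{*})$ and the appearance of $G$ using only the identities for $S_B$, $\varepsilon_B$, and $S^2_B=\Ad G$ recalled in Section~\ref{action}, rather than appealing to Kac-algebra simplifications; verifying that these identities combine to give exactly the factor $G$ on the correct tensor leg, and no spurious factor of $h$, is where the care is required. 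Once the correct normalizations are fixed, both formulas follow from uniqueness of $x_I$ and the antipode/counit axioms.
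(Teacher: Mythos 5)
A preliminary remark: the paper contains no proof of this statement for you to be compared against --- the proposition is imported verbatim from Proposition~3.5 of Nikshych and Vainerman (the paper only adds that the trace used is the normalized Markov trace of the factors), and the reader is referred there for the argument, which rests on the duality between $A=N_0'\cap N_2$ and $B=N_1'\cap N_3$ and the explicit inclusion formulas of~\ref{action} for $\varepsilon_B$ and $S_B$. Judged on its own terms, your proposal has a reasonable skeleton but is missing the mathematical content at exactly the points where the proposition is nontrivial. First, the opening assertions (positivity of $\varepsilon|_I$, hence existence and uniqueness of $x_I$) are taken for granted; they do require an argument, e.g. $\varepsilon(b^*b)=\tau^{-1}tr(hf_2hb^*b)=\tau^{-1}tr\bigl((bhf_2)^*(bhf_2)\bigr)\geq 0$. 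Second, and more seriously: expanding $\Delta(x_I)=\sum_{t\in T}y_t\otimes c_t$ with $y_t=(\id\otimes tr)\bigl(\Delta(x_I)(1\otimes c_t^*)\bigr)$ is indeed legitimate (left coideal property plus $tr(c_t^*c_{t'})=\delta_{t,t'}$), but the identification $y_t=S(c_t^*)G$ \emph{is} the proposition. The tools you invoke for it --- the weak multiplicativity of $\varepsilon$ and the counit/antipode axioms of~\ref{gq} --- cannot by themselves produce either $S$ or $G$; what is needed is an invariance identity exchanging the trace and the antipode through the coproduct (equivalently, the formulas $\varepsilon_B(b)=\tau^{-1}tr(hf_2hb)$ and $S_B(b)=hj_2(h^{-1})j_2(b)h^{-1}j_2(h)$ fed through the duality pairing), and your sketch neither states nor proves any such identity; ``pairing should pin down $y_t$'' is where the whole proof lives.

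The second formula is in worse shape: the route you describe would fail as stated. Applying $m(S\otimes\id)$ or $m(\id\otimes S)$ to $\Delta(x_I)$ returns $\varepsilon_s(x_I)$ and $\varepsilon_t(x_I)$ by the very axioms you quote --- it never produces $S(x_I)$. Nor can ``$\varepsilon\circ S=\varepsilon$ together with positivity and uniqueness of $x_I$'' force $S(x_I)=x_IG^{-1}$: the uniqueness characterizes $x_I$ among elements \emph{of $I$} representing $\varepsilon|_I$ against the trace, whereas $S(x_I)$ has no reason to lie in $I$ (one has $S(I)=G^{1/2}j_2(I)G^{-1/2}$, and $j_2(I)$ is the \emph{right} coideal subalgebra $\tilde I$, generally different from $I$), so that uniqueness statement simply does not apply to it. A correct derivation from the coproduct formula needs genuinely different inputs --- for instance that $S$ is an anti-morphism of coalgebras together with the behaviour of $G$ under $\Delta$, or the $j_2$-invariance of the trace --- i.e.\ precisely the ``bookkeeping of the modular element'' that your last paragraph acknowledges but defers.
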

Note that we use here a single trace, namely the normalized Markov
trace which is that of the factors.

\subsubsection {The Jones projection of $I$}
\label{defpI}

We now extend Proposition 4.7 of~\cite{Nikshych_Vainerman.2000.2}:
\begin{prop}
  \label{proposition.delta_pI}
  The element $p_I=\tau_I h^{-1}x_Ih^{-1}$ is a projection of $I$ of
  trace $\tau_I$ which we call \textit{Jones projection of $I$}. Furthermore:
  \begin{align*}
    p_I&=\tau_I \tau^{-1} E_I(f_2)\\
    \Delta(p_I)&= \tau_I \sum_{t\in T} h^{-1}S(H^{-1}c_t^{*})h \otimes c_t=\tau_I \sum_{t\in T} j_2(c_t^{*}h^{-1})\otimes c_t h^{-1}\\
    j_2(p_I)&=p_I \quad \text{et}\quad
    S(p_I)=G^{1/2}p_IG^{-1/2}
  \end{align*}
\end{prop}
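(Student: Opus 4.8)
The plan is to make the identity $p_I=\tau_I\tau^{-1}E_I(f_2)$ the hub of the proof: once it is established, positivity, the trace, and the $j_2$-invariance all follow cheaply, and only the idempotency and the two coproduct formulas require genuine work. I would begin with the structural observation that makes everything go through. Since $I=N_1'\cap M_3$ with $N_2\subseteq M_3$, we have $N_1'\cap N_2\subseteq N_1'\cap M_3=I$, so the positive invertible element $h$, and hence $h^{-1}$, lies in $I$. Consequently $p_I=\tau_I h^{-1}x_Ih^{-1}\in I$, and writing $p_I=\tau_I\,(x_I^{1/2}h^{-1})^{*}(x_I^{1/2}h^{-1})$ shows it is positive; in particular it is self-adjoint, so it will be a projection as soon as it is idempotent.

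Next I would prove $p_I=\tau_I\tau^{-1}E_I(f_2)$ by pairing both sides against an arbitrary $b\in I$ and invoking faithfulness of the trace on $I$. Because $h,h^{-1}\in I$, the element $h^{-1}bh^{-1}$ again lies in $I$, so the two descriptions of the counit on $I$, namely $\varepsilon(a)=tr(x_Ia)$ from the proposition of \S\ref{xI} and $\varepsilon_B(a)=\tau^{-1}tr(hf_2ha)$ from \S\ref{action}, combine with the trace property to give $tr(p_Ib)=\tau_I\,tr(x_I\,h^{-1}bh^{-1})=\tau_I\,\varepsilon(h^{-1}bh^{-1})=\tau_I\tau^{-1}tr(f_2b)$. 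Since $tr(f_2b)=tr(E_I(f_2)b)$ by the defining property of the conditional expectation $E_I\colon B\to I$, the claimed identity follows. The trace is then immediate: $tr(p_I)=\tau_I\tau^{-1}tr(f_2)=\tau_I$, using that $f_2$ is a Jones projection of trace $\tau$.

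The main obstacle is idempotency. Conceptually $p_I$ is the Jones projection of the intermediate inclusion $N_2\subset M_3=N_2\rtimes I$, a bona fide projection whose trace equals the inverse index $\tau_I$; the content of the proposition is to realize it explicitly inside $B$. To verify $p_I^2=p_I$ directly I would cancel the invertible $h$ on both sides and reduce to the operator identity $\tau_I\,x_Ih^{-2}x_I=x_I$, then establish the latter from the quasibasis of \S\ref{quasibase}: the family $\{c_th^{-1}\}$ reconstructs every element of $I$ through $E_{N_1'\cap N_2}$, and the index relation $\tau_I^{-1}=\sum_t c_tH^{-1}c_t^{*}$ (with $H=h^2$) contracts the resulting sum. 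This is precisely where the non-triviality of $h$, absent in the Kac case $h=1$ of~\cite[Prop.~4.7]{Nikshych_Vainerman.2000.2}, forces careful bookkeeping. The invariance $j_2(p_I)=p_I$ is then painless from the $E_I(f_2)$ form: $f_2$ is fixed by $j_2$, and $j_2$ stabilizes $I$ and commutes with $E_I$, so it fixes $E_I(f_2)$ and hence $p_I$.

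Finally, for the coproduct I would apply the homomorphism $\Delta$ to $p_I=\tau_I h^{-1}x_Ih^{-1}$ and insert $\Delta(x_I)=\sum_t S(c_t^{*})G\otimes c_t$ from \S\ref{xI}, using that $h$ lies in the counital subalgebra to control $\Delta(h^{-1})$; this yields the first form $\tau_I\sum_t h^{-1}S(H^{-1}c_t^{*})h\otimes c_t$. To pass to the second form I would feed the antipode formula $S(b)=hj_2(h^{-1})j_2(b)h^{-1}j_2(h)$ of \S\ref{action} into the left leg, which, with $H=h^2$ and $j_2$ an antiautomorphism, collapses $h^{-1}S(H^{-1}c_t^{*})h$ to $j_2(h)^{-1}j_2(c_t^{*})j_2(h)^{-1}$; the remaining discrepancy with $j_2(c_t^{*}h^{-1})\otimes c_th^{-1}$ is absorbed by the quantum-groupoid identity that transports the factor $h$ across $\otimes$ through $\Delta(1)$, encoded in $G=j_2(h^{-2})h^2$. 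The antipode relation $S(p_I)=G^{1/2}p_IG^{-1/2}$ is then obtained by applying the antiautomorphism $S$ to $\tau_I h^{-1}x_Ih^{-1}$, substituting $S(x_I)=x_IG^{-1}$, and collecting factors via $S^2=\Ad G$. I expect this passage between the two coproduct forms to be the second most delicate point, after idempotency.
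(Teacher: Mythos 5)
Your treatment of the identity $p_I=\tau_I\tau^{-1}E_I(f_2)$ (via trace duality, after observing that $h\in N_1'\cap N_2\subseteq I$), of the trace, and of the first coproduct formula is sound and close to the paper's own route, which obtains the same identity from the uniqueness of $x_I$. But two of your steps fail. First, the ``painless'' proof of $j_2(p_I)=p_I$ rests on the assertion that $j_2$ stabilizes $I$ and commutes with $E_I$. This is false for a general coideal subalgebra: by the proposition of~\ref{sgq}, stability under $j_2$ is \emph{equivalent} to $I$ being a $C^*$-quantum subgroupoid, and $j_2(I)=\tilde I$ is in general a different, right, coideal subalgebra (see~\ref{d}). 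What your argument actually gives is $j_2(p_I)=\tau_I\tau^{-1}E_{j_2(I)}(f_2)$, i.e.\ the Jones projection attached to $\tilde I$; that this coincides with $p_I$ is precisely what must be proved. The paper instead deduces $j_2(x_I)=h^{-1}j_2(h)x_Ih^{-1}j_2(h)$ from $S(x_I)=x_IG^{-1}$ combined with the formula $S(b)=hj_2(h^{-1})j_2(b)h^{-1}j_2(h)$ of~\ref{action}, and then $j_2(p_I)=p_I$ drops out because $h$ and $j_2(h)$ commute (they lie in $N_1'\cap N_2$ and $N_2'\cap N_3$ respectively).

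Second, idempotency. Your reduction of $p_I^2=p_I$ to $\tau_I\,x_IH^{-1}x_I=x_I$ is correct, but the proposed contraction is not a proof. Expanding $x_I$ over the normalized matrix units gives $x_I=\sum_t tr(c_t^*x_I)\,c_t=\sum_t\varepsilon(c_t^*)\,c_t$, so that $x_IH^{-1}x_I$ contains all cross terms $\varepsilon(c_t^*)\varepsilon(c_{t'}^*)\,c_tH^{-1}c_{t'}$ with $t\neq t'$, and the index relation $\sum_t c_tH^{-1}c_t^*=\tau_I^{-1}$ — which pairs each $c_t$ with its \emph{own} adjoint — cannot collapse them; nothing in the quasibasis reconstruction supplies the missing orthogonality. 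This is the genuinely hard point of the proposition, and the paper does not reprove it: it appeals to the proof of Proposition 4.7 of~\cite{Nikshych_Vainerman.2000.2}, after noting that in that proof $\varepsilon_s(H^{-1}x_I)$ equals $\tau_I^{-1}$. What makes that argument work is the weak-Hopf machinery (the counit axioms and the distinguished projection $e_I$ of~\ref{distingue}), which your sketch never invokes. As written, both the projection property and the $j_2$-invariance remain unproved in your proposal.
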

\begin{proof}
  Note that, in the proof of Proposition 4.7
  of~\cite{Nikshych_Vainerman.2000.2}, $\varepsilon_s(H^{-1}x_I)$ is
  exactly $\tau_I^{-1}$. That $p_I$ is a projection follows from the
  same proof. The equality $p_I=\tau_I \tau^{-1} E_I(f_2)$ follows
  from the uniqueness of $x_I$, and yields the trace of $p_I$. The
  counit restricted to $I$ can then be expressed as
  $\varepsilon(y)=\tau^{-1}_I tr(yhp_Ih)$, for all $y\in I$.

  The coproduct formula is obtained using~\ref{quasibase}
  and~\ref{xI}. The formula of~\ref{action} for the antipode gives the
  second expression which also involves the quasibasis
  (see~\ref{quasibase}).

  From $S(x_I)=x_IG^{-1}$ and~\ref{action}, one deduces
  successively
  $$j_2(x_I)=h^{-1}j_2(h)x_Ih^{-1}j_2(h)\,, \quad j_2(p_I)=p_I\,, \quad\text{ and }\quad S(p_I)=G^{1/2}p_IG^{-1/2}\,.$$
\end{proof}
\begin{remark}
  \label{remark.jones_left_legs}
  The projection $p_I$ generates $I$ as a \sacig. Furthermore, the
  \sacig $I$ is the linear span of the right legs of the coproduct of
  $p_I$. In general, we denote by $I(f)$ the \sacig generated by a
  projection $f$.
\end{remark}

\subsubsection {The Jones projection of   $I$ is a Bisch projection}
\label{pbisch}

In~\cite{Bisch.1994}, D.~Bisch gives the following partial
characterization of Jones projections. Let $BP(N_1, N_2)$ be the set
of projections $q$ of $N'_{1} \cap N_{3}$ satisfying
\begin{description}
\item[BP1] $qf_2=f_2$;
\item[BP2] $E_{N_2}(q)$ is a scalar;
\item[BP3] $E_{N_2}(q f_1f_2)$ is the multiple of a projection.
\end{description}
Then, $BP(N_1, N_2)$ contains all the Jones projections of the
intermediate subfactors of $N_{1} \subset N_{2}$ and is contained in
the set of all the Jones projection of the intermediate von Neumann
subalgebra of $N_{1} \subset N_{2}$.

Using this result, D. Nikshych et L. Vainerman show that $p_I$ is the
Jones projection\footnote{This motivates
  our terminology \emph{Jones projection of $I$}.}
 of the intermediate inclusion $M_1=\{p_I\}' \cap N_2
\subset N_2$ of $N_{1} \subset
N_{2}$~\cite[Proposition~4.8]{Nikshych_Vainerman.2000.2}.

\begin{prop}(\cite[Proposition~4.8]{Nikshych_Vainerman.2000.2})
  The projection $p_I$ belongs to $BP(N_1, N_2)$. It implements the
  conditional expectation on the intermediate subfactor $M_1=\{p_I\}'\cap
  N_2$, whose index in $N_2$ is $\tau_I^{-1}$.
\end{prop}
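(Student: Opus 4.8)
The plan is to verify the three Bisch conditions BP1, BP2, BP3 for the projection $p_I$, and then to invoke Bisch's characterization (stated in~\ref{pbisch}) to conclude that $p_I$ is the Jones projection of an intermediate von Neumann subalgebra, which by the connectedness hypothesis on $I$ is in fact the intermediate subfactor $M_1=\{p_I\}'\cap N_2$. The key computational input is the explicit formula $p_I=\tau_I\tau^{-1}E_I(f_2)$ together with the coproduct and antipode formulas for $p_I$ established in Proposition~\ref{proposition.delta_pI}, and the convenient expressions for the counital maps and the action of $B$ from~\ref{action}.

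First I would check BP1, that $p_If_2=f_2$. Since $p_I=\tau_I h^{-1}x_Ih^{-1}$ with $x_I\in I$ the density of $\varepsilon|_I$ against the trace, I expect this to follow from the defining property $\varepsilon(b)=tr(x_Ib)$ combined with the expression $\varepsilon_B(b)=\tau^{-1}tr(hf_2hb)$ from~\ref{action}: the two identify $x_I$ with (the $E_I$-projection of) $\tau^{-1}hf_2h$ up to the normalization by $\tau_I$, so that $p_I$ absorbs $f_2$. For BP2, that $E_{N_2}(p_I)$ is a scalar, I would use $p_I=\tau_I\tau^{-1}E_I(f_2)$ and push the conditional expectation $E_{N_2}$ through, using $I\subset N_1'\cap N_3$ and the fact that $E_{N_2}(f_2)$ is the scalar $\tau$ (the Markov property of the trace); the trace of $p_I$ being $\tau_I$, computed in Proposition~\ref{proposition.delta_pI}, pins down the scalar value.

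The step I expect to be the main obstacle is BP3, that $E_{N_2}(p_If_1f_2)$ is a scalar multiple of a projection. Here the coproduct formula
\begin{displaymath}
  \Delta(p_I)=\tau_I\sum_{t\in T} j_2(c_t^{*}h^{-1})\otimes c_th^{-1}
\end{displaymath}
should be the crucial tool: the left legs $j_2(c_t^{*}h^{-1})$ live in $N_2'\cap N_3$ and the right legs $c_th^{-1}$ form (up to normalization) the quasibasis of $M_3$ over $N_2$ from~\ref{quasibase}, so that applying $E_{N_2}$ after multiplying by $f_1f_2$ reassembles a sum $\sum_t a_t a_t^{*}$-type expression that collapses to a projection. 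The identities $j_2(p_I)=p_I$ and $S(p_I)=G^{1/2}p_IG^{-1/2}$ will likely be needed to control the interaction of the legs under $j_2$ and to show the resulting element is self-adjoint and idempotent up to the scalar.

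Once the three conditions hold, Bisch's result places $p_I$ in $BP(N_1,N_2)$ and identifies it as the Jones projection of the intermediate von Neumann algebra $M_1=\{p_I\}'\cap N_2\subset N_2$; the associated conditional expectation is then implemented by $p_I$, and its index is the inverse of the trace of $p_I$, namely $\tau_I^{-1}$, by the Markov normalization. That $M_1$ is a subfactor rather than merely a von Neumann subalgebra follows from the standing assumption, recorded in~\ref{inter}, that we only consider connected \sacigs. I would finish by checking that the index value $\tau_I^{-1}$ is consistent with the quasibasis computation of Proposition~\ref{quasibase}, closing the argument.
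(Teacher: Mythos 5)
Your overall plan---verify BP1, BP2, BP3 for $p_I$ and invoke Bisch's characterization, with connectedness giving factoriality of $M_1$ and $tr(p_I)=\tau_I$ giving the index---is exactly the paper's route, and your BP2 and the concluding identification are fine. But the two steps carrying the real content are not established. For BP1, you argue that comparing $\varepsilon(b)=tr(x_Ib)$ on $I$ with $\varepsilon_B(b)=\tau^{-1}tr(hf_2hb)$ identifies $x_I$ with $\tau^{-1}E_I(hf_2h)$, ``so that $p_I$ absorbs $f_2$.'' The identification is correct (it is precisely how Proposition~\ref{proposition.delta_pI} obtains $p_I=\tau_I\tau^{-1}E_I(f_2)$), but the absorption does not follow from it: for a projection $f$ and a subalgebra $I$, the fact that $E_I(f)$ is proportional to a projection $p$ in no way forces $pf=f$. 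The paper's argument is of a different nature: from $\varepsilon_s(H^{-1}x_I)=\tau_I^{-1}$ and the relation $\varepsilon_s\circ S=S\circ\varepsilon_t$ one deduces $\varepsilon_t(hp_Ih^{-1})=1$, and then the Haar-projection property $be_B=\varepsilon_t(b)e_B$, with $e_B=d^{-1}hf_2h^{-1}$, gives $hp_Ih^{-1}e_B=e_B$, which is BP1 in this setting. Some counital-map input of this kind is unavoidable.

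For BP3 you correctly single out the main obstacle, but what you propose (applying $E_{N_2}$ to $p_If_1f_2$ ``reassembles a sum $\sum_t a_ta_t^*$-type expression that collapses to a projection'') is a guess at a mechanism, not an argument, and it is not the mechanism the proof uses. The paper's proof is a duality computation in $A$: from the coproduct of $p_I$ and Corollaries~2.2 and~3.9 of Nikshych--Vainerman one first gets, for all $b\in B$, that $E_I(b)=\tau_I^{-1}\,b_{(1)}\,tr(hp_Ihb_{(2)})$; then pairing $E_{N_2}(p_If_1f_2)\in A$ against an arbitrary $b\in B$ and inserting this formula yields
\begin{displaymath}
  \langle E_{N_2}(p_If_1f_2), b\rangle
  =\tau^{-1}\tau_I^{-1}\langle E_{N_2}(p_If_1f_2)^2, b\rangle\,,
\end{displaymath}
so that $\tau^{-1}\tau_I^{-1}E_{N_2}(p_If_1f_2)$ is idempotent; self-adjointness then follows from $j_2(p_I)=p_I$, as you anticipated. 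Without this pairing argument (or a substitute for it), BP3---and hence the proposition---is not proved. A smaller slip: the left legs $j_2(c_t^{*}h^{-1})$ of $\Delta(p_I)$ lie in $j_2(I)\subset B=N_1'\cap N_3$, not in $N_2'\cap N_3$.
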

The upcoming proof follows that of Proposition~4.8
of~\cite{Nikshych_Vainerman.2000.2}, albeit with  our notations and
duality. We identify $\Theta([z\otimes 1)]$ and $z$ for $z \in
N_2$, as well as $\Theta([1\otimes b)]$ and $b$ for $b \in B$.
\begin{proof}
  Since the source and target counits satisfy $\varepsilon_s \circ
  S=S\circ \varepsilon_t$, from
  $\varepsilon_s(H^{-1}x_I)=\tau_I^{-1}$, follows that
  $\varepsilon_t(hp_Ih^{-1})=1$. By definition of the Haar projection
  $e_B=d^{-1} hf_2h^{-1}$ of $B$, we get $hp_Ih^{-1}e_B=e_B$; this is
  property (BP1).

  We use Proposition~\ref{defpI} to compute $E_{N_2}(p_I)$, and obtain
  property (BP2); indeed:
  $$E_{N_2}(p_I)=E_{N_2}(\tau_I \tau^{-1} E_I(f_2))=\tau_I
  \tau^{-1}E_{N_2}(f_2)=\tau_I\,.$$

  From Corollaries~2.2 and~3.9 of~\cite{Nikshych_Vainerman.2000.2},
  and from the expression of the coproduct of $p_I$, we deduce that,
  for any $b \in B$, $\tau_I^{-1} b_{(1)}tr(hp_Ihb_{(2)})$ belongs to
  $I$ and coincides with $E_I(b)$. The element $E_{N_2}(p_If_1f_2)$
  belongs to $A$; using duality, \cite[3.1.3]{David.2005}, and the
  expression of $E_I$, one get:
  \begin{align*}
    \langle E_{N_2}(p_If_1f_2), b\rangle&=\tau^{-2}tr(E_{N_2}(p_If_1f_2)hf_2f_1hb)=\tau^{-1}tr(p_If_1f_2f_1hbj_2(h))\\
    &=tr(p_If_1hbj_2(h))=\tau tr(p_Ihbj_2(h))\\
    &=\tau tr(p_IhE_I(bj_2(h)))=\tau \tau_I^{-1}tr(p_Ihb_{(1)})tr(hp_Ihb_{(2)}j_2(h))\\
    &=\tau^{-1} \tau_I^{-1}\langle E_{N_2}(p_If_1f_2)h^{-1},b_{(1)}\rangle\langle h E_{N_2}(p_If_1f_2),b_{(2)}\rangle\\
    &=\tau^{-1} \tau_I^{-1}\langle E_{N_2}(p_If_1f_2)^2,b\rangle\,.
  \end{align*}
  Therefore, $\tau^{-1} \tau_I^{-1}E_{N_2}(p_If_1f_2)$ is an
  idempotent. Using $j_2(p_I)=p_I$
  and~\cite[3.1.3]{David.2005}, it is further self-adjoint:
  $$E_{N_2}(p_If_1f_2)^*=E_{N_2}(f_2f_1p_I)=E_{N_2}(p_If_1f_2)\,.$$
  By~\cite[4.2]{Bisch.1994.2}, $p_I$ implements the conditional
  expectation on the intermediate subfactor $M_1=\{p_I\}'\cap N_2$; its
  index in $N_2$ is $\tau_I^{-1}$ since $\tau_I$ is the trace of
  $p_I$.
\end{proof}
\begin{remark}

\begin{enumerate}
\item  In~\cite[Remark 4.4]{Bisch.1994}, D.~Bisch notes that BP3 can be
  replaced by BP3': $qN_2q=N_2q$. In the depth $2$ case, it is easy to
  show that one can also use instead BP3": $qAq=Aq$. Indeed, BP3"
  follows from BP3' by application of $E_{N'_0 \cap N_3}$.
  Reciprocally, if BP3" holds, then BP3' follows easily since $N_2$
  is spanned linearly by $N_1.A$ and $q$ commutes with $N_1$.
  Furthermore, for any $a\in A$, $p_Iap_I=E_{\delta(I)}(a)p_I$, where
  $\delta(I)$ is defined in~\ref{d} and calculated in~\ref{tour}~(1).
  \item Using~\ref{tour},  the projection $\tau^{-1}
    \tau_I^{-1}E_{N_2}(p_If_1f_2)$ can be described as  the Jones
    projection of $\delta(I)$; indeed, in this setting and thanks to
    ~\ref{proposition.delta_pI}, one can write:
  \begin{align*}
 \tau^{-1} \tau_I^{-1}E_{N_2}(p_If_1f_2)&=\tau^{-1} \tau_I^{-1}E_{N_2}(p_If_1E_{M_3}(f_2))=\tau_I^{-2}E_{N_2}(p_If_1p_I)\\
 &=\tau_I^{-2}E_{N_2}(E_{M_1}(f_1)p_I)=\tau_I^{-1}E_{\delta(I)}(f_1)=p_{\delta(I)}\,.
  \end{align*}
\end{enumerate}
\end{remark}

\subsubsection {Distinguished projection of $I$}
\label{distingue}

Following Definition 3.6 of~\cite{Nikshych_Vainerman.2000.2}, we
denote by $e_I$ the support of the restriction of $\varepsilon$ on
$I$, and call it the \emph{distinguished projection}\footnote{It could be called \textit{Haar projection}, since it  satisfies all the relevant properties} of
$I$. It is the minimal projection with the property $\varepsilon(e_Iye_I)=\varepsilon(y)$ for all $y$ of $I$. Furthermore, $e_I$ satisfies: $x_Ie_I=e_Ix_I=x_I$, $\varepsilon$
is faithful on $e_IIe_I$, and for all $b\in I$ one has
$be_I=\varepsilon_t(b)e_I$~\cite[Proposition~3.7]{Nikshych_Vainerman.2000.2}.

\begin{prop}
  The distinguished projection of $I$ is $e_I= E_{M_1}(H^{-1}) hp_Ih$.
\end{prop}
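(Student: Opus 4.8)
The plan is to recognize $e_I$ as the support projection of $x_I$ and then to compute that support explicitly from the Jones-projection description of $p_I$. Since $\varepsilon(b)=tr(x_Ib)$ on $I$ by~\ref{xI} and $tr$ is faithful, the restriction of $\varepsilon$ to $I$ is the positive form induced by the positive element $x_I$, so its support is the support projection of $x_I$ in $I$; this is consistent with the properties recalled in~\ref{distingue} (namely $x_Ie_I=e_Ix_I=x_I$ with $e_I$ minimal). Using $p_I=\tau_I h^{-1}x_Ih^{-1}$ from Proposition~\ref{proposition.delta_pI}, I would rewrite $x_I=\tau_I^{-1}hp_Ih=\tau_I^{-1}(hp_I)(hp_I)^{*}$. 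As $h$ is positive and invertible and $p_I$ is a projection, this reduces the problem to computing the range (left support) projection of $hp_I$.

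Next I would evaluate that range projection using that $p_I$ is the Jones projection of the intermediate inclusion $M_1=\{p_I\}'\cap N_2\subset N_2$ (see~\ref{pbisch}), so that $p_Iyp_I=E_{M_1}(y)p_I$ for every $y\in N_2$. Applying this to $y=H\in N_2$ (the element fixed by the quasibasis normalization of~\ref{quasibase}) gives $(hp_I)^{*}(hp_I)=p_IHp_I=E_{M_1}(H)p_I$, which is invertible inside the corner $p_IN_3p_I$ because $E_{M_1}(H)\in M_1$ is positive, invertible, and commutes with $p_I$. Feeding this into the standard formula for a range projection yields $e_I=hp_I\,(p_IHp_I)^{-1}\,p_Ih=hE_{M_1}(H)^{-1}p_Ih$. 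It then remains to commute the central factor $E_{M_1}(H)^{-1}$ across $h$ so as to reach the asserted expression $E_{M_1}(H^{-1})hp_Ih$, and to check directly that the resulting operator is a self-adjoint idempotent of $I$ of trace $\tau_I$, which pins it down as $e_I$ via the characterization of~\ref{distingue}.

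The hard part will be exactly this last rewriting. Because $h\neq 1$ in the general quantum groupoid setting, the conditional expectation implemented by $p_I$ is modularly twisted and $h$ does not normalize $M_1$, so the naive identity $hE_{M_1}(H)^{-1}h^{-1}=E_{M_1}(H^{-1})$ is \emph{not} available and the two expressions $hE_{M_1}(H)^{-1}p_Ih$ and $E_{M_1}(H^{-1})hp_Ih$ must be reconciled through a genuine modular computation. I expect to carry this out by exploiting $j_2(p_I)=p_I$ and $S(x_I)=x_IG^{-1}$ from~\ref{defpI} and~\ref{proposition.delta_pI}, together with the fact that the working trace is the Markov trace of the factors, which is what forces the range projection of $hp_I$ to collapse precisely onto $E_{M_1}(H^{-1})hp_Ih$. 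This bookkeeping of the element $h$ is where the real content of the statement resides.
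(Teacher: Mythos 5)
Your route is genuinely different from the paper's, and its first two steps are sound. Identifying $e_I$ with the support projection of $x_I$ (legitimate, since $tr$ is faithful and $\varepsilon|_I=tr(x_I\,\cdot)$), rewriting $x_I=\tau_I^{-1}(hp_I)(hp_I)^{*}$, and computing the range projection of $hp_I$ from $(hp_I)^{*}(hp_I)=p_IHp_I=E_{M_1}(H)p_I$ is a clean argument that yields $e_I=hE_{M_1}(H)^{-1}p_Ih$. The paper proceeds quite differently: it takes $e=E_{M_1}(H^{-1})hp_Ih$ as a candidate, checks $x_Ie=ex_I=x_I$ and $\varepsilon(eye)=\varepsilon(y)$, then works with the counital map, proving $\varepsilon_t(e)=1$, $ye=\varepsilon_t(y)e$ for $y\in I$, and $\varepsilon_t(e_I)=1$, to conclude $e_I=ee_I=e_Ie=\varepsilon_t(e_I)e=e$. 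Your approach, once completed, bypasses that $\varepsilon_t$ machinery entirely, which is a real simplification.

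The gap is your final paragraph. You stop at $hE_{M_1}(H)^{-1}p_Ih$, declare that the identity needed to reach $E_{M_1}(H^{-1})hp_Ih$ is ``not available'', and defer to an unspecified ``modular computation'' with $j_2(p_I)=p_I$, $S(x_I)=x_IG^{-1}$ and the Markov property, which you never carry out; it is far from clear those tools would produce it. In fact no modular computation is needed: the missing ingredient is exactly the observation that opens the paper's proof, namely that $h$ (hence $H=h^2$) is central in $N'_1\cap N_2$. Since $H\in N'_1\cap N_2$ and $N_1\subset M_1$, one has $uE_{M_1}(H)u^{*}=E_{M_1}(uHu^{*})=E_{M_1}(H)$ for every unitary $u\in N_1$, so $E_{M_1}(H)\in N'_1\cap M_1\subset N'_1\cap N_2$; centrality of $h$ then gives $[h,E_{M_1}(H)^{-1}]=0$, hence $hE_{M_1}(H)^{-1}p_Ih=E_{M_1}(H)^{-1}hp_Ih$ (that $E_{M_1}(H)^{-1}$ commutes with $p_I$ you already noted). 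The same centrality yields $E_{M_1}(H^{-1})=E_{M_1}(H)^{-1}$, which is precisely the paper's first line, and the identification is finished. Whether $h$ normalizes all of $M_1$ is a red herring: you only need commutation with the single element $E_{M_1}(H)$. Finally, your closing remark that the operator has trace $\tau_I$ and that this ``pins it down'' is both unjustified and unnecessary; the support-projection identification already determines $e_I$ uniquely. With the centrality observation substituted for your third paragraph, your proof is correct.
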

\begin{proof}
  Since $h$ belongs to the center of $N'_1 \cap N_2$, one has:
  $E_{M_1}(h^{-1})=E_{M_1}(h)^{-1}$.  Let us show first some
  properties of $e=E_{M_1}(H^{-1}) hp_Ih$. Since $p_I$ implements the
  conditional expectation on $M_1$, $e$ is a projection which
  satisfies $\varepsilon(eye)=\varepsilon(y)$ for  $y$ in $I$ and
  $x_Ie=ex_I=x_I$. From the formula for $\varepsilon_t$ we derive:
  \begin{align*}
    \varepsilon_t(e)&=\tau^{-1}E_{N'_1\cap N_2}(E_{M_1}(H)^{-1}hp_IHf_2h^{-1})\\
    &=\tau^{-1}E_{N'_1\cap N_2}(E_{M_1}(H)^{-1}hp_IHp_If_2h^{-1})\\
    &=\tau^{-1}E_{N'_1\cap N_2}(hf_2h^{-1})=1\,,
  \end{align*}
  as well as, for all $y \in I$:
  \begin{align*}
    \varepsilon_t(y)e&=\tau^{-1}E_{N'_1\cap N_2}(yhf_2h^{-1}E_{M_1}(H^{-1}) h)p_IE_{M_1}(H^{-1})h\\
    &=\tau^{-1}E_{N'_1\cap N_2}(yhf_2p_I)p_IE_{M_1}(H^{-1})h\\
    &=\tau^{-1} \tau_I yhE_I(f_2)p_IE_{M_1}(H^{-1})h\\
    &= yE_{M_1}(H^{-1})hp_Ih=ye\,.\\
  \end{align*}

  On the other hand, from $e_I=\varepsilon_t(e_I)e_I$ we deduce
  successively:
  \begin{displaymath}
    x_I=\varepsilon_t(e_I)x_I, \qquad
    \varepsilon_t(H^{-1}x_I)=\tau_I^{-1}, \qquad\text{and}\qquad \varepsilon_t(e_I)=1\,.
  \end{displaymath}
  Since $ee_I=\varepsilon_t(e)e_I=e_I$, we can write
  $$e_I=ee_I=e_Ie= \varepsilon_t(e_I)e=e\,.\qedhere$$
\end{proof}

\subsection{The inclusion associated with a  \sacig and its Jones tower}

\subsubsection{The antiisomorphism $\delta$}
\label{d}

In~\cite[Proposition~3.2]{Nikshych_Vainerman.2000.2}, D. Nikshych and
L. Vainerman define a lattice isomorphism from $\ll(B)$ to $\rl(B)$ by
$I \longmapsto \tilde{I}= G^{-1/2}S_B(I)G^{1/2}$.  This isomorphism is
induced by $j_2$; indeed, $\tilde{I}$ is simply $j_2(I)$. Note that
$j_2(I)$ is the linear span of the left legs of $\Delta(p_I)$, as well as the right \sacig generated by $p_I$.

Following~\cite[Proposition~3.3]{Nikshych_Vainerman.2000.2}, consider
$B$ as embedded into $A\rtimes B= N'_0 \cap N_3$, and for $I$ in
$\ll(B)$, denote by $\delta(I)$ the \sacig $j_2(I)'\cap A$ of $A$. Since
$j_2(I)$ is the right \sacig generated by $p_I$, $\delta(I)$ is the
commutant of $p_I$ in $A$.

The map $\delta$ is a lattice antiisomorphism from $\ll(B)$ to
$\ll(A)$~\cite[Proposition~3.3]{Nikshych_Vainerman.2000.2}.  In
particular, $\ll(B)$ is self-dual whenever $B$ is. In this case, we
identify $\delta(I)$ and its image in $B$ via some isomorphism from
$A$ to $B$. Note that this is slightly abusive, as this is only
defined up to an automorphism of $B$.

\subsubsection{The Jones tower}\label{tour}

We keep the notations of~\ref{pI} and study the inclusion $M_1 \subset
N_2$ whose Jones projection is $p_I$.

\begin{prop}\

\begin{enumerate}
  \item The algebra $M_1=\{p_I\}' \cap N_2$ is the subalgebra of $N_2$
    obtained by cross product of $N_1$ by $\delta(I)=N'_0 \cap M_1$;
  \item Let $N_2^I$ be the subalgebra of the points of $N_2$ which are
    fixed under the action of $I$; then, $M_1=h^{-1}N_2^Ih$;
  \item $M_1=N_1\rtimes\delta(I) \subset N_2=N_1\rtimes A \subset
    M_3=N_2\rtimes I$ is the basic construction. The relative
    commutant $M'_1 \cap M_3$ is $I\cap j_2(I)$.

  \item The indices satisfy the following relations:

  $[N_2\rtimes I:N_2]=[N_2:N_2^I]$ and
  $[N_1\rtimes \delta(I):N_1][N_2\rtimes I:N_2]=[N_2\rtimes B:N_2]$.
  \item For any two \sacigs $I_1$ and $I_2$,
  $$I_1 \subset I_2\quad \Leftrightarrow \quad p_{I_2} \leq p_{I_1}\,.$$
  \end{enumerate}
\end{prop}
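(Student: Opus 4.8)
The plan is to establish the five assertions in an order that lets each reuse its predecessors, relying throughout on the Galois correspondence of~\ref{inter}, on the identity $p_I=\tau_I\tau^{-1}E_I(f_2)$ from~\ref{proposition.delta_pI}, and on the fact, proved in~\ref{pbisch}, that $p_I$ is the Jones projection of the intermediate subfactor $M_1=\{p_I\}'\cap N_2\subset N_2$, of index $\tau_I^{-1}$. For~(1), since $A=N'_0\cap N_2$ one computes $N'_0\cap M_1=N'_0\cap\{p_I\}'\cap N_2=\{p_I\}'\cap A$, which is exactly $\delta(I)$ by the description of $\delta$ recalled in~\ref{d}. This is a \sacig of $A$, so applying the analogue of the correspondence of~\ref{inter} to the lower inclusion $N_1\subset N_2\cong N_1\rtimes A$ yields $M_1=N_1\rtimes(N'_0\cap M_1)=N_1\rtimes\delta(I)$.

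For~(3), I would first identify $M_3=N_2\rtimes I$ as the basic construction of $M_1\subset N_2$. Because $p_I\in I\subset M_3$, the von Neumann algebra $\langle N_2,p_I\rangle$ is contained in $M_3$; since $p_I$ implements $E_{M_1}$, its index over $N_2$ is $[N_2:M_1]=\tau_I^{-1}$, which by the quasi-basis computation of~\ref{quasibase} is also $[M_3:N_2]$. Matching indices forces $\langle N_2,p_I\rangle=M_3$, so $M_1\subset N_2\subset M_3$ is a basic construction. For the relative commutant, $N_1\subset M_1$ gives $M'_1\cap M_3\subset N'_1\cap M_3=I$ by~\ref{inter}, so $M'_1\cap M_3=\{b\in I:\ b\ \text{commutes with}\ M_1\}$. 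As every $b\in I\subset B=N'_1\cap N_3$ already commutes with $N_1$ and $M_1=N_1\rtimes\delta(I)$, this set equals $I\cap\delta(I)'$. Since $\delta(I)=j_2(I)'\cap A$ we have $j_2(I)\subset\delta(I)'$, hence $I\cap j_2(I)\subseteq M'_1\cap M_3$; the reverse inclusion is the bicommutant identity $\delta(I)'\cap B=j_2(I)$, which I expect to deduce from the $A$--$B$ duality underlying~\ref{d} (the map $\delta$ being a lattice antiisomorphism, its inverse recovers $j_2(I)$ as the double relative commutant).

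Assertion~(2) requires an explicit computation: starting from the formula $b\triangleright x=\tau^{-1}E_{N_2}(bxhf_2h^{-1})$ for the action and the covariance relation between this action and the product in $N_3=N_2\rtimes B$, I would show that $x\in N_2^I$ if and only if $h^{-1}xh$ commutes with $p_I$, i.e.\ lies in $\{p_I\}'\cap N_2=M_1$; the conjugation by $h$ is forced precisely by the $h$-twist appearing in the action. Granting~(2) and~(3), the first index relation in~(4) follows from $[N_2\rtimes I:N_2]=[M_3:N_2]=[N_2:M_1]=[N_2:N_2^I]$, the last equality holding because conjugation by the invertible central element $h\in N'_1\cap N_2$ carries the pair $M_1\subset N_2$ onto $N_2^I\subset N_2$ and so preserves the index; the second relation is then pure multiplicativity of the index along $N_1\subset M_1\subset N_2$ together with the two basic constructions, giving $[N_1\rtimes\delta(I):N_1]\,[N_2\rtimes I:N_2]=[M_1:N_1][N_2:M_1]=[N_2:N_1]=[N_2\rtimes B:N_2]$.

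Finally,~(5) is a consequence of~(1) and standard subfactor theory. Writing $M_1(I)=\{p_I\}'\cap N_2$ for the subfactor attached to $I$, the projection $p_I$ is the orthogonal projection onto $L^2(M_1(I))\subset L^2(N_2)$, and $M_1\mapsto p_{M_1}$ is an order-preserving bijection between intermediate subfactors of $N_1\subset N_2$ and their Jones projections; hence $p_{I_2}\le p_{I_1}$ is equivalent to $M_1(I_2)\subset M_1(I_1)$. Combining this with~(1) and the fact from~\ref{d} that $\delta$ reverses inclusions turns the latter into $I_1\subset I_2$. The main obstacle is the relative-commutant step of~(3): unlike the other parts it is not formal, since the equality $\delta(I)'\cap B=j_2(I)$ rests on the non-degeneracy of the $A$--$B$ pairing inside $N'_0\cap N_3$; the twisted fixed-point identity of~(2) is the other point demanding genuine computation rather than bookkeeping.
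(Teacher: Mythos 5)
Your overall architecture is sound, and parts (1), (4) and (5) run essentially as in the paper: (1) is the same computation $N_0'\cap M_1=\{p_I\}'\cap A=\delta(I)$ followed by the Galois correspondence of~\ref{inter}; (4) is the same combination of (2), (3) and index multiplicativity; (5) is Watatani's domination criterion for intermediate subfactors combined with the fact that $\delta$ is a lattice antiisomorphism, exactly as in the paper. Where you genuinely diverge is the first half of (3): the paper identifies the basic construction of $M_1\subset N_2$ by computing $J_2M_1'J_2=J_2(N_1\vee\delta(I))'J_2$ and showing that it contains $N_2$ and $I$ while being contained in $N_2\rtimes I$; you instead observe $\langle N_2,p_I\rangle\subseteq M_3$ and match the two indices $\tau_I^{-1}$ coming from~\ref{pbisch} and~\ref{quasibase}. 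That index-counting argument is valid — both algebras are factors (recall $I$ is connected) containing $N_2$, so an inclusion with equal finite indices is an equality — and it is lighter than the paper's commutant computation; what the paper's route buys in exchange is that the relative commutant formula then comes for free.

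Concerning the two points you flag: the ``bicommutant identity'' $\delta(I)'\cap B=j_2(I)$ is not actually an obstacle, because your own argument already contains the means to prove it. Since~\ref{pbisch} gives that $p_I$ is the \emph{canonical} Jones projection of $M_1\subset N_2$ (not merely some projection implementing $E_{M_1}$), you have $\langle N_2,p_I\rangle=J_2M_1'J_2$, so your index argument yields $M_3=J_2M_1'J_2$; hence
$$M_1'\cap N_3=J_2M_3J_2\cap J_2N_1'J_2=J_2(M_3\cap N_1')J_2=j_2(I)\,,$$
and since $M_1'\cap N_3=(N_1\vee\delta(I))'\cap N_3=\delta(I)'\cap B$, the identity follows (this is in substance how the paper proceeds, just with the steps in the opposite order; it is also part of the content of~\cite[Proposition~3.3]{Nikshych_Vainerman.2000.2} cited in~\ref{d}). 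The remaining genuine gap is (2): your proposal only restates what must be proved, namely that $x\in N_2^I$ if and only if $h^{-1}xh$ commutes with $p_I$. This is the one part of the proposition requiring real computation, and the paper does it in two explicit steps: for $z\in M_1$ one shows $b\triangleright(hzh^{-1})=\varepsilon^t_B(b)\triangleright(hzh^{-1})$ for all $b\in I$, using $p_I=\tau_I\tau^{-1}E_I(f_2)$, the element $x_I$, the distinguished projection $e_I$ and the relation $be_I=\varepsilon_t(b)e_I$ of~\ref{distingue}; conversely, for $y\in h^{-1}N_2^Ih$, one applies the fixed-point property to the single element $b=p_Ih^{-1}$ and deduces $E_{M_1}(y)=y$. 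Until that computation is carried out, (2) — and with it the first index relation of (4) — remains unproven.
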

\begin{proof}\

\begin{enumerate}
  \item The element of $\ll(A)$ associated to $M_1$ by the Galois
    correspondence (see~\ref{inter}) is $\delta(I)$; indeed:
    $$N'_0 \cap M_1= N'_0 \cap \{p_I\}' \cap N_2=\{p_I\}' \cap A= \delta(I)\,.$$
  \item Let $y= hzh^{-1}$, with $z \in M_1$.  Then, for $b\in I$,
    \begin{displaymath}
      \begin{array}{lclcl }
        b\triangleright y & = &\tau^{-1} E_{N_2}(bhzf_{2}h^{-1})
        &  =& \tau^{-1} E_{N_2}(bhzp_If_{2}h^{-1}) \\
        &=& \tau^{-1} E_{N_2}(bhp_Izf_{2}h^{-1})
        &=& \tau^{-1}\tau_I E_{N_2}(bx_Ih^{-1}zf_{2}h^{-1}) \\
        & =&\tau^{-1}\tau_I E_{N_2}(be_Ix_Ih^{-1}zf_{2}h^{-1})
        & =&\tau^{-1}\tau_I E_{N_2}(\varepsilon^t_B(b)e_Ix_Ih^{-1}zf_{2}h^{-1}) \\
        &=& \tau^{-1}\tau_I \varepsilon^t_B(b)E_{N_2}(e_Ix_Ih^{-1}zf_{2}h^{-1})
        &= &\tau^{-1} \varepsilon^t_B(b)E_{N_2}(hp_Izf_{2}h^{-1}) \\
        &=& \tau^{-1} \varepsilon^t_B(b)E_{N_2}(hzf_{2}h^{-1}) &=&\varepsilon^t_B(b)\triangleright y\,.
      \end{array}
    \end{displaymath}
    Therefore,  $M_1$ is contained in $h^{-1}N_2^Ih$.\\
    Take reciprocally $y \in h^{-1}N_2^Ih$; then, the following
    sequence of equality holds:
    \begin{align*}
      \label{}
      p_Ih^{-1}\triangleright hy h^{-1} & =  \varepsilon^t_B(p_Ih^{-1}) \triangleright (hy h^{-1})\\
      \tau^{-1} E_{N_2}(p_Iyf_{2})h^{-1}  & = \tau^{-1} E_{N'_1\cap N_2}(p_If_{2})yh^{-1}\\
      \tau^{-1} E_{N_2}(p_Iyp_If_{2})& =  y\\
      \tau^{-1} E_{M_1}(y)E_{N_2}(p_If_{2})& =  y\\
      E_{M_1}(y)& =  y
    \end{align*}
    Therefore, $y$ belongs to $M_1$ and (2) is proved.
  \item The algebra $M_3=\langle N_2,p_I \rangle$ obtained by basic
    construction from $M_1\subset N_2$ is, using (1):
     $$J_2 M'_1 J_2=J_2 (N_1 \cup \delta(I))' J_2=J_2 N'_1
    \cap \delta(I)'J_2=(J_2 N'_1J_2)
    \cap (J_2\delta(I)'J_2)$$
    On one hand $J_2 N'_1 J_2$ is $N_3$, and on the other hand,
    $\delta(I)'$ contains both $N'_2$ and $\delta(I)'\cap B$.
    Therefore, by~\cite[3.3]{Nikshych_Vainerman.2000.2}, $M_3$
    contains both $N_2$ and $I$, while being also contained in $N_2
    \rtimes I$. Hence $M_3=N_2 \rtimes I$. Furthermore,
    $$I\cap \tilde{I}=I\cap j_2(I)=(N'_1\cap M_3) \cap (M'_1\cap N_3)= M'_1 \cap M_3\,.$$

  \item Follows from (2), (3), and Proposition 2.1.8 of~\cite{Jones.1983.Subfactors}.

  \item Y. Watatani shows at the beginning of~\cite[Part
    II]{Watatani.1996} that if $M$ and $P$ are two intermediate
    subfactors of $N_1\subset N_2$, then $M$ is contained in $P$ if and
    only if $e^{N_2}_{M}$ is dominated by $e^{N_2}_{P}$. Let us apply
    this result to $P=J_2(N_2\rtimes I_1)'J_2$ and $M=J_2(N_2\rtimes
    I_2)'J_2$, where $I_1$ and $I_2$ are two \sacigs; then, $p_{I_2}
    \leq p_{I_1}$ is equivalent to $M \subset P$ and therefore to
    $I_1\subset I_2$ since $\delta$ is an antiisomorphism.
  \end{enumerate}
\end{proof}

\subsection{The principal graph of an intermediate inclusion}
\label{grapheprincipal}

The principal graph of an inclusion is  obtained from
the Bratelli diagram of the tower of relative commutants or from the
equivalence classes of simple bimodules
(see~\cite{Goodman_Harpe_Jones.1989}, and~\cite{Jones_Sunder.1997}). It is
an invariant of the inclusion.

By~\cite[5.9]{Nikshych_Vainerman.2000.2}, the principal graph of the
inclusion $N_2 \subset N_2\rtimes I$ is the connected component of the
Bratteli diagram of $\delta(I) \subset A$ containing the trivial
representation of $A$.
All the principal graphs we obtained
in our examples are gathered in Section~\ref{graphe}.

\subsection{Depth $2$ intermediate inclusions}

\subsubsection{$C^*$-quantum subgroupoid}\label{sgq}

A \sacig $I$ is a \emph{$C^*$-quantum subgroupoid} of $B$ if it is
stabilized by the coproduct and antipode of $B$; then, it becomes a
$C^*$-quantum groupoid for the structure induced from $B$. The
following proposition provides some equivalent characterizations.
\begin{prop}
  For $I$ a \sacig of $B$, the following are equivalent:
  \begin{enumerate}[(i)]
  \item $I$ is a  $C^*$-quantum subgroupoid of $B$;
  \item $I$ is stabilized by the antipode $S$;
  \item $I$ is stabilized by the antiautomorphism $j_2$;
  \item $I$ is stabilized by $\Delta$: $\Delta(I)\subset I\otimes I$;
  \item $\Delta^{\varsigma}(p_I)=(h^{-1}j_2(h) \otimes 1)\Delta(p_I)(h^{-1}j_2(h) \otimes 1)$.
  \end{enumerate}
\end{prop}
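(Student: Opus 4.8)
The plan is to split the five assertions into two clusters. Conditions (i), (ii) and (iv) are about invariance under the antipode and the coproduct, and I would first show these three are equivalent; conditions (iii) and (v), phrased through $j_2$ and through the Jones projection $p_I$, I would then graft onto this chain using the leg description of $I$ recalled in~\ref{d} and~\ref{remark.jones_left_legs}. Two facts are used throughout: since $I$ is by definition a left \sacig, the inclusion $\Delta(I)\subset B\otimes I$ holds for free, and since $j_2$ is a bijective antiautomorphism one has $\dim j_2(I)=\dim I$, so any inclusion $j_2(I)\subset I$ is automatically an equality.

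By the very definition of a $C^*$-quantum subgroupoid, (i) means that $I$ is stable under both $\Delta$ and $S$, so $(\mathrm{i})\Rightarrow(\mathrm{ii})$ and $(\mathrm{i})\Rightarrow(\mathrm{iv})$ are immediate, and (ii) together with (iv) returns (i). To pass from (ii) to (iv) I would invoke the anti-comultiplicativity of the antipode, $\Delta\circ S=\varsigma\circ(S\otimes S)\circ\Delta$: assuming $S(I)=I$, for $x\in I$ write $x=S(y)$ with $y=S^{-1}(x)\in I$, so that $\Delta(x)=\varsigma(S\otimes S)\Delta(y)$; as $\Delta(y)\in B\otimes I$ and $S(I)=I$, the flip sends $\Delta(x)$ into $I\otimes B$. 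Thus $I$ is also a right coideal, and intersecting with the left-coideal property gives $\Delta(I)\subset (B\otimes I)\cap(I\otimes B)=I\otimes I$, which is (iv).

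The equivalence $(\mathrm{iii})\Leftrightarrow(\mathrm{iv})$ is where the leg description does the work. For $(\mathrm{iv})\Rightarrow(\mathrm{iii})$: from $\Delta(I)\subset I\otimes I$ and $p_I\in I$ we get $\Delta(p_I)\in I\otimes I$, so all left legs of $\Delta(p_I)$ lie in $I$; but by~\ref{d} these left legs span $j_2(I)$, whence $j_2(I)\subset I$ and therefore $j_2(I)=I$ by the dimension count. For $(\mathrm{iii})\Rightarrow(\mathrm{iv})$: $j_2(I)=I$ says precisely that $I$ coincides with the right \sacig generated by $p_I$, i.e. $\Delta(I)\subset I\otimes B$, and intersecting again with $\Delta(I)\subset B\otimes I$ yields (iv). To attach (v) I would argue similarly through right legs. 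Conjugating the first leg of $\Delta(p_I)$ by $h^{-1}j_2(h)\otimes 1$ leaves the span of the right legs, namely $I$, unchanged, while the right legs of $\Delta^{\varsigma}(p_I)$ are the left legs of $\Delta(p_I)$ and span $j_2(I)$; hence equality (v) forces $j_2(I)=I$, giving $(\mathrm{v})\Rightarrow(\mathrm{iii})$. The converse $(\mathrm{iii})\Rightarrow(\mathrm{v})$ is the one genuinely computational point: I would substitute the explicit coproduct $\Delta(p_I)=\tau_I\sum_t j_2(c_t^{*}h^{-1})\otimes c_t h^{-1}$ from~\ref{proposition.delta_pI} into both sides and match them using $j_2(p_I)=p_I$ and the antipode formula $S(p_I)=G^{1/2}p_IG^{-1/2}$, relegating this bookkeeping to a direct verification.

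The step I expect to be the main obstacle is $(\mathrm{iv})\Rightarrow(\mathrm{ii})$, i.e. upgrading stability under the coproduct to stability under the antipode; the example of $L^\infty(G)$, where this is exactly normality of the corresponding subgroup, shows that it is a genuine condition and not a formal one. I would handle it through the relation $j_2(I)=G^{-1/2}S_B(I)G^{1/2}$ of~\ref{d}: having already derived $j_2(I)=I$ from (iv), this reads $S(I)=G^{1/2}IG^{-1/2}$, so it suffices to prove that $G^{1/2}$ normalizes $I$. The cleanest way is to cite that a finite dimensional sub-bialgebra of a $C^*$-quantum groupoid is automatically stable under the antipode, which breaks the circularity; alternatively, once $G$ is known to normalize $I$, a spectral/Vandermonde argument on the components $P_\lambda\,x\,P_\mu$ of $x\in I$, grouped by the ratio $\lambda/\mu$ of eigenvalues of $G$, shows that $\Ad(G^{t})$ normalizes $I$ for every $t$ and in particular for $t=\tfrac12$. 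Either route secures (ii), after which all five conditions are equivalent.
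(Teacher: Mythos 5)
Most of your chain is correct and runs parallel to the paper's proof: (i) $\Leftrightarrow$ (ii)$+$(iv) is definitional; your (ii) $\Rightarrow$ (iv) via anti-comultiplicativity of $S$ is the paper's argument; (iv) $\Rightarrow$ (iii) via the left legs of $\Delta(p_I)$, and (v) $\Rightarrow$ (iii) by comparing leg spans, are exactly the paper's; your direct (iii) $\Rightarrow$ (iv), using that $j_2(I)$ is the right \sacig generated by $p_I$, is a harmless variant (the paper routes this implication through (ii)). Your (iii) $\Rightarrow$ (v) is thinner than the paper's: the identity comes out because $\{j_2(c_t^*)\}$ is \emph{again} a family of normalized matrix units of $I$, so that $\Delta(p_I)$ admits a second expansion to compare with the first; merely substituting the single expansion and invoking $j_2(p_I)=p_I$ and $S(p_I)=G^{1/2}p_IG^{-1/2}$ does not produce it. Still, that is a fixable presentation issue.

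The genuine gap is the step you yourself flag as the obstacle: passing from $j_2(I)=I$ to $S(I)=I$, i.e.\ showing that $G^{1/2}$ normalizes $I$. Neither of your routes closes it. The first, citing that a finite dimensional sub-bialgebra of a $C^*$-quantum groupoid is automatically antipode-stable, appeals to a theorem you do not reference, which is not part of the paper's toolkit and is not folklore in this weak (non-irreducible) setting; if it were quotable, the hardest implication of the proposition would be a triviality. The second route is conditional on ``$G$ is known to normalize $I$'', which you never establish; since $S^2=\Ad G$, that premise is essentially the statement to be proven, and it cannot be extracted from $S(I)=G^{1/2}IG^{-1/2}$ alone (applying $S$ once more only returns a tautology). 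The missing idea --- the paper's one-line remark --- is that $h$ lies in $I$ automatically: since $1\in I$ and $\Delta(1)\in B\otimes I$, the right legs of $\Delta(1)$, which span the target counital algebra $N_1'\cap N_2\ni h$, are contained in $I$. Then $j_2(I)=I$ forces $j_2(h)\in I$, hence $G=j_2(h^{-2})h^2\in I$, hence $G^{1/2}\in I$ by functional calculus in the $C^*$-subalgebra $I$, and $S(I)=G^{1/2}IG^{-1/2}=I$; equivalently, one feeds $j_2(h)\in I$ into the relation $S(b)=hj_2(h^{-1})j_2(b)h^{-1}j_2(h)$ of~\ref{action}, which is exactly how the paper converts $j_2$-stability into $S$-stability (and back). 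With this containment in hand, your Vandermonde argument becomes superfluous. Note that all of this is invisible in the Kac case, where $h=1$ and $S=j_2$; the entire content of the implication sits in the non-irreducible case, which is precisely where your proposal leaves it open.
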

\begin{proof}
  By definition, (i) is equivalent to (ii) and (iv) together.

  (ii) $\Longleftrightarrow$ (iii): It is sufficient to remark that if
  $I$ is stabilized by $S$ or $j_2$, then $j_2(h)$ belongs to $I$. It
  remains to use the relation between $S$, $h$ and $j_2$
  (see~\ref{action}).

  (iii) $\Longrightarrow$ (v): Assume that $I$ is stabilized by
  $j_2$. Then $\{j_2(c^*_t)\suchthat t\in T\}$ is a family of normalized
  matrix units of $I$; using it, one can write the coproduct of $p_I$
  and check the claimed relation between $\Delta(p_I)$ et
  $\Delta^{\sigma}(p_I)$.

   (v) $\Longrightarrow$ (iii): Assume that
   $\Delta^{\sigma}(p_I)=(h^{-1}j_2(h) \otimes 1)\Delta(p_I)(h^{-1}j_2(h) \otimes 1)$. Then the left and right legs  of $\Delta(p_I)$ span the same subspace, which is $I=j_2(I)$.

 (ii) $\Longrightarrow$ (iv):
   $$\Delta(I)=(S^{-1} \otimes S^{-1})\Delta^{\sigma}(S(I)) \subset S^{-1}(I)\otimes I=I\otimes I\,,$$

  (iv) $\Longrightarrow$ (iii): Assume $\Delta(I)\subset I\otimes
  I$. Then, $\Delta(p_I)$ is in $I\otimes I$, and by
  Proposition~\ref{proposition.delta_pI}, $I$ is stabilized by
  $j_2$.
\end{proof}

\subsubsection{$C^*$-quantum subgroupoid and intermediate inclusions}\label{prof2}

From~\ref{tour}~(3) and~\ref{sgq}, a \sacig $I$ is a $C^*$-quantum
subgroupoid of $B$, if and only if $I=M'_1 \cap M_3$. If $I$ is a
$C^*$-quantum groupoid, the inclusion $N_2 \subset N_2 \rtimes I$ is
of depth $2$. %
In~\ref{irredprof2}, we will show that the reciprocal holds as soon as
the inclusions are irreducible, while the following example shows that
it can fail without the irreducibility condition.

\subsubsection{An example}
  As in~\cite[2.7]{Nikshych_Vainerman.2002} and~\cite[5]{David.2005},
  we consider the Jones subfactor of index $4\cos^2\frac{\pi}{5}$ in $R$. We
  denote by $P_0\subset P_1$ this inclusion whose principal graph is
  $A_4$ and write
  $$P_0\subset P_1\subset P_2\subset P_3\subset P_4\subset P_5\subset P_6\subset P_7\subset P_8\subset P_9$$
  the tower obtained by basic construction. The relative commutants
  are minimals; they are the Temperley-Lieb
  algebras. From~\cite[4.1]{Nikshych_Vainerman.2000.2}, the inclusions
  $P_0\subset P_2$ and $P_0\subset P_3$ are of depth $2$. Consider
  $P_8$ as intermediate subfactor of $P_6\subset P_9$; it is obtained by
  cross product of $P_6$ by a \sacig $I=P'_3\cap P_8$ of $P'_3 \cap
  P_9$. The algebra $P'_3 \cap P_9$ is endowed with a $C^*$-quantum
  groupoid structure since the inclusion $P_0\subset P_3$ is of depth
  $2$. On the other hand, since the inclusion $P_0\subset P_2$ is of
  depth $2$, this is also the case for $P_6\subset P_8$. However, by
  dimension count, $P'_4 \cap P_8$ is strictly contained in $I$.

\subsection{The $C^*$-quantum groupoids of dimension $8$}

The smallest non trivial $C^*$-quantum groupoids are of dimension
$8$. They are described in ~\cite[3.1]{Nikshych_Vainerman.2000.3} and
in~\cite[7.2]{Nikshych_Vainerman.2000.1}. L. Vainerman pointed to us
that, since their representation categories are the same as for
$\mathbb{Z}_2$, their intermediate subfactors lattices are trivial.
Together with the Kac-Paljutkin algebra $\KP$, this covers all the non
trivial $C^*$-quantum groupoids of dimension~$8$.

\subsection{Special case of irreducible inclusions}
\label{irred}

In this article, we concentrate on Kac algebras, and therefore on
irreducible inclusions of finite index. In this case, any intermediate
inclusion is of integral index (see~\cite{Bisch.1994.2}); in fact, as
we showed in~\ref{quasibase}, the index of $N_2 \subset N_2 \rtimes I$
is the dimension of $I$. Furthermore, the lattice of intermediate
subfactors is finite (see~\cite{Watatani.1996}).

\subsubsection{Kac algebras and $C^*$-Hopf algebras}
\label{KacHopf}

This is a summary of~\cite[6.6]{Enock_Schwartz.1992} which relates Kac
algebras and $C^*$-Hopf algebras. Let $B$ be a $C^*$-algebra of finite
dimension\footnote{Recall that any $C^*$-algebra of finite dimension
  is semi-simple.}. Set $B=\oplus_j M_{n_j}(\C)q_j$ and denote by
$Tr_j$ the canonical trace\footnote{$Tr_j(q_j)=n_j$.} on the factor
$M_{n_j}(\C)q_j$. The normalized canonical trace of $B$ is given by
the formula $tr(x)=\frac{1}{\dim B}\sum_j n_j Tr_j(xq_j)$.

If $(B, \Delta, \varepsilon, S)$ is a $C^*$-Hopf algebra, then $(B,
\Delta, S, tr)$ is a Kac algebra. Reciprocally, if $(B, \Delta, S,
tr)$ is a Kac algebra, then, from \cite[6.3.5]{Enock_Schwartz.1992},
there exist a projection $p$ of dimension $1$ in the center of $B$
such that $\varepsilon$, defined by $\varepsilon(x)=tr(xp)$, is the
counit of $B$, and $(B, \Delta, \varepsilon, S)$ is a  $C^*$-Hopf
algebra.

The coinvolution, or antipode, of a Kac algebra constructed from a
depth $2$ irreducible inclusion coincides with $j_2$
(voir~\cite{David.1996}). From~\cite[III.3.2]{Kassel.1995}, the
antipode of a $C^*$-algèbre de Hopf is uniquely determined by the rest
of the structure. Also, in the examples we consider, the coinvolution
is not deformed. Therefore, we often skip in the sequel the obvious
checks on the coinvolution.

\subsubsection{}
\label{resumep}

The following proposition summarizes the properties of \sacigs and
their Jones projections in the irreducible case:
\begin{prop}
  \label{prop.resumep}
  Assume that the inclusion $N_0 \subset N_1$ is irreducible, and
  consider a \sacig $I$ in $\ll(B)$. Then, $I$ is connected, and
  \begin{enumerate}
  \item $\tau_I^{-1} = tr(p_I)^{-1}=[N_2\rtimes I:N_2]=\dim I$, and
    $\dim I$ divides $\dim B$.
  \item The Jones projection $p_I$ dominates $f_2$, belongs to the
    center of $I$, and satisfies:
    \begin{displaymath}
      Ip_I=\C p_I \quad \text{ and } \quad \varepsilon(b)=\dim I
      \; tr(p_Ib), \text{ for all } b\in I\,.
    \end{displaymath}
  \item The \sacig $I$ is the subspace spanned by the right legs of
    $p_I$, and
    $$\Delta(p_I)= (\dim I)^{-1}\sum_{t\in T} S(c_t^{*}) \otimes c_t\,.$$
  \item $\displaystyle\Delta(p_I)(1\otimes p_I)=(p_I\otimes p_I)\,.$

  \end{enumerate}
\end{prop}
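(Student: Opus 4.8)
The plan is to obtain each assertion by specializing the general results of the preceding subsections to the irreducible case. The single structural input that makes everything collapse is that, by~\ref{action}, irreducibility of $N_0\subset N_1$ is equivalent to $B$ being a Kac algebra, and then $h=1$ (Corollary of~\ref{quasibase}). Consequently every element built from $h$ and $j_2(h)$---in particular $H$ and $G=j_2(h^{-2})h^2$---reduces to $1$, the Haar projection becomes $e_B=(\dim B)^{-1}f_2$, and the distinguished projection of~\ref{distingue} becomes $e_I=E_{M_1}(H^{-1})hp_Ih=p_I$. Moreover, since a Kac algebra is a genuine $C^*$-Hopf algebra, the counital subalgebras are $B_s=B_t=\C 1$; hence $Z(I)\cap B_s=\C 1$ is trivial, so $I$ is automatically connected and all our standing hypotheses apply.

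For part~(1), by~\ref{defpI} the trace of $p_I$ is $\tau_I$, and by construction (Proposition of~\ref{quasibase}) $\tau_I^{-1}$ is the index of $N_2$ in $M_3=N_2\rtimes I$; its Corollary identifies this index with $\dim I$ in the irreducible case, giving the chain of equalities. The divisibility $\dim I\mid\dim B$ I would read off from~\ref{tour}~(4): the product of indices there becomes $\dim\delta(I)\cdot\dim I=[N_2\rtimes B:N_2]=\dim B$, the last equality being the Corollary of~\ref{quasibase} applied to $I=B$.

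For part~(2), property (BP1) of~\ref{pbisch}, namely $hp_Ih^{-1}e_B=e_B$, reduces with $h=1$ to $p_Ie_B=e_B$, i.e. $p_If_2=f_2$, so $p_I$ dominates $f_2$. Next, using $e_I=p_I$, the identity $be_I=\varepsilon_t(b)e_I$ of~\ref{distingue} becomes $bp_I=\varepsilon_t(b)p_I$; since $\varepsilon_t(b)\in B_t=\C 1$ is a scalar, this yields $Ip_I=\C p_I$. Taking adjoints shows $p_I I\subset\C p_I$ as well, and comparing $bp_I=\lambda p_I$ with $p_Ib=\mu p_I$ through the trace forces $\lambda=\mu$, whence $p_I$ is central in $I$. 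Finally $\varepsilon(b)=tr(x_Ib)$ from~\ref{xI}, together with $x_I=\tau_I^{-1}p_I=(\dim I)p_I$, gives $\varepsilon(b)=(\dim I)\,tr(p_Ib)$.

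For part~(3) the right legs of $\Delta(p_I)$ span $I$ by Remark~\ref{remark.jones_left_legs}, and the coproduct formula is exactly the formula of~\ref{defpI} (equivalently~\ref{xI}) once $H=G=1$ and $x_I=(\dim I)p_I$ are substituted. For part~(4) I would combine the previous points: writing $\Delta(p_I)=\sum_i a_i\otimes b_i$ with $b_i\in I$, the relation $bp_I=\varepsilon(b)p_I$ for $b\in I$---valid because the scalar $\varepsilon_t(b)$ equals $\varepsilon(b)$, as the trace computation of~(2) shows---gives $\Delta(p_I)(1\otimes p_I)=\sum_i\varepsilon(b_i)\,a_i\otimes p_I=\bigl((\id\otimes\varepsilon)\Delta(p_I)\bigr)\otimes p_I=p_I\otimes p_I$, the last equality being the counit axiom. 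I expect no genuine obstacle here beyond careful bookkeeping of the $h=1$ specialization; the only two points needing a short honest argument are the centrality of $p_I$ and the identity $\varepsilon_t|_I=\varepsilon|_I$, both dispatched by the adjoint-and-trace device above.
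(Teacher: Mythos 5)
Your proposal is correct, and for the setup and parts (1)--(3) it follows the paper's own proof: same reduction ($h=1$, trivial counital algebras, $e_I=p_I=(\dim I)^{-1}x_I$, automatic connectedness), same references to the corollary of~\ref{quasibase} and to~\ref{tour}~(4) for the index chain and divisibility, and the same adjoint trick for centrality in (2) --- the only cosmetic difference being that you identify the two scalars via traciality of $tr$, where the paper uses $\varepsilon(y^*)=\overline{\varepsilon(y)}$; both are sound.

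Part (4) is where you genuinely diverge. The paper stays inside the matrix-unit formalism: since $Ip_I=\C p_I$ is a one-dimensional central block, it chooses the normalized matrix units of $I$ so that $c_{t_0}=\tau_I^{-1/2}p_I$ and $c_tp_I=0$ for $t\ne t_0$, then plugs into the coproduct formula of (3), so that only the $t_0$ term survives and $S(p_I)=p_I$ finishes the computation. You instead write $\Delta(p_I)=\sum_i a_i\otimes b_i$ with $b_i\in I$ (the left-coideal property), apply the eigenvalue relation $b_ip_I=\varepsilon(b_i)p_I$ established in (2), and invoke the counit axiom $(\id\otimes\varepsilon)\Delta=\id$. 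Your route is more structural and slightly more general: it never touches a basis and would apply verbatim to any projection in a left coideal satisfying $yp=\varepsilon(y)p$. The paper's route is a one-line consequence of (3) once the adapted matrix units are chosen, and it has the side benefit of exhibiting explicitly that $p_I$ is itself (up to normalization) one of the quasibasis elements --- a fact reused in other computations in the paper. Your appeal to "the trace computation of (2)" to justify $\varepsilon_t|_I=\varepsilon|_I$ works, though it is worth noting this identity is immediate in any Kac algebra from $\Delta(1)=1\otimes1$, with no trace needed.
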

\begin{proof}
 (1) follows from corollary~\ref{quasibase} and~\ref{tour}~(4).

 (2) The counital algebras are trivial, the element $h$ is $1$, the
 projections $e_I$ and $p_I$ coincide with $(\dim I)^{-1} x_I$.  Take
 $y\in I$. From~\cite[3.7]{Nikshych_Vainerman.2000.2} (see
 also~\ref{distingue}), one has $yp_I=\varepsilon(y)p_I$, and it
 follows successively that $p_Iy^*=\overline{\varepsilon(y)}p_I$, and
 $p_Iy=\overline{\varepsilon(y^*)}p_I=\varepsilon(y)p_I$. Therefore
 $p_I$ is in the center of $I$.

  (3) follows from~\ref{defpI}.

  (4) Since $Ip_I$ is of dimension $1$, one can set
  $c_{t_0}=\tau_I^{-1/2}p_I$ for some $t_0\in T$, and $c_tp_I=0$ for all
  $t\in T\backslash\{t_0\}$. The desired equation follows easily.
\end{proof}
An equivalent statement of Proposition~\ref{prop.resumep} (4) can be
found in~\cite{Baaj_Blanchard_Skandalis.1999}; namely, in their
setting, the image of $p_I$ in $L^2(B,tr)$ is a presubgroup.

The following remark is useful in practice when searching for \sacigs.
\begin{remark}
  \label{remark.resumep}
  Let $p$ be a projection of trace $1/k$, for some divisor $k$ of
  $\dim B$, which dominates $f_2$ and generates a \sacig $I$ of
  dimension $k$. Then, $p$ is the Jones projection $p_I$ of $I$;
  indeed, $p$ dominates $p_I$ and has the same trace.
\end{remark}

\subsubsection{Intermediate inclusions of depth $2$}
\label{irredprof2}

The following proposition characterize the depth $2$ intermediate
inclusions in the case of irreducible inclusions.

\begin{prop}
  Let $I$ be a \sacig of $B$. Then, the following are equivalent:
  \begin{enumerate}[(i)]
  \item The inclusion $N_2 \subset N_2\rtimes I$ is of depth $2$;
  \item $I$ is a Kac subalgebra of $B$;
  \item the coproduct of $p_I$ is symmetric;
  \item $I=M'_1 \cap M_3$.
  \end{enumerate}
\end{prop}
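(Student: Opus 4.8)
The plan is to prove the chain of equivalences
(i) $\Leftrightarrow$ (iv), (iv) $\Leftrightarrow$ (ii), and
(ii) $\Leftrightarrow$ (iii), exploiting that in the irreducible case
$h=1$, the counital algebras are trivial, and $j_2$, $S$, and the antipode
all coincide (see~\ref{KacHopf} and~\ref{resumep}). Most of the work has
already been done in the non-irreducible setting, so the strategy is to
specialize~\ref{prof2} and~\ref{sgq} rather than to reprove everything
from scratch.

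First I would establish (i) $\Leftrightarrow$ (iv). By~\ref{tour}~(3),
the relative commutant of the basic construction $M_1 \subset N_2 \subset M_3$
is $M'_1 \cap M_3 = I \cap j_2(I)$. Now $M_1 \subset N_2$ has Jones
projection $p_I$ with index $\tau_I^{-1}$, and by~\ref{tour}~(3) again its
basic construction is exactly $N_2 \subset N_2 \rtimes I = M_3$.
The inclusion $N_2 \subset M_3$ is of depth $2$ precisely when
$M'_1 \cap M_3$ has the expected size, i.e.\ when $M'_1 \cap M_3 = I$,
which by the formula $M'_1 \cap M_3 = I \cap j_2(I)$ is equivalent to
$I \subset j_2(I)$, hence (by dimension, since $j_2$ is an
antiautomorphism preserving the trace) to $I = j_2(I)$, which is exactly
condition (iv). The depth-$2$ count here uses irreducibility so that
dimensions coincide with indices, as recorded in~\ref{resumep}~(1); this
is where the hypothesis is essential and where~\ref{prof2} (which only
gave one implication without irreducibility) gets upgraded to an
equivalence.

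Next, (iv) $\Leftrightarrow$ (ii). Condition (iv) says $I = j_2(I)$, i.e.\
$I$ is stabilized by $j_2$. By the equivalences of~\ref{sgq}, stability
under $j_2$ is equivalent to stability under $S$ and to
$\Delta(I) \subset I \otimes I$, i.e.\ to $I$ being a $C^*$-quantum
subgroupoid of $B$. But in the irreducible case $B$ is a Kac algebra,
$h=1$, and the counital algebras are trivial, so a $C^*$-quantum
subgroupoid that is a sub-coalgebra closed under $S$ is simply a Kac
subalgebra; this is condition (ii). So (iv) $\Leftrightarrow$ (ii) is just
the specialization of~\ref{sgq} to $h=1$.

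Finally, (ii) $\Leftrightarrow$ (iii). By~\ref{resumep}~(3), in the
irreducible case $\Delta(p_I) = (\dim I)^{-1}\sum_{t\in T} S(c_t^{*})\otimes c_t$,
and by~\ref{sgq}~(v) the quantum-subgroupoid condition is
$\Delta^{\varsigma}(p_I) = (h^{-1}j_2(h)\otimes 1)\Delta(p_I)(h^{-1}j_2(h)\otimes 1)$,
which at $h=1$ collapses to $\Delta^{\varsigma}(p_I) = \Delta(p_I)$, i.e.\
the coproduct of $p_I$ is symmetric. Thus (iii) is precisely (ii) read off
on the generator $p_I$, using that $p_I$ generates $I$
(Remark~\ref{remark.jones_left_legs}). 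The main obstacle is the first
equivalence: making the dimension count in (i) $\Leftrightarrow$ (iv)
airtight requires care that the depth-$2$ condition for $N_2 \subset M_3$
is exactly $M'_1 \cap M_3 = I$ and not merely an inclusion, and this is
where irreducibility (forcing $h=1$ and equating dimensions with indices)
does the decisive work.
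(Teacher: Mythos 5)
Your handling of (ii) $\Leftrightarrow$ (iii) $\Leftrightarrow$ (iv) is correct and is exactly the paper's argument: specialize Proposition~\ref{sgq} at $h=1$, use~\ref{tour}~(3) to translate (iv) into $I=j_2(I)$, and read (iii) off condition (v) of~\ref{sgq}. The direction (i) $\Rightarrow$ (iv) is also essentially the paper's: for the irreducible depth $2$ inclusion $N_2\subset M_3$, the Galois correspondence makes $M'_1\cap M_3$ a Kac algebra of dimension $[M_3:N_2]=\dim I$, and since $M'_1\cap M_3=I\cap j_2(I)\subset I$, equality of dimensions forces $M'_1\cap M_3=I$.

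The gap is in the converse direction of your step 1. You dispose of (i) $\Leftrightarrow$ (iv) in one stroke by asserting that ``$N_2\subset M_3$ is of depth $2$ precisely when $M'_1\cap M_3$ has the expected size.'' The ``only if'' half of this is the Galois correspondence, as above; but the ``if'' half --- that $\dim(M'_1\cap M_3)=[M_3:N_2]$ forces depth $2$ --- is exactly the implication (iv) $\Rightarrow$ (i) you are supposed to prove, and it is not among the results recalled in Section~\ref{inclusions} (nor is it an innocuous bookkeeping fact: it is a genuine converse, the analogue of which fails without irreducibility, as the example following~\ref{prof2} shows in spirit). As written, this step is circular. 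The repair costs one line and uses pieces you already have: since you prove (iv) $\Leftrightarrow$ (ii) via~\ref{sgq}, it suffices to invoke~\ref{prof2} --- a crossed product by a Kac (quantum groupoid) subalgebra yields a depth $2$ inclusion --- to get (ii) $\Rightarrow$ (i). This is precisely how the paper closes the loop: it never proves (iv) $\Rightarrow$ (i) directly, but routes it through (ii). You cite~\ref{prof2} in your preamble but never actually use it at the point where it is needed.
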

\begin{proof}
  In the irreducible case, Proposition~\ref{sgq} implies that $(ii)
  \Leftrightarrow (iii) \Leftrightarrow (iv)$; indeed,
  by~\ref{tour}~(3), (iv) is equivalent to $I=j_2(I)$.

  Furthermore (ii) implies (i).  Assume reciprocally (i): $N_2 \subset
  N_2\rtimes I$ of depth $2$. Then $M'_1 \cap M_3$ is a Kac algebra of
  dimension $[M_3:N_2]=\dim I$ contained in $I$; therefore $M'_1 \cap
  M_3=I\cap j_2(I)$ coincides with $I$, that is (iv).

\end{proof}

\subsubsection{Special case of group algebras}
\label{section.group_algebras}
\cite[4.7]{Goodman_Harpe_Jones.1989},
\cite[A.4]{Jones_Sunder.1997} et
\cite[2]{Hong_Szymanski.1996}
\label{groupe}

We now describe the special case of group algebras and their duals.

Let $G$ be a finite group. We denote by $(\mathbb C[G],\Delta_s)$ the
symmetric Kac algebra of the group, and by $L^{\infty}(G)$ the
commutative Kac algebra of complex valued functions on the
group\footnote{Recall that any finite dimensional symmetric
  (resp. commutative) Kac algebra is the algebra of a group
  (resp. the algebra of functions on a
  group)\cite{Enock_Schwartz.1992}}. Those two algebras are in
duality.

If $G$ acts outerly on a $II_1$ factor $N$, then
$N^G \subset N\subset N\rtimes G \subset (N\rtimes G)\rtimes
L^{\infty}(G)$ is the basic construction. The principal graph of $N^G
\subset N$ (resp. $N \subset N\rtimes G$) is the Bratelli diagram of
$\mathbb C \subset \mathbb CG$ (resp. $\mathbb C \subset
L^{\infty}(G)$).

The intermediate subfactors of $N \subset N\rtimes G$ are of the form
$N\rtimes H$, where $H$ is a subgroup of $G$; indeed, since the
coproduct $\Delta_s$ is symmetric, any \sacig of $\C[G]$ is a
symmetric Kac subalgebra, hence the algebra of a subgroup $H$ of $G$.
The Jones projection of $\C[H]$ is $\frac{1}{\vert H
  \vert}\sum_{h\in H} h$. Remark that $H$ is normal if and only if
$p_H$ belongs to the center of $\mathbb C[G]$.

If $H$ is commutative, from~\cite[page 714]{Vainerman.1998}, the
matrix units of $\mathbb C[H]$ are the projections
 $P_h=\frac{1}{|H|}\sum_{g\in H}\langle h,g\rangle \lambda (g)$, for $h\in \widehat{H}$,
with standard coproduct:
$$\Delta_s(P_h)=\sum_{g\in \widehat{H}} P_g\otimes P_{g^{-1}h}\,.$$
More precisely, for the Jones projection, one get using
Proposition~\ref{resumep} (3):
 $$\Delta_s(\frac{1}{|H|}\sum_{g\in H} \lambda (g))=\sum_{h\in \widehat{H}} S(P_h) \otimes P_h\,.$$

The intermediate subfactors of $N\rtimes G \subset (N\rtimes G)\rtimes
L^{\infty}(G)$ are of the form $(N\rtimes G)\rtimes I$, where $I$ is a
\sacig of $L^{\infty}(G)$; they are
obtained by basic construction from the inclusion $N\rtimes H\subset
N\rtimes G$, where $\delta(I)=\mathbb C[H]$. The \sacig $I$ is then
the algebra $L^{\infty}(G/H)$ of functions which are constant on the
right $H$-cosets of $G$. Indeed $L^{\infty}(G/H)$ is an involutive
subalgebra, and by definition of the coproduct $\Delta(f)(s,t)=f(st)$,
it is a \sacig which is furthermore a Kac subalgebra if and only if
$H$ is normal. Its Jones projection is $\sum_{h\in H} \chi_h$, where
$\chi_h$ is the characteristic function of $h$.

\subsubsection{Normal \sacigs}
\label{normal}

Generalizing from group algebras and their dual (see~\ref{groupe}),
we define normal \sacigs as follow (see also~\cite{Kac.Paljutkin.1966}
and~\cite[2.2]{Nikshych.1998}).
\begin{defn}
  A \sacig $I$ of $B$ is \textbf{normal} if $I$ is a Kac subalgebra of
  $B$, and $p_I$ belongs to the center of $B$.
\end{defn}

The following characterization of normal \sacigs results
from~\ref{irredprof2} and~\ref{grapheprincipal}.
\begin{prop}
  A \sacig $I$ of $B$ is normal if and only if the inclusions $N_1 \subset
  M_1$ and $M_1 \subset N_2$ are of depth $2$ if and only if $p_I$ belongs to the center of $B$ and the coproduct of $p_I$ is symmetric.
\end{prop}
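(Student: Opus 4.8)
The statement packages three conditions: (1)~$I$ is normal; (2)~both $N_1\subset M_1$ and $M_1\subset N_2$ are of depth~$2$; (3)~$p_I\in Z(B)$ and $\Delta(p_I)$ is symmetric. The plan is to prove (1)$\Leftrightarrow$(3) first, essentially for free, and then to match (2) with (3) by treating the two intermediate inclusions one at a time. For (1)$\Leftrightarrow$(3): by the definition of normality in~\ref{normal}, $I$ is normal exactly when $I$ is a Kac subalgebra of $B$ and $p_I\in Z(B)$; and by~\ref{irredprof2} being a Kac subalgebra of $B$ is equivalent to the symmetry of $\Delta(p_I)$. Substituting the one for the other turns (1) into (3) word for word, so there is nothing more to do here.

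For the inclusion $M_1\subset N_2$ I would invoke~\ref{irredprof2}. By~\ref{tour}~(3) the chain $M_1\subset N_2\subset M_3=N_2\rtimes I$ is a basic construction; since depth~$2$ is preserved under one step of basic construction, $M_1\subset N_2$ is of depth~$2$ if and only if $N_2\subset N_2\rtimes I$ is, which by~\ref{irredprof2} happens exactly when $\Delta(p_I)$ is symmetric. Thus the depth-$2$-ness of $M_1\subset N_2$ accounts for precisely the ``symmetric coproduct'' half of~(3), and it remains to show that the depth-$2$-ness of $N_1\subset M_1$ accounts for the half $p_I\in Z(B)$.

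Here I would bring in~\ref{grapheprincipal}. By~\ref{tour}~(1), $M_1=N_1\rtimes\delta(I)$ with $\delta(I)\in\ll(A)$, so $N_1\subset M_1$ is an inclusion of the kind handled by the Galois correspondence, with acting algebra $A$ and dual $B$. Reading~\ref{grapheprincipal} one level down (and using that the antiisomorphism $\delta$ of~\ref{d} attached to $A$ is inverse to the one attached to $B$, so that $\delta_A(\delta(I))=I$), its principal graph is the connected component through the trivial representation of the Bratteli diagram of $I\subset B$. The goal is then to prove that this component has depth~$2$ if and only if $p_I$ is central in $B$; equivalently, by~\ref{irredprof2} applied to $A$, that $\delta(I)$ is a Kac subalgebra of $A$ if and only if $p_I\in Z(B)$.

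This last equivalence is the heart of the matter and, I expect, the main obstacle, for it is exactly the Fourier duality interchanging centrality in $B$ with cocommutativity in $A$ --- the Kac-algebra analogue of the fact that a conjugacy-class sum in $\C[G]$ is central precisely when the dual function in $L^\infty(G)$ is a class function. I would make it explicit through the pairing $\langle\,\cdot\,,\,\cdot\,\rangle$ of~\ref{action}, combined with the description of $p_{\delta(I)}$ obtained after~\ref{pbisch}, translating the symmetry of $\Delta_A(p_{\delta(I)})$ into the commutation of $p_I$ with all of $B$; concretely one analyses how the centrality of $p_I$ forces the Bratteli diagram of $I\subset B$ to close up at depth~$2$. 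Granting this, condition~(2) becomes ``$I$ is a Kac subalgebra of $B$ and $p_I\in Z(B)$'', which is normality, and the three conditions are equivalent.
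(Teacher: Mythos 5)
Your architecture is correct and is in fact the route the paper intends (the paper's own ``proof'' is nothing more than the citation of~\ref{irredprof2} and~\ref{grapheprincipal}): the equivalence (1)$\Leftrightarrow$(3) is definitional once~\ref{irredprof2} identifies ``Kac subalgebra'' with ``symmetric coproduct of $p_I$'', and the half ``$M_1\subset N_2$ of depth $2$ $\Leftrightarrow$ $\Delta(p_I)$ symmetric'' is correctly disposed of via~\ref{tour}~(3) and the stability of depth $2$ under basic construction. The problem is the other half. The equivalence ``$N_1\subset M_1$ of depth $2$ $\Leftrightarrow$ $p_I\in Z(B)$'' --- which you correctly reformulate, via~\ref{tour}~(1) and~\ref{irredprof2} applied to $A$, as ``$\delta(I)$ is a Kac subalgebra of $A$ $\Leftrightarrow$ $p_I\in Z(B)$'' --- is the only assertion in the whole proposition that requires an argument rather than a citation, and you do not give one: you call it the heart of the matter, sketch two directions one might take, and then write ``Granting this''. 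A proof that grants its single nontrivial step is incomplete, so as it stands the proposal has a genuine gap exactly where the work is.

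The gap is fillable in a few lines, along the second route you gesture at. By~\ref{grapheprincipal} read one level down (as you set it up), the principal graph of $N_1\subset M_1$ is the connected component of the trivial representation of $B$ in the Bratteli diagram of $I\subset B$. The trivial $B$-module is one dimensional, so the unique vertex at distance $1$ is the one-dimensional $I$-module $\varepsilon|_I$. Moreover, for any $B$-module $V$, the $\varepsilon|_I$-isotypic subspace of $V|_I$ is exactly $p_IV$: if $v=p_Iw$ then $yv=yp_Iw=\varepsilon(y)v$ for all $y\in I$ by~\ref{distingue}/\ref{resumep}, and conversely if $yv=\varepsilon(y)v$ for all $y\in I$, then taking $y=p_I$ and using $\varepsilon(p_I)=1$ gives $v=p_Iv$. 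Hence the vertices at distance $2$ are the simple $B$-modules $V$ with $p_IV\neq 0$, and the graph has depth at most $2$ if and only if each such $V$ restricts to a multiple of $\varepsilon|_I$, i.e. $p_IV=V$; in other words, if and only if $p_I$ acts by $0$ or $1$ in every irreducible representation of $B$, which for a projection in a finite dimensional $C^*$-algebra is precisely the condition $p_I\in Z(B)$. Inserting this argument in place of ``Granting this'' completes your proof, and the completed proof coincides with the paper's intended derivation from~\ref{irredprof2} and~\ref{grapheprincipal}.
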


\begin{example}
In $\KD(2m)$, the \sacigs $K_j$, for $j=1,\dots,4$, are normal (see~\ref{section.KD.even}).
\end{example}

\subsubsection{Intrinsic group}
\label{intrin}

The intrinsic group $G(K)$ of a Kac algebra $K$ is the subset of
invertible elements $x$ of $K$ satisfying $\Delta(x)=x \otimes
x$~\cite[1.2.2]{Enock_Schwartz.1992}. It is in correspondence
  with the subset of characters of the dual Kac algebra of
$K$~\cite[3.6.10]{Enock_Schwartz.1992}.

The group algebra $\C[G(K)]$ of $G(K)$ is a Kac subalgebra of
$K$ which contains any symmetric Kac subalgebra of $K$. In particular,
since any inclusion of index $2$ results from a cross product by
$\mathbb{Z}_2$, any \sacig of dimension $2$ is contained in
$\C[G(K)]$.

\newpage
\section{The Kac-Paljutkin algebra}
\label{section.KP}

In this section, we study the Kac-Paljutkin algebra
$\KP$~\cite{Kac.Paljutkin.1966}. This is the unique non trivial Kac
algebra of dimension $8$, making it the smallest non trivial Kac
algebra. The study of the lattice of \sacigs of $\KP$, which does not
require heavy calculations, gives a simple illustration of the results
of~\ref{inclusions}. Furthermore, $\KP$ will often occur as Kac
subalgebra in the sequel.

\subsection{The Kac-Paljutkin algebra $\KP$}

We use the description and notations of~\cite{Enock_Vainerman.1996}.
The Kac-Paljutkin algebra is the $C^*$-algebra
\begin{displaymath}
  \KP=\C e_1 \oplus \C e_2\oplus  \C e_3 \oplus
  \C e_4 \oplus M_2(\C),
\end{displaymath}
endowed with its unique non trivial Kac algebra structure. The matrix
units of the factor $M_2(\C)$ are denoted by $e_{i,j}$ ($i=1,2$ and
$j=1,2$).
\noindent From~\ref{KacHopf}, the trace is given by: $\forall\;
i=1,2,3,4\;\; tr(e_i)=\frac{1}{8}$ and
$tr(e_{1,1})=tr(e_{2,2})=\frac{1}{4}$.

The Kac algebra structure can be constructed by twisting the group
algebra of
$$G=\{1, a, b, ab=ba, c,ac=cb,bc=ca,abc=cab\}\,.$$
We denote by $\lambda$ the left regular representation of $G$:
\begin{align*}
  \lambda(a)&=e_1 -e_2+e_3 -e_4-e_{1,1}+e_{2,2}\\
  \lambda(b)&=e_1 -e_2+e_3 -e_4+e_{1,1}-e_{2,2}\\
  \lambda(c)&=e_1 +e_2-e_3 -e_4+e_{1,2}+e_{2,1}
\end{align*}
and by $\Delta$ the coproduct of $\KP$ given explicitly
in~\cite{Enock_Vainerman.1996}.

\subsection{The \sacigs of $\KP$}

We now determine $\ll(\KP)$, using the results of~\ref{resumep}. Note
first that the Jones projection of the non trivial \sacigs are
projections of trace either $\frac{1}{2}$ or $\frac{1}{4}$ which
dominate $e_1$, the support of the counit of $\KP$ hence the Jones
projection of $\KP$.

\subsubsection{}
Since the coproduct is not deformed on
$\C[(\mathbb{Z}_2)^2]$, $\ll(\KP)$ contains the Kac subalgebras:
\begin{itemize}
\item $J_0=\C(e_1+e_3) \oplus \C(e_2+e_4) \oplus
  \C e_{1,1} \oplus \C e_{2,2}$ spanned by $1$,
  $\lambda(a)$, $\lambda(b)$, and $\lambda(ab)$;
\item $I_0=\C(e_1 +e_2+e_3 +e_4)\oplus \C(e_{1,1}
  +e_{2,2})$ spanned by $1$ and $\lambda(ab)$;
\item $I_1=\C(e_1 +e_3 +e_{1,1})\oplus \C(e_2
  +e_4+e_{2,2})$ spanned by $1$ and $\lambda(b)$;
\item $I_2=\C(e_1 +e_3 +e_{2,2})\oplus  \C(e_2
  +e_4+e_{1,1})$ spanned by  $1$ and $\lambda(a)$.
\end{itemize}
The Jones projections of those subalgebras are therefore respectively
\begin{displaymath}
  p_0=e_1+e_3,\quad p'_0=e_1 +e_2+e_3 +e_4,\quad p'_1=e_1 +e_3
  +e_{1,1},\quad \text{and} \quad p'_2=e_1 +e_3 +e_{2,2}\,.
\end{displaymath}
Since the intrinsic group of $\KP$ is $J_0$, by~\ref{intrin}, $I_0$,
$I_1$, and $I_2$ are the only dimension $2$ \sacigs.

\subsubsection{}
Since the Kac-Paljutkin algebra is self-dual, $\ll(\KP)$ is also
self-dual. We therefore can obtain the other elements of $\ll(\KP)$ by
using the antiisomorphism $\delta$.  Since $J_0$ is the group algebra
of $(\mathbb{Z}_2)^2$, the inclusion $N_2\subset N_2\rtimes J_0$ is of
index $4$, and its principal graph is $D_4^{(1)}$
(see~\ref{grapheD1}); from~\ref{grapheprincipal}, this graph is
the connected component of the Bratteli diagram of $\delta(J_0)
\subset \KP$ which contains $\C e_1$. Therefore, the Jones projection
of $\delta(J_0)$ is $e_1 +e_2+e_3 +e_4$, and $\delta(J_0)$ is $I_0$.

\subsubsection{}
Since $\delta$ is a lattice antiisomorphism, $\delta(I_1)$ and
$\delta(I_2)$ contain $I_0=\delta(J_0)$; furthermore, their Jones
projection (which dominate $e_1$) are of trace $\frac{1}{4}$. Only two
remaining projections are possible: $p_2=e_1+e_2 \quad \text{et}\quad
p_4=e_1+e_4$.

Denote by $q_k$ the projection $\frac{1}{2}\begin{pmatrix}
1&\rm{e}^{-\mathrm{i}k\pi/4}\\
\rm{e}^{\mathrm{i}k\pi/4}&1
\end{pmatrix}$ of the factor $M_2(\C)$.  We describe the
\sacig $J_2$ generated by $p_2$. From the equality
\begin{align*}
\Delta(p_{2})=(e_1+e_2)&\otimes(e_1+e_2)+(e_3+e_4)\otimes(e_3+e_4)\\&+\frac{1}{2}(e_{1,1}+e_{2,2})\otimes(e_{1,1}+e_{2,2})
+\frac{1}{2}(e_{1,2}-i e_{2,1}) \otimes (e_{1,2}+ie_{2,1})\,,
\end{align*}
we deduce that $J_2$ contains $e_3+e_4$, $e_{1,1}+e_{2,2}$ (which
we already know since $J_2$ contains $p_2$ and $I_0$) and
$u=e_{1,2}+ie_{2,1}$. It therefore contains the symmetry
$s=\rm{e}^{-i\pi/4} u$, and the projection
$q_1=\frac{1}{2}(e_{1,1}+e_{2,2}+s)$. More precisely:
\begin{displaymath}
  \Delta(p_{2})=(e_1+e_2)\otimes(e_1+e_2)+(e_3+e_4)\otimes(e_3+e_4)
  +q_7 \otimes q_1 +q_3 \otimes q_5
\end{displaymath}
It follows that:
\begin{displaymath}
  J_2=\C(e_1+e_2) \oplus \C(e_3+e_4) \oplus  \C q_1 \oplus \C q_5\,.
\end{displaymath}
Similarly, $J_4$ can be described as follow:
\begin{displaymath}
  J_4=\C(e_1+e_4) \oplus \C(e_2+e_3) \oplus  \C q_3 \oplus \C q_7\,.
\end{displaymath}

From~\ref{grapheprincipal}, the principal graph of the inclusions
$N_2\subset N_2\rtimes J_2$ and $N_2\subset N_2\rtimes J_4$
is the connected component of the Bratteli diagram of $I_j \subset
\KP,\,j=1,2$ containing $\C e_1$; in both case, it is
$D_6^{(1)}$ (see~\ref{grapheD1}).

\subsection{The lattice $\mathrm{l}(\KP)$}
\label{KP}
We have now completed the construction of $\ll(\KP)$.
\begin{figure}[h]
  \centering
  \begin{tikzpicture}[baseline=(current bounding box.east),yscale = -1]
  \newcommand{\dimline}[2]{\draw[color=black!10,very thin](0,#1) -- (4,#1); \node[right] at (4,#1) {dim $#2$};}
  \dimline{1}{1};
  \dimline{2}{2};
  \dimline{3}{4};
  \dimline{4}{8};
  
  \tikzstyle{every node}=[fill=white]

  \node(C)  at (2,1) {$\C$ };
                   
  \node(I1) at (1,2) {$I_1$};
  \node(I0) at (2,2) {$I_0$};
  \node(I2) at (3,2) {$I_2$};
                   
  \node(J1) at (1,3) {$J_2$};
  \node(J0) at (2,3) {$J_0$};
  \node(J2) at (3,3) {$J_4$};

  \node(KP) at (2,4) {$KP$};

  \tikzstyle{every path}=[blue]

  \draw (I1) -- (C);
  \draw (I0) -- (C);
  \draw (I2) -- (C);

  \draw (J0) -- (I1);
  \draw (J0) -- (I0);
  \draw (J0) -- (I2);

  \draw (I0) -- (J1);
  \draw (I0) -- (J0);
  \draw (I0) -- (J2);
  
  \draw (J1) -- (KP);
  \draw (J0) -- (KP);
  \draw (J2) -- (KP);
\end{tikzpicture}
  \caption{The lattice of \sacigs of  $\KP$}
  \label{figure.KP}
\end{figure}
\begin{prop}
  The lattice of \sacigs of the Kac-Paljutkin algebra is as given in
  Figure~\ref{figure.KP}.
\end{prop}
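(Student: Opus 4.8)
The plan is to list the \sacigs dimension by dimension and then read off the order relations from the explicit descriptions already at hand. By Proposition~\ref{prop.resumep}(1) the dimension of any \sacig divides $\dim\KP=8$, so it equals $1$, $2$, $4$, or $8$. The two extreme cases are settled at once: $\C$ is the unique \sacig of dimension $1$ and $\KP$ the unique one of dimension $8$. Everything therefore hinges on counting the \sacigs of dimensions $2$ and $4$ and deciding their mutual inclusions.

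In dimension $2$, I would use the intrinsic group. By~\ref{intrin} every \sacig of dimension $2$ lies in the group algebra $\C[G(\KP)]$ of the intrinsic group, which for $\KP$ is $J_0\cong\C[(\mathbb{Z}_2)^2]$. As the coproduct on $J_0$ is undeformed, its \sacigs are the group algebras of the subgroups of $(\mathbb{Z}_2)^2$ (see~\ref{groupe}), and the three subgroups of order $2$ produce exactly $I_0$, $I_1$, $I_2$. Hence these three are all the \sacigs of dimension $2$.

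The completeness in dimension $4$ is where the self-duality of $\KP$ does the real work. By~\ref{d} the antiisomorphism $\delta$ gives a lattice antiautomorphism of $\ll(\KP)$, and by~\ref{tour}(4) it carries a \sacig of dimension $d$ to one of dimension $8/d$; it therefore restricts to a bijection between the \sacigs of dimension $2$ and those of dimension $4$. Since there are exactly three of the former, there are exactly three of the latter. We have exhibited three such \sacigs, $J_0$, $J_2$, $J_4$, and they are pairwise distinct because their Jones projections $e_1+e_3$, $e_1+e_2$, $e_1+e_4$ differ and each $p_I$ generates its \sacig (Remark~\ref{remark.jones_left_legs}). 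Thus $J_0$, $J_2$, $J_4$ exhaust dimension $4$.

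It remains to confirm the covering relations of Figure~\ref{figure.KP}, and only the containments between the dimension-$2$ and dimension-$4$ \sacigs require checking, the rest being forced by $\C$ at the bottom and $\KP$ at the top. These are read directly off the bases: $J_0$ contains $\lambda(a),\lambda(b),\lambda(ab)$, hence all of $I_0,I_1,I_2$; the identities $q_1+q_5=e_{1,1}+e_{2,2}=q_3+q_7$ show that both $J_2$ and $J_4$ contain $I_0$; and since an element of $J_2$ (resp.\ $J_4$) has equal coefficients on $e_1$ and $e_2$ (resp.\ on $e_1$ and $e_4$) while the generators $e_1+e_3+e_{1,1}$ of $I_1$ and $e_1+e_3+e_{2,2}$ of $I_2$ do not, neither $I_1$ nor $I_2$ is contained in $J_2$ or $J_4$. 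So $I_1$ and $I_2$ sit below $J_0$ only, and the Hasse diagram is exactly as drawn. The one genuine obstacle is the completeness in dimension $4$: without the self-dual bijection one would be forced into a direct search over all trace-$\tfrac14$ projections dominating the support of the counit, whereas the duality argument makes the count immediate.
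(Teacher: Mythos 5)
Your proof is correct, and its skeleton (dimensions divide $8$; dimension $2$ handled through the intrinsic group exactly as the paper does; the three dimension-$4$ \sacigs taken from the preceding construction) matches the paper's. The one step where you genuinely diverge is completeness in dimension $4$. The paper settles it with a one-line trace/block-structure argument: the Jones projection of a dimension-$4$ \sacig has trace $\frac14$ and dominates $e_1$, and in $\C^4\oplus M_2(\C)$ the only such projections are $e_1+e_j$ with $j=2,3,4$; since a \sacig is determined by its Jones projection, there are at most three, hence exactly the three already constructed. You instead invoke the self-duality of $\KP$: by~\ref{d} and~\ref{tour}~(4) together with~\ref{resumep}, $\delta$ induces a lattice antiautomorphism of $\ll(\KP)$ exchanging dimensions $2$ and $4$, so the number of dimension-$4$ \sacigs equals the number of dimension-$2$ ones, namely three. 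Both arguments are sound; yours is the more conceptual one, and it is precisely the mechanism the paper deploys later for $\KD(n)$ and $\KQ(n)$ with $n$ odd, whereas the paper's is more elementary here, needing neither the self-duality of $\KP$ (an external input) nor the identification of $\delta$ on specific \sacigs. Note that the ``direct search over all trace-$\frac14$ projections dominating the support of the counit,'' which you present as the obstacle your duality argument avoids, is not an obstacle at all: by the block structure of $\KP$ it has exactly three candidates, and that search \emph{is} the paper's proof. Your explicit verification of the covering relations (via $q_1+q_5=q_3+q_7=e_{1,1}+e_{2,2}$ and the comparison of coefficients on $e_1$, $e_2$, $e_4$) is a welcome addition that the paper leaves implicit.
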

\begin{proof}
  The Jones projections of the \sacigs of dimension $4$ are necessarily
  of the form $e_1+e_j$ with $j= 2,3$ or $4$, and the \sacigs of
  dimension $2$ are all contained in $J_0$.
\end{proof}

Thanks to the Galois correspondence, we can deduce the non trivial
intermediate subfactors of the inclusion $N_2 \subset N_2 \rtimes \KP$:
\begin{itemize}
\item Three factors $N_2\rtimes I_i$, $i=0,1,2$, isomorphic to $N_2
  \rtimes \mathbb{Z}_2$;
\item The factor $N_2\rtimes J_0=N_2 \rtimes (\mathbb{Z}_2)^2$ with
  principal graph $D^{(1)}_4$;
\item Two factors $N_2\rtimes J_i$, $i=1,2$, with principal graph $D^{(1)}_6$.
\end{itemize}

\subsection{Realization of $\KP$ by composition of subfactors}
\label{KPIK}

In~\cite[VIII]{Izumi_Kosaki.2002}, Izumi and Kosaki consider an
inclusion of principal graph $D_6^{(1)}$, from which they construct a
depth $2$ inclusion isomorphic to $R\subset R\rtimes \KP$. We can now
describe explicitly all the ingredients of their construction (see also~\cite{Popa.1990}).
\begin{prop}
  The inclusion $N_2 \subset N_2 \rtimes J_2$, which is of principal graph $D_6^{(1)}$, can
  be put under the form $M^{(\alpha, \Z_2)} \subset M \rtimes_\beta \Z_2$ as follows.
  Set $M=N_2 \rtimes I_0$, $v=(e_1+e_2)-(e_3+e_4)+s$ and $\beta=\Ad
  v$. Let $\alpha$ be the automorphism of $M$ which fixes $N_2$ and
  changes $\lambda(ab)$ into its opposite (this is the dual action of
  $\mathbb{Z}_2$).  Then, $\alpha$ and $\beta$ are involutive
  automorphisms of $M$ such that the outer period of $\beta \alpha$ is
  $4$, and
  \begin{displaymath}
    (\beta \alpha)^4          = \Ad \lambda(ab)  , \quad
    \beta \alpha(\lambda(ab)) = -\lambda(ab)     , \quad
    M^{\alpha}=N_2                                , \quad   \text{and }\quad
    N_2\rtimes J_2=M\rtimes_{\beta}\mathbb{Z}_2   \,.
  \end{displaymath}
\end{prop}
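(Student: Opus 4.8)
The plan is to place every object explicitly inside the crossed product $N_3=N_2\rtimes\KP$, identifying $\KP$ with its image $\Theta([1\otimes\,\cdot\,])$, so that both $M=N_2\rtimes I_0$ and $P:=N_2\rtimes J_2$ sit in $N_3$ with $N'_1\cap M=I_0$ and $N'_1\cap P=J_2$. First I record the elementary facts. From $q_1+q_5=e_{1,1}+e_{2,2}$ and $q_1-q_5=s$ we get $s\in J_2$, hence $v=(e_1+e_2)-(e_3+e_4)+s\in J_2$; since $s^2=e_{1,1}+e_{2,2}$ we have $v^2=1$ and $v^*=v$, so $v$ is a symmetry. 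As $\lambda(ab)=(e_1+e_2)+(e_3+e_4)-(q_1+q_5)$ lies in the abelian algebra $J_2$, it commutes with $v$, whence $\beta(\lambda(ab))=v\lambda(ab)v=\lambda(ab)$. Granting that $\beta=\Ad v$ preserves $M$ (treated below), $v^2=1$ gives $\beta^2=\id$, while $\alpha^2=\id$ is clear, so both are involutions; and $\beta\alpha(\lambda(ab))=\beta(-\lambda(ab))=-\lambda(ab)$. Finally $[P:M]=\dim J_2/\dim I_0=2$ by Proposition~\ref{prop.resumep}(1).

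Two of the required identities are then structural. For $M^\alpha=N_2$ I use the $N_2$-bimodule splitting $M=N_2\oplus N_2\lambda(ab)$: the dual action $\alpha$ is $+1$ on $N_2$ and $-1$ on $N_2\lambda(ab)$, so its fixed-point algebra is exactly $N_2$. For $N_2\rtimes J_2=M\rtimes_\beta\Z_2$ I argue by generation and dimension: the four elements $1,\ \lambda(ab),\ v,\ v\lambda(ab)$ carry the four distinct sign patterns of the characters of $(\Z_2)^2$ on the minimal projections $e_1+e_2,\ e_3+e_4,\ q_1,\ q_5$, hence are linearly independent and span $J_2$; therefore the algebra generated by $M$ and $v$ contains $N_2$ and $J_2$, i.e.\ equals $P$. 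Since $N'_1\cap M=I_0$ and $v\in J_2\setminus I_0$ we have $v\notin M$, so $P=M+Mv=M\rtimes_\beta\Z_2$, and $\beta$ is necessarily outer because $P$ is a factor.

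The heart of the argument is the automorphism $\theta:=\beta\alpha$. On $I_0$ it is immediate: $\theta(\lambda(ab))=-\lambda(ab)$, so $\theta^2$ and hence $\theta^4$ fix $\lambda(ab)$, which agrees with $\Ad\lambda(ab)$ there. On $N_2$, where $\alpha=\id$, one has $\theta|_{N_2}=\Ad v|_{N_2}$, and this is where one must compute. Using the crossed-product relation $v\,x=(v_{(1)}\triangleright x)\,v_{(2)}$ with $\Delta(v)\in\KP\otimes J_2$ read off from $\Delta(e_1+e_2)=\Delta(p_2)$ and the coproducts of $e_3+e_4,\,q_1,\,q_5$, I would first verify the normalization $vN_2v\subseteq M$ (so that $\beta\in\aut(M)$), writing $vxv=y_0(x)+y_1(x)\lambda(ab)$ with $y_0,y_1\in N_2$. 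Iterating, and using $\theta(y_0+y_1\lambda(ab))=\beta(y_0)-\beta(y_1)\lambda(ab)$ together with the fact that $v$ commutes with $\lambda(ab)$, one expresses $\theta^2$ and $\theta^4$ on all of $M$ in terms of $y_0,y_1$; the expected outcome is $\theta^4=\Ad\lambda(ab)$ with $\theta^2$ non-inner, giving outer period exactly $4$.

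The main obstacle is precisely this final computation. The normalization $vN_2v\subseteq M$ does not hold termwise—the legs $v_{(2)}v$ a priori lie in $J_2$, not in $I_0$—so it rests on a cancellation forced by the explicit action $v_{(1)}\triangleright x$, and it is exactly here that the deformation of the coproduct on $J_2$ (the appearance of $q_3,q_7$ in $\Delta(p_2)$, i.e.\ the failure of $v$ to be group-like) enters decisively. Establishing that $\Ad v|_{N_2}$ combines with the dual action $\alpha$ to produce outer period $4$ rather than $2$, and that the fourth power is the inner automorphism $\Ad\lambda(ab)$, is the one genuinely non-formal step; everything else reduces to the abelian combinatorics of $J_2$ and standard dual-action bookkeeping, and is the kind of finite verification the computer exploration handles directly.
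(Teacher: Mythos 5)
There is a genuine gap, and your own text names it: everything your proposal actually establishes ($v=v^*$, $v^2=1$, $v\in J_2$, $[v,\lambda(ab)]=0$, $M^\alpha=N_2$, the index-$2$/generation argument giving $N_2\rtimes J_2=M+Mv$, outerness of $\beta$ from $M'\cap N_3=\C$) is the routine bookkeeping, while the three assertions that carry the content of the proposition --- that $\Ad v$ normalizes $M$ at all, that $(\beta\alpha)^4=\Ad\lambda(ab)$, and that the outer period is $4$ rather than $2$ --- are only announced as ``expected outcomes'' of a computation you defer. Note also that without the normalization, your earlier steps are conditional: $\beta$ is not even an automorphism \emph{of $M$} until $vN_2v\subseteq M$ is proved, so the phrase ``granting that $\beta$ preserves $M$'' leaves the statement unproven from the start.

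What is missing is precisely the ingredient the paper supplies: the explicit coproduct $\Delta(v)=v\otimes[(e_1+e_2)-(e_3+e_4)]+v'\otimes s$, with $v'=(e_1+e_4)-(e_2+e_3)+\mathrm{e}^{\mathrm{i}\pi/4}e_{1,2}+\mathrm{e}^{-\mathrm{i}\pi/4}e_{2,1}$. Feeding this into $\beta(x)=(v_{(1)}\triangleright x)v_{(2)}v^*$ for $x\in N_2$ and using $[(e_1+e_2)-(e_3+e_4)]\,v=(e_1+e_2)+(e_3+e_4)=\tfrac12(1+\lambda(ab))$ together with $s\,v=e_{1,1}+e_{2,2}=\tfrac12(1-\lambda(ab))$, one gets the closed formula $\beta(x)= 8\,[E_{N_2}(vxe_1)(e_1+e_2+e_3+e_4)+E_{N_2}(v'xe_1)(e_{1,1}+e_{2,2})]$; the right legs collapse into $I_0$ (not merely $J_2$), which \emph{is} the normalization $vN_2v\subseteq M$, and the relations $(\beta\alpha)^4=\Ad\lambda(ab)$, $(\beta\alpha)^2$ non-inner, then follow by direct calculation from this formula. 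You correctly diagnosed that the failure of $v$ to be group-like is where the deformed coproduct enters decisively, but diagnosing the obstacle is not the same as overcoming it: without computing $\Delta(v)$ (or an equivalent explicit description of $\Ad v|_{N_2}$), the proposition's central claims remain unestablished.
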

\begin{proof}
  As in~\ref{action} (see~\cite{David.2005}), we identify $N_3$ and
  $N_2\rtimes \KP$. Therefore, for all $x \in N_2$, one has
  $\beta(x)=vxv^*=(v_{(1)}\triangleright x)v_{(2)}v^*$.
  From $\Delta(v)=v\otimes [(e_1+e_2)-(e_3+e_4)] +v'\otimes s $, (with
  $v'=(e_1+e_4)-(e_2+e_3)+\mathrm{e}^{\mathrm{i}\pi/4}e_{1,2}+\mathrm{e}^{-\mathrm{i}\pi/4}e_{2,1}$),
  we get, for $x\in N_2$,
  $$\beta(x)= 8 [E_{N_2}(vxe_1)(e_1+e_2+e_3+e_4)+E_{N_2}(v'xe_1)(e_{1,1}+e_{2,2})]\,,$$
and  deduce that $\beta$ normalizes $M$; by a straightforward calculation, the two automorphisms  satisfy the claimed relations. Then, $N_2\rtimes J_2$ is indeed the
cross product of $M$ by $\beta$, since $\lambda(ab)$ and $v$ generate the subalgebra $J_2$.
\end{proof}

\newpage
\section{Principal graphs obtained in our examples}
\label{graphe}

Using~\ref{grapheprincipal}, the examples we study in the sequel
yield several families of non trivial principal graphs of inclusions
which we collect and name here.

\subsection{Graphs $D_{2n+1}/\mathbb{Z}_2$}
\label{grapheDimpair}

In~\ref{Himpair},~\ref{sacig4impair},~\ref{sacig4KQimpair}, and~\ref{grapheK0}, we
obtain the graphs $D_{2n+1}/\mathbb{Z}_2$, where $n$ is the number of
vertices in the spine of the graph (altogether $n+4$ vertices). The
inclusions are then of index $2n+1$. The graph
$D_{11}/\mathbb{Z}_2$ is depicted in Figure~\ref{figure.D11Z2}.
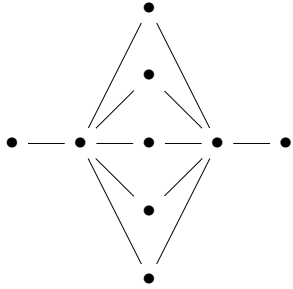
\begin{figure}[h]
  \centering
  \scalebox{.9}{\pgfdeclarelayer{background}
\pgfdeclarelayer{nodes}
\pgfsetlayers{background,main,nodes}
\begin{tikzpicture}[yscale=-1]
  \tikzstyle{every node}=[fill=white]
  \begin{pgfonlayer}{nodes}
    \node(bullet)	at (3,1) {$\bullet$};
    \node(bulleu)	at (3,2) {$\bullet$};
    \node(bullev)	at (1,3) {$\bullet$};
    \node(bullew)	at (2,3) {$\bullet$};
    \node(bullex)	at (3,3) {$\bullet$};
    \node(bulley)	at (4,3) {$\bullet$};
    \node(bullez)	at (5,3) {$\bullet$};
    \node(bullfa)	at (3,4) {$\bullet$};
    \node(bullfb)	at (3,5) {$\bullet$};
  \end{pgfonlayer}
  \draw (bullev) -- (bullew);
  \draw (bullew) -- (bullet);
  \draw (bullew) -- (bulleu);
  \draw (bullew) -- (bullex);
  \draw (bullew) -- (bullfa);
  \draw (bullew) -- (bullfb);
  \draw (bulley) -- (bullet);
  \draw (bulley) -- (bulleu);
  \draw (bulley) -- (bullex);
  \draw (bulley) -- (bullfa);
  \draw (bulley) -- (bullfb);
  \draw (bulley) -- (bullez);
\end{tikzpicture}}
  \caption{The graph $D_{11}/\mathbb{Z}_2$}
  \label{figure.D11Z2}
\end{figure}

Those graphs are the principal graphs of the inclusions $R\rtimes
\mathbb{Z}_2 \subset R\rtimes D_{2n+1}$
\cite{deBoer.Goeree.1991}. Furthermore, any inclusion admitting such a
principal graph is of the form $R\rtimes H \subset R\rtimes G$ where
$G$ is a group of order $2(2n+1)$ obtained by semi-direct product of
an abelian group and $H \equiv
\mathbb{Z}_2$~\cite{Hong_Szymanski.1996}.

\subsection{Graphs $D_{2n+2}/\mathbb{Z}_2$}
\label{grapheDpair}

In~\ref{Himpair} and~\ref{grapheK0}, we obtain the graphs
$D_{2n+2}/\mathbb{Z}_2$, where $n$ is the number of vertices in the
spine (altogether $n+6$ vertices). The inclusion is then of index
$2(n+1)$. The graph $D_{12}/\mathbb{Z}_2$ is depicted in
Figure~\ref{figure.D12Z2}. Those graphs are the principal graphs of
the inclusions $R\rtimes \mathbb{Z}_2 \subset R\rtimes D_{2n+2}$
\cite{deBoer.Goeree.1991}.
\begin{figure}[h]
  \centering
  \scalebox{.9}{\pgfdeclarelayer{background}
\pgfdeclarelayer{nodes}
\pgfsetlayers{background,main,nodes}
\begin{tikzpicture}[yscale=-1]
  \tikzstyle{every node}=[fill=white]
  \begin{pgfonlayer}{nodes}
    \node(bullet)	at (1,1) {$\bullet$};
    \node(bulleu)	at (3,1) {$\bullet$};
    \node(bullev)	at (5,1) {$\bullet$};
    \node(bullew)	at (3,2) {$\bullet$};
    \node(bullex)	at (2,3) {$\bullet$};
    \node(bulley)	at (3,3) {$\bullet$};
    \node(bullez)	at (4,3) {$\bullet$};
    \node(bullfa)	at (3,4) {$\bullet$};
    \node(bullfb)	at (1,5) {$\bullet$};
    \node(bullfc)	at (3,5) {$\bullet$};
    \node(bullfd)	at (5,5) {$\bullet$};
  \end{pgfonlayer}
  \draw (bullet) -- (bullex);
  \draw (bullfb) -- (bullex);
  \draw (bullex) -- (bulleu);
  \draw (bullex) -- (bullew);
  \draw (bullex) -- (bulley);
  \draw (bullex) -- (bullfa);
  \draw (bullex) -- (bullfc);
  \draw (bullez) -- (bulleu);
  \draw (bullez) -- (bullew);
  \draw (bullez) -- (bulley);
  \draw (bullez) -- (bullfa);
  \draw (bullez) -- (bullfc);
  \draw (bullez) -- (bullev);
  \draw (bullez) -- (bullfd);
\end{tikzpicture}}
  \caption{The graph $D_{12}/\mathbb{Z}_2$}
  \label{figure.D12Z2}
\end{figure}
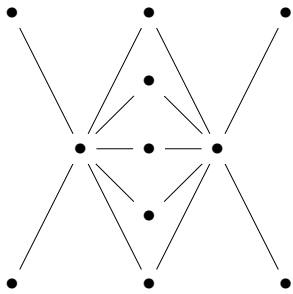

\subsection{Graphs $D^{(1)}_n$}
\label{grapheD1}

We obtain the graphs $D^{(1)}_n$, with $n+1$ vertices, for subfactors
of index $4$. These graphs $D^{(1)}_n$ were readily obtained
in~\cite{Popa.1990}.
The graph $D^{(1)}_6$ (Figure~\ref{figure.Dn1}) is the principal graph
of two intermediate subfactors of $R\subset R\rtimes \KP$
(see~\ref{section.KP}).
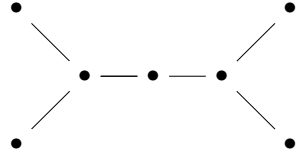
\begin{figure}[h]
  \centering
  \scalebox{.9}{\pgfdeclarelayer{background}
\pgfdeclarelayer{nodes}
\pgfsetlayers{background,main,nodes}
\begin{tikzpicture}[yscale=-1]
  \tikzstyle{every node}=[fill=white]
  \begin{pgfonlayer}{nodes}
    \node(bullet)	at (1,1) {$\bullet$};
    \node(bulleu)	at (5,1) {$\bullet$};
    \node(bullev)	at (2,2) {$\bullet$};
    \node(bullew)	at (3,2) {$\bullet$};
    \node(bullex)	at (4,2) {$\bullet$};
    \node(bulley)	at (1,3) {$\bullet$};
    \node(bullez)	at (5,3) {$\bullet$};
  \end{pgfonlayer}
  \draw (bullet) -- (bullev);
  \draw (bulley) -- (bullev);
  \draw (bullev) -- (bullew);
  \draw (bullev) -- (bullew);
  \draw (bullex) -- (bullew);
  \draw (bullex) -- (bullez);
  \draw (bullex) -- (bulleu);
\end{tikzpicture}}
  \caption{The graph $D^{(1)}_6$}
  \label{figure.Dn1}
\end{figure}
The graph $D^{(1)}_8$ is the principal graph of two intermediate
subfactors of $R\subset R\rtimes \KD(3)$ (see~\ref{KD3section}). The
graph $D^{(1)}_{10}$ is the principal graph of $R\rtimes
\delta(K_{2k})$ (with $k \in \{1,\dots,4\}$) intermediate in $R\rtimes
\widehat{\KD(4)}$ (see~\ref{KD4K2}).
The graphs $D^{(1)}_n$ also occur as principal graphs of $R\rtimes
J_k$ (with $k \in \{1,\dots,m\}$) intermediate in $R\rtimes \KD(2n+1)$
(see~\ref{sacig4impair}) and of $R\rtimes J_k$ (with $k \in
\{1,\dots,m\}$) intermediate in $R\rtimes \KQ(2n+1)$
(see~\ref{sacig4KQimpair}).

\subsection{Graphs $QB_n$}
\label{grapheQB}

In~\ref{grapheK0}, we obtain a family of graphs which we denote
$QB_n$. They have $4n+5$ vertices; $n$ of type $\star$ in each of the
three groups, and $n-1$ of type $\square$. The inclusion is then of index
$8n$. The graph $QB_2$ is depicted in Figure~\ref{figure.QB2}.
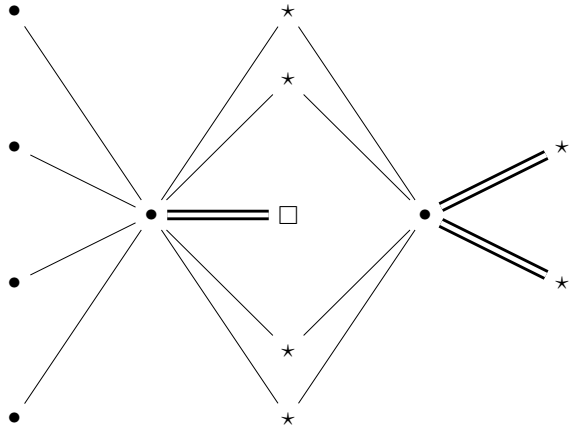
\begin{figure}[h]
  \centering
  \scalebox{.9}{\pgfdeclarelayer{background}
\pgfdeclarelayer{nodes}
\pgfsetlayers{background,main,nodes}
\begin{tikzpicture}
         \tikzstyle{every node}=[fill=white]
         \tikzstyle{emph}=[very thick, double distance=2pt]
    \node(00)  at (0,0)  {$\bullet$};
    \node(02)  at (0,2)  {$\bullet$};
    \node(04)  at (0,4)  {$\bullet$};
     \node(06)  at (0,6)  {$\bullet$};
    \node(23)  at (2,3)  {$\bullet$};
     \node(63)  at (6,3)  {$\bullet$};
     \node(46)  at (4,6)  {$\star$};
     \node(45)  at (4,5)  {$\star$};
     \node(84)  at (8,4)  {$\star$};
     \node(82)  at (8,2)  {$\star$};
      \node(43)  at (4,3)  {$\square$};
      \node(41)  at (4,1)  {$\star$};
      \node(40)  at (4,0)  {$\star$};

   \draw      (00) -- (23);
  \draw      (02) -- (23);
  \draw      (06) -- (23);
  \draw      (04) -- (23);
  \draw      (46) -- (23);
  \draw      (45) -- (23);
  \draw      (41) -- (23);
  \draw      (40) -- (23);
  \draw      (46) -- (63);
  \draw      (45) -- (63);
  \draw      (41) -- (63);
 \draw      (40) -- (63);
    \tikzstyle{every path}=[emph] 
\draw      (84) -- (63);
 \draw      (82) -- (63); 
 \draw      (43) -- (23);
     \end{tikzpicture}
   }
  \caption{The graph $QB_2$}
  \label{figure.QB2}
\end{figure}

\subsection{Graphs $DB_n$}
\label{grapheDB}

In~\ref{grapheK0}, we find a family of graphs which we denote by
$DB_n$. They have $4n+7$ vertices; $n$ of type $\star$ in each of the
three groups, and $n+1$ of type $\square$. The inclusion is then of
index $8n+4$. The graph $DB_2$ is depicted in Figure~\ref{figure.DB2}.
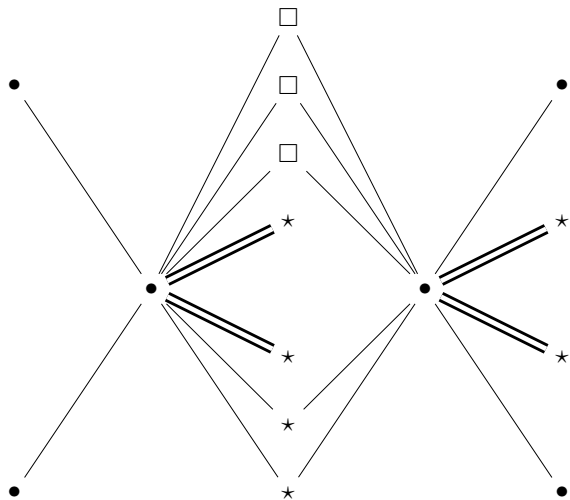
\begin{figure}[h]
  \centering
  \scalebox{.9}{\pgfdeclarelayer{background}
\pgfdeclarelayer{nodes}
\pgfsetlayers{background,main,nodes}
\begin{tikzpicture}
         \tikzstyle{every node}=[fill=white]
         \tikzstyle{emph}=[very thick, double distance=2pt]
    \node(00)  at (0,0)   {$\bullet$};
    \node(80)  at (8,0)   {$\bullet$};
    \node(86)  at (8,6)   {$\bullet$};
    \node(06)  at (0,6)   {$\bullet$};

    \node(23)  at (2,3)   {$\bullet$};
    \node(63)  at (6,3)   {$\bullet$};
    \node(47)  at (4,7)   {$\square$};
    \node(46)  at (4,6)   {$\square$};
    \node(45)  at (4,5)   {$\square$};
    \node(84)  at (8,4)   {$\star$};
    \node(82)  at (8,2)   {$\star$};
    \node(44)  at (4,4)   {$\star$};
    \node(42)  at (4,2)   {$\star$};
    \node(41)  at (4,1)   {$\star$};
    \node(40)  at (4,0)   {$\star$};

    \draw      (00) -- (23);
    \draw      (06) -- (23);
    \draw      (80) -- (63);
    \draw      (86) -- (63);
    \draw      (47) -- (23);
    \draw      (47) -- (63);
    \draw      (46) -- (23);
    \draw      (45) -- (23);
    \draw      (41) -- (23);
    \draw      (40) -- (23);
    \draw      (46) -- (63);
    \draw      (45) -- (63);
    \draw      (41) -- (63);
    \draw      (40) -- (63);
     \tikzstyle{every path}=[emph] 
    \draw(84) -- (63);
    \draw(82) -- (63); 
    \draw(42) -- (23);
    \draw(44) -- (23);
  \end{tikzpicture}}
  \caption{The graph $DB_2$}
  \label{figure.DB2}
\end{figure}

\newpage
\section{The Kac algebras $\KD(n)$ and twisted group algebras}
\label{section.KD}

In this section, we recall the definition of the family of Kac
algebras $(\KD(n))_n$, describe completely $\KD(2)$ (\ref{KD2}),
determine the automorphism group of $\KD(n)$, and obtain some general
results on the lattice of their \sacigs. The rest of the structure
depends much on the parity of $n$, and are further investigated
in~\ref{section.KD.odd} for $n$ odd and in~\ref{section.KD.even}
and~\ref{section.KB} for $n$ even.

Along the way, we build some general tools for manipulating twisted
group algebras: isomorphism computations
(Algorithm~\ref{algo.isomorphism}) and proofs
(Proposition~\ref{proposition.KacAlgebraIsomorphism}), \sacigs induced
by the group (Theorem~\ref{theorem.sacigG}), efficient
characterization of the algebra of the intrinsic group
(Corollary~\ref{corollary.intrinsicGroup}). Those tools will be reused
extensively in the later sections.

\subsection{Twisted group algebras}
\label{groupedef}

In~\cite{Enock_Vainerman.1996} and~\cite{Vainerman.1998}, M. Enock and
L. Vainerman construct non trivial finite dimensional Kac algebras by
twisting the coproduct of a group algebra by a 2-(pseudo)-cocycle
$\Omega$. More precisely, $\Omega$ is obtained from a
2-(pseudo)-cocycle $\omega$ defined on the dual $\hat{H}$ of a
commutative subgroup $H$ of $G$. We refer to those articles for the
details of the construction.

For example, the Kac-Paljutkin algebra $\KP$ studied in~\ref{KP} can
be constructed this way (see~\cite{Enock_Vainerman.1996}). Similarly,
the Kac algebras $\KD(n)$ (resp. $\KQ(n)$) of dimension $4n$ are
obtained in~\cite[6.6]{Vainerman.1998} by twisting of the algebra of
the dihedral group $\C[D_{2n}]$ (resp. of the quaternion group).

In~\cite[end of page 714]{Vainerman.1998}, L. Vainerman gives a
condition on the cocycle $\omega$ (to be counital) for $\Omega$ itself
to be counital (see definition in \cite[Lemma
6.2]{Vainerman.1998}). Then, the coinvolution exists in the twisted
algebra and the counit is well defined, and in fact left
unchanged~\cite[lemma 6.2]{Vainerman.1998}. \emph{This condition is
  always satisfied in our examples}.

\subsection{Notations}
\label{q}

Let $n$ be a positive integer, and $D_{2n}=\langle a, b \suchthat
a^{2n}=1, b^2=1, ba = a^{-1}b \rangle$ be the dihedral group of order
$4n$.  It contains a natural commutative subgroup of order $4$:
$H=\langle a^n,b\rangle$.
We denote by $\KD(n)$ the Kac algebra constructed
in~\cite[6.6]{Vainerman.1998} by twisting the group algebra
$\C[D_{2n}]$ along a $2$-cocycle of $H$. For notational convenience we
include $\KD(1)$ in the definition, even though the twisting is trivial
there since $H=D_2$. We refer to~\cite[6.6]{Vainerman.1998} for the
full construction, and just recall here the properties that we
need. Some further formulas are listed in
Appendix~\ref{section.formulas}.

The algebra structure  and the trace are left unchanged; the block matrix structure of $\KD(n)$ is given by:
\begin{displaymath}
  \KD(n)=\C e_1 \oplus \C e_2 \oplus \C e_3\oplus \C e_4\oplus   \bigoplus_{k=1}^{n-1}\KD(n)^k\,,
\end{displaymath}
where $\KD(n)^k$ is a factor $M_2(\C)$ whose matrix units we denote by
$e^{k}_{i,j}$ ($i=1,2$ and $j=1,2$).  From~\ref{KacHopf}, the trace on
$\KD(n)$ is given by: $tr(e_i)=1/4n$ and $tr(e^{k}_{j,j})=1/2n$.

We use the following matrix units in $\KD(n)^k$ :
$$r^{k}_{1,1}=\frac{1}{2}\begin{pmatrix}1 & -\mathrm{i} \\ \mathrm{i}& 1\end{pmatrix},
 \;r^{k}_{1,2}=\frac{1}{2}\begin{pmatrix}\mathrm{i} & -1 \\ -1&-\mathrm{i}\end{pmatrix},\;
 r^{k}_{2,1}=\frac{1}{2}\begin{pmatrix}-\mathrm{i} & -1 \\ -1&\mathrm{i}\end{pmatrix},\;
 r^{k}_{2,2}=\frac{1}{2}\begin{pmatrix}1 & \mathrm{i} \\ -\mathrm{i}& 1\end{pmatrix}\,,
 $$%
and the projections
\begin{displaymath}
  p^k_{1,1}=\frac{1}{2}(e^k_{1,1}+e^k_{1,2}+e^k_{2,1}+e^k_{2,2})
  \qquad \text{ and }\qquad
  p^k_{2,2}=\frac{1}{2}(e^k_{1,1}-e^k_{1,2}-e^k_{2,1}+e^k_{2,2})\,,
\end{displaymath}
as well as, following L. Vainermann, their odd and even sums:
\begin{displaymath}
  p_i = \sum_{k=1,\,k \text{ odd}}^{n-1} p^k_{i,i}, \quad i=1,2\,
  \qquad \text{ and }\qquad
  q_i = \sum_{k=1,\,k \text{ even}}^{n-1} p^k_{i,i}, \quad i=1,2\,.
\end{displaymath}
We finally consider the projections $q(\alpha, \beta,k)$ of
the $k$-th factor $M_2(\C)$ given by
\begin{displaymath}
  \frac{1}{2}\begin{pmatrix}
  1-\sin \alpha  &   \rm{e}^{-\mathrm{i}\beta}\cos \alpha \\
  \rm{e}^{\mathrm{i}\beta}\cos \alpha  & 1+\sin \alpha
\end{pmatrix}\,,
\end{displaymath}
for $\alpha$ and $\beta$ in $\R$. Then,
\begin{displaymath}
  q(\alpha, \beta,k)+q(-\alpha,\beta+\pi,k)=e^k_{1,1}+e^k_{2,2}\,.
\end{displaymath}
For the new coalgebra structure, the coproduct is obtained by twisting the symmetric
coproduct $\Delta_s$ of $\C[D_{2n}]$:
$$\Delta(x)=\Omega \Delta_s(x)\Omega^*\,,$$
where $\Omega$ is the unitary of $\C[H] \otimes \C[H]$ given
in~\ref{omega}. We set $J_0=\C[H]$.

The coinvolution and the counit are left unchanged\footnote{In most proofs, checking the properties of the coinvolution and the counit is straightforward and we omit them.}.
The left regular representation $\lambda$ of $\C[D_{2n}]$ and the
2-cocycle $\Omega$ are given explicitly in Appendix~\ref{lambda}
and~\ref{omega}, respectively. The coproduct expansions relevant to
the description of the \sacigs are computed in Appendix~\ref{cop} for
general formulas and by computer for examples
(see Appendix~\ref{subsubsection.mupad.sacig.K2}).

\subsection{The Kac algebra $\KD(2)$ of dimension $8$}
\label{KD2}

We now describe completely the Kac algebra $\KD(2)$. As in
Appendix~\ref{subsection.demo.sacig}, we check on computer that the
coproduct of $\KD(2)$ is symmetric and that its intrinsic group is
$\langle c, \lambda(b) \rangle$, where
$c=e_1-e_2+e_3-e_4-ie_{1,1}+ie_{2,2}$ is of order $4$ with
$c^2=\lambda(a^2)$.

Therefore, $\KD(2)$ is isomorphic to $\C[D_4]$, and the lattice of the
\sacigs of $\KD(2)$, depicted in Figure~\ref{figure.KD2}, coincides with
that of the subgroups of $D_4$ (see~\ref{groupe}).

\begin{figure}[h]
  \centering
  \pgfdeclarelayer{background}
\pgfdeclarelayer{nodes}
\pgfsetlayers{background,main,nodes}
\begin{tikzpicture}[yscale=-1]
  \tikzstyle{every node}=[fill=white]
  \begin{pgfonlayer}{nodes}
    \node(C)	at (3,1) {$\C$};
    \node(I1)	at (1,2) {$I_1$};
    \node(I2)	at (2,2) {$I_2$};
    \node(I0)	at (3,2) {$I_0$};
    \node(I3)	at (4,2) {$I_3$};
    \node(I4)	at (5,2) {$I_4$};
    \node(J0)	at (2,3) {$J_0$};
    \node(J2)	at (3,3) {$J_m$};
    \node(J1)	at (4,3) {$J_{20}$};
    \node(KD2)	at (3,4) {$KD(2)$};
  \end{pgfonlayer}
  \draw (C) -- (I1);
  \draw (C) -- (I2);
  \draw (C) -- (I0);
  \draw (C) -- (I3);
  \draw (C) -- (I4);
  \draw (J0) -- (I1);
  \draw (J0) -- (I2);
  \draw (J0) -- (I0);
  \draw (J2) -- (I0);
  \draw (J1) -- (I0);
  \draw (J1) -- (I3);
  \draw (J1) -- (I4);
  \draw (J1) -- (KD2);
  \draw (J0) -- (KD2);
  \draw (J2) -- (KD2);
  \begin{pgfonlayer}{background}
    \newcommand{\dimline}[2]{\draw[color=black!10,very thin](0,#1) -- (6,#1); \node[right] at (6,#1) {dim $#2$};}
    \dimline{1}{1}
    \dimline{2}{2}
    \dimline{3}{4}
    \dimline{4}{8}
  \end{pgfonlayer}
\end{tikzpicture}
  \caption{The lattice of \sacigs of $\KD(2)$}
  \label{figure.KD2}
\end{figure}
The \sacigs of dimension $2$ are:
\begin{itemize}
\item $I_0=\C(e_1 +e_2+e_3 +e_4)\oplus \C(e_{1,1}+e_{2,2})$ generated
  by $\lambda(a^2)$;
\item $I_1=\C(e_1 +e_4+p_2)\oplus \C(e_2 +e_3+p_1)$ generated by
  $\lambda(ba^2)$;
\item $I_2=\C(e_1 +e_4+p_1)\oplus \C(e_2 +e_3+p_2)$ generated by
  $\lambda(b)$;
\item $I_3=I(e_1 +e_2+r^1_{2,2})$ generated by $\lambda(b)c$;
\item $I_4=I(e_1 +e_2+r^1_{1,1})$  generated by $\lambda(b)c^3$.
\end{itemize}

The \sacigs of dimension $4$ are:
\begin{itemize}
\item $J_0=\C(e_1+e_4) \oplus \C(e_2+e_3) \oplus \C p_1 \oplus \C p_2$
  generated by $\lambda(a^2)$ and $\lambda(b)$;
\item $J_{20}=\C(e_1+e_2) \oplus \C(e_3+e_4) \oplus \C(r^{1}_{1,1})\oplus
  \C(r^1_{2,2})$ generated by $\lambda(a^2)$ and $\lambda(b)c$;
\item $J_m=\C(e_1+e_3) \oplus \C(e_2+e_4) \oplus \C(e^1_{1,1}) \oplus
  \C(e^1_{2,2})$ generated by $c$.
\end{itemize}
The rationale behind the notations will become clearer later on.

\subsection{\Sacigs induced by subgroups}

\subsubsection{\Sacigs containing $H$}\label{sacigG}

\begin{theorem}
  \label{theorem.sacigG}
  Let $K$ be a finite dimensional Kac algebra obtained by twisting a
  group algebra $\C[G]$ with a $2$-(pseudo)-cocycle $\Omega$ of a
  commutative subgroup $H$. Then, the \sacigs of $K$ containing $H$
  are in correspondence with the subgroups of $G$ containing
  $H$. Namely, the \sacig $I_{G'}$ corresponding to the subgroup $G'$
  is obtained by twisting the group algebra $\C[G']\subset\C[G]$ with
  $\Omega$; in particular, its Jones projection is still $p_G = \frac1{|G'|}
  \sum_{g'\in G'} \lambda(g')$.
\end{theorem}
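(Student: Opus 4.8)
The plan is to establish the bijection in both directions and then verify that the twisted algebra $I_{G'}$ is genuinely a \sacig with the claimed Jones projection. First I would fix notation: for a subgroup $G'$ with $H \subseteq G' \subseteq G$, let $I_{G'}$ denote the subspace of $K$ spanned by $\{\lambda(g') \suchthat g' \in G'\}$, equipped with the algebra structure inherited from $K$ (which, by~\ref{groupedef}, is the \emph{unchanged} group-algebra multiplication) and the \emph{twisted} coproduct $\Delta(x) = \Omega\Delta_s(x)\Omega^*$. The key structural observation, which I would isolate at the start, is that since $\Omega \in \C[H]\otimes\C[H]$ and $H \subseteq G'$, the twisting element $\Omega$ lives in $\C[G']\otimes\C[G']$. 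This single fact is what makes the whole argument go through: it guarantees that the twisting of $\C[G']$ is well-defined internally and is compatible with the twisting of the ambient $\C[G]$.

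\textbf{Forward direction: $I_{G'}$ is a \sacig.}

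I would first check that $I_{G'}$ is a unital $*$-subalgebra: this is immediate since $\{\lambda(g')\}$ is closed under the (undeformed) product and adjoint, and $1 = \lambda(e) \in I_{G'}$. The coideal condition $\Delta(I_{G'}) \subset K \otimes I_{G'}$ is the substantive point. Writing $\Delta(\lambda(g')) = \Omega\,\Delta_s(\lambda(g'))\,\Omega^*$ and using $\Delta_s(\lambda(g')) = \lambda(g')\otimes\lambda(g')$, I would show that each leg stays inside $\C[G']$: the right leg is controlled because $\Omega^* \in \C[G']\otimes\C[G']$ multiplies $\lambda(g')\otimes\lambda(g')$ on the right, keeping the second tensor factor in $\C[G']$, and similarly $\Omega$ on the left preserves this. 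Hence $\Delta(I_{G'}) \subset K\otimes I_{G'}$ as required. I would then note (appealing to~\ref{groupedef} and the fact that $\Omega$ is counital in our examples) that the coinvolution and counit restrict correctly, so $I_{G'}$ is indeed a \sacig of $K$.

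\textbf{The Jones projection and the reverse direction.}

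For the Jones projection, set $p = \frac{1}{|G'|}\sum_{g'\in G'}\lambda(g')$. By the group-algebra computation in~\ref{groupe}, $p$ is a projection of trace $1/|G'|$, it dominates $f_2$ (the support of the counit, as $e \in G'$), and it generates $I_{G'}$, which has dimension $|G'|$. By Remark~\ref{remark.resumep}, this forces $p = p_{I_{G'}}$. The reverse direction --- that every \sacig containing $H$ arises this way --- is where I expect the main obstacle, and I would handle it via the Jones projection. Given a \sacig $I \supseteq H$, its Jones projection $p_I$ generates $I$ (Remark~\ref{remark.jones_left_legs}), and I would argue that $p_I$ must be a uniform average $\frac{1}{|G'|}\sum_{g'\in G'}\lambda(g')$ over some subgroup $G' = \{g \in G \suchthat \lambda(g) \in I\}$: the set of group elements whose images lie in $I$ is closed under multiplication and inversion (since $I$ is a $*$-subalgebra and the $\lambda(g)$ are the group-like-flavored generators), hence is a subgroup containing $H$. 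The delicate part is showing $I$ contains no elements \emph{beyond} $\C[G']$ --- i.e. that $I = I_{G'}$ rather than strictly larger --- which I would obtain from the dimension bound $\dim I = |G'|$ forced by matching the trace of $p_I$ against Proposition~\ref{prop.resumep}(1), together with the fact that $I_{G'} \subseteq I$ already accounts for that full dimension. Checking that the correspondence $G' \mapsto I_{G'}$ is order-preserving in both directions (hence a lattice isomorphism onto its image) is then routine, as containment of subgroups translates directly into containment of their spans.
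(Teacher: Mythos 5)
Your forward direction is essentially the paper's own argument: since $\Omega\in\C[H]\otimes\C[H]\subseteq\C[G']\otimes\C[G']$, one gets $\Delta(\C[G'])=\Omega\,\Delta_s(\C[G'])\,\Omega^*\subseteq\C[G']\otimes\C[G']$, and since the multiplication, trace and counit are untouched by the twist, the Jones projection of $\C[G']$ is the same as in $\C[G]$. That half, including your identification of $p_{I_{G'}}$ via Remark~\ref{remark.resumep}, is fine.

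The reverse direction, however, has a genuine gap. You set $G'=\{g\in G\suchthat \lambda(g)\in I\}$, correctly observe that it is a subgroup containing $H$, and then assert that $p_I$ ``must be'' the uniform average $\frac1{|G'|}\sum_{g'\in G'}\lambda(g')$, from which you deduce $\dim I=|G'|$ and hence $I=\C[G']$. No justification for that assertion is given, and the dimension count is circular: to get $tr(p_I)=1/|G'|$ from Proposition~\ref{prop.resumep}(1) you would already need to know that $p_I$ is that average, i.e.\ essentially that $I=\C[G']$. Worse, the only inequality that actually follows from what you have proved goes the wrong way: $\C[G']$ is itself a \sacig (your forward direction) contained in $I$, so by~\ref{tour}~(5) one has $p_I\leq p_{\C[G']}$, hence $tr(p_I)\leq 1/|G'|$ and $\dim I\geq |G'|$ --- a lower bound, perfectly compatible with $I\supsetneq\C[G']$. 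Note also that the implicit claim ``a \sacig is spanned by the group elements it contains'' is false once the hypothesis $H\subseteq I$ is dropped: the dimension-$4$ \sacigs of $\KD(n)$, $n$ odd, generated by the Jones projections $Q_r$ of the subgroups $H_r$ (see~\ref{abelien} and Appendix~\ref{J4}) contain only the group elements $1$ and $\lambda(a^n)$, because the legs $\tilde{P}_r,\tilde{P'}_r$ of $\Delta(Q_r)$ are deformed projections, not elements of $\C[H_r]$. So any correct proof must exploit $\C[H]\subseteq I$ more forcefully than merely through $G'\supseteq H$, and this is precisely what the paper's untwisting step does: for $x\in I$, $\Delta_s(x)=\Omega^*\Delta(x)\Omega\in\Omega^*(K\otimes I)\Omega\subseteq K\otimes I$, since $\Omega,\Omega^*\in\C[H]\otimes\C[H]$ and $I$ is an algebra containing $\C[H]$. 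Thus $I$ is a \sacig of the \emph{untwisted} group algebra $(\C[G],\Delta_s)$, and the classification recalled in~\ref{groupe} (the coproduct being symmetric, any \sacig of $\C[G]$ is a subgroup algebra) gives $I=\C[G']$ with $G'\supseteq H$. That untwisting idea is the missing ingredient in your proposal.
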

\begin{proof}
  Recall that the twisted coproduct of $K$ is defined by:
  $$\Delta(x)=\Omega \Delta_s(x)\Omega^*\,,$$
  where $\Delta_s(x)$ is the standard coproduct of the group algebra
  $\C[G]$; recall also that $\Omega$ is a unitary of $H\otimes H$.
  Let $G'$ be a subgroup of $G$ containing $H$, and $I=\C[G']$. Then,
  $\Delta(I) \subset \Omega(I\otimes I) \Omega^* = I\otimes I$.
  Therefore $I$ is the Kac subalgebra of $K$, obtained by twisting
  $\C[G']$ with the cocycle $\Omega$. In particular, its trace,
  counit, and Jones projections are the same as for $\C[G']$
  (see~\ref{groupedef}, and~\ref{section.group_algebras} for the formula).

  Reciprocally, let $I$ be a \sacig of $K$ containing $H$. We may
  untwist the coproduct on $I$
  $$\Delta_s(x)=\Omega^* \Delta(x)\Omega\,.$$
  Then, $\Delta_s(I) \subset K \otimes I$, making $I$ into a
  \sacig of $\C[G]$. Therefore, $I$ is the algebra of some subgroup
  $G'$ of $G$.

 \end{proof}

\begin{corollary}
  \label{corollary.intrinsicGroup}
  Let $K$ be a finite dimensional Kac algebra obtained by twisting a
  group algebra $\C[G]$ with a $2$-(pseudo)-cocycle $\Omega$ of a
  commutative subgroup $H$.  The intrinsic group $G(K)$ of $K$
  contains $H$. Therefore, its group algebra coincides with the
  algebra of the subgroup $G'$ of those elements of $G$ such that
  $\Delta(g)$ is symmetric.
\end{corollary}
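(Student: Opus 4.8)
The plan is to derive Corollary~\ref{corollary.intrinsicGroup} directly from Theorem~\ref{theorem.sacigG}. The key observation is that $\C[G(K)]$, the algebra of the intrinsic group, is itself a \sacig of $K$ (indeed a symmetric Kac subalgebra, as recalled in~\ref{intrin}), so I can apply the theorem to it once I have verified that it contains $H$.

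First I would show that $H \subseteq G(K)$, that is, that every element $\lambda(h)$ for $h \in H$ satisfies $\Delta(\lambda(h)) = \lambda(h) \otimes \lambda(h)$. The point is that the twisting cocycle $\Omega$ is a unitary of $\C[H] \otimes \C[H]$, and $H$ is commutative. For $h \in H$ one has $\Delta_s(\lambda(h)) = \lambda(h) \otimes \lambda(h)$, and since $\Omega$ lives in $\C[H]\otimes\C[H]$ and $\C[H]$ is commutative, $\Omega$ commutes with $\lambda(h) \otimes \lambda(h)$. Hence
\begin{displaymath}
  \Delta(\lambda(h)) = \Omega\, \Delta_s(\lambda(h))\, \Omega^*
  = \Omega\,(\lambda(h)\otimes\lambda(h))\,\Omega^*
  = \lambda(h)\otimes\lambda(h)\,,
\end{displaymath}
so $\lambda(h)$ is grouplike and lies in $G(K)$. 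This gives $H \subseteq G(K)$.

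Next, since $\C[G(K)]$ contains $H$ and is a \sacig of $K$, Theorem~\ref{theorem.sacigG} applies: $\C[G(K)]$ must be the twist $I_{G'}$ of $\C[G']$ for some subgroup $G' \supseteq H$ of $G$. It remains to identify $G'$ concretely as the set of $g \in G$ with $\Delta(\lambda(g))$ symmetric (grouplike). By the same computation as above (now without the commutativity simplification, since $g$ need not lie in $H$), $\lambda(g)$ is grouplike for the twisted coproduct exactly when $\Omega\,(\lambda(g)\otimes\lambda(g))\,\Omega^* = \lambda(g)\otimes\lambda(g)$, i.e.\ when $\Delta(\lambda(g))$ is symmetric and equals $\lambda(g)\otimes\lambda(g)$. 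Collecting these elements gives precisely the subgroup $G'$, and the correspondence of Theorem~\ref{theorem.sacigG} shows $\C[G(K)] = I_{G'}$ is its twisted group algebra.

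The main subtlety, rather than a genuine obstacle, is the bookkeeping in identifying $G'$: I must check that the grouplike elements of $K$ arising from the twisted structure are exactly the images $\lambda(g)$ of those $g \in G$ whose coproduct is left symmetric by the twist, and that no new grouplike elements appear outside $\lambda(G)$. This follows because $\C[G(K)]$, being a \sacig containing $H$, is forced by the theorem to be spanned by group elements $\lambda(g')$, so every grouplike element is of this form; then being grouplike is equivalent to $\Delta(\lambda(g'))$ being symmetric, which is the stated defining condition for $G'$.
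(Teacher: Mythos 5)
Your first two steps are sound: $\Omega$ lies in the commutative algebra $\C[H]\otimes\C[H]$, so it commutes with $\lambda(h)\otimes\lambda(h)$ and $H\subseteq G(K)$; and since $\C[G(K)]$ is a Kac subalgebra of $K$ containing $H$, Theorem~\ref{theorem.sacigG} forces $\C[G(K)]=\C[G'']$ for some subgroup $G''\supseteq H$ of $G$. The gap is in your final identification of $G''$ with $G'$. You assert that every grouplike element of $K$ is of the form $\lambda(g')$, and that ``being grouplike is equivalent to $\Delta(\lambda(g'))$ being symmetric''. Both claims are false: grouplike means $\Delta(x)=x\otimes x$, which is strictly stronger than $\Delta(x)$ being invariant under the flip, and this is precisely the confusion the Remark following the corollary warns about. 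Concretely, in $\KQ(2)$ the twisted coproduct is cocommutative, so $G'$ is the whole quaternion group, while the intrinsic group is $D_4$, which is not even isomorphic to it; and in $\KD(2)$ the element $\lambda(a)$ has symmetric coproduct but is not grouplike, whereas the grouplike generator $c=e_1-e_2+e_3-e_4-\mathrm{i}e_{1,1}+\mathrm{i}e_{2,2}$ is not of the form $\lambda(g)$ at all. The theorem only tells you that $\C[G(K)]$ is \emph{spanned} by the $\lambda(g')$, $g'\in G''$; the grouplike elements are in general linear combinations of these, so your characterization of $G''$ via grouplikeness of individual $\lambda(g')$ collapses.

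The repair is to argue with cocommutativity instead of grouplikeness, using the maximality property recalled in~\ref{intrin}. First, $G'$ is a subgroup of $G$ containing $H$: the flip of $K\otimes K$ is an algebra automorphism and $\Delta$ is an algebra morphism, so the set of $g$ with symmetric $\Delta(\lambda(g))$ is stable under products and inverses, and it contains $H$ by your first step. Hence, by Theorem~\ref{theorem.sacigG}, $\C[G']$ is a Kac subalgebra of $K$, and it is cocommutative because $\Delta$ is symmetric on its spanning set; by~\ref{intrin}, $\C[G(K)]$ contains every symmetric Kac subalgebra, so $\C[G']\subseteq\C[G(K)]$. Conversely, $\C[G(K)]=\C[G'']$ is spanned by grouplike elements, hence its coproduct is cocommutative, so each $\lambda(g')$ with $g'\in G''$ has symmetric coproduct, i.e. $G''\subseteq G'$ and $\C[G(K)]\subseteq\C[G']$. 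The two inclusions give $\C[G(K)]=\C[G']$, an equality of group \emph{algebras} --- not of the groups $G(K)$ and $G'$, which is all the corollary claims.
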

\begin{remark}
  This gives a rather efficient way to compute the order of the
  intrinsic group of $K$ (complexity: $|G|$ coproduct computations,
  that is $O(|G||H|^4)$). Beware however that $G'$ need not coincide
  nor even be isomorphic to $G(K)$; consider for example the Kac
  algebras $\KD(2)$ and $\KQ(2)$ which are isomorphic (see~\ref{iso.KDKQ}).
  Therefore, to compute the actual intrinsic group, and not only its
  group algebra, one still needs to compute the minimal idempotents of
  the (commutative) dual algebra of $(\C[G'], \Delta)$.
\end{remark}

\subsubsection{Embedding of $\KD(d)$ into $\KD(n)$}
\label{plonge}

We now apply the previous results to $\KD(n)$.
\begin{corollary}
  \label{corollary.plonge}
  Let $n$ be a positive integer and $d$ a divisor of $n$. Then $\KD(d)$
  embeds as a Kac subalgebra into $\KD(n)$ via the morphism defined by
  $\varphi_k(\alpha)=a^k$ and $\varphi_k(\beta)=b$, where $k=\frac n
  d$, and $\alpha$ and $\beta$ denote the generators of $\KD(d)$.

  Furthermore, all the \sacigs of $\KD(n)$ containing $H$ are obtained
  this way.
\end{corollary}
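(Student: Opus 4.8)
The plan is to derive Corollary~\ref{corollary.plonge} as a direct application of Theorem~\ref{theorem.sacigG} to the specific group-theoretic setting of $\KD(n)$. Recall that $\KD(n)$ is obtained by twisting $\C[D_{2n}]$ along a $2$-cocycle supported on the commutative subgroup $H=\langle a^n, b\rangle$. The key observation is that the subgroups of $D_{2n}$ containing $H$ are themselves dihedral groups of the form $\langle a^k, b\rangle$ where $k$ ranges over the divisors of $n$, and that the restriction of the twisting cocycle to such a subgroup reproduces exactly the cocycle defining $\KD(d)$ with $d=n/k$.

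\textbf{First step: identify the relevant subgroups.} I would begin by classifying the subgroups $G'$ of $D_{2n}$ that contain $H=\langle a^n, b\rangle$. Since $H$ already contains the involution $b$ and the central element $a^n$, any such $G'$ is a dihedral subgroup determined by the cyclic part it contains; concretely, $G' = \langle a^k, b\rangle$ for some $k \mid n$. Setting $d=n/k$, the map $\varphi_k$ sending the generators $\alpha, \beta$ of $D_{2d}$ to $a^k, b$ realizes an isomorphism of $D_{2d}$ onto $G'$, since $(a^k)^{2d}=a^{2n}=1$, $b^2=1$, and $b a^k b = a^{-k}$.

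\textbf{Second step: match the cocycles.} The heart of the argument is verifying that the twisting of $\C[G']$ by $\Omega$ (which Theorem~\ref{theorem.sacigG} guarantees produces a Kac subalgebra $I_{G'}$) is isomorphic, via $\varphi_k$, to $\KD(d)$ as constructed in~\cite[6.6]{Vainerman.1998}. Because $\Omega$ is built from a cocycle $\omega$ on $\widehat H$ and $H \subset G'$, the restriction of the twisting data to $\C[G']$ is literally the same cocycle $\Omega$; on the other hand, the cocycle defining $\KD(d)$ lives on the same subgroup $H=\langle a^n,b\rangle=\langle (a^k)^d, b\rangle$ viewed inside $D_{2d}$. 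One must therefore check that $\varphi_k$ carries the distinguished order-$4$ commutative subgroup of $D_{2d}$ onto $H$ and that it intertwines the two cocycles. This is the step I expect to require the most care, since it is where the explicit form of $\Omega$ from~\ref{omega} enters; however, it should reduce to a routine comparison once one notes that both cocycles are pulled back from the \emph{same} group $H$ via identifications induced by $\varphi_k$.

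\textbf{Final step: exhaustiveness.} The ``furthermore'' clause follows immediately from the reciprocal half of Theorem~\ref{theorem.sacigG}: every \sacig of $\KD(n)$ containing $H$ arises as $\C[G']$ (twisted by $\Omega$) for a unique subgroup $G'\supseteq H$, and we have just shown each such $G'$ equals $\langle a^k,b\rangle$ with $k=n/d$, yielding the embedded copy of $\KD(d)$. Thus the embeddings $\varphi_k$ for $d\mid n$ account for all of them, and the Jones projection $p_{G'}=\frac1{|G'|}\sum_{g'\in G'}\lambda(g')$ is inherited verbatim from the theorem. The main obstacle, as noted, is the cocycle-matching verification; everything else is bookkeeping on dihedral subgroups.
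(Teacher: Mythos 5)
Your proposal is correct and follows essentially the same route as the paper: identify the subgroups of $D_{2n}$ containing $H$ as the $\langle a^k,b\rangle$ with $k=n/d$, apply Theorem~\ref{theorem.sacigG} to get the twisted subalgebras and the exhaustiveness claim, and observe that the cocycle $\Omega$ is built from $a^n=(a^k)^d$ and $b$, which is exactly the twisting data of $\KD(d)$ transported by $\varphi_k$. The only difference is emphasis: the cocycle-matching step you flag as delicate is dispatched in the paper by this one-line observation.
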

\begin{proof}
  The subgroups of $D_{2n}$ containing $H$ are the $G_k =\langle a^k,
  b\rangle$ for $k$ as above. Furthermore, by
  Theorem~\ref{theorem.sacigG}, $(\C[G_k], \Delta)$ is a Kac
  subalgebra of $\KD(n)$; it is obviously isomorphic to $\KD(d)$ (the
  unitary $\Omega$ is constructed from the same elements $a^n=(a^k)^d$
  and $b$ and the same cocycle).
\end{proof}
Note that $\KD(d)$ could alternatively be embedded into $\KD(n)$ by
choosing any $k$ of the form $\frac n d (1+2k')$; this gives the same subalgebra of
$\KD(n)$ and amounts to compose the embedding with some automorphism of
$\KD(n)$ (see~\ref{intrinseque}). In fact, under the assumption that
$n\geq3$, the automorphism group of $\KD(n)$ stabilizes the embedding
of $\KD(d)$.

\begin{example}
  When $n=2m$, $\KD(n)$ contains the Kac subalgebras $K_0=\C[a^m,b]$
  isomorphic to $\KD(2)$, and $K_4=\C[a^2,b]$ isomorphic to
  $\KD(m)$. Those subalgebras are studied in~\ref{section.KD.even}.
\end{example}

\subsubsection{Intrinsic group of $\KD(n)$ and its dual}
\label{intrinseque}

The intrinsic group of the dual Kac algebra $\widehat{\KD(n)}$ of
$\KD(n)$ is $\mathbb{Z}_2 \times
\mathbb{Z}_2$~\cite[p.718]{Vainerman.1998}. We give here the intrinsic
group of $\KD(n)$ itself.
\begin{prop}
  The intrinsic group of $\KD(n)$ is $\mathbb{Z}_2 \times \mathbb{Z}_2$
  if $n$ is odd and $D_4$ if $n$ is even.
\end{prop}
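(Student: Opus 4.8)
The plan is to apply Corollary~\ref{corollary.intrinsicGroup} and reduce the computation of the intrinsic group to a concrete search inside $\C[D_{2n}]$. By that corollary, the group algebra $\C[G(\KD(n))]$ coincides with $\C[G']$, where $G'$ is the subgroup of those $g \in D_{2n}$ whose twisted coproduct $\Delta(g) = \Omega \Delta_s(g) \Omega^*$ is symmetric. Since $H = \langle a^n, b\rangle$ is always contained in $G'$ (the cocycle $\Omega$ lives in $\C[H]\otimes\C[H]$, and $\Delta(h)$ is symmetric for $h\in H$ because $\Delta_s(h)=h\otimes h$ commutes appropriately with $\Omega$), the first step is to identify exactly which further group elements $a^j$ and $a^j b$ contribute. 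The remark following Corollary~\ref{corollary.intrinsicGroup} warns that $G'$ need not be isomorphic to $G(\KD(n))$ itself, so after pinning down $G'$ (hence the \emph{algebra} $\C[G(\KD(n))]$), I must take a second step: compute the actual intrinsic group by finding the minimal idempotents of the dual of $(\C[G'],\Delta)$, or equivalently read off the group structure directly from the grouplike elements.

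First I would compute $\Delta(\lambda(g)) = \Omega \Delta_s(\lambda(g)) \Omega^*$ for a general $g \in D_{2n}$, using the explicit form of $\Omega$ from Appendix~\ref{omega}. Because $\Omega$ is supported on $\C[H]\otimes\C[H]$, the commutator $[\Omega, \lambda(g)\otimes\lambda(g)]$ depends only on how $g$ conjugates $H$ and on the values of the underlying cocycle $\omega$ on $\hat H$. Symmetry of $\Delta(\lambda(g))$ amounts to $\Omega \Delta_s(\lambda(g))\Omega^*$ being fixed by the flip $\varsigma$, i.e.\ to $\Omega^{\varsigma}(\lambda(g)\otimes\lambda(g))\Omega = \Omega(\lambda(g)\otimes\lambda(g))\Omega^{\varsigma}$. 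This should reduce to a cocycle identity whose solvability depends on the parity of $n$: roughly, the obstruction to symmetry is measured by a sign $\omega(\cdot,\cdot)\overline{\omega(\cdot,\cdot)}$ that collapses when $n$ is odd and survives when $n$ is even. I expect to find $G' = H \cong \mathbb{Z}_2\times\mathbb{Z}_2$ for $n$ odd, while for $n$ even the commutation constraints are looser and $G'$ grows to order~$8$.

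The second step, extracting the group from its algebra, is where the parity distinction becomes visible at the level of group \emph{structure} rather than dimension. For $n$ odd, $\C[G']=\C[H]$ is commutative of dimension $4$, and its grouplike elements (the invertible $x$ with $\Delta(x)=x\otimes x$) form a group which, since the twist is trivial on the commutative $H$, is just $H \cong \mathbb{Z}_2\times\mathbb{Z}_2$. For $n$ even, I expect $G'$ to have order $8$ but $\C[G']$ to carry a genuinely twisted, noncommutative-coalgebra structure whose grouplike elements assemble into $D_4$ rather than an abelian group of order $8$; this mirrors the description already given for $\KD(2)$ in~\ref{KD2}, where the intrinsic group is generated by an order-$4$ element $c$ with $c^2 = \lambda(a^2)$. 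Concretely, for $n=2m$ I would exhibit an element $c$ of the form $e_1 - e_2 + e_3 - e_4 - \mathrm{i}\,(\dots) + \mathrm{i}\,(\dots)$ analogous to the $\KD(2)$ generator, verify $\Delta(c)=c\otimes c$ by direct coproduct computation, check $c^2 = \lambda(a^n)$ so that $c$ has order~$4$, and confirm $\lambda(b)\,c\,\lambda(b)^{-1} = c^{-1}$, giving the dihedral relations.

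The main obstacle will be the explicit coproduct bookkeeping for general $g$: the twist $\Omega$ introduces $2\times 2$ block mixing between the factors $\KD(n)^k$, and verifying symmetry (or the dihedral relations for $c$) for all parameters at once, rather than for a fixed small $n$, requires carefully tracking the cocycle values through Appendix~\ref{cop}. I would isolate the parity dependence as early as possible --- ideally reducing the whole question to a single sign $(-1)^{\text{something}(n)}$ --- so that the two cases split cleanly and the remaining verifications become the routine checks the paper elsewhere delegates to computer computation.
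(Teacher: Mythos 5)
Your starting point is the same as the paper's (Corollary~\ref{corollary.intrinsicGroup}), and you are right both that $H\subseteq G'$ automatically and that one must afterwards distinguish $G'$ from the actual intrinsic group. But the core of your argument is missing: the claim that coproduct symmetry fails for every $g\notin H$ when $n$ is odd, and holds exactly on a subgroup of order $8$ when $n$ is even, is only announced (``I expect to find'', ``should reduce to a cocycle identity'', ``a single sign'') and never established. For a general element $a^j$ in a general $\KD(n)$ this is not a routine check --- the paper itself notes that $\Delta(\lambda(a))$ already has $64$ terms --- and your closing paragraph concedes that the uniform-in-$n$ bookkeeping is the obstacle without resolving it. The same applies to the even case: the element $c$ generalizing the $\KD(2)$ generator is not written down, and its group-likeness and dihedral relations are not verified for general $m$. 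As it stands, the proposal is a plan whose two hard steps are conjectures.

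Both gaps are closed, with no computation at all, by the ingredient you never invoke: Corollary~\ref{corollary.plonge}. The algebra $\C[G(\KD(n))]$ is a Kac subalgebra containing $H$, hence by that corollary it is (the twisted) $\C[\langle a^k,b\rangle]\cong\KD(d)$ for some divisor $d$ of $n$; being a group algebra it is cocommutative, and $\KD(d)$ is cocommutative only for $d=1,2$; since $\KD(1)\cong\C[D_2]$ and $\KD(2)\cong\C[D_4]$ (the latter by~\ref{KD2}), and for $n$ even the copy of $\KD(2)$ inside $\KD(n)$ forces $d=2$ (by~\ref{intrin}, $\C[G(K)]$ contains every symmetric Kac subalgebra), one reads off $\mathbb{Z}_2\times\mathbb{Z}_2$ for $n$ odd and $D_4$ for $n$ even. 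If you insist on your computational route, it can be repaired along the same lines: $G'$ is a subgroup containing $H$, hence of the form $\langle a^k,b\rangle$ with $k\mid n$, and via the embedding $\varphi_k$ the symmetry of $\Delta(\lambda(a^k))$ in $\KD(n)$ is exactly the symmetry of $\Delta(\lambda(a))$ in $\KD(n/k)$ --- a degree-$1$ expression which, by Proposition~\ref{proposition.equationAtInfinity}, is non-symmetric for all $n/k\geq 3$ as soon as it is non-symmetric in $\KD(3)$ (a single computer check), and symmetric for $n/k\leq 2$. Likewise the $D_4$ structure for $n$ even should be transported through $\varphi_{n/2}$ from the analysis of $\KD(2)$ in~\ref{KD2} rather than re-derived by hand.
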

\begin{proof}
  Applying Corollaries~\ref{corollary.intrinsicGroup}
  and~\ref{corollary.plonge}, the algebra of the intrinsic group of
  $\KD(n)$ is isomorphic to $\KD(k)$ for some $k$ dividing $n$.
  Furthermore $\KD(k)$ is cocommutative if and only if $k=1$ or $k=2$.
  Finally, $\KD(1)$ and $\KD(2)$ are respectively isomorphic to
  $\C[D_2]$ and $\C[D_4]$.
\end{proof}

\subsection{Automorphism group}
\label{subsection.automorphisms}

In this subsection, we describe the automorphism group of $\KD(n)$. The
point is that each automorphism of $\KD(n)$ induce a symmetry in the
lattice $\ll(\KD(n))$ of \sacigs of $\KD(n)$. Namely, it maps a \sacig
on a \sacig (mapping the Jones projection of the first to the Jones
projection of the second).

\subsubsection{Automorphisms induced by automorphisms of $D_{2n}$ fixing $\C[H]$}
\label{conjugue}

\begin{proposition}
  \label{proposition.automorphisms.KD.ak}
  Let $k$ be coprime with $2n$, and $\Theta_k$ be the algebra morphism
  defined by:
  \begin{displaymath}
    \Theta_k(\lambda(a)) = \lambda(a^k)
    \qquad \text{and} \qquad
    \Theta_k(\lambda(b))=\lambda(b)\,.
  \end{displaymath}
  Then, $\Theta_k$ is a Kac algebra automorphism of $\KD(n)$ which
  fixes $H$.
\end{proposition}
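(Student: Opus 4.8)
The plan is to recognize $\Theta_k$ as the $*$-algebra automorphism induced by a group automorphism of $D_{2n}$ that fixes the twisting subgroup $H$ pointwise, and then to transport this automorphism through the cocycle $\Omega$ exactly as in the twisting framework of~\ref{groupedef} (the same mechanism underlying Theorem~\ref{theorem.sacigG}). The first observation I would record is that the hypothesis $\gcd(k,2n)=1$ forces $k$ to be \emph{odd}; this parity turns out to be the linchpin of the whole argument.

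The key step is to check that $a\mapsto a^k$, $b\mapsto b$ extends to a group automorphism of $D_{2n}$. Since $k$ is coprime to $2n$, the assignment $a\mapsto a^k$ is a bijection of the cyclic subgroup $\langle a\rangle$, and the dihedral relation is preserved because $ba^kb^{-1}=(bab^{-1})^k=a^{-k}$. This automorphism lifts to a $*$-algebra automorphism of $\C[D_{2n}]$, which is precisely the (untwisted) algebra underlying $\KD(n)$; hence $\Theta_k$ is well defined. Crucially, because $k$ is odd we have $kn\equiv n\pmod{2n}$, so $\Theta_k(a^n)=a^{kn}=a^n$ and $\Theta_k(b)=b$; thus $\Theta_k$ fixes all four elements of $H=\langle a^n,b\rangle$, and therefore fixes $\C[H]$ pointwise.

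Finally I would verify compatibility with the coalgebra structure. As $\Theta_k$ comes from a group automorphism, it commutes with the symmetric coproduct, preserves the (unchanged) counit $\varepsilon$, and intertwines the group-algebra antipode $S$. Since $\Omega\in\C[H]\otimes\C[H]$ is fixed by $\Theta_k\otimes\Theta_k$, the twisted coproduct $\Delta(x)=\Omega\Delta_s(x)\Omega^*$ satisfies
\[
(\Theta_k\otimes\Theta_k)\Delta(x)=\Omega(\Theta_k\otimes\Theta_k)\Delta_s(x)\Omega^*=\Omega\Delta_s(\Theta_k(x))\Omega^*=\Delta(\Theta_k(x)),
\]
so $\Theta_k$ intertwines the twisted coproduct as well, establishing that it is a Kac algebra automorphism fixing $H$. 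The only genuine subtlety — the main obstacle — is the verification that $\Theta_k$ fixes $\C[H]$, equivalently that $a^{kn}=a^n$: this is exactly where the oddness of $k$ (hence the coprimality hypothesis) is indispensable, since otherwise $\Theta_k\otimes\Theta_k$ would move $\Omega$ and the coproduct compatibility would fail.
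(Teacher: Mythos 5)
Your proof is correct and follows essentially the same route as the paper's: recognize $\Theta_k$ as induced by a group automorphism of $D_{2n}$, observe that $H$ (hence $\Omega$) is fixed, and conclude that the twisted coproduct is preserved. The only difference is that you spell out the details the paper leaves implicit — in particular the point that oddness of $k$ (forced by $\gcd(k,2n)=1$) is what guarantees $a^{kn}=a^n$, so that $H$ is genuinely fixed pointwise — which is a worthwhile clarification but not a different argument.
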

\begin{proof}
  Such a $\Theta_k$ is an automorphism of the group $D_{2n}$, and
  therefore an automorphism of its Kac algebra $\C[D_{2n}]$. Since $H$
  is fixed, so is $\Omega$, so that the coproduct $\Delta$ of $\KD(n)$
  is preserved as well.
\end{proof}

\begin{example}
  \label{example.conjugue}
  Take $\Theta=\Theta_{-1}$. It is an involutive automorphism of
  $\KD(n)$; it can be alternatively described as the conjugation by
  $\lambda(b)$.
\end{example}

\subsubsection{An automorphism which does not fix $\C[H]$}

We now construct an automorphism $\Theta'$ which acts non trivially on
$\C[H]$.
\begin{proposition}
  \label{proposition.automorphism.KD.ThetaPrime}
  The following formulas define an involutive Kac algebra automorphism
  of $\KD(n)$:
  \begin{displaymath}
    \Theta'(\lambda(a))=\lambda(a)-\frac{1}{2}(\lambda(a)-\lambda(a^{-1}))(1+\lambda(a^n))
    \qquad \text{and} \qquad
    \Theta'(\lambda(b))=\lambda(a^nb)\,.
  \end{displaymath}
\end{proposition}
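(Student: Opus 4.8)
The plan is to prove the three assertions in turn: that the given formulas extend to a well-defined $*$-algebra morphism, that it squares to the identity, and that it commutes with the coproduct. As usual (see~\ref{KacHopf}), the counit and coinvolution checks are routine and I would omit them.

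First I would rewrite the image of $\lambda(a)$ in a usable form. Writing $P_\pm=\frac12(1\pm\lambda(a^n))$ for the two central projections attached to the central symmetry $\lambda(a^n)$ (recall $\lambda(a^n)^2=1$ and $\lambda(a^{-n})=\lambda(a^n)$), a one-line computation gives
\[
  \Theta'(\lambda(a))=\lambda(a)P_-+\lambda(a^{-1})P_+ .
\]
Since the algebra of $\KD(n)$ is that of $\C[D_{2n}]$ (only the coproduct is twisted), it is enough to verify that $\Theta'(\lambda(a))$ and $\Theta'(\lambda(b))=\lambda(a^nb)$ satisfy the dihedral relations. As $\lambda(a)$ commutes with the central $P_\pm$, one gets $\Theta'(\lambda(a))^m=\lambda(a^m)P_-+\lambda(a^{-m})P_+$, so $\Theta'(\lambda(a))^{2n}=1$; the relation $\Theta'(\lambda(b))^2=1$ follows from $a^nba^nb=1$; and the relation $\Theta'(\lambda(b))\Theta'(\lambda(a))=\Theta'(\lambda(a))^{-1}\Theta'(\lambda(b))$ reduces, using $\lambda(b)\lambda(a)=\lambda(a^{-1})\lambda(b)$ and the centrality of $P_\pm$, to a direct identity. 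Since the $\lambda(g)$ are unitary and $\Theta'(\lambda(a))^*=\Theta'(\lambda(a))^{-1}$, the map $\Theta'$ is a $*$-morphism.

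For involutivity, the key point is that $\Theta'$ fixes $\lambda(a^n)$, hence fixes $P_\pm$: indeed $\Theta'(\lambda(a^n))=\lambda(a^n)P_-+\lambda(a^{-n})P_+=\lambda(a^n)$. Consequently $\Theta'^2(\lambda(a))=\Theta'(\lambda(a))P_-+\Theta'(\lambda(a))^{-1}P_+=\lambda(a)$ and $\Theta'^2(\lambda(b))=\lambda(a^n)^2\lambda(b)=\lambda(b)$, so $\Theta'^2=\id$; in particular $\Theta'$ is bijective, being its own inverse.

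The main work, and the main obstacle, is the coalgebra identity $(\Theta'\otimes\Theta')\circ\Delta=\Delta\circ\Theta'$, which I would check on the two generators. The decisive simplification is that $\C[H]$ is commutative and $\Omega\in\C[H]\otimes\C[H]$, so $\Delta$ is \emph{untwisted} on $\C[H]$: thus $\Delta(\lambda(b))=\lambda(b)\otimes\lambda(b)$, $\Delta(\lambda(a^n))=\lambda(a^n)\otimes\lambda(a^n)$, and $\Delta(P_\pm)=\frac12\bigl(1\otimes1\pm\lambda(a^n)\otimes\lambda(a^n)\bigr)=:Q_\pm$. The $\lambda(b)$ case is then immediate, both sides being $\lambda(a^nb)\otimes\lambda(a^nb)$. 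For $\lambda(a)$, writing $W=\Delta(\lambda(a))=\Omega(\lambda(a)\otimes\lambda(a))\Omega^*$, the left-hand side is $\Delta(\Theta'(\lambda(a)))=WQ_-+W^{*}Q_+$, while the right-hand side is $(\sigma\otimes\sigma)(\Omega)\,\bigl(\Theta'(\lambda(a))\otimes\Theta'(\lambda(a))\bigr)\,(\sigma\otimes\sigma)(\Omega^{*})$, where $\sigma=\Theta'|_{\C[H]}$ is the automorphism exchanging $\lambda(b)$ and $\lambda(a^nb)$. Reducing the claim to the equality of these two expressions is the crux: it is exactly where the explicit shape of the cocycle $\Omega$ of Appendix~\ref{omega} enters, since one must compare $(\sigma\otimes\sigma)(\Omega)$ with $\Omega$ and track, within the four-fold central decomposition given by the projections $P_\pm\otimes P_\pm$ underlying $Q_\pm$, how conjugation by $\Omega$ redistributes the terms $\lambda(a^{\pm1})\otimes\lambda(a^{\pm1})$. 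I would settle this last verification directly from the coproduct expansion of Appendix~\ref{cop}, delegating the bookkeeping to the computer as elsewhere in the paper.
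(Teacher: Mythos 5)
Your algebraic groundwork is correct and is in fact cleaner than the paper's own treatment. Writing $\Theta'(\lambda(a))=\lambda(a)P_-+\lambda(a^{-1})P_+$ with $P_\pm=\frac12(1\pm\lambda(a^n))$ central orthogonal idempotents gives you, by hand: the power formula (this is exactly Lemma~\ref{lemma.automorphism.KD.ThetaPk}), the relation $\Theta'(\lambda(a))^{2n}=1$, the dihedral relation, the $*$-compatibility, and the involutivity via $\Theta'(\lambda(a^n))=\lambda(a^n)$. The paper proves the power formula and the $*$-compatibility by hand, but delegates the dihedral relation, the involutivity, and the coproduct compatibility to computer checks in $\KD(3)$, each legitimized by the degree argument of Proposition~\ref{proposition.equationAtInfinity}. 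Your central-projection argument eliminates the machine for the first two of these three checks, and your reduction of the third to the identity $WQ_-+W^*Q_+=(\sigma\otimes\sigma)(\Omega)\,\bigl(\Theta'(\lambda(a))\otimes\Theta'(\lambda(a))\bigr)\,(\sigma\otimes\sigma)(\Omega)^*$, using that $\Delta$ is untwisted on the commutative algebra $\C[H]$, is also correct.

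The gap is in the last step, the one you yourself call the crux. Having reached that identity, you stop and ``delegate the bookkeeping to the computer as elsewhere in the paper''. As stated this does not close the proof: the identity lives in $\C[D_{2n}]\otimes\C[D_{2n}]$ and depends on $n$, while a computer can only verify it for finitely many fixed values of $n$. The missing idea is precisely the paper's Proposition~\ref{proposition.equationAtInfinity}, extended to tensor squares: because both sides have degree $1$ in $a$ once $\lambda(a^n)$ is treated as a separate generator $\an$, the vanishing of their difference in $\KD(3)$ lifts through $\C[D_\infty]\otimes\C[D_\infty]$ and implies its vanishing in $\KD(n)$ for every $n$. Without invoking that lifting principle (or else carrying out the general-$n$ computation by hand against the explicit cocycle of~\ref{omega}, tracking how $\Omega$ commutes past the four projections $P_\pm\otimes P_\pm$), a verification at any particular $n$ proves nothing about the others. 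Note also that Appendix~\ref{cop} does not contain an expansion of $\Delta(\lambda(a))$ --- it lists coproducts of certain projections only --- so the phrase ``directly from the coproduct expansion of Appendix~\ref{cop}'' does not point to anything usable; the relevant input is $\Omega$ itself, and that is exactly the computation left undone.
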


\subsubsection{Tips and tricks to prove an isomorphism }

The following (easy) proposition gives a conveniently small list of
properties to be checked to be sure that an homomorphism defined by
the images of some generators extends to a proper Kac-algebra isomorphism.
\begin{proposition}
  \label{proposition.KacAlgebraIsomorphism}
  Let $K$ and $K'$ be two Kac algebras of identical finite dimension,
  and $A$ and $A'$ be sets of algebra generators of $K$ and $K'$
  respectively. Fix $\Phi(a)\in K'$ for $a\in A$. Then, $\Phi$
  extends to a Kac-algebra isomorphism from $K$ to $K'$ if and only
  if:
  \begin{enumerate}[(i)]
  \item The elements $\{\Phi(a) \suchthat a\in A\}$ satisfy the
    relations of $A$, and therefore define a (non necessarily unital)
    algebra morphism $\Phi$ from $K$ to $K'$;
  \item $\Phi$ preserves the involution on $A$:
    $\Phi(a^*) = \Phi(a)^*$, for all $a\in A$;
  \item $\Phi$ preserves the coproduct on $A$:
    $(\Phi\otimes\Phi)(\Delta(a)) = \Delta(\Phi(a))$, for all $a\in A$;
  \item All elements $a'\in A'$ have a preimage by $\Phi$ in $K$.
  \end{enumerate}
\end{proposition}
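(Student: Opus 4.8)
The plan is to dispose of the ``only if'' direction immediately and to concentrate on showing that conditions (i)--(iv) are sufficient. The forward implication is trivial: a Kac-algebra isomorphism preserves the multiplication, involution and coproduct, so its restriction to the generating set $A$ satisfies (i)--(iii), and being surjective it satisfies (iv) as well.

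For the converse I would propagate each structural property from the generating set $A$ to all of $K$ by a \emph{subalgebra-of-agreement} argument. By (i) we already have an algebra morphism $\Phi\colon K\to K'$ determined by the images of the generators. For the involution, the set $\{x\in K\suchthat \Phi(x^*)=\Phi(x)^*\}$ is a subalgebra of $K$: it is a subspace (using conjugate-linearity) and is closed under products since $\Phi$ is multiplicative and $*$ is an antiautomorphism; by (ii) it contains $A$, hence equals $K$, so $\Phi$ is a $*$-morphism. For the coproduct, observe that both $x\mapsto(\Phi\otimes\Phi)(\Delta(x))$ and $x\mapsto\Delta(\Phi(x))$ are algebra morphisms from $K$ to $K'\otimes K'$; by (iii) they agree on $A$, hence on the subalgebra it generates, namely all of $K$. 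Thus $\Phi$ intertwines the coproducts everywhere.

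The one genuinely delicate point---and the step I expect to be the main obstacle---is that (i) only supplies a \emph{possibly non-unital} morphism, so I must prove $\Phi(1)=1$. Here I would exploit the coproduct preservation just established. Set $p:=\Phi(1)$, a nonzero idempotent. Evaluating $(\Phi\otimes\Phi)\Delta(1)=\Delta(\Phi(1))$ gives $p\otimes p=\Delta(p)$, so $p$ is group-like in $K'$. Applying the counit axiom $(\varepsilon\otimes\id)\Delta=\id$ yields $p=\varepsilon(p)\,p$, whence $\varepsilon(p)=1$ as $p\neq0$; then $p$ is group-like with $\varepsilon(p)=1$, so $S(p)\,p=\varepsilon(p)1=1$ (and symmetrically $p\,S(p)=1$), making $p$ an invertible idempotent and therefore $p=1$. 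With unitality secured, (iv) shows that $\Phi(K)$ is a unital subalgebra of $K'$ containing the generating set $A'$, hence $\Phi(K)=K'$; surjectivity together with $\dim K=\dim K'<\infty$ forces $\Phi$ to be bijective.

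It remains to upgrade the bijection to a morphism of Kac algebras, i.e.\ to check that it also intertwines the counits and antipodes. Rather than verifying this by hand, I would invoke the uniqueness of the counit and antipode of a finite-dimensional $C^*$-Hopf algebra recalled in~\ref{KacHopf}: a unital $*$-algebra isomorphism preserving the coproduct automatically transports the counit and antipode of $K$ onto those of $K'$. Hence $\Phi$ is the desired Kac-algebra isomorphism, which is exactly what makes this list of checks a practical certificate for the isomorphism computations used later.
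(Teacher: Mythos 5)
Your proof is correct and its overall skeleton matches the paper's --- extend $\Phi$ from the generators using (i), propagate (ii) and (iii), use (iv) for surjectivity, and finish by the uniqueness of the counit and antipode in a finite dimensional $C^*$-Hopf algebra --- but it genuinely differs at the delicate point, namely how unitality and bijectivity are obtained. The paper argues in the opposite order: it deduces bijectivity directly from (iv), and then unitality is immediate because a multiplicative bijection onto $K'$ sends $1$ to a unit of $K'$, which is unique; after that, $1$ automatically lies in the sets where the involution and the coproduct are intertwined, so the propagation from $A$ to all of $K$ that you spell out (and the paper leaves implicit) is unproblematic. You instead prove $\Phi(1)=1$ first, by a self-contained Hopf-theoretic argument: $p=\Phi(1)$ is a nonzero group-like idempotent, the counit axiom gives $\varepsilon(p)=1$, the antipode axiom makes $p$ invertible, hence $p=1$; surjectivity then follows from (iv) together with unitality, and injectivity from equality of the finite dimensions. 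Your route is more careful exactly where the paper is terse (its ``by (iv), $\Phi$ is bijective'' silently needs the image, a priori only a non-unital subalgebra containing $A'$, to be all of $K'$). Conversely, the paper records one point you omit: a Kac algebra also carries its Haar trace, whose preservation the paper justifies by semi-simplicity and the uniqueness of the normalized trace on each matrix block; this is harmless to skip, since that trace is determined by the $C^*$-algebra structure alone, but it deserves a sentence.

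One shared subtlety you should flag: your evaluation of $(\Phi\otimes\Phi)\circ\Delta=\Delta\circ\Phi$ at $x=1$ is legitimate only if $1$ lies in the set where the two algebra morphisms are already known to agree, i.e.\ in the span of nonempty products of elements of $A$. If $1$ is not such a combination (think of $K=\C\oplus\C$ generated by the single idempotent $(1,0)$), then ``agreement at $1$'' is exactly the group-likeness of $p$ that you are trying to establish, and the step becomes circular; the same caveat applies to your involution and coproduct propagation, which until unitality is known only reaches the non-unital subalgebra generated by $A$. In all the paper's applications the relations (such as $\lambda(b)^2=1$) place $1$ in that subalgebra, so both your proof and the paper's go through; but in the stated generality both arguments tacitly use this hypothesis --- yours at the evaluation at $1$, the paper's at the bijectivity step.
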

\begin{proof}
  Using (i), $\Phi$ extends to a (possibly non unital) algebra
  morphism from $K$ to $K'$. By (iv), $\Phi$ is bijective, and by the
  uniqueness of the unit it is an unital algebra isomorphism.

  Similarly, by (iii) $\Phi$ is a coalgebra isomorphism.  The counit
  and the antipode are preserved, as they are unique in a Hopf algebra
  (see~\cite[III.3.2]{Kassel.1995}).

  By semi-simplicity of the algebra, the trace is preserved; indeed,
  the central idempotents are unique (up to permutation among those of
  same rank), and the normalization of the trace on each corresponding
  matrix block is determined by the rank.  The coinvolution coincides
  with the antipode~\ref{KacHopf} and is preserved.  Finally, the
  involution being an anti-algebra morphism, (ii) ensures it is
  preserved as well.
\end{proof}

\subsubsection{Automatic checks on computer}

Checking the properties listed in
Proposition~\ref{proposition.KacAlgebraIsomorphism} often boils down
to straightforward, pointless, and tedious calculations. We therefore
wish to delegate them to the computer whenever possible. To obtain
Proposition~\ref{proposition.automorphism.KD.ThetaPrime}, we shall in
principle prove those properties for all values of $n$; however it
turns out that, in most cases, it is sufficient to check them only for
small values of $n$, which is easier to automatize. We introduce here
just a bit of formalism that justifies this approach.  For the sake of
simplicity, we do that for the special case of $\KD(n)$, but we later
reuse straightforward analogues in other isomorphism proofs.

The idea is to consider $n$ as a formal parameter, which can be though
of as letting $n$ go to infinity. Namely, let $D_\infty$ be the
(infinite) group generated by $a,b,\an$, subject to the relations
$b^2=\an^2=1$, $\an b=b\an$, and $ab=ba^{-1}$. We denote by $\Pi_{D,
  n}$ the canonical quotient map from $D_\infty$ to $D_{2n}$ which
sends $\an$ to $a^n$ (its kernel is the subgroup generated by $\an
a^{-n}$). Let $\C[D_\infty]$ be the group algebra of $D_\infty$.  The
quotient map $\Pi_{D,n}$ extends to a surjective algebra morphism from
$\C[D_\infty]$ to $\C[D_{2n}]$.  It is further injective whenever
restricted to the subspace of $\C[D_\infty]$ spanned by $a^i \an^j
b^k$ with $2|i| <n$.  Define the degree of an algebraic expression $A$
involving $a$, $b$ and $\an$ as the maximal $d$ such that $a^d$ or
$a^{-d}$ occurs in it.
\begin{proposition}
  \label{proposition.equationAtInfinity}
  An algebraic expression $A$ of degree $d$ which vanishes in some
  $\KD(N)$, $N>2d$, vanishes in $\KD(n)$ for any $n\in\N$.
   \end{proposition}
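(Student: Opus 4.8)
The plan is to read the whole statement as a specialization argument in the formal parameter $n$, resting entirely on the injectivity property recalled just above: $\Pi_{D,n}$ is injective on the subspace of $\C[D_\infty]$ spanned by the monomials $a^i\an^j b^k$ with $2|i|<n$. First I would put $A$ into normal form as an element of the group algebra $\C[D_\infty]$, writing it as a finite linear combination $A=\sum_{i,j,k}\lambda_{i,j,k}\,a^i\an^j b^k$ over the canonical spanning set ($i\in\Z$, $j,k\in\{0,1\}$, after using $\an^2=b^2=1$ to normalize the exponents of $\an$ and $b$). By the definition of the degree, every monomial with $\lambda_{i,j,k}\neq0$ then satisfies $|i|\le d$, so $A$ lies in the subspace $V_d$ spanned by the $a^i\an^j b^k$ with $|i|\le d$.

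Next I would translate the hypothesis into a statement about $\Pi_{D,N}$. Twisting changes only the coproduct and leaves the algebra and the involution of $\KD(N)$ identical to those of $\C[D_{2N}]$; hence evaluating the algebraic expression $A$ in $\KD(N)$, that is substituting $\lambda(a),\lambda(b),\lambda(a^N)$ for $a,b,\an$, is nothing but applying $\Pi_{D,N}$ and reading the result in $\C[D_{2N}]=\KD(N)$. The hypothesis therefore says exactly $\Pi_{D,N}(A)=0$.

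Finally I would invoke the injectivity range. When $N>2d$, every monomial occurring in $A$ satisfies $2|i|\le 2d<N$, so $A\in V_d$ sits inside the subspace on which $\Pi_{D,N}$ is injective; thus $\Pi_{D,N}(A)=0$ forces $A=0$ already in $\C[D_\infty]$. Applying the algebra morphism $\Pi_{D,n}$ to the identity $A=0$ then yields $\Pi_{D,n}(A)=0$, i.e. $A$ vanishes in $\KD(n)$, for every $n\in\N$.

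The argument is soft once the injectivity is granted, and there is no serious obstacle; the one point that deserves care, and the closest thing to a difficulty, is the bookkeeping ensuring $A\in V_d$. One must read the degree off the reduced normal form of $A$ in $\C[D_\infty]$ --- with the relations $\an^2=b^2=1$ and $ab=ba^{-1}$ already applied and all cancellations performed --- since only for the reduced form does the bound $|i|\le d$ hold, and with it the inclusion of $A$ into the injectivity range $2|i|<N$.
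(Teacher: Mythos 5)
Your proof is correct and follows exactly the paper's argument: use the injectivity of $\Pi_{D,N}$ on the span of the $a^i\an^jb^k$ with $2|i|<N$ (which contains $A$ since $N>2d$) to lift the vanishing from $\KD(N)$ up to $\C[D_\infty]$, then push the identity $A=0$ down to every $\KD(n)$ via the algebra morphism $\Pi_{D,n}$. Your extra care about reading the degree off the reduced normal form, and the remark that evaluation in $\KD(N)$ coincides with $\Pi_{D,N}$ because twisting only alters the coproduct, merely make explicit what the paper leaves implicit.
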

\begin{proof}
  Indeed, it can be lifted up via $\Pi_{D,N}$ to $\C[D_\infty]$ and
  then projected down to $\C[D_n]$ for any other $n\in\N$ via
  $\Pi_{D,n}$.
\end{proof}
Note that the bound $N>2d$ is tight because of the cancellation $a^d -
\an a^{-d}=0$ if $n=2d$.

As illustrated in the following example, we often use implicitly
straightforward variants of this proposition.
\begin{example}
  The coproduct $\Delta(a) = \Omega (a\otimes a)\Omega^*$ is of degree
  $1$. One can read off the general formula for its expansion from the
  computation for $N=3$; this expansion has $64$ terms, and is non
  symmetric. However, for $N=2$ there are cancellations: the expansion
  degenerates to $16$ terms and becomes symmetric.
\end{example}

\subsubsection{Technical lemma}

\begin{lemma}
  \label{lemma.automorphism.KD.ThetaPk}
  For all $k\in \Z$,
  \begin{displaymath}
    \Theta'(\lambda(a))^k=\lambda(a^k)-\frac{1}{2}(\lambda(a^k)-\lambda(a^{-k}))(1+\lambda(a^n))\,,
  \end{displaymath}
  and $\Theta'$ preserves the involution.
\end{lemma}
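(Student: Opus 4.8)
The plan is to avoid the induction on $k$ that the stated formula seems to invite (and which would produce unpleasant cross terms), by organizing everything around a central idempotent decomposition. First I would set $z=\lambda(a^n)$ and record the elementary but crucial fact that $a^n$ is central in $D_{2n}$: conjugation by $b$ sends $a^n$ to $a^{-n}$, and $a^{-n}=a^n$ since $a^{2n}=1$. Hence $z$ is a central, self-adjoint element of $\KD(n)$ with $z^2=\lambda(a^{2n})=1$ and $z^*=\lambda(a^{-n})=z$. Consequently
$$e=\tfrac12(1+z)\qquad\text{and}\qquad f=\tfrac12(1-z)$$
are central, self-adjoint, orthogonal idempotents with $e+f=1$ and $ef=fe=0$. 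Since the algebra structure and the involution of $\KD(n)$ are those of $\C[D_{2n}]$ and are untouched by the twist (see~\ref{q}), all of this holds in $\KD(n)$ itself.

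The key observation is that, writing $x=\lambda(a)$, the defining formula rewrites as
$$\Theta'(\lambda(a))=x\,f+x^{-1}e,$$
because $x f+x^{-1}e=x-\tfrac12(x-x^{-1})(1+z)$ is exactly the definition of $\Theta'(\lambda(a))$. Now I would use that $e,f$ split $\KD(n)=e\,\KD(n)\oplus f\,\KD(n)$ as a direct sum of (unital) algebras with respective units $e$ and $f$. On these two summands $\Theta'(\lambda(a))$ acts as $x^{-1}e$ and $xf$, both invertible, so $\Theta'(\lambda(a))$ is invertible and its $k$-th power is $x^{-k}e$ on the first summand and $x^{k}f$ on the second, for every $k\in\Z$. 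Thus
$$\Theta'(\lambda(a))^{k}=\lambda(a^{k})f+\lambda(a^{-k})e,$$
and re-expanding $\lambda(a^{k})f+\lambda(a^{-k})e=\lambda(a^{k})-\tfrac12\bigl(\lambda(a^{k})-\lambda(a^{-k})\bigr)(1+\lambda(a^{n}))$ gives the claimed identity (the cases $k=0$, both sides $e+f=1$, and $k<0$ are automatically covered).

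For the involution, I would read it off the same decomposition. Since $e,f$ are self-adjoint and central, $\Theta'(\lambda(a))^{*}\Theta'(\lambda(a))=(x^{-1}f+xe)(xf+x^{-1}e)=f+e=1$, so $\Theta'(\lambda(a))$ is unitary, matching the unitarity of $\lambda(a)$; hence the involution is preserved on the generator $a$. On the generator $b$, one has $\Theta'(\lambda(b))=\lambda(a^{n}b)$, and $(a^{n}b)^{-1}=b^{-1}a^{-n}=a^{n}b$ shows this element is self-adjoint, matching $\lambda(b)^{*}=\lambda(b)$.

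The only delicate point—and the step I would be most careful about—is the central idempotent bookkeeping: checking that $a^{n}$ is genuinely central (so that $e,f$ commute with \emph{all} of $\KD(n)$, not merely with powers of $a$) and that $z^2=1$ forces $e,f$ to be complementary self-adjoint projections. Once this is secured, both assertions follow with no induction and no parity case analysis on $k$; the entire content of the lemma is the recognition that $\Theta'(\lambda(a))$ is diagonal with respect to the splitting induced by $\lambda(a^{n})$.
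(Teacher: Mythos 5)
Your proof is correct, and it takes a genuinely different (and more structural) route than the paper's. The paper simply names $f_k$ the right-hand side of the claimed identity, checks by a direct computation that $f_kf_1=f_{k+1}$ (using that all elements in sight commute and that $\lambda(a^n)$ is its own inverse), notes $f_{-1}=f_1^*=f_1^{-1}$ to get the involution statement, and concludes by induction on $k$. You package the same two facts --- commutativity and $\lambda(a^n)^2=1$ --- into the central symmetry $z=\lambda(a^n)$ and its spectral projections $e=\tfrac12(1+z)$, $f=\tfrac12(1-z)$, so that $\Theta'(\lambda(a))=\lambda(a)^{-1}e+\lambda(a)f$ is visibly diagonal with respect to the splitting; the formula for all $k\in\Z$, the invertibility, and the unitarity of $\Theta'(\lambda(a))$ then come out in one stroke, with no induction and no separate treatment of negative exponents. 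What the paper's version buys is brevity; what yours buys is transparency: it explains \emph{why} the power formula has this shape, and the involution statement follows exactly as in the paper from $\Theta'(\lambda(a)^*)=\Theta'(\lambda(a^{-1}))=\Theta'(\lambda(a))^{-1}=\Theta'(\lambda(a))^*$, with unitarity now being obvious rather than a computation. One small remark: the point you flag as delicate --- full centrality of $\lambda(a^n)$ in $\KD(n)$ --- is true but not actually needed for this lemma, since every element involved lies in the commutative subalgebra $\C[\langle a\rangle]$, so it suffices that $e$ and $f$ commute with powers of $\lambda(a)$; centrality only matters when the splitting meets elements involving $\lambda(b)$ (as in your side check that $\lambda(a^nb)$ is self-adjoint, which the paper relegates to the proof of the proposition).
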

\begin{proof}
  Write
  $f_k=\lambda(a^k)-\frac{1}{2}(\lambda(a^k)-\lambda(a^{-k}))(1+\lambda(a^n))$.
  Since everything commutes and $\lambda(a^n)$ is its own inverse,
  one checks easily that $f_k f_1 = f_{k+1}$. Specially we have $f_{-1}=f_1^*=f_1^{-1}$, so $\Theta'(\lambda(a)^*)=\Theta'(\lambda(a))^*$. The lemma follows by induction.
\end{proof}

\subsubsection{Proof of Proposition~\ref{proposition.automorphism.KD.ThetaPrime}}
\begin{proof}%
  We check that $\Theta'$ satisfies the properties listed in
  Proposition~\ref{proposition.KacAlgebraIsomorphism}.
  \begin{enumerate}[(i)]
  \item Thanks to Lemma~\ref{lemma.automorphism.KD.ThetaPk},
    $\Theta'(\lambda(a))^{2n}=1$. The relation $\Theta'(\lambda(b))^2=1$ is obvious. The remaining relation
    $\Theta'(\lambda(b))\Theta'(\lambda(a))=\Theta'(\lambda(a))^{-1})\Theta'(\lambda(b))$
   is of degree $1$. Using
    Proposition~\ref{proposition.equationAtInfinity}, it is sufficient to check
    on computer that it holds in $\KD(3)$.
  \item Follows from Lemma~\ref{lemma.automorphism.KD.ThetaPk}
  \item Using a direct extension of
    Proposition~\ref{proposition.equationAtInfinity} for
    $\C[D_{2n}]\otimes\C[D_{2n}]$, we check on computer that, in
    $\KD(3)$ and for $x=a,b$, $(\Theta'\otimes
    \Theta')(\Delta(\lambda(x)))=\Delta(\Theta'(x))$.
  \item We prove that $\Theta'$ is an involution (and therefore an
    isomorphism) by checking on computer that the equation
    $\Theta'(\Theta'(\lambda(a)))=\lambda(a)$ holds in $\KD(3)$. The
    equation $\Theta'(\Theta'(\lambda(b)))=\lambda(b)$ is obvious.\qedhere

  \end{enumerate}
\end{proof}

\subsubsection{The automorphism group of $\KD(n)$}
\label{subsubsection.automorphismGroupKD}

\begin{theorem}
  \label{theorem.automorphisms.KD}
  For $n\geq 3$, the automorphism group $\aut(\KD(n))$ of $\KD(n)$ is
  given by:
  \begin{displaymath}
    A_{2n} = \{\  \Theta_k, \ \Theta_k\Theta'\  \suchthat k\wedge 2n = 1 \}\,.
  \end{displaymath}
  In particular, it is of order $2\varphi(2n)$ (where $\varphi$ is the
  Euler function), and isomorphic to $\Z_{2n}^* \rtimes \Z_2$, where
  $\Z_{2n}^*$ is the multiplicative group of units of $\Z_{2n}$.
\end{theorem}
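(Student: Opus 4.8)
The plan is to show that the listed set $A_{2n}$ really is the full automorphism group by a two-sided argument: first that every element of $A_{2n}$ is a genuine automorphism, and second that no automorphism can fall outside $A_{2n}$. The first inclusion $A_{2n}\subseteq\aut(\KD(n))$ is already essentially in hand: the $\Theta_k$ are automorphisms by Proposition~\ref{proposition.automorphisms.KD.ak} (each $k$ coprime to $2n$ gives one), and $\Theta'$ is an automorphism by Proposition~\ref{proposition.automorphism.KD.ThetaPrime}; composites of automorphisms are automorphisms, so every $\Theta_k$ and every $\Theta_k\Theta'$ lies in $\aut(\KD(n))$. What remains on this side is purely group-theoretic bookkeeping: verifying that these $2\varphi(2n)$ expressions are pairwise distinct and closed under composition, which will simultaneously identify the abstract group structure.

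For the reverse inclusion, the key idea is that \emph{any} Kac-algebra automorphism $\Phi$ must permute the \sacigs, and in particular must fix or permute distinguished ones. The crucial structural fact to exploit is the intrinsic group computed in~\ref{intrinseque}: for $n\geq 3$, $\KD(n)$ is not cocommutative (it is cocommutative only for $n=1,2$), so $\C[H]=J_0$ plays a canonical role. I would argue that an automorphism must send the group algebra $\C[G(K)]$ of the intrinsic group to itself and, more importantly, must preserve the subalgebra generated by the image of $D_{2n}$ under $\lambda$. The strategy is then to pin down $\Phi(\lambda(a))$ and $\Phi(\lambda(b))$: since $\lambda(a)$ and $\lambda(b)$ generate $\KD(n)$ as an algebra, $\Phi$ is determined by these two images, and the defining relations $\lambda(a)^{2n}=1$, $\lambda(b)^2=1$, $\lambda(b)\lambda(a)=\lambda(a)^{-1}\lambda(b)$ together with preservation of the involution and the coproduct severely constrain the possibilities.

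Concretely, I would show that $\Phi$ must carry the element $\lambda(a)$ (or an appropriate canonical unitary built from it) to one of a finite list of candidates of the same order and spectral type, and similarly constrain $\Phi(\lambda(b))$; matching the coproduct constraint $(\Phi\otimes\Phi)\Delta=\Delta\Phi$ then forces $\Phi$ to agree with one of the $\Theta_k$ or $\Theta_k\Theta'$. The automorphisms $\Theta_k$ arise from the choice $\lambda(a)\mapsto\lambda(a^k)$ (the $\varphi(2n)$ group automorphisms of $D_{2n}$ fixing $H$), while the factor of $2$, coming from $\Theta'$, accounts for the extra freedom of moving $\lambda(b)$ to $\lambda(a^nb)$ and correspondingly deforming the image of $\lambda(a)$. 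Identifying the group structure $\Z_{2n}^*\rtimes\Z_2$ is then a matter of checking the composition law $\Theta_k\Theta_{k'}=\Theta_{kk'}$ together with how $\Theta'$ conjugates the $\Theta_k$.

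\textbf{The main obstacle} I anticipate is the reverse inclusion --- proving there are \emph{no other} automorphisms. Exhibiting $A_{2n}$ as a subgroup is routine, but ruling out exotic automorphisms requires genuinely using the coalgebra structure, not just the algebra structure: after all, as the remark following Corollary~\ref{corollary.intrinsicGroup} warns, algebra-isomorphic Kac algebras need not be Kac-isomorphic, so the twisted coproduct is doing essential work. The delicate point is that $\Phi$ need not fix $\C[H]$ (indeed $\Theta'$ does not), so one cannot simply reduce to automorphisms of the underlying group $D_{2n}$; one must show that the only way to move $H$ while preserving the coproduct is via $\Theta'$. I expect this to hinge on a careful analysis of which unitaries of the correct order have a symmetric-enough coproduct, likely leveraging Proposition~\ref{proposition.equationAtInfinity} to reduce the verification to small $n$ and delegate the finite casework to computer computation, exactly as was done for $\Theta'$ itself.
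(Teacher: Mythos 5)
Your easy inclusion $A_{2n}\subseteq\aut(\KD(n))$ matches the paper, and you correctly single out both the hard direction and the relevant canonical invariant (the intrinsic group, which every Kac algebra automorphism must preserve). But there is a genuine gap in your plan for the converse. You propose to pin down $\Phi(\lambda(a))$ and $\Phi(\lambda(b))$ among ``a finite list of candidates of the same order and spectral type'' and then impose the coproduct constraint, delegating the casework to the computer via Proposition~\ref{proposition.equationAtInfinity}. Nothing in your argument produces such a finite list: the unitaries of $\KD(n)$ with the same order and spectral type as $\lambda(a)$ form a continuum, and Proposition~\ref{proposition.equationAtInfinity} can only be invoked once you have a \emph{fixed} algebraic expression of bounded degree to test --- which you do not have until the images of the generators are already known. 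So the reduction to finitely many cases, which is the whole difficulty, is assumed rather than proved. (Also, your remark that $\Phi$ ``must preserve the subalgebra generated by $\lambda(D_{2n})$'' is vacuous, since that subalgebra is all of $\KD(n)$.)

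The missing idea --- and the way the paper closes exactly this gap --- is an untwisting argument exploiting the fact that $\Omega$ lies in $\C[H]\otimes\C[H]$ with $H\subseteq G(\KD(n))$. An automorphism $\Phi$ restricts to an automorphism $\sigma$ of the intrinsic group ($H$ for $n$ odd, $D_4$ for $n$ even), so $(\Phi\otimes\Phi)(\Omega)$ is completely determined by the \emph{finite} datum $\sigma$. Setting $\Delta_\sigma(y)=(\Phi\otimes\Phi)(\Omega)^*\,\Delta(y)\,(\Phi\otimes\Phi)(\Omega)$, one finds $\Delta_\sigma=(\Phi\otimes\Phi)\circ\Delta_s\circ\Phi^{-1}$, which is necessarily cocommutative. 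Since $G(\KD(n))$ has only finitely many automorphisms, and for each candidate $\tau$ the expression $\Delta_\tau(\lambda(a))$ has degree $1$, checking non-cocommutativity for $n=3$ (resp.\ $n=4$) rules out every $\sigma$ except the identity and $a^n\mapsto a^n$, $b\mapsto a^nb$. In the first case $\Phi$ is an automorphism of $(\C[D_{2n}],\Delta_s)$ fixing $H$, hence some $\Theta_k$; in the second, $\Phi\Theta'$ is such an automorphism, hence $\Phi=\Theta_k\Theta'$. This is the mechanism that converts your continuum of candidates into the finite casework your plan needs; without it, the coproduct constraint alone gives you no handle on where $\lambda(a)$ can go.
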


\begin{proof}
  From Propositions~\ref{proposition.automorphisms.KD.ak}
  and~\ref{proposition.automorphism.KD.ThetaPrime}, $\aut(\KD(n))$ contains $A_{2n}$.
  Let us prove the converse.

  Let $\Phi$ be an automorphism of $\KD(n)$. It induces an automorphism
  $\sigma$ of the intrinsic group $G(\KD(n))$ (respectively $H$ for $n$
  odd, and $D_4$ for $n$ even). Note that $(\Phi\otimes\Phi)(\Omega)$
  is completely determined by $\sigma$. Furthermore, we can use it to
  "untwist" the coproduct of $\KD(n)$ into a cocommutative coproduct by
  defining for all $y$ in $\KD(n)$:
  \begin{displaymath}
    \Delta_\sigma(y) = (\Phi\otimes\Phi)(\Omega)^* \Delta(y) (\Phi\otimes\Phi)(\Omega)\,.
  \end{displaymath}
  This coproduct is indeed cocommutative because
  \begin{displaymath}
    \Delta_\sigma(y) = (\Phi\otimes\Phi)(\Omega)^* \Delta(\Phi(x)) (\Phi\otimes\Phi)(\Omega)
    = (\Phi\otimes \Phi) ( \Omega^* \Delta(x) \Omega )
    = (\Phi\otimes \Phi) ( \Delta_s(x) ) \,
  \end{displaymath}
  where $x=\Phi^{-1}(y)$, and $\Delta_s$ is the usual cocommutative
  coproduct of $\C[D_{2n}]$.

  We claim that, for $n\geq 3$, this later property rules out all the
  automorphism of $G(\KD(n))$ but $\sigma=\id$ and $\sigma$ defined by
  $\sigma(a^n)=a^n$ and $\sigma(b) =a^nb$. Consider indeed the case
  $n$ odd (resp. even), and take $\tau$ one of the $4$ (resp. $6$)
  remaining automorphisms of $G(\KD(n))=H$ (resp $=D_4$). The
  expression $\Delta_\tau(\lambda(a))$ is of degree $1$, so it is
  sufficient to check on computer that it is non symmetric for $n=3$
  (resp $n=4$).

  If $\sigma=\id$, then $\Delta_\sigma = \Delta_s$, so $\Phi$ is an
  automorphism of $\C[D_{2n}]$ fixing $H$.  Therefore $\Phi=\Theta_k$
  for some $k$ coprime to $2n$. Otherwise, $\Phi\Theta'$ fixes $H$, so
  $\Phi$ is of the form $\Theta_k\Theta'$.
\end{proof}

\subsubsection{Computing isomorphisms}

The reasoning developed in the previous proof turns into an algorithm
to compute Kac algebra isomorphisms. All the isomorphisms in this
article were conjectured from the application of this algorithm on
small examples (see~\ref{subsubsection.automorphismsK3},
\ref{subsubsection.demo.self-dual},
and~\ref{subsubsection.demo.KDKQ}).
\begin{algorithm}
  \label{algo.isomorphism}
  Let $K=K(\C[G],\,H,\,\Omega)$ be a finite dimensional Kac algebra
  obtained by twisting a group algebra $\C[G]$ with a
  $2$-(pseudo)-cocycle $\Omega$ of a commutative subgroup $H$, and let
  $K'$ be any finite dimensional Kac algebra.  The following algorithm
  returns all the Kac algebra isomorphisms $\phi$ from $K$ to $K'$. In
  particular, the result is non empty if and only if the two algebras
  are isomorphic.
  \begin{itemize}
  \item Compute the intrinsic group $H'$ of $K'$;
  \item For each embedding $\rho$ of $H$ into $H'$:
    \begin{itemize}
    \item Construct $K''=K(K',\,\rho(H),\,\Omega^*)$ whose
      coproduct $\Delta''$ is obtained by (un)twisting the coproduct
      $\Delta'$ of $K'$ by the inverse cocycle $\Omega^*$;
    \item If the untwisted coproduct $\Delta''$ is cocommutative:
      \begin{itemize}
      \item Compute the intrinsic group $G''$ of $K''$;
      \item Compute the group isomorphisms from $G$ to $G''$ which are
        compatible with $\rho$.
      \end{itemize}
    \end{itemize}
  \end{itemize}
\end{algorithm}
\begin{proof}
  If there exist a Kac algebra isomorphism $\phi$ from $K$ to $K'$,
  then the Kac algebra $K'' = K(K',\,\phi(H),\,\Omega^*)$ is
  isomorphic to $\C[G]$.
\end{proof}
This algorithm is not blazingly fast, but we are not
aware of any other algorithms to test systematically the isomorphism
of two Kac algebras (although their definitely should exist one). It
also has some nice theoretical consequences: first there are finitely
many isomorphisms, and second the existence of an isomorphism does not
depend on the actual ground field (typically $\C$ or
$\Q(i,\epsilon)$).

\subsection{The three \sacigs $K_2$, $K_3$, $K_4$ of dimension $2n$}
\label{sacig2n}

We come back to the study of the lattice $\ll(\KD(n))$. The action of
$G(\widehat{\KD(n)})$ on $\KD(n)$ produces three distinct exterior
actions of $\mathbb{Z}_2$ on $N_3$, and the corresponding three fixed
point algebras are three subfactors of index $2$ in $N_3$.

Therefore, $\KD(n)$ admits three \sacigs of dimension $2n$, with Jones
projections $e_1+e_2$, $e_1+e_3$ et $e_1+e_4$ respectively (those are
the only possible projections of trace $1/2n$). We denote by $K_i$ the
\sacig $I(e_1+e_i)$ for $i=1,2,3$.  We postpone the study of $K_3$ and
$K_4$ whose structure depend on the parity of $n$.

\subsection{The Kac subalgebra $K_2=I(e_1+e_2)$ of dimension $2n$}
\label{e1e2}
\label{section.K2}

In this section, we describe completely the \sacig $K_2$ and its
\sacigs. The following result was suggested by computer exploration
(see Appendix~\ref{mupad.K2=Dn}).

\begin{prop}
  \label{proposition.K2}
  The \sacig $K_2=I(e_1+e_2)$ is isomorphic to the Kac algebra of
  functions on the group $D_n$. Its lattice of \sacigs is the dual of
  the lattice of subgroups of $D_n$.
\end{prop}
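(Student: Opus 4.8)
I want to show that $K_2 = I(e_1+e_2)$ is isomorphic, as a Kac algebra, to $L^\infty(D_n)$, the commutative Kac algebra of functions on the dihedral group $D_n$ of order $2n$. Since $L^\infty(D_n)$ is the dual of $\C[D_n]$, and the lattice of coideal subalgebras is contravariant under duality (so $\ll(L^\infty(D_n)) \cong \rl(\C[D_n])$), the statement about the lattice of \sacigs being the dual of the subgroup lattice of $D_n$ will follow once the isomorphism is established. Concretely: the \sacigs of $L^\infty(D_n)$ are exactly the algebras $L^\infty(D_n/H)$ for subgroups $H$ of $D_n$ (as recalled in~\ref{section.group_algebras}), with inclusion reversed relative to inclusion of subgroups.

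**Strategy.** The plan is to produce an explicit isomorphism $K_2 \to L^\infty(D_n)$ and verify it with Proposition~\ref{proposition.KacAlgebraIsomorphism}. The key structural fact is that $K_2$ is \emph{commutative}: since $L^\infty(D_n)$ is commutative, I must first check that $K_2 = I(e_1+e_2)$ is an abelian subalgebra of $\KD(n)$. I would identify the matrix-unit blocks of $\KD(n)$ (the four one-dimensional pieces $\C e_i$ and the $2\times 2$ factors $\KD(n)^k$) that survive in $K_2$, using Remark~\ref{remark.jones_left_legs}: $K_2$ is the linear span of the right legs of $\Delta(p_{K_2})$ where $p_{K_2}=e_1+e_2$. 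So the first computation is to expand $\Delta(e_1+e_2)$ (the coproduct formulas are collected in Appendix~\ref{cop}, and computer checks are available per~\ref{subsubsection.mupad.sacig.K2}) and read off which projections/matrix elements appear. Because $K_2$ has dimension $2n$ and must be commutative and isomorphic to $L^\infty(D_n)$, it should decompose as $2n$ copies of $\C$, i.e.\ it is a maximal abelian subalgebra with $2n$ minimal projections. I expect the minimal projections to be among the $e_i$, the sums like $p^k_{i,i}$, and the projections $q(\alpha,\beta,k)$ introduced in~\ref{q}.

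**The isomorphism.** Having the minimal projections of $K_2$ in hand, I want to index them by the $2n$ elements of $D_n$, so that the coproduct of $K_2$ matches the coproduct $\Delta(\chi_g)(s,t)=\chi_g(st)$ on $L^\infty(D_n)$, equivalently $\Delta(\chi_g)=\sum_{s\in D_n}\chi_s\otimes\chi_{s^{-1}g}$. That is, I would define $\Phi\colon L^\infty(D_n)\to K_2$ sending each characteristic function $\chi_g$ to the corresponding minimal projection of $K_2$, and then verify the four conditions of Proposition~\ref{proposition.KacAlgebraIsomorphism}: (i) the images are orthogonal projections summing to $1$ with the right products (automatic once they are the minimal idempotents of a $2n$-dimensional commutative algebra); (ii) the involution is preserved (projections are self-adjoint, characteristic functions are real); (iv) surjectivity (the images span $K_2$ by construction); and the real content, (iii), that $(\Phi\otimes\Phi)\Delta(\chi_g)=\Delta(\Phi(\chi_g))$. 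For the group structure labelling, I expect the natural generators of $D_n$ to be realized through $a^2$ and $b$ — consistent with Corollary~\ref{corollary.plonge}, where $\langle a^2,b\rangle$ yields a $\KD(m)$-type subalgebra — but here the surviving block structure collapses the twisting so that the result is commutative; pinning down exactly which two involutions-or-rotation generate the $D_n$ labelling is where I would lean on the explicit coproduct expansion.

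**Main obstacle.** The hard part will be matching the \emph{coalgebra} structure, i.e.\ checking condition (iii). The commutativity of $K_2$ and its dimension are relatively direct, but to see that $\Delta$ restricted to $K_2$ is precisely the group multiplication coproduct of $D_n$ (and not, say, of the cyclic group $\Z_{2n}$ or of $\Z_2\times\Z_n$), I need the twisting cocycle $\Omega$ to conspire so that the comultiplication of the minimal projections reproduces dihedral multiplication. I would expect to verify the degree-$\le 1$ coproduct identities using Proposition~\ref{proposition.equationAtInfinity} (checking them in $\KD(3)$ and reading off the general pattern), exactly as in the automorphism proofs, and to let the computer confirm the full expansion. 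Once the dihedral labelling is fixed and condition (iii) checked, the lattice statement is immediate from the group-algebra discussion in~\ref{section.group_algebras} together with the contravariance of $\delta$.
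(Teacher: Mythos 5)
Your overall architecture is the same as the paper's: show that $K_2$ is commutative by reading its block structure off the coproduct of $e_1+e_2$, conclude $K_2\cong L^\infty(G)$ for some group $G$ of order $2n$, identify $G$ by matching the coproducts of the minimal projections against a group law, and then get the lattice statement from the group-algebra theory of~\ref{section.group_algebras}. The paper does exactly this, labelling the minimal projections $e_3+e_4$, $r^{2j-1}_{i,i}$, $e^{2j}_{i,i}$ by group elements $\beta_j,\beta'_j,\alpha_{\pm j}$ and extracting the relations $\alpha_k=\alpha^k$, $\beta'_1=\alpha\beta_1$, $\alpha\beta=\beta\alpha^{-1}$ from specific tensor components of $\Delta(e^2_{1,1})$ and $\Delta(e_3+e_4)$, with a case distinction on the parity of $n$.

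However, your mechanism for handling general $n$ has a genuine flaw. You propose to verify the needed coproduct identities ``using Proposition~\ref{proposition.equationAtInfinity} (checking them in $\KD(3)$ and reading off the general pattern).'' That proposition only applies to algebraic expressions in $\lambda(a),\lambda(b),\lambda(a^n)$ of \emph{bounded degree independent of $n$}, and the identities you need are not of this kind: the minimal projections of $K_2$ are full averages over the group (e.g.\ $e^j_{1,1}=\frac{1}{2n}\sum_k \epsilon_n^{-jk}\lambda(a^k)$, of degree roughly $2n$), and their number grows with $n$ --- indeed the group $G$ you are trying to identify changes with $n$, so no single computation in $\KD(3)$ can carry the information. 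This is precisely why the paper does \emph{not} invoke Proposition~\ref{proposition.equationAtInfinity} here; instead it computes the standard (untwisted) coproducts for general $n$ by hand in Appendix~\ref{copG} using the formulas of~\ref{form2}, and then passes to the twisted coproducts via Remark~\ref{remark.omega}, whose point is that conjugation by $\Omega$ acts uniformly on all $M_2(\C)$ factors of a given parity --- that is the legitimate ``uniformity in $n$'' argument. (Computer runs for $n=3,4$ in Appendix~\ref{cop} only serve to guess and confirm the pattern.) To close your proof you would have to replace the $\KD(3)$-check by this kind of general-$n$ computation, and also carry out the parity case distinction that you leave implicit when ``pinning down the $D_n$ labelling'': ruling out $\Z_{2n}$ and $\Z_2\times\Z_n$ is exactly the content of the relations derived from $\Delta(e^2_{1,1})$ and $\Delta(e_3+e_4)$, and these read differently for $n$ even and $n$ odd.
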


\begin{proof}
  First, $K_2=I(e_1+e_2)$ is indeed a Kac subalgebra of $\KD(n)$
  because $\Delta(e_1+e_2)$ is symmetric (see Appendix~\ref{cop}).
  Looking further at the expression of this coproduct yields the
  following block matrix structure for $K_2$:
  \begin{displaymath}
    K_2=\C(e_1 +e_2)\oplus  \C(e_3 +e_4) \oplus \bigoplus_{j=1,\,j \text{ odd}}^{n-1}
    \left(\C r^j_{1,1}\oplus \C r^j_{2,2}\right)\oplus \bigoplus_{j=1,\,j \text{ even}}^{n-1} \left(\C e^j_{1,1}\oplus \C e^j_{2,2}\right)\,.
  \end{displaymath}
  All the blocks are trivial, so $K_2$ is commutative. Therefore it is
  the Kac algebra $L^\infty(G)$ of functions on some group $G$. The
  minimal projections are the characteristic functions $\chi_g$ of the
  elements of $g\in G$. To start with, $\chi_1=e_1+e_2$, since the
  later is the support of the counit of $I(e_1+e_2)$. We denote the
  others as follows: $e_3+e_4=\chi_{\beta_0}$,
  \begin{align*}
    r^{2j-1}_{1,1}&=\chi_{\beta_j},&&r^{2j-1}_{2,2}=\chi_{\beta'_j},&&\text{for } j=1,\dots,m\,,\\
    e^{2j}_{1,1}&=\chi_{\alpha_j},&&e^{2j}_{2,2}=\chi_{\alpha_{-j}},&&\text{for
    } j=1,\dots,m'\,.
  \end{align*}
  with $m'=\lfloor(n-1)/2\rfloor$. For short, we also write $\alpha=\alpha_1$.

  The group law on $G$ is determined by the coproducts of those
  central idempotents: $\Delta(\chi_g)= \sum_{hk=g} \chi_h \otimes
  \chi_k$ (the expressions of those coproducts are computed in
  Appendix~\ref{cop}). In particular, $\Delta(e_1+e_2)$ gives the
  inverses: the $\beta_j$ are idempotents, while $\alpha_j$ is the
  inverse of $\alpha_{-j}$. To obtain the remaining products, we need
  to distinguish between $n$ even and odd.

  \textbf{Case $n=2m$:} From the expression of $\Delta(e^2_{1,1})$,
  one get:
  \begin{itemize}
  \item for $j>2$ even, $\Delta(e^2_{1,1})(e^{j-2}_{2,2}\otimes
    e^{j}_{1,1})=e^{j-2}_{2,2}\otimes e^{j}_{1,1}$. Therefore, for
    $k=2,\dots,m-1$, $\alpha_{k}=\alpha_{k-1}\alpha_1$, and by
    induction $\alpha_{k}=\alpha^k$ and $\alpha_{-k}=\alpha^{-k}$.

  \item $\Delta(e^2_{1,1})((e_3+e_4)\otimes
    e^{n-2}_{2,2})=(e_3+e_4)\otimes e^{n-2}_{2,2}$. Therefore,
    $\beta_0=\alpha\alpha_{m-1}=\alpha^m$ and $\alpha$ is of order
    $n$.

  \item For $j>2$ odd, $\Delta(e^2_{1,1})(r^{j-2}_{1,1}\otimes
    r^{j}_{1,1})=r^{j-2}_{1,1}\otimes r^{j}_{1,1}$ and
    $\Delta(e^2_{1,1})(r^{n-j+2}_{2,2}\otimes r^{n-j}_{2,2}
    )=r^{n-j+2}_{2,2}\otimes r^{n-j}_{2,2}$. It follows that for
    $k=1,\dots,m-1$, $\beta_{k+1}=\beta_{k}\alpha$ and
    $\beta'_{k+1}=\alpha\beta'_{k}$ and by induction
    $\beta_{k+1}=\beta_1\alpha^k$ et $\beta'_{k+1}=\alpha^k\beta'_1$.
  \item $\Delta(e^2_{1,1})(r^1_{2,2}\otimes
    r^{1}_{1,1})=r^1_{2,2}\otimes r^{1}_{1,1}$. Therefore,
    $\beta'_1=\alpha\beta_1$.
  \end{itemize}
  The group $G$ is generated by $\alpha$ of order $n$ and
  $\beta=\beta_1$ of order $2$. From the expression $\Delta(e_3+e_4)$
  given in~\ref{cop}, one get: $\Delta(e_3+e_4)(r^{2m-1}_{1,1}\otimes
  r^{1}_{2,2})=(r^{2m-1}_{1,1}\otimes r^{1}_{2,2})$, that is
  $\beta'_1=\beta_{m}\beta$, which gives the relation
  $\alpha\beta=\beta\alpha^{-1}$. Therefore, $G$ is the dihedral group
  $D_n$.

  \textbf{Case $n=2m+1$:} From the expression of $\Delta(e^2_{1,1})$
  one get:
  \begin{itemize}
  \item for $j>2$ even, $\Delta(e^2_{1,1})(e^{j-2}_{2,2}\otimes
    e^{j}_{1,1})=e^{j-2}_{2,2}\otimes e^{j}_{1,1}$. Therefore, for
    $k=2,\dots,m$ $\alpha_{k}=\alpha_{k-1}\alpha_{1}$, and by
    induction $\alpha_{k}=\alpha^k$ and $\alpha_{-k}=\alpha^{-k}$.

  \item $\Delta(e^2_{1,1})(e^{n-1}_{2,2}\otimes
    e^{n-1}_{2,2})=e^{n-1}_{2,2}\otimes e^{n-1}_{2,2}$. Therefore,
    $\alpha$ is of order $n$.
  \end{itemize}
  The expression of the coproduct of $e_3+e_4$ gives the following
  relations for $j=1,\dots,m$, $\beta_{m+1-j}=\beta_0\alpha_{-j}$ and
  $\beta'_{m+1-j}=\beta_0\alpha_{j}$.

  Since $\beta_{m+1-j}$ is of order $2$, for all $k=0,\dots,n-1$, the
  following commutation relation holds:
  $\beta_0\alpha^{-k}=\alpha^{k}\beta_0$ .  Therefore, $G$ is
  generated by $\alpha$ of order $n$ and $\beta=\beta_0$ of order $2$
  with the relation $\alpha\beta=\beta\alpha^{-1}$: it is again the
  dihedral group $D_n$.
\end{proof}

\subsection{The \sacigs of $K_2$}\label{sacigsK2}
Each \sacig of $K_2$ is the subspace of invariant functions on the
right cosets of some subgroup of $D_n$.  In
Appendix~\ref{subsubsection.mupad.sacig.K2} we show how to get all the
Jones projection on computer for $n$ small. We do it here explicitly
for some examples for all $n$:
\begin{itemize}
\item Take $m$ such that $n=2m$ or $n=2m+1$. The $2m+1$ subgroups of
  order $2$ of $D_n$ induce $2m+1$ \sacigs of dimension $n$. Their
  Jones projection are respectively: $e_1+e_2+e_3+e_4$,
  $e_1+e_2+r_{1,1}^{2j+1}$ and $e_1+e_2+r_{2,2}^{2j+1}$ with
  $j=0,\dots,m-1$.
\item The subgroup generated by $\alpha$ induces the \sacig
  $I_0=I(e_1+e_2+q_1+q_2)$ of dimension $2$.
\item If $n$ is even, the subgroup generated by $\alpha^2$ induces a
  \sacig $J_{20}$ of dimension $4$ contained in $K_0$ (see~\ref{sacigK0}).
\end{itemize}

\subsection{The \sacig $K_1=I(e_1+e_2+e_3+e_4)$ of dimension $n$}
\label{K1}

The intersection $K_1$ of the three \sacigs $K_2$, $K_3$ et $K_4$ is
the image by $\delta$ of the algebra of the intrinsic group of
$\widehat{\KD(n)}$. Its dimension is $n$; its Jones projection is
$e_1+e_2+e_3+e_4$ since it is of trace $1/n$ and dominates $e_1+e_2$,
$e_1+e_3$, and $e_1+e_4$.  The connected component of $e_1$ in the
Bratelli diagram of $I(e_1+e_2+e_3+e_4) \subset \KD(n)$ is the
principal graph of the inclusion $N \subset N\rtimes \mathbb{Z}_2
\times \mathbb{Z}_2$. The structure of $K_1$ depends on the parity of
$n$ and is further elucidated in~\ref{K1pair}
and~\ref{K1impair}.

\subsection{\Sacigs of dimension dividing  $2n$}
\label{sacign}

The following proposition allows for recovering many, if not all, of
the \sacigs recursively from those of $K_2$, $K_3$, and $K_4$.
\begin{prop}
  If the dimension of a \sacig $I$ of $\KD(n)$ divides $2n$ (and not
  just $4n$), then either its Jones projection $p_I$ dominates
  $e_1+e_2+e_3+e_4$ and then $I$ is contained in $K_1$, or there
  exists a unique $i \in \{2,3,4\}$ such that $p_I$ dominates
  $e_1+e_i$ and $I$ is a sub\sacig of $K_i$. In particular, if $n$ is
  a power of $2$, then any \sacig is contained in one of the $K_i$'s.
\end{prop}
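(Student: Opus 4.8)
The plan is to extract everything from the trace of the Jones projection $p_I$, using the block structure of $\KD(n)$ recalled in~\ref{q} together with the order-reversing correspondence $I_1\subseteq I_2 \Leftrightarrow p_{I_2}\leq p_{I_1}$ of~\ref{tour}~(5).

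First I would record that $p_I$ always dominates $e_1$. Indeed $e_1$ is the support of the (unchanged) counit of $\KD(n)$, and therefore the Jones projection of $\KD(n)$ regarded as a \sacig of itself: it has trace $1/4n=1/\dim\KD(n)$, as demanded by Proposition~\ref{prop.resumep}~(1). Since $I\subseteq\KD(n)$, \ref{tour}~(5) gives $e_1=p_{\KD(n)}\leq p_I$. Hence I may write
$$p_I=\sum_{i\in S}e_i+\sum_{k=1}^{n-1}P_k,$$
with $S\subseteq\{1,2,3,4\}$, $1\in S$, and each $P_k$ a projection of rank $r_k\in\{0,1,2\}$ in the factor $\KD(n)^k\cong M_2(\C)$. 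From $tr(e_i)=1/4n$ and $tr(e^k_{j,j})=1/2n$ (see~\ref{q}) the trace reads
$$tr(p_I)=\frac{|S|+2\sum_k r_k}{4n}.$$

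Next I would feed in $tr(p_I)=\tau_I=1/\dim I$ from Proposition~\ref{prop.resumep}~(1), so that $\dim I=4n/N$ with $N:=|S|+2\sum_k r_k$. Now $\dim I$ divides $2n$ exactly when $2n/\dim I=N/2$ is an integer, i.e.\ when $N$ is even; and since $N=|S|+2\sum_k r_k$, this is equivalent to $|S|$ being even. As $1\in S$, I conclude $|S|\in\{2,4\}$. This parity observation is really the whole content of the statement.

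Finally I would dispose of the two cases. If $|S|=4$ then $p_I\geq e_1+e_2+e_3+e_4=p_{K_1}$, so $I\subseteq K_1$ by~\ref{tour}~(5). If $|S|=2$ then $S=\{1,i\}$ for a unique $i\in\{2,3,4\}$, whence $p_I\geq e_1+e_i=p_{K_i}$ (the remaining $P_k$ being positive and orthogonal to the $e_j$) and $I\subseteq K_i$, with $i$ unique because $S$ contains exactly one index besides $1$. For the closing remark, when $n=2^\ell$ one has $\dim\KD(n)=2^{\ell+2}$, and every \sacig other than $\KD(n)$ itself has dimension a proper divisor of $4n$, hence a power of $2$ at most $2n$; such a dimension divides $2n$, so the main statement applies and lodges the \sacig inside one of $K_1,\dots,K_4$. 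The argument is essentially bookkeeping with traces; the only place requiring a little care is the justification that $p_I\geq e_1$, which is where irreducibility (so that $h=1$ and $p_I$ is a bona fide projection dominating the counit support) enters.
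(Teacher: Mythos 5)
Your proof is correct and follows essentially the same route as the paper's: both arguments decompose $p_I$ (which dominates $e_1$) according to the block structure of $\KD(n)$, observe that $tr(p_I)=1/\dim I$ forces the number of rank-one central blocks $e_i$ under $p_I$ to be even exactly when $\dim I$ divides $2n$, and then conclude via the order-reversing correspondence of~\ref{tour}~(5). Your extra care in justifying $e_1\leq p_I$ and in excluding $\KD(n)$ itself in the power-of-two case only makes explicit what the paper leaves implicit.
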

\begin{proof}
  The Jones projection $p_I$ of a \sacig $I$ dominates $e_1$. Given
  the block matrix structure of $\KD(n)$, it is the sum of $x$
  projections of trace $1/4n$ (with $x\geq 1$) et and $y$ projections
  of trace $1/2n$. Its trace $tr(p_I)=(\dim I)^{-1}$, is therefore
  $(x+2y)/4n$ (see~\ref{resumep}). If the dimension of $I$ divides
  $2n$ (which is always the case when $n$ is a power of $2$), $x$ must
  be even, and therefore $p_I$ takes one of the two following forms:
  \begin{itemize}
  \item $e_1+e_2+e_3+e_4+s$, with $s$ projection and $tr(s)=(\dim I)^{-1}-n^{-1}$;
  \item $e_1+e_i+s'$, with $i=2$, $3$, or $4$, and $s'$ projection of
    trace $(\dim I)^{-1}-(2n)^{-1}$.
  \end{itemize}
  The proposition follows from~\ref{tour}~(5).
\end{proof}

In~\ref{sacignimpair}, we will establish an even more precise result
when $n$ is odd.

\subsection{\Sacigs induced by Jones projections of subgroups}
\label{abelien}

We have seen in~\ref{plonge} that the algebras of the subgroups
containing $H$ are \sacigs of $\KD(n)$. The algebras of the other
subgroups are usually not \sacigs in $\KD(n)$; yet, in many cases,
their Jones projections remain Jones projections in $\KD(n)$.
\begin{prop}
  \label{prop.abelien}
  The Jones projections of the following subgroups of $D_{2n}$ are
  Jones projections of \sacigs of $\KD(n)$ of dimension the order of
  the subgroup. If the subgroup is commutative, then so is the \sacig.
  \begin{itemize}
  \item $H_r=\{1,  \lambda(a^n), \lambda(ba^r), \lambda(ba^{r+n}) \}$
  for $r=0,\dots,n-1$;
  \item $A_k$, subgroup generated by $a^k$, for $k$ such that $2n=kd$;
  \item $B_k$, subgroup generated by $a^k$ and $b$, for $k$ such that $2n=kd$.
  \end{itemize}
\end{prop}
For example, in $\KD(9)$, the Jones projection of $\C[A_3]$ is that of
$M_2$, and that of $\C[A_6]$ generates $L_0$ (see
Figure~\ref{KD9}). We refer to~\ref{K4impair} and~\ref{sacig4impair}
for further applications, and to~\ref{lattice_K3_even} and~\ref{KD6-8}
for examples of Jones projections given by dihedral subgroups.

Not every subgroup, even commutative, gives a Jones projection in the
twisted Kac algebra; for example, in $\KD(n)$ with $n\geq 3$, there
are only three \sacigs of dimension $2$ whereas $D_{2n}$ has $2n+1$
subgroups of order $2$. Reciprocally, not all Jones projections are
given by subgroups; for example, in $\KD(3)$ there are three \sacigs
of dimension $3$ whereas $D_6$ has a single subgroup of order $3$.

\begin{problem}
  Let $K$ be a Kac algebra by twisting a group algebra
  $\C[G]$. Characterize the subgroups of $G$ whose Jones projections
  remain Jones projections in $K$.
\end{problem}

We now turn to the proof of the proposition, starting with a general
lemma.
\begin{lemma}
  \label{lemma.abelien}
  Let $K$ be a Kac algebra and $p$ be a projection which dominates the
  Jones projection $f_2$ of $K$, and such that $\Delta(p) = \sum_{i=1}^d Q_i\otimes P_i$, where:
  \begin{itemize}
  \item the families $(P_i)_{i=1,\dots,d}$ and $(Q_i)_{i=1,\dots,d}$
    are linearly independent;
  \item $d = 1/tr(p)$;
  \item $I=\bigoplus_{i=1}^d \C P_i$ is a unital involutive
    subalgebra of $K$.
  \end{itemize}
  Then, $I$ is a \sacig of Jones projection $p$. In particular, $Q_i =
  S(P_i^*)$.
\end{lemma}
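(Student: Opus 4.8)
The plan is to prove the three assertions in turn: that $I$ is a left coideal subalgebra, that $p$ is its Jones projection, and finally the explicit form $Q_i=S(P_i^*)$ of the left legs. For the coideal property I would exploit coassociativity of the coproduct of the Kac algebra $K$. Applying $(\Delta\otimes\id)$ and $(\id\otimes\Delta)$ to the two sides of $\Delta(p)=\sum_i Q_i\otimes P_i$ and equating gives
\[
  \sum_i Q_i\otimes\Delta(P_i)=\sum_i\Delta(Q_i)\otimes P_i ,
\]
an identity in $K\otimes K\otimes K$ whose right-hand side visibly lies in $K\otimes K\otimes I$. Since the $Q_i$ are linearly independent I would choose $\phi_j\in K^*$ with $\phi_j(Q_i)=\delta_{ij}$ and apply $\phi_j\otimes\id\otimes\id$: the left-hand side collapses to $\Delta(P_j)$, while the right-hand side remains in $K\otimes I$. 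Hence $\Delta(P_j)\in K\otimes I$ for every $j$, so $\Delta(I)\subset K\otimes I$. Together with the hypothesis that $I$ is a unital involutive subalgebra, this is exactly the definition of a \sacig (see~\ref{sacig}).

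Next I would identify $p$ with the Jones projection $p_I$. Applying $\varepsilon\otimes\id$ to $\Delta(p)$ yields $p=\sum_i\varepsilon(Q_i)P_i$, so $p$ is a projection belonging to $I$ and dominating $f_2$. To see that $p$ in fact generates $I$, observe that any \sacig $J$ containing $p$ satisfies $\Delta(p)\in K\otimes J$; applying $\phi\otimes\id$ for arbitrary $\phi\in K^*$ gives $\sum_i\phi(Q_i)P_i\in J$, and, the $Q_i$ being linearly independent, these elements sweep out all of $\Vect\{P_i\}=I$, whence $I\subset J$. As $I$ is itself a \sacig containing $p$, the \sacig generated by $p$ is precisely $I$. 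Now $p$ is a projection of trace $1/d$ with $d=\dim I$ a divisor of $\dim K$ (Proposition~\ref{prop.resumep}), it dominates $f_2$, and it generates the \sacig $I$ of dimension $d$; by Remark~\ref{remark.resumep} this forces $p=p_I$.

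Finally, for the identity $Q_i=S(P_i^*)$, I would compare the given expansion with the canonical coproduct of the Jones projection from Proposition~\ref{prop.resumep}~(3), namely $\Delta(p_I)=(\dim I)^{-1}\sum_t S(c_t^*)\otimes c_t$ for a trace-normalized family $(c_t)$ of matrix units of $I$. Because $I=\bigoplus_i\C P_i$ is commutative, its matrix units are scalar multiples of the minimal projections $P_i$; substituting and matching coefficients against the linearly independent right legs $P_i$ then expresses each $Q_i$ as a scalar multiple of $S(P_i^*)$. The step demanding the most care---and the one I regard as the genuine obstacle---is the normalization: the clean form $Q_i=S(P_i^*)$ emerges exactly when all the minimal projections $P_i$ carry the common trace $1/d$, so that this scalar equals $1$. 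This balance holds in the situations where the lemma is applied, where $I$ is the function algebra of a homogeneous space and the $P_i$ are its equal-weight point masses; I would verify it explicitly, since the literal statement of the identity rests on it.
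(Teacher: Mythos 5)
Your argument for the two main assertions is correct and follows exactly the paper's route: coassociativity applied to $\Delta(p)=\sum_i Q_i\otimes P_i$, evaluated against a dual family of the linearly independent $Q_i$, gives $\Delta(I)\subset K\otimes I$, and the identification $p=p_I$ then comes from Remark~\ref{remark.resumep}. The paper's proof is terser at the second step -- it does not spell out, as you do, that $p=\sum_i\varepsilon(Q_i)P_i$ lies in $I$ and that any \sacig containing $p$ must contain every $P_i$, so that $p$ generates $I$ -- but the ideas are the same.

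Where you genuinely depart from the paper is the final clause $Q_i=S(P_i^*)$, which the paper dispatches by citing Proposition~\ref{prop.resumep} with no further detail. Your normalization caveat is a real mathematical point, not pedantry: the hypotheses of the lemma are invariant under the rescaling $P_i\mapsto\lambda_i P_i$, $Q_i\mapsto\lambda_i^{-1}Q_i$, whereas $S(P_i^*)$ rescales as $\overline{\lambda_i}\,S(P_i^*)$, so the identity $Q_i=S(P_i^*)$ cannot be a formal consequence of the hypotheses as written; it requires a normalization of the $P_i$. Matching against $\Delta(p_I)=(\dim I)^{-1}\sum_t S(c_t^*)\otimes c_t$ for trace-normalized matrix units, as you do, yields $Q_i=\bigl(d\,tr(P_i)\bigr)^{-1}S(P_i^*)$ when the $P_i$ are pairwise orthogonal projections, so the clean identity holds exactly when $tr(P_i)=1/d$ for all $i$. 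Your additional assumption that $I$ is commutative with the $P_i$ as minimal projections is likewise not among the stated hypotheses, but it -- together with the trace condition -- is satisfied in the paper's application (see~\ref{J4}, where the $P_i$ are the four orthogonal projections $Q_r$, $Q'_r$, $\tilde{P}_r$, $\tilde{P'}_r$, each of trace $1/4$), and without some such rigidity the last clause is simply not determined by the hypotheses. In short: same proof as the paper for the \sacig and Jones-projection statements, and a more careful treatment than the paper's of the ``in particular'' clause.
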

\begin{proof}
  This is a consequence of the coassociativity of $\Delta$. Indeed,
  \begin{equation}
    \sum_{i=1}^d Q_i \otimes \Delta(P_i) =
    (\id \otimes \Delta)\Delta(p) =
    (\Delta \otimes \id)\Delta(p) =
    \sum_{i=1}^d \Delta(Q_i) \otimes P_i\,;
  \end{equation}
  applying $\widehat Q_j \otimes \id \otimes \id$ on both sides of
  this equation, where $(\widehat Q_i)_{i\in I}$ is a family of linear
  forms such that $\widehat Q_i(Q_j)=\delta_{i,j}$, yields that
  $\Delta(P_j)\in K\otimes J$. Therefore, $\Delta(J)\subset K\otimes
  J$, and the result follows by Proposition~\ref{resumep} and
  Remark~\ref{remark.resumep}.
\end{proof}

\begin{proof}[Proof of proposition~\ref{prop.abelien}]
  We shall see in~\ref{J4} that the coproduct of the Jones
  projection $Q_r$ of $\C[H_r]$ is of the form
  $\Delta(Q_r)=\sum_{i=1}^4 S(P_i)\otimes P_i$, where $P_1=Q_r$ and
  $1=P_1+P_2+P_3+P_4$ is a decomposition of the identity into
  orthogonal projections. Then, $J_r=\sum_i \C P_i$ is an involutive
  commutative subalgebra of dimension $4$, while $tr(Q_r)=1/4$.  The
  result follows by applying Lemma~\ref{lemma.abelien}.

  If $d$ is even, the Jones projection of $\C [A_k]$ generates a
  commutative \sacig, which is the image of $K_2$ in $\KD(d/2)$
  through the embedding $\varphi_k$ in $\KD(n)$ (see~\ref{plonge} and
  the expression of $e_1+e_2$ in~\ref{form2}).  For example, the Jones
  projection of $\C[A_1]$ is $e_1+e_2=p_{K_2}$.
  Otherwise, $k$ is even, and the Jones projection of $\C[A_k]$
  generates a commutative \sacig which is the image of $K_1$ in
  $\KD(d)$ through the embedding $\varphi_{k/2}$ in $\KD(n)$.  For
  example, the Jones projection of $\C[A_2]$ is
  $e_1+e_2+e_3+e_4=p_{K_1}$.

  If $k$ divises $n$, $\mathbb{C}[B_k]$ is isomorphic to $KD[d/2]$
  (see~\ref{plonge}). If $k=2k'$ does not divide $n$, the Jones
  projection of $\C[B_k]$ generates a \sacig which is the image of
  $K_4$ of $\KD(d)$ through the embedding $\varphi_{k/2}$ in $\KD(n)$.
 \end{proof}

\newpage
\section{The Kac algebras $\KD(n)$ for $n$ odd}
\label{section.KD.odd}

The study of $\ll(\KD(n))$ for $n$ odd led us to conjecture, and later
prove, that $\KD(n)$ is self-dual in that case. This property is not
only interesting by itself. First, this is a useful tool for
constructing new \sacigs by duality (see
e.g. Appendix~\ref{subsubsection.demo.delta}), and unraveling the
self-dual lattice $\ll(\KD(n))$; in particular, this is a key
ingredient to get the full lattice when $n$ is prime. Last but not
least, this allows us to describe completely the principal graph for
some inclusions $N_2 \subset N_2 \rtimes I$ with $I$ \sacig of $\KD(n)$
(which requires information on $\delta(I)$).

After establishing the self-duality, we describe the symmetric
\sacigs, as well as those of dimension $2n$, of odd dimension, and of
dimension $4$. Putting everything together, we get a partial
description of $\ll(\KD(n))$ (Theorem~\ref{theorem.lattice.nodd}) which
is conjecturally complete (Conjecture~\ref{conj.lattice.nodd}). This
is proved for $n$ prime (Corollary~\ref{corollary.lattice.nprime}) and
checked on computer up to $n=51$. We conclude with some illustrations
on $\KD(n)$ for $n=3,5,9,15$.

\subsection{Self-duality}\label{self-dualKD}

In Appendix~\ref{subsubsection.demo.self-dual}, the self-duality of
$\KD(n)$ for $n$ odd is checked for $n\leq 21$ by computing an
explicit isomorphism. From this exploration, we infer the definition
of $\psi$, candidate as Kac isomorphism from $\KD(n)$ to its dual.

The dual of $\C[D_{2n}]$ is the algebra $L^{\infty}(D_{2n})$ of
functions on $D_{2n}$. We denote by $\chi_g$ the characteristic
function of $g$, so that $\{\chi_g, g \in G\}$ is the dual basis of
$\{\lambda(g), g\in G\}$.
We denote by $x \mapsto \hat{x}$ the vector-space isomorphism from
$\C[D_{2n}]$ to its dual $L^{\infty}(D_{2n})$ which extends
$\lambda(g)\mapsto \chi_g$ by linearity.

\label{self-dual}

The following theorem is the main result of this section:
\begin{thm}
  \label{theorem.self-dual}
  Let $n$ be odd. Then $\KD(n)$ is self-dual, via the Kac algebra
  isomorphism $\psi$:
  \begin{displaymath}
    a \mapsto n(2\widehat{e^{n-1}_{1,1}}+ \widehat{e^1_{2,2}}- (\widehat{e^1_{1,1}}+\widehat{e^{n-1}_{1,2}}+\widehat{e^{n-1}_{2,1}})) \qquad\qquad
    b \mapsto 4n\widehat {e_4}\,.
  \end{displaymath}
\end{thm}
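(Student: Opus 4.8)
The plan is to verify that $\psi$ satisfies the four conditions of Proposition~\ref{proposition.KacAlgebraIsomorphism}, applied with $K=\KD(n)$, $K'=\widehat{\KD(n)}$, and with $A=\{\lambda(a),\lambda(b)\}$ the group generators of $\KD(n)$. The key structural point is that, since the target is the dual algebra, every product of the images $\psi(\lambda(a)),\psi(\lambda(b))$ is, by duality, governed by the coproduct $\Delta$ of $\KD(n)$, whereas the coproduct $\widehat\Delta$ of the dual applied to those images is governed by the multiplication of $\KD(n)$. Thus conditions (i)--(iv) translate into concrete identities involving $\Delta$ and the product of $\KD(n)$, whose explicit formulas are gathered in Appendix~\ref{section.formulas}.

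I would start with the cheap conditions. Condition (ii) amounts to checking that $\psi(\lambda(a))$ is unitary and $\psi(\lambda(b))$ self-adjoint in the dual; both follow from the explicit action of the dual involution on the basis elements $\widehat{e^k_{i,j}}$ and $\widehat{e_4}$. Condition (iii) is handled by the degree argument: the coproducts $\Delta(\lambda(a))$ and $\Delta(\lambda(b))$ are of degree $1$, hence so are the identities $(\psi\otimes\psi)(\Delta(\lambda(x)))=\widehat\Delta(\psi(\lambda(x)))$ for $x=a,b$; by the tensor-product variant of Proposition~\ref{proposition.equationAtInfinity} it then suffices to check them by computer for one small odd value of $n$ and to lift the conclusion to all odd $n$.

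Condition (i) consists of the three defining relations of $D_{2n}$. The relation $\psi(\lambda(b))^2=1$ and the braid relation $\psi(\lambda(b))\psi(\lambda(a))=\psi(\lambda(a))^{-1}\psi(\lambda(b))$ are again of bounded degree, so they reduce to a finite computation lifted by Proposition~\ref{proposition.equationAtInfinity}. The remaining relation $\psi(\lambda(a))^{2n}=1$ is the crux, and it cannot be obtained this way because its degree grows with $n$, so the lift-from-small-$n$ strategy breaks down. The plan here is to compute the powers $\psi(\lambda(a))^k$ explicitly in the dual: using the coproduct formulas of Appendix~\ref{section.formulas}, one determines which basis elements $z$ of $\KD(n)$ contribute tensors built from $\{e^{n-1}_{i,j},e^1_{i,j}\}$ to $\Delta(z)$, extracts a closed recursive description of $\psi(\lambda(a))^k$, and verifies that it is periodic of period exactly $2n$ --- this is where the oddness of $n$ enters. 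These lengthy computations are precisely those deferred to Appendix~\ref{section.formulas}, and establishing $\psi(\lambda(a))^{2n}=1$ is the main obstacle of the proof.

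Finally, condition (iv) follows once (i)--(iii) hold: $\psi$ is then an algebra morphism between two algebras of the same finite dimension $4n$, so it suffices to show that its image generates $\widehat{\KD(n)}$. Concretely, one checks that the image contains the generators of the intrinsic group $\Z_2\times\Z_2$ of $\widehat{\KD(n)}$ (see~\ref{intrinseque}) together with enough matrix units to span the dual; equivalently, that $\psi$ carries the minimal central idempotents of $\KD(n)$ onto those of $\widehat{\KD(n)}$, which already forces injectivity and hence, by the dimension count, bijectivity.
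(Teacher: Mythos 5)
Your plan founders on the reduction of conditions (i) and (iii) to small-$n$ computer checks. Proposition~\ref{proposition.equationAtInfinity} applies to algebraic expressions of bounded degree in $\lambda(a)$, $\lambda(b)$, $\lambda(a^n)$ inside the group algebra $\C[D_{2n}]$: such an expression lifts to $\C[D_\infty]$ and projects back down to every $\KD(n)$. The identities you need, however, live in $\widehat{\KD(n)}$ and in $\widehat{\KD(n)}\otimes\widehat{\KD(n)}$, and the elements involved have no bounded degree in any usable sense: $\psi(\lambda(b))=4n\widehat{e_4}=\sum_{j=0}^{2n-1}(-1)^j(\chi_{a^j}+\chi_{ba^j})$ is supported on \emph{all} $4n$ basis elements $\chi_g$, the dual coproduct $\widehat\Delta(\chi_g)=\sum_{s}\chi_s\otimes\chi_{s^{-1}g}$ spreads over the whole group, and evaluating $(\psi\otimes\psi)(\Delta(\lambda(a)))$ requires powers $\psi(\lambda(a))^{n+1}$ coming from the $\lambda(a^n)$-terms of $\Omega$. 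A verification at $n=3$ is a statement about sums of fixed length $12$; it cannot be formally transported to sums of length $4n$. This is not a pedantic objection: the authors state explicitly at the start of Appendix~\ref{psicoproduit} that these computations, unlike the others in the paper, could \emph{not} be delegated to the computer in the general case, and they carry them out by hand via root-of-unity summations such as $\sum_j\epsilon^{-(n-1)j}\chi_{a^j}=2n\widehat{e^{n-1}_{1,1}}$.

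There is also a structural inversion worth noting. You identify $\psi(\lambda(a))^{2n}=1$ as the crux, but this difficulty is an artifact of defining $\psi$ on the two generators. The paper's proof makes every multiplicative relation free of charge: it first computes the block decomposition of $\widehat{\KD(n)}$ (Section~\ref{block}), namely $\bigoplus_{h\in H}\C\chi_h$ plus $2\times 2$ blocks with explicit matrix units $E^k_{i,j}$, $E^{n-k}_{i,j}$, and \emph{defines} $\psi$ by sending matrix units of $\KD(n)$ to these matrix units. This is automatically a trace-preserving $*$-algebra isomorphism, so $\psi(\lambda(a))^{2n}=\psi(\lambda(a^{2n}))=1$ costs nothing; the displayed formulas for $\psi(\lambda(a))$ and $\psi(\lambda(b))$ in Theorem~\ref{theorem.self-dual} are then \emph{derived} (Appendix~\ref{lambdaa}), not taken as definitions. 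The only thing left to verify is that $\psi$ preserves the coproducts of the two algebra generators, which is exactly the hand computation of Appendix~\ref{psicoproduit} — i.e.\ the one step your proposal tries to dispatch with the unjustified lifting argument. To repair your proof you would either have to redo those dual-algebra computations for general odd $n$ (at which point you have reproduced the paper's appendix), or develop a genuine ``dual at infinity'' lifting framework, which neither the paper nor your proposal provides.
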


We prove this theorem in the sequel of this section. We start by
describing the block matrix decomposition of $\widehat{\KD(n)}$. We use
it to define a $C^*$-algebra isomorphism $\psi$ on the matrix units of
$\KD(n)$, calculate its expression on $\lambda(a)$ and $\lambda(b)$,
and conclude by proving that $\psi$ is a Kac algebra isomorphism.
Some heavy calculations are relegated to~\ref{psicoproduit}.

For an alternative non constructive proof of self-duality, one can use
the isomorphism with $\KA(n)$ (see Theorem~\ref{theorem.isomorphisms})
and the self-duality of the later for $n$ odd which is mentioned
in~\cite[p. 776]{Calinescu_al.2004}.

\subsubsection{Block matrix decomposition of $\widehat{\KD(n)}$}  \label{block}

The product on $L^{\infty}(D_{2n})$ is the point wise (Hadamard)
product; the coproduct is defined by duality:
$$\Delta(\chi_g)=\sum_{s\in D_{2n}}\chi_s\otimes \chi_{s^{-1}g}\,.$$
For $f\in L^{\infty}(D_{2n})$ and $s\in D_{2n}$, the involution is
given by $f^{*}(s)=\overline{f(s)}$, the coinvolution $S$ by
$S(f)(s)=f(s^{-1})$, and $tr(\chi_s)$ by $1/2n$.

The coproduct of $\KD(n)$ is that of $\C[D_{2n}]$ twisted by $\Omega$;
Therefore by duality, the product of $\widehat{\KD(n)}$ is twisted from
that of $L^{\infty}(G)$. From~\ref{omega}, it can be written as:
\begin{displaymath}
  \Delta(\lambda(g))=\sum_{i,j,r,s=1}^4 c_{i,j}c_{s,r}
  \lambda(h_igh_r) \otimes \lambda(h_jgh_s)\,,
\end{displaymath}
with $h_1=1$, $h_2=a^n$, $h_3=b$, $h_4=ba^n$.  By duality, we obtain
the twisted product on $L^{\infty}(D_{2n})$:
\begin{displaymath}
  \chi_{g_1} \odot \chi_{g_2}=\sum_{i,j,r,s=1}^4
  c_{i,j}c_{s,r}\chi_{h_ig_1h_r}\chi_{h_jg_2h_s}\,.
\end{displaymath}

Let us study this new product. If one of the $g_k$ is in $H$ and the
other is not, then the product $\chi_{g_1} \odot \chi_{g_2}$ is
zero. If both are in $H$, the product is left unchanged since the
coproduct is unchanged on $H$. Therefore, $\chi_1$, $\chi_{a^n}$,
$\chi_{b}$ and $\chi_{ba^n}$ are central projections of
$\widehat{\KD(n)}$.

Consider now $s$ et $t$ in $D_{2n}-H$. The product $\chi_s \odot
\chi_t$ is non zero if and only if there exists $h$ and $h'$ in $H$
such that $t=hsh'$, that is only if $t$ belongs to the double coset
modulo $H$ of $s$: $H_s=\{s,s^{-1}, sa^n, s^{-1}a^n,bs, bs^{-1},
bsa^n, bs^{-1}a^n\}$; then $\chi_s \odot \chi_t$ belongs to the
subspace spanned by $\{\chi_r, r\in H_s\}$.

\begin{proposition}
  The $C^*$-algebra $\widehat{\KD(n)}$ decomposes as an algebra as the
  direct sum of $\C\chi_h$ for $h\in H$ and of $\C H_{a^k}$ for $k$
  odd integer in $\{1,\dots,n-1\}$, with
  $$H_{a^k}=\{a^k, a^{2n-k}, a^{n+k}, a^{n-k},ba^k, ba^{2n-k}, ba^{n+k}, ba^{n-k}\}\,.$$
  Furthermore, each non trivial block $\C H_{a^k}$ of
  $\widehat{\KD(n)}$ is isomorphic to $\C H_{a}$ in
  $\widehat{\KD(3)}$.
\end{proposition}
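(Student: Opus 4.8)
The plan is to treat the two assertions separately: first derive the algebra direct sum from the multiplicative facts already established for the twisted product $\odot$, and then reduce the structure of each large block to a statement about $H$-$H$-bisets.

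For the decomposition I would argue as follows. The four elements $\chi_h$, $h\in H$, are central projections, and the product restricted to $\bigoplus_{h\in H}\C\chi_h$ is the untwisted pointwise product, so this part of $\widehat{\KD(n)}$ is the commutative algebra $\C^4$. For $s,t\notin H$ we already know that $\chi_s\odot\chi_t$ lies in the span of $\{\chi_r\mid r\in H_s\}$ and vanishes unless $t\in H_s$, i.e.\ unless $s$ and $t$ lie in the same double coset $HsH$. Hence each subspace $\C H_s=\operatorname{span}\{\chi_r\mid r\in H_s\}$ is a subalgebra, these subalgebras annihilate one another and annihilate the $\chi_h$, and $\widehat{\KD(n)}$ is their algebra direct sum. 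It then remains to enumerate the double cosets meeting $D_{2n}\setminus H$: a short computation gives $Ha^kH=H_{a^k}$, and since for $n$ odd the four $a$-exponents $\{k,-k,n+k,n-k\}$ of $H_{a^k}$ contain exactly two odd residues forming a pair that sums to $2n$, each such coset has a unique representative $a^k$ with $k$ odd and $1\le k\le n-1$. This yields the stated list, and the dimension count $4+8\cdot\tfrac{n-1}{2}=4n$ confirms that nothing is missed.

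For the second assertion I would show that the multiplication table of a non-trivial block depends only on the $H$-$H$-biset structure of its double coset together with the fixed cocycle matrix $(c_{i,j})$, neither of which varies with $n$ or $k$. Collapsing the pointwise products $\chi_p\chi_q=\delta_{p,q}\chi_p$ in the formula for $\odot$ turns the structure constant of $\chi_{g_1}\odot\chi_{g_2}$ at $\chi_m$ into $\sum c_{i,j}c_{s,r}$, the sum ranging over quadruples $(i,j,r,s)$ with $h_ig_1h_r=h_jg_2h_s=m$; thus the whole table is encoded by the relations $h_ig_1h_r=h_jg_2h_s$ holding among elements of the coset, that is by the biset $H_{a^k}$ under the action $(h,h')\cdot x=hxh'$, plus $(c_{i,j})$. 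The key step is then to produce, for each admissible $k$, an $H\times H$-biset isomorphism $\phi_k\colon H_{a^k}\to H_a$ compatible with the common labelling $h_1=1,\ h_2=a^n,\ h_3=b,\ h_4=ba^n$. Each non-trivial coset is a transitive $H\times H$-set, so it suffices to match stabilizers: since $a^kba^{-k}=ba^{-2k}\notin H$ because $2k\not\equiv 0,n\pmod{2n}$ for $n$ odd and $0<k<n$, the stabilizer of $a^k$ reduces to $\{(1,1),(a^n,a^n)\}$, independently of $k$ and equal to the stabilizer of $a$ in $\widehat{\KD(3)}$. Transitive bisets with equal stabilizers are isomorphic, so the required $\phi_k$ exists, determined by $\phi_k(a^k)=a$ and equivariance. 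Setting $\Phi(\chi_g)=\chi_{\phi_k(g)}$ and using that $\phi_k$ intertwines the relations $h_ig_1h_r=h_jg_2h_s$ while $(c_{i,j})$ is unchanged, $\Phi$ preserves $\odot$; since $\chi_g^*=\chi_g$, it is a $*$-isomorphism onto $\C H_a\subset\widehat{\KD(3)}$.

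I expect the main obstacle to be this second assertion, specifically the bookkeeping showing that the $\odot$-structure constants really are functions of the biset and the cocycle alone, and the verification that $\phi_k$ is well defined (the stabilizer computation) and compatible with the fixed labelling of $H$ appearing in $(c_{i,j})$; by contrast the decomposition itself is essentially the double-coset partition of $D_{2n}$ combined with the multiplicative facts already in hand.
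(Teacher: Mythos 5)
Your proof is correct and follows essentially the same route as the paper: the double-coset decomposition of $D_{2n}\setminus H$ with the parity argument and dimension count for the first assertion, and for the second the observation that the multiplicative relations inside a block are independent of $k$. The only difference is one of rigor, not of method: where the paper simply asserts that the blocks are ``trivially isomorphic'' because the relations between the eight elements of $H_{a^k}$ do not depend on $a^k$, you make that assertion precise via the $H\times H$-biset structure constants and the stabilizer computation $\{(1,1),(a^n,a^n)\}$ — a sound elaboration of exactly the same idea.
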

\begin{proof}
  The $H_{a^k}$ are disjoints: indeed $k$ (resp. $n-k$) is odd
  (resp. even) and strictly lower than $n$, and $2n-k$ (resp. $n+k$)
  is odd (resp. even) and strictly larger than $n$. Therefore, the
  blocks $\C H_{a^k}$ and the $\C\chi_h$ are in direct sum and by
  dimension, they span $\widehat{\KD(n)}$.

  Each non trivial block $\C H_{a^k}$ of $\widehat{\KD(n)}$ is stable
  under product; in fact, it is trivially isomorphic to $\C H_{a}$ in
  $\widehat{\KD(3)}$ because the relations between the eight elements
  of $H_s$ do not depend on $s=a^k$. From previous considerations,
  products of elements in different blocs are zero. Putting everything
  together, they form a (non minimal) matrix block decomposition of
  $\widehat{\KD(n)}$.
\end{proof}

We now define eight matrix units in each non trivial block $\C
H_{a^k}$. Those definitions are easily suggested and checked by
computer exploration in $\KD(3)$.
\begin{defn}
  For any odd integer $k$ in $\{1,\dots,n-1\}$, set:
  \begin{align*}
    E^k_{1,1}&=\chi_{ba^{n+k}}+\chi_{ba^{k}}\\
    E^k_{2,2}&=\chi_{ba^{2n-k}}+\chi_{ba^{n-k}}\\
    E^k_{1,2}&=-\frac{1}{2} \rm{i}(-\chi_{a^k}+\chi_{a^{2n-k}}+\chi_{a^{n+k}}-\chi_{a^{n-k}})+\frac{1}{2}(\chi_{ba^k}+\chi_{ba^{2n-k}}-\chi_{ba^{n+k}}-\chi_{ba^{n-k}})\\
    E^k_{2,1}&=\frac{1}{2}\rm{i}(-\chi_{a^k}+\chi_{a^{2n-k}}+\chi_{a^{n+k}}-\chi_{a^{n-k}})+\frac{1}{2}(\chi_{ba^k}+\chi_{ba^{2n-k}}-\chi_{ba^{n+k}}-\chi_{ba^{n-k}})\\
    E^{n-k}_{1,1}&=\chi_{a^{n-k}}+\chi_{a^{2n-k}}\\
    E^{n-k}_{2,2}&=\chi_{a^k}+\chi_{a^{n+k}}\\
    E^{n-k}_{1,2}&=\frac{1}{2}(-\chi_{a^k}-\chi_{a^{2n-k}}+\chi_{a^{n+k}}+\chi_{a^{n-k}})+\frac{1}{2}\rm{i}(\chi_{ba^k}-\chi_{ba^{2n-k}}-\chi_{ba^{n+k}}+\chi_{ba^{n-k}})
    \\
    E^{n-k}_{2,1}&=\frac{1}{2}(-\chi_{a^k}-\chi_{a^{2n-k}}+\chi_{a^{n+k}}+\chi_{a^{n-k}})-\frac{1}{2}\rm{i}(\chi_{ba^k}-\chi_{ba^{2n-k}}-\chi_{ba^{n+k}}+\chi_{ba^{n-k}})
  \end{align*}
\end{defn}

\subsubsection{The Kac algebra isomorphism $\psi$ from $\KD(n)$ to its dual}

Using definition~\ref{block}, we can construct an isomorphism $\psi$
from $\KD(n)$ to its dual that preserves the involution and the
trace. As we noticed just before, we can check others properties of
$\psi$ on computer for $n=3$ to obtain the following proposition :
\begin{proposition}
  The application $\psi$ defined by
  \begin{gather*}
    \psi(e_1)=\chi_1,\qquad
    \psi(e_2)=\chi_{a^n},\qquad
    \psi(e_3)=\chi_{ba^n},\qquad
    \psi(e_4)=\chi_b,\\
    \psi(r^k_{i,j})=E^k_{i,j}, \qquad
    \psi(e^{n-k}_{i,j})=E^{n-k}_{i,j}\,,
  \end{gather*}
  for $k$ odd in $\{1,\dots,n-1\}$, $i=1,2$, and $j=1,2$ extends by
  linearity into a $C^*$-algebra from $\KD(2m +1 )$ to
  $\widehat{\KD(2m+1)}$ which preserves the trace $tr$.
\end{proposition}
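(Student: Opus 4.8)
The plan is to prove the stated claim, namely that $\psi$ is a trace-preserving $C^*$-algebra isomorphism, by reducing everything to a single finite verification in $\widehat{\KD(3)}$. Since $\psi$ sends the matrix-unit basis $\{e_i\}\cup\{r^k_{i,j}, e^{n-k}_{i,j}\}$ of $\KD(n)$ to the family $\{\chi_h : h\in H\}\cup\{E^k_{i,j}, E^{n-k}_{i,j}\}$, it is enough to establish three facts: that the image family is a basis of $\widehat{\KD(n)}$, that $\psi$ respects the involution, and that $\psi$ is multiplicative. All of the multiplicative content is encoded in the relations $E^k_{i,j}\odot E^k_{l,p}=\delta_{j,l}E^k_{i,p}$ (and likewise for the $E^{n-k}$), the mutual orthogonality $E^k_{i,j}\odot E^{n-k}_{l,p}=0$, and the requirement that the $\chi_h$, $h\in H$, be orthogonal central idempotents annihilating all the $E$'s. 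I would check exactly these relations.

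The key simplification is that every one of these relations lives \emph{inside} a single block $\C H_{a^k}$ (together with the central $\chi_h$), and by the block decomposition of~\ref{block} each such block is isomorphic to $\C H_a\subset\widehat{\KD(3)}$ through an isomorphism matching the $\chi$'s; since the $E$'s are defined by identical formulas in every block, the structure constants do not depend on $k$. Consequently, in the spirit of Proposition~\ref{proposition.equationAtInfinity}, it suffices to verify the matrix-unit relations once, in the unique non-trivial block of $\widehat{\KD(3)}$, which I would delegate to the computer using the twisted product $\odot$ coming from the cocycle of~\ref{omega}.

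Once these relations hold, the eight elements $E^k_{i,j}, E^{n-k}_{i,j}$ are automatically linearly independent and exhibit $\C H_{a^k}\cong M_2(\C)\oplus M_2(\C)$; adjoining the four central projections $\chi_h$ yields $\widehat{\KD(n)}\cong\C^4\oplus (M_2(\C))^{n-1}$, which is precisely the block structure of $\KD(n)$. Hence the image family is a basis, $\psi$ is bijective, and it carries matrix units to matrix units and minimal central projections to minimal central projections. The involution is preserved directly from the explicit formulas: the involution $f^*(s)=\overline{f(s)}$ conjugates coefficients, fixing the real elements $\chi_h, E^k_{1,1}, E^k_{2,2}$ and swapping $E^k_{1,2}\leftrightarrow E^k_{2,1}$, in exact agreement with $(e_i)^*=e_i$ and $(r^k_{i,j})^*=r^k_{j,i}$. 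Thus $\psi$ is a $*$-algebra isomorphism.

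Finally, trace preservation comes for free: a $*$-isomorphism of finite-dimensional $C^*$-algebras matches blocks of equal size, and by~\ref{KacHopf} the relevant trace is the canonical normalized one, which depends only on the block sizes; explicitly $tr(e_i)=1/4n=tr(\chi_h)$ while the minimal projections of the $M_2(\C)$ blocks have trace $1/2n$ on both sides. The main obstacle is therefore concentrated in the multiplicativity step: showing that the prescribed combinations of characteristic functions really are matrix units for the twisted product, where the double-coset combinatorics of the $H_{a^k}$ intervenes. What makes this tractable is exactly the block-independence established in~\ref{block}, which collapses the infinite family of checks to the single algebra $\widehat{\KD(3)}$.
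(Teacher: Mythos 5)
Your proposal is correct and takes essentially the same approach as the paper: both rest on the block decomposition of $\widehat{\KD(n)}$ into the central lines $\C\chi_h$, $h\in H$, plus blocks $\C H_{a^k}$ each isomorphic, independently of $k$, to $\C H_a$ in $\widehat{\KD(3)}$, and both use this $k$-independence to collapse the verification that the $E^k_{i,j}$ are matrix units to a single computer check in $\KD(3)$. The details you add on the involution (conjugation fixing $E^k_{1,1},E^k_{2,2}$ and swapping $E^k_{1,2}\leftrightarrow E^k_{2,1}$) and on trace preservation via matching block sizes are exactly what the paper leaves implicit.
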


\begin{proof}[Proof of Theorem~\ref{self-dual}]
  In~\ref{psicoproduit}, we calculate explicitly $\psi(\lambda(a))$
  and $\psi(\lambda(b))$, and check that the corresponding coproducts
  are preserved by $\psi$. It follows that the isomorphism $\psi$ is a
  $C^*$-algebra isomorphism which preserves the trace and the
  coproducts of the generators of $\KD(n)$. It therefore also preserves
  the coinvolution (see~\ref{KacHopf}), and is a Kac algebra
  isomorphism from $\KD(n)$ to its dual.
\end{proof}

\subsection{The symmetric Kac subalgebras}\label{Himpair}

The lattice $\ll(\KD(n))$ contains the algebra $J_0$ of the intrinsic
group of $\KD(n)$ (see~\ref{intrinseque}) and its Kac subalgebras:
\begin{itemize}
\item $J_0=\C(e_1+q_1) \oplus \C(e_2+q_2) \oplus \C(e_3+p_2) \oplus
  \C(e_4+p_1)$ spanned by $1$, $\lambda(a^n)$, $\lambda(b)$ and
  $\lambda(ba^n)$;
\item $I_0=\C(e_1 +e_2+q_1+q_2)\oplus \C(e_3 +e_4+p_1+p_2)$ spanned
  by $1$ and $\lambda(a^n)$;
\item $I_1=\C(e_1 +e_3+q_1+p_2)\oplus \C(e_2 +e_4+p_1+q_2)$ spanned
  by $1$ and $\lambda(ba^n)$;
\item $I_2=\C(e_1 +e_4+p_1+q_1)\oplus \C(e_2 +e_3+p_2+q_2)$ spanned
  by $1$ and $\lambda(b)$.
\end{itemize}
Using~\ref{intrin}, the only \sacigs of dimension $2$ of $\KD(n)$ are $I_0$, $I_1$, and $I_2$.

Since $\KD(n)$ is self-dual, we can describe the principal graph of an
inclusion $N_2 \subset N_2\rtimes J$ as soon as $\delta(J)$ is known
after identification of $\KD(n)$ and its dual.
Considering the Bratteli diagram of $I_i \subset \KD(n)$ (with
$i=0,1,2$) shows that the inclusion $N_2 \subset N_2\rtimes
\delta^{-1}(I_0)$ is the only one of depth $2$. Therefore,
by~\ref{irredprof2}, $\delta(K_2)$ has to be identified with $I_0$.
Then, the principal graph of $N_2 \subset N_2\rtimes K_3$ and of $N_2
\subset N_2\rtimes K_4$ is $D_{2n}/\mathbb{Z}_2$ (see~\ref{grapheDpair}).

Since $J_0$ contains the three \sacigs of dimension $2$, $\delta(K_1)$
is $J_0$, the intersection of the three \sacigs of dimension $2n$. The
graph of the inclusion $N_2 \subset N_2\rtimes K_1$ is therefore
$D_n/\mathbb{Z}_2$ (see~\ref{grapheDimpair}). Furthermore,
using~\cite{Hong_Szymanski.1996}, the inclusion is of the form
$R\rtimes \mathbb{Z}_2 \subset R\rtimes G$, where $G$ is the semi-direct
product of an abelian group of order $n$ with $\mathbb{Z}_2$.

\subsection{The other  \sacigs of dimension $2n$: $K_3$ and $K_4$}
\label{K3K4impair}
\label{K3K4nimpair}

\subsubsection{The \sacigs $K_3$ and $K_4$ are isomorphic}
\label{iso-n-impair}

\begin{proposition}
  Let $n$ be odd. Then, the automorphism $\Theta'$ of $\KD(n)$
  (see Proposition~\ref{proposition.automorphism.KD.ThetaPrime})
  exchanges $e_3$ and $e_4$. Therefore, the \sacigs $K_3=I(e_1+e_3)$
  and $K_4=I(e_1+e_4)$ of $\KD(n)$ are exchanged by $\Theta'$ and thus
  isomorphic.
\end{proposition}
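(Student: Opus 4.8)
The plan is to exploit that $\Theta'$ is a \emph{Kac algebra} automorphism, not merely an algebra one: by the discussion in~\ref{subsection.automorphisms}, it permutes the \sacigs of $\KD(n)$ and carries the Jones projection of a \sacig to the Jones projection of its image. Since $K_3=I(e_1+e_3)$ and $K_4=I(e_1+e_4)$ have Jones projections $e_1+e_3$ and $e_1+e_4$ respectively, it suffices to prove $\Theta'(e_1+e_3)=e_1+e_4$; the isomorphism $\Theta'\colon K_3\to K_4$ then follows at once, and $\Theta'(e_4)=e_3$ comes for free because $\Theta'$ is involutive (Proposition~\ref{proposition.automorphism.KD.ThetaPrime}). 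Concretely I would establish the two facts $\Theta'(e_1)=e_1$ and $\Theta'(e_3)=e_4$.

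First I would recall that, since the twisting leaves the algebra structure of $\KD(n)$ unchanged (see~\ref{q}), the elements $e_1,e_2,e_3,e_4$ are exactly the four rank-one central idempotents of $\C[D_{2n}]$, attached to its four one-dimensional characters. Writing $\chi_j$ for the character of $e_j$, so that $\lambda(g)e_j=\chi_j(g)e_j$, the appendix formulas (Appendix~\ref{section.formulas}) give the values $(\chi_j(a),\chi_j(b))\in\{\pm1\}^2$ as $e_1\colon(1,1)$, $e_2\colon(1,-1)$, $e_3\colon(-1,1)$, $e_4\colon(-1,-1)$. As an algebra automorphism $\Theta'$ permutes $\{e_1,e_2,e_3,e_4\}$, so the task is only to locate images. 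I would identify an idempotent through the scalars by which the central element $\lambda(a^n)$ and the element $\lambda(b)$ act on it: for $n$ odd the element $\lambda(a^n)$ acts on $e_j$ by $\chi_j(a)^n=\chi_j(a)$, so the resulting pairs $(\chi_j(a),\chi_j(b))$ are distinct and these two eigenvalues separate the four idempotents.

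The key simplification is that $\Theta'$ fixes $\lambda(a^n)$. Indeed, Lemma~\ref{lemma.automorphism.KD.ThetaPk} gives $\Theta'(\lambda(a^n))=\lambda(a^n)-\tfrac12(\lambda(a^n)-\lambda(a^{-n}))(1+\lambda(a^n))$, and $a^{-n}=a^n$ kills the correction term. Hence $\lambda(a^n)$ has the same eigenvalue on $e_3$ and on $\Theta'(e_3)$, namely $(-1)^n=-1$ for $n$ odd; this already forces $\Theta'(e_3)\in\{e_3,e_4\}$ and, symmetrically, $\Theta'(e_1)\in\{e_1,e_2\}$. To separate $e_3$ from $e_4$ I would use the involutivity of $\Theta'$, which turns $\Theta'(\lambda(b))=\lambda(a^nb)$ into $\Theta'(\lambda(a^nb))=\lambda(b)$, and then compute
\[
  \lambda(b)\,\Theta'(e_3)=\Theta'(\lambda(a^nb))\,\Theta'(e_3)=\Theta'\!\big(\lambda(a^nb)\,e_3\big)=\Theta'\!\big((-1)^n e_3\big)=-\Theta'(e_3),
\]
using $\chi_3(a^nb)=\chi_3(a^n)\chi_3(b)=(-1)^n\cdot 1=-1$. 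Thus $\lambda(b)$ acts by $-1$ on $\Theta'(e_3)$, which together with the $\lambda(a^n)$-eigenvalue $-1$ pins down $\Theta'(e_3)=e_4$. The identical computation with $\chi_1(a^nb)=1$ yields $\lambda(b)\Theta'(e_1)=\Theta'(e_1)$, hence $\Theta'(e_1)=e_1$, and the proof is complete.

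The only genuine subtlety, and exactly where the hypothesis enters, is the parity of $n$: the whole argument hinges on $(-1)^n=-1$, both to keep $\{e_3,e_4\}$ inside the $(-1)$-eigenspace of $\lambda(a^n)$ and to flip the $\lambda(b)$-eigenvalue when passing through $\lambda(a^nb)$. For $n$ even these signs reverse, so $\Theta'$ would instead fix $e_3$ and $e_4$ individually, which is precisely why the statement is restricted to odd $n$. A more pedestrian alternative would be to expand $e_3=\tfrac1{4n}\sum_i(-1)^i(\lambda(a^i)+\lambda(ba^i))$ and apply $\Theta'$ term by term through Lemma~\ref{lemma.automorphism.KD.ThetaPk}; this works but is considerably more tedious, so I would favor the eigenvalue bookkeeping above.
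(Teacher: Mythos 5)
Your proof is correct, and it takes a genuinely different route from the paper's. The paper argues by direct computation in the group basis: it expands $e_1+e_3=\frac{1}{2n}\sum_{k=0}^{n-1}\lambda(a^{2k})+\lambda(ba^{2k+1})$ using the formulas of~\ref{form2}, pushes $\Theta'$ through term by term via Lemma~\ref{lemma.automorphism.KD.ThetaPk}, and observes that for $n$ odd the surviving sum is exactly $e_1+e_4$ while the correction terms involving $(1+\lambda(a^n))$ cancel --- precisely the ``pedestrian alternative'' you mention at the end. Your character-theoretic bookkeeping is more conceptual: since $\Theta'$ fixes $\lambda(a^n)$ (because $a^{-n}=a^n$) and sends $\lambda(b)$ to $\lambda(a^nb)$, it must send each rank-one central idempotent $e_j$ to the idempotent whose $a^n$-eigenvalue is unchanged and whose $b$-eigenvalue is multiplied by $\chi_j(a)^n$; for $n$ odd this flips the $b$-character exactly on the two idempotents with $\chi_j(a)=-1$, forcing $e_3\leftrightarrow e_4$ and fixing $e_1$, $e_2$. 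Your argument isolates transparently where oddness enters, and it yields the literal claim ``$\Theta'$ exchanges $e_3$ and $e_4$'' (the paper only computes the image of the sum $e_1+e_3$, which suffices for the \sacig statement); what the paper's computation buys is self-containedness --- nothing is needed beyond the explicit formulas, no structural facts about how automorphisms act on central idempotents.

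One bookkeeping slip you should fix: your character table for $e_3$ and $e_4$ is swapped in the $b$-column. By~\ref{lambda} (take $k=0$ in the formula for $\lambda(ba^k)$), one has $\lambda(b)=e_1-e_2-e_3+e_4+\cdots$, so $\chi_3(b)=-1$ and $\chi_4(b)=+1$, not the reverse. The error is harmless here because it cancels against itself: you use the wrong $\chi_3(b)$ to compute $\chi_3(a^nb)$, and then the wrong table again to decide which idempotent carries the resulting eigenvalue pair, and the two swaps compose to the identity. Stated invariantly, your argument shows that $\Theta'(e_3)$ has $a$-character $-1$ and $b$-character $-\chi_3(b)$, which is $e_4$ under either labeling --- so the proof stands, but the table as written contradicts the conventions of~\ref{lambda} and~\ref{form2}.
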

\begin{proof}
  Using~\ref{form2}, and dropping the $\lambda$'s for clarity, we
  have:
  \begin{displaymath}
    \Theta'(e_1+e_3)
    = \frac{1}{2n} \sum_{k=0}^{n-1}a^{2k} +ba^{2k+1+n}
    - \frac{1}{4n} \sum_{k=0}^{n-1}[a^{2k}-a^{-2k}+ba^{2k+1}-ba^{-2k-1}](1+a^n)\,.
  \end{displaymath}
  Since $n$ is odd, the first sum is $e_1+e_4$, while the second is
  obviously null.
\end{proof}

\subsubsection{Decomposition of $K_3$ and $K_4$}
\label{K3K4.blocks.impair}

From the expressions of the coproducts of the projections $e_1+e_3$
and $e_1+e_4$ for $n$ odd, one deduces:
$$ K_3 =\C(e_1+e_3)\oplus \C(e_2+e_4) \oplus \bigoplus_{j=1, j \text{ odd}}^{n-1} K_3^j\,,$$
$$K_4 =\C(e_1+e_4)\oplus \C(e_2+e_3) \oplus \bigoplus_{j=1, j \text{ odd}}^{n-1} K_4^j\,,$$
where each $K_3^j$ and $K_4^j$ is a factor $M_2(\C)$ with matrix units:
\begin{displaymath}
  (e^j_{1,1}+r^{n-j}_{1,1}), \ (e^j_{2,2}+r^{n-j}_{2,2}), \
  (e^j_{1,2}+r^{n-j}_{1,2}), \text{ and } (e^j_{2,1}+r^{n-j}_{2,1})\,,
\end{displaymath}
and
\begin{displaymath}
  (e^j_{1,1}+r^{n-j}_{2,2}),\ (e^j_{2,2}+r^{n-j}_{1,1}),\
  (e^j_{1,2}-r^{n-j}_{2,1}), \text{ and } (e^j_{2,1}-r^{n-j}_{1,2})\,,
\end{displaymath}
respectively.
From~\ref{tour}~(5), $I_1$ is contained in $K_3$ and $I_2$ in $K_4$.

\subsection{The \sacig $K_1=I(e_1+e_2+e_3+e_4)$ of dimension $n$}
\label{K1impair}

\begin{proposition}
  For $n=2m+1$, the matrix structure of
  $K_1=I(e_1+e_2+e_3+e_4)=K_2\cap K_3 \cap K_4$ is given by
  \begin{displaymath}
    K_1 =\C(e_1+e_2+e_3+e_4) \oplus \bigoplus_{j=1, j \text{ odd}}^{n-1} \C(r^j_{1,1}+e^{n-j}_{1,1}) \oplus \C(r^j_{2,2}+e^{n-j}_{2,2})\,.
  \end{displaymath}
  In $K_2\equiv D_n$ it is the subalgebra of constant functions on the
  right cosets w.r.t. $\{1, \beta\}$. Using the notations
  of~\ref{e1e2}, the matrix structure can be written as:
  \begin{displaymath}
    K_1 = \bigoplus_{j=0, \dots ,n-1} \C (\chi_{\alpha^j} + \chi_{ \alpha^j \beta})\,.
  \end{displaymath}
  Each \sacig $I$ of $K_1$ is the subalgebra of constant functions on
  the right cosets of some subgroup of $D_n$ containing
  $\{1,\beta\}$. Such a subgroup is a dihedral group generated by
  $\beta$ and $\alpha^k$ for some $k$ divisor of $n$. Then, the Jones
  projection of $I$ is $e_1+e_2+e_3+e_4+\sum_{h=1}^{(n-1)/2k}
  e^{2hk}_{1,1}+e^{2hk}_{2,2}+r^{n-2hk}_{1,1}+r^{n-2hk}_{2,2}$.
\end{proposition}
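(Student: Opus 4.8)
The plan is to establish the four assertions of the proposition in order: first the explicit block matrix decomposition of $K_1$, then its identification as a subalgebra of $K_2 \equiv L^\infty(D_n)$, then the rewriting of the block structure in terms of the characteristic functions $\chi_{\alpha^j}, \chi_{\alpha^j\beta}$, and finally the classification of the \sacigs of $K_1$ together with their Jones projections. Since $K_1 = K_2 \cap K_3 \cap K_4$ by definition (\ref{K1}), and since the block decompositions of $K_2$ (Proposition~\ref{proposition.K2}), $K_3$, and $K_4$ (\ref{K3K4.blocks.impair}) are all explicitly available for $n=2m+1$ odd, the first step is essentially a bookkeeping intersection. First I would intersect the commutative algebra $K_2 = \bigoplus \C r^j_{i,i} \oplus \C e^j_{i,i} \oplus \dots$ with the matrix structure of $K_3$ and $K_4$. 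The minimal projections surviving in all three are precisely the sums $r^j_{1,1}+e^{n-j}_{1,1}$ and $r^j_{2,2}+e^{n-j}_{2,2}$ (for $j$ odd), together with $e_1+e_2+e_3+e_4$; this yields the displayed decomposition. One checks that these are exactly the projections lying simultaneously in the commutative $K_2$ and in the fixed-point structure imposed by the $K_3^j$, $K_4^j$ matrix units from~\ref{K3K4.blocks.impair}.

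For the second and third assertions, I would transport everything through the isomorphism $K_2 \cong L^\infty(D_n)$ of Proposition~\ref{proposition.K2}, using the explicit dictionary established there: $r^{2j-1}_{1,1}=\chi_{\beta_j}$, $r^{2j-1}_{2,2}=\chi_{\beta'_j}$, $e^{2j}_{1,1}=\chi_{\alpha_j}=\chi_{\alpha^j}$, and $e^{2j}_{2,2}=\chi_{\alpha_{-j}}=\chi_{\alpha^{-j}}$, together with $\alpha\beta=\beta\alpha^{-1}$ and the relations $\beta'_1 = \alpha\beta_1$, $\beta_{k+1}=\beta_1\alpha^k$, etc. Under this dictionary the minimal projections of $K_1$ become sums $\chi_{\alpha^j}+\chi_{\alpha^j\beta}$, which are exactly the characteristic functions of the right cosets of $\{1,\beta\}$ in $D_n$; this simultaneously proves that $K_1$ is the algebra of functions constant on those cosets and gives the third displayed formula. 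The only care needed is matching the pairing $(r^{n-j}_{2,2}, e^j_{2,2})$ etc. against the two elements $\alpha^j$ and $\alpha^j\beta$ of each coset, which follows from the dihedral relations among the $\beta_j, \beta'_j$.

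For the final assertion, I would invoke \ref{section.group_algebras}: since $K_2 \cong L^\infty(D_n)$, every \sacig $I$ of $K_1 \subset L^\infty(D_n)$ is of the form $L^\infty(D_n/L)$ for a subgroup $L$ of $D_n$, and being contained in $K_1$ forces $L \supseteq \{1,\beta\}$. A subgroup of the dihedral group $D_n$ containing the reflection $\beta$ is itself dihedral, generated by $\beta$ and $\alpha^k$ for some divisor $k$ of $n$. The Jones projection of $L^\infty(D_n/L)$ is $\sum_{\ell\in L}\chi_\ell$ by~\ref{section.group_algebras}; translating the group elements $\alpha^{hk}$ and $\alpha^{hk}\beta$ back through the dictionary into the matrix units $e^{2hk}_{1,1}, e^{2hk}_{2,2}, r^{n-2hk}_{1,1}, r^{n-2hk}_{2,2}$ yields the stated formula for $p_I$. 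The main obstacle is purely combinatorial rather than conceptual: keeping the index conventions consistent across the three different parametrizations (the matrix-unit labels from~\ref{q}, the group-element labels $\alpha_j,\beta_j,\beta'_j$ from~\ref{e1e2}, and the coset representatives), and verifying that the reflection-containing subgroups of $D_n$ are correctly enumerated by divisors $k\mid n$ so that the summation range $h=1,\dots,(n-1)/2k$ produces precisely the nonidentity cosets. Since $n$ is odd, no parity subtleties of the type occurring for $n$ even intrude, so the enumeration of dihedral subgroups is clean.
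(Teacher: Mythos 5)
Your proof is correct, but its first step takes a different computational route from the paper. The paper's own proof is a one-liner: it invokes~\ref{K1} (which identifies $K_1$ as $I(e_1+e_2+e_3+e_4)$, of dimension $n$) together with Appendix~\ref{cop}, where the twisted coproduct
\begin{displaymath}
  \Delta(e_1+e_2+e_3+e_4)=(e_1+e_2+e_3+e_4)\otimes(e_1+e_2+e_3+e_4)
  +\sum_{j \text{ odd}}(r^j_{1,1}+e^{n-j}_{2,2})\otimes(r^j_{1,1}+e^{n-j}_{1,1})
  +(r^j_{2,2}+e^{n-j}_{1,1})\otimes(r^j_{2,2}+e^{n-j}_{2,2})
\end{displaymath}
is recorded; since a \sacig is the linear span of the right legs of the coproduct of its Jones projection (Proposition~\ref{prop.resumep}~(3)), the displayed block decomposition of $K_1$ is read off directly. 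You instead obtain it by intersecting the three explicit block structures of $K_2$ (Proposition~\ref{proposition.K2}) and of $K_3$, $K_4$ (\ref{K3K4.blocks.impair}). This works: matching the $r$-diagonal form imposed by $K_2$ on an odd factor $j$ against the $K_3^j$ (or $K_4^j$) matrix units forces exactly the combinations $c(r^j_{1,1}+e^{n-j}_{1,1})+d(r^j_{2,2}+e^{n-j}_{2,2})$, so the intersection has dimension $n$ and the stated basis. What your route buys is that it exhibits $K_1$ as $K_2\cap K_3\cap K_4$ without recomputing any coproduct, at the cost of a small two-by-two linear-algebra check in each pair of factors $(j,n-j)$; note, though, that the structures of $K_3$ and $K_4$ in~\ref{K3K4.blocks.impair} were themselves extracted from the coproducts of Appendix~\ref{cop}, so the ultimate inputs coincide. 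The remaining assertions (transport to $L^\infty(D_n)$, classification of the \sacigs of $K_1$ by subgroups containing $\{1,\beta\}$, and the Jones projection formula via~\ref{groupe}) follow the same path the paper intends.

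One imprecision to correct: you quote the relations $\beta'_1=\alpha\beta_1$ and $\beta_{k+1}=\beta_1\alpha^k$ from the proof of Proposition~\ref{proposition.K2}, but those belong to the case $n=2m$ even. For $n=2m+1$ odd, which is the case at hand, the dictionary gives $\beta=\beta_0$ (so $\chi_\beta=e_3+e_4$, consistent with $\chi_1+\chi_\beta=e_1+e_2+e_3+e_4$ being a minimal projection of $K_1$) and the relations $\beta_{m+1-j}=\beta_0\alpha_{-j}$, $\beta'_{m+1-j}=\beta_0\alpha_{j}$; these, not the even-case ones, are what pair each $r^j_{1,1}+e^{n-j}_{1,1}$ and $r^j_{2,2}+e^{n-j}_{2,2}$ with a coset $\{\alpha^l,\alpha^l\beta\}$.
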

\begin{proof}
  Straightforward using~\ref{K1} and Appendix~\ref{cop}
\end{proof}

\subsection{The \sacigs of dimension dividing $n$}
\label{sacignimpair}

From~\ref{sacign}, the other \sacigs of dimension dividing $2n$ are
contained in one of the \sacigs of dimension $2n$. We now show a more
precise result for those of dimension dividing $n$ (or equivalently of
odd dimension).
\begin{prop}
  \label{prop.sacignimpair}
  When $n$ is odd, the \sacigs of odd dimension of $\KD(n)$ are those
  of $K_2$; in particular, they are isomorphic to the algebra of
  functions on $D_n$ which are constant on right cosets w.r.t. one of
  its subgroups. There are exactly $n$ \sacigs of dimension $n$, whose
  Jones projections are given in~\ref{sacigsK2}.
\end{prop}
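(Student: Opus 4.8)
The plan is to establish the equality of two sets: the odd-dimensional \sacigs of $\KD(n)$ and the odd-dimensional \sacigs of $K_2$. Since $K_2$ is a Kac subalgebra of $\KD(n)$ (Proposition~\ref{proposition.K2}), its coproduct is the restriction of that of $\KD(n)$, which makes one direction cheap: if $J\subseteq K_2$ is a \sacig of $K_2$, then $\Delta(J)=\Delta_{K_2}(J)\subseteq K_2\otimes J\subseteq \KD(n)\otimes J$, so $J$ is a \sacig of $\KD(n)$ of the same dimension. Conversely, once I know that an odd-dimensional \sacig $I$ of $\KD(n)$ satisfies $I\subseteq K_2$, it is automatically a \sacig of $K_2$, because $\Delta(I)\subseteq (K_2\otimes K_2)\cap(\KD(n)\otimes I)=K_2\otimes I$. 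Via the identification of $K_2$ with $L^{\infty}(D_n)$ and the description of its \sacigs (see~\ref{section.group_algebras} and~\ref{sacigsK2}), this yields both the classification in terms of subgroups of $D_n$ and the count of exactly $n$ \sacigs of dimension $n$, which come from the $n$ order-two subgroups (the reflections) of $D_n$.

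So the heart of the matter is to show that every odd-dimensional \sacig $I$ lands in $K_2$. By Proposition~\ref{prop.resumep}~(1), $\dim I$ divides $\dim\KD(n)=4n$; being odd it divides $n$, hence also $2n$, so the dichotomy of~\ref{sacign} applies. Either $p_I$ dominates $e_1+e_2+e_3+e_4$, in which case $I\subseteq K_1\subseteq K_2$ and we are done; or $p_I$ dominates $e_1+e_i$ for a unique $i\in\{2,3,4\}$ and $I\subseteq K_i$. The case $i=2$ is immediate, so the only real work is to rule out $i=3$ and $i=4$.

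Here I would run a trace (parity) argument inside $K_3$, and symmetrically $K_4$, based on the block decomposition of~\ref{K3K4.blocks.impair}. In $K_3$ the two blocks $\C(e_1+e_3)$ and $\C(e_2+e_4)$ are scalar of trace $1/2n$, whereas the minimal projections of the matrix blocks $K_3^j$ have trace $1/n$. If $I\subseteq K_3$, then $p_I\in K_3$ dominates $e_1$, forcing its $\C(e_1+e_3)$-component to equal the whole $e_1+e_3$; writing $p_I=(e_1+e_3)+b(e_2+e_4)+\sum_j P_j$ with $b\in\{0,1\}$ and $\operatorname{rank}P_j=r_j$, I obtain $tr(p_I)=\frac{1+b+2\sum_j r_j}{2n}$. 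Since $tr(p_I)=1/\dim I$ with $\dim I$ odd and dividing $n$, the integer $2n/\dim I=1+b+2\sum_j r_j$ is even, which forces $b=1$. Hence $p_I$ dominates $(e_1+e_3)+(e_2+e_4)=e_1+e_2+e_3+e_4$, and~\ref{sacign} relocates $I$ into $K_1\subseteq K_2$, contradicting $i=3$; the same computation in $K_4$ eliminates $i=4$.

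The main obstacle is exactly this trace bookkeeping, and its crux is the structural fact that the two scalar blocks of $K_3$ (resp.\ $K_4$) carry half the trace of the minimal matrix-block projections. It is this parity mismatch that makes an odd dimension incompatible with lying strictly inside $K_3$ or $K_4$, and it is what pins all odd-dimensional \sacigs down to $K_2$.
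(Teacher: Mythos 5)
Your proof is correct and takes essentially the same route as the paper's: both reduce to the dichotomy of~\ref{sacign} and then rule out $K_3$ and $K_4$ by the identical parity-of-trace argument resting on the block structure of~\ref{K3K4.blocks.impair}, namely that outside the two scalar blocks all minimal projections of $K_3$ (resp.\ $K_4$) have trace $1/n$, which is incompatible with $2n/\dim I$ being even. The only differences are presentational: the paper phrases the parity clash as the impossibility of building the complement $s$ of trace $(2n/\dim I-1)/2n$ out of trace-$1/n$ projections, whereas you force the coefficient of $e_2+e_4$ to be $1$ and contradict the case hypothesis, and you spell out the (cheap) equivalence between \sacigs of $K_2$ and \sacigs of $\KD(n)$ contained in $K_2$, which the paper leaves implicit.
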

\begin{proof}
  Let $I$ be a \sacig of odd dimension $k$. Then, $k$ is an odd
  divisor of $2n$. Set $k'=2n/k$ which is even. From~\ref{sacign},
  either $I$ is contained in $K_1$ (and therefore in $K_2$, or its
  Jones projection is of the form $e_1+e_i+s$, where $i=2,3$ or $4$
  and $s$ is a projection of trace $1/k-1/2n=(k'-1)/2n$ distinct from
  $e_j+e_{j'}$. However, in $K_3$ (resp.  $K_4$), and except for
  $e_2+e_4$ (resp. $e_2+e_3$), the minimal projections are of trace
  $1/n$, so it is not possible to combine them to get a projection of
  trace $(k'-1)/2n$ with $k'$ even. Therefore, $p_I$ is of the form
  $e_1+e_2+s$, and by~\ref{tour}~(5), $I\subset K_2$.
\end{proof}

\subsection{The \sacigs of $K_3$ and $K_4$}\label{K4impair}

Since $K_3$ and $K_4$ are isomorphic by $\Theta'$, their lattice of
\sacigs are isomorphic. We focus on the description of that of
$K_4$. From~\ref{sacignimpair}, the \sacigs of odd dimension of $K_4$
are those of $K_1$, and therefore contained in $K_2$. For the others,
we have the following partial description.
\begin{prop}
  \label{prop.K4impair}
  Let $n=2m+1$, and $k$ a divisor of $n=kd$. Let $B_{2d}$ be the
  subgroup generated by $a^{2d}$ and $b$ as in
  Proposition~\ref{prop.abelien}. Then,
  \begin{equation*}
    \label{equation.pK4k}
    e_1+e_4+\sum_{j=1}^{d -1} p^{jk}_{1,1}=\frac{1}{2k} \sum_{g\in B_{2d}} \lambda(g)\,.
  \end{equation*}
  is the Jones projection of a \sacig of dimension $2k$ of $K_4$,
  image by $\varphi_{d}$ of the $K_4$ of $\KD(k)$.

  In general, every \sacig of dimension $2k$ of $K_4$ contains
  $I_2$. Its Jones projection is of the form:
  \begin{equation*}
    e_1+e_4+\sum_{t\in T} p^{2t}_{1,1}+p^{n-2t}_{1,1}\,,
  \end{equation*}
  where $T$ is a subset of $\{1,2,\dots,m\}$ of size
  $\frac12(d-1)$. In particular, the Jones projection for $B_{2d}$
  above is obtained when $T$ is the set of multiples of $k$ in
  $\{1,\dots,m\}$.
\end{prop}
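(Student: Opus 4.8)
The plan is to treat the two assertions separately. For the displayed identity, I would use the embedding results of~\ref{plonge}. Since $d\mid n$, Corollary~\ref{corollary.plonge} gives a Kac-algebra embedding $\varphi_d\colon\KD(k)\hookrightarrow\KD(n)$ sending the generators $\alpha,\beta$ of $\KD(k)$ to $a^d,b$, whose image is the twist of $\C[B_d]$ with $B_d=\langle a^d,b\rangle$. Because $\varphi_d$ preserves the product, coproduct, counit and trace, it carries \sacigs to \sacigs and Jones projections to Jones projections; hence $\varphi_d(K_4)$ is a dimension-$2k$ \sacig of $\KD(n)$ with Jones projection $\varphi_d(e_1+e_4)$. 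Inside $\KD(k)$, Proposition~\ref{prop.abelien} (applied to $\KD(k)$) identifies this Jones projection $e_1+e_4$ with $\frac{1}{2k}\sum_{g\in\langle\alpha^2,\beta\rangle}\lambda(g)$, and $\varphi_d$ turns $\langle\alpha^2,\beta\rangle$ into $B_{2d}=\langle a^{2d},b\rangle$, producing the right-hand side. It then remains to verify the closed form $\frac{1}{2k}\sum_{g\in B_{2d}}\lambda(g)=e_1+e_4+\sum_{j=1}^{d-1}p^{jk}_{1,1}$ by a direct computation with the explicit $\lambda$ of Appendix~\ref{lambda}: the character sums $\sum_{i=0}^{k-1}\zeta^{2di\ell}$ (with $\zeta=e^{\mathrm i\pi/n}$) vanish unless $k\mid\ell$, the one-dimensional part collapses to $2k(e_1+e_4)$, and each surviving block $\ell=jk$ assembles into $2k\,p^{jk}_{1,1}$.

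For the general description, let $I\subseteq K_4$ be a \sacig with $\dim I=2k$. By~\ref{tour}~(5), the inclusion $I\subseteq K_4$ forces $e_1+e_4=p_{K_4}\le p_I$, while $tr(p_I)=1/2k$ by Proposition~\ref{prop.resumep}; hence $p_I=e_1+e_4+s$ with $s$ a projection of $K_4$ orthogonal to $e_1+e_4$ and of trace $(d-1)/2n$. From the block decomposition of $K_4$ in~\ref{K3K4.blocks.impair}, the minimal projections of $K_4$ below $1-(e_1+e_4)$ are $e_2+e_3$, of trace $1/2n$, and the rank-one projections of the $M_2(\C)$-blocks $K_4^{j}$ ($j$ odd), each of trace $1/n$. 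Writing $s=\varepsilon(e_2+e_3)+\sum_{j\in S}P_j$ gives $\varepsilon+2|S|=d-1$; as $n=kd$ is odd, $d-1$ is even, so $\varepsilon=0$ and $|S|=(d-1)/2$. Thus $p_I=e_1+e_4+\sum_{j\in S}P_j$ with each $P_j$ a rank-one projection of $K_4^{j}$.

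The crux is to pin down each $P_j$. In the block $K_4^{j}$ with $j=n-2t$, a short check with the matrix units of~\ref{K3K4.blocks.impair} shows that $p^{2t}_{1,1}+p^{n-2t}_{1,1}$ is a rank-one projection of $K_4^{j}$, namely the restriction to the blocks $2t$ and $n-2t$ of the $\lambda(b)=+1$ spectral projection, the complementary one being $p^{2t}_{2,2}+p^{n-2t}_{2,2}$. The main obstacle is to rule out the latter, that is, to force $P_j$ to be the $+1$ projection; I would do this through the coideal condition $\Delta(I)\subseteq\KD(n)\otimes I$, expanding $\Delta(p_I)$ with the twisted coproduct formulas of Appendix~\ref{cop} and checking that any $\lambda(b)=-1$ component yields right legs that fail to close into a $2k$-dimensional algebra. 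This step is equivalent to the inclusion $p_I\le p_{I_2}=e_1+e_4+p_1+q_1$ (see~\ref{Himpair}), i.e.\ to $I_2\subseteq I$ by~\ref{tour}~(5), so it settles simultaneously the containment of $I_2$ and the form $p_I=e_1+e_4+\sum_{t\in T}(p^{2t}_{1,1}+p^{n-2t}_{1,1})$ with $|T|=(d-1)/2$.

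Finally, to recover the first part from the second, I would specialize $T$ to the multiples of $k$ in $\{1,\dots,m\}$, namely $t=rk$ for $r=1,\dots,(d-1)/2$; rewriting $2rk$ and $n-2rk=(d-2r)k$ as respectively the even and the odd multiples of $k$ in $\{1,\dots,n-1\}$ shows that $\sum_{t\in T}(p^{2t}_{1,1}+p^{n-2t}_{1,1})=\sum_{j=1}^{d-1}p^{jk}_{1,1}$, matching the Jones projection of the distinguished \sacig obtained above and confirming the final claim.
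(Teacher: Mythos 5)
Your first half is fine and is, in substance, the paper's own argument: the identity and the statement are the third item of Proposition~\ref{prop.abelien}, obtained through the embedding $\varphi_d$ of Corollary~\ref{corollary.plonge} and the group-basis expression of $e_1+e_4$ from~\ref{form2}, and your character-sum verification of the closed form is correct. The gap is in the second half, at exactly the point you yourself call the crux. After the (correct) trace-and-block reduction to $p_I=e_1+e_4+\sum_{j\in S}P_j$ with each $P_j$ a minimal projection of some block $K_4^j$, two things go wrong. First, you treat the choice of each $P_j$ as binary, between $p^{2t}_{1,1}+p^{n-2t}_{1,1}$ and $p^{2t}_{2,2}+p^{n-2t}_{2,2}$; but a rank-one projection of an $M_2(\C)$ block is an arbitrary point of a $2$-sphere (think of the projections $q(\alpha,\beta,j)$ of~\ref{q}), and nothing in your reduction even prevents two orthogonal $P_j$'s from sitting in the same block, so the set of candidates to exclude is a continuum, not a single ``$\lambda(b)=-1$ component''. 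Second, and more seriously, the exclusion is never carried out: you announce that you \emph{would} expand $\Delta(p_I)$ with the formulas of Appendix~\ref{cop} and \emph{check} that the unwanted components cannot close into a $2k$-dimensional coideal. That check is precisely the hard content of the proposition --- for a single small case the paper has to solve quadratic systems by computer to perform this kind of elimination (see~\ref{KD6-8}) --- and your text gives no indication of how it would be done uniformly in $n$, $k$ and $T$.

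The missing idea is the paper's duality argument, which makes the issue disappear. Since $\dim I=2k$, the \sacig $\delta(I)$ has dimension $2d$, which divides $2n$; hence, by~\ref{sacign}, $\delta(I)$ is contained in one of $K_2$, $K_3$, $K_4$. Applying the antiisomorphism $\delta$, which maps $\{K_2,K_3,K_4\}$ onto the three \sacigs of dimension $2$, the \sacig $I$ must contain one of $I_0$, $I_1$, $I_2$; from the expressions in~\ref{Himpair}, $p_{I_0}$ and $p_{I_1}$ do not dominate $e_4$, while $p_I$ does, so necessarily $I_2\subset I$. By~\ref{tour}~(5) this gives the two-sided bound $e_1+e_4\le p_I\le p_{I_2}=e_1+e_4+p_1+q_1$, and since the component of $p_1+q_1$ in each block $K_4^j$ is itself a rank-one projection, the matrix units of~\ref{K3K4.blocks.impair} force $p_I$ to have the stated form with no coproduct computation at all. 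Your observation that what you need is equivalent to $I_2\subseteq I$ was the right signpost; what is missing is a proof of that containment, and duality supplies it.
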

\begin{proof}
  The first part is the third item of Proposition~\ref{prop.abelien}.

  Let $k$ be a non trivial divisor of $n$ and $L$ a \sacig of
  dimension $2k$ of $K_4$. Then, $\delta(L)$ is of dimension $2d$,
  and by~\ref{sacign} it is contained in exactly one of $K_2$, $K_3$,
  and $K_4$.  Therefore, $L$ contains exactly one of $I_0$, $I_1$, and
  $I_2$, and from the expressions of $p_{I_i}$, $i=0,1,2$
  (see~\ref{Himpair}), this is necessarily $I_2$.

  The Jones projection $p_L$ of $L$ is of trace $1/2k$, dominates
  $p_{K_4}=e_1+e_4$, and is dominated by
  $p_{I_2}=e_1+e_4+q_1+p_1$. Using the matrix units of $K_4$
  (see~\ref{K3K4.blocks.impair}), it has to be of the given form.
\end{proof}

\subsection{The \sacigs of dimension $4$}
\label{sacig4impair}

The following proposition was suggested by the computer exploration of
Appendix~\ref{subsubsection.demo.delta}, where we use the explicit
isomorphism between $\KD(n)$ and its dual (see~\ref{self-dualKD}) to
derive the Jones projections of the \sacigs of dimension $4$ from the
\sacigs of dimension $n$ of $K_2$.
\begin{prop}
  When $n$ is odd, there are $n$ \sacigs $(J_k)_{k=0,\dots,n-1}$ of
  dimension $4$ in $\KD(n)$. The Jones projection of $J_k$ is $p_{J_k}
  = e_1+ \sum_{j=1}^m q(0,\frac{2kj\pi}{n},2j)$. The principal graph
  of the inclusions $N_1 \subset N_1\rtimes\delta(J_k)$ and $N_2
  \subset N_2\rtimes J_k$ are respectively $D_n/\mathbb{Z}_2$
  (see~\ref{grapheDimpair}) and $D_{2n+2}^{(1)}$.
 \end{prop}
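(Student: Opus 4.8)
The plan is to get the count and the explicit projections by duality, and the two principal graphs from the Galois correspondence of~\ref{grapheprincipal}.

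First I would settle the count. Since $n$ is odd, $\KD(n)$ is self-dual by Theorem~\ref{theorem.self-dual}, so the antiisomorphism $\delta$ of~\ref{d} may be regarded as a dimension-reversing lattice antiisomorphism of $\ll(\KD(n))$: by~\ref{tour}~(4) together with~\ref{prop.resumep}~(1) one has $\dim\delta(I)\cdot\dim I=\dim\KD(n)=4n$, so $\dim I=4$ if and only if $\dim\delta(I)=n$. By Proposition~\ref{prop.sacignimpair} there are exactly $n$ \sacigs of dimension $n$, namely those of $K_2\cong L^\infty(D_n)$, one for each of the $n$ order-two (reflection) subgroups of $D_n$. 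Applying $\delta$ therefore produces exactly $n$ \sacigs of dimension $4$.

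Next I would pin down their Jones projections. As indicated before the statement, the candidates $p_{J_k}$ were read off by transporting the dimension-$n$ \sacigs of $K_2$ (whose Jones projections are listed in~\ref{sacigsK2}) through the explicit self-duality isomorphism of~\ref{self-dualKD} and the map $\delta$; this is the computation of Appendix~\ref{subsubsection.demo.delta}. Rather than redo that transport in full generality, I would verify directly that each $p_{J_k}=e_1+\sum_{j=1}^m q(0,\tfrac{2kj\pi}{n},2j)$ is the Jones projection of a dimension-$4$ \sacig. A trace count gives $tr(p_{J_k})=\tfrac1{4n}+m\cdot\tfrac1{2n}=\tfrac14$, and $p_{J_k}$ dominates $e_1$. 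Using the coproduct expansions of Appendix~\ref{cop} (or a degree argument reducing to a computer check in a small $\KD(N)$ as in Proposition~\ref{proposition.equationAtInfinity}), I would compute $\Delta(p_{J_k})$ and write it as $\sum_{i=1}^4 S(P_i^*)\otimes P_i$ for four linearly independent projections $P_i$ spanning a unital involutive, here four-dimensional commutative, subalgebra; Lemma~\ref{lemma.abelien} then shows $p_{J_k}$ generates a \sacig of dimension $4$. Since the $p_{J_k}$ are pairwise distinct (the phase $\tfrac{2k\pi}{n}$ in the block $k=1$ already separates them) and the count above gives exactly $n$ such \sacigs, these are all of them.

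Finally, the principal graphs come from~\ref{grapheprincipal}. For $N_2\subset N_2\rtimes J_k$ the graph is the connected component through the trivial representation of the Bratteli diagram of $\delta(J_k)\subset A$; since $\delta(J_k)$ has dimension $n$ this is an index-$4$ inclusion, and I would identify the graph as $D^{(1)}_{2n+2}$ by computing the inclusion matrix of the commutative algebra $\delta(J_k)$ into $A\cong\KD(n)=\C^4\oplus M_2(\C)^{n-1}$ and counting the vertices of the relevant component (this reproduces $D^{(1)}_8$ for $\KD(3)$, cf.~\ref{grapheD1}). For $N_1\subset N_1\rtimes\delta(J_k)$, applying~\ref{grapheprincipal} to the action of $A$ on $N_1$ and using that the two $\delta$ maps are mutually inverse reduces the graph to the component of the trivial representation in the Bratteli diagram of $J_k\subset\KD(n)$, an index-$n$ inclusion; alternatively, under $K_2\cong L^\infty(D_n)$ the \sacig $\delta(J_k)$ is the algebra of functions constant on the cosets of an order-two reflection subgroup, and since all such subgroups are conjugate in $D_n$ for $n$ odd the inclusion is of the form $R\rtimes\mathbb Z_2\subset R\rtimes D_n$, whose principal graph is $D_n/\mathbb Z_2$ by~\ref{grapheDimpair}. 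The main obstacle is precisely this explicit determination of the two Bratteli diagrams: one must track how the minimal projections of $\delta(J_k)$, respectively of $J_k$, split across the matrix blocks of the twisted algebra, which relies on the coproduct expansions of Appendix~\ref{cop} and the self-duality identification; the count and the verification of $p_{J_k}$ via Lemma~\ref{lemma.abelien} are comparatively routine once those coproducts are available.
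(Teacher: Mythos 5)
Your overall architecture is the paper's: the count comes from self-duality together with Proposition~\ref{prop.sacignimpair} (the paper's proof says exactly that $\delta$ maps the dimension-$4$ \sacigs onto the $n$ \sacigs of dimension $n$ of $K_2$), the projections are certified by Lemma~\ref{lemma.abelien}, and the graphs come from the Bratteli diagrams of $J_k\subset\KD(n)$ and $\delta(J_k)\subset A$ via~\ref{grapheprincipal}. Two remarks on the middle step. What makes the verification of $\Delta(p_{J_k})$ tractable in the paper is an identification you do not use: $p_{J_k}$ is the Jones projection $\frac14\sum_{g\in H_k}\lambda(g)$ of the subgroup $H_k=\{1,\lambda(a^n),\lambda(ba^k),\lambda(ba^{k+n})\}$ of $D_{2n}$, so its twisted coproduct is obtained in Appendix~\ref{J4} by conjugating the \emph{standard} coproduct by $\Omega$ (Proposition~\ref{prop.abelien}); this computation also hands you the four minimal central projections of $J_k$, which are precisely what the Bratteli diagram of $J_k\subset\KD(n)$ needs. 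Your plan of expanding $\Delta(p_{J_k})$ from the formulas of Appendix~\ref{cop} can be made to work, but it is the heavier path, and without those four projections in hand the graph computation you defer as ``the main obstacle'' is exactly the part that still has to be done.

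Your alternative argument for the graph $D_n/\Z_2$ has a genuine gap. By~\ref{groupe}, the intermediate subfactor $N_1\rtimes\delta(J_k)$, with $\delta(J_k)$ the algebra of functions constant on the cosets of an order-$2$ subgroup inside $K_2\cong L^\infty(D_n)$, is obtained by \emph{basic construction} from an inclusion $R\rtimes\Z_2\subset R\rtimes D_n$; it is not itself exhibited as an inclusion of that form. Hence conjugacy of the order-$2$ subgroups of $D_n$ gives you, directly, only that the principal graph of $N_1\subset N_1\rtimes\delta(J_k)$ is the \emph{dual} principal graph of $R\rtimes\Z_2\subset R\rtimes D_n$. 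That dual graph does coincide with $D_n/\Z_2$ (for $n$ odd it is the induction--restriction graph of $\Z_2\subset D_n$: two hubs for the characters of $\Z_2$, two pendant vertices for the characters of $D_n$, and $(n-1)/2$ spine vertices for the two-dimensional irreducibles), but this must be stated and proved; and you cannot shortcut it with the classification result quoted in~\ref{grapheDimpair}, since that result takes the graph as hypothesis, which would make the argument circular. Finally, note that both your proof and the paper's silently include $k=0$, where $J_0=\C[H]$, the inclusion $N_2\subset N_2\rtimes J_0$ is of depth $2$, and its principal graph is $D_4^{(1)}$ rather than $D^{(1)}_{2n+2}$ --- as the paper itself records for $\KD(3)$ in Proposition~\ref{prop.lattice.KD3}.
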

\begin{proof}
  By self-duality, the \sacigs of dimension $4$ are mapped by $\delta$
  to the \sacigs of dimension $n$ of $K_2$, and there are $n$ of them
  (see~\ref{sacigsK2}). The Jones projections $p_{J_k}$ are given in
  Proposition~\ref{J4} together with the central projections of $J_k$;
  the principal graph of $N_1 \subset N_1\rtimes\delta(J_k)$ follows.
  The principal graph of $N_2 \subset N_2\rtimes J_k$ follows
  from~\ref{sacigsK2}.
\end{proof}

\subsection{The lattice $\mathrm{l}(\KD(n))$}
\label{resumeimpair}

The following theorem summarizes the results of this section, and
describe most, if not all (see
Conjecture~\ref{conj.lattice.nodd}), of the lattice
$\mathrm{l}(\KD(n))$ of \sacigs of $\KD(n)$.
\begin{theorem}
  \label{theorem.lattice.nodd}
  When $n$ is odd, the lattice $\ll(\KD(n))$ is self-dual. It has:
  \begin{itemize}
  \item 3 \sacigs of dimension $2$: $I_0$, $I_1$, and $I_2$,
    contained in the intrinsic group $J_0$;
  \item 3 \sacigs of dimension $2n$:
    \begin{itemize}
    \item $K_2$, isomorphic to $\mathcal{L}^{\infty}(D_n)$;
    \item $K_3$ and $K_4$, isomorphic via $\Theta'$, but non Kac subalgebras;
    \end{itemize}
  \item $n$ \sacigs of dimension $n$ contained in $K_2$:
    $K_1=I(e_1+e_2+e_3+e_4)$, and, for $j=0,\dots,m-1$,
    $I(e_1+e_2+r_{1,1}^{2j+1})$ and $I( e_1+e_2+r_{2,2}^{2j+1})$;

  \item $n$ \sacigs of dimension $4$:
   \begin{equation*}
     I(e_1+ \sum_{j=1}^m q(0,\frac{2kj\pi}{n},2j)), \quad\text{ for $k=0,\dots,n-1$.}
  \end{equation*}
  \end{itemize}
  When $n$ is not prime, $\ll(\KD(n))$ contains further:
  \begin{itemize}
  \item the \sacigs of odd dimension dividing strictly $n$, contained
    in $K_2$, and associated to the dihedral subgroups of $D_n$;
  \item the \sacigs of dimension a multiple of $4$, image of the
    previous ones by $\delta$;
  \item the \sacigs of even dimension dividing strictly $2n$:
    \begin{itemize}
    \item the \sacigs of $K_2$ corresponding to the subgroups of
      $<\alpha>$ in $D_n$;
    \item the \sacigs of $K_3$ containing $I_1$; they are the images
      by $\Theta'$ of those of $K_4$ containing $I_2$ below;
    \item \sacigs of $K_4$ containing $I_2$:
      \begin{itemize}
      \item for each $k$ non trivial divisor of $n=kd$, the image of
        the \sacig $K_4$ of dimension $2k$ of $\KD(k)$ by the
        embedding $\varphi_{d}$;
      \item a set $\mathcal{I}$ of other \sacigs, controlled by
        Proposition~\ref{prop.K4impair}, and empty for $n\leq 51$.
      \end{itemize}
    \end{itemize}
  \end{itemize}
\end{theorem}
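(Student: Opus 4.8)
The plan is to treat this as an assembly theorem: every item in the two lists has already been established in the preceding subsections, and the role of the proof is to collect them and to verify that the enumerations claimed to be \emph{exact} (dimensions $2$, $2n$, $n$ and $4$) are indeed complete. The organizing principle is self-duality. First I would record that $\ll(\KD(n))$ is self-dual: by Theorem~\ref{theorem.self-dual} the Kac algebra $\KD(n)$ is self-dual for $n$ odd, and by the remark in~\ref{d} the lattice $\ll(B)$ inherits self-duality from $B$. Throughout, I would use the identification of $\KD(n)$ with its dual furnished by $\psi$, so that $\delta$ becomes a lattice antiautomorphism of $\ll(\KD(n))$; the index relation of~\ref{tour}~(4), combined with $[N_2\rtimes I:N_2]=\dim I$ in the irreducible case, then gives $\dim\delta(I)\cdot\dim I=\dim\KD(n)=4n$.

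Next I would dispatch the four exact enumerations. The three dimension-$2$ \sacigs $I_0,I_1,I_2$ and their containment in the intrinsic group $J_0$ come from~\ref{intrin} together with the computation $G(\KD(n))=\Z_2\times\Z_2$ of~\ref{Himpair}. The three dimension-$2n$ \sacigs $K_2,K_3,K_4$ are exactly those whose Jones projection has trace $1/2n$ and dominates $e_1$, namely $e_1+e_i$ for $i=2,3,4$ (\ref{sacig2n}); that $K_2\cong\mathcal L^\infty(D_n)$ is Proposition~\ref{proposition.K2}, that $K_3$ and $K_4$ are exchanged by $\Theta'$ is~\ref{iso-n-impair}, and that they are \emph{not} Kac subalgebras follows from the depth-$2$ analysis of~\ref{Himpair} (only $K_2=\delta^{-1}(I_0)$ is depth $2$) via the equivalence in~\ref{irredprof2}. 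The $n$ \sacigs of dimension $n$, all contained in $K_2$, are Proposition~\ref{prop.sacignimpair}, with the explicit Jones projections read off from~\ref{sacigsK2} and~\ref{K1impair} (the $n$ order-$2$ subgroups of $D_n$). Finally, applying $\delta$ to these and using $\dim\delta(I)=4n/n=4$ together with the antiautomorphism property produces exactly $n$ \sacigs of dimension $4$, whose Jones projections $p_{J_k}=e_1+\sum_{j=1}^m q(0,\tfrac{2kj\pi}{n},2j)$ are those of~\ref{sacig4impair} and Proposition~\ref{J4}.

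For $n$ not prime I would assemble the remaining families. By~\ref{sacignimpair} every \sacig of odd dimension lies in $K_2\cong\mathcal L^\infty(D_n)$ and hence corresponds to a subgroup of $D_n$; the dihedral subgroups yield the odd-dimensional \sacigs properly dividing $n$. Applying $\delta$ turns each of these into a \sacig of the complementary dimension $4n/\dim$, a multiple of $4$. The even-dimensional \sacigs properly dividing $2n$ are then sorted, using~\ref{sacign}, by which of $K_2,K_3,K_4$ contains them: those in $K_2$ arise from the subgroups of $\langle\alpha\rangle$ (\ref{sacigsK2}); those in $K_4$ containing $I_2$ are described by Proposition~\ref{prop.K4impair}, comprising the images under $\varphi_d$ of the dimension-$2k$ \sacig $K_4$ of $\KD(k)$ for $k$ a nontrivial divisor $n=kd$, together with the residual set $\mathcal I$; and those in $K_3$ are their $\Theta'$-images (\ref{K4impair}).

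The genuine content, as opposed to bookkeeping, is concentrated in the completeness of the dimension-$4$ count, which I expect to be the main point. It is \emph{not} obtained by inspecting trace-$1/4$ projections directly (which would be delicate), but transported from the clean dimension-$n$ enumeration of Proposition~\ref{prop.sacignimpair} through the self-dual structure; thus the entire exact part of the theorem rests on Theorem~\ref{theorem.self-dual}, the deep input. A second delicate point is honesty about completeness: for $n$ prime the four exact families exhaust $\ll(\KD(n))$ (Corollary~\ref{corollary.lattice.nprime}), whereas for $n$ not prime the word ``contains'' is deliberate, since the set $\mathcal I$ of Proposition~\ref{prop.K4impair} is only known to be empty for $n\le 51$; I would therefore state the non-prime lists as containments, deferring completeness to Conjecture~\ref{conj.lattice.nodd}.
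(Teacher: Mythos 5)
Your proposal is correct and takes essentially the same route as the paper: the published proof is likewise a pure assembly, citing Theorem~\ref{theorem.self-dual} together with Propositions~\ref{sacign}, \ref{iso-n-impair}, \ref{K1impair}, \ref{prop.sacignimpair}, \ref{prop.K4impair}, and~\ref{sacig4impair}, with the exact dimension-$4$ count transported from the dimension-$n$ count by self-duality exactly as you describe. The only point you leave implicit is the justification of the claim that $\mathcal{I}$ is empty for $n\leq 51$ (which is part of the theorem's statement, not merely background): the paper obtains it by a finite computer check, made possible because Proposition~\ref{prop.K4impair} restricts the Jones projections of members of $\mathcal{I}$ to finitely many candidates, a search further halved by the $\delta$-exchange of dimensions $2k$ and $2d$.
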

\begin{proof}
  Follows from Theorem~\ref{theorem.self-dual}, and
  Propositions~\ref{sacign},~\ref{iso-n-impair},~\ref{K1impair},~\ref{prop.sacignimpair},~\ref{prop.K4impair},
  and~\ref{sacig4impair}.
  The emptyness of $\mathcal{I}$ for $n\leq 51$ was checked on
  computer, using that, by Proposition~\ref{prop.K4impair}) there are
  only finitely many possible Jones projections for \sacigs in
  $\mathcal{I}$. This check can be further reduced by using $\delta$
  which exchanges \sacigs of dimension $2k$ and $2d$.
\end{proof}

\begin{cor}
  \label{corollary.lattice.nprime}
  For $n$ odd prime, the non trivial \sacigs of $\KD(n)$ are of
  dimension $2$, $4$, $n$, and $2n$ and the graph of the lattice of
  \sacigs of $\KD(n)$ is similar to Figure~\ref{figure.KD5} with $n$
  \sacigs of dimension $4$ and $n$.
\end{cor}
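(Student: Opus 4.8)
The plan is to deduce the corollary from Theorem~\ref{theorem.lattice.nodd} after exploiting the divisibility constraint specific to $n$ prime. First I would invoke Proposition~\ref{prop.resumep}, by which the dimension of every \sacig of $\KD(n)$ divides $\dim\KD(n)=4n$. When $n$ is an odd prime, the divisors of $4n$ are exactly $1,2,4,n,2n,4n$; the two extremes correspond to the trivial \sacigs $\C$ and $\KD(n)$ itself, so every non trivial \sacig has dimension $2$, $4$, $n$, or $2n$, which is the first assertion.

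The core of the argument is then to count the \sacigs in each of these four dimensions and to check that together they exhaust $\ll(\KD(n))$. By~\ref{Himpair} (using~\ref{intrin}) there are exactly the three \sacigs $I_0,I_1,I_2$ of dimension $2$, and by~\ref{sacig2n} exactly the three \sacigs $K_2,K_3,K_4$ of dimension $2n$. For dimension $n$ I would use Proposition~\ref{prop.sacignimpair}: since $n$ is odd, every \sacig of dimension $n$ lies in $K_2\cong\mathcal L^\infty(D_n)$ and corresponds to a subgroup of order $2$ of $D_n$; as $n$ is odd these are precisely the $n$ subgroups generated by the $n$ reflections, giving exactly $n$ \sacigs of dimension $n$ (the Jones projections listed in~\ref{sacigsK2}). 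Finally, self-duality (Theorem~\ref{theorem.self-dual}) makes $\delta$ a lattice antiisomorphism sending dimension $d$ to dimension $4n/d$, so the dimension-$4$ \sacigs are exactly the $\delta$-images of the dimension-$n$ ones, hence again $n$ in number (Proposition~\ref{sacig4impair}). Since these counts account for every admissible dimension, the list is complete; in particular the \emph{further} \sacigs appearing in the composite case of Theorem~\ref{theorem.lattice.nodd}, which all arise from a proper non trivial divisor of $n$, are absent when $n$ is prime.

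It remains to record the Hasse diagram. Self-duality forces the picture to be symmetric about its middle level, pairing the three dimension-$2$ \sacigs with the three dimension-$2n$ ones and the $n$ dimension-$4$ \sacigs with the $n$ dimension-$n$ ones; the covering relations are exactly those already established in this section ($I_0,I_1,I_2\subset J_0$, the inclusions $I_1\subset K_3$ and $I_2\subset K_4$, the $n$ dimension-$n$ \sacigs inside $K_2$, and their images under $\delta$). This yields precisely the shape of Figure~\ref{figure.KD5}, with $n$ nodes on each of the two middle rows. The main obstacle, such as it is, is the completeness step rather than the enumeration: the divisor count from Proposition~\ref{prop.resumep} does the heavy lifting, but one must still be sure each per-dimension tally is exact, and the dimension-$4$ tally is the one that genuinely requires the self-duality of $\KD(n)$ rather than a direct classification.
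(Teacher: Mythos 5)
Your proposal is correct and follows essentially the same route as the paper: the corollary is there obtained by specializing Theorem~\ref{theorem.lattice.nodd} to $n$ prime, whose proof aggregates exactly the ingredients you invoke (the divisibility constraint from Proposition~\ref{prop.resumep}, the counts of \sacigs of dimension $2$ and $2n$ from~\ref{Himpair} and~\ref{sacig2n}, Proposition~\ref{prop.sacignimpair} for dimension $n$, and the self-duality/$\delta$ argument of Proposition~\ref{sacig4impair} for dimension $4$). Your unpacking of why the ``further'' \sacigs of the composite case cannot occur when $n$ is prime is precisely the point that makes the theorem's description complete in that case.
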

\begin{figure}[h]
  \centering
  \pgfdeclarelayer{background}
\pgfdeclarelayer{nodes}
\pgfsetlayers{background,main,nodes}
\begin{tikzpicture}[yscale=-0.8,xscale=0.8]
  \begin{pgfonlayer}{nodes}
    \node(C)	at (7,1) {$\C$};
    \node(I1)	at (1,2) {$I_1$};
    \node(I0)	at (7,2) {$I_0$};
    \node(I2)	at (13,2) {$I_2$};
    \node(J4)	at (2,3) {$J_4$};
    \node(J3)	at (3,3) {$J_3$};
    \node(J2)	at (4,3) {$J_2$};
    \node(J1)	at (5,3) {$J_1$};
    \node(J0)	at (6,3) {$J_0$};
    \node(K1)	at (8,4) {$K_1$};
    \node(Kp22)	at (9,4) {$K_{21}$};
    \node(Kp23)	at (10,4) {$K_{22}$};
    \node(Kp24)	at (11,4) {$K_{23}$};
    \node(Kp25)	at (12,4) {$K_{24}$};
    \node(K3)	at (1,5) {$K_3$};
    \node(K2)	at (7,5) {$K_2$};
    \node(K4)	at (13,5) {$K_4$};
    \node(KD5)	at (7,6) {$KD(5)$};
  \end{pgfonlayer}
  \begin{pgfscope}
    \tikzstyle{every path}=[blue]
    \draw (C) -- (I1);
    \draw (C) -- (I0);
    \draw (C) -- (I2);
    \draw (I1) -- (J0);
    \draw (I2) -- (J0);

    \draw (I0) -- (J4);
    \draw (I0) -- (J3);
    \draw (I0) -- (J2);
    \draw (I0) -- (J1);
    \draw (I0) -- (J0);
    \draw (KD5) -- (J4);
    \draw (KD5) -- (J3);
    \draw (KD5) -- (J2);
    \draw (KD5) -- (J1);
    \draw (KD5) -- (J0);

    \tikzstyle{every path}=[green]
    \draw (C) -- (K1);
    \draw (C) -- (Kp22);
    \draw (C) -- (Kp23);
    \draw (C) -- (Kp24);
    \draw (C) -- (Kp25);
    \draw (K3) -- (K1);
    \draw (K4) -- (K1);
    \draw (K2) -- (K1);
    \draw (K2) -- (Kp22);
    \draw (K2) -- (Kp23);
    \draw (K2) -- (Kp24);
    \draw (K2) -- (Kp25);
    \draw (K3) -- (KD5);
    \draw (K2) -- (KD5);
    \draw (K4) -- (KD5);

    \tikzstyle{every path}=[red]
    \draw (I1) -- (K3);
    \draw (I2) -- (K4);
    \draw (I0) -- (K2);
  \end{pgfscope}
  \begin{pgfonlayer}{background}
    \newcommand{\dimline}[2]{\draw[color=black!10,very thin](0,#1) -- (14,#1); \node[right] at (14,#1) {dim $#2$};}
    \dimline{1}{1}
    \dimline{2}{2}
    \dimline{3}{4}
    \dimline{4}{5}
    \dimline{5}{10}
    \dimline{6}{20}
  \end{pgfonlayer}
\end{tikzpicture}
  \caption[The lattice of \sacigs of $\KD(5)$]{The lattice of \sacigs of $\KD(5)$}
  \label{figure.KD5}
\end{figure}

The computer exploration results mentioned in
Theorem~\ref{theorem.lattice.nodd} suggests right away the following
conjecture when $n$ is not prime.
\begin{conjecture}
  \label{conj.lattice.nodd}
  Let $n$ be odd. Then, the description of the lattice $\ll(\KD(n))$
  of Theorem~\ref{theorem.lattice.nodd} is complete: $\mathcal I$ is
  empty.
\end{conjecture}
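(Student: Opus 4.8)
The plan is to prove the conjecture by sharpening the combinatorial classification already obtained in Proposition~\ref{prop.K4impair}. By Theorem~\ref{theorem.lattice.nodd}, the only potentially missing \sacigs are those in $\mathcal{I}$: coideal subalgebras $L$ of $K_4$ of even dimension $2k$, with $k$ a nontrivial divisor of $n=kd$, containing $I_2$, whose Jones projection has the form
\begin{equation*}
  p_T = e_1+e_4+\sum_{t\in T} \left(p^{2t}_{1,1}+p^{n-2t}_{1,1}\right),
\end{equation*}
for some $T\subseteq\{1,\dots,m\}$ with $|T|=\tfrac12(d-1)$. The conjecture asserts that, among all $\binom{m}{(d-1)/2}$ candidate projections of this shape, the only one that actually generates a $2k$-dimensional \sacig is the \emph{arithmetic} choice $T=\{k,2k,\dots\}$ coming from the embedding $\varphi_{d}$ of $\KD(k)$. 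Thus the whole problem reduces to deciding, for each $T$, whether the span $I_T$ of the right legs of $\Delta(p_T)$ is a subalgebra of dimension exactly $2k$; by Remark~\ref{remark.resumep} this is both necessary and sufficient for $p_T$ to be the Jones projection of a \sacig of dimension $2k$.

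First I would compute $\Delta(p_T)$ explicitly using the matrix units of $K_4$ from~\ref{K3K4.blocks.impair} and the coproduct formulas of Appendix~\ref{cop}. The key point is that the twisted coproduct acts on the indices $t$ by a convolution-like rule: decomposing $\Delta(p_T)$ on the blocks $K_4^j$, one should be able to read off which products $p^{2s}_{1,1}\cdot(\text{leg})$ occur, and hence a rewriting $t\mapsto t\pm s \pmod n$ governing how the right legs multiply. Closure of $I_T$ under multiplication then ought to translate into a purely additive condition on $T$: stability under the relevant translations modulo $n$, which forces $T$ to be a coset-union, i.e. the set of multiples of $k$. In other words, I expect the subalgebra generated by $p_T$ to have dimension strictly larger than $2k$ whenever $T$ is not this arithmetic set, so that such a $p_T$ is simply not a legitimate Jones projection.

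To organize the argument I would exploit the self-duality of Theorem~\ref{theorem.self-dual} together with the antiisomorphism $\delta$ of~\ref{d}, which exchanges \sacigs of dimension $2k$ and $2d$. This symmetry halves the work and, more importantly, suggests an induction on the number of prime factors of $n$: if $L$ of dimension $2k$ is contained in a proper sub-copy $\varphi_{d'}(\KD(k'))$ with $k\mid k'\mid n$ and $k'<n$, then $L$ is a \sacig of the $K_4$ of $\KD(k')$, and the conjecture for $\KD(k')$ (a strictly smaller case) would apply. The base case $n$ prime is Corollary~\ref{corollary.lattice.nprime}, where $\mathcal{I}$ is vacuously empty.

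The main obstacle is exactly the situation where no such reduction is available, namely when $d$ is prime (in particular for $d$ the largest prime factor of $n$): there $k=n/d$, $|T|=(d-1)/2$, and there is no intermediate divisor to induct through, so one must analyze the twisted coproduct head-on and also verify that an exotic $L$ is not trapped inside any proper sub-copy. The difficulty is that $K_4$ is \emph{not} a Kac subalgebra, so the clean group-convolution structure available for $K_2\cong\mathcal{L}^{\infty}(D_n)$ is lost, and multiplication of the right legs mixes the $p$- and $q$-type projections through the cocycle $\Omega$ in a way that resists a uniform closed form. Proving that no non-arithmetic $T$ ever closes up is precisely the content that remains open: the computer verification for $n\leq 51$ confirms that the additive closure obstruction is fatal in every case, but converting this experimental evidence into an argument valid for all odd $n$ is the crux of the conjecture.
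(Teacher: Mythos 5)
The statement you are addressing is a conjecture: the paper itself gives no proof of it, only a computer verification for $n\leq 51$ (mentioned in the proof of Theorem~\ref{theorem.lattice.nodd}) and the case $n$ prime (Corollary~\ref{corollary.lattice.nprime}). Your proposal does not close this gap either, and you concede as much in your final paragraph. The reduction you describe --- every element of $\mathcal{I}$ would have Jones projection $p_T=e_1+e_4+\sum_{t\in T}\bigl(p^{2t}_{1,1}+p^{n-2t}_{1,1}\bigr)$ for some $T$ of size $\tfrac12(d-1)$, and $p_T$ is a legitimate Jones projection precisely when the span of the right legs of $\Delta(p_T)$ is an algebra of dimension $2k$ --- is exactly the paper's Proposition~\ref{prop.K4impair} combined with Remark~\ref{remark.resumep}; it is what makes the finite check for each fixed $n$ possible, not new progress. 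The substantive claim, that closure under multiplication ``translates into a purely additive condition on $T$'' forcing $T$ to be the set of multiples of $k$, is asserted only as an expectation, with no mechanism offered for controlling how the cocycle $\Omega$ mixes the $p$- and $q$-type legs inside $K_4$ --- which, as you note, is not a Kac subalgebra, so no group-convolution structure is available. That claim \emph{is} the conjecture, restated.

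The inductive scaffolding does not reduce the problem either. Induction on divisors applies only to an exotic $L$ that happens to lie inside a proper sub-copy $\varphi_{d'}\bigl(\KD(k')\bigr)$, but nothing guarantees an element of $\mathcal{I}$ does; when $d$ is prime there is no intermediate divisor at all, and this is precisely where the conjecture has content. Likewise the $\delta$-symmetry exchanging dimensions $2k$ and $2d$ halves the search space --- a point already made in the paper's proof of Theorem~\ref{theorem.lattice.nodd} --- but it cannot exclude a $\delta$-paired family of exotic \sacigs. In summary, what you have written is a correct account of why the conjecture is plausible and of how to verify it for any fixed $n$, matching the paper's own framing; the step that would turn it into a proof is missing, and is, explicitly, the open problem.
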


\subsection{The Kac algebra $\KD(3)$ of dimension $12$}

\subsubsection{The lattice  $\mathrm{l}(\KD(3))$}
\label{KD3section}
In this section, we describe the lattice of $\KD(3)$ in detail to
illustrate Corollary~\ref{corollary.lattice.nprime}.

\begin{figure}[h]
  \centering
  \pgfdeclarelayer{background}
\pgfdeclarelayer{nodes}
\pgfsetlayers{background,main,nodes}
\begin{tikzpicture}[xscale=1,yscale=-0.7]
  \begin{pgfonlayer}{nodes}
    \node(C)	at (5,1) {$\C$};
    \node(I1)	at (1,2) {$I_1$};
    \node(I0)	at (5,2) {$I_0$};
    \node(I2)	at (9,2) {$I_2$};
    \node(K1)	at (7,3) {$K_1$};
    \node(L1)	at (7.8,3) {$K_{21}$};
    \node(L2)	at (8.5,3) {$K_{22}$};
    \node(Jp2)	at (1.5,4) {$J_2$};
    \node(Jp1)	at (2.2,4) {$J_1$};
    \node(J0)	at (3,4) {$J_0$};
    \node(K3)	at (1,5) {$K_3$};
    \node(K2)	at (5,5) {$K_2$};
    \node(K4)	at (9,5) {$K_4$};
    \node(KD3)	at (5,6) {$KD(3)$};
  \end{pgfonlayer}
\begin{pgfscope}
  \tikzstyle{every path}=[blue]
  \draw (C) -- (I1);
  \draw (C) -- (I0);
  \draw (C) -- (I2);
  \draw (I1) -- (J0);
  \draw (I2) -- (J0);
  \draw (I0) -- (J0);
  \draw (I0) -- (Jp2);
  \draw (I0) -- (Jp1);
  \draw (Jp2) -- (KD3);
  \draw (Jp1) -- (KD3);
  \draw (J0) -- (KD3);
  \tikzstyle{every path}=[red]
  \draw (I0) -- (K2);
  \draw (I1) -- (K3);
  \draw (I2) -- (K4);
  \tikzstyle{every path}=[green]
  \draw (C) -- (K1);
  \draw (C) -- (L1);
  \draw (C) -- (L2);
  \draw (K1) -- (K2);
  \draw (K1) -- (K3);
  \draw (K1) -- (K4);
  \draw (L1) -- (K2);
  \draw (L2) -- (K2);
  \draw (K3) -- (KD3);
  \draw (K2) -- (KD3);
  \draw (K4) -- (KD3);
  \end{pgfscope}
  \begin{pgfonlayer}{background}
    \newcommand{\dimline}[2]{\draw[color=black!10,very thin](0,#1) -- (10,#1); \node[right] at (10,#1) {dim $#2$};}
    \dimline{1}{1}
    \dimline{2}{2}
    \dimline{3}{3}
    \dimline{4}{4}
    \dimline{5}{6}
    \dimline{6}{12}
  \end{pgfonlayer}
\end{tikzpicture}
  \caption{The lattice of \sacigs of $\KD(3)$}
  \label{figure.KD3}
\end{figure}
\begin{prop}
  \label{prop.lattice.KD3}
In $\KD(3)$,
\begin{itemize}
\item The \sacigs of dimension $2$ are $I_0$, $I_1$, and $I_2$;
\item The \sacigs of dimension $3$ are contained in $K_2$; they are
  the algebras of functions constant w.r.t. the three subgroups of
  order $2$:
\begin{align*}
  K_1&=\C(e_1+e_2+e_3+e_4)\oplus \C(r^1_{1,1}+e^2_{1,1}) \oplus \C(r^1_{2,2}+e^2_{2,2})\,;    \\
  K_{21}&=\C(e_1+e_2+r^1_{1,1}) \oplus \C(e_3+e_4+e^2_{2,2})   \oplus \C(r^1_{2,2}+e^2_{1,1})\,;  \\
  K_{22}&=\C(e_1+e_2+r^1_{2,2}) \oplus \C(e_3+e_4+e^2_{1,1})   \oplus \C(r^1_{1,1}+e^2_{2,2})\,.
\end{align*}
\item The \sacigs of dimension $4$, images of those of dimension $3$
  by $\delta$, are:
\begin{align*}
J_0&=\C(e_1+q_1) \oplus \C(e_2+q_2) \oplus \C(e_3+p_2) \oplus
  \C(e_4+p_1)\,;\\
  J_1&=\C(e_1+q(0,\frac{2\pi}{3},2  )) \oplus \C(e_2+q(0,\frac{5\pi}{3},2    )) \oplus
        \C(e_3+q(  \frac{ \pi}{3},0,1)) \oplus \C(e_4+q( -\frac{ \pi}{3},\pi,1))\,;
        \\
  J_2&= \C(e_1+q(0,\frac{4\pi}{3},2))\oplus \C(e_2+q(0,\frac{\pi}{3},2))\oplus \C(e_3+q(-\frac{\pi}{3},0,1))\oplus \C(e_4+q(\frac{\pi}{3},\pi,1))\,.
\end{align*}
\item The three \sacigs of dimension $6$ are $K_2$, $K_3$, and $K_4$.
\end{itemize}

With the notations of~\ref{graphe}, the inclusion $N_2\subset N_2\rtimes J$ has for principal graph
\begin{itemize}
\item  $D^{(1)}_8$ for $J= J_1$ or $J_2$.
\item  $A_5$ for $J=K_1$, $K_{21}$ or $K_{22}$,
\item  $D_6/\mathbb{Z}_2$ for $J=K_3$ or $K_4$.
\end{itemize}
For $J= I_0$, $I_1$, $I_2$, $J_0$ or $K_2$, it is of depth $2$.

 The lattice of \sacigs  de $\KD(3)$ is as given in
  Figure~\ref{figure.KD3}.
 \end{prop}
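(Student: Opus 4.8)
The plan is to \emph{specialize the general machinery of the previous sections to $n=3$}, treating the explicit matrix-block descriptions as the only genuinely computational ingredient. Since $3$ is an odd prime, Corollary~\ref{corollary.lattice.nprime} already fixes the global shape of the lattice: the only non-trivial dimensions that occur are $2$, $4$, $3$, and $6$, with exactly $3$ coideal subalgebras in each of dimensions $4$ and $3$, so that the poset is that of Figure~\ref{figure.KD5} with $n=3$. It therefore remains to (a) list the members of each layer explicitly, (b) pin down the covering relations, and (c) read off the principal graphs.

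For the layers I would argue dimension by dimension. In dimension $2$, by~\ref{intrin} every such \sacig sits inside $\C[G(\KD(n))]=J_0$, and~\ref{Himpair} exhibits exactly the three $I_0,I_1,I_2$. In dimension $3$, Proposition~\ref{prop.sacignimpair} forces every odd-dimensional \sacig into $K_2\cong L^\infty(D_3)$, where they correspond to the three (conjugate) order-$2$ subgroups of $D_3$; their Jones projections $e_1+e_2+e_3+e_4$, $e_1+e_2+r^1_{1,1}$, $e_1+e_2+r^1_{2,2}$ are the $j=0$ members of the list in~\ref{sacigsK2} (here $m=1$), and the stated block decompositions of $K_1,K_{21},K_{22}$ follow by expanding $\Delta$ of these projections (Appendix~\ref{cop}) and taking right legs (Remark~\ref{remark.jones_left_legs}). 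In dimension $4$, I invoke self-duality (Theorem~\ref{theorem.self-dual}): under the antiisomorphism $\delta$ of~\ref{d}, which here sends dimension $3$ to dimension $4$, the three dimension-$3$ \sacigs are carried to the three dimension-$4$ ones, whose Jones projections $p_{J_k}=e_1+q(0,\tfrac{2k\pi}{3},2)$ and central projections are given by Proposition~\ref{sacig4impair} and Proposition~\ref{J4} (with $m=1$); these yield the listed forms of $J_0,J_1,J_2$. Finally, dimension $6$ is handled verbatim by~\ref{sacig2n}: the only projections of trace $1/6$ dominating $e_1$ are $e_1+e_2,e_1+e_3,e_1+e_4$, giving $K_2,K_3,K_4$.

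The covering relations I would deduce from~\ref{tour}~(5) ($I_1\subset I_2\iff p_{I_2}\le p_{I_1}$) together with the behaviour of $\delta$. Directly, $e_1+e_2\le p_{I_0}$ gives $I_0\subset K_2$, while~\ref{K3K4.blocks.impair} gives $I_1\subset K_3$ and $I_2\subset K_4$; the identity $K_1=K_2\cap K_3\cap K_4$ and Proposition~\ref{prop.sacignimpair} place $K_1,K_{21},K_{22}$ inside $K_2$. The remaining edges then come for free by applying the order-reversing $\delta$ to these, using $\delta(K_2)=I_0$ and $\delta(K_1)=J_0$ from~\ref{Himpair}: for instance $K_{2j}\subset K_2$ yields $I_0=\delta(K_2)\subset\delta(K_{2j})=J_j$, and $J_0\supset I_0,I_1,I_2$ is just the intrinsic-group statement. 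This reproduces Figure~\ref{figure.KD3}.

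For the principal graphs I would use~\ref{grapheprincipal} (the graph of $N_2\subset N_2\rtimes I$ is the component of the trivial representation in the Bratteli diagram of $\delta(I)\subset A$) together with~\ref{irredprof2} (depth $2$ exactly for Kac subalgebras, i.e. when $\Delta(p_I)$ is symmetric). The symmetric group-type \sacigs $I_0,I_1,I_2,J_0$ and the Kac subalgebra $K_2$ are thus the depth-$2$ cases. For the rest I specialize the general formulas: $K_3,K_4$ give $D_{2n}/\mathbb{Z}_2=D_6/\mathbb{Z}_2$ by~\ref{Himpair}; $J_1,J_2$ give $D_{2n+2}^{(1)}=D_8^{(1)}$ by~\ref{sacig4impair}; and $K_1$ gives $D_n/\mathbb{Z}_2=D_3/\mathbb{Z}_2$ by~\ref{Himpair}, with $K_{21},K_{22}$ giving the same graph. \emph{The main obstacle is precisely this last point}: the small cases of the general graph families must be matched with the named diagrams --- in particular that $D_3/\mathbb{Z}_2$ (the principal graph of the index-$3$ inclusion $R\rtimes\mathbb{Z}_2\subset R\rtimes D_3$, a $5$-vertex graph) is $A_5$ rather than the other index-$3$ candidate $D_4$, and that $J_0$, being the \emph{symmetric} member of the $J_k$ family, degenerates to a genuine depth-$2$ inclusion instead of producing $D_8^{(1)}$. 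These I would settle by the explicit Bratteli-diagram computation from the block decompositions obtained above, exactly the kind of calculation the authors delegate to the computer.
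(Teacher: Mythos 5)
Your proof is correct and follows essentially the route the paper intends: the paper states this proposition without a written proof, as an illustration of Corollary~\ref{corollary.lattice.nprime}, which is exactly the specialization to $n=3$ of the general results you invoke (\ref{Himpair}, \ref{sacig2n}, Proposition~\ref{prop.sacignimpair}, Proposition~\ref{sacig4impair}, \ref{tour}~(5), \ref{irredprof2}, together with the explicit projections of~\ref{sacigsK2} and Appendix~\ref{J4}). Your explicit handling of the two degenerate small-$n$ identifications --- that $D_3/\mathbb{Z}_2$ collapses to $A_5$ for the index-$3$ inclusions given by $K_1$, $K_{21}$, $K_{22}$, and that $J_0=\C[H]$, being a Kac subalgebra, yields a depth-$2$ inclusion rather than $D_8^{(1)}$ --- supplies precisely the checks the paper leaves implicit.
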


\subsubsection{Realization of $N_2 \subset N_2 \rtimes J_1$ by composition of subfactors}\label{KD3IK}

As in~\ref{KPIK}, the inclusion $N_2\subset N_2\rtimes J_i$ of
principal graph $D^{(1)}_8$ can be interpreted as $M^{(\alpha, \Z_2)}
\subset M \rtimes_\beta \Z_2$ (see also~\cite{Popa.1990}).
\begin{prop}
  The inclusion $N_2 \subset N_2 \rtimes J_1$, which is of principal graph $D_8^{(1)}$, can
  be put under the form $M^{(\alpha, \Z_2)} \subset M \rtimes_\beta
  \Z_2$ as follows. Take the following basis of $J_1$:
  \begin{displaymath}
    B_1= e_1+q(0,\frac{2\pi}{3},2), \; B_2=e_2+q(0,\frac{5\pi}{3},2), \; B_3=e_3+q(\frac{ \pi}{3},0,1), \; B_4=e_4+q( -\frac{ \pi}{3},\pi,1)\,,
  \end{displaymath}
  and set $ v=B_1-B_2+B_3-B_4$. Recall that $\lambda(a^3)=B_1+B_2-B_3-B_4$.

  Set $M=N_2 \rtimes I_0$, and let $\alpha$ be the automorphism of $M$
  which fixes $N_2$ and changes $\lambda(a^3)$ into its opposite (this
  is the dual action of $\mathbb{Z}_2$) and $\beta=\Ad v$.  Then,
  $\alpha$ and $\beta$ are involutive automorphisms of $M$ such that
  the period of $\beta \alpha$ is $6$, while $M^{\alpha}=N_2$ and
  $N_2\rtimes J_1=M\rtimes_{\beta}\mathbb{Z}_2$.
\end{prop}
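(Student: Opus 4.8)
The plan is to mirror the realization of $\KP$ given in~\ref{KPIK}, transported to $\KD(3)$. First I would identify $N_3$ with $N_2\rtimes\KD(3)$ through $\Theta$, as in~\ref{action}. By Proposition~\ref{prop.lattice.KD3}, $B_1,B_2,B_3,B_4$ are precisely the minimal projections of the commutative \sacig $J_1$, hence orthogonal and summing to $1$; consequently $v=B_1-B_2+B_3-B_4$ is a self-adjoint unitary with $v^2=1$, so $\beta=\Ad v$ is automatically involutive. The automorphism $\alpha$ is the dual action of $\Z_2$ on $M=N_2\rtimes I_0$, where $I_0$ is the symmetric \sacig $\C[\langle\lambda(a^3)\rangle]\cong\C[\Z_2]$ generated by the grouplike element $\lambda(a^3)$; thus $\alpha$ is involutive as well. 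Finally, since $\lambda(a^3)=B_1+B_2-B_3-B_4$ and $v=B_1-B_2+B_3-B_4$, the elements $1,\lambda(a^3),v,\lambda(a^3)v$ are linearly independent and span $J_1$, so $\lambda(a^3)$ and $v$ generate $J_1$ as an algebra.

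The only genuine verification is that $\beta$ normalizes $M$. Because $v$ lies in the commutative algebra $J_1$, conjugation by $v$ fixes $I_0\subset J_1$ pointwise, so it suffices to control $\beta$ on $N_2$. Following~\ref{KPIK}, for $x\in N_2$ one writes $\beta(x)=vxv^*=(v_{(1)}\triangleright x)v_{(2)}v^*$, which requires the coproduct $\Delta(v)$ in $\KD(3)$. I would compute $\Delta(v)$ from the twisted coproduct formulas of Appendix~\ref{cop} (or by computer, as done elsewhere in the paper), rewrite $\beta(x)$ as a sum of terms $E_{N_2}(\cdots)$ multiplied by fixed elements of $M$, and conclude $\beta(N_2)\subset M$. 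Since $\beta$ is involutive, it is then an automorphism of $M$.

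The structural identities come next. The equality $M^\alpha=N_2$ is the standard duality for the $\Z_2$-crossed product $M=N_2\rtimes\Z_2$ under its dual action. For $N_2\rtimes J_1=M\rtimes_\beta\Z_2$, I would observe that $M$ and $v$ both sit in $N_2\rtimes J_1$, that $v$ implements $\beta$, and that $M$ together with $v$ generate $N_2\rtimes J_1$ precisely because $\lambda(a^3)$ and $v$ generate $J_1$; as $[N_2\rtimes J_1:M]=\dim J_1/\dim I_0=2$, this exhibits $N_2\rtimes J_1$ as the crossed product of $M$ by the order-two automorphism $\beta$.

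The last point is where the principal graph $D_8^{(1)}$ is really encoded, and I expect it to be the main obstacle: one must check that $\beta\alpha$ has period exactly $6$, equivalently that $\langle\alpha,\beta\rangle$ is the dihedral group of order $12$. Once $\alpha$ and $\beta$ are available as explicit automorphisms of the twelve-dimensional algebra $M$ --- through their action on $N_2$ and on the generators $\lambda(a^3)$ and $v$ --- this reduces to a finite, mechanical computation: evaluate the successive powers $(\beta\alpha)^k$ on the generators and confirm that $(\beta\alpha)^6=\id$ while no smaller power is the identity. As throughout the paper, this verification is naturally delegated to the computer in $\KD(3)$.
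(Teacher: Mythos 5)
Your proposal is correct and follows essentially the same route as the paper's proof: identify $N_3$ with $N_2\rtimes\KD(3)$, use the coproduct of $v$ together with the action formula $\beta(x)=(v_{(1)}\triangleright x)v_{(2)}v^*$ to show that $\beta$ normalizes $M$, observe that $\lambda(a^3)$ and $v$ generate $J_1$ to obtain $N_2\rtimes J_1=M\rtimes_\beta\Z_2$, and settle the order of $\beta\alpha$ by a direct (computer-assisted) calculation, which in the paper takes the form $\Delta(v)=v\otimes(B_1-B_2)+w\otimes(B_3-B_4)$ and $(wv)^3=(vw)^3=1$. Your additional remarks (self-adjointness of $v$ from the orthogonality of the $B_i$, and that $\beta$ fixes $I_0$ pointwise since $v\in J_1$ commutative) are correct refinements of the same argument.
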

\begin{proof}
  As in~\ref{action} (see~\cite{David.2005}), we identify $N_3$ and
  $N_2\rtimes \KD(3)$. Therefore, for all $x \in N_2$, one has
  $\beta(x)=vxv^*=(v_{(1)}\triangleright x)v_{(2)}v^*$.
  From $\Delta(v)=v\otimes (B_1-B_2) +w\otimes (B_3-B_4)$,
  (with
  $w=(e_1+q(0,-\frac{2\pi}{3},2))-(e_2+q(0,\frac{\pi}{3},2))+(e_3+q(-\frac{\pi}{3},0,1))-(e_4+q(\frac{ \pi}{3},\pi,1))$),
  we get, for $x\in N_2$,
  $$\beta(x)= (v\triangleright x)(B_1+B_2)+(w\triangleright x)(B_3+B_4)\,,$$
  and deduce that $\beta$ normalizes $M$. By a straightforward
  calculation, we obtain $(wv)^3=(vw)^3=1$ and
  $(\beta\alpha)^6=\id$. Then, $N_2\rtimes J_1$ is indeed the cross
  product of $M$ by $\beta$, since $\lambda(a^3)$ and $v$ generate the
  subalgebra $J_1$.
\end{proof}

\subsection{The algebra $\KD(9)$ of dimension $36$}
\label{KD9}

We illustrate, on $\KD(9)$, Theorem~\ref{theorem.lattice.nodd} for $n$
not prime.
\begin{corollary}
  The lattice of \sacigs of $\KD(9)$ is given by
  Figure~\ref{figure.KD9}. Namely:
  \begin{itemize}
  \item The \sacigs of dimension $2$ are $I_0$, $I_1$ et $I_2$.
  \item The \sacigs of dimension $3$ are contained in $K_2$:
    \begin{itemize}
    \item The \sacig of constant functions modulo $<\alpha^3,\beta>$ is \\$L_0=I(e_1+e_2+e_3+e_4+r_{1,1}^3+r_{2,2}^3+e_{1,1}^6+e_{2,2}^6)$
    \item The \sacig of constant functions modulo $<\alpha^3,\beta \alpha>$ is \\$L_1=I(e_1+e_2+r_{2,2}^1+r_{1,1}^5+e_{1,1}^6+e_{2,2}^6+r_{2,2}^7)$
    \item The \sacig of constant functions modulo $<\alpha^3,\beta \alpha^2>$ is \\$L_2=I(e_1+e_2+r_{1,1}^1+r_{2,2}^5+e_{1,1}^6+e_{2,2}^6+r_{1,1}^7)$
    \end{itemize}
  \item The nine \sacigs of dimension $4$ are the $J_k=I(e_1+
    \sum_{j=1}^4 q(0,\frac{2kj\pi}{9},2j))$, for $k=0,\dots,8$;
  \item The  \sacigs of dimension $6$ are:
    \begin{itemize}
    \item The Kac subalgebra of  constant functions modulo $<\alpha^3>$ in $K_2$:\\
      $M_{2}=I(e_1+e_2+e_{1,1}^6+e_{2,2}^6)$ which contains all the
      $L_i$, $i=0,1,2$;
    \item The \sacig $M_{3}=I(e_1+e_3+p^{3}_{2,2}+p^{6}_{1,1})$
      contained in $K_3$;
    \item The \sacig $M_{4}=I(e_1+e_4+p^{3}_{1,1}+p^{6}_{1,1})$
      contained in $K_4$.
    \end{itemize}

  \item The nine \sacigs of dimension $9$ are contained in $K_2$:\\
    $K_1=I(e_1+e_2+e_3+e_4)$,  $K_{21}=e_1+e_2+r_{1,1}^{3}$, and
    $K_{22}=e_1+e_2+r_{2,2}^{3}$ whose intersection is $L_0$; \\
    $K_{23}=e_1+e_2+r_{2,2}^{1}$, $K_{24}=e_1+e_2+r_{1,1}^{5}$, and
    $K_{25}=e_1+e_2+r_{2,2}^{7}$ whose intersection is $L_1$; \\
    $K_{23}=e_1+e_2+r_{2,2}^{1}$, $K_{24}=e_1+e_2+r_{1,1}^{5}$, and
    $K_{25}=e_1+e_2+r_{2,2}^{7}$ whose intersection is $L_1$; \\
    $K_{26}=e_1+e_2+r_{1,1}^{1}$, $K_{27}=e_1+e_2+r_{2,2}^{5}$, and
    $K_{28}=e_1+e_2+r_{1,1}^{7}$ whose  intersection is $L_2$.

  \item The three \sacigs of dimension $12$, images by $\delta$ of
    $L_i$, $i=0,1,2$. Their Jones projections are $e_1+p_{1,1}^6$ for
    $\delta(L_0)$, and $e_1+ q(0,\frac{2\pi}{3},6))$ and $e_1+
    q(0,\frac{4\pi}{3},6))$ for the two others.

  \item The three \sacigs of dimension $18$ are $K_2$, $K_3$, and $K_4$.
  \end{itemize}
\end{corollary}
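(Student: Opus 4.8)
The plan is to specialize Theorem~\ref{theorem.lattice.nodd} to $n=9$, which is odd with $m=4$ and whose only nontrivial divisor is $3$, so that $9=kd$ forces $k=d=3$. Every \sacig then comes from that theorem, and the work is to make each family explicit and to record the containments of Figure~\ref{figure.KD9}.

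First I would read off the four generic families directly from the first part of Theorem~\ref{theorem.lattice.nodd}: the three dimension-$2$ \sacigs $I_0,I_1,I_2$; the three dimension-$18$ \sacigs $K_2\cong L^\infty(D_9)$, $K_3$, $K_4$; the nine dimension-$9$ \sacigs contained in $K_2$, namely $K_1$ together with the $I(e_1+e_2+r^{2j+1}_{1,1})$ and $I(e_1+e_2+r^{2j+1}_{2,2})$ for $j=0,\dots,3$, whose Jones projections are listed in~\ref{sacigsK2}; and the nine dimension-$4$ \sacigs $J_k=I(e_1+\sum_{j=1}^{4}q(0,\tfrac{2kj\pi}{9},2j))$, $k=0,\dots,8$, with projections from Proposition~\ref{J4}.

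Since $9$ is not prime, the remaining families are those attached to the divisor $3$. The \sacigs of odd dimension dividing strictly $n$ are the three dimension-$3$ \sacigs $L_0,L_1,L_2$ of $K_2$; under $K_2\cong L^\infty(D_9)$ they are the functions constant on the right cosets of the dihedral subgroups $\langle\alpha^3,\beta\alpha^i\rangle$ ($i=0,1,2$), and their Jones projections are read off from~\ref{sacigsK2}. Applying $\delta$ to $L_0,L_1,L_2$ produces the three dimension-$12$ \sacigs; to obtain their explicit projections I would transport the $L_i$ through the self-duality isomorphism $\psi$ of Theorem~\ref{theorem.self-dual}, which identifies $\widehat{\KD(9)}$ with $\KD(9)$, and simplify. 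Finally, by~\ref{sacign} the even \sacigs of dimension dividing strictly $18$ have dimension $6$: here $M_2\subset K_2$ corresponds to the subgroup $\langle\alpha^3\rangle$ of $\langle\alpha\rangle$, while $M_3\subset K_3$ and $M_4\subset K_4$ are the $\varphi_3$-images (see~\ref{plonge}) of the $K_3,K_4$ of $\KD(3)$, with projections furnished by Proposition~\ref{prop.K4impair}.

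To know the list is complete I must verify that the exceptional set $\mathcal I$ of Theorem~\ref{theorem.lattice.nodd} is empty for $n=9$. By Proposition~\ref{prop.K4impair} the candidate Jones projections of a dimension-$6$ \sacig of $K_4$ containing $I_2$ are indexed by the size-$1$ subsets $T\subset\{1,\dots,4\}$, and one checks that only $T=\{3\}$ (the multiples of $k=3$) actually generates a \sacig, namely $M_4$; this is the finite verification underlying the $n\le 51$ check. The lattice of Figure~\ref{figure.KD9} is then assembled by recording containments, each following from the domination criterion~\ref{tour}(5) applied to the explicit Jones projections, so that, for instance, $K_{21}\cap K_{22}=L_0$. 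The one genuinely computational step is producing the dimension-$12$ projections through $\psi$; everything else is bookkeeping governed by Theorem~\ref{theorem.lattice.nodd}.
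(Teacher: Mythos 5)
Your proposal is correct and follows the paper's own route: the corollary is obtained by specializing Theorem~\ref{theorem.lattice.nodd} to $n=9$ (including the computer-verified emptiness of $\mathcal{I}$, which covers $n\le 51$), the only genuinely new work being the explicit Jones projections of the three dimension-$12$ \sacigs. The single divergence is tactical: you compute $\delta(L_i)$ directly by transporting the commutant construction through the self-duality isomorphism $\psi$ (the method the paper itself demonstrates for $\KD(3)$ in Appendix~\ref{subsubsection.demo.delta}), whereas the paper exploits the lattice antiisomorphism --- each $\delta(L_i)$ must contain $M_2$ and three of the $J_k$'s, and $\delta(L_0)\supset J_0$ since $L_0\subset K_1$ --- and then searches by computer among the trace-$1/12$ projectors dominating those known projections; both are computer-assisted and equally valid.
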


\begin{proof}
  Most of this proposition follows from Theorem~\ref{theorem.lattice.nodd}. To derive
  the formulas for the Jones projections of $\delta(L_i)$, we noticed
  that they all contain $M_2$ as well as three $J_k$'s, and looked on
  computer through the projectors of trace $1/12$ which dominates the
  projections of those \sacigs. Since $L_0$ is contained in $K_1$,
  $\delta(L_0)$ contains $J_0$.
\end{proof}
\begin{figure}[h]
    \centering
    \begin{bigcenter}
      \pgfdeclarelayer{background}
\pgfdeclarelayer{nodes}
\pgfsetlayers{background,main,nodes}
\begin{tikzpicture}[yscale=-0.8,xscale=0.65]
  \begin{pgfonlayer}{nodes}
    \node(C)	at (10,0) {$\C$};
    \node(I1)	at (0,1) {$I_1$};
    \node(I0)	at (10,1) {$I_0$};
    \node(I2)	at (20,1) {$I_2$};
    \node(L1)	at (16,2) {$L_1$};
    \node(L0)	at (12,2) {$L_0$};
    \node(L2)	at (18,2) {$L_2$};    
     \node(J8)	at (1,3) {$J_8$};
    \node(J7)	at (4,3) {$J_7$};
    \node(J6)	at (7,3) {$J_6$};
    \node(J5)	at (2,3) {$J_5$};
    \node(J4)	at (5,3) {$J_4$};
    \node(J3)	at (8,3) {$J_3$};
    \node(J2)	at (3,3) {$J_2$};
    \node(J1)	at (6,3) {$J_1$};
    \node(J0)	at (9,3) {$J_0$};
    \node(M4)	at (20,4) {$M_{4}$};
    \node(M3)	at (0,4) {$M_{3}$};
    \node(M2)	at (10,4) {$M_{2}$};
    \node(K1)	at (11,5) {$K_1$};
    \node(K21)	at (12,5) {$K_{21}$};
    \node(K22)	at (13,5) {$K_{22}$};
    \node(K23)	at (14,5) {$K_{23}$};
    \node(K24)	at (15,5) {$K_{24}$};
    \node(K25)	at (16,5) {$K_{25}$};
    \node(K26)	at (17,5) {$K_{26}$};
    \node(K27)	at (18,5) {$K_{27}$};
    \node(K28)	at (19,5) {$K_{28}$};
    \node(d2)	at (2,6) {$\delta(L_2)$};
    \node(d1)	at (4,6) {$\delta(L_1)$};
    \node(d0)	at (8,6) {$\delta(L_0)$};
    \node(K3)	at (0,7) {$K_3$};
    \node(K2)	at (10,7) {$K_2$};
    \node(K4)	at (20,7) {$K_4$};
    \node(KD9)	at (10,8) {$KD(9)$};
  \end{pgfonlayer}
  \begin{pgfscope}
    \tikzstyle{every path}=[blue]
    \draw (C) -- (I1);
    \draw (C) -- (I0);
    \draw (C) -- (I2);
    \draw (I1) -- (J0);
    \draw (I2) -- (J0);
    \draw (I0) -- (J8);
    \draw (I0) -- (J7);
    \draw (I0) -- (J6);
    \draw (I0) -- (J5);
    \draw (I0) -- (J4);
    \draw (I0) -- (J3);
    \draw (I0) -- (J2);
    \draw (I0) -- (J1);
    \draw (I0) -- (J0);
    \draw (d2) -- (J8);
    \draw (d1) -- (J7);
    \draw (d0) -- (J6);
    \draw (d2) -- (J5);
    \draw (d1) -- (J4);
    \draw (d0) -- (J3);
    \draw (d2) -- (J2);
    \draw (d1) -- (J1);
    \draw (d0) -- (J0);
    \draw (d1) -- (M2);
    \draw (d2) -- (M2);
    \draw (d0) -- (M2);
    \draw (KD9) -- (d0);
    \draw (KD9) -- (d1);
    \draw (KD9) -- (d2);

    \tikzstyle{every path}=[green]
    \draw (C) -- (L1);
    \draw (C) -- (L0);
    \draw (C) -- (L2);
    \draw (L0) -- (K1);
    \draw (L0) -- (K22);
    \draw (L0) -- (K21);
    \draw (L0) -- (M2);
    \draw (L1) -- (K23);
    \draw (L1) -- (K24);
    \draw (L1) -- (K25);
    \draw (L1) -- (M2);
    \draw (L2) -- (K26);
    \draw (L2) -- (K27);
    \draw (L2) -- (K28);
    \draw (L2) -- (M2);
    \draw (K3) -- (K1);
    \draw (K4) -- (K1);
    \draw (K2) -- (K1);
    \draw (K2) -- (K22);
    \draw (K2) -- (K23);
    \draw (K2) -- (K24);
    \draw (K2) -- (K25);
    \draw (K2) -- (K21);
    \draw (K2) -- (K26);
    \draw (K2) -- (K27);
    \draw (K2) -- (K28);
    \draw (K3) -- (KD9);
    \draw (K2) -- (KD9);
    \draw (K4) -- (KD9);

    \tikzstyle{every path}=[red]
    \draw (I1) -- (M3);
    \draw (M3) -- (K3);
    \draw (I2) -- (M4);
    \draw (M4) -- (K4);
    \draw (I0) -- (M2);
    \draw (M2) -- (K2);
  \end{pgfscope}
  \begin{pgfonlayer}{background}
    \newcommand{\dimline}[2]{\draw[color=black!10,very thin](-1,#1) -- (21,#1); \node[right] at (21,#1) {dim $#2$};}
    \dimline{0}{1}
    \dimline{1}{2}
    \dimline{2}{3}
    \dimline{3}{4}
    \dimline{4}{6}
    \dimline{5}{9}
    \dimline{6}{12}
    \dimline{7}{18}
    \dimline{8}{36}
  \end{pgfonlayer}
\end{tikzpicture}
    \end{bigcenter}
    \caption{The lattice of \sacigs of $\KD(9)$}
  \label{figure.KD9}
\end{figure}

\subsection{The algebra $\KD(15)$ of dimension $60$}
\label{KD15}

Figure~\ref{figure.KD15} illustrates, on $\KD(15)$,
Theorem~\ref{theorem.lattice.nodd} for $n$ not the square of a prime.

\begin{figure}[h]
  \begin{bigcenter}
    \scalebox{0.5}{\input{Fig/KD15.tikz}}
  \end{bigcenter}
  \caption{The lattice of \sacigs of $\KD(15)$}
  \label{figure.KD15}
\end{figure}

\newpage
\section{The Kac algebras $\KD(n)$ for $n$ even}
\label{section.KD.even}

In this section we assume that $n$ is even: $n=2m$. The structure of
$\KD(n)$ is then very different from the odd case. The intrinsic group
is $D_4$, and the algebra $\KD(2m)$ is never self-dual. On the other
hand, the \sacigs $K_3$ and $K_4$ are non isomorphic Kac
subalgebras. Using that $K_2$ is isomorphic to $L^\infty(D_n)$, $K_4$
to $\KD(m)$ and $K_3$ to $\KB(m)$ (i.e. $\KQ(m)$ for $m$ odd), we
recover a large part of the lattice of \sacigs by induction. For
example the lattices of \sacigs of $\KD(4)$ (see~\ref{KD4}) and
$\KD(8)$ (see~\ref{KD8}) and a large part of the lattice of $\KD(6)$
(see~\ref{KD6}) can be constructed this way.

\subsection{The algebra of the intrinsic group $K_0$}
\label{section.K0}

We start with the algebra $K_0$ of the intrinsic group of $\KD(n)$.
From~\ref{plonge} and~\ref{intrinseque}, it is generated by
$\lambda(a^{m})$ and $\lambda(b)$, and isomorphic to $\KD(2)\equiv
\C(D_4)$.

\subsubsection{The isomorphism between $\KD(2)$ and $K_0$ using matrix units}
\label{section.KD2.K0}

We have seen in~\ref{plonge} that the application $\varphi_m$ from
$\KD(2)$ to $K_0$ which sends $\lambda(b)$ to $\lambda(b)$ and
$\lambda(a)$ to $\lambda(a^m)$ can be extended into a Kac algebra
isomorphism. We now give the expression of $\varphi_m$ on the matrix
units of $\KD(2)$ (which will be marked with a $'$ to avoid
confusion). Taking the conditions on the index $j$ modulo $4$, and
using the formulas~\ref{lambda} in $\KD(2)$ and $K_0$, we obtain:\\
For $n=2m=4m'$:
  \begin{align*}
    \varphi_m(e'_1) &=e_1+e_4 + \sum_{j \equiv 0} p^j_{1,1}\,,&&
    \varphi_m(e'_2) =e_2+e_3 + \sum_{j \equiv 0} p^j_{2,2}\,,\\
    \varphi_m(e'_3) &= \sum_{j \equiv 2} p^j_{2,2}\,,&&
    \varphi_m(e'_4) =\sum_{j \equiv 2} p^j_{1,1}\,.
  \end{align*}
For $n=2m=4m'+2$:
  \begin{align*}
    \varphi_m(e'_1) &=e_1 + \sum_{j \equiv 0} p^j_{1,1}\,,&&
    \varphi_m(e'_2) =e_2+ \sum_{j \equiv 0} p^j_{2,2}\,,\\
    \varphi_m(e'_3) &= e_3 + \sum_{j \equiv 2} p^j_{2,2}\,,&&
    \varphi_m(e'_4) =e_4 + \sum_{j \equiv 2} p^j_{1,1}\,.
  \end{align*}
In both cases:
  $$\varphi_m(e'_{1,2})=\sum_{j \equiv 1} e^j_{1,2}+\sum_{j \equiv 3} e^j_{2,1}\,.$$

\subsubsection{The lattice $\mathrm{l}(K_0))$}
\label{sacigK0}

We obtain the central projections and the lattice of \sacigs of $K_0$
by combining the results of~\ref{KD2} and~\ref{section.KD2.K0}. All
the \sacigs are Kac subalgebras. We keep the same naming convention
for the \sacigs in $\KD(2)$ and their images by $\varphi_m$ in $K_0$.

The algebra $K_0$ contains $J_0=\C[H]$ and all
its \sacigs (note that by~\ref{tour} (5), those are also \sacigs of
$K_4$):
\begin{itemize}
  \item $J_0=\C(e_1+e_4+q_1) \oplus \C(e_2+e_3+q_2) \oplus  \C p_2 \oplus \C p_1$;%

  \item $I_0=\C(e_1 +e_2+e_3 +e_4+q_1+q_2)\oplus  \C(p_1+p_2)$;%

  \item $I_1=\C(e_1 +e_4+q_1+p_2)\oplus  \C(e_2 +e_3+p_1+q_2)$;%

 \item $I_2=\C(e_1 +e_4+p_1+q_1)\oplus  \C(e_2 +e_3+p_2+q_2)$. %
\end{itemize}

The basis of the other \sacigs depends on the parity of $m$:
\begin{description}
\item[For $n=4m'$] The Jones projection of $K_0$ is $e_1+e_4
  +\sum_{j=1}^{m'-1}p^{4j}_{1,1}$. The other central projection in the
  same connected component of the Bratelli diagram of $K_0 \subset
  \KD(n)$ is $e_2+e_3 + \sum_{j \equiv 0} p^j_{2,2}$. The \sacigs not
  contained in $J_0$ are:
  \begin{itemize}
  \item $I_3=I(e_1+e_2+e_3+e_4+ \sum_{j \equiv 0}(e^j_{1,1}+e^j_{2,2})+\sum_{j \equiv 1}r^j_{2,2}+\sum_{j \equiv 3}r^j_{1,1})$;
  \item $I_4=I(e_1+e_2+e_3+e_4+ \sum_{j \equiv 0}(e^j_{1,1}+e^j_{2,2})+\sum_{j \equiv 1}r^j_{1,1}+\sum_{j \equiv 3}r^j_{2,2})$;
  \item $J_{20}=I(e_1+e_2+e_3+e_4+ \sum_{j \equiv 0}(e^j_{1,1}+e^j_{2,2}))$;
  \item $J_m=I(e_1+e_4+ \sum_{j \equiv 0}p^j_{1,1}+\sum_{j \equiv 2}p^j_{2,2})$.
  \end{itemize}
  $K_0$ is a Kac subalgebra of $K_4$ and $J_{20}$ is contained in $K_1$.

\item[For $n=4m'+2$] The Jones projection of $K_0$ is
  $e_1+\sum_{j=1}^{m'}p^{4j}_{1,1}$.  The other central projection in
  the same connected component of the Bratelli diagram of $K_0 \subset
  \KD(n)$ is $e_2+ \sum_{j \equiv 0} p^j_{2,2}$.
The \sacigs not contained in $J_0$ are:
  \begin{itemize}
  \item $I_3=I(e_1+e_2+ \sum_{j \equiv 0}(e^j_{1,1}+e^j_{2,2})+\sum_{j \equiv 1}r^j_{2,2}+\sum_{j \equiv 3}r^j_{1,1})$;
  \item $I_4=I(e_1+e_2+ \sum_{j \equiv 0}(e^j_{1,1}+e^j_{2,2})+\sum_{j \equiv 1}r^j_{1,1}+\sum_{j \equiv 3}r^j_{2,2})$;
  \item $J_{20}=I(e_1+e_2+ \sum_{j \equiv 0}(e^j_{1,1}+e^j_{2,2}))$;
  \item $J_m=I(e_1+e_3+ \sum_{j \equiv 0}p^j_{1,1}+\sum_{j \equiv 2}p^j_{2,2})$.
  \end{itemize}
  From~\ref{tour}~(5), $J_{m}$ is contained in $K_3$.
\end{description}
In both cases, $J_{20}$ is the \sacig of functions which are constant on
the right cosets for $ <\alpha^2>$ in $K_2$
(see~\ref{section.K2}). Recall that, by~\ref{intrin}, all the \sacigs
of dimension $2$ of $\KD(n)$ are in $K_0$.

\subsubsection{Principal graphs coming from $K_0$ and its \sacigs}
\label{grapheK0}

Since $\KD(n)$ is not self-dual when $n$ is even, the Bratelli diagram
of an inclusion $I\subset \KD(n)$ yields by~\ref{grapheprincipal} the
principal graph of an intermediate factor in $\widehat{\KD(n)}$. With
the notations of~\ref{graphe}, here are the principal graphs coming
from the \sacigs of $K_0$:
\begin{itemize}
\item The inclusions $N_1 \subset N_1\rtimes \delta(I_0)$ and
  $R\subset R\rtimes \delta(J_{20})$ are of depth $2$;
\item The principal graph of $N_1 \subset N_1\rtimes \delta(I_1)$ and
  of $N_1 \subset N_1\rtimes \delta(I_2)$ is $D_{2n}/\mathbb{Z}_2$;
\item The principal graph of $N_1\subset N_1\rtimes \delta(I_3)$ and
  of $N_1\subset N_1\rtimes \delta(I_4)$ is $QB_{m'}$ for $n=4m'$
  and $DB_{m'}$ for $n=4m'+2$;
\item The principal graph of $N_1 \subset N_1\rtimes\delta(J_0)$ and
  of $N_1\subset N_1\rtimes \delta(J_m)$ is $D_n/\mathbb{Z}_2$
  (see~\ref{sacig4pair});
\item The principal graph of $N_1\subset N_1\rtimes \delta(K_0)$ is
  $D_m/\mathbb{Z}_2$.
\end{itemize}

\subsection{The Kac subalgebra $K_4=I(e_1+e_4)$}

As in~\ref{plonge}, consider the Kac subalgebra of $\KD(n)$ isomorphic
to $\KD(m)$ and generated by $\lambda(a^2)$ and $\lambda(b)$. Its
dimension is $2n$, and it contains $e_1+e_4$
(see~\ref{form2}). Therefore, it coincides with $K_4$.

From the formulas of~\ref{form2} or~\ref{cop}:
$$K_4=\C(e_1+e_4) \oplus \C(e_2+e_3)\oplus  \C p^m_{1,1} \oplus \C p^m_{2,2} \oplus \bigoplus_{j=1}^{m-1}K_4^j\,,$$
where $K_4^j$ is the factor $M_2(\C)$ with matrix units
$e^j_{1,1}+e^{n-j}_{2,2}$, $e^j_{1,2}+e^{n-j}_{2,1}$,
$e^j_{2,1}+e^{n-j}_{1,2}$, and $e^j_{2,2}+e^{n-j}_{1,1}$ (note: $j$
and $n-j$ have the same parity).

\subsection{The Kac subalgebra $K_3=I(e_1+e_3)$}
\label{section.KD.even.K3}

Since $\Delta(e_1+e_3)$ is symmetric (see~\ref{cop}),
by~\ref{irredprof2}, $K_3$ is a Kac subalgebra of $\KD(n)$.

\label{K3pair}

From the expression of the coproduct of the Jones projection, we deduce:\\
if $m$ is odd:
  $$K_3=I(e_1+e_3)=\C(e_1+e_3) \oplus \C(e_2+e_4)\oplus \C e^m_{1,1}\oplus \C e^m_{2,2} \oplus   \bigoplus_{j=1}^{m-1}K_3^j\,,$$
if $m$ is even:
  $$K_3=I(e_1+e_3)=\C(e_1+e_3) \oplus \C(e_2+e_4)\oplus \C r^m_{1,1}\oplus \C r^m_{2,2} \oplus   \bigoplus_{j=1}^{m-1}K_3^j\,,$$
where in both cases $K_3^j$ is the factor  $M_2(\C)$ with matrix
units:\\
for $j$ even:
\begin{displaymath}
  e^j_{2,2}+e^{n-j}_{1,1}, \quad e^j_{2,1}-e^{n-j}_{1,2},
  e^j_{1,2}-e^{n-j}_{2,1}, \quad \text{ and } \quad e^j_{1,1}+e^{n-j}_{2,2}\,;
\end{displaymath}
for $j$ odd:
\begin{gather*}
  r^j_{2,2}+r^{n-j}_{1,1},\quad r^j_{2,1}-r^{n-j}_{1,2}, \quad r^j_{1,2}-r^{n-j}_{2,1}, \quad\text{ and }\quad r^j_{1,1}+r^{n-j}_{2,2}\,,\\
  \text{ or }\\
  e^j_{1,1}+e^{n-j}_{1,1},\quad e^j_{2,1}-e^{n-j}_{2,1}, \quad e^j_{1,2}-e^{n-j}_{1,2}, \quad\text{ and }\quad e^j_{2,2}+e^{n-j}_{2,2}\,.
\end{gather*}

$K_3$ is further studied in Section~\ref{section.KB}, in connexion
with the families of Kac algebras $\KQ(m)$ and $\KB(m)$.

\subsection{The Kac subalgebra $K_1=I(e_1+e_2+e_3+e_4)$}
\label{K1pair}

\begin{proposition}  For $n=2m$, $K_1=I(e_1+e_2+e_3+e_4)=K_2\cap K_3 \cap K_4$ is a
  commutative Kac subalgebra isomorphic to $L^\infty(D_m)$.  Its matrix
  structure is given by
  \begin{displaymath}
    K_1=\C(e_1+e_2+e_3+e_4) \oplus
    \bigoplus_{j=1, \, j \text{  odd}}^{n-1} \C(r^j_{1,1}+r^{n-j}_{2,2})
    \oplus \bigoplus_{j=1,\, j \text{ even}}^{n-1} \C(e^j_{1,1}+e^{n-j}_{2,2})
  \end{displaymath}
  Its lattice of \sacigs is the dual of the lattice of subgroups of
  $D_m$.

  In $K_2\equiv L^\infty(D_{2m})$ it is the subalgebra of constant
  functions on the right cosets of $\{1, \alpha^m\}$. In $K_4\equiv
  \KD(m)$, $K_1$ plays the role of $K_2$.
\end{proposition}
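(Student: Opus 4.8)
The plan is to deduce everything from the already-established isomorphism $K_2 \cong L^{\infty}(D_n)$ of Proposition~\ref{proposition.K2} together with the group-algebra dictionary of~\ref{section.group_algebras}, using only that $K_1 = K_2 \cap K_3 \cap K_4$ is a \sacig of dimension $n$ with Jones projection $e_1+e_2+e_3+e_4$ (from~\ref{K1}). Since $K_1 \subseteq K_2$, it is a \sacig of $K_2 \cong L^{\infty}(D_n)$, where $D_n = \langle \alpha, \beta\rangle$ is the dihedral group of order $2n$ with $\alpha$ of order $n$ (the generators produced in the proof of Proposition~\ref{proposition.K2}). By~\ref{section.group_algebras}, such a \sacig is the algebra $L^{\infty}(D_n/V)$ of functions constant on the right cosets of a subgroup $V \le D_n$, with Jones projection $\sum_{h\in V}\chi_h$ and dimension $|D_n|/|V| = 2n/|V|$.

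First I would identify $V$. Matching dimensions forces $|V| = 2$, and matching Jones projections pins it down: in the notation of the proof of Proposition~\ref{proposition.K2} one has $\chi_1 = e_1+e_2$ and $\chi_{\alpha^m} = \chi_{\beta_0} = e_3+e_4$ (there $\beta_0 = \alpha^m$), so $p_{K_1} = e_1+e_2+e_3+e_4 = \chi_1 + \chi_{\alpha^m} = \sum_{h \in \{1,\alpha^m\}}\chi_h$. Hence $V = \{1,\alpha^m\}$, which proves the coset description. As $\alpha^m$ is the central involution of $D_n$, the subgroup $V$ is normal, so by~\ref{section.group_algebras} the \sacig $K_1$ is in fact a Kac subalgebra and $K_1 \cong L^{\infty}(D_n/V)$. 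Since $D_n/\langle\alpha^m\rangle \cong D_m$ (the image of $\alpha$ has order $m$ and the dihedral relation survives), this gives $K_1 \cong L^{\infty}(D_m)$, in particular commutative. The lattice assertion then follows immediately: by~\ref{section.group_algebras} the \sacigs of $L^{\infty}(D_m)$ are in order-reversing bijection with the subgroups of $D_m$, so $\ll(K_1)$ is the dual of the lattice of subgroups of $D_m$.

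For the explicit block decomposition I would compute $\Delta(e_1+e_2+e_3+e_4)$ (Appendix~\ref{cop}) and read off $K_1$ as the span of its right legs via Proposition~\ref{prop.resumep}~(3); equivalently, one checks that each listed projection, namely $r^j_{1,1}+r^{n-j}_{2,2}$ for $j$ odd and $e^j_{1,1}+e^{n-j}_{2,2}$ for $j$ even, lies simultaneously in the block structures of $K_2$, $K_3$ and $K_4$ obtained above, and that these $n$ mutually orthogonal projections exhaust $K_1$ by a dimension count. Finally, for the last assertion, recall from~\ref{plonge} the embedding $\varphi_2 \colon \KD(m) \xrightarrow{\sim} K_4$. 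Then $K_1$ is a \sacig of $K_4$ of dimension $2m$, i.e.\ of index $2$, hence the image under $\varphi_2$ of one of the three dimension-$2m$ \sacigs $K_2,K_3,K_4$ of $\KD(m)$; evaluating $\varphi_2$ on the Jones projection $e_1+e_2$ of $K_2=I(e_1+e_2)$ via~\ref{form2} yields exactly $e_1+e_2+e_3+e_4 = p_{K_1}$, so $K_1 = \varphi_2(K_2)$ and $K_1$ indeed plays the role of $K_2$ inside $K_4 \cong \KD(m)$.

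The conceptual content is light once Proposition~\ref{proposition.K2} and~\ref{section.group_algebras} are in hand; the only genuine work, and the main obstacle, is the bookkeeping for the block decomposition — either expanding $\Delta(e_1+e_2+e_3+e_4)$ explicitly, or carefully intersecting the three block structures and tracking how the $r$- and $e$-matrix units of the odd and even factors recombine — together with the matrix-unit evaluation of $\varphi_2$ that fixes the identification with $K_2$ in the last assertion.
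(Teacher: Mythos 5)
Your proof is correct, but it takes a genuinely different route from the paper's. The paper's own proof runs through $K_4$: it takes the commutativity of $K_1$ (read off the block structure coming from~\ref{cop}), asserts that $K_4\equiv\KD(m)$ has a \emph{unique} commutative \sacig of dimension $2m$ (namely its own $K_2$), identifies $K_1$ with that \sacig, and then transports everything through Proposition~\ref{proposition.K2} applied to $\KD(m)$. You instead run the identification through $K_2$: using the dictionary of~\ref{groupe} together with the labelling $\chi_1=e_1+e_2$ and $\chi_{\beta_0}=e_3+e_4$, $\beta_0=\alpha^m$, established in the even case of the proof of Proposition~\ref{proposition.K2}, you match Jones projections to get $K_1=L^\infty\bigl(D_n/\{1,\alpha^m\}\bigr)$, observe that $\{1,\alpha^m\}$ is central hence normal, and conclude $K_1\cong L^\infty(D_m)$ directly as the function algebra of the quotient group; the statement about $K_4$ is then proved separately by the explicit computation
\begin{displaymath}
  \varphi_2(e'_1+e'_2)=\frac{1}{2m}\sum_{k=0}^{2m-1}\lambda(a^{2k})=e_1+e_2+e_3+e_4=p_{K_1}\,,
\end{displaymath}
using~\ref{form2}. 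The trade-off is real: the paper's argument is shorter once Proposition~\ref{proposition.K2} is in hand, but its uniqueness claim is left unproved and is actually delicate --- for $m=2$ the algebra $\KD(2)\cong\C[D_4]$ has \emph{three} commutative \sacigs of dimension $4$, so the argument as stated needs $m\geq 3$ together with the noncommutativity of $K_3$ and $K_4$ inside $\KD(m)$ --- whereas your identifications rest only on Jones projections, which determine \sacigs canonically (\ref{tour}~(5)) and work uniformly in $m$; moreover, your route makes the coset description asserted in the statement the primary fact rather than a by-product. Both arguments use~\ref{cop} in the same way to obtain the explicit block decomposition, and your dimension count ($1+m+(m-1)=n$ minimal projections) confirms it.
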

\begin{proof}
  The commutative subalgebra $K_1$ has to play the role of $K_2$ in $K_4$ because $K_4\equiv
  \KD(m)$ has a single commutative \sacig of this dimension. The rest
  follows from~\ref{e1e2},~\ref{K1}, and~\ref{cop}.
\end{proof}

\subsection{The \sacigs of dimension $4$}
\label{sacig4pair}

Recall that, from~\ref{sacign}, any \sacig of dimension $4$ is
contained in some $K_i$. In this section, we use this fact to
inductively describe %
all the \sacigs of dimension $4$ of $\KD(2m)$. %
As in the case $n$ odd (see~\ref{J4}), we exhibit $2m$ projections
that generate $2m$ \sacigs $J_k$ of dimension $4$:
\begin{lemma}
  For $k=0,\dots,n-1$, define
  \begin{displaymath}
    p_{J_k} = e_1+e_i + \sum_{j=1}^{m-1} q(0,\frac{2kj\pi}{n},2j)\,,
  \end{displaymath}
  where $i=3$ (resp. $i=4$) for $k$ odd (resp. even).  The projection
  $p_{J_k}$ is the Jones projection of a \sacig $J_k$ of dimension $4$
  of $K_3$ (resp. $K_4$) for $k$ odd (resp. even). In particular,
  $J_0$ and $J_m$ are contained in $K_0$.

  Furthermore, the principal graph of the inclusion $N_1 \subset
  N_1\rtimes\delta(J_k)$ is $D_n/\mathbb{Z}_2$
  (see~\ref{grapheDpair}).
\end{lemma}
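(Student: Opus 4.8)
\emph{Strategy and trace count.} The plan is to reproduce, almost verbatim, the odd-case treatment of Proposition~\ref{sacig4impair} and~\ref{J4}, the only genuine changes being bookkeeping on the parities of $n=2m$ and of $m$. I would first show that $p_{J_k}$ is the Jones projection of a four-dimensional \sacig, then locate $J_k$ inside $K_3$, $K_4$, or $K_0$ via the domination criterion~\ref{tour}~(5), and finally read the principal graph off a Bratteli diagram. Since $e_1$, $e_i$, and the rank-one projections $q(0,\tfrac{2kj\pi}{n},2j)$ live in pairwise distinct blocks of $\KD(n)$, the sum $p_{J_k}$ is an orthogonal sum of projections, hence a projection dominating $e_1=f_2$. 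Using $tr(e_i)=1/4n$ and $tr(q(0,\cdot,2j))=1/2n$ one gets $tr(p_{J_k})=\tfrac{2}{4n}+(m-1)\tfrac{1}{2n}=\tfrac{m}{2n}=\tfrac14$, the value required of a \sacig of dimension $4$.

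\emph{The \sacig of dimension $4$.} The heart of the matter is to compute $\Delta(p_{J_k})$ and to exhibit it in the form $\sum_{i=1}^{4}S(P_i)\otimes P_i$, where $1=P_1+\cdots+P_4$ is a decomposition into orthogonal projections spanning a four-dimensional commutative involutive unital subalgebra, and both families of legs are linearly independent. This is precisely the content of Proposition~\ref{J4}, whose computation parallels the odd case. Granting it, Lemma~\ref{lemma.abelien} produces at once the \sacig $J_k:=\bigoplus_i\C P_i$ of dimension $4$ with Jones projection $p_{J_k}$; alternatively, since $tr(p_{J_k})=1/4$ and $p_{J_k}$ dominates $f_2$, one may invoke Remark~\ref{remark.resumep}.

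\emph{Localization.} By definition $p_{J_k}\geq e_1+e_i=p_{K_i}$, so Proposition~\ref{tour}~(5) gives $J_k\subset K_i$, with $i=4$ for $k$ even and $i=3$ for $k$ odd. For $k\in\{0,m\}$ I would evaluate the relevant projections explicitly, namely $q(0,0,2j)=p^{2j}_{1,1}$ and $q(0,j\pi,2j)=p^{2j}_{1,1}$ or $p^{2j}_{2,2}$ according to the parity of $j$, and check that $p_{J_0}$ and $p_{J_m}$ coincide with the Jones projections of the \sacigs called $J_0$, $J_m$ in~\ref{sacigK0}; hence $p_{J_0},p_{J_m}\geq p_{K_0}$ and $J_0,J_m\subset K_0$, again by~\ref{tour}~(5). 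This is where the split $n=4m'$ versus $n=4m'+2$ enters, and it is pure bookkeeping.

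\emph{Principal graph and main obstacle.} Applying the Galois correspondence~\ref{grapheprincipal} to the inclusion $N_1\subset N_2=N_1\rtimes A$ and using the involutivity of the lattice antiisomorphism $\delta$ of~\ref{d}, the principal graph of $N_1\subset N_1\rtimes\delta(J_k)$ is the connected component of the trivial representation $\C e_1$ in the Bratteli diagram of $J_k\subset\KD(n)$. Reading this diagram off from the four central projections of $J_k$ (Proposition~\ref{J4}) and the block structure of $\KD(n)$ yields $D_n/\mathbb{Z}_2$ (see~\ref{grapheDpair}), exactly as in the odd case. The only step requiring real work is the coproduct identity of Proposition~\ref{J4}, that $\Delta(p_{J_k})$ has the diagonal-leg form $\sum_i S(P_i)\otimes P_i$; as elsewhere in the paper, I would reduce it to a degree-one statement and, via a variant of Proposition~\ref{proposition.equationAtInfinity}, to a finite verification at small $n$ delegated to the computer.
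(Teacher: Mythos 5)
Your proof is correct and is essentially the paper's own argument: the paper justifies this lemma exactly as you do, by identifying $p_{J_k}$ with the Jones projection $Q_k$ of the subgroup $H_k$ whose twisted coproduct is computed (for $n$ even) in Appendix~\ref{J4}, invoking Lemma~\ref{lemma.abelien} (through Proposition~\ref{prop.abelien}) to obtain a commutative \sacig of dimension $4$, using the domination criterion of~\ref{tour}~(5) for the inclusions in $K_3$, $K_4$ and (after the parity bookkeeping against~\ref{sacigK0}) in $K_0$, and reading the principal graph off the Bratteli diagram as in~\ref{grapheprincipal}.

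One caveat on your closing sentence: the coproduct identity of~\ref{J4} cannot be dispatched by Proposition~\ref{proposition.equationAtInfinity} together with a verification at one small $n$, because $p_{J_k}=Q_k$ involves $\lambda(ba^k)$ with $k$ ranging up to $n-1$, so the identity has unbounded degree and a second free parameter; one would first have to formulate and prove a two-parameter variant (with a formal generator standing for $a^k$). The paper avoids this by proving~\ref{J4} through a direct computation, uniform in $n$ and $k$, exploiting the fact that conjugation by $\Omega$ depends only on the parities of the blocks involved (Remark~\ref{remark.omega}). Since your main argument merely cites~\ref{J4} rather than relying on this reduction, this does not create a gap in your proof.
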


\begin{prop}
  The \sacigs of dimension $4$ of $\KD(2m)$ are:
  \begin{itemize}
  \item in $K_3$: the $m$ \sacigs $J_k$, $k$ odd;
  \item in $K_4$: the $m$ \sacigs $J_k$, $k$ even;
  \item in $K_2$:
    \begin{itemize}
    \item when $m$ is odd, the unique \sacig $J_{20}$ of dimension $4$;
    \item when $m$ is even, the five \sacigs of dimension $4$.
    \end{itemize}
  \end{itemize}
\end{prop}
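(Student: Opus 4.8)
The plan is to sort the dimension-$4$ \sacigs of $\KD(2m)$ by which of the three dimension-$2n$ \sacigs $K_2,K_3,K_4$ contains them, and then to count them inside each one.

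First I would reduce to this situation by~\ref{sacign}: since $4$ divides $2n=4m$, the Jones projection $p_I$ of a dimension-$4$ \sacig $I$ either dominates $e_1+e_2+e_3+e_4$, and then $I\subset K_1\subset K_2$, or it dominates $e_1+e_i$ for a \emph{unique} $i\in\{2,3,4\}$, and then $I\subset K_i$. By this uniqueness the dimension-$4$ \sacigs split into three disjoint families: those whose Jones projection dominates $e_1+e_3$ but neither $e_2$ nor $e_4$ (contained in $K_3$, not in $K_1$), those dominating $e_1+e_4$ but neither $e_2$ nor $e_3$ (in $K_4$, not in $K_1$), and those dominating $e_1+e_2$ (in $K_2$, where the $K_1$-type ones are counted as well). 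The preceding lemma already produces $m$ \sacigs $J_k$ ($k$ odd) in $K_3$ and $m$ \sacigs $J_k$ ($k$ even) in $K_4$, so what remains is to prove that these exhaust the families in $K_3$ and $K_4$, and to count the family in $K_2$.

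For $K_3$ and $K_4$ I would argue by induction on $m$, using that they are Kac subalgebras: a dimension-$4$ \sacig of $\KD(2m)$ lying in $K_4$ is exactly a dimension-$4$ \sacig of $K_4\cong\KD(m)$, since the coproduct of $K_4$ is the restriction of that of $\KD(2m)$; moreover $K_1$ plays inside $K_4\cong\KD(m)$ the role of $K_2$ (see~\ref{K1pair}). Hence the \sacigs in $K_4$ but not in $K_1$ are precisely the dimension-$4$ \sacigs of $\KD(m)$ not contained in its own $K_2$. For $m$ odd this number is $m$ by Theorem~\ref{theorem.lattice.nodd} (the $m$ \sacigs $J_k$ of $\KD(m)$, none of which lies in $K_2$); for $m$ even it equals $\tfrac m2+\tfrac m2=m$ by the induction hypothesis (those in the $K_3$ and $K_4$ of $\KD(m)$). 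In both cases there are exactly $m$, which must then be the $J_k$ with $k$ even. The same argument applied to $K_3\cong\KB(m)$ (which is $\KQ(m)$ for $m$ odd), drawing on the dimension-$4$ counts of Sections~\ref{section.KB} and~\ref{section.KQ}, accounts for the $J_k$ with $k$ odd.

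Finally, for $K_2$ I would use that $K_2\cong L^\infty(D_n)=L^\infty(D_{2m})$ (Proposition~\ref{proposition.K2}), whose lattice of \sacigs is dual to the subgroup lattice of the dihedral group $D_{2m}$ of order $4m$. A dimension-$4$ \sacig then corresponds to a subgroup of index $4$, that is of order $m$. There is a unique such subgroup inside the rotation group, namely $\langle\alpha^2\rangle$, while dihedral subgroups of order $m$ exist only for $m$ even, and then there are exactly four of them; this yields $1$ \sacig when $m$ is odd and $5$ when $m$ is even, these counts already including the $K_1$-type \sacigs. The main obstacle is the completeness step for $K_3$ and $K_4$: it requires setting up the induction on $m$ carefully and importing the dimension-$4$ counts for $\KB(m)$ and $\KQ(m)$ proved later, whereas the reduction via~\ref{sacign} and the subgroup count for $K_2$ are routine.
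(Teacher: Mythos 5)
Your proposal follows essentially the same route as the paper's proof: the reduction via~\ref{sacign}, the count of index-$4$ subgroups of $D_{2m}$ for $K_2$ (one cyclic, plus four dihedral when $m$ is even), the identifications $K_4\equiv\KD(m)$ and $K_3\equiv\KQ(m)$ for $m$ odd, and, for $m$ even, the induction through $K_4\equiv\KD(m)$ using~\ref{K1pair} to identify the $K_2$ of $\KD(m)$ with the $K_1$ of $\KD(2m)$. Two steps need tightening, though. First, your induction has no base case: when $m$ is a power of $2$ the recursion never reaches an odd parameter covered by Theorem~\ref{theorem.lattice.nodd} (it bottoms out at the degenerate $\KD(2)$), and the paper anchors the induction by verifying the statement directly for $\KD(4)$ in~\ref{KD4}; you must either do the same or check the degenerate case $m=1$ separately. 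Second, the phrase ``the same argument applied to $K_3\cong\KB(m)$'' is not literally available: the $\KB$ family contains no smaller member playing the role that $\KD(m)$ plays inside $\KD(2m)$, so no induction within that family runs parallel to the one for $K_4$. What is actually needed for $m$ even---and what the paper invokes---is the separate analysis of $\ll(\KB(m))$ in~\ref{latticeKB}, proved via the self-duality of $K_3$, which shows that every dimension-$4$ \sacig of $K_3$ is either contained in $K_1$ or equal to some $J_k$ with $k$ odd. Since you do end up citing those counts from Section~\ref{section.KB}, this second gap is one of formulation rather than substance, but as written the inductive step for $K_3$ would not stand on its own.
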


\begin{proof}
  \textbf{Case $m$ odd:} $K_3$ has $m$ \sacigs of dimension $4$ since
  it is isomorphic to $\KQ(m)$ (see~\ref{theorem.lattice.nprime});
  $K_4$ has $m$ \sacigs of dimension $4$ since it is isomorphic to
  $\KD(m)$ (see~\ref{sacig4impair}).
  The unique \sacig of dimension $4$ of $K_2$ is $J_{20}$; it
  corresponds to the unique subgroup of order $m$ of $D_n$
  (see~\ref{section.K2}).

  \textbf{Case $m=2m'$ even:} $D_n$ has five subgroups of order $m$,
  and the associated \sacigs of $K_2$ can be made explicit from the
  results of~\ref{sacigsK2}, which we will do in the examples. We
  prove in~\ref{latticeKB} that the \sacigs of dimension $4$ of $K_3$
  are either in $K_1$, or are the $J_k$ for $k$ odd.
  Since $K_4$ is isomorphic to $\KD(2m')$, we can use induction: we
  shall see in~\ref{KD4} that the proposition holds for $m=2$. Assume
  that the proposition holds for $m'$; then the dimension $4$ \sacigs
  of $K_4$ are:
  \begin{itemize}
  \item the \sacigs $J_k$ of $\KD(2m')$ which give the $J_{2k}$ in $K_4$;
  \item those of the $K_2$ of $\KD(2m')$ which itself is the subalgebra $K_1$ of $K_2$ of $\KD(2m)$.
  \end{itemize}
  So the proposition holds for $m$.
\end{proof}

\subsection{The Kac algebra $\KD(4)$ of dimension $16$}
\label{KD4}

We now illustrate the general study of $\KD(2m)$ with $m$ even on the
Kac algebra $\KD(4)$ of dimension $16$. This algebra (and its dual
$\widehat{\KD(4)}$ with underlying algebra $\C^8\oplus M_2(\C) \oplus
M_2(\C)$) are described in~\cite[XI.15]{Izumi_Kosaki.2002}.

\begin{prop}
  The lattice of \sacigs of $\KD(4)$ is as given in Figure~\ref{figure.KD4}.
\end{prop}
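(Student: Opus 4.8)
The plan is to determine $\ll(\KD(4))$ by a dimension stratification, leaning on the general machinery assembled for $\KD(2m)$ with $m$ even (here $n=4$, $m=2$). By Proposition~\ref{prop.resumep}(1), every \sacig has dimension dividing $\dim\KD(4)=16$, so the only possible dimensions are $1,2,4,8,16$. The extremes $\C$ and $\KD(4)$ are the trivial ones, leaving only dimensions $2$, $4$, and $8$ to be handled. Since each of these divides $2n=8$ (not merely $4n=16$), Proposition~\ref{sacign} applies: every proper nontrivial \sacig is contained in $K_1=I(e_1+e_2+e_3+e_4)$ or in exactly one of the dimension-$8$ \sacigs $K_2,K_3,K_4$. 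This reduces the whole problem to understanding the internal lattices of these four distinguished \sacigs and then gluing them.

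First I would dispose of the two easy strata. The dimension-$8$ \sacigs are exactly $K_2,K_3,K_4$ by~\ref{sacig2n}, since $e_1+e_2$, $e_1+e_3$, $e_1+e_4$ are the only projections of trace $1/2n$ dominating $e_1$. The dimension-$2$ \sacigs are exactly $I_0,I_1,I_2$: by~\ref{intrin} any such \sacig lies in the algebra $K_0$ of the intrinsic group (which is $D_4$ since $n$ is even), whose dimension-$2$ \sacigs are listed in~\ref{section.K0}. For dimension $4$ I would invoke the classification of the proposition in~\ref{sacig4pair} in the case $m=2$ even: the dimension-$4$ \sacigs of $K_3$ are the $J_k$ with $k$ odd, those of $K_4$ are the $J_k$ with $k$ even, and $K_2\cong L^\infty(D_4)$ contributes five of them, one per subgroup of order $2$ of $D_4$.

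The core of the argument is then to make each distinguished \sacig concrete and read off its internal lattice, using the isomorphisms already established: $K_2\cong L^\infty(D_4)$ (Proposition~\ref{proposition.K2}), whose \sacigs are dual to the subgroups of $D_4$; $K_4\cong\KD(2)\cong\C[D_4]$ by the embedding of~\ref{plonge}, whose lattice is the subgroup lattice of $D_4$ depicted in Figure~\ref{figure.KD2}; $K_3$, whose dimension-$4$ \sacigs are, by the proof in~\ref{sacig4pair}, either contained in $K_1$ or of the form $J_k$ with $k$ odd; and $K_1\cong L^\infty(D_2)$ (Proposition~\ref{K1pair}), which sits inside each of $K_2,K_3,K_4$. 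With the Jones projections explicitly in hand (the $p_{J_k}$ from~\ref{sacig4pair}, the $p_{I_i}$ from~\ref{section.K0}, and the block descriptions of $K_3,K_4$), the covering relations of the assembled Hasse diagram follow mechanically from the order criterion $I_1\subset I_2\Leftrightarrow p_{I_2}\le p_{I_1}$ of~\ref{tour}(5).

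The main obstacle is the bookkeeping of overlaps and completeness, rather than any single hard computation. One must verify that the dimension-$4$ lists coming from $K_2$, $K_3$, and $K_4$ are exhaustive and correctly identify the \sacigs common to several $K_i$ --- notably that $K_1$ is simultaneously the \sacig of $K_2$ associated with the order-$2$ subgroup $\langle\alpha^2\rangle$, the distinguished commutative \sacig of $K_4\cong\KD(2)$, and a \sacig of $K_3$, and that $J_0$ and $J_m$ both land inside $K_0$. Because there is no a priori guarantee that a \sacig lying in, say, $K_2\cap K_3$ is detected consistently from both sides, I would close the argument with the completeness clause of Proposition~\ref{sacign} together with a direct computer enumeration of the projections of trace $1/4$ dominating $e_1$ (as was done for the analogous cases), confirming that no dimension-$4$ \sacig has been missed and that the lattice is exactly the one drawn in Figure~\ref{figure.KD4}.
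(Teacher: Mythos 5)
Your global reduction is the same as the paper's: by Proposition~\ref{sacign}, since $n=4$ is a power of $2$ every proper \sacig lies in $K_2$, $K_3$, or $K_4$, and one then identifies the three internal lattices and glues them. But your execution contains a genuine error in the dimension-$2$ stratum. You assert that the dimension-$2$ \sacigs are exactly $I_0,I_1,I_2$; in fact there are \emph{five}, namely $I_0,I_1,I_2,I_3,I_4$. Your own reasoning yields this: any dimension-$2$ \sacig lies in the intrinsic group algebra $K_0$, which for $\KD(4)$ coincides with $K_4\cong\C[D_4]$, and $D_4$ has five subgroups of order $2$ (the center and the four reflections), hence five \sacigs of dimension $2$ --- as listed in~\ref{sacigK0} and shown in Figures~\ref{figure.KD2} and~\ref{figure.KD4}. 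The count of three is the one valid for $n$ odd, where the intrinsic group is $\Z_2\times\Z_2$; carried through, your claim contradicts both the lattice you are trying to establish and your own later identification of the lattice of $K_4$ with the subgroup lattice of $D_4$.

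Second, your appeal to the classification of~\ref{sacig4pair} ``in the case $m=2$'' is circular: the proof of that proposition in the even case is an induction on $m$ whose base case is precisely the $\KD(4)$ analysis (``we shall see in~\ref{KD4} that the proposition holds for $m=2$''), and its auxiliary input~\ref{latticeKB}, classifying the dimension-$4$ \sacigs of $K_3$, explicitly assumes $m\geq 6$. For $\KD(4)$ the paper handles $K_3$ directly: the coproduct of $r^2_{1,1}$ is not symmetric (computer check), so $K_3$ is a non-trivial Kac algebra of dimension $8$, hence isomorphic to $\KP$, and its \sacigs ($I_0,I_3,I_4$ in dimension $2$; $J_{20},J_1,J_3$ in dimension $4$) are read off from the lattice of $\KP$ established in~\ref{KP}. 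Your closing fallback --- a direct computer enumeration of the projections of trace $1/4$ dominating $e_1$ --- would indeed repair both defects, but as written the argument miscounts one stratum of the lattice and rests another on a proposition whose proof presupposes the very result being proven.
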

\begin{figure}[h]
  \centering
  \pgfdeclarelayer{background}
\pgfdeclarelayer{nodes}
\pgfsetlayers{background,main,nodes}
\begin{tikzpicture}[yscale=-1]
  \begin{pgfonlayer}{nodes}
    \node(C)	at (5,1) {$\C$};
    \node(I1)	at (2,2)  [text=blue] {$I_1$};
    \node(I2)	at (3,2)  [text=blue] {$I_2$};
    \node(I0)	at (5,2)  [text=blue]{$I_0$};
    \node(I3)	at (7,2) [text=blue] {$I_3$};
    \node(I4)	at (8,2)  [text=blue] {$I_4$};
    \node(J0)	at (1,3)  [text=blue] {$J_0$};
    \node(J2)	at (2,3)  [text=blue] {$J_2$};
    \node(J3)	at (3,3) [text=orange] {$J_1$};
    \node(J4)	at (4,3) [text=orange] {$J_3$};
    \node(J1)	at (5,3) [text=blue] {$J_{20}$};
    \node(J5)	at (6,3) [text=green]{$J_{21}$};
    \node(J6)	at (7,3) [text=green]{$J_{22}$};
    \node(J7)	at (8,3) [text=green]{$J_{23}$};
    \node(J8)	at (9,3) [text=green]{$J_{24}$};
    \node(K4)	at (3,4) [text=blue] {  $K_4$};
    \node(K3)	at (5,4) [text=orange] {$K_3$};
    \node(K2)	at (7,4) [text=green] {$K_2$};
    \node(KD4)	at (5,5) {$KD(4)$};
  \end{pgfonlayer}
  \begin{pgfscope}
   \draw (K4) -- (KD4);
  \draw (K3) -- (KD4);
  \draw (K2) -- (KD4);
  \tikzstyle{every path}=[red]
  \draw (C) -- (I0);
   \draw (I0) -- (J1);
    \draw (I3) -- (J1);
  \draw (I4) -- (J1);
    \draw (C) -- (I3);
  \draw (C) -- (I4);
  \tikzstyle{every path}=[blue]
  \draw (C) -- (I1);
  \draw (C) -- (I2);
  \draw (I1) -- (J0);
  \draw (I2) -- (J0);
  \draw (I0) -- (J0);
  \draw (I0) -- (J2);
  \draw (J0) -- (K4);
  \draw (J2) -- (K4);
  \draw (J1) -- (K4);
  \tikzstyle{every path}=[orange]
    \draw (I0) -- (J3);
  \draw (I0) -- (J4);
   \draw (J3) -- (K3);
  \draw (J4) -- (K3);
  \draw (J1) -- (K3);
  \tikzstyle{every path}=[green]
   \draw (J1) -- (K2);
  \draw (J5) -- (K2);
  \draw (J6) -- (K2);
  \draw (J7) -- (K2);
  \draw (J8) -- (K2);
  \draw (I3) -- (J5);
  \draw (I3) -- (J6);
  \draw (I4) -- (J7);
  \draw (I4) -- (J8);
  \end{pgfscope}
  \begin{pgfonlayer}{background}
    \newcommand{\dimline}[2]{\draw[color=black!10,very thin](0,#1) -- (10,#1); \node[right] at (10,#1) {dim $#2$};}
    \dimline{1}{1}
    \dimline{2}{2}
    \dimline{3}{4}
    \dimline{4}{8}
    \dimline{5}{16}
  \end{pgfonlayer}
\end{tikzpicture}
  \caption{The lattice of \sacigs of $\KD(4)$}
  \label{figure.KD4}
\end{figure}
\begin{proof}
  From~\ref{sacign}, any \sacig of $\KD(4)$ is inductively a \sacig of
  one of the three Kac subalgebras of dimension $8$: $K_2$, $K_3$, and
  $K_4$. In the sequel of this section, we study them in turn.
\end{proof}
The coproduct expressions which we use in this section are collected
in~\ref{cop}.
\subsubsection{The \sacigs of $K_4$}
\label{D4}

The Kac subalgebra $K_4=I(e_1+e_4)$ is isomorphic to $\KD(2)$:
$$K_4=\C(e_1+e_4) \oplus \C(e_2+e_3)\oplus  \C q_1 \oplus \C q_2 \oplus M_2(\C)\,,$$
where the matrix units of the factor $M_2(\C)$ are:
\begin{displaymath}
  e^1_{1,1}+e^{3}_{2,2},\quad e^1_{1,2}+e^{3}_{2,1},\quad
  e^1_{2,1}+e^{3}_{1,2},\quad \text{ and } \quad e^1_{2,2}+e^{3}_{1,1}\,.
\end{displaymath}

As a special feature of $\KD(4)$, $K_4$ coincides with the group
algebra $K_0$ of the intrinsic group studied in~\ref{sacigK0}. The element
$c=e_1 -e_2-e_3
+e_4-i(e^1_{1,1}-e^{1}_{2,2})-(q_1-q_2)+i(e^3_{1,1}-e^{3}_{2,2})$ of
$K_4$ is group-like and of order $4$, and its square is
$\lambda(a^4)$. The intrinsic group $D_4$ of $\KD(4)$ is therefore
generated by $c$ and $\lambda(b)$.

The \sacigs of $K_4$ are the algebras of the subgroups of $D_4$
(see~\ref{sacigK0}). In dimension $2$, they are:
\begin{itemize}
\item $I_0=\C(e_1 +e_2+e_3 +e_4+q_1+q_2)\oplus  \C(p_1+p_2)$ generated by  $\lambda(a^4)$;
\item $I_1=\C(e_1 +e_4+q_1+p_2)\oplus  \C(e_2 +e_3+p_1+q_2)$ generated by  $\lambda(ba^4)$;
\item $I_2=\C(e_1 +e_4+p_1+q_1)\oplus  \C(e_2 +e_3+p_2+q_2)$ generated by $\lambda(b)$;
\item $I_3=I(e_1 +e_2+e_3 +e_4+r^1_{2,2}+r^3_{1,1})$ generated by $\lambda(b)c$;
\item $I_4=I(e_1 +e_2+e_3 +e_4+r^1_{1,1}+r^3_{2,2})$ generated by $\lambda(b)c^3$.
\end{itemize}
Since $K_4=K_0$, those give all the \sacigs of dimension $2$ of $\KD(4)$.

In dimension $4$, the \sacigs are:
\begin{itemize}
\item $J_0=\C(e_1+e_4+q_1) \oplus \C(e_2+e_3+q_2) \oplus  \C p_2 \oplus \C p_1$
  generated by $\lambda(a^4)$ and $\lambda(b)$;
\item$J_{20}=\C(e_1+e_2+e_3+e_4)  \oplus \C(q_1+q_2) \oplus \C(r^{1}_{1,1}+r^{3}_{2,2})\oplus \C(r^1_{2,2}+r^{3}_{1,1})$
  generated by $c^2$ and $\lambda(b)c$;
\item$J_2=\C(e_1+e_4+q_2) \oplus \C(e_2+e_3+q_1) \oplus  \C(e^1_{1,1}+e^3_{2,2}) \oplus \C(e^1_{2,2}+e^3_{1,1})$
  generated by $\lambda(c)$.
\end{itemize}
\subsubsection{The \sacigs of $K_3$}

From~\ref{K3pair},
$$K_3=I(e_1+e_3)=\C(e_1+e_3) \oplus \C(e_2+e_4)\oplus  \C r^2_{1,1} \oplus \C r^2_{2,2} \oplus M_2(\C)\,,$$
where the matrix units of the factor $M_2(\C)$ are
\begin{displaymath}
  e^1_{1,1}+e^{3}_{1,1},\quad e^1_{1,2}-e^{3}_{1,2},\quad
  e^1_{2,1}-e^{3}_{2,1},\quad \text{ and }\quad e^1_{2,2}+e^{3}_{2,2}\,.
\end{displaymath}

Since the coproduct of $r^2_{1,1}$ is not symmetric (checked on
computer), $K_3$ is a non trivial Kac algebra of dimension $8$, and
therefore isomorphic to $\KP$. Its \sacigs are $I_0$, $I_3$, and $I_4$
contained in $J_{20}$, and $J_1$ and $J_3$ which contains $I_0$
(see~\ref{KP}):
\begin{align*}
  J_1=I(e_1+e_3+r^2_{1,1})&=\C(e_1+e_3+r^2_{1,1}) \oplus \C(e_2+e_4+r^2_{2,2})\\
  &\oplus \C(q(\pi/4, 0,1)+q(\pi/4, \pi,3))\oplus \C(q(-\pi/4, \pi,1)+q(-\pi/4, 0,3))\,, \\
  J_3=I(e_1+e_3+r^2_{2,2})&=\C(e_1+e_3+r^2_{2,2}) \oplus \C(e_2+e_4+r^2_{1,1})\\
  &\oplus \C(q(-\pi/4, 0,1)+q(-\pi/4, \pi,3))\oplus \C(q(\pi/4, \pi,1)+q(\pi/4, 0,3))\,.&
\end{align*}
The principal graph of $N_1 \subset N_1\rtimes\delta(J_i)$, $i=1,3$ is $D_6^{(1)}$.

\subsubsection{The \sacigs of $K_2$}\label{KD4K2}
From~\ref{e1e2}, the commutative Kac subalgebra $K_2$ is the algebra $L^\infty(D_4)$
of  functions on the group $D_4$, its basis is given by:
$$K_2=I(e_1+e_2)=\C(e_1+e_2) \oplus \C(e_3+e_4) \oplus  \C r^1_{1,1} \oplus \C r^1_{2,2}\oplus \C e^2_{1,1} \oplus \C e^2_{2,2}\oplus \C r^3_{1,1} \oplus \C r^3_{2,2}\,$$
and its lattice of \sacigs is in correspondence with the dual of the lattice of the subgroups of $D_4$.
There are three \sacigs of dimension $2$ contained in $J_{20}$ and four \sacigs of dimension $4$:
\begin{align*}%
  J_{21}&=\C(e_1+e_2+r^1_{1,1}) \oplus \C(e_3+e_4+r^3_{2,2})\oplus \C(e^2_{2,2}+r^3_{1,1})\oplus \C(r^1_{2,2}+e^2_{1,1});\\%
  J_{22}&=\C(e_1+e_2+r^1_{2,2}) \oplus \C(e_3+e_4+r^3_{1,1})\oplus \C(e^2_{1,1}+r^3_{2,2})\oplus \C(r^1_{1,1}+e^2_{2,2});\\%
  J_{23}&=\C(e_1+e_2+r^3_{2,2}) \oplus \C(e_3+e_4+r^1_{1,1})\oplus \C(e^2_{1,1}+r^3_{1,1})\oplus \C(r^1_{1,1}+e^2_{2,2});\\%
  J_{24}&=\C(e_1+e_2+r^3_{1,1}) \oplus \C(e_3+e_4+r^1_{2,2})\oplus \C(e^2_{2,2}+r^3_{2,2})\oplus \C(r^1_{1,1}+e^2_{1,1}).%
\end{align*}
For $j=21,\dots,24$, the principal graph of $N_1 \subset N_1\rtimes\delta(J_{j})$ is $D_{10}^{(1)}$.

\subsection{The Kac algebra $\KD(6)$ of dimension $24$}\label{KD6}

The study of the lattice of the \sacigs of $\KD(6)$ illustrates how a
large part of it (but not all!) can be constructed inductively.
\begin{thm}
  The lattice of \sacigs of $\KD(6)$ is as given in Figure~\ref{figure.KD6}.
\end{thm}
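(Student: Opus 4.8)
The plan is to reduce the classification to the three Kac subalgebras of dimension $2n=12$, namely $K_2$, $K_3$, $K_4$ (together with their common intersection $K_1$), and to treat separately the few \sacigs that this reduction does not reach. Since $\dim\KD(6)=4n=24$, a \sacig has dimension in $\{1,2,3,4,6,8,12,24\}$, and every such dimension except $8$ divides $2n=12$. Thus, by Proposition~\ref{sacign}, any \sacig of dimension $1,2,3,4,6$ or $12$ is contained in $K_1$ or in one of $K_2,K_3,K_4$; the two extreme cases $\C$ and $\KD(6)$ are trivial.

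First I would import the already established descriptions of these subalgebras, specialized to $n=2m$ with $m=3$. By Proposition~\ref{proposition.K2}, $K_2\cong L^\infty(D_6)$ and its \sacigs are dual to the subgroup lattice of $D_6$; by~\ref{plonge}, $K_4\cong\KD(3)$, whose complete lattice is Figure~\ref{figure.KD3}; and by Section~\ref{section.KB}, $K_3\cong\KB(3)\cong\KQ(3)$, whose lattice for the prime $m=3$ follows from the $\KQ$-analogue of Corollary~\ref{corollary.lattice.nprime} (Section~\ref{section.KQ}). I would also use $K_1\cong L^\infty(D_3)$ (from~\ref{K1pair}) and the explicit dimension-$4$ \sacigs of~\ref{sacig4pair}. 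Gluing these lattices requires identifying their shared elements: the three dimension-$2$ \sacigs $I_0,I_1,I_2$ all lie in the intrinsic-group algebra (by~\ref{intrin}), $K_1=K_2\cap K_3\cap K_4$, and the covering relations are read off via~\ref{tour}~(5) together with Remark~\ref{remark.jones_left_legs}.

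The hard part is the layer of dimension $8$, which the statement flags as not constructible by induction: since $8\mid 24$ but $8\nmid 12$, Proposition~\ref{sacign} gives no information. One such \sacig is already in hand, the algebra $K_0$ of the intrinsic group, which by~\ref{section.K0} is isomorphic to $\KD(2)\cong\C[D_4]$, of dimension $8$, and whose own \sacigs are listed in~\ref{sacigK0}. The real work is to prove there are no further dimension-$8$ \sacigs. A Jones projection of such a \sacig has trace $1/8=3/24$ and dominates $e_1$; given the block decomposition $\KD(6)=\C e_1\oplus\C e_2\oplus\C e_3\oplus\C e_4\oplus\bigoplus_{k=1}^{5}M_2(\C)$, it must be either of the form $e_1+e_i+e_j$ or $e_1+q$ with $q$ a minimal projection of one of the five $M_2(\C)$ blocks. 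I would enumerate this finite list of candidates, and for each compute the \sacig it generates as the span of the right legs of its coproduct (Remark~\ref{remark.jones_left_legs}), keeping only those of dimension exactly $8$. These coproduct computations are the step I expect to delegate to the computer, as in~\ref{subsubsection.mupad.sacig.K2}; the analysis is narrowed by the remark that any cocommutative dimension-$8$ \sacig must coincide with $K_0$, since by~\ref{intrin} every symmetric Kac subalgebra is contained in the intrinsic-group algebra. Once the full list of \sacigs and their inclusions is assembled, transcribing them into Figure~\ref{figure.KD6} is routine bookkeeping.
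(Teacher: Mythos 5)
Your overall reduction is the same as the paper's: every \sacig whose dimension divides $12$ lies in $K_1$ or in one of $K_2\equiv L^\infty(D_6)$, $K_3\equiv\KQ(3)$, $K_4\equiv\KD(3)$ by Proposition~\ref{sacign}, these sublattices are imported from earlier sections, and only dimension $8$ needs a separate argument. The problem is in that separate argument. You set out to ``prove there are no further dimension-$8$ \sacigs'' beyond $K_0$, but that statement is false, and it contradicts the very figure you are trying to establish: Figure~\ref{figure.KD6} (and the paper's~\ref{KD6-8}) contains \emph{three} \sacigs of dimension $8$, namely $K_0$, $K_{01}=I(e_1+q(0,\frac{2\pi}{3},4))$ and $K_{02}=I(e_1+q(0,\frac{4\pi}{3},4))$. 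The two extra ones are not Kac subalgebras (they have the block structure of $\KP$ but a non-symmetric coproduct), so your narrowing remark via~\ref{intrin} --- that a cocommutative dimension-$8$ \sacig must equal $K_0$ --- is vacuous here: it says nothing about non-Kac \sacigs, which is exactly what $K_{01}$ and $K_{02}$ are. A proof organized around showing $K_0$ is the unique dimension-$8$ \sacig would be attempting to prove something false.

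Second, your enumeration step would not terminate as described. The candidates $e_1+e_i+e_j$ are finitely many, but the candidates $e_1+q$ with $q$ a minimal projection of one of the five $M_2(\C)$ blocks form a two-parameter continuum in each block (the projections $q(\alpha,\beta,k)$), so there is no ``finite list'' to run through, and computing the right legs of coproducts candidate-by-candidate cannot settle the question. The paper needs two further ideas at this point: (i) a structural step showing that any dimension-$8$ \sacig $K$ contains $J_{20}$ and has Jones projection of the form $e_1+s$ with $s$ a minimal projection of the \emph{fourth} factor --- this uses that $N_1\subset N_1\rtimes\delta(K)$ has index $3$, hence principal graph $A_5$, which forces a central projection of $K$ of a specific shape, combined with a finite ($15$-case) computer check; and (ii) the identity $\Delta(p_I)(1\otimes p_I)=p_I\otimes p_I$ of Proposition~\ref{prop.resumep}~(4), which converts the remaining continuum search over $s$ into a system of quadratic algebraic equations, whose solution (on computer) yields exactly the three projections $p^4_{1,1}$, $q(0,\frac{2\pi}{3},4)$, $q(0,\frac{4\pi}{3},4)$. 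Without (i) and (ii), or some substitute that turns the continuum of minimal projections into an algebraic problem, the dimension-$8$ layer of the lattice --- and hence the theorem --- remains unproved.
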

\begin{figure}
  \centering
  \begin{bigcenter}
    \pgfdeclarelayer{background}
\pgfdeclarelayer{nodes}
\pgfsetlayers{background,main,nodes}
\begin{tikzpicture}[xscale=.7,yscale=.7]
  \tikzstyle{every node}=[inner sep=1pt]
  \begin{pgfonlayer}{nodes}
    \node(C)	at (6,8)[text=blue] {$\C$};
    \node(I2)	at (-2,6) [text=blue] {$I_2$};
    \node(I1)	at (0,6) [text=blue] {$I_1$};
    \node(I0)	at (4,6) [text=blue] {$I_0$};
    \node(I3)	at (12,6) [text=blue] {$I_3$};
    \node(I4)	at (13,6) [text=blue] {$I_4$};
    \node(L3)	at (6,4) [text=red] {$L_3$};
    \node(L1)	at (7,4) [text=red] {$L_1$};
    \node(L5)	at (8,4) [text=red] {$L_5$};
    \node(J0)	at (-1,2) [text=blue] {$J_0$};
    \node(J5)	at (2,2) [text=purple] {$J_2$};
    \node(J6)	at (2.7,2) [text=purple] {$J_4$};
    \node(J3)	at (4.2,2) [text=orange] {$J_5$};
    \node(J2)	at (3.5,2) [text=blue] {$J_3$};
    \node(J4)	at (5,2) [text=orange] {$J_1$};
    \node(J1)	at (6,2) [text=blue] {$J_{20}$};
    \node(Kp3)	at (-5,0) [text=purple] {$K_{42}$};
    \node(Kp4)	at (-4,0) [text=purple] {$K_{41}$};
    \node(K1)	at (8,0) [text=red] {$K_1$};
    \node(Kp21)	at (10,0) [text=green] {$K_{21}$};
    \node(Kp22)	at (11,0) [text=green] {$K_{22}$};
    \node(Kp23)	at (12,0) [text=green] {$K_{23}$};
    \node(Kp24)	at (13,0) [text=green] {$K_{24}$};
    \node(Kp25)	at (14,0) [text=green] {$K_{25}$};
    \node(Kp26)	at (15,0) [text=green] {$K_{26}$};
    \node(K0)	at (0.5,-2) [text=blue] {$K_0$};
    \node(Kp135)at (2,-2) {$K_{02}$};
    \node(Kp146)at (7.5,-2) {$K_{01}$};
    \node(K4)	at (-2,-4) [text=purple] {$K_4$};
    \node(K3)	at (5,-4) [text=orange] {$K_3$};
    \node(K2)	at (10.5,-4) [text=green] {$K_2$};
    \node(KD6)	at (6,-6) {$KD(6)$};
  \end{pgfonlayer}
  \tikzstyle{every path}=[purple]
  \draw (C) -- (I2);
  \draw (C) -- (I1);
  \draw (I2) -- (J0);
  \draw (I1) -- (J0);
  \draw (I0) -- (J0);
   \draw (I0) -- (J6);
  \draw (I0) -- (J5);
  \draw (L3) -- (Kp3);
  \draw (I2) -- (Kp3);
  \draw (I1) -- (Kp4);
  \draw (K4) -- (Kp3);
  \draw (K4) -- (Kp4);
  \draw (L3) -- (Kp4);
  \draw (J0) -- (K4);
  \draw (J6) -- (K4);
  \draw (J5) -- (K4);
  \draw (K1) -- (K4);
  \draw (K4) -- (KD6);
  \tikzstyle{every path}=[blue]
  \draw (J0) -- (K0);
  \draw (J2) -- (K0);
  \draw (J1) -- (K0);
  \draw (K0) -- (KD6);
  \tikzstyle{every path}=[red]
   \draw (C) -- (I0);
  \draw (C) -- (L3);
  \draw (C) -- (L1);
  \draw (C) -- (L5);
  \d2aw (C) -- (I0);
  \draw (I0) -- (K1);
  \draw (L3) -- (K1);
  \draw (L1) -- (K1);
  \draw (L5) -- (K1);
    \tikzstyle{every path}=[green]
   \draw (C) -- (I3);
  \draw (C) -- (I4);
   \draw (I0) -- (J1);
   \draw (I3) -- (J1);
  \draw (I4) -- (J1);
  \draw (I3) -- (Kp21);
  \draw (I3) -- (Kp22);
  \draw (I3) -- (Kp23);
  \draw (I4) -- (Kp24);
  \draw (I4) -- (Kp25);
  \draw (I4) -- (Kp26);
  \draw (K2) -- (J1);
  \draw (K2) -- (K1);
  \draw (K2) -- (Kp21);
  \draw (K2) -- (Kp22);
  \draw (K2) -- (Kp23);
  \draw (K2) -- (Kp24);
  \draw (K2) -- (Kp25);
  \draw (K2) -- (Kp26);
  \draw (K2) -- (KD6);
  \tikzstyle{every path}=[orange]
  \draw (I0) -- (J2);
  \draw (I0) -- (J3);
  \draw (I0) -- (J4);
  \draw (J4) -- (K3);
  \draw (J3) -- (K3);
  \draw (J2) -- (K3);
  \draw (K1) -- (K3);
  \draw (K3) -- (KD6);
  \tikzstyle{every path}=[black]
  \draw (J5) -- (Kp135);
  \draw (J3) -- (Kp135);
  \draw (J1) -- (Kp135);
  \draw (J6) -- (Kp146);
  \draw (J4) -- (Kp146);
  \draw (J1) -- (Kp146);
  \draw (KD6) -- (Kp135);
  \draw (KD6) -- (Kp146);
  \begin{pgfonlayer}{background}
    \newcommand{\dimline}[2]{\draw[color=black!10,very thin](-6,#1) -- (16,#1); \node[right] at (16,#1){dim $#2$};}
    \dimline{8}{1}
    \dimline{6}{2}
    \dimline{4}{3}
    \dimline{2}{4}
    \dimline{0}{6}
    \dimline{-2}{8}
    \dimline{-4}{12}
    \dimline{-6}{24}
  \end{pgfonlayer}
\end{tikzpicture}
  \end{bigcenter}
  \caption{The lattice of \sacigs of $\KD(6)$}
  \label{figure.KD6}
\end{figure}

\begin{proof}%
  From~\ref{sacign} and~\ref{isoproj}, any \sacigs not of dimension
  $8$ is contained in one of the three Kac subalgebras of dimension
  $12$: $K_2\equiv L^\infty(D_n)$, $K_3\equiv \KQ(3)$, and $K_4\equiv
  \KD(3)$ which we study in turn in the sequel of this section.

  In~\ref{KD6-8} we construct all \sacigs of dimension $8$.
\end{proof}

\subsubsection{The \sacigs of $K_0$}

From~\ref{section.K0}, $K_0= I(e_1+p^{4}_{1,1})$ is the algebra of the
intrinsic group $D_4$ and is of dimension $8$. It contains all the
\sacigs of dimension $2$ of $\KD(6)$. It also contains the algebra
$J_0$ of the subgroup $H$. Its \sacigs are:
\begin{itemize}
\item $I_0=\C(e_1 +e_2+e_3 +e_4+q_1+q_2)$%
\item $I_1=\C(e_1 +e_4+q_1+p_2)$%
\item $I_2=\C(e_1 +e_4+q_1+p_1)$%
\item $I_3=I(e_1+e_2+ e^4_{1,1}+e^4_{2,2}+r^1_{2,2}+r^3_{1,1}+r^5_{2,2})$
\item $I_4=I(e_1+e_2+ e^4_{1,1}+e^4_{2,2}+r^1_{1,1}+r^3_{2,2}+r^5_{1,1})$
\item $J_0=\C(e_1+e_4+q_1) $%
\item $J_{20}=I(e_1+e_2+ e^4_{1,1}+e^4_{2,2})$
\item $J_3=I(e_1+e_3+ p^4_{1,1}+p^2_{2,2})$
\end{itemize}

\subsubsection{The \sacigs of $K_2$}

From~\ref{section.K2}, the Kac subalgebra $K_2=I(e_1+e_2)$ is
$L^\infty(D_6)$. Its \sacigs of dimension $6$ correspond to the seven
subgroups of order $2$ of $D_6$:
\begin{itemize}
\item $K_1=I(e_1+e_2+e_3+e_4)$ which is isomorphic to $L^\infty(D_3)$;
\item $K_{21}=I(e_1+e_2+r^1_{1,1})$;
\item $K_{22}=I(e_1+e_2+r^3_{1,1})$;
\item $K_{23}=I(e_1+e_2+r^5_{1,1})$;
\item $K_{24}=I(e_1+e_2+r^1_{2,2})$;
\item $K_{25}=I(e_1+e_2+r^3_{2,2})$;
\item $K_{26}=I(e_1+e_2+r^5_{2,2})$.
\end{itemize}
The group $D_6=\langle \alpha,\beta\rangle$ has a single subgroup of
order $3$ which gives the \sacig $J_1$ of dimension $4$, and three
subgroups of order $4$ which give three \sacigs of dimension $3$:
\begin{itemize}
\item $L_1=I(e_1 +e_2+e_3 +e_4+r^1_{1,1}+r^5_{2,2})$;
\item $L_3=I(e_1 +e_2+e_3 +e_4+r^3_{1,1}+r^3_{2,2})$;
\item $L_5=I(e_1 +e_2+e_3 +e_4+r^5_{1,1}+r^1_{2,2})$.
\end{itemize}
Finally, the \sacigs of dimension $2$ of $K_2$ are $I_0$, $I_3$ et
$I_4$ (see~\ref{tour} (5)).

\subsubsection{The \sacigs of $K_3$}
\label{K3inKD6}

The Kac algebra $K_3=I(e_1+e_3)$ is isomorphic to $\KQ(3)$, and we can
derive its \sacigs from~\ref{KQ3section}. There is a single
\sacig of dimension $6$: $K_1$. Its \sacigs of dimension $2$ and $3$
are those of $K_1$ which we treated above. The three \sacigs of
dimension $4$ are:
\begin{itemize}
\item $J_3$, which is the algebra of its intrinsic group,
\item $J_5=I(e_1+e_3+q(0,\frac{5\pi}{3},2)+q(0,\frac{4\pi}{3},4))$,
\item $J_1=I(e_1+e_3 + q(0,\frac{\pi}{3},2)+q(0,\frac{2\pi}{3},4))$.
\end{itemize}
We computed those projections by using successively the embedding of
$\KQ(3)$ into $\KQ(6)$ and the explicit isomorphism from $\KQ(6)$ to
$\KD(6)$ (see~\ref{mupad.plonge}).

\subsubsection{The \sacigs of $K_4$}
\label{K4inKD6}

The Kac subalgebra $K_4=I(e_1+e_4)$ is isomorphic to $\KD(3)$, and we
can derive its \sacigs from~\ref{KD3section}. Besides $K_1$, it
contains two other \sacigs of dimension $6$:
\begin{itemize}
\item $K_{41}=I(e_1+e_4+p(2,2,3))$ which contains $I_1$ and $L_3$,
\item $K_{42}=I(e_1+e_4+p(1,1,3))$ which contains $I_2$ and $L_3$.
\end{itemize}
The \sacigs of dimension $4$ are:
\begin{itemize}
\item $J_0$, which is the algebra of its intrinsic group,
\item $J_2=I(e_1+e_4+q(0,\frac{2\pi}{3},2)+q(0,\frac{4\pi}{3},4))$,
\item $J_4=I(e_1+e_4+q(0,\frac{4\pi}{3},2)+q(0,\frac{2\pi}{3},4))$.
\end{itemize}
We computed those projections by using the embedding of $\KD(3)$ in
$\KD(6)$ (see~\ref{mupad.plonge}).

\subsubsection{The \sacigs of dimension $4$ of $\KD(6)$ }

As predicted by Proposition~\ref{sacig4pair}, there are seven \sacigs of
dimension $4$ in $\KD(6)$: $J_{20}$ and $J_k$, with $k= 0 \dots 5$.

\subsubsection{The \sacigs of dimension $8$ of $\KD(6)$ }
\label{KD6-8}
\begin{prop}
   The \sacigs of dimension $8$ of $\KD(6)$  are the following :
  \begin{itemize}
  \item $K_0= I(e_1+p^{4}_{1,1})$, the algebra of the intrinsic group;
  \item $K_{01}=I(e_1 + q(0,\frac{2\pi}{3},4))=I(p_{\langle a^3, ba\rangle})$, which contains $J_{20}$,
    $J_1$, and $J_4$;
  \item $K_{02}=I(e_1 + q(0,\frac{4\pi}{3},4))=I(p_{\langle a^3, ab\rangle})$, which contains $J_{20}$,
    $J_2$, and $J_5$.
  \end{itemize}
  The last two are isomorphic by $\Theta$; they share the same block matrix
  structure as $\KP$, and their lattice of \sacigs are isomorphic to
  that of $\KP$. However they are not Kac subalgebras.
\end{prop}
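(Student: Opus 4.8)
The plan is to classify the dimension-$8$ \sacigs through their Jones projections and then to read off the structural assertions. By Proposition~\ref{prop.resumep}, a \sacig $I$ with $\dim I=8$ has a Jones projection $p_I$ of trace $1/8$ dominating $e_1$, and $I$ is the linear span of the right legs of $\Delta(p_I)$ (Remark~\ref{remark.jones_left_legs}). Writing $p_I-e_1$ in the block decomposition $\KD(6)=\C e_1\oplus\cdots\oplus\C e_4\oplus\bigoplus_{k=1}^{5}\KD(6)^k$ and running the same trace count as in~\ref{sacign}, one gets $x+2y=3$, where $x\ge 1$ is the number of scalar blocks $e_i$ and $y$ the number of rank-one summands of $p_I-e_1$ inside the factors $\KD(6)^k$. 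Since $8$ divides $4n=24$ but not $2n=12$, the value $x$ may now be odd, leaving exactly two shapes: $p_I=e_1+e_i+e_j$ with $i,j\in\{2,3,4\}$, or $p_I=e_1+q$ with $q$ a rank-one projection of a single factor $\KD(6)^k$.

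First I would dispose of the shape $p_I=e_1+e_i+e_j$. Such a $p_I$ dominates $e_1+e_i=p_{K_i}$ and $e_1+e_j=p_{K_j}$, so by~\ref{tour}~(5) the \sacig it generates lies in $K_i\cap K_j$; being a \sacig of the Kac subalgebra $K_i$, its dimension divides $\dim K_i=12$, is at least $\dim K_1=6$ (as $K_1\subseteq K_i\cap K_j$), and is strictly smaller than $12$ (since $K_i\ne K_j$). The only such divisor is $6$, so $K_i\cap K_j=K_1$ and no $8$-dimensional \sacig occurs. For the surviving shape $p_I=e_1+q$, I would impose the defining relation $\Delta(p_I)(1\otimes p_I)=p_I\otimes p_I$ of Proposition~\ref{prop.resumep}~(4): expanding $\Delta(q)$ block by block from the coproduct formulas of Appendix~\ref{cop} should force $q$ into the factor $\KD(6)^4$, the unique block on which $\lambda(a^3)$ acts as the identity, and pin it to the form $q=q(0,\beta,4)$ with $\mathrm{e}^{3\mathrm{i}\beta}=1$, i.e. $\beta\in\{0,\tfrac{2\pi}{3},\tfrac{4\pi}{3}\}$. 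This yields exactly the three candidates $p_{K_0}=e_1+p^4_{1,1}=e_1+q(0,0,4)$, $p_{K_{01}}=e_1+q(0,\tfrac{2\pi}{3},4)$ and $p_{K_{02}}=e_1+q(0,\tfrac{4\pi}{3},4)$. That each is genuinely the Jones projection of an $8$-dimensional \sacig follows from Lemma~\ref{lemma.abelien}: for $K_0$ it is already known (the algebra of the intrinsic group $D_4$), while $p_{K_{01}}$ and $p_{K_{02}}$ are recognized as the group Jones projections $\tfrac18\sum_{g}\lambda(g)$ of the two remaining order-$8$ (Sylow) subgroups $\langle a^3,ba\rangle$ and $\langle a^3,ab\rangle$ of $D_{12}$, which survive as Jones projections after twisting in the spirit of Proposition~\ref{prop.abelien}.

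For the remaining assertions, the involutive automorphism $\Theta=\Theta_{-1}$, which is conjugation by $\lambda(b)$ (Example~\ref{example.conjugue}), fixes $e_1$ and acts on the fourth factor by $q(0,\beta,4)\mapsto q(0,-\beta,4)$; since $-\tfrac{2\pi}{3}\equiv\tfrac{4\pi}{3}\pmod{2\pi}$, it sends $p_{K_{01}}$ to $p_{K_{02}}$ and hence exchanges $K_{01}$ and $K_{02}$, which are therefore isomorphic. Reading off the right legs of $\Delta(p_{K_{01}})$ gives the block matrix decomposition $\C^4\oplus M_2(\C)$, identical to that of $\KP$, together with its \sacigs (the three of dimension $2$ sitting among $I_0,I_3,I_4$ and the three of dimension $4$ among $J_{20},J_1,J_4$), which match the lattice of Figure~\ref{figure.KP}; the case of $K_{02}$ then follows by applying $\Theta$. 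Finally, by the criterion of~\ref{irredprof2}, $K_{01}$ is a Kac subalgebra if and only if $\Delta(p_{K_{01}})$ is symmetric. Because the angle $\beta=\tfrac{2\pi}{3}\ne0$ genuinely twists the coproduct away from the cocommutative one carried by $K_0=\langle a^3,b\rangle$ (the only Sylow subgroup containing the twisting group $H=\langle a^6,b\rangle$, hence the only one giving a Kac subalgebra by Theorem~\ref{theorem.sacigG}), this symmetry fails, so $K_{01}$ and $K_{02}$ are not Kac subalgebras.

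The main obstacle is the block-by-block coproduct computation in the $p_I=e_1+q$ case: one must show simultaneously that no rank-one $q$ outside $\KD(6)^4$, and no $q(\alpha,\beta,4)$ with $\alpha\ne0$ or $\beta$ outside the three admissible angles, can satisfy $\Delta(p_I)(1\otimes p_I)=p_I\otimes p_I$. I expect this to be the crux, and it is exactly the kind of finite, $n=6$-specific verification that the formulas of Appendix~\ref{cop} (and, if needed, a direct machine check) are designed to handle; the non-symmetry of $\Delta(p_{K_{01}})$ used at the very end is a byproduct of the same expansion.
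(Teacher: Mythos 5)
Your skeleton follows the paper's: a trace argument reducing the Jones projection to one of two shapes, followed by finite algebraic verifications (which the paper delegates to the computer). One step of yours is genuinely cleaner than the paper's: to eliminate the shape $p_I=e_1+e_i+e_j$, the paper runs "a systematic check on computer", whereas your argument — $p_{K_i}=e_1+e_i\le p_I$ forces $I\subset K_i$ by Proposition~\ref{tour}~(5), and then $\dim I=8$ would have to divide $\dim K_i=12$ by Proposition~\ref{prop.resumep}~(1), a contradiction — is purely structural and valid, since for $n$ even all three $K_i$ are Kac subalgebras. For the surviving shape $e_1+q$, the paper proceeds differently from you: it first shows (via the index-$3$ inclusion having principal graph $A_5$, plus a $15$-case computer check) that every dimension-$8$ \sacig contains $J_{20}$ and hence that $q$ lies in the fourth factor, and only then solves the quadratic system coming from Proposition~\ref{prop.resumep}~(4); your plan to impose (4) directly over all rank-one $q$ is an equivalent, if larger, finite verification, and deferring it to a machine check is consistent with what the paper itself does. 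Your $\Theta_{-1}$ computation exchanging $q(0,\beta,4)$ and $q(0,-\beta,4)$ is correct and nicely makes explicit an isomorphism the paper's proof leaves implicit.

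However, two of your steps would fail as written. First, the existence step: Proposition~\ref{prop.abelien} covers only the subgroups $H_r$, $A_k$, and $B_k=\langle a^k,b\rangle$; it says nothing about $\langle a^3,ba\rangle$ or $\langle a^3,ab\rangle$, and the paper warns explicitly that not every subgroup, even commutative, yields a Jones projection after twisting. So "in the spirit of" is not a substitute for actually verifying (via Lemma~\ref{lemma.abelien}, i.e.\ computing $\Delta(p)$, or via a closure computation) that these two projections generate \sacigs of dimension $8$. Second, and more seriously, your reason why $K_{01}$ and $K_{02}$ are not Kac subalgebras inverts Theorem~\ref{theorem.sacigG}: that theorem says that subgroups containing $H$ give (twisted, hence Kac) subalgebras; it does not say that \sacigs whose underlying subgroup fails to contain $H$ cannot be Kac subalgebras. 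Counterexamples sit inside $\KD(6)$ itself: $K_1\cong L^\infty(D_3)$, $K_2\cong L^\infty(D_6)$ and $K_3\cong\KQ(3)$ are all Kac subalgebras, and none of them contains $\C[H]$. The only valid route here is the criterion of~\ref{irredprof2}: compute $\Delta(p_{K_{01}})$ and observe it is not symmetric — exactly the computer check the paper performs — so you should lean on that computation explicitly rather than on the sacigG-based argument.
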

\begin{proof}
  We first prove that any \sacig of dimension $8$ in $\KD(6)$ contains
  $J_{20}$ and that its Jones projection is of the form $e_1+s$, with
  $s$ a minimal projection of the fourth factor $M_2(\C)$ of $\KD(6)$:

  By a usual trace argument, the Jones projection of a \sacig $K$ of
  dimension $8$ is of the form $e_1+s$, with $s$ a minimal projection
  of some $j$-th factor $M_2(\C)$ (in principle, it could also be of
  the form $e_1+e_i+e_k$, but we ruled out those later projections by
  a systematic check on computer). The index of $N_1 \subset
  N_1\rtimes \delta(K)$ is $3$. Given the shape of $p_K$, its
  principal graph is $A_5$. Therefore, the center of $K$ admits a
  projection of the form $e_i+(e^j_{1,1}+e^j_{2,2}-s)$ (with $i=2,3$,
  or $4$). Then, the \sacig generated by $e_1+e_i+
  e^j_{1,1}+e^j_{2,2}$ is of dimension at most $8$ since it is
  contained in $K$. A computer check on the $15$ possible values shows
  that only $e_1+e_2+e^4_{1,1}+e^4_{2,2}$ satisfies this
  condition. Therefore, every \sacig of dimension $8$ contains
  $J_{20}=I(e_1+e_2+e^4_{1,1}+e^4_{2,2})$ and its Jones projection is
  of the form $e_1+s$, with $s$ a minimal projection of the fourth
  factor $M_2(\C)$ of $\KD(6)$.

  Using Proposition~\ref{prop.resumep} (4), we checked on computer
  that the only such Jones projections are $e_1+p^{4}_{1,1}$, $e_1 +
  q(0,\frac{2\pi}{3},4)$ and $e_1 + q(0,\frac{4\pi}{3},4)$; this
  reduced to solving a system of quadratic algebraic equations. We
  also checked that both new \sacigs have the same block matrix
  structure as $\KP$ but are not Kac subalgebras.

  The mentioned inclusions are straightforward to check using
  Proposition~\ref{tour} (5).
\end{proof}

\newpage
\section{The Kac algebras $\KQ(n)$}
\label{section.KQ}

The structure of $\KQ(n)$ follows closely that of $\KD(n)$, and we
sketch some general results. We first prove that $\KQ(2m)$ is
isomorphic to $\KD(2m)$. Then, when $n$ is odd, we establish the
self-duality of $\KQ(n)$, describe its automorphism group, list its
\sacigs of dimension $4$, $n$ and $2n$, and get a partial description
of $\ll(\KQ(n))$ (Theorem~\ref{theorem.lattice.nprime}); this
description is complete for $n$ odd prime. Finally, we complete the
systematic study of all Kac algebras of dimension at most $15$ by the
lattice $\ll(\KQ(3))$ which serves as intermediate step toward the
lattice of $\KD(6)$ in~\ref{KD6}.

\subsection{Definition and general properties}

\subsubsection{Notations}
\label{lambdaKQ}

For $n\geq 1$, we denote by $\KQ(n)$ the Kac algebra of dimension $4n$
constructed in~\cite[6.6]{Vainerman.1998} from the quaternion group
$Q_{n}=\langle a, b | a^{2n}=1, b^2=a^n, ba = a^{-1}b \rangle$. As for
$\KD(n)$, its block matrix structure is:
$$\KQ(n)=\C e'_1 \oplus \C e'_2 \oplus \C e'_3\oplus \C e'_4\oplus   \bigoplus_{k=1}^{n-1}\KQ(n)^k\,,$$
where $\KQ(n)^k$ is a factor $M_2(\C)$ whose matrix units we denote by
$e'^{k}_{i,j}$, $i=1,2$, $j=1,2$. In general, we use the same
notations for matrix units as in~\ref{q}, and put a $'$ when needed to
distinguish between the matrix units of $\KD(n)$ and $\KQ(n)$.

From~\ref{KacHopf}, the trace on $\KQ(n)$ is given by: $tr(e'_i)=1/4n
\quad tr(e_{i,j}'^{k})=1/2n$.

Setting $\epsilon_n=\mathrm{e}^{\mathrm{i}\pi/n}$, the left regular
representation of $Q_n$ is given by:
\begin{align*}
  \lambda'(a^k)&=e'_1+e'_2+(-1)^k(e'_3+e'_4)+\sum_{j=1}^{n-1}(\epsilon_n^{jk}\ e'^j_{1,1}+\epsilon_n^{-jk}\  e'^j_{2,2})\,;\\
  \lambda'(a^k b)&=e'_1-e'_2+(-1)^k(e'_3-e'_4)+\sum_{j=1}^{n-1}(\epsilon_n^{j(k-n)}\  e'^j_{1,2}+\epsilon_n^{-jk}\  e'^j_{2,1}) \quad \text{for $n$ even}\,;\\
  \lambda'(a^k b)&=e'_1-e'_2+(-1)^k \mathrm{i} (e'_3-e'_4)+\sum_{j=1}^{n-1}(\epsilon_n^{j(k-n)}\  e'^j_{1,2}+\epsilon_n^{-jk}\  e'^j_{2,1}) \quad \text{for $n$ odd}\,.
\end{align*}
The coproduct of $\C[Q_n]$ is twisted by a 2-pseudo-cocycle $\Omega'$,
given in~\ref{sacig22nKQimpair}. We refer
to~\cite[6.6]{Vainerman.1998} for details.
As in~\ref{KD2} one can show that $\KQ(1)$ and $\KQ(2)$ are respectively
the algebras of the groups $\mathbb{Z}_4$ and $D_4$.

\subsubsection{Intrinsic group}
\label{Qintrinseque}

In~\cite[p.718]{Vainerman.1998}, L. Vainerman mentions that the
intrinsic group of the dual of $\KQ(n)$ is $\mathbb{Z}_4$ if $n$ is
odd, and $\mathbb{Z}_2 \times \mathbb{Z}_2$ otherwise.  As in
\ref{intrinseque}, one can show that the intrinsic group of $\KQ(n)$ is
$\mathbb{Z}_4$ if $n$ is odd and $D_4$ otherwise.

\subsubsection{Automorphism group}
\label{section.KQ.automorphismGroup}

\begin{theorem}
  \label{theorem.automorphisms.KQ}
  For $n\geq 3$, the automorphism group $\aut(\KQ(n))$ of $\KQ(n)$ is
  given by:
  \begin{displaymath}
    A_{2n} = \{\  \Theta_k, \ \Theta_k\Theta'\  \suchthat k\wedge 2n = 1 \}\,,
  \end{displaymath}
  where $\Theta_k$ and $\Theta'$ are defined as in
  Theorem~\ref{theorem.automorphisms.KD}.  In particular, it is of
  order $2\varphi(2n)$ (where $\varphi$ is the Euler function), and
  isomorphic to $\Z_{2n}^* \rtimes \Z_2$, where $\Z_{2n}^*$ is the
  multiplicative group of $\Z_{2n}$.
\end{theorem}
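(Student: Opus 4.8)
The plan is to mirror, essentially verbatim, the proof of Theorem~\ref{theorem.automorphisms.KD} for $\KD(n)$, exploiting that $\KQ(n)$ is built by the identical twisting construction of~\cite[6.6]{Vainerman.1998}: the same commutative subgroup $H = \langle a^n, b\rangle$ of order $4$ carries the same cocycle, producing the unitary $\Omega'$ in $\C[H]\otimes\C[H]$. First I would note that $A_{2n}\subseteq\aut(\KQ(n))$: the maps $\Theta_k$ ($k\wedge 2n=1$) are automorphisms of the group $Q_n$ fixing $H$ pointwise, hence preserve $\Omega'$ and the twisted coproduct exactly as in Proposition~\ref{proposition.automorphisms.KD.ak}; and $\Theta'$ is an involutive Kac automorphism by the analogue of Proposition~\ref{proposition.automorphism.KD.ThetaPrime}, whose verification reduces by the degree argument of Proposition~\ref{proposition.equationAtInfinity} (with $Q_\infty$ replacing $D_\infty$) to a finite check in $\KQ(3)$.

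For the reverse inclusion I would take an arbitrary automorphism $\Phi$ of $\KQ(n)$. It restricts to an automorphism $\sigma$ of the intrinsic group $G(\KQ(n))$, which by~\ref{Qintrinseque} is $\Z_4$ for $n$ odd and $D_4$ for $n$ even. The key structural fact is that $(\Phi\otimes\Phi)(\Omega')$ is determined by $\sigma$, so one can untwist $\Delta$ via
\begin{displaymath}
  \Delta_\sigma(y) = (\Phi\otimes\Phi)(\Omega')^{*}\,\Delta(y)\,(\Phi\otimes\Phi)(\Omega')
  = (\Phi\otimes\Phi)(\Delta_s(\Phi^{-1}(y)))\,,
\end{displaymath}
which is cocommutative. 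I would then argue that for $n\geq 3$ the cocommutativity of $\Delta_\sigma$ forces $\sigma$ to be either $\id$ or the distinguished automorphism $a^n\mapsto a^n$, $b\mapsto a^n b$: every other automorphism of $\Z_4$ (resp.\ $D_4$) makes $\Delta_\sigma(\lambda'(a))$ non-symmetric, a degree-$1$ expression that by the infinity-argument need only be checked non-symmetric in $\KQ(3)$ (resp.\ $\KQ(4)$). In the first case $\Delta_\sigma=\Delta_s$, so $\Phi$ is an automorphism of $\C[Q_n]$ fixing $H$ and hence equals some $\Theta_k$; in the second case $\Phi\Theta'$ fixes $H$, giving $\Phi=\Theta_k\Theta'$.

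Finally the group structure: $A_{2n}$ has order $2\varphi(2n)$ since the $\Theta_k$ form a copy of $\Z_{2n}^{*}$ under $\Theta_k\Theta_{k'}=\Theta_{kk'}$, and $\Theta'$ provides an order-$2$ element normalizing this subgroup (one checks $\Theta'\Theta_k\Theta'=\Theta_{k}$ or a suitable relabelling, yielding a semidirect product), so $\aut(\KQ(n))\cong \Z_{2n}^{*}\rtimes\Z_2$.

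\textbf{The main obstacle} I anticipate is subtler than in the $\KD(n)$ case: here the intrinsic group is $\Z_4$ (not $(\Z_2)^2$) for $n$ odd, and the quaternionic relation $b^2=a^n$ changes both $\lambda'(b)$ and the admissible automorphisms $\sigma$ of $G(\KQ(n))$. I would need to enumerate $\aut(\Z_4)$ and $\aut(D_4)$ carefully and confirm that exactly the same two values of $\sigma$ survive, and—because $\Theta'$ was defined in Proposition~\ref{proposition.automorphism.KD.ThetaPrime} through formulas involving $\lambda(b)\mapsto\lambda(a^n b)$—verify that the \emph{same} formulas still define a valid Kac automorphism of $\KQ(n)$ despite $b$ now squaring to $a^n$. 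These are the computer-assisted finite checks in $\KQ(3)$ and $\KQ(4)$ that the theorem's statement implicitly relies on, and getting the quaternionic bookkeeping right in the untwisting step is where the real work lies.
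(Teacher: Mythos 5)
Your proposal is correct and takes essentially the same route as the paper: the paper's own proof is a brief deferral to the proof of Theorem~\ref{theorem.automorphisms.KD}, noting that the analogue of Lemma~\ref{lemma.automorphism.KD.ThetaPk} holds because $\Theta'(a)$ lies in the subalgebra generated by $a$ (which is the same in $\KQ(n)$ as in $\KD(n)$), that the group automorphisms fixing $a^n$ and $b$ coincide in $D_{2n}$ and $Q_{2n}$, and that everything else is checked on computer. Your expanded write-up, including the correct identification of the quaternionic subtleties (intrinsic group $\Z_4$ for $n$ odd, since $b^2=a^n$ makes $H=\langle b\rangle$ cyclic, and $D_4$ for $n$ even) and the reliance on finite computer checks in $\KQ(3)$ and $\KQ(4)$, is exactly the intended argument.
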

\begin{proof}
  The proof follows that of Theorem~\ref{theorem.automorphisms.KD}.
  The analogue of Lemma~\ref{lemma.automorphism.KD.ThetaPk} holds
  because $\Theta'(a)$ lies in the subalgebra generated by $a$ in
  $\KQ(n)$ which is isomorphic to that in $\KD(n)$. Furthermore, the
  group automorphisms which fix $a^n$ and $b$ are the same in $D_{2n}$
  and $Q_{2n}$. Everything else is checked on computer.
\end{proof}

\subsection{The isomorphism $\phi$ from $\KD(2m)$ to $\KQ(2m)$}
\label{iso.KDKQ}
\label{isotheorem}

\begin{theorem}
  \label{theorem.isomorphism.KD.KQ}
  The Kac algebras $\KD(n)$ and $\KQ(n)$ are isomorphic if and only if
  $n$ is even. More specifically, when $n=2m$, $\phi$ defined by:
  \begin{align*}
    \phi(\lambda(a)) &= a+1/4 (a-a^{-1}) (a^{n}-1) (1 - ba^m)\,,\\
    \phi(\lambda(b)) &=1/2[b(a^n+1)+\mathrm{i}(a^n-1)]a^m\,,
  \end{align*}
  where for the sake of readability the $\lambda'$ have been omitted
  on the right hand sides, extends into a Kac algebra isomorphism from
  $\KD(n)$ to $\KQ(n)$.
\end{theorem}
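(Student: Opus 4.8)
The plan is to prove the two directions of the equivalence separately: an isomorphism invariant disposes of the odd case, and the checklist of Proposition~\ref{proposition.KacAlgebraIsomorphism} verifies the explicit map $\phi$ in the even case.

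First I would settle the \emph{only if} direction, i.e. that $n$ odd forces $\KD(n)\not\cong\KQ(n)$. The intrinsic group is an isomorphism invariant of a Kac algebra (see~\ref{intrin}), so it suffices to compare them: by~\ref{intrinseque} one has $G(\KD(n))=\Z_2\times\Z_2$, whereas by~\ref{Qintrinseque} one has $G(\KQ(n))=\Z_4$. These two groups are not isomorphic, hence the algebras are not isomorphic when $n$ is odd. This is exactly the contrapositive of ``isomorphic $\Rightarrow n$ even'', and it also confirms that the even case treated below is the only one in which an isomorphism can exist.

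For $n=2m$ I would verify that the prescribed $\phi$ extends to a Kac algebra isomorphism by checking conditions (i)--(iv) of Proposition~\ref{proposition.KacAlgebraIsomorphism}, taking $A=\{\lambda(a),\lambda(b)\}$ and $A'=\{\lambda'(a),\lambda'(b)\}$ as generating sets. Conditions (ii), (iii), and the two \emph{bounded-degree} relations in (i) are handled by the formal-parameter reduction: after introducing a group $Q_\infty$ and quotient maps $\Pi_{Q,n}$ in analogy with $D_\infty$ and $\Pi_{D,n}$, a quaternion analogue of Proposition~\ref{proposition.equationAtInfinity} (together with its tensor-product variant, as used for $\C[D_{2n}]\otimes\C[D_{2n}]$) reduces the identities $\phi(\lambda(b))^2=1$, $\phi(\lambda(b))\phi(\lambda(a))=\phi(\lambda(a))^{-1}\phi(\lambda(b))$, $\phi(\lambda(x)^*)=\phi(\lambda(x))^*$, and $(\phi\otimes\phi)(\Delta(\lambda(x)))=\Delta(\phi(\lambda(x)))$ for $x=a,b$ to a single finite computation in a small $\KQ(N)$, since each has bounded degree in $a$.

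The genuinely non-routine point is the relation $\phi(\lambda(a))^{2n}=1$, whose degree grows with $n$ and so escapes the formal-parameter reduction. As in the proof of Proposition~\ref{proposition.automorphism.KD.ThetaPrime}, I would prove a closed form for the powers of $f:=\phi(\lambda(a))$, a technical lemma analogous to Lemma~\ref{lemma.automorphism.KD.ThetaPk}, of the shape
$$f^{k}=\lambda'(a^k)+\tfrac14(\lambda'(a^k)-\lambda'(a^{-k}))(\lambda'(a^n)-1)(1-\lambda'(ba^m))\,,$$
which agrees with the stated $\phi(\lambda(a))$ at $k=1$ and is established by induction once $f^k f=f^{k+1}$ is checked. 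The subtlety, compared with the $\Theta'$ case, is that $f$ is \emph{not} a function of $a$ alone: the factor $1-\lambda'(ba^m)$ does not commute with $\lambda'(a)$, so the induction step must use the quaternion relations $a^m b=ba^{-m}$ and $(ba^m)^2=a^n$ to collapse the cross terms. Granting the closed form, setting $k=2n$ gives $f^{2n}=\lambda'(a^{2n})+\tfrac14(\lambda'(a^{2n})-\lambda'(a^{-2n}))(\cdots)=1$ immediately, and $f^{-1}=f^{2n-1}$ supplies the inverse needed for the commutation relation. For surjectivity (condition (iv)) I would then invert algebraically: the closed form lets one solve for $\lambda'(a)$ and $\lambda'(b)$ in terms of $f$, $\phi(\lambda(b))$, and the central idempotents $\tfrac12(1\pm\lambda'(a^n))$, exhibiting explicit preimages (again confirmed on small $\KQ(N)$). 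The main obstacle throughout is this closed-form power lemma, precisely because the occurrence of $b$ inside $\phi(\lambda(a))$ destroys the commutativity that made the corresponding computation routine for $\Theta'$.
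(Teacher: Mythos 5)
Your proposal is correct and follows essentially the same route as the paper: the intrinsic-group invariant ($\Z_2\times\Z_2$ versus $\Z_4$) rules out $n$ odd, the checklist of Proposition~\ref{proposition.KacAlgebraIsomorphism} organizes the even case, the closed-form power lemma you state is exactly the paper's Lemma~\ref{lemma.isomorphism.KD.KQ} (proved there by the same induction $f_kf_1=f_{k+1}$ using the anticommutation $(a^k-a^{-k})ba^m=-ba^m(a^k-a^{-k})$ and $(1+ba^m)(1-ba^m)=1-a^n$), and the bounded-degree identities are delegated to a quaternionic analogue of Proposition~\ref{proposition.equationAtInfinity} built on $Q_{2\infty}$ with $a_\infty$ playing the role of $a^m$. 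Even your treatment of surjectivity matches the paper, which verifies explicit preimages of $\lambda'(a)$ and $\lambda'(b)$ under $\phi$.
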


Note first that $\KD(n)$ and $\KQ(n)$ are not isomorphic for $n$ odd, as
their intrinsic groups are $\Z_2^2$ and $\Z_4$, respectively
(see~\ref{intrinseque} and~\ref{Qintrinseque}).  The proof of the
isomorphism for $n$ even follows the same line as that of
Proposition~\ref{proposition.automorphism.KD.ThetaPrime}. We start
with some little remarks which simplify the calculation, give an
explicit formula for $\phi(\lambda(a))^k$, and conclude the proof with
computer checks.

\subsubsection{Technical lemmas}

\begin{remarks}
  Recall that, in $Q_n$, $ba^kb^{-1}=a^{-k}$, $b^4=1$, and $b^2=a^n$.
  Therefore:
  \begin{enumerate}[(i)]
  \item $a^n$ is in the center of $\KQ(n)$;
  \item $(a^n-1)(a^n+1)=0$;
  \item $(1+a^n)^2= 2(1+a^n)$ et $(1-a^n)^2= 2(1-a^n)$;
  \item $(a^k-a^{-k})ba^m=-ba^m(a^k-a^{-k})$;
  \item $(1+ba^m)(1-ba^m)=1-a^n$.
  \end{enumerate}
\end{remarks}

\begin{lemma}
  \label{lemma.isomorphism.KD.KQ}
  For all $k\in \Z$,
  \begin{displaymath}
    \phi(\lambda(a))^k
    =\lambda'(a^k)+\frac{1}{4}(\lambda'(a^k)-\lambda'(a^{-k}))(\lambda'(a^n)-1)(1-\lambda'(ba^m))\,.
  \end{displaymath}
  Furthermore $\phi(\lambda(a)^*)=\phi(\lambda(a))^*$.
\end{lemma}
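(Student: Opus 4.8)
The plan is to prove the formula for $\phi(\lambda(a))^k$ by induction on $k$, using the auxiliary remarks listed just above the lemma, in close analogy with the proof of Lemma~\ref{lemma.automorphism.KD.ThetaPk}. First I would introduce the shorthand
\begin{displaymath}
  g_k=\lambda'(a^k)+\tfrac{1}{4}(\lambda'(a^k)-\lambda'(a^{-k}))(\lambda'(a^n)-1)(1-\lambda'(ba^m))\,,
\end{displaymath}
so that the claim is $\phi(\lambda(a))^k=g_k$, with $g_1=\phi(\lambda(a))$ holding by definition. The heart of the argument is to verify the multiplicative relation $g_k\,g_1=g_{k+1}$; once this is established, the general formula follows by induction for $k\ge 0$, and the case $k<0$ is handled by checking $g_{-1}=g_1^{-1}$ (equivalently $g_1 g_{-1}=g_0=1$), which then propagates to all negative $k$.

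\textbf{The key computation.}
The main work is the product $g_k g_1$. Expanding the two factors produces four terms, and simplifying them is exactly where the remarks do their job. The relation (i) that $a^n$ is central lets me freely move the factor $(\lambda'(a^n)-1)$ around; relation (iv), $(a^k-a^{-k})ba^m=-ba^m(a^k-a^{-k})$, governs how the ``twist'' factor $(1-\lambda'(ba^m))$ commutes past the $(\lambda'(a^k)-\lambda'(a^{-k}))$ pieces and is what makes the cross terms collapse; and relations (ii), (iii), (v) handle the resulting products of the idempotent-like expressions $(1\pm a^n)$ and $(1\pm ba^m)$. I expect that after using (v) to rewrite $(1-\lambda'(ba^m))^2$ in terms of $(1-\lambda'(a^n))$ and (iii) to reduce the powers of $(\lambda'(a^n)-1)$, the telescoping leaves precisely $\lambda'(a^{k+1})+\tfrac14(\lambda'(a^{k+1})-\lambda'(a^{-k-1}))(\lambda'(a^n)-1)(1-\lambda'(ba^m))=g_{k+1}$. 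This is the step I expect to be the main obstacle: the sign bookkeeping coming from the anticommutation relation (iv), combined with keeping track of which of the several nilpotent/idempotent identities applies to each term, is where an error is most likely to creep in, so I would carry it out carefully rather than wave it through.

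\textbf{Preservation of the involution.}
For the final assertion, I would observe that $\lambda'(a)^*=\lambda'(a^{-1})=\lambda'(a)^{-1}$, so it suffices to show $\phi(\lambda(a))^*=\phi(\lambda(a))^{-1}$, i.e. $g_1^*=g_{-1}$. Taking adjoints in the defining expression for $g_1$, using that $\lambda'(a^k)^*=\lambda'(a^{-k})$, that $\lambda'(a^n)$ is a self-adjoint central element, and that $(1-\lambda'(ba^m))^*=1-\lambda'(a^{-m}b^{-1})$ which I simplify via the quaternion relations $b^2=a^n$, $ba^kb^{-1}=a^{-k}$, should return exactly the expression $g_{-1}$ already identified as $g_1^{-1}$. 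Hence $\phi(\lambda(a)^*)=\phi(\lambda(a))^*$, completing the lemma.
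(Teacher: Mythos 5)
Your proposal follows essentially the same route as the paper: the paper likewise defines $f_k=\lambda'(a^k)+\frac{1}{4}(\lambda'(a^k)-\lambda'(a^{-k}))(\lambda'(a^n)-1)(1-\lambda'(ba^m))$ (your $g_k$), verifies $f_kf_1=f_{k+1}$ by direct expansion using exactly the listed remarks (in particular the anticommutation $(a^k-a^{-k})ba^m=-ba^m(a^k-a^{-k})$ to move the twist factor, and the identities for $(1\pm a^n)$ and $(1+ba^m)(1-ba^m)=1-a^n$), concludes by induction, and gets the involution claim from $f_{-1}=f_1^*=f_1^{-1}$. The one step you defer as the ``main obstacle'' --- the product $g_kg_1$ --- is precisely the computation the paper carries out, and it does collapse to $g_{k+1}$ as you predict.
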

\begin{proof}
  Write
  $f_k=\lambda'(a^k)+\frac{1}{4}(\lambda'(a^k)-\lambda'(a^{-k}))(\lambda'(a^n)-1)(1-\lambda'(ba^m))$.
  First note that $f_0=1$. One checks further that $f_k f_1 =
  f_{k+1}$ (omitting $\lambda'$):
  \begin{align*}
    f_kf_1-a^{k+1}& =\frac{a^{n}-1}{4}[(1+ba^m)(a^k- a^{-k})a+a^k(1+b a^{m})(a-a^{-1})]\\&+\frac{(a^{n}-1)^2}{16} (1+ b a^m)(a^k-a^{-k})(a-a^{-1}) (1-ba^m)\\
    &=\frac{a^{n}-1}{4}[(2a^{k+1}-a^{-k+1}-a^{k-1})+ba^m(a^{k+1}-a^{-k-1})]
    \\&-\frac{(a^{n}-1)}{8}(1+ b a^m)(1-ba^m)(a^{k+1}+a^{-k-1}-a^{k-1}-a^{-k+1})]\\
    & =  \frac{a^{n}-1}{4}(1+ba^m)(a^{k+1}-a^{-k-1})\,.
  \end{align*}
  In particular, we have $f_{-1}=f_1^*=f_1^{-1}$, that is
  $\phi(\lambda(a)^*)=\phi(\lambda(a))^*$.  The lemma follows by
  induction.
\end{proof}

\subsubsection{Proof of Theorem~\ref{theorem.isomorphism.KD.KQ}}

\begin{proof}[Proof of Theorem~\ref{theorem.isomorphism.KD.KQ}]
  We check that $\phi$ satisfies the properties listed in
  Proposition~\ref{proposition.KacAlgebraIsomorphism}.
  Most expressions below involve the element $a^m$. Therefore the
  computer checks use a close analogue of
  Proposition~\ref{proposition.equationAtInfinity} for the group algebra
  $\C[Q_{2\infty}]$ of
  \begin{displaymath}
    Q_{2\infty} =
    \langle a, b, a_\infty\,|\, b^2=a_\infty,\, ab=ba^{-1},\,a_\infty b =
    ba_\infty^{-1} \rangle\,,
  \end{displaymath}
  where $a_\infty$ plays the role of $a^m$. Then, an algebraic
  expression of degree $d$ in $a$ vanishes in $\KQ(2m)$ for all $m$
  whenever it vanishes for some $M>2d$.
  \begin{enumerate}[(i)]
  \item By Lemma~\ref{lemma.isomorphism.KD.KQ}, $\phi(\lambda(a))^{2n}=1$,
    and we check on computer that $\phi(\lambda(b))^2=1$ and
    $\phi(\lambda(b))\phi(\lambda(a))=\phi(\lambda(a))^{-1}\phi(\lambda(b))$.
  \item By Lemma~\ref{lemma.isomorphism.KD.KQ},
    $\phi(\lambda(a)^*)=\phi(\lambda(a))^*$, and check on computer
    that $\phi(\lambda(b))^*\phi(\lambda(b))=1$.
  \item Using a close analogue of
    Proposition~\ref{proposition.equationAtInfinity}, we check on
    computer that $(\phi\otimes
    \phi)(\Delta(\lambda(x)))=\Delta(\phi(x))$ for $x\in\{a,b\}$.
  \item Similarly, we check on computer the following equations:
    \begin{align*}
      \lambda'(a) &= \phi\left(\frac14 (\lambda(a)-\lambda(a)^{-1})
        (\lambda(a)^{m}-\lambda(a)^{-m}) ( \lambda(a)^{m} - \mathrm{i}
        \lambda(b) ) + \lambda(a)\right)\,,\\
      \lambda'(b) &=
      \phi\left(\frac12 \lambda(a)^m\left((\lambda(a)^n+1)\lambda(b)-(\lambda(a)^n-1)\right)\right)\,.
      \qedhere
    \end{align*}
  \end{enumerate}
\end{proof}

\subsubsection{The isomorphism $\phi$ on the central projections}

The following proposition gives the values of the isomorphism $\phi$
on the central projections of $\KD(2m)$.
\begin{prop}
  \label{prop.KDKQ.central}
  Set $n=2m$.  Then,\\
  for $m$ even:
  $$\phi(e_1)=e'_1, \qquad \phi(e_2)=e'_2, \qquad \phi(e_3)=e'_4, \qquad \phi(e_4)=e'_3\,,$$
  and for $m$ odd:
  $$\phi(e_1)=e'_1, \qquad \phi(e_2)=e'_2, \qquad \phi(e_3)=e'_3, \qquad \phi(e_4)=e'_4\,.$$
\end{prop}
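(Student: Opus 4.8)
The plan is to identify, for each central projection $e_i$ of $\KD(2m)$, its image $\phi(e_i)$ among the central projections $e'_1,\dots,e'_4$ of $\KQ(2m)$ by recognizing $\phi(e_i)$ as a spectral idempotent of the group-like element $\phi(\lambda(a))$, which is completely controlled by Lemma~\ref{lemma.isomorphism.KD.KQ}. Indeed, each $e_i$ can be written as a polynomial in $\lambda(a)$ and $\lambda(b)$; from~\ref{lambda} (and~\ref{lambdaKQ}) one reads off that the $e_i$ are precisely the minimal central idempotents supported on the four one-dimensional representations of $D_{2n}$ (resp.\ $Q_n$), on which $\lambda(a)$ and $\lambda(b)$ act by the scalars $\pm1$. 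So the cleanest approach is to evaluate $\phi(\lambda(a))$ and $\phi(\lambda(b))$ on these characters and compare.

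First I would recall the explicit projection formulas. For $\KD(2m)$, $\lambda(a^k)$ takes the value $(\pm1)$ or $(-1)^k$ on the four central blocks according to~\ref{lambda}, so $e_1,\dots,e_4$ are the idempotents cutting out the characters $(\lambda(a)\mapsto\pm1,\ \lambda(b)\mapsto\pm1)$; likewise for $\KQ(2m)$ using the $n$-even row of~\ref{lambdaKQ}, where the $e'_j$ correspond to $\lambda'(a)\mapsto\pm1$ and $\lambda'(b)\mapsto\pm1$. Next I would compute the scalar values of $\phi(\lambda(a))$ and $\phi(\lambda(b))$ on each $e'_j$. By Lemma~\ref{lemma.isomorphism.KD.KQ} with $k=1$, on any block where $\lambda'(a)=\lambda'(a^{-1})$ (that is, where $a$ acts by $\pm1$) the correction term vanishes, so $\phi(\lambda(a))$ acts there exactly as $\lambda'(a)$. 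Thus $\phi$ preserves the $\lambda(a)$-eigenvalue on the four central idempotents. One then checks the behaviour of $\phi(\lambda(b)) = \tfrac12[b(a^n+1)+\mathrm{i}(a^n-1)]a^m$ (dropping $\lambda'$) on these blocks: since $a^n=a^{2m}$ acts by $+1$ on the $e'_1,e'_2$ blocks and by $-1$ on $e'_3,e'_4$, the factor $\tfrac12(a^n+1)$ kills one pair and $\tfrac12\mathrm{i}(a^n-1)$ the other, and $a^m$ acts by $(\pm1)^?$ depending on the parity of $m$. It is exactly this $a^m$ factor that produces the $m$-dependent swap of $e_3$ and $e_4$.

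The main obstacle, and the step deserving the most care, is tracking the scalar by which $a^m$ (equivalently $\lambda'(a)^m$) acts on the $e'_3,e'_4$ blocks together with the role of the $b$-term, since this is what distinguishes $m$ even from $m$ odd. Concretely, on the blocks where $a$ acts by $-1$ one has $a^m$ acting by $(-1)^m$, and combined with the twist in $\phi(\lambda(b))$ this either fixes or interchanges the two characters with $\lambda(b)\mapsto\pm1$; carrying the signs through yields $\phi(e_3)=e'_3,\ \phi(e_4)=e'_4$ for $m$ odd and the interchange for $m$ even. Since $\phi(e_1),\phi(e_2)$ lie on the blocks where $a$ acts by $+1$ and there $a^m$ acts by $+1$ regardless of parity, those two are always fixed. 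As with the other isomorphism proofs in this paper, once the general pattern is read off, it suffices to confirm the computation on $\KQ(4)$ (i.e.\ $m=2$) and $\KQ(6)$ (i.e.\ $m=3$) by computer via the analogue of Proposition~\ref{proposition.equationAtInfinity} for $\C[Q_{2\infty}]$ already set up in the proof of Theorem~\ref{theorem.isomorphism.KD.KQ}, which pins down the two representative parities and hence all $m$.
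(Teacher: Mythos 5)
Your overall strategy --- identifying $\phi(e_i)$ by pulling back the four one-dimensional characters through $\phi$, and using Lemma~\ref{lemma.isomorphism.KD.KQ} to see that $\phi(\lambda(a))$ acts on those blocks exactly as $\lambda'(a)$ --- is sound, and is essentially the paper's own proof in character-theoretic clothing (the paper works instead with $e_1\pm e_2$ and $e_3\pm e_4$ in the group basis, killing the correction terms with the identities $\sum_k(\lambda'(a^k)-\lambda'(a^{-k}))=0$ and $\sum_k(-1)^k(\lambda'(a^k)-\lambda'(a^{-k}))=0$). But your key sign computation is wrong. You assert that $a^n$ acts by $-1$ on the $e'_3,e'_4$ blocks. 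In fact, since $n=2m$ is even --- equivalently, since $a^n=b^2$ in $Q_n$ and every one-dimensional character sends $b$ to $\pm1$ --- the element $\lambda'(a^n)$ acts by $+1$ on \emph{all four} one-dimensional blocks; the paper uses exactly this, via $\lambda'(a^n)=e'_1+e'_2+e'_3+e'_4+\sum_{j}(-1)^j(e'^j_{1,1}+e'^j_{2,2})$. Consequently $\tfrac12(a^n+1)$ is the identity and $\tfrac{\mathrm{i}}{2}(a^n-1)$ is zero on all four blocks, so $\phi(\lambda(b))$ acts there as $\lambda'(ba^m)$. With your claimed signs, $\phi(\lambda(b))$ would take the values $\pm\mathrm{i}$ on $e'_3,e'_4$, which is impossible for the pullback of a character of $D_{2n}$ (whose one-dimensional characters send $b$ to $\pm1$); so the argument collapses at precisely the step that is the whole content of the proposition. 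Moreover, even after this correction, ``carrying the signs through'' is not automatic: the labeling conventions are opposite in the two families. From~\ref{lambda}, $e_3$ is the character $(a,b)\mapsto(-1,-1)$ and $e_4$ is $(-1,+1)$, whereas from~\ref{lambdaKQ} with $n$ even, $e'_3$ is $(-1,+1)$ and $e'_4$ is $(-1,-1)$. It is the combination of this mismatch with the factor $(-1)^m$ contributed by $a^m$ that yields the swap for $m$ even and no swap for $m$ odd; a same-convention sign chase gives exactly the opposite pairing.

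Your fallback --- confirming the identities on $\KQ(4)$ and $\KQ(6)$ by computer and invoking the analogue of Proposition~\ref{proposition.equationAtInfinity} --- does not repair this, because that principle applies only to identities between expressions whose degree in $a$ is bounded independently of $n$ (one needs $N>2d$), whereas the $e_i$ are averages over the whole group, of degree growing with $n$. Checking two values of $m$ therefore pins down nothing about general $m$. The paper's proof needs no such check: it is a direct computation valid for all $m$, giving first $\phi(e_1+e_2)=e'_1+e'_2$ and $\phi(e_3+e_4)=e'_3+e'_4$ from the summation identities above, then $(e'_1+e'_2+e'_3+e'_4)\phi(\lambda(b))=(e'_1-e'_2)+(-1)^m(e'_3-e'_4)$ from the fact that $\lambda'(a^n)$ acts by $+1$ on the four blocks, whence $\phi(e_1-e_2)=e'_1-e'_2$ and $\phi(e_3-e_4)=(-1)^{m+1}(e'_3-e'_4)$.
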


\begin{proof}
  We recall the following formulas (see~\ref{form2} and~\ref{lambdaKQ}), where $\lambda$
  (resp. $\lambda'$) is the left regular representation of the
  dihedral (resp. quaternion) group:
  \begin{align*}
    e_1+e_2 &= \frac{1}{2n}\sum_{k=0}^{2n-1}\lambda(a^k) &e'_1+e'_2 &= \frac{1}{2n}\sum_{k=0}^{2n-1}\lambda'(a^k)\\
    e_3+e_4 &= \frac{1}{2n}\sum_{k=0}^{2n-1}(-1)^k\lambda(a^k) &e'_3+e'_4 &= \frac{1}{2n}\sum_{k=0}^{2n-1}(-1)^k\lambda'(a^k)\\
    e_1-e_2&=\lambda(b)(e_1+e_2) &e'_1-e'_2&=(e'_1+e'_2)\lambda'(b)\\
    e_3-e_4 &=-\lambda(b)(e_3+e_4) &e'_3-e'_4 &=(e'_3+e'_4) \lambda'(b)
  \end{align*}
  Using further the identities
  \begin{displaymath}
    \sum_{k=0}^{2n-1}(\lambda'(a^k)-\lambda'(a^{-k})) = 0
    \qquad \text{ and } \qquad
    \sum_{k=0}^{2n-1}(-1)^k(\lambda'(a^k)-\lambda'(a^{-k})) = 0\,,
  \end{displaymath}
  it follows easily that $\phi(e_1+e_2)=e'_1+e'_2$ and
  $\phi(e_3+e_4)=e'_3+e'_4$. Then, from
  \begin{displaymath}
    \lambda'(a^n)=e'_1+e'_2+e'_3+e'_4+\sum_{j=1}^{n-1}(-1)^j(e^j_{1,1}+e^j_{2,2})\,,
  \end{displaymath}
  we get
  $$(e'_1+e'_2+e'_3+e'_4)\phi(\lambda(b))= (e'_1-e'_2+e'_3-e'_4) \lambda'(a^m)= (e'_1-e'_2)+(-1)^m(e'_3-e'_4)$$
  and it follows that
  $\phi(e_1-e_2)=e'_1-e'_2$ \quad and \quad $\phi(e_3-e_4)=(-1)^{m+1}(e'_3-e'_4)$.
\end{proof}

\subsection{Self-duality for  $n$ odd}
\label{self-dualKQ}

As for $\KD(n)$, $\KQ(n)$ is self-dual for $n$ odd, and the proof
follows the same lines (see~\ref{self-dualKD}).
\begin{thm}
  The Kac algebra $\KQ(n)$ is self-dual if and only if $n$ is odd. When this is the
  case, one can take as Kac algebra isomorphism $\psi$ defined by:
  \begin{displaymath}
    a \mapsto 2n(\widehat{ e^{n-1}_{1,1}} + \frac12 (i(\widehat {e^1_{2,2}} - \widehat {e^1_{1,1}}) -\widehat {e^{n-1}_{1,2}} - \widehat {e^{n-1}_{2,1})}, \qquad
    b \mapsto 4n\widehat {e_4}\,,
  \end{displaymath}
  where the notations are as in~\ref{self-dualKD}.
\end{thm}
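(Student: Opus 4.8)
The plan is to follow step by step the argument used for $\KD(n)$ in~\ref{self-dualKD}, adjusting each computation to the quaternion group. For the \emph{only if} direction I would compare intrinsic groups. A Kac algebra isomorphism preserves the intrinsic group, so self-duality of $\KQ(n)$ would force $G(\KQ(n))\cong G(\widehat{\KQ(n)})$. By~\ref{Qintrinseque}, $G(\KQ(n))$ is $\Z_4$ for $n$ odd and $D_4$ for $n$ even, while $G(\widehat{\KQ(n)})$ is $\Z_4$ for $n$ odd and $\Z_2\times\Z_2$ for $n$ even. Since $D_4\not\cong\Z_2\times\Z_2$, this rules out $n$ even. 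Equivalently, for $n$ even one may invoke Theorem~\ref{theorem.isomorphism.KD.KQ}, $\KQ(2m)\cong\KD(2m)$, together with the fact, recorded in~\ref{section.KD.even}, that $\KD(2m)$ is never self-dual.

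For the \emph{if} direction, assume $n$ odd. First I would work out the block matrix decomposition of $\widehat{\KQ(n)}$ exactly as in~\ref{block}: the product of $\widehat{\KQ(n)}$ is the $\Omega'$-twist of the pointwise product on $L^\infty(Q_{2n})$, so the characteristic functions $\chi_h$ with $h\in H$ remain central, and for $s\in Q_{2n}\setminus H$ the product $\chi_s\odot\chi_t$ is supported on the double coset of $s$ modulo $H$. This exhibits $\widehat{\KQ(n)}$ as a direct sum of one-dimensional blocks $\C\chi_h$ and eight-dimensional blocks $\C H_{a^k}$ for $k$ odd, each of the latter isomorphic to the single block $\C H_{a}$ of $\widehat{\KQ(3)}$. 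In each non trivial block I would write down explicit matrix units $E^{k}_{i,j}$, their precise form being suggested and checked by computer in $\KQ(3)$, just as in the definition preceding~\ref{block}.

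Next I would define $\psi$ on the matrix units of $\KQ(n)$ by $\psi(e'_i)=\chi_{g_i}$ for suitable $g_i\in H$ and $\psi(e'^{k}_{i,j})=E^{k}_{i,j}$ (pairing the blocks $k$ and $n-k$), so that $\psi$ is automatically a trace-preserving $C^*$-algebra isomorphism preserving the involution. It then remains to compute $\psi(\lambda'(a))$ and $\psi(\lambda'(b))$ from the formulas of~\ref{lambdaKQ}, to check that they coincide with the stated images $a\mapsto 2n(\ldots)$ and $b\mapsto 4n\widehat{e_4}$, and to verify $(\psi\otimes\psi)(\Delta(\lambda'(x)))=\Delta(\psi(\lambda'(x)))$ for $x\in\{a,b\}$. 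Because $\Delta(\lambda'(a))$ has degree $1$ and every non trivial block is isomorphic to the corresponding block of $\widehat{\KQ(3)}$, I would reduce this last verification to a finite computer check at $n=3$, using the quaternion analogue of Proposition~\ref{proposition.equationAtInfinity} (the group $Q_{2\infty}$) already exploited in~\ref{isotheorem}; should the cancellations at $n=3$ prove too degenerate, $n=5$ would be used instead. Proposition~\ref{proposition.KacAlgebraIsomorphism} then upgrades $\psi$ to a Kac algebra isomorphism, since an algebra isomorphism preserving the involution, the trace, and the coproduct of the generators automatically preserves the counit and the coinvolution, which are determined by the rest of the structure (see~\ref{KacHopf}).

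The main obstacle I anticipate is purely computational: correctly diagonalising the $\Omega'$-twisted product on $L^\infty(Q_{2n})$ and producing the matrix units $E^{k}_{i,j}$ in closed form. The quaternion left regular representation differs from the dihedral one by the factors of $\mathrm{i}$ in $\lambda'(a^k b)$ for $n$ odd (see~\ref{lambdaKQ}), so the coefficients in the definition of $\psi$, and hence in $\psi(\lambda'(a))$, will not be identical to those of the $\KD$ case and must be tracked with care; this is precisely where the computer exploration in $\KQ(3)$ is indispensable. Everything else transfers from~\ref{self-dualKD}.
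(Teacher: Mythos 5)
Your overall architecture --- intrinsic-group comparison for the \emph{only if} direction, then block decomposition of $\widehat{\KQ(n)}$, matrix units $E^k_{i,j}$ checked in $\KQ(3)$, a blockwise-defined trace-preserving $C^*$-isomorphism $\psi$, and finally coproduct preservation on the two generators --- is exactly the paper's route, and the \emph{only if} half ($D_4\not\cong\Z_2\times\Z_2$, or alternatively $\KQ(2m)\cong\KD(2m)$ with $\KD(2m)$ never self-dual) is correct. The reduction to $n=3$ is also legitimate for the purely algebraic part: since each non trivial block $\C H_{a^k}$ is isomorphic to $\C H_{a}$ in $\widehat{\KQ(3)}$, the statement that $\psi$ is a $C^*$-algebra isomorphism preserving trace and involution is a blockwise statement and can be verified on a single block, which is what the paper does.

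The gap is in the last step: verifying $(\psi\otimes\psi)(\Delta(\lambda'(x)))=\Delta(\psi(\lambda'(x)))$ for $x=a,b$ \emph{cannot} be reduced to a finite check at $n=3$ (or $n=5$) by any $Q_{2\infty}$-analogue of Proposition~\ref{proposition.equationAtInfinity}. That proposition applies to identities given by a fixed, $n$-independent expression of bounded degree in the group algebra (or its tensor square), with coefficients independent of $n$. The identity you need lives in $\widehat{\KQ(n)}\otimes\widehat{\KQ(n)}$: the map $\psi$ is a Fourier-type transform whose values, written in the basis $\{\chi_g\}$, are sums over the whole group with coefficients that are powers of $\epsilon=\mathrm{e}^{\mathrm{i}\pi/n}$ (for instance $2n\,\widehat{e^{n-1}_{1,1}}=\sum_{j}\epsilon^{-(n-1)j}\chi_{a^j}$), and the dual coproduct $\Delta(\chi_g)=\sum_{s}\chi_s\otimes\chi_{s^{-1}g}$ is likewise a sum over all of $Q_{2n}$; neither side lifts to $Q_{2\infty}$ as an $n$-independent bounded-degree expression. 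Nor does the blockwise isomorphism with $\widehat{\KQ(3)}$ help here: it identifies individual blocks \emph{as algebras}, whereas the dual coproduct, being dual to multiplication in $\KQ(n)$, couples all the blocks, so no coalgebra property transfers from $n=3$. This is precisely why the paper, for $\KD(n)$, carries out the coproduct verification by hand for general odd $n$ in Appendix~\ref{psicoproduit} and remarks there that these calculations could not be delegated to the computer in the general case, and why the computer exploration in~\ref{subsubsection.demo.self-dual} checks self-duality separately for each odd $n\le 21$ rather than once. To close the gap you must do the $\KQ$-analogue of~\ref{psicoproduit}: compute $\psi(\lambda'(a))$ and $\psi(\lambda'(b))$ explicitly (splitting $\lambda'(a)$ into its even and odd parts $U+V$ as in~\ref{lambdaa}), and verify both coproduct identities by explicit summation over the group for arbitrary odd $n$, keeping track of the extra factors of $\mathrm{i}$ in $\lambda'(a^kb)$ that you correctly flagged.
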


The following proposition gives an alternative description of $\psi$
on the matrix units.
\begin{prop}
  Take $k$ odd in $\{1,\dots,n-1$, and set:
  \begin{align*}
    E^k_{1,1}&=\chi_{ba^{n+k}}+\chi_{ba^{n-k}}\\%
    E^k_{2,2}&=\chi_{ba^{2n-k}}+\chi_{ba^{k}}\\%
    E^k_{1,2}&=\frac{1}{2}[(-\chi_{a^k}+\chi_{a^{2n-k}}+\chi_{a^{n+k}}-\chi_{a^{n-k}})+(\chi_{ba^k}-\chi_{ba^{2n-k}}+\chi_{ba^{n+k}}-\chi_{ba^{n-k}})]\\%
    E^k_{2,1}&=\frac{1}{2}[-(-\chi_{a^k}+\chi_{a^{2n-k}}+\chi_{a^{n+k}}-\chi_{a^{n-k}})+(\chi_{ba^k}-\chi_{ba^{2n-k}}+\chi_{ba^{n+k}}-\chi_{ba^{n-k}})]\\
    E^{n-k}_{1,1}&=\chi_{a^{n-k}}+\chi_{a^{2n-k}}\\
    E^{n-k}_{2,2}&=\chi_{a^k}+\chi_{a^{n+k}}\\
    E^{n-k}_{1,2}&=\frac{1}{2}[(-\chi_{a^k}-\chi_{a^{2n-k}}+\chi_{a^{n+k}}+\chi_{a^{n-k}})+(\chi_{ba^k}-\chi_{ba^{2n-k}}-\chi_{ba^{n+k}}+\chi_{ba^{n-k}})]
    \\%
    E^{n-k}_{2,1}&=\frac{1}{2}[(-\chi_{a^k}-\chi_{a^{2n-k}}+\chi_{a^{n+k}}+\chi_{a^{n-k}})-(\chi_{ba^k}-\chi_{ba^{2n-k}}-\chi_{ba^{n+k}}+\chi_{ba^{n-k}})]
  \end{align*}
  Then,
  $$\psi(e_1)=\chi_1 ,\qquad \psi(e_2)=\chi_{a^n},\qquad \psi(e_3)=\chi_{ba^n},\qquad \psi(e_4)=\chi_b,$$
  $$\psi(r^k_{i,j})=E^k_{i,j}, \qquad \psi(e^{n-k}_{i,j})=E^{n-k}_{i,j}.$$
\end{prop}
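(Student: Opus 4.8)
The plan is to mirror the treatment of $\KD(n)$ in~\ref{self-dualKD}. First I would establish the block matrix decomposition of the twisted dual $\widehat{\KQ(n)}$, exactly as in~\ref{block}: with respect to the twisted product of $L^\infty(Q_n)$, the four characteristic functions $\chi_1,\chi_{a^n},\chi_b,\chi_{ba^n}$ (those supported on $H$) are central projections, while the remaining $\chi_g$ group, by double cosets modulo $H$, into blocks $\C H_{a^k}$ for $k$ odd in $\{1,\dots,n-1\}$. As for $\KD(n)$, each such block is trivially isomorphic to the block $\C H_a$ of $\widehat{\KQ(3)}$, since the relations among the eight relevant $\chi$'s depend only on the coset and not on $k$. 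This reduces every block-local identity below to a single verification in $\widehat{\KQ(3)}$.

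Next I would check that the eight elements $E^k_{i,j}$ and $E^{n-k}_{i,j}$ attached to each block are genuine matrix units: $(E^k_{i,j})^*=E^k_{j,i}$ and $E^k_{i,j}E^k_{l,m}=\delta_{j,l}E^k_{i,m}$ (and analogously for the $E^{n-k}$), products across distinct blocks vanishing by the double-coset description. By the previous paragraph these are finitely many identities, all reducing to computations in $\widehat{\KQ(3)}$ which can be delegated to the computer. It follows that the formulas of the statement send a complete system of central idempotents and matrix units of $\KQ(n)$ to one of $\widehat{\KQ(n)}$, matching block sizes ($1\times1$ for the $e_i$, the factors $M_2(\C)$ for the $r^k$ and $e^{n-k}$). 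Hence they extend by linearity to a $C^*$-algebra isomorphism $\tilde\psi$; since the correspondence respects block ranks and normalizations, $\tilde\psi$ preserves the canonical trace (compare the proof of Proposition~\ref{proposition.KacAlgebraIsomorphism}).

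It then remains to identify $\tilde\psi$ with the isomorphism $\psi$ of the preceding theorem. Both are algebra homomorphisms and $\lambda'(a),\lambda'(b)$ generate $\KQ(n)$, so it suffices to show they agree on these two generators. Using~\ref{lambdaKQ} to write $\lambda'(a)$ and $\lambda'(b)$ as explicit linear combinations of the matrix units $e_i,r^k_{i,j},e^{n-k}_{i,j}$, I would apply $\tilde\psi$ term by term and simplify, checking that the result coincides with the expressions for $\psi(\lambda'(a))$ and $\psi(\lambda'(b))$ recorded in the theorem. Invoking the analogue of Proposition~\ref{proposition.equationAtInfinity} for $\C[Q_{2\infty}]$ used in the proof of Theorem~\ref{theorem.isomorphism.KD.KQ}, this last comparison again reduces to a degree-bounded check in $\widehat{\KQ(3)}$. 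Since $\tilde\psi=\psi$ on generators, the two maps coincide, which is precisely the claimed matrix-unit description of $\psi$.

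The main obstacle is the bookkeeping in the twisted product: unlike the commutative pointwise product of $L^\infty(Q_n)$, the dual product is deformed by $\Omega'$, so verifying that the $E^k_{i,j}$ multiply as matrix units and that $\tilde\psi(\lambda'(a))$ collapses to the compact form of the theorem are genuinely computational. The block decomposition is what tames this, confining all deformed-product computations to the single small algebra $\widehat{\KQ(3)}$ and then propagating them to general odd $n$ by the degree argument.
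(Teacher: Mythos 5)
Your overall route is the one the paper intends: since the paper itself gives no separate proof of this proposition but says the self-duality proof of $\KQ(n)$ ``follows the same lines'' as~\ref{self-dualKD}, the intended argument is exactly yours --- establish the analogue of the block decomposition~\ref{block} for $\widehat{\KQ(n)}$, check the matrix-unit relations of the $E^k_{i,j}$ by a computer verification in the single nontrivial block of $\widehat{\KQ(3)}$ (legitimate, because the multiplicative relations among the eight $\chi_g$, $g\in Ha^kH$, do not depend on the odd $k$), and then identify the resulting $C^*$-isomorphism $\tilde\psi$ with the $\psi$ of the theorem by evaluating on the generators $\lambda'(a)$, $\lambda'(b)$. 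Up to that last step your argument is sound, including the remark that trace preservation is automatic from the block structure.

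The gap is your claim that this last identification ``reduces to a degree-bounded check in $\widehat{\KQ(3)}$'' via the analogue of Proposition~\ref{proposition.equationAtInfinity}. That proposition applies only to identities whose two sides lift to \emph{fixed} elements of $\C[Q_{2\infty}]$ of bounded degree in $a$, with supports and coefficients independent of $n$. The identity $\tilde\psi(\lambda'(a))=\psi(\lambda'(a))$ is not of this kind: by~\ref{lambdaKQ}, $\lambda'(a)$ expands over all $n-1$ two-dimensional blocks with coefficients $\epsilon_n^{\pm j}$, so $\tilde\psi(\lambda'(a))$ is a sum of $E$'s over all blocks; and the target $2n(\widehat{e^{n-1}_{1,1}}+\cdots)$, rewritten in the $\chi_g$ basis, is supported on the whole group with coefficients in $\Q(i,\epsilon_n)$ varying with $n$. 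Neither side is the image of a fixed degree-bounded element of $\C[Q_{2\infty}]$, so agreement at $n=3$ proves nothing for general odd $n$. This is precisely the step that the paper, in the $\KD$ case, carries out by hand in Appendix~\ref{psicoproduit} --- the root-of-unity summations of~\ref{lambdaa}, such as $\psi(U)=\sum_j\epsilon^{-(n-1)j}\chi_{a^j}=2n\widehat{e^{n-1}_{1,1}}$, which run over all blocks and use that $n$ is odd --- and about which the paper explicitly says it ``could not delegate them to the computer in the general case.'' To close the gap you must perform the $\KQ$ analogue of~\ref{lambdaa}: split $\lambda'(a)$ and $\lambda'(b)$ into their even-block and odd-block parts, apply your $\tilde\psi$ term by term, and carry out the geometric summations for arbitrary odd $n$, checking the outcome against the formulas of the theorem.
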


\subsection{The lattice $\mathrm{l}(\KQ(n))$ for $n$ odd}
\label{section.KQ.odd.sacig}

In the sequel of this section, we prove the following theorem, and
illustrate it on $\KQ(3)$.
\begin{theorem}
  \label{theorem.lattice.nprime}
  When $n$ is odd, $\KQ(n)$ admits:
  \begin{itemize}
  \item A single \sacig of dimension $2n$: $K'_2=I(e_1+e_2)$,
    isomorphic to $L^\infty(D_n)$;
  \item $n$ \sacigs of dimension $n$ in $K'_2$;
  \item $n$ \sacigs of dimension $4$, $J_k$ for $k=0,\dots,n-1$, with Jones projections:
    \begin{displaymath}
      p_{J_k}=e_1+\sum_{j =1, \, j\text{ even}}^{n-1} q(0,\frac{jk\pi}{n},j)\,;
    \end{displaymath}
  \item A single \sacig of dimension $2$: $I(e_1 +e_2+q_1+q_2)$.
  \end{itemize}
  If $n$ is prime there are no other \sacigs and the graph of the
  lattice of \sacigs of $\KQ(n)$ is similar to Figure~\ref{figure.KQ3}.
\end{theorem}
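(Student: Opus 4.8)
The plan is to transpose, step for step, the treatment of $\KD(n)$ for $n$ odd (Theorem~\ref{theorem.lattice.nodd} and Corollary~\ref{corollary.lattice.nprime}), using that the block structure and the twisted coproduct of $\KQ(n)$ parallel those of $\KD(n)$, and exploiting the self-duality proved in~\ref{self-dualKQ} to obtain half of the classification for free. The organizing principle is the antiisomorphism $\delta$: in the irreducible case it satisfies $\dim\delta(I)\cdot\dim I=\dim\KQ(n)=4n$, so after identifying $\KQ(n)$ with its dual, the \sacigs of dimension $k$ are in bijection with those of dimension $4n/k$.

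First I would settle the extreme dimensions $2$ and $2n$. By~\ref{intrin} every dimension-$2$ \sacig sits inside the algebra of the intrinsic group $G(\KQ(n))$, which is $\Z_4$ for $n$ odd (see~\ref{Qintrinseque}). Since $\Z_4$ has a unique subgroup of order $2$, namely $\{1,\lambda'(a^n)\}$, there is a single dimension-$2$ \sacig, with Jones projection $\tfrac12(1+\lambda'(a^n))=e_1+e_2+q_1+q_2$. Dualizing through $\delta$ and using $\dim\delta(I)=4n/\dim I$ then shows there is exactly one \sacig of dimension $2n$. A direct look at the coproduct (computed as in Appendix~\ref{cop}) shows $\Delta(e_1+e_2)$ is symmetric, so $K'_2=I(e_1+e_2)$ is a Kac subalgebra; reading off its block decomposition shows it is commutative, hence equals $L^\infty(G)$ for some group $G$, and reconstructing the group law from the coproducts of the minimal central idempotents identifies $G$ with $D_n$, exactly as in Proposition~\ref{proposition.K2}.

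Next I would treat the dimensions $n$ and $4$ jointly. The dimension-$n$ \sacigs have odd dimension, and the $\KQ$-analogue of Proposition~\ref{prop.sacignimpair}, resting on the analogue of Proposition~\ref{sacign} and on the block structure of $\KQ(n)$, confines them to $K'_2$. As $K'_2\cong L^\infty(D_n)$, its \sacigs are the algebras of functions constant on the right cosets of subgroups of $D_n$, and the dimension-$n$ ones correspond to the $n$ order-$2$ subgroups (the reflections), giving exactly $n$ \sacigs, their Jones projections being listed as in~\ref{sacigsK2}. Applying $\delta$ converts these into the $n$ dimension-$4$ \sacigs; to make the stated projections $p_{J_k}$ explicit, I would produce each candidate and verify via Lemma~\ref{lemma.abelien} that it is the Jones projection of a dimension-$4$ \sacig, checking the coassociativity hypothesis against the coproduct expansion of Proposition~\ref{J4} (this is the $\KQ$ analogue of Proposition~\ref{sacig4impair}).

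For $n$ odd prime the divisors of $4n$ are $1,2,4,n,2n,4n$, so every nontrivial proper \sacig has dimension $2$, $4$, $n$, or $2n$; the four counts above are therefore exhaustive and produce the lattice of Figure~\ref{figure.KQ3}. The main obstacle is the pair of structural facts feeding the enumeration: the identification $K'_2\cong L^\infty(D_n)$, which demands the same parity-split reconstruction of the multiplication of $D_n$ from coproducts as in Proposition~\ref{proposition.K2}, and the confinement of odd-dimensional \sacigs to $K'_2$, which requires a trace argument showing that the minimal projections of $\KQ(n)$ lying outside $K'_2$ cannot be assembled into the Jones projection of an odd-dimensional \sacig. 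Everything else reduces, through self-duality, either to bookkeeping or to finite computer checks of the kind already used in~\ref{self-dualKQ}.
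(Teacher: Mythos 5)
Most of your outline does track the paper's own proof: the dimension-$2$ and dimension-$2n$ counts via the intrinsic groups (\ref{Qintrinseque} and~\ref{intrin}), the verification that $\Delta(e_1+e_2)$ is symmetric followed by the reconstruction of $D_n$ as in Proposition~\ref{proposition.K2}, the transfer by self-duality of the $n$ \sacigs of dimension $n$ into the $n$ \sacigs of dimension $4$, and the divisor count for $n$ prime.

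There is, however, a genuine gap at the step you treat as a routine transposition: confining the odd-dimensional (dimension-$n$) \sacigs to $K'_2$. The proof of Proposition~\ref{prop.sacignimpair} does \emph{not} carry over to $\KQ(n)$. In $\KD(n)$ that argument runs as follows: if $p_I=e_1+e_i+s$ with $i=3$ or $4$, then $p_I$ dominates the Jones projection of the dimension-$2n$ \sacig $K_i$, hence $I\subset K_i$ by~\ref{tour}~(5); and since the minimal projections of $K_i$ other than $e_1+e_i$ and $e_2+e_j$ all have trace $1/n$ (the block structure of~\ref{K3K4.blocks.impair}), the projection $s$ would have trace with even numerator over $2n$, a contradiction. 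This hinges on $e_1+e_3$ and $e_1+e_4$ being Jones projections of dimension-$2n$ \sacigs with known block structure. In $\KQ(n)$ with $n$ odd this input is simply unavailable: $K'_2=I(e_1+e_2)$ is the \emph{unique} \sacig of dimension $2n$, so there is no $K_3$ or $K_4$ in which to trap $I$, and the trace computation never gets started. Nor can the fallback you suggest --- ``a trace argument on the minimal projections of $\KQ(n)$ lying outside $K'_2$'' --- succeed: $\KQ(n)$ and $\KD(n)$ have identical block-matrix and trace structure, so any purely trace-theoretic criterion would produce the same conclusion in both algebras, whereas the obstruction genuinely lives in the coproduct. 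This is precisely where the paper's proof takes a different route: given a dimension-$n$ \sacig $J$, it considers the index-$4$ inclusion $R\subset R\rtimes\delta^{-1}(J)$, invokes the classification of index-$4$ subfactors of~\cite{Popa.1990} to see that its principal graph is $D_4^{(1)}$ or $D^{(1)}_{n'}$, deduces the existence of an intermediate subfactor of index $2$ --- that is, a dimension-$2$ \sacig inside $\delta^{-1}(J)$ --- and then applies the antiisomorphism $\delta$ together with the uniqueness of the dimension-$2n$ \sacig to conclude $J\subset K'_2$. Without this (or an equivalent) idea, your enumeration in dimensions $n$ and $4$, and hence the completeness claim for $n$ prime, is not justified.
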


\subsubsection{The \sacigs of dimension $2$ and $2n$}
\label{sacig22nKQimpair}

From~\ref{Qintrinseque} and~\ref{intrin}, $\widehat{\KQ(n)}$ has a
single \sacig of dimension $2$. Therefore $\KQ(n)$ has a single \sacig
of dimension $2n$.

By Remark~\ref{groupe} its standard coproduct is
\begin{displaymath}
  \Delta_s(e_1+e_2)=(e_1+e_2)\otimes(e_1+e_2)+(e_3+e_4)\otimes(e_3+e_4)+\sum_{j=1}^{n-1} e^j_{1,1}\otimes e^{j}_{2,2}+e^{j}_{2,2}\otimes e^j_{1,1}\,.
\end{displaymath}
Beware that, for an improved consistency with $\KD(n)$, our notations
differ slightly from those of~\cite{Vainerman.1998}:
$$r_1=\frac{1}{2}\sum_{j=1,\,j \text{ odd}}^{n-1}r^j_{1,1}, \qquad r_2=\frac{1}{2}\sum_{j=1,\,j \text{ odd}}^{n-1}r^j_{2,2}\,,$$
the unitary $\Omega'$ used to twist the coproduct of $\KQ(n)$ can be
written as:
\begin{align*}
\Omega'&=(e_1+e_4+r_1+q_1)\otimes (e_1+e_4+r_1+q_1)\\
&+(e_1+q_1)\otimes(e_2+e_3+r_2+q_2)+(e_2+e_3+r_2+q_2)\otimes(e_1+q_1)\\
&+\rm{i}(e_4+r_1)\otimes(e_2-e_3+q_2-r_2)-\rm{i}(e_2-e_3+q_2-r_2)\otimes(e_4+r_1)\\
&+(e_2-\rm{i}e_3+q_2-\rm{i}r_2)\otimes(e_2+\rm{i}e_3+q_2+\rm{i}r_2)\,.
\end{align*}
Therefore:
\begin{multline*}
  \Delta(e_1+e_2)=(e_1+e_2)\otimes(e_1+e_2)+(e_3+e_4)\otimes(e_3+e_4)\\+\sum_{j=1,\,j \text{ even}}^{n-1} e^j_{1,1}\otimes e^{j}_{2,2}+e^{j}_{2,2}\otimes
  e^j_{1,1}+\sum_{j=1,\, j \text{ odd}}^{n-1} p^j_{1,1}\otimes
  p^{j}_{1,1}+p^{j}_{2,2}\otimes p^j_{2,2}\,.
\end{multline*}
The unique \sacig of dimension $2n$ of $\KQ(n)$ is therefore
$K'_2=I(e_1+e_2)$. As in~\ref{e1e2} it can be shown to be isomorphic
to $L^\infty(D_n)$. %
In particular, it admits $n$ \sacigs of dimension $n$.

\subsubsection{The \sacigs of dimension $4$ and $n$}
\label{sacig4KQimpair}
\begin{prop}
  For $n$ odd, the \sacigs of dimension $n$ of $\KQ(n)$ are contained
  in $K'_2$. They correspond to the $n$ sub-groups of order $2$ of
  $D_n$. By self-duality, $\KQ(n)$ admits $n$ \sacigs of dimension $4$,
  with Jones projections:
  $$p_{J_k}=e_1+\sum_{j=1, \, j \text{ even}}^{n-1} q(0,\frac{jk\pi}{n},j) \quad  (k=0,\dots,n-1)\,.$$
  The principal graph of the inclusion $N_1 \subset
  N_1\rtimes\delta(J_k)$ is $D_n/\mathbb{Z}_2$
  (see~\ref{grapheDimpair}), that of $N_2 \subset N_2\rtimes J_k$ is
  $D_{2n+2}^{(1)}$ (see~\ref{grapheD1}).
\end{prop}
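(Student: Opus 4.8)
The plan is to classify first the \sacigs of dimension $n$, then to obtain those of dimension $4$ by self-duality, and finally to read off the two principal graphs. The organising remark is that $\delta$ interchanges these two families: by~\ref{tour}~(4) one has $\dim I\cdot\dim\delta(I)=\dim\KQ(n)=4n$, so $\dim I=n$ forces $\dim\delta(I)=4$ and conversely. Since $\KQ(n)$ is self-dual for $n$ odd (see~\ref{self-dualKQ}), it then suffices to understand one of the two families completely and to transport it through $\delta$ and the explicit isomorphism $\psi$.

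First I would show that every \sacig of dimension $n$ is contained in $K'_2$. If $I$ has odd dimension then, as $n$ is odd and $\dim I\mid 4n$ by Proposition~\ref{prop.resumep}~(1), in fact $\dim I\mid n$. A trace count on its Jones projection $p_I$, which dominates $e_1$ and has trace $1/\dim I$, proceeds exactly as in~\ref{sacign}: writing $p_I$ as a sum of $x$ of the central idempotents $e_i$ (each of trace $1/4n$) and $y$ minimal projections of the factors (each of trace $1/2n$), the divisibility $\dim I\mid n$ forces $x$ to be even. For $\dim I=n$ this leaves $p_I=e_1+e_2+e_3+e_4$ or $p_I=e_1+e_i+s$ with $s$ a single minimal projection. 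It remains to exclude $i\in\{3,4\}$, after which $p_I$ dominates $e_1+e_2=p_{K'_2}$ and $I\subset K'_2$ by~\ref{tour}~(5).

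This exclusion is the main obstacle. For $\KD(n)$ one simply invokes that $K_3$ and $K_4$ are themselves \sacigs of dimension $2n$ and inspects their minimal projections (Proposition~\ref{prop.sacignimpair}); here, by~\ref{sacig22nKQimpair}, $K'_2$ is the \emph{unique} \sacig of dimension $2n$, so neither $e_1+e_3$ nor $e_1+e_4$ is a Jones projection and that shortcut is unavailable. Instead I would argue directly from the coproduct of $p_I$, using the relation $\Delta(p_I)(1\otimes p_I)=p_I\otimes p_I$ of Proposition~\ref{prop.resumep}~(4) and $S(p_I)=p_I$, together with the explicit block data of~\ref{lambdaKQ} and the coproduct of $e_1+e_2$ computed in~\ref{sacig22nKQimpair}: a candidate $p_I=e_1+e_i+s$ with $i\in\{3,4\}$ cannot meet these constraints. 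The base identities can be verified on computer for small $n$ and lifted to all $n$ by the $\C[Q_{2\infty}]$ analogue of Proposition~\ref{proposition.equationAtInfinity} used in~\ref{iso.KDKQ}.

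Once the containment is established, the count is immediate: $K'_2\cong L^\infty(D_n)$ (proved as in~\ref{e1e2}, see~\ref{sacig22nKQimpair}), whose \sacigs are the algebras of functions constant on the right cosets of subgroups of $D_n$; those of dimension $n$ correspond to the index-$n$, i.e. order-$2$, subgroups, of which $D_n$ has exactly $n$ when $n$ is odd (the reflections). Applying $\delta$ and $\psi$ yields $n$ \sacigs of dimension $4$. To pin down the Jones projections, I would verify with Remark~\ref{remark.resumep} that each candidate $e_1+\sum_{j\text{ even}}q(0,\tfrac{jk\pi}{n},j)$ is a projection of trace $1/4$ generating a \sacig of dimension $4$, the routine algebra being delegated to the computer for small $n$ and lifted as above. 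Finally, for the graphs: $\delta(J_k)$ is one of these dimension-$n$ \sacigs $L^\infty(D_n/H)$ with $|H|=2$, so $N_1\subset N_1\rtimes\delta(J_k)$ is of the form $R\rtimes H\subset R\rtimes D_n$ with principal graph $D_n/\Z_2$ by~\ref{grapheDimpair} and~\ref{section.group_algebras}; and by~\ref{grapheprincipal} the principal graph of $N_2\subset N_2\rtimes J_k$ is the connected component of the trivial representation in the Bratteli diagram of $\delta(J_k)\subset\widehat{\KQ(n)}$, which the block structure of the dimension-$n$ \sacigs of $K'_2$ (as in~\ref{sacigsK2}) identifies with $D_{2n+2}^{(1)}$.
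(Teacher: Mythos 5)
Your reduction to the trace count, and the final transport of the dimension-$n$ \sacigs through $\delta$ and $\psi$, are sound, but the step you yourself flag as ``the main obstacle'' --- excluding Jones projections of the form $e_1+e_3+s$ and $e_1+e_4+s$ --- is never actually proved: you assert that such a candidate ``cannot meet'' the constraints $\Delta(p_I)(1\otimes p_I)=p_I\otimes p_I$ and $S(p_I)=p_I$, but give no argument, and that assertion is precisely the content of the proposition. The verification scheme you propose in its place does not fill the hole: Proposition~\ref{proposition.equationAtInfinity} (and its $\C[Q_{2\infty}]$ analogue) lifts the \emph{vanishing of a fixed algebraic expression of bounded degree} from one large $N$ to all $n$; what you need here is a \emph{non-existence} statement, quantified over a family of candidates $s=q(\alpha,\beta,j)$ with continuous parameters $\alpha,\beta$ and a block index $j$ ranging over $n-1$ factors --- data whose size grows with $n$. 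A computer check of such a system for small $n$ (as the paper does once, for one fixed algebra, in~\ref{KD6-8}) gives no handle on general odd $n$. The same growth-of-parameters issue affects your last step: checking that $e_1+\sum_{j\text{ even}}q(0,\frac{jk\pi}{n},j)$ is a Jones projection for all $k=0,\dots,n-1$ cannot be ``lifted'' either, since $k$ itself ranges up to $n-1$.

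For comparison, the paper closes the main gap by a classification-theoretic argument that works uniformly in $n$: if $J$ has dimension $n$, then $\delta^{-1}(J)$ has dimension $4$, so $R\subset R\rtimes\delta^{-1}(J)$ has index $4$; by the classification of index-$4$ subfactors~\cite{Popa.1990} its principal graph is $D_4^{(1)}$ or $D_{n'}^{(1)}$, hence the inclusion admits an intermediate subfactor of index $2$, i.e.\ $\delta^{-1}(J)$ contains a \sacig of dimension $2$. Since the intrinsic group of $\KQ(n)$ is $\Z_4$ for $n$ odd, there is a \emph{unique} \sacig of dimension $2$, so it lies in $\delta^{-1}(J)$, and applying the antiisomorphism $\delta$ gives $J\subset K'_2$, the unique \sacig of dimension $2n$. (This is exactly the point where your observation that the $\KD(n)$ shortcut is unavailable gets resolved: uniqueness of the dimension-$2$ and dimension-$2n$ \sacigs turns from an obstacle into the key lever.) For the Jones projections of the dimension-$4$ \sacigs, the paper likewise avoids case-by-case computation: it identifies them with the group-basis elements $\frac14(1+\lambda(a^n)+\lambda(ba^k)+\lambda(ba^{k+n}))$ and computes their twisted coproducts by hand, uniformly in $n$ and $k$, in Appendix~\ref{J4}.
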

\begin{proof}
  If $J$ is a \sacig of dimension $n$, its Jones projection is of the
  same form as described in~\ref{sacign} for $\KD(n)$.  From the
  classification of subfactors of index $4$ in~\cite{Popa.1990}, the
  principal graph of $R \subset R\rtimes \delta^{-1}(J)$ is either
  $D_4^{(1)}$ or $D_{n'}^{(1)}$. Therefore, $R \subset R\rtimes
  \delta^{-1}(J)$ admits an intermediate factor of index $2$, which
  implies that $J$ is contained in the unique \sacig $K'_2$ of
  dimension $2n$.  Using~\ref{sacig22nKQimpair}, there are exactly $n$
  \sacigs of dimension $n$.

  Since $\KQ(n)$ is self-dual, it admits exactly $n$ \sacigs of
  dimension $4$. As in~\ref{sacig4impair}, one shows that their Jones
  projections are, for $k=0,\dots,n-1$:
  \begin{displaymath}
    p_{J_k}=e_1+\sum_{j=1, \, j \text{ even}}^{n-1} q(0,\frac{jk\pi}{n},j)\,.\qedhere
\end{displaymath}
\end{proof}

\subsubsection{The Kac algebra $\KQ(3)$ of dimension $12$}
\label{KQ3section}

We illustrate Theorem~\ref{theorem.lattice.nprime} on $\KQ(3)$, making
explicit the block matrix decompositions of the \sacigs (obtained by
computer) and the principal graphs of the corresponding inclusions.

\begin{figure}[h]
  \centering
  \pgfdeclarelayer{background}
\pgfdeclarelayer{nodes}
\pgfsetlayers{background,main,nodes}
\begin{tikzpicture}[baseline=(current bounding box.east),yscale=-1]
  \begin{pgfonlayer}{nodes}
    \node(C)	at (3,1) {$\C$};
    \node(I0)	at (1,2) {$I_0$};
    \node(K1)	at (3,3) {$K_1$};
    \node(L1)	at (4,3) {$K_{21}$};
    \node(L2)	at (5,3) {$K_{22}$};
    \node(Jp2)	at (1,4) {$J_2$};
    \node(Jp1)	at (2,4) {$J_1$};
    \node(J0)	at (3,4) {$J_0$};
    \node(K2)	at (5,5) {$K_2$};
    \node(KQ3)	at (3,6) {$KQ(3)$};
  \end{pgfonlayer}
\begin{pgfscope}
\tikzstyle{every path}=[blue]
  \draw (C) -- (I0);
  \draw (I0) -- (J0);
  \draw (I0) -- (Jp2);
  \draw (I0) -- (Jp1);
  \draw (Jp2) -- (KQ3);
  \draw (Jp1) -- (KQ3);
  \draw (J0) -- (KQ3);
\tikzstyle{every path}=[green]
  \draw (C) -- (K1);
  \draw (C) -- (L1);
  \draw (C) -- (L2);
  \draw (K1) -- (K2);
  \draw (L1) -- (K2);
  \draw (L2) -- (K2);
  \draw (K2) -- (KQ3);
\tikzstyle{every path}=[red]
   \draw (I0) -- (K2);
  \end{pgfscope}
  \begin{pgfonlayer}{background}
    \newcommand{\dimline}[2]{\draw[color=black!10,very thin](0,#1) -- (7,#1); \node[right] at (7,#1) {dim $#2$};}
    \dimline{1}{1}
    \dimline{2}{2}
    \dimline{3}{3}
    \dimline{4}{4}
    \dimline{5}{6}
    \dimline{6}{12}
  \end{pgfonlayer}
\end{tikzpicture}
  \caption{The lattice of \sacigs of $\KQ(3)$}
  \label{figure.KQ3}
\end{figure}
\begin{prop}
  The lattice of \sacigs de $\KQ(3)$ is as given by
  Figure~\ref{figure.KQ3}.

  The block matrix decomposition of the \sacigs is given by:
  \begin{align*}
    I_0&=\C(e_1 +e_2+q_1+q_2)\oplus  \C(e_3 +e_4+p_1+p_2);\\%
    K_1&=\C(e_1+e_2+e_3+e_4)\oplus \C(p^1_{1,1}+e^2_{2,2}) \oplus \C(p^1_{2,2}+e^2_{1,1})      ;\\
    K_{21}&=\C(e_1+e_2+p^1_{1,1}) \oplus \C(e_3+e_4+e^2_{1,1})   \oplus \C(p^1_{2,2}+e^2_{2,2});\\
    K_{22}&=\C(e_1+e_2+p^1_{2,2}) \oplus \C(e_3+e_4+e^2_{2,2})   \oplus \C(p^1_{1,1}+e^2_{1,1});\\
    J_0&=\C(e_1+q_1) \oplus \C(e_2+q_2) \oplus  \C(e_3+r^{1}_{2,2}) \oplus \C(e_4+r^{1}_{1,1});\\ %
   J_1&=\C(e_1+q(0,\frac{2\pi}{3},2))\oplus \C(e_2+q(0,\frac{5\pi}{3},2))\oplus \C(e_3+q(\frac{\pi}{3},\frac{\pi}{2},1))\oplus \C(e_4+q(-\frac{\pi}{3},\frac{3\pi}{2},1));\\
   J_2&=\C(e_1+q(0,\frac{4\pi}{3},2))\oplus \C(e_2+q(0,\frac{\pi}{3},2))\oplus \C(e_3+q(-\frac{\pi}{3},\frac{\pi}{2},1))\oplus \C(e_4+q(\frac{\pi}{3},\frac{3\pi}{2},1));\\
    K_2&=I(e_1+e_2)=\C(e_1 +e_2)\oplus  \C(e_3 +e_4)\oplus \C p^1_{1,1}\oplus \C p^1_{2,2}\oplus \C e^2_{1,1}\oplus \C e^2_{2,2}.
  \end{align*}
With the notations of~\ref{graphe}, the inclusion $N_2\subset N_2\rtimes J$ has for principal graph
\begin{itemize}
\item  $D^{(1)}_8$ for $J= J_1$, or $J_2$;
\item  $A_5$ for $J=K_1$, $K_{21}$, or $K_{22}$.
\end{itemize}
For $J= I_0$, $J_0$, or $K_2$, it is of depth $2$.
\end{prop}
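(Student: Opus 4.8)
The plan is to bootstrap from the general structure theory of $\KQ(n)$ for $n$ odd prime, established in Theorem~\ref{theorem.lattice.nprime}, and then to pin down the explicit data by computation in the $12$-dimensional algebra. Since $3$ is odd and prime, that theorem applies directly and already yields \emph{completeness}: the nontrivial coideal subalgebras are a single one of dimension $2$ (namely $I_0=I(e_1+e_2+q_1+q_2)$, by~\ref{Qintrinseque} and~\ref{intrin}), three of dimension $3$, three of dimension $4$ (the $J_k$, $k=0,1,2$, whose Jones projections $p_{J_k}=e_1+\sum_{j\text{ even}}q(0,\frac{jk\pi}{3},j)$ are given in~\ref{sacig4KQimpair}), and a single one of dimension $6$ (namely $K_2=I(e_1+e_2)$). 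The remaining freedom is in the covering relations of Figure~\ref{figure.KQ3}, which I would read off from Proposition~\ref{tour}~(5), comparing the explicit Jones projections via $I\subset I'\Leftrightarrow p_{I'}\le p_I$.

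For the block matrix decompositions I would begin with the commutative part. By~\ref{sacig22nKQimpair}, $K_2=I(e_1+e_2)\cong L^\infty(D_3)$, and its decomposition is read off from the displayed expression for $\Delta(e_1+e_2)$ there. Its coideal subalgebras are then the algebras of functions constant on the right cosets of the subgroups of $D_3$ (see~\ref{section.group_algebras}): the three order-$2$ subgroups give the three dimension-$3$ coideal subalgebras $K_1,K_{21},K_{22}$, and the order-$3$ subgroup gives $I_0$; computing the corresponding idempotents produces their bases. For the dimension-$4$ coideal subalgebras $J_0,J_1,J_2$ I would start from the Jones projections above, compute the twisted coproduct $\Delta(p_{J_k})$, and recover each $J_k$ as the linear span of the right legs (Remark~\ref{remark.jones_left_legs} and Proposition~\ref{prop.resumep}~(3)). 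These coproduct expansions are exactly the kind of routine but bulky computation to delegate to the computer.

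The principal graphs I would obtain from~\ref{grapheprincipal}, which identifies the graph of $N_2\subset N_2\rtimes J$ with the component of the Bratteli diagram of $\delta(J)\subset\KQ(3)$ through the trivial representation, combined with the depth-$2$ criterion of~\ref{irredprof2}. For $J\in\{I_0,J_0,K_2\}$ the inclusion is of depth $2$: here $I_0\cong\C[\Z_2]$ and $J_0=\C[\Z_4]$ is the full intrinsic-group algebra (by~\ref{Qintrinseque} and~\ref{intrin}), while $K_2\cong L^\infty(D_3)$, so all three are Kac subalgebras and~\ref{irredprof2} applies. The three dimension-$3$ coideal subalgebras $K_1,K_{21},K_{22}$ are \emph{not} Kac subalgebras, since under $K_2\cong L^\infty(D_3)$ they correspond to the \emph{non-normal} order-$2$ subgroups (see~\ref{section.group_algebras}); their index being $3$, the relevant Bratteli component is $A_5$. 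Finally $J_1,J_2$ have index $4$ and non-symmetric Jones-projection coproduct, and arguing exactly as in~\ref{sacig4KQimpair} via the classification of index-$4$ subfactors their graph is $D^{(1)}_8$. The main obstacle is thus not conceptual but computational: producing and certifying the exact block matrix units of $J_0,J_1,J_2$ in the twisted algebra, and checking — via symmetry of the Jones-projection coproducts (\ref{irredprof2}~(iii)) — precisely which coideal subalgebras are Kac, which is what separates the three depth-$2$ inclusions from those with graphs $A_5$ and $D^{(1)}_8$.
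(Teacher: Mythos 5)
Your proposal is correct and follows essentially the same route as the paper: completeness of the lattice is drawn from Theorem~\ref{theorem.lattice.nprime} for $n=3$, the block decompositions come from the $L^\infty(D_3)$ coset description of $K_2$ and its \sacigs together with computer expansion of the coproducts of the $p_{J_k}$, and the principal graphs follow from~\ref{grapheprincipal}, the depth-$2$ criterion of~\ref{irredprof2}, and the index-$4$ classification already invoked in~\ref{sacig4KQimpair}. The only point to keep explicit is that ``index $3$'' alone allows both $A_5$ and the star $D_4$; your final remark that $K_1$, $K_{21}$, $K_{22}$ are not Kac subalgebras (non-normal subgroups), hence the inclusions are not of depth $2$, is exactly what excludes $D_4$, so no gap remains.
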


\newpage
\section{The Kac algebras $\KB(n)$ and $K_3$ in $\KD(2m)$}
\label{section.KB}

When $n=2m$ is even, the Kac subalgebras $K_2$ and $K_4$ of $\KD(n)$
are respectively isomorphic to $L^{\infty}(D_m)$ and $\KD(m)$; their
structure is therefore known, or can at least be studied
recursively. There remains to describe the family of Kac subalgebras
$K_3$, as $m$ varies. This question was actually at the origin of our
interest in the family of Kac algebras $\KQ(n)$, and later in the
family $B_{4m}$ defined by A. Masuoka in~\cite[def 3.3]{Masuoka.2000},
and which we denote $\KB(m)$ for notational consistency.

In this section, we first prove that, for all $m$, $K_3$ in $\KD(2m)$
is isomorphic to $\KB(m)$, and therefore self-dual, and discuss
briefly a construction of $K_3$ by composition of
factors. %
Then, we compare the three families: $K_3$ in $\KD(n)$ is also
isomorphic to $\KQ(m)$ when $m$ is odd (this gives an alternative
proof of the self-duality of $\KQ(m)$ when $m$ is odd) and the family
$(\KD(n))_n$ contains the others, $(\KQ(n))_n$ and $(\KB(n))_n$, and
$\KP$ as subalgebras.  Using the self-duality of $\KB(m)$, we list the
\sacigs of dimension $2$, $4$, $m$, and $2m$ of $K_3$ in $\KD(2m)$ and
describe the complete lattice of \sacigs for $\KD(8)$.

\subsection{The subalgebra $K_3$ of $\KD(2m)$}
\label{section.K3.KD2m}

In $\KD(4)$, $K_3$ is isomorphic to $\KP$ (see~\ref{KD4}); in
particular, it is self-dual and can be obtained by twisting a group
algebra. In $\KD(8)$, $K_3$ is the self-dual Kac algebra described
in~\cite[Lemma XI.10]{Izumi_Kosaki.2002} (the intrinsic group of its
dual is $\mathbb{Z}_2 \times \mathbb{Z}_2$). We however found on
computer that it cannot be obtained by twisting a group algebra: we
took a generic cocycle $\omega$ on $H$ with six unknowns such that
$\omega(h,h)=1$, and $\omega(h,h')= \alpha_{h,h'}$ if $h\ne h'$, and
asked whether there existed a choice for the $\alpha_{h,h'}$ making
the untwisted coproduct $\Delta_u = \Omega\Delta\Omega^*$ is
cocommutative; this gave an algebraic system of equations of degree
$2$ which had no solution (note that we relaxed the condition
$\omega(h,h')=\overline{\omega(h',h)}$ to keep the system algebraic).

\subsubsection{A presentation of $K_3$ and isomorphism with $\KB(m)$}
\label{B4m}

We now construct explicit generators for $K_3$ satisfying the
presentation of $\KB(m)$ given in~\cite[def 3.3]{Masuoka.2000}.
\begin{lemma}
  With $n=2m$ and  $\epsilon=\rm{e}^{\rm{i}\pi/(2m)}$ set:
  \begin{align*}
    v&=(e_1+e_3)-(e_2+e_4)
    +\sum_{j=1}^{m-1} \epsilon^{-2j}e_{1,2}^{2j} +\epsilon^{2j}e_{2,1}^{2j}
    +\sum_{j=0}^{m-1}\epsilon^{-(2j+1)}r_{1,2}^{2j+1}+\epsilon^{2j+1}r_{2,1}^{2j+1}
    \\
    w&=(e_1+e_3)-(e_2+e_4)
    +\sum_{j=1}^{m-1} \epsilon^{2j}e_{1,2}^{2j} +\epsilon^{-2j}e_{2,1}^{2j}
    +\sum_{j=0}^{m-1}\epsilon^{2j+1}r_{1,2}^{2j+1}+\epsilon^{-(2j+1)}r_{2,1}^{2j+1}\\
    B_0&=(1+\lambda(a^n))/2\\
    B_1&=(1-\lambda(a^n))/2)
  \end{align*}
  The unitary elements $v$ and $w$ are contained in $K_3$ and satisfy:
  \begin{align*}
    v^2&=w^2=1\\
    (vw)^m&=\lambda(a^n)\\
    \Delta(v)=v \otimes B_0v + w\otimes B_1 v \quad &\text{et}\quad
    \Delta(w)=w \otimes B_0w + v\otimes B_1 w\\
    \varepsilon(v)&=\varepsilon(w)=1\\
    S(v)=B_0v+B_1w \quad &\text{et}\quad S(w)=B_0w+B_1v
  \end{align*}
\end{lemma}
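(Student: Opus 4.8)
The plan is to verify each of the listed properties of $v$ and $w$ directly from their explicit expressions in the matrix units of $\KD(2m)$, using the block matrix structure of $K_3$ described in~\ref{section.KD.even.K3}. First I would check that $v$ and $w$ actually lie in $K_3$: this amounts to expressing each of them as a linear combination of the matrix units of $K_3$ listed in~\ref{K3pair}. Since the matrix units of $K_3^j$ pair up the $j$-th and $(n-j)$-th factors of $\KD(2m)$ with specific signs, I would match the coefficients $\epsilon^{\pm k}$ appearing in $v$ and $w$ against those combinations (distinguishing the even-$j$ and odd-$j$ cases, and the parity of $m$ which controls whether the middle block uses $e^m_{i,i}$ or $r^m_{i,i}$). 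The unitarity and the relation $v^2=w^2=1$ are then immediate from the fact that, within each factor $M_2(\C)$, $v$ and $w$ restrict to off-diagonal unitaries of the form $\begin{pmatrix} 0 & \bar\zeta \\ \zeta & 0\end{pmatrix}$ (together with scalars $\pm1$ on the one-dimensional blocks), whose square is the identity.

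Next I would treat the arithmetic relations. The identity $(vw)^m=\lambda(a^n)$ I expect to reduce, block by block, to computing the product of the two $M_2(\C)$-level symmetries: on the $j$-th factor, $vw$ acts as a diagonal unitary $\operatorname{diag}(\epsilon^{-2j},\epsilon^{2j})$ (up to the pairing), so its $m$-th power produces $\operatorname{diag}(\epsilon^{-2mj},\epsilon^{2mj})=\operatorname{diag}((-1)^j,(-1)^j)$, which is exactly the restriction of $\lambda(a^n)=\lambda(a^{2m})$ to that factor (see the formula for $\lambda(a^k)$ in~\ref{lambda}). I would double-check the sign conventions on the one-dimensional blocks to confirm the global equality. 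The counit values $\varepsilon(v)=\varepsilon(w)=1$ follow from the fact that $\varepsilon$ is the restriction of the counit of $\KD(2m)$, which is unchanged under twisting (see~\ref{groupedef}), so $\varepsilon$ reads off the coefficient of the support of the counit; since $v$ and $w$ both contain $e_1$ with coefficient $+1$, this is immediate.

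The main obstacle will be the coproduct formulas $\Delta(v)=v\otimes B_0 v + w\otimes B_1 v$ and $\Delta(w)=w\otimes B_0 w + v\otimes B_1 w$, together with the antipode formulas. Here I would use $\Delta(x)=\Omega\,\Delta_s(x)\,\Omega^*$ with the twisting unitary $\Omega\in\C[H]\otimes\C[H]$ from~\ref{omega}. The strategy is to note that $v$ and $w$ are built from the regular representation elements $\lambda(ba^k)$ (the reflections), whose symmetric coproduct $\Delta_s(\lambda(ba^k))=\lambda(ba^k)\otimes\lambda(ba^k)$ is simple; the twist by $\Omega$ then redistributes the tensor factors according to the central projections $B_0=(1+\lambda(a^n))/2$ and $B_1=(1-\lambda(a^n))/2$, which project onto the even/odd components that distinguish $v$ from $w$. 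I expect that conjugating $\Delta_s(v)$ by $\Omega$ exactly produces the claimed mixing of $v$ and $w$ in the right tensor leg, with $B_0,B_1$ selecting which of $v$ or $w$ appears. Rather than grinding through all $4m$ blocks by hand, I would follow the methodology advocated throughout the paper: verify the coproduct and antipode identities explicitly on computer for small $m$ (say $m=2,3,4$), and then invoke a degree/parameter argument in the spirit of Proposition~\ref{proposition.equationAtInfinity} to conclude that, since the expressions involved are of bounded degree in $a$, validity for small $m$ forces validity for all $m$. The antipode formulas $S(v)=B_0v+B_1w$ and $S(w)=B_0w+B_1v$ would be checked the same way, using $S=j_2$ (see~\ref{KacHopf}) on the matrix units.
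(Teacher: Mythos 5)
Your treatment of the easy assertions is fine: membership in $K_3$, unitarity, $v^2=w^2=1$, the block-by-block computation of $(vw)^m$, the counit values, and the antipode formulas (using that $j_2$ fixes $e^k_{1,2},e^k_{2,1}$ and exchanges $r^k_{1,2}$ with $r^k_{2,1}$, so that $S$ preserves the even part of $v$ and swaps its odd part with that of $w$) can all be read off the matrix units, and this matches what the paper treats as the routine part. The gap is in the coproduct step, and it is twofold. First, your structural claim is wrong: $v$ and $w$ are \emph{not} linear combinations of the reflections $\lambda(ba^k)$. The odd-block units $r^{2j+1}_{1,2}$, $r^{2j+1}_{2,1}$ have nonzero diagonal components, so the odd-block sums contribute rotation terms; the paper's own computation (Appendix~\ref{subsection.copv}) gives the closed form
\begin{displaymath}
v=\lambda(ba)B_0-\tfrac12\,B_1\bigl[\mathrm{i}(\lambda(a)-\lambda(a^{-1}))+\lambda(ba)+\lambda(ba^{-1})\bigr],
\end{displaymath}
and similarly for $w$. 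Consequently $\Delta_s(v)$ is not a sum of group-like tensors, and the verification has to be organized by parity of blocks, as the paper does by checking $\Omega^{*}D_{i,j}\Omega=(B_i\otimes B_j)\Delta_s(v)$ where $D_{i,j}=(B_i\otimes B_j)(v\otimes B_0v+w\otimes B_1v)$.

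Second, and more seriously, your degree argument is not available until such a closed form has been \emph{proved}. Proposition~\ref{proposition.equationAtInfinity} applies to expressions whose degree in $a$ is bounded independently of the parameter, lifted to $\C[D_\infty]$; but the lemma defines $v$ and $w$ as sums of order $m$ many matrix units, and each matrix unit, expanded in the group basis, is a Fourier sum supported on the whole group. A priori these expressions have degree growing with $m$ --- note that if your ``sum of reflections'' description were taken at face value, the degree \emph{would} grow with $m$ and the lifting argument would collapse. It is precisely the nontrivial, $m$-uniform, degree-one closed forms above that make a lift to $\C[D_\infty]$ possible, and deriving them is the step your plan skips (and misguesses). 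Once they are in hand, your strategy does become a legitimate alternative to the paper's: the paper delegates to the computer only the conjugation rules of $\Omega$ on tensors of matrix units of fixed parities (computed in $\KD(4)$, valid for all even $n$ by Remark~\ref{remark.omega}) and then performs the summation over blocks by hand, whereas you could instead check the whole, now bounded-degree, identity in a single sufficiently large $\KD(N)$ and invoke the tensor-product extension of Proposition~\ref{proposition.equationAtInfinity}; but one value of $N$ exceeding twice the degree is what is needed --- checking several small $m$ proves nothing by itself.
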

\begin{proof}
  The non trivial part is the calculation of the coproducts which is
  carried over in~\ref{subsection.copv}.
\end{proof}

\begin{theorem}
  The Kac subalgebra $K_3$ in $\KD(2m)$ is isomorphic to the Kac algebra
  $\KB(m)$ defined by A. Masuoka in~\cite[def 3.3]{Masuoka.2000}.
  In particular, it is self-dual.
\end{theorem}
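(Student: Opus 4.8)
The plan is to invoke Proposition~\ref{proposition.KacAlgebraIsomorphism}, exactly as was done for the isomorphisms $\Theta'$ and $\phi$ earlier in the paper. The generators $v$ and $w$ of $K_3$ constructed in the preceding Lemma have been cooked up precisely so that they satisfy the defining presentation of $\KB(m)$ from~\cite[def 3.3]{Masuoka.2000}. So first I would recall that presentation explicitly: $\KB(m)$ is the algebra generated by two elements, say $V$ and $W$, subject to $V^2=W^2=1$ and $(VW)^m$ central of order $2$ (it equals the group-like element playing the role of $\lambda(a^n)$), with a prescribed coproduct, counit, and antipode. Matching these against the relations established in the Lemma is the whole point of the construction.

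Concretely, I would define $\Phi\colon \KB(m)\to K_3$ by $\Phi(V)=v$ and $\Phi(W)=w$, and then verify the four conditions of Proposition~\ref{proposition.KacAlgebraIsomorphism} in turn. Condition (i), that the images satisfy the relations of the generators, is handed to us by the Lemma: $v^2=w^2=1$ and $(vw)^m=\lambda(a^n)$, and one checks that $\lambda(a^n)$ is the central involution required by $\KB(m)$. Condition (ii), preservation of the involution, follows since $v$ and $w$ are unitary with $v^2=w^2=1$, hence self-adjoint. Condition (iii), preservation of the coproduct, is again read off directly from the coproduct formulas $\Delta(v)=v\otimes B_0 v+w\otimes B_1 v$ and $\Delta(w)=w\otimes B_0 w+v\otimes B_1 w$, once these are matched with the coproduct in Masuoka's presentation (here $B_0,B_1$ are the two central idempotents $\tfrac12(1\pm\lambda(a^n))$, whose images correspond to the analogous idempotents in $\KB(m)$). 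Condition (iv), surjectivity, amounts to checking that $v$ and $w$ generate all of $K_3$ as an algebra; since $\dim \KB(m)=4m=\dim K_3$, equality of dimensions together with injectivity (or a direct count that the subalgebra generated by $v,w$ already exhausts the block structure of $K_3$ described in~\ref{section.KD.even.K3}) closes the argument. The self-duality then follows immediately, because $\KB(m)$ is known to be self-dual~\cite{Calinescu_al.2004}.

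The main obstacle is really condition (iv), the surjectivity/generation claim: one must be sure that $v$ and $w$ generate the full $4m$-dimensional algebra $K_3$ and not a proper subalgebra. I would address this by examining products such as $vw$, $wv$, and their powers against the explicit matrix-unit decomposition of $K_3$ from~\ref{section.KD.even.K3}: the element $vw$ should recover, factor by factor, enough matrix units (together with the central projections $B_0,B_1$ and $(vw)^j$) to span each $M_2(\C)$ block, yielding a spanning set of size $4m$. A clean shortcut, if the presentation of $\KB(m)$ is itself known to present a $4m$-dimensional algebra, is to note that $\Phi$ is a surjection of algebras between spaces of equal finite dimension, hence an isomorphism, so that checking generation in $K_3$ is equivalent to the (already transparent) fact that $V,W$ generate $\KB(m)$. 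The remaining verifications reduce to the routine coproduct and relation checks provided by the Lemma, and the heavy computational part has already been relegated to Appendix~\ref{subsection.copv}.
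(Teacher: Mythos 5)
Your proposal takes essentially the same route as the paper: the paper's own proof likewise consists of invoking the relations, coproduct, counit and antipode formulas of Lemma~\ref{B4m}, observing that they match Masuoka's presentation of $\KB(m)$, concluding by a dimension count, and citing~\cite{Calinescu_al.2004} for self-duality. Your write-up is in fact more detailed than the paper's, which compresses your conditions (i)--(iv) into a single sentence, and you correctly identify that the generation/surjectivity step is the only point of substance.

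One warning, though: the ``clean shortcut'' you offer for condition (iv) is circular. Knowing that Masuoka's presentation defines a $4m$-dimensional algebra and that $V,W$ generate $\KB(m)$ only yields that $\Phi$ surjects onto the subalgebra $\langle v,w\rangle\subseteq K_3$; it does not yield $\langle v,w\rangle=K_3$, which is exactly what surjectivity of $\Phi\colon\KB(m)\to K_3$ means. A homomorphic image of $\KB(m)$ may perfectly well have dimension smaller than $4m$ (the kernel of a $*$-algebra morphism between finite dimensional $C^*$-algebras is a sum of matrix blocks, so nontrivial quotients abound), so the phrase ``$\Phi$ is a surjection between spaces of equal finite dimension'' presupposes the very generation statement the shortcut was meant to avoid. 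Your two other suggestions do work, and are what the paper's ``by dimension count'' tacitly relies on: either verify that $\Phi$ is injective (no block of $\KB(m)$ is annihilated), or check directly, against the block decomposition of $K_3$ given in~\ref{section.KD.even.K3}, that $v$, $w$ and the powers of $vw$ span all $4m$ dimensions; once either is done, equality of dimensions finishes the argument.
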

\begin{proof}
  Using Lemma~\ref{B4m}, $K_3$ contains the subalgebra $I_0=\C\Z_2$
  generated by $\lambda(a^n)$, as well as the unitary elements $v$ and
  $w$ which satisfy the desired relations. By dimension count, $K_3$
  is therefore isomorphic to $\KB(m)$. The later is
  self-dual (\cite[p. 776]{Calinescu_al.2004}).
\end{proof}

\subsubsection{Realization of $K_3$ by composition  of subfactors}

As in~\ref{KPIK} and~\ref{KD3IK}, we describe the inclusion
$N_2\subset N_2\rtimes K_3$ using group actions.

Define the group $G=D_m\rtimes \Z_2$, where $\Z_2$ acts on $D_m=
\langle \alpha, \beta \suchthat \alpha^{m}=1, \beta^2=1, \beta\alpha =
\alpha^{-1}\beta \rangle$ by the automorphism $\nu$ of $D_m$ defined
by $\nu(\alpha)=\alpha^{-1}$ and $\nu(\beta)=\alpha\beta$. Set
$M=N_2\rtimes K_1$. Consider the dual action of $D_m$ on $M$ ($K_1$ is
isomorphic to $L^\infty(D_m)$) and the action of $z$ of $\Z_2$ on $M$
by $\Ad(v)$.  As in~\cite[def II.1]{Izumi_Kosaki.2002}, it can be
easily proved that $(d,z)$ is an outer action of the matched pair
$(D_m,\Z_2)$. From $(d,z)$ arises a couple of cocycles $(\eta, \zeta)$
whose class in a certain cohomology group $H^2((D_m,\Z_2),T)$
characterizes up to an isomorphism the depth 2 irreducible inclusion
of factors $M^{(D_m,d)} \subset M \rtimes_{z} \Z_2$ (\cite[Theorem
II.5 and Remark 2 p.12]{Izumi_Kosaki.2002}) and the Kac algebra which
is associated to this inclusion (\cite[Theorem
VI.1]{Izumi_Kosaki.2002}).  The group $H^2((D_m,\Z_2),T)$ is reduced
to $\Z_2$ (\cite[VII.§5 and Proposition VII.5]{Izumi_Kosaki.2002}) and
the pair $(d,z)$ is associated to the non-trivial cocycle. Indeed, if
$m$ is even, this follows from ~\cite[Lemma~VII.6]{Izumi_Kosaki.2002},
since the intrinsic group of $K_3$ and $\widehat{K_3}$ are of order
$4$ (that of $K_3$ is $J_{20}\equiv \Z_2\times\Z_2$ and $K_3$ has four
characters).  If $m$ is odd, $K_3$ is isomorphic to $\KQ(m)$, which is
self-dual, and its intrinsic group is $\Z_4$, so one can
use~\cite[Lemma~VII.2]{Izumi_Kosaki.2002}. For any value of $m$, the
inclusion $N_2 \subset N_2\rtimes K_3$ is therefore isomorphic to the
inclusion $M^{(D_m,d)} \subset M \rtimes_{z} \Z_2$ associated to the
non trivial cocycle.

\subsection{$\KD(n)$, $\KQ(n)$, $\KA(n)$, and $\KB(n)$: isomorphisms, \sacigs of dimension $2n$}
\label{isoproj}
We collect here all the results on the isomorphisms between the four
families and their Kac subalgebras of dimension $2n$.
\begin{theorem}
  \label{theorem.isomorphisms}
  Let $n \geq 2$.
  \begin{enumerate}
  \item The Kac algebra $\KA(n)$ is isomorphic to the dual of $\KD(n)$.
  \item Assume further $n=2m$ even. Then, in $\KD(n)$,
    \begin{itemize}
    \item $K_2=I(e_1+e_2)$ is isomorphic to $L^\infty(D_n)$,
    \item $K_3=I(e_1+e_3)$ is isomorphic to $\KB(m)$,
    \item $K_4=I(e_1+e_4)$ is isomorphic to $\KD(m)$.
    \end{itemize}
  \item The Kac algebra $\KQ(n)$ is isomorphic to $\KD(n)$ for $n$ even and
  to $\KB(n)$ for $n$ odd.
  \end{enumerate}
\end{theorem}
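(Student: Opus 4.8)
The plan is to assemble this as a consolidation theorem, collecting the isomorphisms proved throughout the paper rather than proving anything new. Each of the three items has already been established, so the proof is essentially a dispatch to the relevant earlier results together with one genuine argument for the odd case of part (3).

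First I would handle part (1). The statement that $\KA(n)$ is the dual of $\KD(n)$ is attributed in the introduction to our structural study; I would cite the isomorphism established there (the one summarized in the introduction as ``$\KA(n)$ is the dual of $\KD(n)$''), so this item reduces to a reference. Next, for part (2), all three identifications are already in hand: the isomorphism $K_2\cong L^\infty(D_n)$ is Proposition~\ref{proposition.K2} applied inside the embedded $\KD(n)$ viewed over $D_n$ (more precisely the analogue for $\KD(2m)$ discussed in~\ref{K1pair} and~\ref{section.KD.even}), the isomorphism $K_4\cong\KD(m)$ is Corollary~\ref{corollary.plonge} with the example following it ($K_4=\C[a^2,b]\cong\KD(m)$), and $K_3\cong\KB(m)$ is exactly Theorem in~\ref{B4m}. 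So part (2) is again a matter of quoting~\ref{section.KD.even}, Corollary~\ref{corollary.plonge}, and the $\KB(m)$ isomorphism theorem.

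The only item carrying real content is part (3). For $n$ even, $\KQ(n)\cong\KD(n)$ is precisely Theorem~\ref{theorem.isomorphism.KD.KQ}, together with the observation there that the two are \emph{not} isomorphic for $n$ odd (their intrinsic groups are $\Z_2\times\Z_2$ and $\Z_4$ respectively, by~\ref{intrinseque} and~\ref{Qintrinseque}). For $n$ odd, the claim $\KQ(n)\cong\KB(n)$ is where I would chain the structural facts: by part (2), the subalgebra $K_3$ inside $\KD(2n)$ is isomorphic to $\KB(n)$; on the other hand, the discussion in~\ref{section.KB} (and the realization-by-composition argument) identifies this same $K_3$ with $\KQ(n)$ when $n$ is odd. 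Composing these two isomorphisms yields $\KQ(n)\cong\KB(n)$. Concretely, I would argue that $K_3=I(e_1+e_3)$ in $\KD(2n)$ is simultaneously isomorphic to $\KB(n)$ (by the theorem of~\ref{B4m}) and to $\KQ(n)$ (using that, for $n$ odd, both are self-dual Kac algebras of dimension $4n$ with intrinsic group $\Z_4$, and that the explicit matched-pair presentation matches), and transitivity finishes the argument.

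The main obstacle, and the step I would be most careful about, is the odd case of (3): verifying that $K_3$ in $\KD(2n)$ really is isomorphic to $\KQ(n)$ for $n$ odd, rather than merely sharing the same dimension and intrinsic group. I expect this to hinge on the self-duality of $\KB(n)$ (cited from~\cite[p.~776]{Calinescu_al.2004}) combined with the self-duality of $\KQ(n)$ for $n$ odd (Theorem in~\ref{self-dualKQ}); since $\KQ(n)$ is constructed by twisting $\C[Q_n]$ while $\KB(n)$ arises as $K_3$, the cleanest route is to exhibit generators of $\KQ(n)$ satisfying Masuoka's presentation of $\KB(n)$, exactly as was done for $K_3$ via the elements $v,w$ in Lemma~\ref{B4m}. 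Everything else in the theorem is bookkeeping: I would present the proof as three short paragraphs, each invoking the appropriate prior result and, for the odd case, spelling out the transitivity $\KQ(n)\cong K_3\cong\KB(n)$.
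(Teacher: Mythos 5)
Your handling of part (2) and of the even case of part (3) is correct and matches the paper: those items are indeed pure dispatch to Proposition~\ref{proposition.K2} (i.e.~\ref{e1e2}), Theorem~\ref{B4m}, Corollary~\ref{corollary.plonge}, and Theorem~\ref{theorem.isomorphism.KD.KQ}. The problems are in part (1) and in the odd case of part (3).

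For part (1) your proposal is circular: you ``cite the isomorphism established there (the one summarized in the introduction as `$\KA(n)$ is the dual of $\KD(n)$')'', but the introduction merely \emph{announces} this result; no earlier section of the paper proves it --- Theorem~\ref{theorem.isomorphisms}(1) \emph{is} its proof. The paper's actual argument is external and uses Masuoka's uniqueness theorem: by \cite[Theorem 4.1 (2)]{Masuoka.2000}, $\KA(n)$ is the \emph{unique} non trivial Kac algebra obtained by twisting the product of $L^{\infty}(D_n)$ by a cocycle; since twisting products and twisting coproducts are dual constructions, its dual is the unique non trivial Kac algebra obtained by twisting the coproduct of $\C[D_n]$, and $\KD(n)$ is such a twist, so they coincide. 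Your proposal contains no substitute for this uniqueness argument.

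For part (3) with $n$ odd, your primary route has the same defect: the ``discussion in~\ref{section.KB}'' identifying $K_3$ of $\KD(2n)$ with $\KQ(n)$ is downstream of (indeed, is) the theorem you are proving --- every assertion of $K_3\cong\KQ(m)$ elsewhere in the paper (e.g.\ in~\ref{sacig4pair} or~\ref{K3inKD6}) is a forward reference to this result. The missing mechanism is Proposition~\ref{prop.KDKQ.central}: for $n$ odd the explicit isomorphism $\phi\colon\KD(2n)\to\KQ(2n)$ of Theorem~\ref{theorem.isomorphism.KD.KQ} satisfies $\phi(e_1)=e'_1$ and $\phi(e_3)=e'_3$, hence carries $K_3=I(e_1+e_3)$ onto $K'_3=I(e'_1+e'_3)$, which is the embedded copy of $\KQ(n)$ in $\KQ(2n)$; composing with $K_3\cong\KB(n)$ (Theorem~\ref{B4m}) gives $\KQ(n)\cong\KB(n)$. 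Your fallback suggestions do not close this gap: matching dimension, self-duality and intrinsic group $\Z_4$ is not sufficient to conclude isomorphism (as you concede), and exhibiting generators $v,w$ of $\KQ(n)$ satisfying Masuoka's presentation of $\KB(n)$ --- a legitimate alternative strategy, parallel to Lemma~\ref{B4m} --- is proposed but not carried out, so it remains a plan rather than a proof.
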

\begin{proof}
  (1) For $n=2$, see~\cite[Remark 3.4 (1)]{Masuoka.2000}. For $n \geq
  3$, $\KA(n)$ is the unique non trivial Kac algebra obtained by
  twisting the product of $L^{\infty}(D_{n})$ by a cocycle
  (see~\cite[Theorem 4.1 (2)]{Masuoka.2000}). Its dual is therefore
  the unique non trivial Kac algebra obtained by twisting the
  coproduct of $\C[D_{n}]$\footnote{It is a classical fact in the
    folklore that twisting products or coproducts of algebras by
    cocycles are two dual constructions}. Hence, $\KA(n)$ must be
  isomorphic to $KD(n)$.

  (2) See Proposition~\ref{e1e2}, Theorem~\ref{B4m},
  Theorem~\ref{plonge} for the structure of $K_2$, $K_3$, and $K_4$
  respectively.

  (3) For $n$ even, $\KQ(n)$ is isomorphic to $\KD(n)$
  (Theorem~\ref{theorem.isomorphism.KD.KQ}). If $n$ odd, $\KQ(n)$,
  which is embedded in $\KQ(2n)$ as $K'_3$, is isomorphic to $K_3$ of
  $\KD(2n)$ by~Proposition~\ref{prop.KDKQ.central}, and therefore to
  $\KB(n)$ by Theorem~\ref{B4m}.
\end{proof}
\subsection{The lattice $\mathrm{l}(\KB(m))$}\label{latticeKB}

\subsubsection{General case}

\label{lattice_K3_even}

In~\ref{section.KQ.odd.sacig}, we partially describe the lattice
$\mathrm{l}(\KQ(m))$ when $m$ is odd; those results therefore apply to
the isomorphic algebras $\KB(m)$ and $K_3$ in $\KD(2m)$.

Let us now explore $\mathrm{l}(\KB(m))$ for $m$ even: $m=2m'$. To this
end, we consider $\KB(m)$ as $K_3$ in $\KD(2m)$. To avoid handling
degeneracies, the special cases $KB(2)=KP$ and $KB(4)$ are treated
respectively in Subsections~\ref{KP} and~\ref{KD8}, and we now assume
$m\geq 6$.

\begin{proposition}
  When $m$ is even, the \sacigs of dimension $2$, $4$, $m$, $2m$ of
  the Kac algebra $KB(m)\approx K_3\subset KD(2m)$ are, keeping the
  notations of~\ref{sacigK0} and~\ref{sacig4pair}:
  \begin{itemize}
  \item Three \sacigs of dimension $2m$:
    \begin{itemize}
    \item $K_{32}=K_1 = I(e_1+e_2+e_3+e_4)$, isomorphic to
      $L^\infty(D_m)$;
    \item $K_{33}=I(e_1+e_3+r^m_{1,1})=I(p_{\langle a^4, ba\rangle})$;
    \item $K_{34}=I(e_1+e_3+r^m_{2,2})=I(p_{\langle a^4, ab\rangle})$;
    \end{itemize}
    The \sacigs $K_{33}$ and $K_{34}$ are isomorphic through the
    involutive automorphism $\Theta_{-1}$;
  \item The \sacigs of $K_{32}$, which correspond to subgroups of
    $D_m$; in dimension $2$, $4$, $m$, they are:
    \begin{itemize}
    \item Dimension $2$: $I_0$, $I_3$, and $I_4$ (subgroups of
      order $m$);
    \item Dimension $4$: the \sacig $J_{20}$ containing $I_0$, $I_3$,
      and $I_4$; furthermore, if $m'$ is even, there are four \sacigs
      (dihedral subgroups of order $m'$), which contain exactly one of
      $I_3$ or $I_4$;
    \item Dimension $m$:
      $K_{31}=I(e_1+e_2+e_3+e_4+e^m_{1,1}+e^m_{2,2})$ (subgroup
      generated by $\alpha^{m'}$), and
      $K_{1k}=I(e_1+e_2+e_3+e_4+r^{2k-1}_{1,1}+r^{n-2k+1}_{2,2})$ for
      $k=1,\dots, m$ (subgroups generated by one reflection); each
      $K_{1k}$ contains exactly one of $I_3$ or $I_4$;
    \end{itemize}
  \item The \sacigs of $K_{33}$ and $K_{34}$:
    \begin{itemize}
    \item Dimension $2$: $I_0$, and if $m'$ is even, $I_3$ and $I_4$ ;
    \item Dimension $4$: $J_{2k+1}$ for $k=0, \dots m-1$ and, if $m'$
      is even, $J_{20}$; for $2k+1 \equiv 1$ (resp. $2k+1 \equiv 3$)
      mod $4$, $J_{2k+1}$ is contained in $K_{33}$ (resp. $K_{34}$);

    \item Dimension $m$: $K_{31}$, and if $m'$ is even, two \sacigs
      contained in $K_{33}$ and two in $K_{34}$.
    \end{itemize}

  \end{itemize}  
  In particular, $K_3$ admits exactly $m$ \sacigs of dimension $4$ not
  contained in $K_1$. Also, $K_{31}$ is the intersection of the three
  \sacigs of dimension $2m$ of $K_3$.
\end{proposition}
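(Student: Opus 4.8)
The plan is to exploit two structural facts established earlier. First, $K_3\cong\KB(m)$ is self-dual (see~\ref{B4m}), so the antiisomorphism $\delta$ of~\ref{d} restricts to an order-reversing involution of $\ll(K_3)$ pairing a \sacig of dimension $d$ with one of dimension $4m/d$; in particular dimensions $2$ and $2m$ are paired, as are $4$ and $m$. Second, one of the three dimension-$2m$ \sacigs, namely $K_{32}=K_1$, is isomorphic to $L^\infty(D_m)$ (see~\ref{K1pair}), whose lattice is completely described by the subgroup lattice of $D_m$ (see~\ref{groupe}). Throughout I would compute in the explicit block-matrix model of $K_3\subset\KD(2m)$ from~\ref{K3pair}, read off containments from the partial order $I\subset I'\Leftrightarrow p_{I'}\le p_I$ of~\ref{tour}~(5), and certify candidate Jones projections by Lemma~\ref{lemma.abelien} together with Remark~\ref{remark.resumep}.

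I would first treat the paired extreme dimensions $2$ and $2m$. By the intrinsic-group principle of~\ref{intrin}, every dimension-$2$ \sacig of $K_3$ lies in the algebra of its intrinsic group $J_{20}\cong\Z_2\times\Z_2$ (recorded in~\ref{section.K3.KD2m}); its three order-$2$ subgroups produce exactly $I_0,I_3,I_4$, all sitting inside $K_{32}=K_1$. Dually, $G(\widehat{K_3})\cong\Z_2\times\Z_2$ forces exactly three \sacigs of index $2$, i.e. of dimension $2m$. Besides $K_{32}$, the other two arise from Proposition~\ref{prop.abelien} (the $B_k$ case) as the \sacigs generated by the Jones projections of the order-$2m$ subgroups $\langle a^4,ba\rangle$ and $\langle a^4,ab\rangle$ of $D_{4m}$; checking that $e_1+e_3+r^m_{1,1}$ and $e_1+e_3+r^m_{2,2}$ are these projections identifies them as $K_{33},K_{34}$, and the involution $\Theta_{-1}=\Theta$ of~\ref{example.conjugue}, which stabilizes $K_3$ and fixes $e_1+e_3$, visibly swaps them.

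Next I would enumerate the \sacigs of each dimension-$2m$ block. Inside $K_{32}\cong L^\infty(D_m)$ the correspondence of~\ref{groupe} reduces everything to counting subgroups of $D_m$: the $m+1$ order-$2$ subgroups (the $m$ reflections and the central $\langle\alpha^{m'}\rangle$) give the dimension-$m$ \sacigs $K_{1k}$ and $K_{31}$, while the order-$m'$ subgroups give the dimension-$4$ \sacigs — always $J_{20}$ from $\langle\alpha^2\rangle$, and four dihedral ones precisely when $m'$ is even (equivalently $4\mid m$), which is exactly the condition for $D_m$ to contain a dihedral subgroup of order $m'$. For $K_{33}$ (hence $K_{34}$, by $\Theta_{-1}$) I would identify $I_0$, and $I_3,I_4$ when $m'$ is even, as the dimension-$2$ \sacigs, and extract the dimension-$4$ \sacigs from the family $J_k$ of~\ref{sacig4pair}: the $m$ odd-indexed projections $p_{J_{2k+1}}$ are the dimension-$4$ \sacigs of $K_3$ not contained in $K_1$, split between $K_{33}$ and $K_{34}$ according to $2k+1\bmod 4$ by comparing with $p_{K_{33}},p_{K_{34}}$; their $\delta$-images supply the dimension-$m$ \sacigs of $K_{33},K_{34}$.

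Completeness in dimensions $2,4,m,2m$ would then follow by matching cardinalities across $\delta$ — three \sacigs in dimension $2$ and in dimension $2m$, and equally many in dimensions $4$ and $m$ — together with the reduction that any such \sacig has Jones projection dominating $e_1+e_3$ and one of $e_1+e_2+e_3+e_4$, $p_{K_{33}}$, $p_{K_{34}}$, hence by~\ref{tour}~(5) lies in one of $K_{32},K_{33},K_{34}$, where it has already been listed. The two ``in particular'' claims drop out: the $J_{2k+1}$ are the only dimension-$4$ \sacigs outside $K_1$, so there are exactly $m$ of them; and since $p_{K_{31}}=e_1+e_2+e_3+e_4+e^m_{1,1}+e^m_{2,2}$ is the unique dimension-$m$ Jones projection dominating $p_{K_{32}},p_{K_{33}},p_{K_{34}}$ at once, $K_{31}=K_{32}\cap K_{33}\cap K_{34}$. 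I expect the real difficulty to be the parity bookkeeping for $m'$ even versus odd — counting the order-$m'$ dihedral subgroups of $D_m$ and tracking which of $I_3,I_4$ each $K_{1k}$ and each dihedral \sacig contains — rather than any individual identity; the projection computations themselves are routine but voluminous, and I would delegate the explicit coproduct evaluations (as in Appendix~\ref{cop}) to the computer.
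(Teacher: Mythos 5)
Your overall architecture matches the paper's: self-duality of $K_3\cong\KB(m)$, the trace argument pinning down the three dimension-$2m$ Jones projections, the subgroup correspondence inside $K_{32}=K_1\cong L^\infty(D_m)$, the swap by $\Theta_{-1}$, and the reduction placing every \sacig of dimension dividing $2m$ inside one of $K_{32}$, $K_{33}$, $K_{34}$. But your completeness argument in dimensions $4$ and $m$ has a genuine gap: ``matching cardinalities across $\delta$'' proves nothing here, because $\delta$ is a bijection between the dimension-$4$ and the dimension-$m$ \sacigs, so any \sacig missing from your dimension-$4$ list is automatically matched by one missing from your dimension-$m$ list; the equality of the two cardinalities holds whether or not your lists are complete, and the putative extras cancel (for $m'$ odd your known lists give $1+m+x=(m+1)+y$, i.e.\ $x=y$, with nothing forcing $x=0$; the same happens for $m'$ even). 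What is needed is a one-sided bound, and this is exactly what the paper supplies, in two cases: for $m'$ odd, every dimension-$4$ \sacig of $K_{33}$ contains a dimension-$2$ \sacig of $K_{33}$, which can only be $I_0$, so its $\delta$-image is a dimension-$m$ \sacig of $\delta(I_0)=K_1$ containing $\delta(K_{33})$, and these are exhausted by the subgroup count in $K_1$; for $m'$ even, the block decomposition of $K_{33}$ computed in~\ref{K33} shows that $K_{33}$ admits only four projections of trace $1/2m$, hence at most three dimension-$m$ \sacigs, which bounds the total number of dimension-$m$ \sacigs by $m+5$; squeezing this against the lower bound of $m+5$ explicit dimension-$4$ \sacigs via self-duality yields equality. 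You never invoke~\ref{K33} nor any substitute upper bound, so your enumeration inside $K_{33}$ and $K_{34}$ remains an assertion.

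A second error: your claim that the $\delta$-images of the $J_{2k+1}$ ``supply the dimension-$m$ \sacigs of $K_{33},K_{34}$'' is false. Every $J_{2k+1}$ contains $I_0$ (its Jones projection is dominated by $p_{I_0}$), so $\delta(J_{2k+1})\subset\delta(I_0)=K_1$: these images are precisely the reflection \sacigs $K_{1h}$ inside $K_1$, as the paper records. The dimension-$m$ \sacigs genuinely contained in $K_{33}$ or $K_{34}$ --- which exist, beyond $K_{31}$, only when $m'$ is even, two in each --- are instead the $\delta$-images of the four dihedral dimension-$4$ \sacigs of $K_1$. Finally, a small repair: Proposition~\ref{prop.abelien} does not cover $\langle a^4,ba\rangle$ and $\langle a^4,ab\rangle$, since these subgroups do not contain $b$ and hence are not of the form $B_k=\langle a^k,b\rangle$ (nor of type $H_r$ or $A_k$). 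This one is harmless: your duality count of the dimension-$2m$ \sacigs together with the trace argument already forces all three candidate projections $e_1+e_2+e_3+e_4$, $e_1+e_3+r^m_{1,1}$, $e_1+e_3+r^m_{2,2}$ to be Jones projections, and the identification with the subgroup projections is then the group-basis computation carried out in~\ref{K33}.
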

\begin{proof}

  Since $K_{32}=K_1=L^{\infty}(D_m)$ the description of its lattice of
  \sacigs is straightforward.

  From~\ref{sacigK0} and~\ref{sacig4pair}, we know that $K_3$ admits
  three \sacigs of dimension $2$: $I_0$, $I_3$, and $I_4$. By
  self-duality of $K_3$, there are exactly three \sacigs of dimension
  $2m$, and by trace argument the given Jones projections for
  $K_{32}$, $K_{33}$, and $K_{34}$ are the only possible ones. Their
  expressions in the group basis is straightforward.

  Then, any automorphism of $\KD(2m)$ stabilizes $K_3$ and therefore
  induces an automorphism of $K_3$.  From the expressions of the Jones
  projection in the group basis, the involutive automorphism
  $\Theta_{-1}$ exchanges $K_{33}$ and $K_{34}$ which are therefore
  isomorphic.

  The specified inclusions are derived by comparison of the Jones
  projections. Looking at the inclusion diagrams imposes that $\delta$
  maps $I_0$ to $K_1$, $I_3$ and $I_4$ to $K_{33}$ and $K_{34}$ (or
  $K_{34}$ and $K_{33}$), and $J_{20}$ to $K_{31}$, and the $J_{2k+1}$
  to the $K_{1h}$ (see~\ref{sacig4pair}).

  As in~\ref{sacign}, the \sacigs of dimension dividing $2m$ are
  either contained in $K_1$ or in exactly one of $K_{33}$ or
  $K_{34}$. Therefore, the remaining \sacigs of dimension $4$ and $m$
  of $K_3$ are in exactly one of $K_{33}$ or $K_{34}$. As $K_{33}$ ad
  $K_{34}$ are isomorphic, it is sufficient to find those in
  $K_{33}$. Any \sacig of dimension $4$ contains a \sacig of dimension
  $2$, since it is the image by $\delta$ of a \sacig of dimension $m$
  contained in one of dimensions $2m$. The next argument depends on
  the parity of $m'$ :

  If $m'$ is odd, the only \sacig of dimension $2$ contained in
  $K_{33}$ is $I_0$; therefore the \sacigs of dimension $4$ of
  $K_{33}$ are mapped by $\delta$ on the $m+1$ \sacigs of dimension $m$ of $K_1$: they are $J_{20}$ and the $J_{2k+1}$.

  If $m'$ is even, $K_{33}$ admits only $4$ projections of trace
  $1/2m$ (see~\ref{K33}); therefore, $K_{33}$ contains at most $3$
  \sacigs of dimension $m$, including $K_{31}$. Conclusion: $K_3$
  contains at most $m+5$ \sacigs of dimension $m$ and at most $m+5$
  \sacigs of dimension $4$, and by self-duality it contains exactly
  $m+5$ \sacigs de dimension $m$ and $m+5$ \sacigs de dimension $4$.

  In both cases, $K_3$ admits exactly $m$ \sacigs of dimension $4$ not
  contained in $K_1$.
\end{proof}

\subsubsection{Lattice $\mathrm{l}(\KD(8))$}
\label{KD8}

Since $n$ is a power of $2$, any \sacig is contained in one of the
$K_i$'s (see prop~\ref{sacign}). The lattice $\mathrm{l}(\KD(8))$ can
therefore be constructed from those of $K_2 \equiv L^{\infty}(D_8)$,
$K_3$, and $K_4 \equiv \KD(4)$ (see~\ref{KD4})

\begin{prop}
  The lattice of \sacigs of $\KD(8)$ is as given in Figure~\ref{figure.KD8}.
\end{prop}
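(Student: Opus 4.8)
The plan is to exploit the fact that $8=2^3$ is a power of $2$, so that Proposition~\ref{sacign} applies in its strongest form: every \sacig of $\KD(8)$ has dimension dividing $2n=16$ and is therefore contained in one of the three dimension-$16$ Kac subalgebras $K_2=I(e_1+e_2)$, $K_3=I(e_1+e_3)$, $K_4=I(e_1+e_4)$. Consequently $\ll(\KD(8))$ is the union of the three sublattices $\ll(K_2)$, $\ll(K_3)$, $\ll(K_4)$, glued along the \sacigs common to several of the $K_i$. The bulk of the proof thus reduces to describing these three sublattices and then recording the inclusions between them.

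Two of the three pieces are already known. By Theorem~\ref{theorem.isomorphisms}, $K_2\cong L^\infty(D_8)$, so by~\ref{e1e2} its lattice of \sacigs is the dual of the subgroup lattice of the dihedral group $D_8$ of order $16$; I would enumerate the subgroups of $D_8$ and read off the corresponding Jones projections exactly as in~\ref{sacigsK2} (and as was done for $\KD(4)$ in~\ref{KD4K2}). Likewise $K_4\cong\KD(4)$, whose complete lattice is Figure~\ref{figure.KD4}, so $\ll(K_4)$ can be imported verbatim after transport through the embedding $\varphi_2$ of~\ref{plonge}.

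The genuinely new work is the third piece, $\ll(K_3)$ with $K_3\cong\KB(4)$ (Theorem~\ref{theorem.isomorphisms}). Here $m=4$ is precisely the small case that Proposition~\ref{lattice_K3_even} excludes (it assumes $m\ge 6$), so I cannot simply quote it and must determine the lattice directly. I would proceed as for the dimension-$8$ \sacigs of $\KD(6)$ in~\ref{KD6-8}: exploit that $\KB(4)$ is self-dual to pair up dimensions $d$ and $16/d$, use a trace argument together with Remark~\ref{remark.resumep} to list the finitely many candidate Jones projections of each trace, and then use Proposition~\ref{prop.resumep}(4) (the relation $\Delta(p_I)(1\otimes p_I)=p_I\otimes p_I$) to discard the candidates that do not generate a \sacig, the surviving quadratic systems being solved on computer. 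This yields the three dimension-$8$ \sacigs ($K_1$ and two others), the dimension-$4$ and dimension-$2$ \sacigs, and their images under $\delta$.

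Finally I would assemble the three sublattices. The overlaps are governed by $K_1=K_2\cap K_3\cap K_4$ and by the fact that all dimension-$2$ \sacigs live in $K_0\subset K_4$; the precise inclusions among the surviving \sacigs are then settled by comparing Jones projections through~\ref{tour}(5) (namely $I_1\subset I_2 \Leftrightarrow p_{I_2}\le p_{I_1}$). A last completeness check --- that every \sacig appears exactly once and that no candidate projection was overlooked --- is carried out by the systematic computer search described in the appendix. The main obstacle is the $\ll(K_3)=\ll(\KB(4))$ computation, since it falls outside the range of the general proposition and requires the ad hoc Jones-projection analysis above; the gluing step is then essentially bookkeeping, once the three sublattices and the relevant intersections are in hand.
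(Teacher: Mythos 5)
Your proposal is correct and follows essentially the same route as the paper: since $8$ is a power of $2$, Proposition~\ref{sacign} reduces everything to the three sublattices of $K_2\cong L^\infty(D_8)$ and $K_4\cong\KD(4)$ (both already known) and of $K_3\cong\KB(4)$, which is then handled by self-duality, trace arguments, and computer checks, exactly because the general proposition of~\ref{lattice_K3_even} excludes $m=4$. The only minor difference is in the $K_3$ step: the paper ``proceeds as in the general case,'' using self-duality to get exactly three dimension-$8$ \sacigs and a computer calculation of the block structure of $K_{33}$ and $K_{34}$ (each $\C^4\oplus M_2(\C)$) to cap their dimension-$4$ \sacigs at two each, whereas you propose the candidate-projection/quadratic-system search of~\ref{KD6-8}; both work.
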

\begin{figure}[h]
  \begin{bigcenter}
    \definecolor{qqzzqq}{rgb}{0,0.6,0}
\definecolor{ffqqtt}{rgb}{1,0,0.2}
\definecolor{ccqqcc}{rgb}{0.8,0,0.8}
\definecolor{cqcqcq}{rgb}{0.75,0.75,0.75}

\pgfdeclarelayer{background}
\pgfdeclarelayer{nodes}
\pgfsetlayers{background,main,nodes}
\begin{tikzpicture}[yscale=-1,xscale=0.8]
  \tikzstyle{every node}=[fill=white]
  \begin{pgfonlayer}{nodes}
    \draw [color=blue] (12,5) node {$\KD(8)$};
    \draw [color=ccqqcc] (7,4) node {$K_4$};
    \draw [color=ffqqtt] (4,2) node {$J_0$};
    \draw [color=ccqqcc] (5,2) node {$J_4$};
    \draw [color=ccqqcc] (6,2) node {$J_2$};
    \draw [color=ccqqcc] (7,2) node {$J_6$};
    \draw [color=ffqqtt] (12,2) node {$J_{20}$};
    \draw [color=qqzzqq] (13,2) node {$J_{21}$};
    \draw [color=qqzzqq] (15,2) node {$J_{22}$};
    \draw [color=qqzzqq] (17,2) node {$J_{23}$};
    \draw [color=qqzzqq] (19,2) node {$J_{24}$};
    \draw [color=ffqqtt] (5,3) node {$K_0$};
    \draw [color=ccqqcc] (7,3) node {$K_{43}$};
    \draw [color=qqzzqq] (12,3) node {$K_1$};
    \draw [color=ffqqtt] (12,1) node {$I_0$};
    \draw [color=ffqqtt] (7,1) node {$I_2$};
    \draw [color=ffqqtt] (5,1) node {$I_1$};
    \draw [color=ffqqtt] (14,1) node {$I_3$};
    \draw [color=ffqqtt] (16,1) node {$I_4$};
    \draw [color=blue] (8,2) node {$J_1$};
    \draw [color=blue] (10,2) node {$J_3$};
    \draw [color=blue] (9,2) node {$J_5$};
    \draw [color=blue] (11,2) node {$J_7$};
    \draw [color=blue] (12,4) node {$K_3$};
    \draw [color=qqzzqq] (17,4) node {$K_2$};
    \draw [color=blue] (9,3) node {$K_{33}$};
    \draw [color=blue] (10,3) node {$K_{34}$};
    \draw [color=qqzzqq] (14,3) node {$K_{21}$};
    \draw [color=qqzzqq] (15,3) node {$K_{22}$};
    \draw [color=qqzzqq] (16,3) node {$K_{23}$};
    \draw [color=qqzzqq] (17,3) node {$K_{24}$};
    \draw [color=qqzzqq] (18,3) node {$K_{25}$};
    \draw [color=qqzzqq] (19,3) node {$K_{26}$};
    \draw [color=qqzzqq] (20,3) node {$K_{27}$};
    \draw [color=qqzzqq] (21,3) node {$K_{28}$};
    \draw [color=blue] (12,0) node {$\C$};
  \end{pgfonlayer}
  \draw [color=ccqqcc] (5,1)-- (4,2);
  \draw [color=ccqqcc] (7,1)-- (4,2);
  \draw [color=ccqqcc] (12,1)-- (4,2);
  \draw [color=ccqqcc] (12,1)-- (5,2);
  \draw (5,2)-- (5,3);
  \draw (4,2)-- (5,3);
  \draw (5,3)-- (7,4);
  \draw (7,4)-- (7,3);
  \draw [color=ccqqcc] (7,4)-- (12,3);
  \draw [color=ccqqcc] (6,2)-- (7,3);
  \draw [color=ccqqcc] (7,2)-- (7,3);
  \draw [color=ccqqcc] (12,1)-- (7,2);
  \draw [color=ccqqcc] (12,1)-- (6,2);
  \draw (12,1)-- (12,2);
  \draw [color=ccqqcc] (12,2)-- (7,3);
  \draw [color=ccqqcc] (14,1)-- (12,2);
  \draw [color=ccqqcc] (16,1)-- (12,2);
  \draw (14,1)-- (13,2);
  \draw (14,1)-- (15,2);
  \draw (16,1)-- (17,2);
  \draw (16,1)-- (19,2);
  \draw (19,2)-- (12,3);
  \draw (17,2)-- (12,3);
  \draw (15,2)-- (12,3);
  \draw (13,2)-- (12,3);
  \draw (12,2)-- (12,3);
  \draw (12,3)-- (12,4);
  \draw (12,3)-- (17,4);
  \draw (7,4)-- (12,5);
  \draw (12,4)-- (12,5);
  \draw (17,4)-- (12,5);
  \draw (13,2)-- (14,3);
  \draw (13,2)-- (15,3);
  \draw (15,2)-- (16,3);
  \draw (15,2)-- (17,3);
  \draw (17,2)-- (18,3);
  \draw (17,2)-- (19,3);
  \draw (19,2)-- (20,3);
  \draw (19,2)-- (21,3);
  \draw (14,3)-- (17,4);
  \draw (17,4)-- (15,3);
  \draw (17,4)-- (16,3);
  \draw (17,4)-- (17,3);
  \draw (17,4)-- (18,3);
  \draw (17,4)-- (19,3);
  \draw [color=ccqqcc] (12,2)-- (5,3);
  \draw (20,3)-- (17,4);
  \draw (21,3)-- (17,4);
  \draw (8,2)-- (9,3);
  \draw (9,2)-- (9,3);
  \draw (10,2)-- (10,3);
  \draw (11,2)-- (10,3);
  \draw (12,1)-- (11,2);
  \draw (12,1)-- (9,2);
  \draw (12,1)-- (10,2);
  \draw (12,1)-- (8,2);
  \draw [color=ccqqcc] (12,0)-- (7,1);
  \draw [color=ccqqcc] (12,0)-- (5,1);
  \draw (12,0)-- (12,1);
  \draw [color=ccqqcc] (12,0)-- (14,1);
  \draw [color=ccqqcc] (12,0)-- (16,1);
  \draw (10,3)-- (12,4);
  \draw (9,3)-- (12,4);
  \draw (12,2)-- (10,3);
  \draw (12,2)-- (9,3);
  \begin{pgfonlayer}{background}
    \newcommand{\dimline}[2]{\draw[color=black!10,very thin](3.5,#1) -- (21.5,#1); \node[right] at (21.5,#1) {dim $#2$};}
    \dimline{0}{1}
    \dimline{1}{2}
    \dimline{2}{4}
    \dimline{3}{8}
    \dimline{4}{16}
    \dimline{5}{32}
  \end{pgfonlayer}
  %
  %
\end{tikzpicture}
  \end{bigcenter}
  \caption{The lattice of \sacigs of $\KD(8)$}
  \label{figure.KD8}
\end{figure}
\begin{proof}
  We proceed essentially as in the general case. The algebra $K_1$
  admits, beside $J_{20}$, four \sacigs of dimension $m=4$, denoted
  $J_{21}$, $J_{22}$, $J_{23}$, and $J_{24}$ which each contains
  either $I_3$ or $I_4$. Beside $K_1$, there are two \sacigs de
  dimension $8$:
  \begin{itemize}
  \item $K_{33}=I(e_1+e_3+r^4_{1,1})$ containing $J_{20}$, $J_1$, and $J_5$;
  \item $K_{34}=I(e_1+e_3+r^4_{2,2})$ containing $J_{20}$, $J_3$, and $J_7$.
  \end{itemize}
  Both $K_{33}$ and $K_{34}$ have $\C \oplus \C \oplus \C \oplus \C
  \oplus M_2(\C)$ as matrix structure (computer calculation).
  Therefore they both admit, beside $J_{20}$, exactly two \sacigs of
  dimension $4$; those are $J_1$, $J_5$ and $J_3$, $J_7$. Their
  images by $\delta$ of $K_3$ are $J_{21}$, $J_{22}$ and $J_{23}$,
  $J_{24}$.
\end{proof}

\newpage
\appendix
\section{Collection of formulas for $\KD(n)$}
\label{section.formulas}

In this appendix, we collect all the formulas used in the study of the
algebra $\KD(n)$: left regular representation and expression of matrix
units in the group basis, standard coproducts of some special
elements, expression of $\Omega$ in terms of matrix units, method to
calculate the twisted coproducts and its expression on some elements.

\subsection{Left regular representation of $D_{2n}$}
\cite[6.6]{Vainerman.1998}\label{lambda}

Set $\epsilon_n=\mathrm{e} ^{i\pi/n}$. The left regular representation
of $D_{2n}$ in terms of matrix units is given by, for all $k$:
\begin{align*}
\lambda(a^k)&=e_1+e_2+(-1)^k(e_3+e_4)+\sum_{j=1}^{n-1}(\epsilon_n^{jk} e^j_{1,1}+\epsilon_n^{-jk} e^j_{2,2})\\
\lambda(ba^k)&=e_1-e_2-(-1)^k(e_3-e_4)+\sum_{j=1}^{n-1}(\epsilon_n^{-jk} e^j_{1,2}+\epsilon_n^{jk} e^j_{2,1})\\
\end{align*}

\subsection{Matrix units in the group basis and coinvolution}
\label{form2}

By inverting the previous formulas we get the following expressions
for the matrix units in the group basis:
\begin{align*}
&e_1 =\frac{1}{4n} \sum_{k=0}^{2n-1}(\lambda(a^k)+\lambda(ba^k))\,,
&e_2 &=\frac{1}{4n} \sum_{k=0}^{2n-1}(\lambda(a^k)-\lambda(ba^k))\,,\\
&e_3 =\frac{1}{4n} \sum_{k=0}^{2n-1}(-1)^k(\lambda(a^k)-\lambda(ba^k))\,,
&e_4 &=\frac{1}{4n} \sum_{k=0}^{2n-1}(-1)^k(\lambda(a^k)+\lambda(ba^k))\,,\\
&e^j_{1,1}=\frac{1}{2n} \sum_{k=0}^{2n-1}\mathrm{e}^{-ijk\pi/n}\lambda(a^k)\,,
&e^j_{2,2}&=\frac{1}{2n} \sum_{k=0}^{2n-1}\mathrm{e}^{ijk\pi/n}\lambda(a^k)\,,\\
&e^j_{1,2}=\frac{1}{2n} \sum_{k=0}^{2n-1}\mathrm{e}^{ijk\pi/n}\lambda(ba^k)\,,
&e^j_{2,1}&=\frac{1}{2n} \sum_{k=0}^{2n-1}\mathrm{e}^{-ijk\pi/n}\lambda(ba^k)\,.
\end{align*}
For the convenience of the reader, here are some direct consequences
of those formulas:
\begin{align*}
&e_1+e_2 =\frac{1}{2n} \sum_{k=0}^{2n-1}\lambda(a^{k})\,,
&e_3+e_4 &=\frac{1}{2n} \sum_{k=0}^{2n-1}(-1)^k\lambda(a^{k})\,,\\
&e_1+e_3 =\frac{1}{2n} \sum_{k=0}^{n-1}\lambda(a^{2k})+\lambda(ba^{2k+1})\,,
&e_1+e_4& =\frac{1}{2n} \sum_{k=0}^{n-1}\lambda(a^{2k})+\lambda(ba^{2k})\,,\\
&e_2+e_3 =\frac{1}{2n} \sum_{k=0}^{n-1}(\lambda(a^{2k})-\lambda(ba^{2k}))\,,
&e_1+e_2+e_3+e_4& = \frac{1}{n}\sum_{k=0}^{n-1}\lambda(a^{2k})\,,\\
&e^j_{1,1}+e^{n-j}_{2,2}=\frac{1}{n} \sum_{k=0}^{n-1}\mathrm{e}^{-2ijk\pi/n}\lambda(a^{2k})\,,
&e^j_{1,2}+e^{n-j}_{2,1}&=\frac{1}{n} \sum_{k=0}^{n-1}\mathrm{e}^{2ijk\pi/n}\lambda(ba^{2k})\,,\\
&e^j_{1,2}-e^{n-j}_{2,1}=\frac{1}{n} \sum_{k=0}^{n-1}\mathrm{e}^{ij(2k+1)\pi/n}\lambda(ba^{2k+1})\,.
\end{align*}

The coinvolution is the antiisomorphism defined by
$S(\lambda(g))=\lambda(g)^*$. It fixes the $e_i$ and $e_{i,j}^k$ (with
$i\neq j$) and exchanges the $e_{1,1}^j$ and $e_{2,2}^j$.

\subsection{The unitary $2$-cocycle $\Omega$}
\label{omega}

In~\cite[6.6]{Vainerman.1998} the unitary $2$-cocycle $\Omega$ used to
twist the coproduct is expressed in the group basis:
$$\Omega=\sum_{i,j=1}^4 c_{i,j} \lambda(h_i)\otimes\lambda(h_j)\,,$$
where $h_1=1$, $h_2=a^n$, $h_3=b$, $h_4=ba^n$, and the  $c_{i,j}$ are
the coefficients of the matrix:
$$ \frac{1}{8}\begin{pmatrix}
5&1&1&1\\
1&1&\bar{\nu}&\nu\\
1&\nu&1&\bar{\nu}\\
1&\bar{\nu}&\nu&1
\end{pmatrix}\quad
\text{with}\;\; \nu=-1+2\rm{i}\,.$$
Furthermore, $\Omega^{*}= \sum_{i,j=1}^4 c_{j,i} \lambda(h_i)\otimes\lambda(h_j)$.

We deduce its expression in terms of matrix units:\\
For $n$ even:
\begin{align*}
\Omega&=(e_1+e_2+e_3+e_4+q_1+q_2)\otimes(e_1+e_2+e_3+e_4+q_1+q_2)\\
&+(p_1+p_2)\otimes(e_1+e_4+q_1)+(e_1+e_4+q_1)\otimes(p_1+p_2)\\
&+i(p_1-p_2)\otimes(e_2+e_3+q_2)-i(e_2+e_3+q_2)\otimes(p_1-p_2)+(p_1+ip_2)\otimes(p_1-ip_2)\end{align*}
For $n$ odd:
\begin{align*}
\Omega&=(e_1+e_2+q_1+q_2)\otimes(e_1+e_2+q_1+q_2)\\
&+(e_3+e_4+p_1+p_2)\otimes(e_1+q_1)+(e_1+q_1)\otimes(e_3+e_4+p_1+p_2)\\
&+i(e_2+q_2)\otimes(e_3-e_4-p_1+p_2)-i(e_3-e_4-p_1+p_2)\otimes (e_2+q_2)\\
&+(e_3-ie_4-ip_1+p_2)\otimes(e_3+ie_4+ip_1+p_2)
\end{align*}

\begin{remark}
  \label{remark.omega}
  Note that the conjugation by $\Omega$ acts similarly on all factors
  $M_2(\C)$ of same parity. For example, if $x$ and $y$ are each in
  some even factor (not necessarily the same), then:
  $$\Omega (x\otimes y) \Omega^*=(q_1+q_2)\otimes (q_1+q_2)(x\otimes y) (q_1+q_2)\otimes (q_1+q_2)=x\otimes y\,.$$
  If $x$ and $y$ are each in some odd factor (not necessarily the
  same), then:
  $$\Omega (x\otimes y) \Omega^*=(-ip_1+p_2)\otimes (ip_1+p_2) (x\otimes y) (ip_1+p_2)\otimes (-ip_1+p_2)\,.$$
  In particular, for $j$ and $j'$ odd:
  $$\Omega (e^j_{1,2}\otimes e^{j'}_{1,2}) \Omega^*=r^j_{2,1}\otimes r^{j'}_{1,2}\,.$$

  In~\ref{subsection.copv}, we shall see an example of twisting of
  $x\otimes y$ for $x$ in some odd factor and $y$ in some even
  factor. In subsequent sections, we give further examples of
  calculation of twisted coproducts using this remark.
\end{remark}
\subsection{Twisted coproduct of the Jones projection of the subgroup $H_r$}
\label{J4}
Recall that, in $\C[D_{2n}]$ and for
$r=1,\dots, n-1$, the projection 
$$Q_r=\frac{1}{4}(1+\lambda(a^n)+\lambda(ba^r)+\lambda(ba^{r+n}))\, ,$$
is the Jones projection of the subalgebra $\C[H_r]$ of the subgroup%
\begin{displaymath}
  H_r=\{1, \lambda(a^n), \lambda(ba^r), \lambda(ba^{r+n}) \}\,.
\end{displaymath}
In order to illustrate Remark~\ref{remark.omega}, we prove that the twisted coproduct of $Q_r$ is of the form
\begin{displaymath}
  \Delta(Q_r)=Q_r\otimes Q_r+Q'_r\otimes Q'_r+S(\tilde{P}_r)\otimes \tilde{P}_r + S(\tilde{P'}_r)\otimes \tilde{P'}_r\,.
\end{displaymath}
where $\tilde{P}_r$ and $\tilde{P'}_r$ are defined below, depending on
the parity of $n$. This is used in the proof of
Proposition~\ref{prop.abelien} to conclude that $Q_r$ is the Jones
projection of $J_r$, spanned by $Q_r$, $Q'_r$, $\tilde{P}_r$ and
$\tilde{P'_r}$.

We recall from~\ref{groupe} that the standard coproduct of $Q_r$ is:
$$\Delta_s(Q_r)=Q_r\otimes Q_r+Q'_r\otimes Q'_r+P_r\otimes P_r + P'_r\otimes P'_r\,,$$
where $Q_r$, $Q'_r$, $P_r$ and $P'_r$ are the minimal projections of
$J_r$ (notice that all the elements of the group $H_r$ are of order $2$).

Let us start with $n$ odd, and express $Q_r,Q'_r,P_r,P'_r$ in the
matrix units basis:
\begin{align*}
  Q_r&=\frac{1}{4}(1+\lambda(a^n)+\lambda(ba^r)+\lambda(ba^{r+n}))=e_1+\sum_{j=1,\,j \text{ even}}^{n-1} q(0,\frac{jr\pi}{n},j)\,.\\
  Q'_r&=\frac{1}{4}(1+\lambda(a^n)-\lambda(ba^r)-\lambda(ba^{r+n}))=e_2+\sum_{j=1,\,j \text{ even}}^{n-1} q(0,\frac{jr\pi}{n}+\pi,j)\,.
\end{align*}
For $r$ even:
\begin{align*}
  P_r&=\frac{1}{4}(1-\lambda(a^n)-\lambda(ba^r)+\lambda(ba^{r+n}))=e_3+\sum_{j=1,\,j \text{ odd}}^{n-1} q(0,\frac{jr\pi}{n} +\pi,j)\,,\\
  P'_r&=\frac{1}{4}(1-\lambda(a^n)+\lambda(ba^r)-\lambda(ba^{r+n}))=e_4+\sum_{j=1,\,j \text{ odd}}^{n-1} q(0,\frac{jr\pi}{n},j)\,.
\end{align*}
For $r$ odd:
\begin{align*}
  P_r&=\frac{1}{4}(1-\lambda(a^n)-\lambda(ba^r)+\lambda(ba^{r+n}))=e_4+\sum_{j=1,\,j \text{ odd}}^{n-1} q(0,\frac{jr\pi}{n}+\pi,j)\,,\\
  P'_r&=\frac{1}{4}(1-\lambda(a^n)+\lambda(ba^r)-\lambda(ba^{r+n}))=e_3+\sum_{j=1,\,j \text{ odd}}^{n-1} q(0,\frac{jr\pi}{n},j)\,.
\end{align*}
Applying~\ref{omega}, yields that:
$$\Delta(Q_r)=Q_r\otimes Q_r+Q'_r\otimes Q'_r+(V\otimes V^*)(P_r\otimes P_r)(V^*\otimes V) + (V\otimes V^*)(P'_r\otimes P'_r)(V^*\otimes V)\,,$$
where $V=(e_3-ie_4-ip_1+p_2)$, and $(V\otimes V^*)$ is the unique
term of $\Omega$ which acts non trivially. Note that the following
relations hold:
\begin{equation*}
  Ve_3V^*=e_3, \qquad Ve_4V^*=e_4, \qquad P_r+P'_r=e_3+e_4\,.
\end{equation*}
Therefore, it is sufficient to conjugate $q(0,\frac{jr\pi}{n},j)$ with
$\pm ip_1+p_2$. As desired, we get:
$$\Delta(Q_r)=Q_r\otimes Q_r+Q'_r\otimes Q'_r+S(\tilde{P}_r)\otimes \tilde{P}_r + S(\tilde{P'}_r)\otimes \tilde{P'}_r\,,$$
with, for $r$ even:
\begin{displaymath}
  \tilde{P}_r =e_3+\sum_{j=1,\,j \text{ odd}}^{n-1} q(-\frac{jr\pi}{n},\pi,j)
  \quad \text{ and }\quad
  \tilde{P'}_r =e_4+\sum_{j=1,\,j \text{ odd}}^{n-1} q(\frac{jr\pi}{n},0,j)\,,
\end{displaymath}
and for $r$ odd:
\begin{displaymath}
  \tilde{P}_r =e_4+\sum_{j=1,\,j \text{ odd}}^{n-1} q(-\frac{jr\pi}{n},\pi,j)
  \quad \text{ and }\quad
  \tilde{P'}_r =e_3+\sum_{j=1,\,j \text{ odd}}^{n-1} q(\frac{jr\pi}{n},0,j)\,.
\end{displaymath}
From~\ref{form2}, we get: $S(q(\alpha,\beta,j))=q(-\alpha,\beta,j)$.

The calculation of $\Delta(Q_r)$ in the case $n$ even is similar with
\begin{displaymath}
  \tilde{P}_r =\sum_{j=1,\,j \text{ odd}}^{n-1} q(-\frac{jr\pi}{n},\pi,j) \quad \text{ and}\quad \tilde{P'}_r =\sum_{j=1,\,j \text{ odd}}^{n-1} q(\frac{jr\pi}{n},0,j)\,,
\end{displaymath}
and with, for $r$ even:
\begin{displaymath}
  Q_r=e_1+e_4+\sum_{j=1,\,j \text{ even}}^{n-1} q(0,\frac{jr\pi}{n},j)\quad \text{ and}\quad Q'_r=e_2+e_3+\sum_{j=1,\,j \text{ even}}^{n-1} q(0,\frac{jr\pi+\pi}{n},j)\,,
\end{displaymath}
and for $r$ odd:
\begin{displaymath}
  Q_r=e_1+e_3+\sum_{j=1,\,j \text{ even}}^{n-1} q(0,\frac{jr\pi}{n},j)\quad \text{ and}\quad Q'_r=e_2+e_4+\sum_{j=1,\,j \text{ even}}^{n-1} q(0,\frac{jr\pi+\pi}{n},j)\,.
 \end{displaymath}

\subsection{Coproducts of Jones projections for \sacigs of dimension $2n$}

\subsubsection{Standard coproduct expressions}
\label{copG}

We need to calculate the twisted coproduct of certain projections. We
start with their standard coproducts in $\C[D_{2n}]$, using the
results of~\ref{groupe}.
\begin{proposition}
  The standard coproduct $\Delta_s$ of the Kac algebra of the dihedral
  group $D_{2n}$ satisfies:
 \begin{align*}
   \Delta_s(e_1+e_2)=&\,(e_1+e_2)\otimes(e_1+e_2)+(e_3+e_4)\otimes(e_3+e_4)\\
   &+\sum_{j=1}^{n-1} e^j_{1,1}\otimes e^{j}_{2,2}+e^{j}_{2,2}\otimes e^j_{1,1}\\
   \Delta_s(e_1+e_3)=&\,(e_1+e_3)\otimes(e_1+e_3)+(e_2+e_4)\otimes(e_2+e_4)\\
   & +\frac{1}{2}\sum_{j=1}^{n-1}( e^j_{1,1}+ e^{n-j}_{2,2})\otimes (e^j_{2,2}+e^{n-j}_{1,1})
   +( e^j_{1,2}- e^{n-j}_{2,1})\otimes (e^j_{2,1}- e^{n-j}_{1,2})\\
   \Delta_s(e_1+e_4)=&\,(e_1+e_4)\otimes(e_1+e_4)+(e_2+e_3)\otimes(e_2+e_3)\\
   &+\frac{1}{2}\sum_{j=1}^{n-1}(e^{n-j}_{1,1}+e^j_{2,2})\otimes (e^j_{1,1}+e^{n-j}_{2,2})+( e^j_{1,2}+ e^{n-j}_{2,1})\otimes (e^j_{2,1}+ e^{n-j}_{1,2})\\
   \Delta_s(e_3+e_4)=&\,(e_1+e_2)\otimes(e_3+e_4)+(e_3+e_4)\otimes(e_1+e_2)\\
   &+\sum_{j=1}^{n-1} e^{n-j}_{1,1}\otimes e^{j}_{1,1}+e^{n-j}_{2,2}\otimes e^j_{2,2}\\
   \Delta_s(e_1+e_2+e_3+e_4)=&\,(e_1+e_2+e_3+e_4)\otimes(e_1+e_2+e_3+e_4)\\
   &+\sum_{j=1}^{n-1} (e^{n-j}_{1,1}+e^j_{2,2})\otimes (e^j_{1,1}+e^{n-j}_{2,2})\\
   \Delta_s(e^j_{1,1})=&\,e^j_{1,1}\otimes(e_1+e_2)+e^{n-j}_{2,2}\otimes(e_3+e_4)+(e_1+e_2)\otimes e^j_{1,1}+(e_3+e_4)\otimes e^{n-j}_{2,2} &\\
    &+\sum_{j'<j} e^{j-j'}_{1,1}\otimes e^{j'}_{1,1} +e^{n-(j-j')}_{2,2}\otimes e^{n-j'}_{2,2}\\
    &+\sum_{j'>j}e^{j'-j}_{2,2}\otimes e^{j'}_{1,1}+e^{n-(j'-j)}_{1,1}\otimes e^{n-j'}_{2,2}
  \end{align*}
\end{proposition}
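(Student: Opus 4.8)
The plan is to compute each of these coproducts by a direct Fourier analysis on the group basis, exploiting that the standard coproduct $\Delta_s$ of $\C[D_{2n}]$ is the symmetric one, i.e. the linear map determined by $\Delta_s(\lambda(g))=\lambda(g)\otimes\lambda(g)$ for every $g\in D_{2n}$ (see~\ref{groupe}). By the inversion formulas of~\ref{form2}, each of the six elements on the left-hand sides is an explicit linear combination of the $\lambda(a^k)$ and $\lambda(ba^k)$, so I can apply $\Delta_s$ termwise and then re-express the resulting tensor $\sum_g c_g\,\lambda(g)\otimes\lambda(g)$ back in the matrix-unit basis using the expansions of~\ref{lambda}. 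The whole proof is thus a sequence of discrete orthogonality computations based on the single relation
\begin{displaymath}
  \frac{1}{2n}\sum_{k=0}^{2n-1}\epsilon_n^{mk}=\delta_{m\equiv 0\ (2n)}\,,
\end{displaymath}
where $\epsilon_n=\mathrm{e}^{i\pi/n}$ is a primitive $2n$-th root of unity.

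First I would treat the three central elements $e_1+e_2$, $e_3+e_4$, and $e_1+e_2+e_3+e_4$, which involve only the $\lambda(a^k)$. For instance $e_1+e_2=\frac1{2n}\sum_k\lambda(a^k)$, so $\Delta_s(e_1+e_2)=\frac1{2n}\sum_k\lambda(a^k)\otimes\lambda(a^k)$; substituting $\lambda(a^k)=(e_1+e_2)+(-1)^k(e_3+e_4)+\sum_j(\epsilon_n^{jk}e^j_{1,1}+\epsilon_n^{-jk}e^j_{2,2})$ and applying orthogonality kills all cross terms (the coefficient of $e^j_{1,1}\otimes e^{j'}_{1,1}$ vanishes because $2\le j+j'\le 2n-2$, while that of $e^j_{1,1}\otimes e^{j'}_{2,2}$ is $\delta_{j,j'}$ since $|j-j'|<n$), leaving exactly the stated formula. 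The computations for $e_3+e_4$ and $e_1+e_2+e_3+e_4$ are identical in spirit, the only change being which parity class of $(-1)^k$ survives; the reflection $j\mapsto n-j$ enters through the identity $\frac1{2n}\sum_k(-1)^k\epsilon_n^{jk}=\frac1{2n}\sum_k\epsilon_n^{(j+n)k}$.

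Next I would handle $e_1+e_3$ and $e_1+e_4$, whose group-basis expressions (see~\ref{form2}) also involve the $\lambda(ba^k)$. Here I substitute in addition $\lambda(ba^k)=(e_1-e_2)-(-1)^k(e_3-e_4)+\sum_j(\epsilon_n^{-jk}e^j_{1,2}+\epsilon_n^{jk}e^j_{2,1})$; the products of the $b$-components produce precisely the off-diagonal tensors $e^j_{1,2}\otimes e^{j'}_{2,1}$, and orthogonality forces $j'=n-j$ in the mixed $a$/$b$ terms, which is the source of the systematic index reflection $j\leftrightarrow n-j$ and of the factor $\tfrac12$ in the stated expressions. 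The off-diagonal computation $\Delta_s(e_3+e_4)=(e_1+e_2)\otimes(e_3+e_4)+\cdots$ comes out the same way, confirming that $\Delta_s$ is symmetric and consistent with the coinvolution $S$ of~\ref{form2} exchanging $e^j_{1,1}\leftrightarrow e^j_{2,2}$.

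Finally, for the matrix unit $e^j_{1,1}=\frac1{2n}\sum_k\epsilon_n^{-jk}\lambda(a^k)$ the same substitution turns the coproduct into a convolution: the coefficient of $e^{j''}_{1,1}\otimes e^{j'}_{1,1}$ is $\frac1{2n}\sum_k\epsilon_n^{(j''+j'-j)k}=\delta_{j''+j'\equiv j\ (2n)}$, and reducing the index modulo $2n$ splits the outcome according to whether $j'<j$ or $j'>j$, the wrap-around producing the reflected $e^{n-\cdot}_{\cdot}$ terms, while the boundary values $j'\in\{0,j,n\}$ feed the $(e_1+e_2)$ and $(e_3+e_4)$ blocks. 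I expect this last case, together with the $\lambda(ba^k)$-terms of $e_1+e_3$ and $e_1+e_4$, to be the only genuinely delicate part of the argument: the difficulty is purely combinatorial bookkeeping of the index ranges and of the reflection $j\mapsto n-j$ across the full residue system modulo $2n$, with no conceptual obstruction once the orthogonality relation is in hand.
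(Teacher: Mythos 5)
Your proposal is correct and takes essentially the same approach as the paper: both expand the left-hand sides in the group basis via the inversion formulas of~\ref{form2}, apply $\Delta_s(\lambda(g))=\lambda(g)\otimes\lambda(g)$ termwise, and re-express the result in matrix units via~\ref{lambda}, which is exactly the orthogonality computation you describe (the paper carries this out in detail for $e_1+e_4$ and states that the remaining cases are similar). The only cosmetic difference is that the paper shortcuts $\Delta_s(e_1+e_2)$ and $\Delta_s(e_1+e_2+e_3+e_4)$ by recognizing them as Jones projections of the subgroups generated by $\lambda(a)$ and $\lambda(a^2)$ and invoking~\ref{groupe}, whereas you treat all six formulas uniformly by the same Fourier argument.
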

\begin{proof}
  The projections $e_1+e_2$ and of $e_1+e_2+e_3+e_4$ are the Jones
  projections of the subgroups generated respectively by $\lambda(a)$
  and $\lambda(a^2)$. Their standard coproducts can be derived
  from~\ref{groupe}.

  From $e_1+e_4 =\frac{1}{2n}
  \sum_{k=0}^{n-1}(\lambda(a^{2k})+\lambda(ba^{2k}))$, and using the
  formulas of~\ref{form2} we get:
  \begin{align*}
    \Delta_s(e_1+e_4)=&\,\frac{1}{2n} \sum_{k=0}^{n-1}\lambda(a^{2k})\otimes \lambda(a^{2k})+\lambda(ba^{2k})\otimes \lambda(ba^{2k})\\
    =&\,\frac{1}{2}\Delta_s(e_1+e_2+e_3+e_4)
    \\&+\frac{1}{2n} \sum_{k=0}^{n-1}\lambda(ba^{2k})\otimes \left((e_1+e_4)-(e_2+e_3)+\sum_{j=1}^{n-1}\mathrm{e}^{-2ijk\pi/n}e^j_{1,2}+\mathrm{e}^{2ijk\pi/n}e^j_{2,1}\right)\\
    =&\,\frac{1}{2}\Delta_s(e_1+e_2+e_3+e_4)+\frac{1}{2}\left((e_1+e_4)-(e_2+e_3)\right)\otimes\left((e_1+e_4)-(e_2+e_3)\right)\\
    &+\sum_{j=1}^{n-1}\left(\frac{1}{2n} \sum_{k=0}^{n-1}\mathrm{e}^{-2ijk\pi/n}\lambda(ba^{2k})\otimes
    e^j_{1,2}+\frac{1}{2n} \sum_{k=0}^{n-1}\mathrm{e}^{2ijk\pi/n}\lambda(ba^{2k})\otimes e^j_{2,1}\right)\\
    =&\,(e_1+e_4)\otimes(e_1+e_4)+(e_2+e_3)\otimes(e_2+e_3)\\ &+\frac{1}{2}\sum_{j=1}^{n-1}(e^{n-j}_{1,1}+e^j_{2,2})\otimes (e^j_{1,1}+e^{n-j}_{2,2})+( e^j_{2,1}+ e^{n-j}_{1,2})\otimes e^j_{1,2}
    +( e^j_{1,2}+ e^{n-j}_{2,1})\otimes e^j_{2,1}\\
    =&\,(e_1+e_4)\otimes(e_1+e_4)+(e_2+e_3)\otimes(e_2+e_3)\\
    &+\frac{1}{2}\sum_{j=1}^{n-1}(e^{n-j}_{1,1}+e^j_{2,2})\otimes (e^j_{1,1}+e^{n-j}_{2,2})+( e^j_{1,2}+ e^{n-j}_{2,1})\otimes (e^j_{2,1}+ e^{n-j}_{1,2})
  \end{align*}
  The other coproduct expressions can be calculated similarly.
\end{proof}

\subsubsection{Twisted coproduct expressions}
\label{cop}

Using computer exploration as described in~\ref{mupad.K2=Dn}, and with
some factorization efforts, one gets:\\
For $n=3$:
\begin{align*}
  \Delta(e_1+e_2)=&\,(e_1+e_2)\otimes(e_1+e_2)+(e_3+e_4)\otimes(e_3+e_4)\\
  &+ r^1_{1,1}\otimes r^1_{1,1}+r^1_{2,2}\otimes r^1_{2,2}+e^2_{1,1}\otimes e^{2}_{2,2}+e^{2}_{2,2}\otimes e^2_{1,1}\,,\\
  \Delta(e_1+e_3)=&\,(e_1+e_3)\otimes(e_1+e_3)+(e_2+e_4)\otimes(e_2+e_4)\\
  &+\frac{1}{2}\left(( e^{1}_{2,2}+ r^2_{1,1})\otimes (e^{1}_{1,1}+ r^2_{1,1})+( e^1_{1,1}+ r^{2}_{2,2})\otimes (e^{1}_{2,2}+ r^2_{2,2})\right)\\
  &+\frac{1}{2}\left(( e^1_{1,2}+r^{2}_{2,1})\otimes (e^1_{2,1}+r^{2}_{2,1})+( e^1_{2,1}+r^{2}_{1,2})\otimes (e^1_{1,2}+r^{2}_{1,2})\right)\,,\\
  \Delta(e_1+e_4)=&\,(e_1+e_4)\otimes(e_1+e_4)+(e_2+e_3)\otimes(e_2+e_3)\\
  &+\frac{1}{2}\left(( e^{1}_{1,1}+ r^2_{1,1})\otimes (e^{1}_{2,2}+ r^2_{1,1})+( e^1_{2,2}+ r^{2}_{2,2})\otimes (e^{1}_{1,1}+ r^2_{2,2})\right)\\
  &+\frac{1}{2}\left(( e^1_{1,2}-r^{2}_{1,2})\otimes (e^1_{2,1}-r^{2}_{1,2})+( e^1_{2,1}-r^{2}_{2,1})\otimes (e^1_{1,2}-r^{2}_{2,1})\right)\,,\\
  \Delta(e_1+e_2+&e_3+e_4)=(e_1+e_2+e_3+e_4)\otimes(e_1+e_2+e_3+e_4)\\&+(r^1_{1,1}+e^{2}_{2,2})\otimes(r^1_{1,1}+e^{2}_{1,1})+(r^1_{2,2}+e^{2}_{1,1})\otimes(r^1_{2,2}+e^{2}_{2,2})\,.
\end{align*}
For $n=4$:
\begin{align*}
  \Delta(e_1+e_2)=&\,(e_1+e_2)\otimes(e_1+e_2)+(e_3+e_4)\otimes(e_3+e_4)
  + r^1_{1,1}\otimes r^1_{1,1}\\&+r^1_{2,2}\otimes r^1_{2,2}
  +e^2_{1,1}\otimes e^{2}_{2,2}+e^{2}_{2,2}\otimes e^2_{1,1}+ r^3_{1,1}\otimes r^3_{1,1}+r^3_{2,2}\otimes r^3_{2,2}\\
  \Delta(e_1+e_3)=&\,(e_1+e_3)\otimes(e_1+e_3)+(e_2+e_4)\otimes(e_2+e_4)\,,\\
  &+\frac{1}{2}\left( ( e^{1}_{1,1}+ e^{3}_{1,1})\otimes (e^{1}_{2,2}+ e^{3}_{2,2})+( e^1_{2,2}+ e^{3}_{2,2})\otimes (e^{1}_{1,1}+ e^{3}_{1,1})\right)\\
  &+\frac{1}{2}\left(( e^2_{1,2}-e^{3}_{1,2})\otimes (e^1_{2,1}-e^{3}_{2,1})+( e^1_{2,1}-e^{3}_{2,1})\otimes (e^1_{1,2}-e^{3}_{1,2})\right)\\
  &+ r^2_{1,1} \otimes r^2_{1,1} +r^2_{2,2} \otimes r^2_{2,2}\,,\\
  \Delta(e_1+e_4)=&\,(e_1+e_4)\otimes(e_1+e_4)+(e_2+e_3)\otimes(e_2+e_3)\\
  &+\frac{1}{2}\left(( e^{1}_{1,1}+ e^{3}_{2,2})\otimes (e^{1}_{2,2}+ e^{3}_{1,1})+( e^1_{2,2}+ e^{3}_{1,1})\otimes (e^{1}_{1,1}+ e^{3}_{2,2})\right)\\
  &+\frac{1}{2}\left(( e^1_{1,2}+e^{3}_{2,1})\otimes (e^1_{2,1}+e^{3}_{1,2})+( e^1_{2,1}+e^{3}_{1,2})\otimes (e^1_{1,2}+e^{3}_{2,1})\right)\\
  &+  p^2_{1,1} \otimes p^2_{1,1} +p^2_{2,2} \otimes p^2_{2,2}\,,\\
  \Delta(e_1+e_2+&e_3+e_4)=(e_1+e_2+e_3+e_4)\otimes(e_1+e_2+e_3+e_4)\\&+(r^1_{1,1}+r^{3}_{2,2})\otimes(r^1_{1,1}+r^{3}_{2,2})+(r^1_{2,2}+r^{3}_{1,1})\otimes(r^1_{2,2}+r^{3}_{1,1})\\&+(e^2_{1,1}+e^{2}_{2,2})\otimes(e^2_{1,1}+e^{2}_{2,2})\,.
\end{align*}

Using the conjugation rules of Remark~\ref{omega}, the formulas for
the standard coproduct, and the expressions of the twisted coproduct
for $n=3$ and $n=4$, and with the help of the computer as described in
\ref{mupad.K2=Dn}, we get:
\begin{align*}
\Delta(e_1+e_2)=&\,(e_1+e_2)\otimes(e_1+e_2)+(e_3+e_4)\otimes(e_3+e_4)\\
&+ \sum_{j=1,\,j \text{ odd}}^{n-1}r^j_{1,1}\otimes r^j_{1,1}+r^j_{2,2}\otimes r^j_{2,2}
+\sum_{j=1,\,j \text{ even}}^{n-1}e^j_{1,1}\otimes e^{j}_{2,2}+e^{j}_{2,2}\otimes e^j_{1,1}\,.
\end{align*}
For $n$ odd:
\begin{align*}
  \Delta(e_1+e_3)=&\,(e_1+e_3)\otimes(e_1+e_3)+(e_2+e_4)\otimes(e_2+e_4)\\
  &+\frac{1}{2} \sum_{j=1,\,j \text{ odd}}^{n-1}\left(( e^{j}_{2,2}+ r^{n-j}_{1,1})\otimes (e^{j}_{1,1}+ r^{n-j}_{1,1})+( e^j_{1,1}+ r^{n-j}_{2,2})\otimes (e^{j}_{2,2}+ r^{n-j}_{2,2})\right)\\
  &+\frac{1}{2}\sum_{j=1,\,j \text{ odd}}^{n-1}\left(( e^j_{1,2}+r^{n-j}_{2,1})\otimes (e^j_{2,1}+r^{n-j}_{2,1})+( e^j_{2,1}+r^{n-j}_{1,2})\otimes (e^j_{1,2}+r^{n-j}_{1,2})\right)\,,\\
  \Delta(e_1+e_4)=&\,(e_1+e_4)\otimes(e_1+e_4)+(e_2+e_3)\otimes(e_2+e_3)\\
  &+\frac{1}{2} \sum_{j=1,\,j \text{ odd}}^{n-1}\left(( e^{j}_{1,1}+ r^{n-j}_{1,1})\otimes (e^{j}_{2,2}+ r^{n-j}_{1,1})+( e^j_{2,2}+ r^{n-j}_{2,2})\otimes (e^{j}_{1,1}+ r^{n-j}_{2,2})\right)\\
  &+\frac{1}{2}\sum_{j=1,\,j \text{ odd}}^{n-1}\left(( e^j_{1,2}-r^{n-j}_{1,2})\otimes (e^j_{2,1}-r^{n-j}_{1,2})+( e^j_{2,1}-r^{n-j}_{2,1})\otimes (e^j_{1,2}-r^{n-j}_{2,1})\right)\,,
\end{align*}
\begin{align*}
  \Delta(e_1+e_2+&e_3+e_4)=(e_1+e_2+e_3+e_4)\otimes(e_1+e_2+e_3+e_4)\\
  &+\sum_{j=1,\,j \text{ odd}}^{n-1}\left((r^j_{1,1}+e^{n-j}_{2,2})\otimes(r^j_{1,1}+e^{n-j}_{1,1})+(r^j_{2,2}+e^{n-j}_{1,1})\otimes(r^j_{2,2}+e^{n-j}_{2,2})\right)\,,\\
  \Delta(e_3+e_4)=&\,(e_1+e_2)\otimes(e_3+e_4)+(e_3+e_4)\otimes(e_1+e_2)\\
  &+ \sum_{j=1,\,j \text{ even}}^{n-1} r^{n-j}_{1,1}\otimes e^{j}_{1,1}+ r^{n-j}_{2,2}\otimes e^{j}_{2,2}
  +  e^{j}_{1,1}\otimes r^{n-j}_{2,2}+ e^{j}_{2,2}\otimes r^{n-j}_{1,1}\,,\\
  \\
  \Delta(e^2_{1,1})=&\,e^2_{1,1}\otimes(e_1+e_2)+r^{n-2}_{1,1}\otimes(e_3+e_4)+(e_1+e_2)\otimes e^2_{1,1}+(e_3+e_4)\otimes r^{n-2}_{2,2}\\
  &+r^1_{2,2}\otimes r^{1}_{1,1}+ e^{n-1}_{2,2}\otimes e^{n-1}_{2,2}\\
  &+\sum_{j>2,\,j \text{ even}}^{n-1}e^{j-2}_{2,2}\otimes e^{j}_{1,1}+r^{n-j+2}_{2,2}\otimes r^{n-j}_{2,2}\\
  &+\sum_{j>2,\,j \text{ odd}}^{n-1}r^{j-2}_{1,1}\otimes r^{j}_{1,1}+e^{n-j+2}_{1,1}\otimes e^{n-j}_{2,2}\,.
\end{align*}
For $n$ even:
\begin{align*}
  \Delta(e_1+e_3)=&\,(e_1+e_3)\otimes(e_1+e_3)+(e_2+e_4)\otimes(e_2+e_4)\\
  &+\frac{1}{2} \sum_{j=1,\,j\text{ odd}}^{n-1}\left(( r^{j}_{2,2}+ r^{n-j}_{1,1})\otimes (r^{j}_{2,2}+ r^{n-j}_{1,1})+(r^j_{2,1}-r^{n-j}_{1,2})\otimes( r^j_{2,1}-r^{n-j}_{1,2})\right)\\
  &+\frac{1}{2}\sum_{j=1,\,j \text{ even}}^{n-1}\left(( e^{j}_{1,1}+ e^{n-j}_{2,2})\otimes (e^{j}_{2,2}+ e^{n-j}_{1,1})+( e^j_{1,2}-e^{n-j}_{2,1})\otimes (e^j_{2,1}-e^{n-j}_{1,2})\right)\,,
  \\
  \Delta(e_1+e_4)=&\,(e_1+e_4)\otimes(e_1+e_4)+(e_2+e_3)\otimes(e_2+e_3)\\
  &+\frac{1}{2} \sum_{j=1,\,j  \text{ odd}}^{n-1}\left(( r^{j}_{1,1}+ r^{n-j}_{2,2})\otimes (r^{j}_{1,1}+ r^{n-j}_{2,2})+(r^j_{2,1}+r^{n-j}_{1,2})\otimes( r^j_{2,1}+r^{n-j}_{1,2})\right)\\
  &+\frac{1}{2}\sum_{j=1,\, j \text{ even}}^{n-1}\left(( e^{n-j}_{1,1}+ e^{j}_{2,2})\otimes (e^{j}_{1,1}+ e^{n-j}_{2,2})+( e^j_{1,2}+e^{n-j}_{2,1})\otimes (e^j_{2,1}+e^{n-j}_{1,2})\right)\,,
  \\
  \Delta(e_3+e_4)=&\,(e_1+e_2)\otimes(e_3+e_4)+(e_3+e_4)\otimes(e_1+e_2)\\
  &+ \sum_{j=1,\,j \text{ odd}}^{n-1} r^{j}_{1,1}\otimes r^{n-j}_{2,2}+ r^{j}_{2,2}\otimes r^{n-j}_{1,1}\\
  &+ \sum_{j=1,\,j \text{ even}}^{n-1} e^{n-j}_{1,1}\otimes e^j_{1,1}+ e^{n-j}_{2,2}\otimes e^j_{2,2}\,\\
  \Delta(e_1+e_2+&e_3+e_4)=(e_1+e_2+e_3+e_4)\otimes(e_1+e_2+e_3+e_4)\\
  &+\sum_{j=1,\,j \text{ odd}}^{n-1}(r^j_{1,1}+r^{n-j}_{2,2})\otimes(r^j_{1,1}+r^{n-j}_{2,2})\\
  &+\sum_{j=1,\,j \text{ even}}^{n-1}(e^j_{1,1}+e^{n-j}_{2,2})\otimes(e^j_{2,2}+e^{n-j}_{1,1})\,,
\end{align*}
\begin{align*}
  \Delta(e^2_{1,1})= &e^2_{1,1}\otimes(e_1+e_2)+e^{n-2}_{2,2}\otimes(e_3+e_4)+(e_1+e_2)\otimes e^2_{1,1}+(e_3+e_4)\otimes e^{n-2}_{2,2}\\
  &+r^1_{2,2}\otimes r^{1}_{1,1}+ r^{n-1}_{1,1}\otimes r^{n-1}_{2,2}\\
  &+\sum_{j>2,\, j \text{ even}}^{n-1}e^{j-2}_{2,2}\otimes e^{j}_{1,1}+e^{n-j+2}_{1,1}\otimes e^{n-j}_{2,2}\\
  &+\sum_{j>2,\, j \text{ odd}}^{n-1}r^{j-2}_{1,1}\otimes r^{j}_{1,1}+r^{n-j+2}_{2,2}\otimes r^{n-j}_{2,2}\,.
\end{align*}

\subsection{Coproducts of $v$ and $w$ of $K_3$ in $\KD(2m)$}
\label{subsection.copv}

In this section, we calculate the coproducts of the unitary elements
$v$ and $w$ of $K_3$ in $\KD(2m)$ needed
in~\ref{B4m}. %

\begin{prop}
  With the notations of~\ref{B4m}, the unitary elements $v$ and $w$ of
  $K_3$ expand as follow in the group basis:
  \begin{align*}
    v&=\lambda(ba)B_0-\frac12B_1[\mathrm{i}(\lambda(a)-\lambda(a^{-1}))+\lambda(ba)+\lambda(ba^{-1})]
    \\
    w&=\lambda(ba^{-1})B_0+\frac12B_1[\mathrm{i}(\lambda(a)-\lambda(a^{-1}))-\lambda(ba)-\lambda(ba^{-1})]
  \end{align*}
  The coproduct of  $v$ satisfies : $\Delta(v)=v \otimes B_0v + w\otimes B_1 v$.\\
  Furthermore: $\Delta(w)=w \otimes B_0w + v\otimes B_1 w$.
\end{prop}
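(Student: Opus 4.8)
The plan is to establish the two assertions separately: the group-basis expansions of $v$ and $w$, and then the two coproduct formulas. For the expansions I would simply substitute, into the definitions of $v$ and $w$ from~\ref{B4m}, the group-basis expressions of the matrix units. The units $e^{2j}_{1,2}$ and $e^{2j}_{2,1}$ come directly from~\ref{form2}; the units $r^{2j+1}_{1,2}$ and $r^{2j+1}_{2,1}$ must first be rewritten in terms of the $e^{2j+1}_{i,l}$ using their defining matrices in~\ref{q} and then expanded with~\ref{form2}; and the scalar term $(e_1+e_3)-(e_2+e_4)$ is handled with the consequence formulas collected in~\ref{form2}. Collecting the coefficient of each $\lambda(a^k)$ and $\lambda(ba^k)$ and summing the resulting geometric series in $\epsilon=\mathrm{e}^{\mathrm{i}\pi/(2m)}$ produces cancellations leaving only the eight group elements $a^{\pm1},\,a^{\pm1+n},\,ba^{\pm1},\,ba^{\pm1+n}$, which regroup into $\lambda(ba)B_0$, $\lambda(ba^{-1})B_0$, and the stated $B_1$-combinations. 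This step is routine.

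For the coproducts I would use that the coproduct of $\KD(n)$ is the twist $\Delta(x)=\Omega\,\Delta_s(x)\,\Omega^{*}$ of the standard coproduct $\Delta_s$ of $\C[D_{2n}]$. Since $\Delta_s$ is group-like, the group-basis expansion just obtained gives $\Delta_s(v)=\sum_g c_g\,\lambda(g)\otimes\lambda(g)$ at once. I would then expand each leg back into matrix units with~\ref{lambda} and sort the resulting tensor terms by the parity of the two factors carrying the two legs: scalar, even, or odd. This is exactly the sorting for which Remark~\ref{remark.omega} provides the behaviour of $\Omega(\,\cdot\,)\Omega^{*}$. Writing $v=B_0v+B_1v$, where $B_0v$ is the scalar-and-even part and $B_1v$ the odd part of $v$ (because $B_0=(1+\lambda(a^n))/2$ and $B_1=(1-\lambda(a^n))/2$ are the central projections onto the scalar-plus-even factors and onto the odd factors respectively), the claim amounts to showing that the twist attaches the full factor $v$ to every scalar-or-even component of the second leg, and the full factor $w$ to every odd component. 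The terms with both legs in even or scalar blocks are untwisted and reassemble into the scalar-and-even portion of $v\otimes B_0v$; the nontrivial pieces of $\Omega$ supply the remaining portions, converting the relevant $\epsilon$-powers of the first leg so that the odd second-leg terms produce $w\otimes B_1v$ rather than $v\otimes B_1v$.

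The delicate point, and the step I expect to be the main obstacle, is the bookkeeping of the \emph{mixed-parity} tensor terms, those with one leg in an odd factor and the other in an even factor or the scalar block; this is the very case that Remark~\ref{remark.omega} announces will be treated here, and for it one cannot merely quote the even$\otimes$even or odd$\otimes$odd rules. There one returns to the explicit form of $\Omega$ in~\ref{omega}, projects each leg onto its scalar, even, and odd components, and carries the factors $(-\mathrm{i}p_1+p_2)$ and $(\mathrm{i}p_1+p_2)$ together with the $\epsilon$-powers through the conjugation; the identity to be checked is precisely that this conjugation turns the odd part of $v$ into the odd part of $w$ in the first leg exactly when the second leg is odd, and leaves the first leg equal to $v$ otherwise. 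Once $\Delta(v)=v\otimes B_0v+w\otimes B_1v$ is established, the formula for $w$ needs no new computation: the involutive automorphism $\Theta_{-1}$ of Example~\ref{example.conjugue} (conjugation by $\lambda(b)$) fixes $B_0$ and $B_1$ and exchanges $v$ and $w$, as one checks directly on the group-basis expansions, so applying $\Theta_{-1}\otimes\Theta_{-1}$ to $\Delta(v)$ and using that $\Theta_{-1}$ is a Kac algebra automorphism yields $\Delta(w)=w\otimes B_0w+v\otimes B_1w$.
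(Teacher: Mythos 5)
Your proposal is correct and, for the core computation, follows essentially the same route as the paper: expand $v$ and $w$ in the group basis, use that $\Delta_s$ is group-like, and analyze the $\Omega$-conjugation block by block according to the parities cut out by the central projections $B_i\otimes B_j$ --- the pure-parity blocks being covered by Remark~\ref{remark.omega}, the mixed blocks requiring the explicit form of $\Omega$. The only organizational difference there is the direction: the paper verifies $\Omega^{*}D_{i,j}\Omega=(B_i\otimes B_j)\Delta_s(v)$ for $D_{i,j}=(B_i\otimes B_j)(v\otimes B_0v+w\otimes B_1v)$, i.e.\ it \emph{untwists} the claimed answer, while you twist $\Delta_s(v)$ forward; since $\Omega$ is unitary and $B_i\otimes B_j$ is central, these are the same identity, and both rest on the same conjugation formulas for the mixed-parity tensors (which the paper computes once, in $\KD(4)$, and which are valid for all even $n$ because the action of $\Omega$ depends only on the parities of the blocks). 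One caution on your sketch: the factors $(-\mathrm{i}p_1+p_2)$ and $(\mathrm{i}p_1+p_2)$ you name govern only the odd$\otimes$odd case; for the mixed blocks you need the cross terms of $\Omega$ such as $(p_1+p_2)\otimes(e_1+e_4+q_1)$ and $\mathrm{i}(p_1-p_2)\otimes(e_2+e_3+q_2)$, so the first-leg factor depends on where inside $B_0$ the second leg sits --- exactly what the paper's four tables of computer-derived formulas record; since you also say to return to the full expression of $\Omega$ in~\ref{omega}, this is an imprecision rather than a gap. Where you genuinely depart from the paper, and improve on it, is $\Delta(w)$: the paper just says the calculation is analogous, whereas you observe that the Kac algebra automorphism $\Theta_{-1}$ (Proposition~\ref{proposition.automorphisms.KD.ak}, Example~\ref{example.conjugue}) fixes $B_0$ and $B_1$ and exchanges $v$ and $w$ (immediate on the group expansions, since $a\mapsto a^{-1}$ swaps $ba\leftrightarrow ba^{-1}$ and negates $\lambda(a)-\lambda(a^{-1})$), so applying $\Theta_{-1}\otimes\Theta_{-1}$ to $\Delta(v)=v\otimes B_0v+w\otimes B_1v$ yields $\Delta(w)=w\otimes B_0w+v\otimes B_1w$ with no further computation. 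That trick halves the work and is a worthwhile simplification of the paper's proof.
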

\begin{proof}
  A straightforward calculation gives the expressions in the group
  basis.

  We now check the expression of $\Delta(v)$ by proving that
  untwisting it yields back the standard coproduct of $v$.  Namely,
  setting $D_{i,j}=(B_i\otimes B_j)(v \otimes B_0v + w\otimes B_1 v)$
  for $i=1,2$ and $j=1,2$, we check the equalities:
  $\Omega^{*}D_{i,j}\Omega=(B_i\otimes B_j)\Delta_s(v)$, for $i=1,2$
  and $j=1,2$.

  Note first that $B_1$ is the projection on odd factors and that
  $$\Delta(B_0)=B_0\otimes B_0+B_1\otimes B_1\quad \text{et} \quad \Delta(B_1)=B_0\otimes B_1+B_1\otimes B_0$$
  since $\lambda(a^n))$ belongs to the intrinsic group. Using
  Remark~\ref{remark.omega}, it follows that:
  $$\Delta_s(vB_0)=\lambda(ba)B_0\otimes \lambda(ba)B_0+\lambda(ba)B_1\otimes \lambda(ba)B_1=\Omega^{*}D_{0,0}\Omega+\Omega^{*}D_{1,1}\Omega$$

  The action of $\Omega$ on $x\otimes y$ depends only on the parity of
  the factors containing $x$ and $y$, and in particular is independent
  of $n$ (even). It is therefore sufficient to use the following
  formulas which have been obtained by computer in $\KD(4)$.
  \begin{itemize}
  \item For tensors in $\KD(2m)^{2j+1} \otimes \KD(2m)^{2k}$:
    \begin{align*}
      \Omega^{*}(r_{1,2}\otimes e_{1,2})\Omega&=\frac{\mathrm{i}}{2}(e_{1,1}\otimes e_{1,1}-e_{2,2}\otimes e_{2,2})-\frac12(e_{1,2}\otimes e_{1,2}+e_{2,1}\otimes e_{2,1})\\
      \Omega^{*}(r_{1,2}\otimes e_{2,1})\Omega&=\frac{\mathrm{i}}{2}(e_{1,1}\otimes e_{2,2}-e_{2,2}\otimes e_{1,1})-\frac12(e_{1,2}\otimes e_{2,1}+e_{2,1}\otimes e_{1,2})\\
      \Omega^{*}(r_{2,1}\otimes e_{1,2})\Omega&=\frac{\mathrm{i}}{2}(e_{2,2}\otimes e_{1,1}-e_{1,1}\otimes e_{2,2})-\frac12(e_{2,1}\otimes e_{1,2}+e_{1,2}\otimes e_{2,1})\\
      \Omega^{*}(r_{2,1}\otimes e_{2,1})\Omega&=\frac{\mathrm{i}}{2}(e_{2,2}\otimes e_{2,2}-e_{1,1}\otimes e_{1,1})-\frac12(e_{1,2}\otimes e_{1,2}+e_{2,1}\otimes e_{2,1})
    \end{align*}
  \item For tensors in $\KD(2m)^{2j+1} \otimes (\C e_1\oplus \C e_2\oplus \C e_3\oplus \C e_4)$:  
    \begin{align*}
      \Omega^{*}[r_{1,2}\otimes ((e_1+e_3)-(e_2+e_4))]\Omega&=\frac{\mathrm{i}}{2}[(e_{1,1}-e_{2,2})\otimes ((e_1+e_2)-(e_3+e_4))]\\
      &-\frac12[(e_{1,2}+e_{2,1})\otimes ((e_1+e_3)-(e_2+e_4))]\\
      \Omega^{*}[r_{2,1}\otimes ((e_1+e_3)-(e_2+e_4))]\Omega&=\frac{\mathrm{i}}{2}[(e_{2,2}-e_{1,1})\otimes ((e_1+e_2)-(e_3+e_4))]\\
      &-\frac12[(e_{1,2}+e_{2,1})\otimes ((e_1+e_3)-(e_2+e_4))]
    \end{align*}
  \item For tensors in $\KD(2m)^{2k} \otimes \KD(2m)^{2j+1}$:
    \begin{align*}
      \Omega^{*}(e_{1,2}\otimes r_{1,2})\Omega&=\frac{\mathrm{i}}{2}(e_{2,2}\otimes e_{1,1}-e_{1,1}\otimes e_{2,2})-\frac12(e_{1,2}\otimes e_{2,1}+e_{2,1}\otimes e_{1,2})\\
      \Omega^{*}(e_{2,1}\otimes r_{1,2})\Omega&=\frac{\mathrm{i}}{2}(e_{1,1}\otimes e_{1,1}-e_{2,2}\otimes e_{2,2})-\frac12(e_{1,2}\otimes e_{1,2}+e_{2,1}\otimes e_{2,1})\\
      \Omega^{*}(e_{1,2}\otimes r_{2,1})\Omega&=\frac{\mathrm{i}}{2}(e_{2,2}\otimes e_{2,2}-e_{1,1}\otimes e_{1,1})-\frac12(e_{1,2}\otimes e_{1,2}+e_{2,1}\otimes e_{2,1})\\
      \Omega^{*}(e_{2,1}\otimes r_{2,1})\Omega&=\frac{\mathrm{i}}{2}(e_{1,1}\otimes e_{2,2}-e_{2,2}\otimes e_{1,1})-\frac12(e_{2,1}\otimes e_{1,2}+e_{1,2}\otimes e_{2,1})
    \end{align*}
   \item For tensors in $(\C e_1\oplus \C e_2\oplus \C e_3\oplus \C e_4)
 \otimes \KD(2m)^{2j+1}$:
      \begin{align*}
      \Omega^{*}[((e_1+e_3)-(e_2+e_4))\otimes r_{1,2}]\Omega&=\frac{\mathrm{i}}{2}[((e_1+e_2)-(e_3+e_4))\otimes(e_{1,1}-e_{2,2}) ]\\
      &-\frac12[((e_1+e_3)-(e_2+e_4))\otimes(e_{1,2}+e_{2,1}) ]\\
      \Omega^{*}[((e_1+e_3)-(e_2+e_4))\otimes r_{2,1}]\Omega&=\frac{\mathrm{i}}{2}[((e_1+e_2)-(e_3+e_4))\otimes(e_{2,2}-e_{1,1}) ]\\
      &-\frac12[((e_1+e_3)-(e_2+e_4))\otimes(e_{1,2}+e_{2,1}) ]\\
    \end{align*}
  \end{itemize}
  It follows that
  $\Delta_s(vB_1)=\Omega^{*}D_{0,1}\Omega+\Omega^{*}D_{1,0}\Omega$. Let
  us detail, for example, the calculation of
  $\Omega^{*}D_{1,0}\Omega$:
  \begin{align*}
    \Omega^{*}(B_1v\otimes B_0v)\Omega
    &=\sum_{j=0}^{m-1}\Omega^{*}\left((\epsilon^{-(2j+1)}r_{1,2}^{2j+1}+\epsilon^{2j+1}r_{2,1}^{2j+1})\otimes ((e_1+e_3)-(e_2+e_4))\right)\Omega\\
    &+\sum_{j=0}^{m-1}\sum_{k=1}^{m-1} \Omega^{*}\left((\epsilon^{-(2j+1)}r_{1,2}^{2j+1}+\epsilon^{2j+1}r_{2,1}^{2j+1})\otimes (\epsilon^{-2k}e_{1,2}^{2k} +\epsilon^{2k}e_{2,1}^{2k})\right)\Omega
  \end{align*}
  \def\phantomdoublesum{\phantom{-\frac12\sum_{j=0}^{m-1}\sum_{k=1}^{m-1}}}
  \begin{align*}
    =&-\frac{\mathrm{i}}{2}\sum_{j=0}^{m-1}\left((\epsilon^{2j+1}e^{2j+1}_{1,1}+\epsilon^{-(2j+1)}e^{2j+1}_{2,2})-(\epsilon^{-(2j+1)}e^{2j+1}_{1,1}+\epsilon^{2j+1}e^{2j+1}_{2,2})\right)\otimes ((e_1+e_2)-(e_3+e_4)) \\
    &-\frac12\sum_{j=0}^{m-1}\left((\epsilon^{-(2j+1)}e_{1,2}^{2j+1}+\epsilon^{2j+1}e_{2,1}^{2j+1})+(\epsilon^{2j+1}e_{1,2}^{2j+1}+\epsilon^{-(2j+1)}e_{2,1}^{2j+1})\right)\otimes ((e_1+e_3)-(e_2+e_4))\\
    &+\frac{\mathrm{i}}{2}\sum_{j=0}^{m-1}\sum_{k=1}^{m-1} \epsilon^{-(2j+1)}e_{1,1}^{2j+1}\otimes \epsilon^{-2k}e_{1,1}^{2k}+\epsilon^{-(2j+1)}e_{1,1}^{2j+1}\otimes \epsilon^{2k}e_{2,2}^{2k} \\
    &\phantomdoublesum+\epsilon^{2j+1}e_{2,2}^{2j+1}\otimes \epsilon^{-2k}e_{1,1}^{2k}+\epsilon^{2j+1}e_{2,2}^{2j+1}\otimes \epsilon^{2k}e_{2,2}^{2k}\\
    &\phantomdoublesum- \epsilon^{-(2j+1)}e_{2,2}^{2j+1}\otimes \epsilon^{-2k}e_{2,2}^{2k}-\epsilon^{-(2j+1)}e_{2,2}^{2j+1}\otimes \epsilon^{2k}e_{1,1}^{2k}\\
    &\phantomdoublesum-\epsilon^{2j+1}e_{1,1}^{2j+1}\otimes \epsilon^{-2k}e_{2,2}^{2k}-\epsilon^{2j+1}e_{1,1}^{2j+1}\otimes \epsilon^{2k}e_{1,1}^{2k}\\
    &-\frac12\sum_{j=0}^{m-1}\sum_{k=1}^{m-1}
    \epsilon^{-(2j+1)}e_{1,2}^{2j+1}\otimes \epsilon^{-2k}e_{1,2}^{2k}+\epsilon^{-(2j+1)}e_{2,1}^{2j+1}\otimes \epsilon^{-2k}e_{2,1}^{2k}\\
    &\phantomdoublesum+\epsilon^{-(2j+1)}e_{1,2}^{2j+1}\otimes \epsilon^{2k}e_{2,1}^{2k}+\epsilon^{-(2j+1)}e_{2,1}^{2j+1}\otimes \epsilon^{2k}e_{1,2}^{2k}\\
    &\phantomdoublesum+\epsilon^{2j+1}e_{2,1}^{2j+1}\otimes\epsilon^{-2k}e_{1,2}^{2k}+\epsilon^{2j+1}e_{1,2}^{2j+1}\otimes\epsilon^{-2k}e_{2,1}^{2k}\\
    &\phantomdoublesum+\epsilon^{2j+1}e_{1,2}^{2j+1}\otimes\epsilon^{2k}e_{1,2}^{2k}+\epsilon^{2j+1}e_{2,1}^{2j+1}\otimes\epsilon^{2k}e_{2,1}^{2k}\\
    =&-\frac{\mathrm{i}}{2}\sum_{j=0}^{m-1}(\epsilon^{2j+1}e^{2j+1}_{1,1}+\epsilon^{-(2j+1)}e^{2j+1}_{2,2})\otimes \left((e_1+e_2)-(e_3+e_4))
    +\sum_{k=1}^{m-1} \epsilon^{2k}e_{1,1}^{2k}+\epsilon^{-2k}e_{2,2}^{2k}\right)\\
    &+\frac{\mathrm{i}}{2}\sum_{j=0}^{m-1}(\epsilon^{-(2j+1)}e^{2j+1}_{1,1}+\epsilon^{2j+1}e^{2j+1}_{2,2})
    \otimes \left((e_1+e_2)-(e_3+e_4))+\sum_{k=1}^{m-1}\epsilon^{-2k}e_{1,1}^{2k}+\epsilon^{2k}e_{2,2}^{2k}\right) \\
    &-\frac12\sum_{j=0}^{m-1}(\epsilon^{-(2j+1)}e_{1,2}^{2j+1}+\epsilon^{2j+1}e_{2,1}^{2j+1})\otimes \left((e_1+e_3)-(e_2+e_4)+\sum_{k=1}^{m-1} \epsilon^{-2k}e_{1,2}^{2k}+\epsilon^{2k}e_{2,1}^{2k}\right)\\
    &-\frac12\sum_{j=0}^{m-1}(\epsilon^{2j+1}e_{1,2}^{2j+1}+\epsilon^{-(2j+1)}e_{2,1}^{2j+1})\otimes \left((e_1+e_3)-(e_2+e_4)+\sum_{k=1}^{m-1} \epsilon^{2k}e_{1,2}^{2k}+\epsilon^{-2k}e_{2,1}^{2k}\right)\\
    =&-\frac{\mathrm{i}}{2}\left(\lambda(a)B_1\otimes \lambda(a)B_0-\lambda(a^{-1})B_1\otimes \lambda(a^{-1})B_0\right)\\
    &-\frac12\left(\lambda(ba)B_1\otimes \lambda(ba)B_0+\lambda(ba^{-1})B_1\otimes \lambda(ba^{-1})B_0\right)\,.
  \end{align*}
  The calculation of the coproduct of $w$ is analogous.
\end{proof}

\subsection{Structure of $K_{33}$ in $K_3$ of $\KD(4m')$}
\label{K33}

The Jones projection of the \sacig $K_{33}$ of $K_3$ in $\KD(4m')$ is
$p_{33}=e_1+e_3+r^m_{1,1}$. Furthermore,
\begin{align*}
p_{33}&=e_1+e_3+r^m_{1,1}\\
&=\frac{1}{2n} \sum_{k=0}^{n-1}\lambda(a^{2k})+\lambda(ba^{2k+1})+\frac{1}{2n}\sum_{k=0}^{n-1}(-1)^k\lambda(a^{2k})+\frac{1}{2n}\sum_{k=0}^{n-1}(-1)^k\lambda(ba^{2k+1})\\
&=\frac{1}{n} \sum_{k=0}^{m-1}\lambda(a^{4k})+\lambda(ba^{4k+1})
\end{align*}
Therefore $p_{33}$ is the Jones projection of the algebra $A$ of the
subgroup $D_{2m'}=\langle a^4, ba\rangle$ in $\C[D_{2n}]$. Using e.g.
Proposition~\ref{prop.resumep}, its standard coproduct can be
expressed in terms of the matrix units of $A$. This subalgebra admits
four central projections, associated to the factors of dimension $1$:
$\tilde{e}_1=p_{33}$, $\tilde{e}_2$, $\tilde{e}_3$, and $\tilde{e}_4$,
with
\begin{align*}
  \tilde{e}_2&=\frac{1}{2m} \sum_{k=0}^{m-1}(\lambda(a^{4k})-\lambda(ba^{4k+1}))\\
  &=e_2+e_4+\frac{1}{2m}\sum_{j=1}^{n-1}\sum_{k=0}^{m-1}(\epsilon_m^{2jk} e^j_{1,1}+\epsilon_m^{-2jk} e^j_{2,2})-(\epsilon_m^{-2jk} \epsilon_n^{-j}e^j_{1,2}+\epsilon_m^{2jk}\epsilon_n^{j} e^j_{2,1})\\
  &=e_2+e_4+r^m_{2,2}
\end{align*}
\begin{align*}
  \tilde{e}_3&=\frac{1}{2m} \sum_{k=0}^{m-1}(-1)^k(\lambda(a^{4k})-\lambda(ba^{4k+1}))\\
  &=\frac{1}{2m}\sum_{j=1}^{n-1}\sum_{k=0}^{m-1}(-1)^k(\epsilon_m^{2jk} e^j_{1,1}+\epsilon_m^{-2jk} e^j_{2,2})-(-1)^k(\epsilon_m^{-2jk} \epsilon_n^{-j}e^j_{1,2}+\epsilon_m^{2jk}\epsilon_n^{j} e^j_{2,1})\\
  &=\frac{1}{2m}\sum_{j=1}^{n-1}\sum_{k=0}^{m-1}(\epsilon_m^{2(m'+j)k} e^j_{1,1}+\epsilon_m^{-2(m'+j)k} e^j_{2,2})-(\epsilon_m^{-2(m'+j)k} \epsilon_n^{-j}e^j_{1,2}+\epsilon_m^{2(m'+j)k}\epsilon_n^{j} e^j_{2,1})\\
  &=\frac{1}{2}(e^{m'}_{1,1}+e^{m'}_{2,2}+\mathrm{e} ^{3i\pi/4}e^{m'}_{1,2}+\mathrm{e} ^{-3i\pi/4}e^{m'}_{2,1})+\frac{1}{2}(e^{3m'}_{1,1}+e^{3m'}_{2,2}+\mathrm{e} ^{i\pi/4}e^{3m'}_{1,2}+\mathrm{e} ^{-i\pi/4}e^{3m'}_{2,1})\\
  &=q(0,-3\pi/4,m')+q(0,-\pi/4,3m')
\end{align*}
\begin{align*}
  \tilde{e}_4&=\frac{1}{2m} \sum_{k=0}^{m-1}(-1)^k(\lambda(a^k)+\lambda(ba^{4k+1}))\\
  &=\frac{1}{2m}\sum_{j=1}^{n-1}\sum_{k=0}^{m-1}(-1)^k(\epsilon_m^{2jk} e^j_{1,1}+\epsilon_m^{-2jk} e^j_{2,2})+(-1)^k(\epsilon_m^{-2jk} \epsilon_n^{-j}e^j_{1,2}+\epsilon_m^{2jk}\epsilon_n^{j} e^j_{2,1})\\
  &=\frac{1}{2m}\sum_{j=1}^{n-1}\sum_{k=0}^{m-1}(\epsilon_m^{2(m'+j)k} e^j_{1,1}+\epsilon_m^{-2(m'+j)k} e^j_{2,2})+(\epsilon_m^{-2(m'+j)k} \epsilon_n^{-j}e^j_{1,2}+\epsilon_m^{2(m'+j)k}\epsilon_n^{j} e^j_{2,1})\\
  &=\frac{1}{2}(e^{m'}_{1,1}+e^{m'}_{2,2}+\mathrm{e} ^{-i\pi/4}e^{m'}_{1,2}+\mathrm{e} ^{i\pi/4}e^{m'}_{2,1})+\frac{1}{2}(e^{3m'}_{1,1}+e^{3m'}_{2,2}+\mathrm{e} ^{-3i\pi/4}e^{3m'}_{1,2}+\mathrm{e} ^{3i\pi/4}e^{3m'}_{2,1})\\
  &=q(0,\pi/4,m')+q(0,3\pi/4,3m')
\end{align*}
as well as $m'-1$ blocks of dimension $2$ whose matrix units are
derived from the partial isometries $\tilde{e}^h_{1,2}$ with $h=1,
\dots, m'-1$:
\begin{align*}
  \tilde{e}^h_{1,2}&=\frac{1}{m} \sum_{k=0}^{m-1}\mathrm{e}^{ihk\pi/m'}\lambda(ba^{4k+1})\\
  &=\frac{1}{m}\sum_{j=1}^{n-1} \sum_{k=0}^{m-1}\epsilon^{2hk}_{m}(\epsilon_m^{-2jk} \epsilon_n^{-j}e^j_{1,2}+\epsilon_m^{2jk}\epsilon_n^{j} e^j_{2,1})\\
  &=\frac{1}{m}\sum_{j=1}^{n-1} \sum_{k=0}^{m-1}(\epsilon_m^{2(h-j)k} \epsilon_n^{-j}e^j_{1,2}+\epsilon_m^{2(h+j)k}\epsilon_n^{j} e^j_{2,1})\\
  &=\epsilon_n^{-h}e^h_{1,2}+\epsilon_n^{-(m+h)}e^{m+h}_{1,2}
  +\epsilon_n^{(m-h)}e^{m-h}_{2,1}+\epsilon_n^{(n-h)}e^{n-h}_{2,1}\\
  &=\epsilon_n^{-h}(e^h_{1,2}-e^{n-h}_{2,1})+\epsilon_n^{-(m+h)}(e^{m+h}_{1,2}
  -e^{n-(m+h)}_{2,1})\\
  &=\epsilon_n^{-h}\left((e^h_{1,2}-e^{n-h}_{2,1})-i(e^{m+h}_{1,2}
  -e^{n-(m+h)}_{2,1})\right)\,.
\end{align*}
Upon deformation by $\Omega$, the right legs $\tilde{e}_1$,
$\tilde{e}_2$, $\tilde{e}_3$, $\tilde{e}_4$, and $\tilde{e}^h_{i,j}$
for $h$ even of $\Delta_s(p_{33})$ are left unchanged; the remaining
rights legs $\tilde{e}^h_{i,j}$ for $h$ odd become
$\epsilon_n^{-h}[(r^h_{1,2}-r^{n-h}_{2,1})-i(r^{m+h}_{1,2}-r^{n-(m+h)}_{2,1})]$. Therefore,
$K_{33}$ admits exactly four blocs of dimension $1$, as is the case
for $A$.

\subsection{Calculations for Theorem~\ref{self-dual}}
\label{psicoproduit}

We complete the demonstration of the self-duality of $\KD(n)$ for $n$
odd (Theorem~\ref{self-dual}) by calculating explicitly
$\psi(\lambda(a))$ and $\psi(\lambda(b))$ and checking that their
coproducts are preserved by $\psi$. The calculations are
straightforward albeit lengthy; regrettably, we could not delegate
them to the computer in the general case.

\subsubsection{Preservation of the coproduct of $\lambda(b)$}
\begin{lemma}
$$\psi(\lambda(b))=4n\widehat{e_4} \qquad \text{and} \qquad \Delta(\psi(\lambda(b)))=(\psi\otimes\psi)(\Delta(\lambda(b))\,.$$
\end{lemma}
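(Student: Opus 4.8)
The plan is to verify the two claimed equalities directly by computation, working in the dual algebra $L^\infty(D_{2n})$ with its twisted product. First I would establish $\psi(\lambda(b))=4n\widehat{e_4}$. Recall from~\ref{form2} that $\lambda(b)=\lambda(ba^0)$ admits the expansion $e_1-e_2-e_3+e_4+\sum_{j=1}^{n-1}(e^j_{1,2}+e^j_{2,1})$ (reading off the $k=0$ case). Applying $\psi$ to this expansion using the definitions $\psi(e_1)=\chi_1$, $\psi(e_2)=\chi_{a^n}$, $\psi(e_3)=\chi_{ba^n}$, $\psi(e_4)=\chi_b$, together with the formulas for $\psi(r^k_{i,j})=E^k_{i,j}$ and $\psi(e^{n-k}_{i,j})=E^{n-k}_{i,j}$ from~\ref{block}, I expect a telescoping of the characteristic functions. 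The off-diagonal matrix units $e^j_{1,2}+e^j_{2,1}$ map to combinations of the $E^k_{1,2}+E^k_{2,1}$ and $E^{n-k}_{1,2}+E^{n-k}_{2,1}$; reading the definitions in~\ref{block}, the purely imaginary parts cancel in each such symmetric sum, leaving only the terms supported on $\chi_b$. Summing the contributions of all blocks and the four central projections should collapse to $4n\widehat{e_4}=4n\chi_b$, since $tr(\chi_b)=1/2n$ and $\widehat{e_4}=\chi_b$ is a single coordinate functional.

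Second I would check $\Delta(\psi(\lambda(b)))=(\psi\otimes\psi)(\Delta(\lambda(b)))$. For the left-hand side, $\psi(\lambda(b))=4n\chi_b$ lives in the dual, whose coproduct is $\Delta(\chi_g)=\sum_{s\in D_{2n}}\chi_s\otimes\chi_{s^{-1}g}$; hence $\Delta(4n\chi_b)=4n\sum_{s}\chi_s\otimes\chi_{s^{-1}b}$. For the right-hand side, I would use the expansion of $\Delta(\lambda(b))$ in $\KD(n)$, which is the $\Delta_s(\lambda(b))=\lambda(b)\otimes\lambda(b)$ (group-like in $\C[D_{2n}]$) conjugated by $\Omega$: $\Delta(\lambda(b))=\Omega(\lambda(b)\otimes\lambda(b))\Omega^*$. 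Since $b\in H$, the conjugation by $\Omega$ may act nontrivially, so I would expand $\Delta(\lambda(b))$ explicitly using the matrix-unit form of $\Omega$ in~\ref{omega} for $n$ odd, then apply $\psi\otimes\psi$ term by term. The target is to match this against $4n\sum_s \chi_s\otimes\chi_{s^{-1}b}$. Because $\psi$ is already known to be a $C^*$-algebra isomorphism preserving the trace (Proposition in~\ref{block}), it suffices to match the two sides as elements of $L^\infty(D_{2n})\otimes L^\infty(D_{2n})$, which is a finite coordinatewise check.

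The main obstacle will be the bookkeeping in the second equality: expanding $\Delta(\lambda(b))=\Omega\Delta_s(\lambda(b))\Omega^*$ into matrix units, applying $\psi\otimes\psi$, and reorganizing the resulting $64$-term expression into the clean form $\sum_s\chi_s\otimes\chi_{s^{-1}b}$. The conjugation by $\Omega$ mixes the odd blocks (through the factors $-\mathrm{i}p_1+p_2$ of Remark~\ref{remark.omega}), so I would exploit that $\lambda(b)$ is supported only on the central projections and the off-diagonal $e^j_{1,2},e^j_{2,1}$, which simplifies how $\Omega$ acts. I would also use the coassociativity and the already-established coproduct formulas for the Jones projections in~\ref{cop} as consistency checks. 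As noted in the excerpt, these calculations are straightforward but lengthy, and in the general $n$ case they cannot be delegated to the computer; however, the pattern is fully determined by the $n=3$ verification carried out earlier, so the general formula follows by the degree argument of Proposition~\ref{proposition.equationAtInfinity}.
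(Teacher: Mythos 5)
Your overall skeleton (expand $\lambda(b)$ in matrix units, push it through $\psi$, then compare coproducts on the dual side) matches the paper's, but the proposal contains a fatal misreading of the notation that derails both halves. By~\ref{self-dualKD}, the hat denotes the \emph{vector-space} isomorphism extending $\lambda(g)\mapsto\chi_g$ by linearity, so by~\ref{form2}
$$4n\,\widehat{e_4}=\sum_{j=0}^{2n-1}(-1)^j\bigl(\chi_{a^j}+\chi_{ba^j}\bigr)\,,$$
a signed sum supported on all $4n$ group elements; it is $\psi(e_4)$, not $\widehat{e_4}$, that equals $\chi_b$. Consequently your two targets --- that the expansion of $\psi(\lambda(b))$ ``collapses to terms supported on $\chi_b$'', and that the left-hand side of the coproduct identity is $4n\sum_s\chi_s\otimes\chi_{s^{-1}b}$ --- are simply not the statements to be proved, and the computation you describe would refute rather than confirm them. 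Indeed, after rewriting $e^k_{1,2}+e^k_{2,1}=-(r^k_{1,2}+r^k_{2,1})$ for $k$ odd and applying $\psi$, the imaginary parts of $E^k_{1,2}+E^k_{2,1}$ and $E^{n-k}_{1,2}+E^{n-k}_{2,1}$ do cancel (as you say), but what survives is spread over $\chi_{a^{\pm k}}$, $\chi_{a^{n\pm k}}$, $\chi_{ba^{\pm k}}$, $\chi_{ba^{n\pm k}}$, and the total is exactly the signed sum displayed above --- this is the paper's first computation.

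Two further gaps. First, you treat $\Delta(\lambda(b))=\Omega(\lambda(b)\otimes\lambda(b))\Omega^*$ as a genuinely twisted object requiring a $64$-term expansion; in fact the conjugation is trivial, because $b$ lies in the commutative group $H$ and $\Omega\in\C[H]\otimes\C[H]$, so $\lambda(b)\otimes\lambda(b)$ commutes with $\Omega$ and $\lambda(b)$ stays group-like in $\KD(n)$. This is the key simplification in the paper's proof: it reduces the right-hand side to $\psi(\lambda(b))\otimes\psi(\lambda(b))=16n^2\,\widehat{e_4}\otimes\widehat{e_4}$, which, after expanding the double sum and re-indexing (using that $j+k$ and $k-j$ have the same parity), is precisely the dual coproduct $\Delta(4n\widehat{e_4})=\sum_{s}\sum_j(-1)^j\chi_s\otimes(\chi_{s^{-1}a^j}+\chi_{s^{-1}ba^j})$. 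Second, your appeal to Proposition~\ref{proposition.equationAtInfinity} to promote an $n=3$ check to general $n$ is not available here: that proposition concerns expressions in $a,b,\an$ whose degree is bounded independently of $n$, whereas $\psi(\lambda(b))=4n\widehat{e_4}$ involves all powers $a^j$, $0\le j<2n$, and the identity to be verified lives partly in the dual algebra, outside the scope of the degree argument. This is exactly why the paper carries out this verification by hand in~\ref{psicoproduit} and notes that it could not be delegated to the computer in the general case.
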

\begin{proof}
  From~\ref{lambda} and~\ref{q}, one can write:
  \begin{align*}
    \lambda(b)&=e_1-e_2-e_3+e_4+\sum_{j=1}^{n-1}e^j_{1,2}+e^j_{2,1}\\
    &=e_1-e_2-e_3+e_4+\sum_{k=1,\, k \text{ odd}}^{n-1}-(r^k_{1,2}+r^k_{2,1})+e^{n-k}_{1,2}+e^{n-k}_{2,1}\,.
  \end{align*}
  It follows that
  \begin{align*}
    \psi(\lambda(b))=&\,\chi_1-\chi_{a^n}-\chi_{ba^n}+\chi_{b}+\sum_{k=1,\, k \text{ odd}}^{n-1}-(E^k_{1,2}+E^k_{2,1})+E^{n-k}_{1,2}+E^{n-k}_{2,1}\\
    =&\,\chi_1-\chi_{a^n}-\chi_{ba^n}+\chi_{b}\\
    &+\sum_{k=1,\, k \text{ odd}}^{n-1}\chi_{ba^{n-k}}+\chi_{ba^{n+k}}
    -\chi_{ba^k}-\chi_{ba^{2n-k}}-\chi_{a^k}-\chi_{a^{2n-k}}+\chi_{a^{n+k}}+\chi_{a^{n-k}}\\
    =&\sum_{j=0}^{2n-1} (-1)^j(\chi_{a^j}+\chi_{ba^j})=4n\widehat{e_4}\,.
  \end{align*}
  Then,
  \begin{align*}
    &(\psi\otimes\psi)(\Delta(\lambda(b))=16n^2\widehat{e_4}\otimes \widehat{e_4}\\
    =&\sum_{j=0}^{2n-1}\sum_{k=0}^{2n-1} (-1)^{k+j}(\chi_{a^j}+\chi_{ba^j})\otimes  (\chi_{a^k}+\chi_{ba^k})\\
    =&\sum_{j=0}^{2n-1}\sum_{k=0}^{2n-1} (-1)^{k+j}\left(\chi_{a^j}\otimes  (\chi_{a^{-j}a^{j+k}}+\chi_{a^{-j}ba^{k-j}})+ \chi_{ba^j}\otimes  (\chi_{(a^{-j}b)ba^{k+j}}+\chi_{(a^{-j}b)a^{k-j}})\right)\,;
  \end{align*}
  and since $j+k$ and $j-k$ have the same parity:
  \begin{align*}
    (\psi\otimes\psi)(\Delta(\lambda(b))&=\sum_{j=0}^{2n-1}\sum_{s\in
      D_{2n}} (-1)^{j}\chi_{s}\otimes
    (\chi_{s^{-1}a^{j}}+\chi_{s^{-1}ba^{j}})=\Delta(4n\widehat{e_4}) =
    \Delta(\psi(\lambda(b)))\,.\qedhere
  \end{align*}
\end{proof}

\subsubsection{Preservation of the coproduct of $\lambda(a)$}

\begin{lemma}
  \begin{displaymath}
    \psi(\lambda(a))=n(2\widehat{e^{n-1}_{1,1}}+ \widehat{e^1_{2,2}}- (\widehat{e^1_{1,1}}+\widehat{e^{n-1}_{1,2}}+\widehat{e^{n-1}_{2,1}}))\quad \text{and} \quad \Delta(\psi(\lambda(a)))=(\psi\otimes\psi)(\Delta(\lambda(a))\,.
  \end{displaymath}
\end{lemma}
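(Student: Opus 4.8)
The plan is to establish this Lemma by the same two-step route used for $\lambda(b)$: first determine $\psi(\lambda(a))$ explicitly in the characteristic-function basis of $\widehat{\KD(n)}$, then verify the coproduct identity by expanding both sides in the $\chi_s\otimes\chi_t$ basis and comparing coefficients. The feature that makes $\lambda(a)$ genuinely harder than $\lambda(b)$ is that $b\in H$ whereas $a\notin H$: since $\Omega\in\C[H]\otimes\C[H]$ and $H$ is abelian, $\lambda(b)$ commutes with both legs of $\Omega$, so $\Delta(\lambda(b))=\Omega(\lambda(b)\otimes\lambda(b))\Omega^*=\lambda(b)\otimes\lambda(b)$ is grouplike and the whole computation was short. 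For $a$ this simplification fails, and one must handle the full non-symmetric twisted coproduct $\Delta(\lambda(a))=\Omega(\lambda(a)\otimes\lambda(a))\Omega^*$.

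First I would compute $\psi(\lambda(a))$. Starting from $\lambda(a)=e_1+e_2-e_3-e_4+\sum_{j=1}^{n-1}(\epsilon_n^{j}e^j_{1,1}+\epsilon_n^{-j}e^j_{2,2})$ of \ref{lambda}, I rewrite it in the basis on which $\psi$ is defined in \ref{block}: the even-index blocks are already of the form $e^{n-k}$, while each odd-index contribution is converted to the $r$-matrix units of \ref{q} via the single-factor identity $\epsilon_n^{j}e^j_{1,1}+\epsilon_n^{-j}e^j_{2,2}=\cos(j\pi/n)(r^j_{1,1}+r^j_{2,2})+\sin(j\pi/n)(r^j_{1,2}-r^j_{2,1})$. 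Applying $\psi$ through $\psi(e_1)=\chi_1,\ \psi(e_2)=\chi_{a^n},\ \psi(e_3)=\chi_{ba^n},\ \psi(e_4)=\chi_b$, $\psi(r^j_{i,l})=E^j_{i,l}$ and $\psi(e^{n-j}_{i,l})=E^{n-j}_{i,l}$ produces a fully explicit element of $L^\infty(D_{2n})$. For each $\chi_g$ the coefficient is a short combination of the phases $\epsilon_n^{\pm j}$ coming from the pair of indices ($j$ odd and $n-j$ even) whose $E$'s hit $g$; by Euler's formula this coefficient matches the one read off from the hatted formulas of \ref{form2}, giving the stated value $n(2\widehat{e^{n-1}_{1,1}}+\widehat{e^1_{2,2}}-(\widehat{e^1_{1,1}}+\widehat{e^{n-1}_{1,2}}+\widehat{e^{n-1}_{2,1}}))$.

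For the coproduct I would compute the two sides directly in $L^\infty(D_{2n})\otimes L^\infty(D_{2n})$. The left side $\Delta(\psi(\lambda(a)))$ follows by applying $\Delta(\chi_g)=\sum_{s\in D_{2n}}\chi_s\otimes\chi_{s^{-1}g}$ to the $\chi_g$-expansion just obtained. The right side $(\psi\otimes\psi)(\Delta(\lambda(a)))$ requires first evaluating the twisted coproduct $\Delta(\lambda(a))=\Omega\Delta_s(\lambda(a))\Omega^*$ of \ref{omega} in matrix units—here the parity-dependent conjugation rules of Remark~\ref{remark.omega} (conjugation by $\Omega$ acts uniformly on factors of equal parity) keep the expansion manageable—and then pushing it through $\psi\otimes\psi$; matching the coefficients of $\chi_s\otimes\chi_t$ on the two sides closes the argument. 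The main obstacle is precisely this bookkeeping: contrary to the isomorphism proofs of Section~\ref{section.KD}, where Proposition~\ref{proposition.equationAtInfinity} collapsed everything to a single finite check, the matrix-unit description of $\lambda(a)$ ranges over all $n-1$ two-dimensional blocks and its twisted coproduct is non-symmetric, so the identity truly depends on $n$ and must be checked symbolically—which is why it resisted automation and is the lengthy content deferred to \ref{psicoproduit}.
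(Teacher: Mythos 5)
Your proposal is correct and follows essentially the same route as the paper: you compute $\psi(\lambda(a))$ by re-expressing the odd blocks in the $r$-units (your identity $\epsilon_n^{j}e^j_{1,1}+\epsilon_n^{-j}e^j_{2,2}=\cos(j\pi/n)(r^j_{1,1}+r^j_{2,2})+\sin(j\pi/n)(r^j_{1,2}-r^j_{2,1})$ is exactly the paper's splitting of the odd part $V$ into $V+V^*$ and $V-V^*$), and you then verify the coproduct identity by expanding $\Delta(\psi(\lambda(a)))$ in the dual and conjugating $\lambda(a)\otimes\lambda(a)$ by $\Omega$ using the even/odd support structure. The only difference is bookkeeping: the paper organizes the $\Omega$-conjugation through $\lambda(a)=U+V$ with $U=\tfrac12(\lambda(a)+\lambda(a^{n+1}))$ and $V=\tfrac12(\lambda(a)-\lambda(a^{n+1}))$, computing $\Omega(U\otimes U+V\otimes V)\Omega^*$ and $\Omega(U\otimes V+V\otimes U)\Omega^*$ separately, which is precisely what your direct expansion amounts to once the parity pieces are separated.
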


We prove this lemma in the following subsections. In order to split
the calculations, using~\ref{lambda} and
$\epsilon=\mathrm{e}^{\mathrm{i}\pi/n}$, we write: $\lambda(a)=U+V$,
with
$$U=e_1+e_2+\sum_{k=1,\, k \text{ even}}^{n-1} \epsilon^k e^k_{1,1}+ \epsilon^{-k} e^k_{2,2}\quad \text {and } \quad V=-e_3-e_4+\sum_{k=1,\,k \text{ odd}}^{n-1} \epsilon^k e^k_{1,1}+ \epsilon^{-k} e^k_{2,2}\,.$$.

Hence, since all the terms of $U-(e_1+e_2)$ (resp. $V+(e_3+e_4)$) are
in $M_2(\C)$ factors of same parity, we may use
Remark~\ref{remark.omega} to calculate the coproducts of $U$ and $V$.

\subsubsection{Expression of $\psi(U)$ and $\psi(V)$}\label{lambdaa}
  \begin{lem}
    $$\psi(U)=2n\widehat{e^{n-1}_{1,1}},\qquad\psi(V)=n (\widehat{e^1_{2,2}}-\widehat{e^1_{1,1}}-\widehat{e^{n-1}_{1,2}}-\widehat{e^{n-1}_{2,1}})\,.$$
  \end{lem}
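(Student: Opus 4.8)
The plan is to prove the two identities separately and then add them, since $\lambda(a)=U+V$ and the whole point of the splitting $U+V$ is that the even-index and odd-index diagonal matrix units sit in \emph{different} defining clauses of $\psi$ (the clause $\psi(e^{n-k}_{i,j})=E^{n-k}_{i,j}$ for $U$, and $\psi(r^{k}_{i,j})=E^{k}_{i,j}$ for $V$, $k$ odd). Throughout I would work with $\epsilon=\mathrm{e}^{\mathrm{i}\pi/n}$, use the explicit $E$-formulas of~\ref{block}, and at the end recognize the resulting linear combinations of $\chi_g$ as the $\widehat{\cdot}$-images of matrix units via the inversion formulas of~\ref{form2}. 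The essential arithmetic fact in play is that $n$ is odd, which controls all the parities.

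For $\psi(U)$: each $k$ in the sum is even, so $n-k$ is odd and $e^{k}_{1,1},e^{k}_{2,2}$ fall under $\psi(e^{n-k}_{i,j})=E^{n-k}_{i,j}$. Substituting the explicit values $E^{n-k}_{1,1}=\chi_{a^{n-k}}+\chi_{a^{2n-k}}$ and $E^{n-k}_{2,2}=\chi_{a^{k}}+\chi_{a^{n+k}}$ (read with the roles of $k$ and $n-k$ swapped), together with $\psi(e_1)=\chi_1$, $\psi(e_2)=\chi_{a^n}$, I would collect everything as a combination of the $\chi_{a^j}$, $0\le j\le 2n-1$. The four index families $\{k\},\{n-k\},\{n+k\},\{2n-k\}$ for $k$ even, plus $\{0,n\}$, partition $\{0,\dots,2n-1\}$; a coefficient check shows the coefficient of $\chi_{a^j}$ is exactly $(-1)^j\epsilon^j$. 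Comparing with $e^{n-1}_{1,1}=\tfrac1{2n}\sum_k \mathrm{e}^{-\mathrm{i}(n-1)k\pi/n}\lambda(a^k)$ from~\ref{form2} (so $2n\,\widehat{e^{n-1}_{1,1}}=\sum_j(-1)^j\epsilon^j\chi_{a^j}$) gives $\psi(U)=2n\,\widehat{e^{n-1}_{1,1}}$.

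For $\psi(V)$ the extra ingredient is a change of basis, since for $k$ odd the domain of $\psi$ is the $r^k$-units, not the $e^k$-units appearing in $V$. From the explicit $2\times2$ matrices of~\ref{q} I would derive $e^{k}_{1,1}+e^{k}_{2,2}=r^{k}_{1,1}+r^{k}_{2,2}$ and $e^{k}_{1,1}-e^{k}_{2,2}=\mathrm{i}(r^{k}_{2,1}-r^{k}_{1,2})$, whence
\begin{displaymath}
  \epsilon^{k}e^{k}_{1,1}+\epsilon^{-k}e^{k}_{2,2}
  =\cos\tfrac{k\pi}{n}\,(r^{k}_{1,1}+r^{k}_{2,2})
  +\sin\tfrac{k\pi}{n}\,(r^{k}_{1,2}-r^{k}_{2,1}).
\end{displaymath}
Applying $\psi(r^{k}_{i,j})=E^{k}_{i,j}$, inserting $E^{k}_{1,1}+E^{k}_{2,2}$ and $E^{k}_{1,2}-E^{k}_{2,1}=\mathrm{i}(\chi_{a^{k}}-\chi_{a^{2n-k}}-\chi_{a^{n+k}}+\chi_{a^{n-k}})$, summing over odd $k$, and adding $\psi(-e_3-e_4)=-\chi_{ba^n}-\chi_b$, I would separate the result into its $a^m$-part and its $ba^m$-part.

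Finally I would match against the target. Using~\ref{form2} one gets $n(\widehat{e^1_{2,2}}-\widehat{e^1_{1,1}})=\mathrm{i}\sum_m\sin\tfrac{m\pi}{n}\,\chi_{a^m}$ and $-n(\widehat{e^{n-1}_{1,2}}+\widehat{e^{n-1}_{2,1}})=-\sum_m(-1)^m\cos\tfrac{m\pi}{n}\,\chi_{ba^m}$; a coefficient-by-coefficient comparison over the four families $m=k,n-k,n+k,2n-k$ (plus $m=0,n$) closes the argument, using $\sin(\pi-x)=\sin x$, $\cos(\pi\pm x)=-\cos x$, and the parity of $n$. I expect the main obstacle to be precisely this index bookkeeping: keeping straight, for each odd $k$, the four residues modulo $2n$ together with their parities and the sign they pick up, and verifying that summing over odd $k$ reproduces the single clean coefficient $\mathrm{i}\sin\tfrac{m\pi}{n}$ on $\chi_{a^m}$ and $-(-1)^m\cos\tfrac{m\pi}{n}$ on $\chi_{ba^m}$ with no leftover terms. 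Everything else is routine linear algebra once the $e^k\!\to r^k$ conversion is in hand.
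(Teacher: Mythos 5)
Your proposal is correct and coincides in substance with the paper's own proof: the computation of $\psi(U)$ is identical, and your per-block identity $\epsilon^k e^k_{1,1}+\epsilon^{-k}e^k_{2,2}=\cos\tfrac{k\pi}{n}(r^k_{1,1}+r^k_{2,2})+\sin\tfrac{k\pi}{n}(r^k_{1,2}-r^k_{2,1})$ is exactly the paper's decomposition of $V$ into Hermitian and anti-Hermitian parts, $V=\tfrac12(V+V^*)+\tfrac12(V-V^*)$, carried out one block at a time (the cosine part is $\tfrac12(V+V^*)$, the sine part $\tfrac12(V-V^*)$). The only cosmetic difference is the final matching step: the paper resums the images into $\sum_j(\epsilon^j-\epsilon^{-j})\chi_{a^j}=2n(\widehat{e^1_{2,2}}-\widehat{e^1_{1,1}})$ and $-\sum_j(\epsilon^{(n-1)j}+\epsilon^{-(n-1)j})\chi_{ba^j}=-2n(\widehat{e^{n-1}_{1,2}}+\widehat{e^{n-1}_{2,1}})$, whereas you compare trigonometric coefficients over the four index families $\{k,\,n-k,\,n+k,\,2n-k\}$ against the targets read off from~\ref{form2}; both versions of the bookkeeping go through.
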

  \begin{proof}
    On the one hand,
    \begin{align*}
      \psi(U)&=\chi_1+\chi_{a^n}+\sum_{k=1,\, k \text{ odd}}^{n-1}\epsilon^{n-k} E^{n-k}_{1,1}+ \epsilon^{-(n-k)} E^{n-k}_{2,2}\\
      &= \chi_1+\chi_{a^n}+\sum_{j=1,\, j \text{ even}}^{n-1}\epsilon^j (\chi_{a^j}+\chi_{a^{n+j}})+ \epsilon^{-j} (\chi_{a^{n-j}}+\chi_{a^{2n-j}})\\
      &=\sum_{j=0}^{2n-1}(-1)^j\epsilon^j\chi_{a^j}
      =\sum_{j=0}^{2n-1}\epsilon^{-(n-1)j}\chi_{a^j}=2n\widehat{e^{n-1}_{1,1}}\,.
    \end{align*}
    On the other hand,
    \begin{align*}
      (V+V^*)&=2(-e_3-e_4)+\sum_{k=1,\, k \text{ odd}}^{n-1}
      (\epsilon^k+\epsilon^{-k}) (r^k_{1,1}+  r^k_{2,2})\\
      (V-V^*)&=\rm{i}\sum_{k=1,\, k \text{ odd}}^{n-1}  (\epsilon^k-\epsilon^{-k})(r^k_{2,1}-r^k_{1,2})\,.
    \end{align*}
    It follows that:
    \begin{align*}
      \psi(V+V^*)&=2(-\chi_{ba^n}-\chi_{b})+\sum_{k=1,\, k \text{ odd}}^{n-1}(\epsilon^k + \epsilon^{-k}) (\chi_{ba^{n+k}}+\chi_{ba^{k}}+\chi_{ba^{2n-k}}+\chi_{ba^{n-k}})\\
      &=- \sum_{j=0}^{2n-1}(\epsilon^{(n-1)j} + \epsilon^{-(n-1)j}) \chi_{ba^{j}}
      =-2n (\widehat{e^{n-1}_{1,2}}+\widehat{e^{n-1}_{2,1}})\,,\\
      \psi(V-V^*)&=\sum_{k=1,\, k \text{ odd}}^{n-1}(\epsilon^k - \epsilon^{-k}) (\chi_{a^{k}}+\chi_{a^{n-k}}-\chi_{a^{n+k}}-\chi_{a^{2n-k}})\\
      &=\sum_{j=0}^{2n-1}(\epsilon^j - \epsilon^{-j}) \chi_{a^{j}}
      =2n (\widehat{e^1_{2,2}}-\widehat{e^1_{1,1}})\,,
    \end{align*}
    which gives the desired expression for $\psi(V)$. The expressions
    for $\psi(V+V^*)$ and $\psi(V-V^*)$ will be reused later on.
  \end{proof}
  \subsubsection{Coproducts of $\psi(U)$ and $\psi(V)$}\label{UV}
  \begin{lem}
    $$\Delta(\psi(U)) =4n^2(\widehat{e^{n-1}_{1,1}}\otimes \widehat{e^{n-1}_{1,1}}+\widehat{e^{n-1}_{1,2}}\otimes \widehat{e^{n-1}_{2,1}})\,.$$

    \begin{multline*}
      \Delta(\psi(V))=2n^2(\widehat{e^1_{2,2}} \otimes\widehat{e^1_{2,2}}
      +\widehat{e^1_{2,1}} \otimes \widehat{e^1_{1,2}}
      -\widehat{e^1_{1,1}} \otimes\widehat{e^1_{1,1}}
      -\widehat{e^1_{1,2}} \otimes \widehat{e^1_{2,1}}\\
      -\widehat{e^{n-1}_{2,2}} \otimes \widehat{e^{n-1}_{2,1}}
      -\widehat{e^{n-1}_{2,1}} \otimes \widehat{e^{n-1}_{1,1}}
      -\widehat{e^{n-1}_{1,1}} \otimes \widehat{e^{n-1}_{1,2}}
      -\widehat{e^{n-1}_{1,2}} \otimes \widehat{e^{n-1}_{2,2}})
    \end{multline*}
  \end{lem}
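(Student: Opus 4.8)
The plan is to reduce both computations to the single structural fact, recorded in~\ref{block}, that the coproduct of $\widehat{\KD(n)}$ is the \emph{undeformed} coproduct of $L^\infty(D_{2n})$, namely $\Delta(\chi_g)=\sum_{s\in D_{2n}}\chi_s\otimes\chi_{s^{-1}g}$ (the product of $\KD(n)$ being untouched by the twist, its dual coproduct is the standard one). Since~\ref{lambdaa} already expresses $\psi(U)$ and $\psi(V)$ as explicit combinations of the matrix units $\widehat{e^j_{i,l}}$, and since~\ref{form2} writes each such unit as a Fourier-type sum, $\widehat{e^j_{1,1}}=\frac1{2n}\sum_k\epsilon^{-jk}\chi_{a^k}$, $\widehat{e^j_{2,2}}=\frac1{2n}\sum_k\epsilon^{jk}\chi_{a^k}$, $\widehat{e^j_{1,2}}=\frac1{2n}\sum_k\epsilon^{jk}\chi_{ba^k}$, $\widehat{e^j_{2,1}}=\frac1{2n}\sum_k\epsilon^{-jk}\chi_{ba^k}$, everything in sight is a finite linear combination of the $\chi_g$, and I simply apply $\Delta$ term by term.

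Concretely, for $\Delta(\psi(U))$ I start from $\psi(U)=\sum_j\epsilon^{-(n-1)j}\chi_{a^j}$ (from~\ref{lambdaa}), apply the coproduct formula, and split the summation index $s$ over the two cosets $s=a^l$ and $s=ba^l$. Using $ba^j=a^{-j}b$ one gets $s^{-1}a^j=a^{j-l}$ in the first case and $s^{-1}a^j=ba^{l+j}$ in the second, so $\Delta(\chi_{a^j})$ splits into $\sum_l\chi_{a^l}\otimes\chi_{a^{j-l}}$ and $\sum_l\chi_{ba^l}\otimes\chi_{ba^{l+j}}$. Reindexing each double sum so that it factors as a product of two one-variable sums, and re-identifying the factors through~\ref{form2}, yields exactly $4n^2(\widehat{e^{n-1}_{1,1}}\otimes\widehat{e^{n-1}_{1,1}}+\widehat{e^{n-1}_{1,2}}\otimes\widehat{e^{n-1}_{2,1}})$, the $a$-coset producing the first tensor and the $b$-coset the second.

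For $\Delta(\psi(V))$ I would use the expansions of $\psi(V+V^{*})$ and $\psi(V-V^{*})$ from~\ref{lambdaa}, i.e. $\psi(V)=\tfrac12\sum_k(\epsilon^k-\epsilon^{-k})\chi_{a^k}-\tfrac12\sum_k(\epsilon^{(n-1)k}+\epsilon^{-(n-1)k})\chi_{ba^k}$, and again apply the coproduct. The diagonal ($a$-type) half contributes $\Delta(\chi_{a^k})$, whose two cosets factor into the four terms $\widehat{e^1_{2,2}}\otimes\widehat{e^1_{2,2}}$, $\widehat{e^1_{2,1}}\otimes\widehat{e^1_{1,2}}$, $-\widehat{e^1_{1,1}}\otimes\widehat{e^1_{1,1}}$, $-\widehat{e^1_{1,2}}\otimes\widehat{e^1_{2,1}}$ (each with coefficient $2n^2$). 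The off-diagonal ($ba$-type) half uses $\Delta(\chi_{ba^k})$; here $s=a^l$ gives $\chi_{a^l}\otimes\chi_{ba^{l+k}}$ and $s=ba^l$ gives $\chi_{ba^l}\otimes\chi_{a^{k-l}}$ (via $b^2=1$), which after the same reindexing produce the four mixed terms $-\widehat{e^{n-1}_{2,2}}\otimes\widehat{e^{n-1}_{2,1}}$, $-\widehat{e^{n-1}_{2,1}}\otimes\widehat{e^{n-1}_{1,1}}$, $-\widehat{e^{n-1}_{1,1}}\otimes\widehat{e^{n-1}_{1,2}}$, $-\widehat{e^{n-1}_{1,2}}\otimes\widehat{e^{n-1}_{2,2}}$. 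Summing the eight contributions gives the asserted formula.

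The only real obstacle is bookkeeping, not anything conceptual: keeping track of the signs and of which phase $\pm(n-1)$ or $\pm1$ lands on which leg after each change of summation variable, and recognizing the resulting one-variable sums as the correct matrix units through~\ref{form2}. Note that no use of $n$ being odd is required in this lemma itself — oddness enters only through the definition of $\psi$ and the block decomposition of~\ref{block} — so these coproduct identities are purely formal identities of sums taken modulo $2n$.
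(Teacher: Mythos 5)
Your proposal is correct and follows essentially the same route as the paper: both arguments rest on the fact that the coproduct of $\widehat{\KD(n)}$ is the undeformed one of $L^{\infty}(D_{2n})$, and both compute $\Delta(\psi(U))$ and $\Delta(\psi(V))$ by expanding in the basis $\{\chi_g\}$, splitting the sum over the two cosets $a^l$ and $ba^l$, reindexing, and recognizing the resulting factored Fourier sums as the matrix units of~\ref{form2}. The only cosmetic difference is in the $V$ part, where the paper computes just $\Delta(2n\widehat{e^1_{2,2}})$ and $\Delta(2n\widehat{e^{n-1}_{1,2}})$ and deduces the remaining terms from the $*$-compatibility of $\Delta$ via $\widehat{e^j_{2,2}}=\widehat{e^j_{1,1}}^*$ and $\widehat{e^j_{2,1}}=\widehat{e^j_{1,2}}^*$, whereas you expand $\psi(V)$ fully in the group basis and handle all eight terms directly.
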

  \begin{proof}
    \begin{align*}
      \Delta(\psi(U))&=\sum_{k=0}^{2n-1}\sum_{j=0}^{2n-1}\epsilon^{-(n-1)j}(\chi_{a^k}\otimes\chi_{a^{j-k}}+\chi_{a^k b}\otimes\chi_{ba^{j-k}})\\
      &=\sum_{k=0}^{2n-1}\sum_{j=0}^{2n-1}\epsilon^{-(n-1)(k+j)}(\chi_{a^k}\otimes\chi_{a^{j}}+\chi_{a^kb}\otimes\chi_{ba^{j}})\\
      &=4n^2(\widehat{e^{n-1}_{1,1}}\otimes \widehat{e^{n-1}_{1,1}}+\widehat{e^{n-1}_{1,2}}\otimes \widehat{e^{n-1}_{2,1}})\,.
    \end{align*}
    Similar calculations give:
    \begin{align*}
      \Delta(2n \widehat{e^1_{2,2}}) &=  4n^2(\widehat{e^1_{2,2}} \otimes\widehat{e^1_{2,2}}
      +\widehat{e^1_{2,1}} \otimes \widehat{e^1_{1,2}})\,,\\
      \Delta(2n \widehat{e^{n-1}_{1,2}}) &=
      4n^2(\widehat{e^{n-1}_{2,2}} \otimes \widehat{e^{n-1}_{2,1}}
      +\widehat{e^{n-1}_{1,2}} \otimes \widehat{e^{n-1}_{1,1}})\,.
    \end{align*}
    The announced formula follows using the equalities
    $\widehat{e^j_{2,2}}=\widehat{e^j_{1,1}}^*$ and
    $\widehat{e^j_{2,1}}=\widehat{e^j_{1,2}}^*$.
  \end{proof}

\subsubsection{Image of $\Delta(U)$ by $\psi\otimes \psi$}
  We now turn to the preservation by $\psi$ of the coproducts of $U$
  and $V$, and therefore of $\lambda(a)=U+V$.  Since
  $\lambda(a^{n+1})=U-V$, we can calculate the coproducts of
  $U=1/2(\lambda(a)+\lambda(a^{n+1}))$ and
  $V=1/2(\lambda(a)-\lambda(a^{n+1}))$ using the expression for
  $\Omega$ for $n$ odd given in~\ref{omega}.

  Let us start with $U$:
  \begin{align*}
    &\Delta(U)=\Omega(U\otimes U+V\otimes V)\Omega^*\\
    &=(e_1+e_2+q_1+q_2)\otimes(e_1+e_2+q_1+q_2)(U\otimes U) (e_1+e_2+q_1+q_2)\otimes(e_1+e_2+q_1+q_2) \\
    &+(e_3-ie_4-ip_1+p_2)\otimes(e_3+ie_4+ip_1+p_2)(V\otimes V) (e_3+ie_4+ip_1+p_2)\otimes (e_3-ie_4-ip_1+p_2)\\
    &=U\otimes U + V'\otimes V'^*\,,
  \end{align*}
  where $V'=(-e_3-e_4+\sum_{k=1,\, k \text{ odd}}^{n-1} \epsilon^k
  r^k_{2,2}+ \epsilon^{-k} r^k_{1,1})$. The image of $V'$ by $\psi$ is:
  \begin{displaymath}
    \psi(V')=-\chi_{ba^n}-\chi_{b}+\sum_{k=1,\, k \text{ odd}}^{n-1}\epsilon^k E^k_{2,2}+ \epsilon^{-k} E^k_{1,1}
    =-\sum_{j=0}^{2n-1}\epsilon^{(n-1)j}  \chi_{ba^{j}}
    =-2n\widehat{e^{n-1}_{1,2}}\,.
  \end{displaymath}
  It follows that $\psi(V'^*)=-2n\widehat{e^{n-1}_{2,1}}$, and we get:
  \begin{displaymath}
    (\psi\otimes \psi)(\Delta(U))=4n^2(\widehat{e^{n-1}_{1,1}}\otimes \widehat{e^{n-1}_{1,1}}+\widehat{e^{n-1}_{1,2}}\otimes \widehat{e^{n-1}_{2,1}})\,.
  \end{displaymath}
  Using Lemma~\ref{UV}, we conclude that $\psi$ preserves the
  coproduct of $U$.

\subsubsection{Image of $\Delta(V)$  by $\psi\otimes \psi$}

  We now calculate the image of $\Delta(V)$ by $\psi\otimes \psi$:
  \begin{align*}
    \Delta(V)&=\Omega(U\otimes V+V\otimes U)\Omega^* \,,\\
    &=U_1\otimes V_1+U_2\otimes V_2+i(U_3\otimes V_3-U_4\otimes V_4)\\
    &+V_1\otimes U_1+V_2\otimes U_2-i(V_3\otimes U_3-V_4\otimes U_4)\,,
  \end{align*}
  where:
  \begin{align*}
    U_1&=(e_1+q_1)U(e_1+q_1)=e_1+\sum_{k=1,\, k \text{ even}}^{n-1}
    \frac{1}{2}(\epsilon^k + \epsilon^{-k})p^k_{1,1}\,,\\
    V_1&=(e_3+e_4+p_1+p_2)V(e_3+e_4+p_1+p_2)=V\,,\\
    U_2&= (e_2+q_2)U (e_2+q_2)=e_2+\sum_{k=1,\, k \text{ even}}^{n-1}
    \frac{1}{2}(\epsilon^k +
    \epsilon^{-k})p^k_{2,2}\,,\\
    V_2&=(e_3-e_4-p_1+p_2)V(e_3-e_4-p_1+p_2)=V^*\,,\\
  \end{align*}
  \begin{align*}
    U_3&=(e_2+q_2)U(e_1+q_1)=\sum_{k=1,\, k \text{ even}}^{n-1} \frac{1}{4}(\epsilon^k - \epsilon^{-k})(e^k_{1,1}+e^k_{1,2}-e^k_{2,1}-e^k_{2,2})\,,\\
    V_3&=(e_3-e_4-p_1+p_2)V(e_3+e_4+p_1+p_2)=-e_3+e_4-\sum_{k=1,\, k \text{ odd}}^{n-1} \epsilon^{-k} e^{k}_{1,2}+ \epsilon^{k} e^k_{2,1}\,\\
    U_4&=(e_1+q_1)U(e_2+q_2)=\sum_{k=1,\, k \text{ even}}^{n-1}
    \frac{1}{4}(\epsilon^k -
    \epsilon^{-k})(e^k_{1,1}-e^k_{1,2}+e^k_{2,1}-e^k_{2,2})\,,\\
    V_4&=(e_3+e_4+p_1+p_2)V(e_3-e_4-p_1+p_2)=-e_3+e_4-\sum_{k=1,\, k \text{ odd}}^{n-1} \epsilon^k e^{k}_{1,2}+ \epsilon^{-k} e^k_{2,1}\,.
  \end{align*}
  To split the calculation into more manageable chunks, we write:
  \begin{align*}
    2[U_1\otimes V_1&+U_2\otimes V_2+i(U_3\otimes V_3-U_4\otimes V_4)]=\\
    &(U_1+U_2)\otimes (V_1+V_2)+(U_1-U_2)\otimes
    (V_1-V_2)\\
    &+\rm{i}[(U_3-U_4)\otimes (V_3+V_4)+(U_3+U_4)\otimes
    (V_3-V_4)]\,.
  \end{align*}

  \begin{lem}
    \begin{align*}
      \psi(U_1+U_2)&=        n(\widehat{e^{n-1}_{1,1}}+\widehat{e^{n-1}_{2,2}})\,,&
      \psi(U_1-U_2)&=        n(\widehat{e^{1}_{1,1}}  +\widehat{e^{1}_{2,2}})\,,\\
      \psi(U_3+U_4)&=        n(\widehat{e^{n-1}_{1,1}}-\widehat{e^{n-1}_{2,2}})\,,&
      \psi(U_3-U_4)&= {\rm i}n(\widehat{e^{1}_{1,2}}  -\widehat{e^{1}_{2,1}})\,,\\
      \psi(V_1+V_2)&=      -2n(\widehat{e^{n-1}_{1,2}}-\widehat{e^{n-1}_{2,1}})\,,&
      \psi(V_1-V_2)&=       2n(\widehat{e^1_{2,2}}    -\widehat{e^1_{1,1}})\,,\\
      \psi(V_3+V_4)&=       2n(\widehat{e^1_{1,2}}    +\widehat{e^1_{2,1}})\,,&
      \psi(V_3-V_4)&=2{\rm i}n(\widehat{e^{n-1}_{1,2}}-\widehat{e^{n-1}_{2,1}})\,.
    \end{align*}
  \end{lem}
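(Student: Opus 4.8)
The plan is to prove the eight identities by exactly the method already used in~\ref{lambdaa} for $\psi(U)$ and $\psi(V)$, now applied to the finer pieces $U_1,\dots,U_4,V_1,\dots,V_4$. For each combination I would first rewrite it purely in terms of those matrix units of $\KD(n)$ on which $\psi$ is explicitly defined — the $r^k_{i,j}$ of~\ref{q} in the odd factors and the ordinary $e^k_{i,j}$ in the even factors — then apply $\psi$ via $\psi(r^k_{i,j})=E^k_{i,j}$, $\psi(e^k_{i,j})=E^k_{i,j}$ and $\psi(e_1)=\chi_1$, $\psi(e_2)=\chi_{a^n}$, $\psi(e_3)=\chi_{ba^n}$, $\psi(e_4)=\chi_b$ (see~\ref{block}), and finally collapse the resulting character sums into single dual matrix units using the group-basis formulas of~\ref{form2}, i.e. $\widehat{e^{j}_{i,i'}}=\tfrac1{2n}\sum_m\epsilon^{\mp jm}\chi_{a^m}$ and the analogous expressions with $\chi_{ba^m}$. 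Two of the eight entries cost nothing new: since $V_1=V$ and $V_2=V^{*}$, the values of $\psi(V_1+V_2)$ and $\psi(V_1-V_2)$ are simply $\psi(V+V^{*})$ and $\psi(V-V^{*})$, both already obtained in~\ref{lambdaa}.

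For the four combinations built from $U_1,\dots,U_4$ only even factors occur, so $\psi$ acts directly. Here I would first simplify the $p^k_{i,i}$ into matrix units of the $k$-th factor, using $p^k_{1,1}+p^k_{2,2}=e^k_{1,1}+e^k_{2,2}$ and $p^k_{1,1}-p^k_{2,2}=e^k_{1,2}+e^k_{2,1}$, and read off $U_3\pm U_4$ as the scalar multiples $\tfrac12(\epsilon^k-\epsilon^{-k})(e^k_{1,1}-e^k_{2,2})$ and $\tfrac12(\epsilon^k-\epsilon^{-k})(e^k_{1,2}-e^k_{2,1})$. Applying $\psi$ turns each into a sum $\sum_{k\ \mathrm{even}}\tfrac12(\epsilon^k\pm\epsilon^{-k})E^k_{\bullet,\bullet}$ of characters; re-indexing with the help of $\epsilon^n=-1$ into full sums over $m=0,\dots,2n-1$ and matching against the $\widehat{e^{j}_{i,i'}}$ yields $n(\widehat{e^{n-1}_{1,1}}\pm\widehat{e^{n-1}_{2,2}})$, together with $n(\widehat{e^{1}_{1,1}}+\widehat{e^{1}_{2,2}})$ and $\mathrm{i}n(\widehat{e^{1}_{1,2}}-\widehat{e^{1}_{2,1}})$.

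For $V_3\pm V_4$ only odd factors occur, where $\psi$ is prescribed in the $r^k$-basis rather than the $e^k$-basis, so the single extra ingredient is the elementary $2\times2$ dictionary $e^k_{1,2}+e^k_{2,1}=-(r^k_{1,2}+r^k_{2,1})$ and $e^k_{1,2}-e^k_{2,1}=\mathrm{i}(r^k_{1,1}-r^k_{2,2})$ valid in each odd factor. Substituting these, applying $\psi$ through $\psi(r^k_{i,j})=E^k_{i,j}$, and resumming exactly as above produces $2n(\widehat{e^1_{1,2}}+\widehat{e^1_{2,1}})$ and $2\mathrm{i}n(\widehat{e^{n-1}_{1,2}}-\widehat{e^{n-1}_{2,1}})$.

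The computation is routine once the bases are aligned; the step I expect to be the main obstacle is the disciplined bookkeeping of the parity of the factor indices — which dictates the $r^k$-versus-$e^k$ convention and hence whether a given $\chi_{a^m}$ picks up an extra sign $(-1)^m=\epsilon^{nm}$ on resummation — combined with the correct pairing of conjugate matrix units $\widehat{e^j_{2,2}}=\widehat{e^j_{1,1}}{}^{*}$ and $\widehat{e^j_{2,1}}=\widehat{e^j_{1,2}}{}^{*}$ needed to fold each sum into one dual matrix unit. With the eight values in hand, they feed directly into the decomposition
\[
2\,\Delta(V)=(U_1+U_2)\otimes(V_1+V_2)+(U_1-U_2)\otimes(V_1-V_2)+\mathrm{i}\big[(U_3-U_4)\otimes(V_3+V_4)+(U_3+U_4)\otimes(V_3-V_4)\big]+(\text{three analogous terms with legs interchanged})
\]
to complete the check that $\psi$ preserves the coproduct of $V$, and hence of $\lambda(a)=U+V$.
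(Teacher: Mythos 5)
Your proposal is correct and follows essentially the same route as the paper's own proof: the same reduction of $\psi(V_1\pm V_2)$ to the preceding lemma via $V_1=V$, $V_2=V^*$, the same conversions $p^k_{1,1}\pm p^k_{2,2}=e^k_{1,1}+e^k_{2,2}$, $e^k_{1,2}+e^k_{2,1}$ in even factors and $e^k_{1,2}+e^k_{2,1}=-(r^k_{1,2}+r^k_{2,1})$, $e^k_{1,2}-e^k_{2,1}=\mathrm{i}(r^k_{1,1}-r^k_{2,2})$ in odd factors, and the same resummation of characters against the $\widehat{e^j_{i,i'}}$ using $\epsilon^{nm}=(-1)^m$. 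Note only that this computation (in your version as in the paper's) yields $\psi(V_1+V_2)=-2n(\widehat{e^{n-1}_{1,2}}+\widehat{e^{n-1}_{2,1}})$, consistent with the preceding lemma and with the final verification of $\Delta(\psi(V))$, so the minus sign in that entry of the displayed statement is a typo.
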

  \begin{proof}
    \begin{align*}
      \psi(U_1+U_2)&=\psi(e_1+e_2+\sum_{k=1,\, k \text{ even}}^{n-1} \frac{1}{2}(\epsilon^k + \epsilon^{-k})(e^k_{1,1}+e^k_{2,2}))\\
      &=\chi_1+\chi_{a^n}+\sum_{k=1,\, k \text{ even}}^{n-1} \frac{1}{2}(\epsilon^k + \epsilon^{-k})(\chi_{a^{n-k}}+\chi_{a^{2n-k}}+\chi_{a^k}+\chi_{a^{n+k}})=n(\widehat{e^{n-1}_{1,1}}+\widehat{e^{n-1}_{2,2}})\,,
      \\
      \psi(U_1-U_2)&=\psi(e_1-e_2+\sum_{k=1,\, k \text{ even}}^{n-1} \frac{1}{2}(\epsilon^k + \epsilon^{-k})(e^k_{1,2}+e^k_{2,1}))\\
      &=\chi_1-\chi_{a^n}+\sum_{k=1,\, k \text{ even}}^{n-1}
      \frac{1}{2}(\epsilon^k +
      \epsilon^{-k})(-\chi_{a^{n-k}}+\chi_{a^{2n-k}}+\chi_{a^k}-\chi_{a^{n+k}})=n(\widehat{e^{1}_{1,1}}+\widehat{e^{1}_{2,2}})\,,
    \end{align*}

    \begin{align*}\psi(U_3+U_4)&=\psi(\sum_{k=1,\, k \text{ even}}^{n-1} \frac{1}{2}(\epsilon^k - \epsilon^{-k})(e^k_{1,1}-e^k_{2,2}))\\
      &=\sum_{k=1,\, k \text{ even}}^{n-1} \frac{1}{2}(\epsilon^k - \epsilon^{-k})(-\chi_{a^{n-k}}-\chi_{a^{2n-k}}+\chi_{a^k}+\chi_{a^{n+k}})=n(\widehat{e^{n-1}_{1,1}}-\widehat{e^{n-1}_{2,2}})\\
      \psi(U_3-U_4)&=\psi(\sum_{k=1,\, k \text{ even}}^{n-1} \frac{1}{2}(\epsilon^k - \epsilon^{-k})(e^k_{1,2}-e^k_{2,1}))\,,\\
      &={\rm i}\sum_{k=1,\, k \text{ even}}^{n-1} \frac{1}{2}(\epsilon^k - \epsilon^{-k})(\chi_{ba^k}-\chi_{ba^{2n-k}}-\chi_{ba^{n+k}}+\chi_{ba^{n-k}})={\rm i} n(\widehat{e^{1}_{1,2}}-\widehat{e^{1}_{2,1}})\,.
    \end{align*}
    The expressions of $\psi(V_1+V_2)$ and $\psi(V_1-V_2)$ follow from
    Lemma~\ref{lambdaa}. Similarly, one calculates:
     \begin{align*}
      \psi(V_3+V_4)&=\psi(2(-e_3-e_4)+\sum_{k=1,\, k \text{ odd}}^{n-1} (\epsilon^k+\epsilon^{-k}) (r^k_{1,2}+  r^k_{2,1}))\\
      &=-2\chi_{ba^n}+2\chi_b+\sum_{k=1,\, k \text{ odd}}^{n-1} (\epsilon^k+\epsilon^{-k}) (\chi_{ba^k}+\chi_{ba^{2n-k}}-\chi_{ba^{n+k}}-\chi_{ba^{n-k}})\\
      &=2n(\widehat{e^1_{1,2}}+\widehat{e^1_{2,1}})\,,\end{align*}
\begin{align*}
      \psi(V_3-V_4)&=\psi(\rm{i}\sum_{k=1,\, k \text{ odd}}^{n-1}  (\epsilon^k-\epsilon^{-k})(r^k_{1,1}-r^k_{2,2}))\\
      &={\rm i}\sum_{k=1,\, k \text{ odd}}^{n-1}  (\epsilon^k-\epsilon^{-k})(\chi_{ba^{n+k}}+\chi_{ba^{k}}-\chi_{ba^{2n-k}}-\chi_{ba^{n-k}}) \\
      &= 2{\rm i} n (\widehat{e^{n-1}_{1,2}}-\widehat{e^{n-1}_{2,1}})\,,
    \end{align*}
    which concludes the proof of this lemma.
  \end{proof}

  We can at last check that the coproduct of $V$ is preserved:
  \begin{align*}
    n^{-2}(\psi\otimes \psi)(\Delta(V))&=
    -(\widehat{e^{n-1}_{1,1}}+\widehat{e^{n-1}_{2,2}})\otimes(\widehat{e^{n-1}_{1,2}}+\widehat{e^{n-1}_{2,1}})
    +(\widehat{e^{1}_{1,1}}+\widehat{e^{1}_{2,2}})\otimes (\widehat{e^1_{2,2}}-\widehat{e^1_{1,1}})\\
    &-(\widehat{e^{n-1}_{1,2}}+\widehat{e^{n-1}_{2,1}})\otimes(\widehat{e^{n-1}_{1,1}}+\widehat{e^{n-1}_{2,2}})
    +(\widehat{e^1_{2,2}}-\widehat{e^1_{1,1}})\otimes (\widehat{e^{1}_{1,1}}+\widehat{e^{1}_{2,2}})
    \\
    &-(\widehat{e^{1}_{1,2}}-\widehat{e^{1}_{2,1}})\otimes (\widehat{e^1_{1,2}}+\widehat{e^1_{2,1}})
    -(\widehat{e^{n-1}_{1,1}}-\widehat{e^{n-1}_{2,2}})\otimes (\widehat{e^{n-1}_{1,2}}-\widehat{e^{n-1}_{2,1}})\\
    &+(\widehat{e^1_{1,2}}+\widehat{e^1_{2,1}})\otimes(\widehat{e^{1}_{1,2}}-\widehat{e^{1}_{2,1}})+(\widehat{e^{n-1}_{1,2}}-\widehat{e^{n-1}_{2,1}})\otimes(\widehat{e^{n-1}_{1,1}}-\widehat{e^{n-1}_{2,2}})\\
    &=n^{-2}\Delta(\psi(V))\,.\qedhere
  \end{align*}

\newpage
\section{Computer exploration with \mupadcombinat{}}
\label{section.computerExploration}

Most of the research we report on in this paper has been driven by
computer exploration. In this section, we quickly describe the tools
we designed, implemented, and used, present typical computations, and
discuss some exploration strategies. To this end, we use the
construction of all \sacigs{} of $\KD(3)$ as running example. We
recommend to start by skimming through the demonstration, in order to
get a rough idea of what computations are, or not, achievable.

\subsection{Software context}

Our work is based on \mupadcombinat~\cite{MuPAD-Combinat}, an
open-source algebraic combinatorics package for the computer algebra
system \mupad~\cite{MuPAD.96}. Among other things, it provides a
high-level framework for implementing Hopf algebras and the like. All
the extensions we wrote for this research are publicly available from
the developers repository (see \url{http://mupad-combinat.sf.net/});
in fact, the two authors used that mean to share the code between
them. With it, new finite dimensional Kac algebras obtained by
deformation of group algebras may be implemented concisely (the full
implementation of the Kac-Paljutkin algebra takes about 50 lines of
code, including comments). Most of the code is fairly generic and
already integrated in the \mupadcombinat{} core, and was beneficial to
several unrelated research projects. The remaining specific code is
provided as a separate worksheet. Feel free to contact the second
author for help.

Since then, the \mupadcombinat project was migrated to the completely
open-source platform \texttt{Sage}~\cite{Sage,Sage-Combinat}. Specific
extensions, like this one, are migrated according to their usefulness
for new research projects.

\subsection{Setup}

The first step is to start a new \mupad{} session, and to setup the stage
for the computations (think of it as the preliminaries section of a
research paper which defines short hand notations). We load the
\mupadcombinat{} package by issuing:
\begin{Mexin}
package("Combinat")
\end{Mexin}
Next we load a worksheet which contains code and short-hand notations
specific to that specific research project. For the user convenience,
a short help will be displayed.
\begin{Mexin}
read("experimental/2005-09-08-David.mu"):
\end{Mexin}
\begin{Mexout}
//////////////////////////////////////////////////////////////////////
Loading worksheet: Twisted Kac algebras
Cf.  p. 715 of '2-cocycles and twisting of Kac algebras'

Version: $Id: KD.tex 433 2010-12-06 15:00:19Z nthiery $
To update to the latest version, go to the MuPAD-Combinat directory and type:
        svn update -d
Content:
  G := DihedralGroup(4)                 -- the dihedral group

  TwistedDihedralGroupAlgebra:
    KD4 := TwistedDihedralGroupAlgebra(4):
    KD4 := KD(4):                       -- shortcut
    KD4::G = KD4::group                 -- KD4 expressed on group elements
    KD4::G([3,1])                       -- a^3 b
    KD4::M = KD4::matrix                -- KD4 expressed as block diagonal matrices
    KD4::G::tensorSquare                -- the tensor product KD4::G # KD4::G
    KD4::M::tensorSquare                -- the tensor product KD4::M # KD4::M

    KD4::coeffRing                      -- the coefficient field
    KD4::coeffRing::primitiveUnitRoot(4)-- the complex value I

    KD4::M(x), KD4::G(x)                -- conversions between bases
    KD4::e(1), KD4::e(2,2,1)            -- matrix units
    KD4::p(2,2,j), KD4::r(2,2,j)        -- some projections of the j-th block
    KD4::p1, KD4::p2, KD4::q1, KD4::q1  -- some projections
    KD4::G::Omega                       -- Omega in the group basis
    KD4::M::tensorSquare(KD4::G::Omega) -- Omega in the matrix basis
    KD4::M::coproductAsMatrix(e(1))     -- the coproduct of e(1) as a matrix

    // Short hands, e.g. to write e(2,2,1) instead of KD4::e(2,2,1)
    export(KD4, Alias, e, p1, p2, q1, q2):

  TwistedQuaternionGroupAlgebra(N)
    KQ4 := TwistedDihedralGroupAlgebra(4):
    KQ4 := KD(4):                     -- shortcut
    Same usage as for KD(N)

  Isomorphism KD(2N) <-> KQ(2N):
    KQ4::G(KD4::G([1,0])):            -- The image of a of KD4 in KQ4
    KD4::G(KQ4::G([0,1])):            -- The image of b of KQ4 in KD4

  Computing with coideal subalgebras:
    algebraClosure([a,b,c])           -- A basis of the subalgebra generated by a,b,c
    coidealClosure([a,b,c])           -- A basis of the coideal generated by a,b,c
    coidealAndAlgebraClosure([a,b,c]) -- A basis of the coideal subalgebra ...
    echelonForm([a,b,c], Reduced)

    SkewTensorProduct(A, B) -- Skew tensor product of A and B (A being the dual of B)
    coidealDual([ p ])      -- Basis of the dual of the left coideal generated by p

A sample computation:
  M := KQ(4):
  Fbasis := coidealAndAlgebraClosure([M::e(1) + M::e(2)]):
  F := Dom::SubFreeModule(Fbasis,
                          [Cat::FiniteDimensionalHopfAlgebraWithBasis(M::coeffRing)]):
  Fdual := Dom::DualOfFreeModule(F):
  G := Fdual::intrinsicGroup():
  G::list()

//////////////////////////////////////////////////////////////////////
\end{Mexout}
Mind that this worksheet is experimental; for further help one needs
to dig into the code.  On the other hand, all features that are
integrated into \mupadcombinat{} or \mupad{} are documented within the
usual \mupad{} help system.

\subsection{Computing with elements}

Let us define $\KD(3)$:
\begin{Mexin}
KD3 := KD(3):
\end{Mexin}
and shortcuts to its generators:
\begin{Mexin}
[aa,bb] := KD3::group::algebraGenerators::list()
\end{Mexin}
\begin{Mexout}
                         [B(a), B(b)]
\end{Mexout}
Now we can use \mupad{} as a pocket calculator:
\begin{Mexin}
bb^2
\end{Mexin}
\begin{Mexout}
                             B(1)
\end{Mexout}
The point is that \mupad{} knows that \texttt{bb} lies in $\KD(3)$
(more precisely, the object \texttt{bb} is in the domain \texttt{KD3::group}\footnote{In
  \mupad{} parlance, concrete classes are called \emph{domains}}), and therefore applies the corresponding
computation rules (usual object oriented programming paradigm).  Here
are some further simple computations:
\begin{Mexin}
aa^2, aa^6, bb*aa
\end{Mexin}
\begin{Mexout}
                        2            5
                     B(a ), B(1), B(a  b)
\end{Mexout}
and a more complicated one:
\begin{Mexin}
(1 - aa^3)*(1 + aa^3) + 1/2*bb*aa^3
\end{Mexin}
\begin{Mexout}
                                 3
                          1/2 B(a  b)
\end{Mexout}
Note that all computations above are done in the group algebra. Namely,
\texttt{KD3::group} (or \texttt{KD3::G}) models the concrete algebra
$\KD(3)$ with its elements expanded on the group basis. However,
$\KD(3)$ can also be represented as a block-matrix algebra, with matrix
units as basis, and it is often more convenient or efficient to do the
computations there.  This basis is modeled by the domain
\texttt{KD3::matrix} (or \texttt{KD3::M} for short), and the change of
basis is done in the natural way:
\begin{Mexin}
KD3::M(aa + 2*bb)
\end{Mexin}
\begin{Mexoutsmall}
                +-                                                           -+
                |  3,  0,  0, 0,    0,         0,           0,          0     |
                |                                                             |
                |  0, -1,  0, 0,    0,         0,           0,          0     |
                |                                                             |
                |  0,  0, -3, 0,    0,         0,           0,          0     |
                |                                                             |
                |  0,  0,  0, 1,    0,         0,           0,          0     |
                |                                                             |
                |  0,  0,  0, 0, epsilon,      2,           0,          0     |
                |                                                             |
                |  0,  0,  0, 0,    2,    1 - epsilon,      0,          0     |
                |                                                             |
                |  0,  0,  0, 0,    0,         0,      epsilon - 1,     2     |
                |                                                             |
                |  0,  0,  0, 0,    0,         0,           2,      -epsilon  |
                +-                                                           -+
\end{Mexoutsmall}

Some comments are in order:
\begin{itemize}
\item An element of \texttt{KD::M} is displayed as a single large
  matrix; however, the four $1\times 1$ blocks and the three
  $2\times2$ blocks inside are well visible in the example above.
\item So far, we have not specified the ground field. It must be of
  characteristic zero, and contain some roots of unity to define
  $\Omega$ (see~\ref{omega}) and the left regular representation
  (see~\ref{lambda}). In theory one can just take $\C$, but in
  practice one needs a computable field. By default, an appropriate
  algebraic extension of $\Q$ is automatically constructed:
\begin{Mexin}
KD3::coeffRing
\end{Mexin}
\vspace{-1ex}
\begin{Mexout}
                        Q(II, epsilon)
\end{Mexout}
  where $\texttt{II}^4=1$, and $\texttt{epsilon}^6=1$.
\item The basis change is implemented by specifying the images of
  \texttt{a} and \texttt{b} and stating that it is an algebra
  morphism. The inverse basis change is deduced automatically by
  matrix inversion. Appropriate caching is done to avoid computation
  overhead. This is completely transparent to the user, and mostly
  transparent for the developer (encapsulation principle).
\item We show here the \mupad{} output in the text interface. In the
  graphical interface things look better; in particular,
  \texttt{epsilon} could be typeset as $\epsilon$.
\end{itemize}

So far, we have only played with the algebra structure of $\KD(3)$
which is just the usual group algebra structure. Let us compute some
coproducts, starting with some group like elements (note: tensor
products are denoted by the symbol \#):
\begin{Mexin}
coproduct(aa^3), coproduct(bb)
\end{Mexin}
\begin{Mexout}
                           3       3
                        B(a ) # B(a ), B(b) # B(b)
\end{Mexout}
Here is the coproduct of $a$:
\begin{Mexin}
coproduct(aa)
\end{Mexin}
\begin{Mexout}
             4         4      /   II        \    4         5
     1/16 B(a  b) # B(a  b) + | - -- - 1/16 | B(a  b) # B(a ) +
                              \    8        /

            ... one hundred lines sniped out ...

                        2
        -1/16 B(a) # B(a ) + 7/16 B(a) # B(a)
\end{Mexout}
The implementation of the coproduct follows closely Vainerman's
definition~\cite{Vainerman.1998} by deformation of the usual coproduct. In
particular, it goes through the definition of $\omega$ and $\Omega$:
\begin{Mexin}
KD3::G::Omega
\end{Mexin}
\begin{Mexout}
           3         3      /   II       \    3         3
    1/8 B(a  b) # B(a  b) + | - -- - 1/8 | B(a  b) # B(a ) +
                            \    4       /

            ... ten lines sniped out ...

    / II       \           3
    | -- - 1/8 | B(b) # B(a ) + 1/8 B(b) # B(1) + 1/8 B(b) # B(b)
    \  4       /
\end{Mexout}
and of the twisted coproduct:
\begin{Mexin}
expose(KD3::G::coproductBasis)
\end{Mexin}
\begin{Mexout}
     proc(x : DihedralGroup(6)) : KD3::G::tensorSquare
       name KD3::G::coproductBasis;
       option remember;
     begin
       dom::Omega * dom::tensorSquare(K::coproductBasis(x)) * dom::OmegaStar
     end_proc
\end{Mexout}
This function just computes the image of a basis element, and the
actual coproduct is obtained by linearity. Thanks to the \texttt{option
remember}, the computation is done only once. \texttt{dom} is a place
holder for the current domain (here $\KD(n)$), and \texttt{K} denotes
the original Kac algebra (here $\C[D_6]$). The code is generic, and
can be used to twist any Kac algebra by an appropriate cocycle.
\texttt{KD3::M::tensorSquare} and \texttt{KD3::G::tensorSquare} model
$\KD(3)\otimes \KD(3)$ respectively in the group and the matrix basis.
The changes of basis between the two are defined automatically, and
registered as implicit conversions. Those conversions are used
transparently to compute coproducts in the matrix basis.

\subsection{Computing with \sacigs}
\label{subsection.demo.sacig}

A foremost feature that we used for exploration was the ability to
compute properties of \sacigs generated by various elements, in the
hope to find Jones projections. Let us first define a shortcut for the
matrix units:
\begin{Mexin}
e := KD3::e:
\end{Mexin}
Now, $e_i$ and $e_{i,j}^k$ are given respectively by \texttt{e(i)} and
\texttt{e(i,j,k)}. We compute the \sacig $K_1$ generated by the
projection $e_1+e_2+e_3+e_4$:
\begin{Mexin}
K1basis := coidealAndAlgebraClosure([e(1)+e(2)+e(3)+e(4)])
\end{Mexin}
\begin{Mexoutsmall}
-- +-                        -+  +-                        -+  +-                        -+ --
|  |  1, 0, 0, 0, 0, 0, 0, 0  |  |  0, 0, 0, 0, 0, 0, 0, 0  |  |  0, 0, 0, 0, 0, 0, 0, 0  |  |
|  |                          |  |                          |  |                          |  |
|  |  0, 1, 0, 0, 0, 0, 0, 0  |  |  0, 0, 0, 0, 0, 0, 0, 0  |  |  0, 0, 0, 0, 0, 0, 0, 0  |  |
|  |                          |  |                          |  |                          |  |
|  |  0, 0, 0, 0, 0, 0, 0, 0  |  |  0, 0, 1, 0, 0, 0, 0, 0  |  |  0, 0, 0, 0, 0, 0, 0, 0  |  |
|  |                          |  |                          |  |                          |  |
|  |  0, 0, 0, 0, 0, 0, 0, 0  |  |  0, 0, 0, 1, 0, 0, 0, 0  |  |  0, 0, 0, 0, 0, 0, 0, 0  |  |
|  |                          |, |                          |, |                          |  |
|  |  0, 0, 0, 0, 0, 0, 0, 0  |  |  0, 0, 0, 0, 0, 0, 0, 0  |  |  0, 0, 0, 0, 0, 0, 0, 0  |  |
|  |                          |  |                          |  |                          |  |
|  |  0, 0, 0, 0, 0, 0, 0, 0  |  |  0, 0, 0, 0, 0, 0, 0, 0  |  |  0, 0, 0, 0, 0, 0, 0, 0  |  |
|  |                          |  |                          |  |                          |  |
|  |  0, 0, 0, 0, 0, 0, 0, 0  |  |  0, 0, 0, 0, 0, 0, 0, 0  |  |  0, 0, 0, 0, 0, 0, 0, 0  |  |
|  |                          |  |                          |  |                          |  |
|  |  0, 0, 0, 0, 0, 0, 0, 0  |  |  0, 0, 0, 0, 0, 0, 0, 0  |  |  0, 0, 0, 0, 0, 0, 0, 1  |  |
-- +-                        -+  +-                        -+  +-                        -+ --
\end{Mexoutsmall}
The result is a basis of $K_1$ in echelon form. Its dimension is
consistent with the trace of $e_1+e_2+e_3+e_4$:

\begin{Mexin}
1 / (e(1)+e(2)+e(3)+e(4))::traceNormalized()
\end{Mexin}
\begin{Mexout}
                                3
\end{Mexout}
It follows that $e_1+e_2+e_3+e_4$ is the Jones projection $p_{K_1}$
(see Remark~\ref{resumep}).
\clearpage
To give a flavor of the implementation work, here is the code for
computing algebra and \sacig closures. It is defined generically for
any domain implementing the appropriate operations (\texttt{dom} is a
place holder for the current domain):
\begin{Mexin}
    algebraClosure :=
    proc(generators: Type::ListOf(dom)) : Type::ListOf(dom)
    begin
        userinfo(3, "Computing the (non unital!) algebra closure");
        dom::moduleClosure(generators,
                           [proc(x: dom) : Type::SequenceOf(dom)
                                local generator;
                            begin
                                x * generator $ generator in generators
                            end_proc])
    end_proc;

    coidealAndAlgebraClosure :=
    proc(generators: Type::ListOf(dom),
         direction=Left: Type::Enumeration(Left,Right))
        : Type::ListOf(dom)
    begin
        userinfo(3, "Computing the coideal and algebra closure");
        // Proposition: the algebra closure of a coideal is again a coideal!
        dom::algebraClosure(dom::coidealClosure(generators, direction));
    end_proc;
\end{Mexin}
In short: thanks to the underlying computer science work in the design
and implementation of the platform, the algorithms may be written in a
reasonnably \emph{expressive} and \emph{mathematically meaningful}
way.

\subsubsection{The \sacig $K_2$ and subalgebras of functions on a group}

Consider now the \sacig $K_2$ generated by the projection $e_1+e_2$
of trace $1/6$:
\begin{Mexin}
K2basis := coidealAndAlgebraClosure([e(1)+e(2)]):
nops(K2basis)
\end{Mexin}
\begin{Mexout}
                               6
\end{Mexout}
It has the desired dimension, so that $e_1+e_2$ is the Jones
projection of $K_2$.

Consider now the coproduct of $e_1+e_2$:
\begin{Mexin}
c := (e(1)+e(2))::coproduct()
\end{Mexin}
\begin{Mexout}
e(1, 1, 2) # e(2, 2, 2) + e(2, 2, 2) # e(1, 1, 2) +

   1/2 e(2, 2, 1) # e(2, 2, 1) + 1/2 e(1, 1, 1) # e(2, 2, 1) +

   1/2 e(2, 2, 1) # e(1, 1, 1) + 1/2 e(1, 1, 1) # e(1, 1, 1) +

   -1/2 e(1, 2, 1) # e(1, 2, 1) + 1/2 e(2, 1, 1) # e(1, 2, 1) +

   1/2 e(1, 2, 1) # e(2, 1, 1) + -1/2 e(2, 1, 1) # e(2, 1, 1) + e(4) # e(4) +

   e(3) # e(4) + e(4) # e(3) + e(3) # e(3) + e(2) # e(2) + e(1) # e(2) +

   e(2) # e(1) + e(1) # e(1)
\end{Mexout}
It turns out to be symmetric:
\begin{Mexin}
  c - c::mapsupport(revert)
\end{Mexin}
\begin{Mexout}
                             0
\end{Mexout}
and therefore $K_2$ is a Kac subalgebra, the properties of which we
now investigate. To this end, we define the subspace spanned by this
basis, and claim to \mupad{} that it is indeed a Kac subalgebra:
\begin{Mexin}
K2 := Dom::SubFreeModule(K2basis,
        [Cat::FiniteDimensionalHopfAlgebraWithBasis(KD3::coeffRing)]):
\end{Mexin}
The computation rules inside \texttt{K2} are then inherited from those
of \texttt{KD3::M}. We first ask whether $K_2$ is commutative or
cocommutative:
\begin{Mexin}
K2::isCommutative(), K2::isCocommutative()
\end{Mexin}
\begin{Mexout}
                        TRUE, FALSE
\end{Mexout}
This tells us that $K_2$ is the dual of the algebra $\C[G]$ of some
non commutative group $G$. To find $G$, we first define the dual of
$K_2$:
\begin{Mexin}
K2dual := K2::Dual():
\end{Mexin}
As expected, there are six group like elements in it:
\begin{Mexin}
K2dual::groupLikeElements()
\end{Mexin}
\begin{Mexout}
           _           _             _           _          _
      [-II B([6, 5]) + B([5, 5]), II B([6, 5]) + B([5, 5]), B([7, 7]),

         _          _          _
         B([8, 8]), B([1, 1]), B([3, 3])]
\end{Mexout}
They are expressed in the dual basis of the row reduced echelon basis for
\texttt{K2}; this representation is not very much useful. On the other
hand, we may instead consider all of them together as the instrisic
group:
\begin{Mexin}
intrinsicGroup := K2dual::intrinsicGroup():
\end{Mexin}
and test that it is isomorphic to the dihedral group $D_3$:
\begin{Mexin}
D3 := DihedralGroup(3):
nops(D3::groupEmbeddings(intrinsicGroup()))
\end{Mexin}
\begin{Mexout}
                              6
\end{Mexout}
The algorithmic behind this last step is currently simplistic. It
could not deal with large groups like other specialized software like
\gap{} could, but is sufficient for our purpose. On the other
hand, the computation of the group-like elements themselves is rather
efficient: it is done by computing the rank one central projections in
the dual algebra. More generally, we can compute the full
representation theory of finite dimensional algebras. For example:
\begin{Mexin}
K2dual::isSemiSimple()
\end{Mexin}
\begin{Mexout}
                             TRUE
\end{Mexout}
\begin{Mexin}
K2dual::simpleModulesDimensions()
\end{Mexin}
\begin{Mexout}
                           [2, 1, 1]
\end{Mexout}

\subsubsection{\Sacigs in $K_2$}
\label{subsubsection.mupad.sacig.K2}

We now show how to construct the Jones projections of the \sacigs of
$K_2=L^\infty(D_3)$, which are in correspondence with the subgroups of
$D_3$ (see~\ref{groupe} and~\ref{section.K2}). Here we consider as
example a subgroup $Z_2$ of order $2$ of $D_3$.  Take the second
generator of the intrinsic group of $\widehat{K_2}$, and write it as an
element of $\widehat{K_2}$:
\begin{Mexin}
c := intrinsicGroup([2]);
c := c::lift()
\end{Mexin}
\begin{Mexout}
                              [2]

                       _           _
                   -II B([6, 5]) + B([5, 5])
\end{Mexout}
\pagebreak[3]
Here is the subgroup it generates:
\begin{Mexin}
Z2 := K2dual::multiplicativeClosure([c])
\end{Mexin}
\begin{Mexout}
             [B([5, 5]) + -II B([6, 5]), B([1, 1])]
\end{Mexout}

The corresponding \sacig $I$ consists of the functions on $D_3$ which are
constant on right cosets for $Z_2$; it is of dimension $[D_3:Z_2]=3$.
The Jones projection is given by the formula $\sum_{g\in Z_2} \widehat
g$ (see~\ref{groupe}):
\begin{Mexin}
pI := _plus( g::groupLikeToIdempotentOfDual() $ g in Z2 ):
\end{Mexin}
The result is actually given in $K_2$; we lift it to an element of
$\KD(3)$:
\begin{Mexin}
pI := pI::toSupModule()
\end{Mexin}
\begin{Mexoutsmall}
                       +-                             -+
                       |  1, 0, 0, 0,  0,    0,  0, 0  |
                       |                               |
                       |  0, 1, 0, 0,  0,    0,  0, 0  |
                       |                               |
                       |  0, 0, 0, 0,  0,    0,  0, 0  |
                       |                               |
                       |  0, 0, 0, 0,  0,    0,  0, 0  |
                       |                               |
                       |                     II        |
                       |  0, 0, 0, 0, 1/2, - --, 0, 0  |
                       |                      2        |
                       |                               |
                       |               II              |
                       |  0, 0, 0, 0,  --,  1/2, 0, 0  |
                       |                2              |
                       |                               |
                       |  0, 0, 0, 0,  0,    0,  0, 0  |
                       |                               |
                       |  0, 0, 0, 0,  0,    0,  0, 0  |
                       +-                             -+
\end{Mexoutsmall}
This gives us $p_{K_{21}}=e_1+e_2+r_{1,1}^1$, as in~\ref{KD3section}.

As a double check, here is the basis of the coideal $K_{21}$ it generates:
\begin{Mexin}
coidealAndAlgebraClosure([pI])
\end{Mexin}
\begin{Mexoutsmall}
-- +-                        -+  +-                        -+  +-                             -+ --
|  |  1, 0, 0, 0, 0, 0, 0, 0  |  |  0, 0, 0, 0, 0, 0, 0, 0  |  |  0, 0, 0, 0,  0,  0,   0,  0  |  |
|  |                          |  |                          |  |                               |  |
|  |  0, 1, 0, 0, 0, 0, 0, 0  |  |  0, 0, 0, 0, 0, 0, 0, 0  |  |  0, 0, 0, 0,  0,  0,   0,  0  |  |
|  |                          |  |                          |  |                               |  |
|  |  0, 0, 0, 0, 0, 0, 0, 0  |  |  0, 0, 1, 0, 0, 0, 0, 0  |  |  0, 0, 0, 0,  0,  0,   0,  0  |  |
|  |                          |  |                          |  |                               |  |
|  |  0, 0, 0, 0, 0, 0, 0, 0  |  |  0, 0, 0, 1, 0, 0, 0, 0  |  |  0, 0, 0, 0,  0,  0,   0,  0  |  |
|  |                          |, |                          |, |                               |  |
|  |  0, 0, 0, 0, 1, 0, 0, 0  |  |  0, 0, 0, 0, 0, 0, 0, 0  |  |  0, 0, 0, 0, II, -1,   0,  0  |  |
|  |                          |  |                          |  |                               |  |
|  |  0, 0, 0, 0, 0, 1, 0, 0  |  |  0, 0, 0, 0, 0, 0, 0, 0  |  |  0, 0, 0, 0,  1, II,   0,  0  |  |
|  |                          |  |                          |  |                               |  |
|  |  0, 0, 0, 0, 0, 0, 1, 0  |  |  0, 0, 0, 0, 0, 0, 0, 0  |  |  0, 0, 0, 0,  0,  0, 2 II, 0  |  |
|  |                          |  |                          |  |                               |  |
|  |  0, 0, 0, 0, 0, 0, 0, 0  |  |  0, 0, 0, 0, 0, 0, 0, 1  |  |  0, 0, 0, 0,  0,  0,   0,  0  |  |
-- +-                        -+  +-                        -+  +-                             -+ --
\end{Mexoutsmall}
which is consistent with the trace of the projection:
\begin{Mexin}
1 / pI::traceNormalized()
\end{Mexin}
\begin{Mexout}
                               3
\end{Mexout}
Doing this for all subgroups of $D_3$ yields all \sacigs of $\KD(3)$ of
dimension $2$ and $3$.

\subsubsection{Deeper study of $K_2$, and generalization to $\KD(n)$}\label{mupad.K2=Dn}

Studying $K_2$ this way had the advantage to be completely automatic.
However to generalize the results to any $\KD(n)$, we need to have a
closer look at the structure, and do some things manually. Here, we
are lucky enough that the row reduced basis comes out quite nicely
(this is seldom the case):
\pagebreak[4]
\begin{Mexin}
K2basis := coidealAndAlgebraClosure([e(1)+e(2)])
\end{Mexin}
\begin{Mexoutsmall}
-- +-                        -+  +-                        -+  +-                        -+
|  |  1, 0, 0, 0, 0, 0, 0, 0  |  |  0, 0, 0, 0, 0, 0, 0, 0  |  |  0, 0, 0, 0, 0, 0, 0, 0  |
|  |                          |  |                          |  |                          |
|  |  0, 1, 0, 0, 0, 0, 0, 0  |  |  0, 0, 0, 0, 0, 0, 0, 0  |  |  0, 0, 0, 0, 0, 0, 0, 0  |
|  |                          |  |                          |  |                          |
|  |  0, 0, 0, 0, 0, 0, 0, 0  |  |  0, 0, 1, 0, 0, 0, 0, 0  |  |  0, 0, 0, 0, 0, 0, 0, 0  |
|  |                          |  |                          |  |                          |
|  |  0, 0, 0, 0, 0, 0, 0, 0  |  |  0, 0, 0, 1, 0, 0, 0, 0  |  |  0, 0, 0, 0, 0, 0, 0, 0  |
|  |                          |, |                          |, |                          |,
|  |  0, 0, 0, 0, 0, 0, 0, 0  |  |  0, 0, 0, 0, 0, 0, 0, 0  |  |  0, 0, 0, 0, 0,-1, 0, 0  |
|  |                          |  |                          |  |                          |
|  |  0, 0, 0, 0, 0, 0, 0, 0  |  |  0, 0, 0, 0, 0, 0, 0, 0  |  |  0, 0, 0, 0, 1, 0, 0, 0  |
|  |                          |  |                          |  |                          |
|  |  0, 0, 0, 0, 0, 0, 0, 0  |  |  0, 0, 0, 0, 0, 0, 0, 0  |  |  0, 0, 0, 0, 0, 0, 0, 0  |
|  |                          |  |                          |  |                          |
|  |  0, 0, 0, 0, 0, 0, 0, 0  |  |  0, 0, 0, 0, 0, 0, 0, 0  |  |  0, 0, 0, 0, 0, 0, 0, 0  |
-- +-                        -+  +-                        -+  +-                        -+

   +-                        -+  +-                        -+  +-                        -+ --
   |  0, 0, 0, 0, 0, 0, 0, 0  |  |  0, 0, 0, 0, 0, 0, 0, 0  |  |  0, 0, 0, 0, 0, 0, 0, 0  |  |
   |                          |  |                          |  |                          |  |
   |  0, 0, 0, 0, 0, 0, 0, 0  |  |  0, 0, 0, 0, 0, 0, 0, 0  |  |  0, 0, 0, 0, 0, 0, 0, 0  |  |
   |                          |  |                          |  |                          |  |
   |  0, 0, 0, 0, 0, 0, 0, 0  |  |  0, 0, 0, 0, 0, 0, 0, 0  |  |  0, 0, 0, 0, 0, 0, 0, 0  |  |
   |                          |  |                          |  |                          |  |
   |  0, 0, 0, 0, 0, 0, 0, 0  |  |  0, 0, 0, 0, 0, 0, 0, 0  |  |  0, 0, 0, 0, 0, 0, 0, 0  |  |
   |                          |, |                          |, |                          |  |
   |  0, 0, 0, 0, 1, 0, 0, 0  |  |  0, 0, 0, 0, 0, 0, 0, 0  |  |  0, 0, 0, 0, 0, 0, 0, 0  |  |
   |                          |  |                          |  |                          |  |
   |  0, 0, 0, 0, 0, 1, 0, 0  |  |  0, 0, 0, 0, 0, 0, 0, 0  |  |  0, 0, 0, 0, 0, 0, 0, 0  |  |
   |                          |  |                          |  |                          |  |
   |  0, 0, 0, 0, 0, 0, 0, 0  |  |  0, 0, 0, 0, 0, 0, 1, 0  |  |  0, 0, 0, 0, 0, 0, 0, 0  |  |
   |                          |  |                          |  |                          |  |
   |  0, 0, 0, 0, 0, 0, 0, 0  |  |  0, 0, 0, 0, 0, 0, 0, 0  |  |  0, 0, 0, 0, 0, 0, 0, 1  |  |
   +-                        -+  +-                        -+  +-                        -+ --
\end{Mexoutsmall}
In particular, we can read off this basis the complete algebra
structure of $K_2$: it is a commutative algebra whose minimal projections are easy to find.

The expression of the coproduct of $e_1+e_2$ did not look very good.
As usual in computer exploration, an essential issue is to find
the right view where the output is readable and exploitable by a
\emph{human}; customizable outputs are therefore at a premium. For
example the first author's favorite view for a tensor element (which
tends to be huge) is as a matrix $M=(m_{a,b})_{a,b}$ whose rows and
columns are indexed by the matrix units of $\KD(m)$ (here, for $m=3$:
$(e_1,\dots,e_4,e^1_{1,1},e^1_{2,2},e^1_{1,2},e^1_{2,1})$), and where
$m_{a,b}$ is the coefficient of $a \otimes b$.
With this view, the coproduct of $e_1+e_2$ now also comes out nicely
(see also~\ref{cop}):
\begin{Mexin}
KD3::M::tensorElementToMatrix((e(1)+e(2))::coproduct())
\end{Mexin}
\begin{Mexoutsmall}
                +-                                              -+
                |  1, 1, 0, 0,  0,   0,    0,    0,  0, 0, 0, 0  |
                |                                                |
                |  1, 1, 0, 0,  0,   0,    0,    0,  0, 0, 0, 0  |
                |                                                |
                |  0, 0, 1, 1,  0,   0,    0,    0,  0, 0, 0, 0  |
                |                                                |
                |  0, 0, 1, 1,  0,   0,    0,    0,  0, 0, 0, 0  |
                |                                                |
                |  0, 0, 0, 0, 1/2, 1/2,   0,    0,  0, 0, 0, 0  |
                |                                                |
                |  0, 0, 0, 0, 1/2, 1/2,   0,    0,  0, 0, 0, 0  |
                |                                                |
                |  0, 0, 0, 0,  0,   0,  -1/2,  1/2, 0, 0, 0, 0  |
                |                                                |
                |  0, 0, 0, 0,  0,   0,   1/2, -1/2, 0, 0, 0, 0  |
                |                                                |
                |  0, 0, 0, 0,  0,   0,    0,    0,  0, 1, 0, 0  |
                |                                                |
                |  0, 0, 0, 0,  0,   0,    0,    0,  1, 0, 0, 0  |
                |                                                |
                |  0, 0, 0, 0,  0,   0,    0,    0,  0, 0, 0, 0  |
                |                                                |
                |  0, 0, 0, 0,  0,   0,    0,    0,  0, 0, 0, 0  |
                +-                                              -+
\end{Mexoutsmall}

For example, it becomes obvious that this coproduct is symmetric,
implying that $K_2$ is a Kac subalgebra (see~\ref{irredprof2}); as it
is commutative, we have $K_2\equiv L^\infty(G)$ for some group $G$
(see~\ref{groupe}). Note that a basis for $K_2$ can also be read off
the rows.  Using duality, the elements of $G$ are given by the rank
$1$ central projections.  With a bit more work, one can deduce from
the expression of $\Delta(e_1+e_2)$ all the pairs $(g,g^{-1})$ of
inverse elements in $G$.  Looking at some other coproducts reveals the
complete group structure of $G$ (see~\ref{e1e2}).

To generalize this we look at $n=5$:
\begin{Mexin}
K := KD(5):
K::M::tensorElementToMatrix((K::e(1)+K::e(2))::coproduct())
\end{Mexin}
\begin{Mexoutsmall}
        +-                                                                                -+
        |  1, 1, 0, 0,  0,   0,    0,    0,  0, 0, 0, 0,  0,   0,    0,    0,  0, 0, 0, 0  |
        |                                                                                  |
        |  1, 1, 0, 0,  0,   0,    0,    0,  0, 0, 0, 0,  0,   0,    0,    0,  0, 0, 0, 0  |
        |                                                                                  |
        |  0, 0, 1, 1,  0,   0,    0,    0,  0, 0, 0, 0,  0,   0,    0,    0,  0, 0, 0, 0  |
        |                                                                                  |
        |  0, 0, 1, 1,  0,   0,    0,    0,  0, 0, 0, 0,  0,   0,    0,    0,  0, 0, 0, 0  |
        |                                                                                  |
        |  0, 0, 0, 0, 1/2, 1/2,   0,    0,  0, 0, 0, 0,  0,   0,    0,    0,  0, 0, 0, 0  |
        |                                                                                  |
        |  0, 0, 0, 0, 1/2, 1/2,   0,    0,  0, 0, 0, 0,  0,   0,    0,    0,  0, 0, 0, 0  |
        |                                                                                  |
        |  0, 0, 0, 0,  0,   0,  -1/2,  1/2, 0, 0, 0, 0,  0,   0,    0,    0,  0, 0, 0, 0  |
        |                                                                                  |
        |  0, 0, 0, 0,  0,   0,   1/2, -1/2, 0, 0, 0, 0,  0,   0,    0,    0,  0, 0, 0, 0  |
        |                                                                                  |
        |  0, 0, 0, 0,  0,   0,    0,    0,  0, 1, 0, 0,  0,   0,    0,    0,  0, 0, 0, 0  |
        |                                                                                  |
        |  0, 0, 0, 0,  0,   0,    0,    0,  1, 0, 0, 0,  0,   0,    0,    0,  0, 0, 0, 0  |
        |                                                                                  |
        |  0, 0, 0, 0,  0,   0,    0,    0,  0, 0, 0, 0,  0,   0,    0,    0,  0, 0, 0, 0  |
        |                                                                                  |
        |  0, 0, 0, 0,  0,   0,    0,    0,  0, 0, 0, 0,  0,   0,    0,    0,  0, 0, 0, 0  |
        |                                                                                  |
        |  0, 0, 0, 0,  0,   0,    0,    0,  0, 0, 0, 0, 1/2, 1/2,   0,    0,  0, 0, 0, 0  |
        |                                                                                  |
        |  0, 0, 0, 0,  0,   0,    0,    0,  0, 0, 0, 0, 1/2, 1/2,   0,    0,  0, 0, 0, 0  |
        |                                                                                  |
        |  0, 0, 0, 0,  0,   0,    0,    0,  0, 0, 0, 0,  0,   0,  -1/2,  1/2, 0, 0, 0, 0  |
        |                                                                                  |
        |  0, 0, 0, 0,  0,   0,    0,    0,  0, 0, 0, 0,  0,   0,   1/2, -1/2, 0, 0, 0, 0  |
        |                                                                                  |
        |  0, 0, 0, 0,  0,   0,    0,    0,  0, 0, 0, 0,  0,   0,    0,    0,  0, 1, 0, 0  |
        |                                                                                  |
        |  0, 0, 0, 0,  0,   0,    0,    0,  0, 0, 0, 0,  0,   0,    0,    0,  1, 0, 0, 0  |
        |                                                                                  |
        |  0, 0, 0, 0,  0,   0,    0,    0,  0, 0, 0, 0,  0,   0,    0,    0,  0, 0, 0, 0  |
        |                                                                                  |
        |  0, 0, 0, 0,  0,   0,    0,    0,  0, 0, 0, 0,  0,   0,    0,    0,  0, 0, 0, 0  |
        +-                                                                                -+
\end{Mexoutsmall}
This suggests the general formulas of~\ref{cop} which we
can double check for other values of $n$ before starting to prove
them:
\begin{Mexin}
for n from 1 to 7 do
    r := (\KD(n))::r:
    e := (\KD(n))::e:
    print(n, iszero(coproduct(e(1)+e(2)) -
                    (  ( e(1)+e(2) ) # ( e(1)+e(2) )
                     + ( e(3)+e(4) ) # ( e(3)+e(4) )
                     + _plus(  r(1,1,j) # r(1,1,j) + r(2,2,j) # r(2,2,j)
                             $ j in select([ $1..n-1], testtype, Type::Odd) )
                     + _plus(  e(1,1,j) # e(2,2,j) + e(2,2,j) # e(1,1,j)
                             $ j in select([ $1..n-1], testtype, Type::Even) )
                    )));
end_for:
\end{Mexin}
\begin{Mexout}
                                    1, TRUE
                                    2, TRUE
                                      ...
                                    7, TRUE
\end{Mexout}
With some patience and perseverance, the other required coproducts can
be reverse engineered the same way for all $n$.

\subsection{Computing Kac isomorphisms and applications}

We now demonstrate the use of algorithm~\ref{algo.isomorphism} to
compute automorphisms, self-duality, and isomorphisms, with the search
of coideals as motivation.

\subsubsection{The \sacigs $K_3$ and $K_4$, and automorphisms of $\KD(3)$}
\label{subsubsection.automorphismsK3}

Computing the coproducts of $e_1+e_3$ and $e_1+e_4$ as above shows
that the corresponding \sacigs $K_3$ and $K_4$ are not Kac
subalgebras. However, they look similar, and it is natural to ask
whether there exists an automorphism of $\KD(3)$ which would exchange
them. To this end, we compute the automorphism group of $\KD(3)$
(see~\ref{algo.isomorphism}):
\begin{Mexin}
automorphismGroup := KD3::G::algebraEmbeddings(KD3::G):
\end{Mexin}
A few minutes and a tea later, we obtain four automorphisms:
\begin{Mexin}
for phi in automorphismGroup do
    fprint(Unquoted,0, _concat("-" $ 78)):
    print(hold(phi)(a) = phi(KD3::G::algebraGenerators[a]));
    print(hold(phi)(b) = phi(KD3::G::algebraGenerators[b]));
end:
\end{Mexin}
\begin{Mexout}
------------------------------------------------------------------------------
                             5            4           2
             phi(a) = 1/2 B(a ) + -1/2 B(a ) + 1/2 B(a ) + 1/2 B(a)

                                            3
                                phi(b) = B(a  b)
------------------------------------------------------------------------------
                             5           4            2
             phi(a) = 1/2 B(a ) + 1/2 B(a ) + -1/2 B(a ) + 1/2 B(a)

                                            3
                                phi(b) = B(a  b)
------------------------------------------------------------------------------
                                             5
                                 phi(a) = B(a )

                                 phi(b) = B(b)
------------------------------------------------------------------------------
                                 phi(a) = B(a)

                                 phi(b) = B(b)
\end{Mexout}
Half of them are obviously induced by automorphisms of the group which
fix $H$. The other half are obtained from $\Theta'$:
\begin{Mexin}
ThetaPrime := automorphismGroup[1]:
\end{Mexin}
which is an involution:
\begin{Mexin}
ThetaPrime(ThetaPrime(KD3::G::algebraGenerators[a])),
ThetaPrime(ThetaPrime(KD3::G::algebraGenerators[b]))
\end{Mexin}
\begin{Mexout}
                                 B(a), B(b)
\end{Mexout}
The generalization of the formula for $\Theta'$ to any $n$ is
straightforward
(see Proposition~\ref{proposition.automorphism.KD.ThetaPrime}).

Going back to our original problem, we see that $\Theta'$ exchanges
$e_1+e_3$ and $e_1+e_4$:
\begin{Mexin}
KD3::M(ThetaPrime(KD3::G( KD3::e(1) + KD3::e(3))))
\end{Mexin}
\begin{Mexoutsmall}
                                   +-                        -+
                                   |  1, 0, 0, 0, 0, 0, 0, 0  |
                                   |                          |
                                   |  0, 0, 0, 0, 0, 0, 0, 0  |
                                   |                          |
                                   |  0, 0, 0, 0, 0, 0, 0, 0  |
                                   |                          |
                                   |  0, 0, 0, 1, 0, 0, 0, 0  |
                                   |                          |
                                   |  0, 0, 0, 0, 0, 0, 0, 0  |
                                   |                          |
                                   |  0, 0, 0, 0, 0, 0, 0, 0  |
                                   |                          |
                                   |  0, 0, 0, 0, 0, 0, 0, 0  |
                                   |                          |
                                   |  0, 0, 0, 0, 0, 0, 0, 0  |
                                   +-                        -+
\end{Mexoutsmall}
Therefore $K_3$ and $K_4$ are isomorphic (see~\ref{iso-n-impair}).

\subsubsection{Self-duality of $\KD(3)$}
\label{subsubsection.demo.self-dual}

In the previous subsubsections, we have determined all \sacigs of
$\KD(3)$, except those of dimension $4$ which we now investigate.
By the classification of small dimension Kac algebras, we knew that
$\KD(3)$ was self-dual, and therefore that there existed exactly three
\sacigs of dimension $4$. In this subsubsection, we demonstrate how we
found an explicit isomorphism between $\KD(2m+1)$ and its dual
(see Theorem~\ref{self-dual}). Then, in the next subsubsection, we derive the
explicit construction of the Jones projection of the \sacigs of
dimension $4$ in $\KD(3)$, and in general in $\KD(2m+1)$.

First, we compute all the Kac isomorphisms from $\KD(3)$ to its dual.
It occurs that the computation is much quicker in the dual of the
matrix basis \texttt{KD3::M::Dual()} rather than of the group basis
\texttt{KD3::G::Dual()} (the expressions of the group like elements
and of $\Omega$ are sparser there):
\begin{Mexin}
isomorphisms := KD3::G::algebraEmbeddings(KD3::M::Dual()):
\end{Mexin}
Due to the non trivial automorphism group of $\KD(3)$, there are four
of them:
\begin{Mexin}
for phi in isomorphisms do
    fprint(Unquoted,0, _concat("-" $ 78)):
    print(hold(phi)(a) = phi(KD3::G::algebraGenerators[a]));
    print(hold(phi)(b) = phi(KD3::G::algebraGenerators[b]));
end:
\end{Mexin}
\begin{Mexout}
------------------------------------------------------------------------------
              _              _               _               _          _
phi(a) = -1/2 e(1,1,1) + 1/2 e(2,2,1) + -1/2 e(1,2,2) + -1/2 e(2,1,2) + e(2,2,2)
                                            _
                                   phi(b) = e(3)
------------------------------------------------------------------------------
             _               _          _               _               _
phi(a) = 1/2 e(1,1,1) + -1/2 e(2,2,1) + e(1,1,2) + -1/2 e(1,2,2) + -1/2 e(2,1,2)
                                            _
                                   phi(b) = e(3)
------------------------------------------------------------------------------
              _              _          _               _               _
phi(a) = -1/2 e(1,1,1) + 1/2 e(2,2,1) + e(1,1,2) + -1/2 e(1,2,2) + -1/2 e(2,1,2)
                                            _
                                   phi(b) = e(4)
------------------------------------------------------------------------------
             _               _               _               _          _
phi(a) = 1/2 e(1,1,1) + -1/2 e(2,2,1) + -1/2 e(1,2,2) + -1/2 e(2,1,2) + e(2,2,2)
                                            _
                                   phi(b) = e(4)
\end{Mexout}
Running the same computation for $n=5$ was sufficient to pick one of
them and guess the general formulas of
Theorem~\ref{self-dual}.

Now, we can construct this isomorphism $\Phi$ directly with:
\begin{Mexin}
psi := KD3::toDualIsomorphism:
\end{Mexin}
and check that it is indeed an isomorphism with:
\begin{Mexin}
KD3::G::isHopfAlgebraMorphism(psi)
KD3::G::kernelOfModuleMorphism(psi)
\end{Mexin}
\begin{Mexout}
                                     TRUE
                                      []
\end{Mexout}
We actually checked this up to $n=21$ (the computation for $n=21$ took
two days on a $\unit[2]{GHz}$ PC and $\unit[1.6]{Gb}$ of memory):
\begin{Mexin}
for n from 3 to 21 step 2 do
    K := \KD(n):
    psi := K::toDualIsomorphism:
    print(n, K::G::isHopfAlgebraMorphism(psi), K::G::kernelOfModuleMorphism(psi))
end_for:
\end{Mexin}

Here is finally how we guessed the formulas for $\psi$:
\begin{Mexin}
MdualToGdual := KD3::G::dualOfModuleMorphism(KD3::M):
Phi := MdualToGdual @ psi @ KD3::G:
Phi(KD3::e(2,1,1))
\end{Mexin}
{
\scriptsize
\begin{Mexout}
/ II       \ _        /       II \ _  2     / II       \ _  4     / II       \ _  5
| -- - 1/2 | B(a b) + | 1/2 - -- | B(a b) + | -- + 1/2 | B(a b) - | -- + 1/2 | B(a b)
\  2       /          \        2 /          \  2       /          \  2       /
\end{Mexout}
}

\subsubsection{\Sacigs of dimension $4$ and the antiisomorphism $\delta$}
\label{subsubsection.demo.delta}

We now use the isomorphism of the previous section to construct
explicitly the coideals of dimension $4$ of $\KD(3)$.  Consider for
example the coideal $K_{21}$ of dimension $3$ whose Jones projection
we computed in~\ref{subsubsection.mupad.sacig.K2}. We construct a
basis for $\delta(K_{21})$ in $\widehat{\KD(3)}$ by looking for the
commutant of $p_{K_{21}}$ in $\widehat{\KD(3)} \subset
\widehat{\KD(3)} \rtimes \KD(3)$.

\begin{Mexin}
e := KD3::e: r := KD3::r:
pK21 := e(1) + e(2) + r(1,1,1):
deltaK21 := coidealDual([pK21])
\end{Mexin}
\vspace{-2ex}
\begin{Mexout}
 _     _         _            _               _            _
[e(1), e(2), -II e(1, 1, 1) + e(1, 2, 1), -II e(2, 1, 1) + e(2, 2, 1)]
\end{Mexout}

Using the inverse of the isomorphism $\Phi$:
\begin{Mexin}
psi := KD3::toDualIsomorphism:
psiInv := KD3::G::inverseOfModuleMorphism(psi):
\end{Mexin}
we identify $\delta(K_{21})$ as a coideal $J_{20}$ of $\KD(3)$:

\begin{Mexin}
J20 := map(deltaK21, psiInv)
\end{Mexin}
\begin{Mexout}
--          3   /   II       \    5      II    5           4      II    4
|  B(1), B(a ), | - -- - 1/4 | B(a  b) + -- B(a ) + 1/4 B(a  b) + -- B(a ) +
--              \    2       /            4                        4

   /       II \    2        II    2                    II
   | 1/4 - -- | B(a  b) + - -- B(a ) + -1/4 B(a b) + - -- B(a),
   \        2 /              4                          4

   / II       \    5             5      II    4             4
   | -- + 1/2 | B(a  b) + 1/4 B(a ) + - -- B(a  b) + 1/4 B(a ) +
   \  4       /                          4

   /       II \    2              2    II                    --
   | 1/2 - -- | B(a  b) + -1/4 B(a ) + -- B(a b) + -1/4 B(a)  |
   \        4 /                         4                    --
\end{Mexout}

Here, we are lucky enough that the obtained basis for $J_{20}$ is orthonormal:
\begin{Mexin}
matrix(4,4, (i,j) -> scalar(J20[i], J20[j]))
\end{Mexin}
\begin{Mexoutsmall}
                                     +-            -+
                                     |  1, 0, 0, 0  |
                                     |              |
                                     |  0, 1, 0, 0  |
                                     |              |
                                     |  0, 0, 1, 0  |
                                     |              |
                                     |  0, 0, 0, 1  |
                                     +-            -+
\end{Mexoutsmall}
Therefore, we can compute the Jones projection $p_{J_{20}}$ right away by
inverting its coproduct formula (otherwise, we should have
orthogonalized it first by Gramm-Schmidt, and use a variant of the
formula involving the norms):
\begin{Mexin}
pJ20 := KD3::G::fiberOfModuleMorphism
    (KD3::G::coproduct,
     (1/nops(J20)) * _plus((b::involution())::antipode() # b
                           $ b in J20)) [1]
\end{Mexin}%
\begin{Mexout}
                       5             3           2
                1/4 B(a  b) + 1/4 B(a ) + 1/4 B(a  b) + 1/4 B(1)
\end{Mexout}
Now the construction of $p_{J_{20}}$ (and of $p_{J_0}$ and $p_{J_2}$)
is obvious, and can be right away generalized to obtain the \sacigs of
dimension $4$ in $\KD(2n+1)$ (see~\ref{abelien},~\ref{sacig4impair},
and~\ref{KD3section}). Beware however that the correspondence we used
between coideals of dimension $d$ of $\KD(3)$ and those of dimension
$\dim \KD(3) / d$ is only defined up to an automorphism of $\KD(3)$.

\subsubsection{The isomorphism between $\KD(2n)$ and $\KQ(2n)$}
\label{subsubsection.demo.KDKQ}
\label{mupad.plonge}

The lattice of \sacigs of $\KD(3)$ being complete, we now show how parts of the
lattice for the larger algebras can be build up from that of the
smaller ones (see~\ref{KD6}). Let us start by defining $\KD(6)$:
\begin{Mexin}
KD6 := KD(6):
\end{Mexin}
We know that $\KD(3)$ embeds as $K_4$ in $\KD(6)$. Let us show how to
obtain the \sacigs of $K_4$ from those of $\KD(3)$.
We have to be a bit careful as, by default, $\KD(3)$ and $\KD(6)$ are
not defined over the same ground field (unless stated otherwise, the
ground field for $\KD(n)$ is $\Q(i, \epsilon)$ where $\epsilon$ is a
$2n$-th root of unity). Here, we force $\KD(3)$ to use the same field
as $\KD(6)$.
\begin{Mexin}
KD3 := KD(3, KD6::coeffRing):
\end{Mexin}
To ease the notations, we define shortcuts for the algebra generators
of $\KD(3)$ and $\KD(6)$:
\begin{Mexin}
[a3, b3] := KD3::G::algebraGenerators::list():
[a6, b6] := KD6::G::algebraGenerators::list():
\end{Mexin}

Now we can define the embedding of $\KD(3)$ as $K_4$ in $\KD(6)$ (see~\ref{plonge}):
\begin{Mexin}
KD3ToKD6 := KD3::G::algebraMorphism(table(a = a6^2, b = b6 )):
\end{Mexin}
and use it as follows:
\begin{Mexin}
KD3ToKD6(1 + 2*a3^3 + 3 * b3)
\end{Mexin}
\begin{Mexout}
                                 6
                            2 B(a ) + 3 B(b) + B(1)
\end{Mexout}

We now take the Jones projection of the coideal $J_{20}$ in $\KD(3)$
(see Appendix~\ref{subsubsection.demo.delta}), and lift it into
$\KD(6)$:
\begin{Mexin}
pJ20 := 1/4 * ( 1 + a3^3 + a3^2 * b3 + a3^5 * b3 ):
KD3ToKD6(pJ20)
\end{Mexin}
\begin{Mexout}
                                 4             6           10
               1/4 B(1) + 1/4 B(a  b) + 1/4 B(a ) + 1/4 B(a   b)
\end{Mexout}
The result is a posteriori obvious, but the point is that this
construction is completely automatic.

In a similar vein, one can obtain all the \sacigs of $K_3$, as it is
$\KQ(3)$ via the isomorphism between $\KD(2n)$ and $\KQ(2n)$ (see
Theorem~\ref{theorem.isomorphism.KD.KQ}). This isomorphism was first
suspected by comparing the representation theory of the algebras,
their dual, and the properties of the simple \sacigs. The systematic
computation of the isomorphisms for $n\leq 3$ suggested the general
formulas which then were checked for $n\leq 10$:
\begin{Mexin}
KQ6 := KQ(6):
\end{Mexin}
\begin{Mexin}
isomorphisms := KD6::G::algebraEmbeddings(KQ6::G):
for phi in isomorphisms do
    fprint(Unquoted,0, _concat("-" $ 78)):
    print(hold(phi)(a) = phi(a6));
    print(hold(phi)(b) = phi(b6))
end:
\end{Mexin}
\begin{Mexout}
                           2              4              5           7
phi(a) = 3/4 B(a) + 1/4 B(a  b) + -1/4 B(a  b) + -1/4 B(a ) + 1/4 B(a ) +

           8             10             11
   -1/4 B(a  b) + 1/4 B(a   b) + 1/4 B(a  )

                     II    3           3      II    9           9
          phi(b) = - -- B(a ) + 1/2 B(a  b) + -- B(a ) + 1/2 B(a  b)
                      2                        2
\end{Mexout}
The isomorphism we chose in Theorem~\ref{theorem.isomorphism.KD.KQ} is
now built into the system:
\begin{Mexin}
KQ6::G(a6)
\end{Mexin}
\begin{Mexout}
                  2              4              5           7
3/4 B(a) + 1/4 B(a  b) + -1/4 B(a  b) + -1/4 B(a ) + 1/4 B(a ) +

           8             10             11
   -1/4 B(a  b) + 1/4 B(a   b) + 1/4 B(a  )
\end{Mexout}
\begin{Mexin}
KQ6::G(b6)
\end{Mexin}
\begin{Mexout}
                II    3           3      II    9           9
              - -- B(a ) + 1/2 B(a  b) + -- B(a ) + 1/2 B(a  b)
                 2                        2
\end{Mexout}

\begin{Mexin}
KQ3 := KQ(3, KQ6::coeffRing):
\end{Mexin}
\begin{Mexin}
[aq3, bq3] := KQ3::G::algebraGenerators::list():
[aq6, bq6] := KQ6::G::algebraGenerators::list():
KQ3ToKQ6 := KQ3::G::algebraMorphism(table(a = aq6^2,
                                          b = bq6 )):
\end{Mexin}

We can now conclude by demonstrating the lifting of the
$4$-dimensional coideal $I=\delta(K_{21})$ of $\KQ(3)$ to a coideal $J_5$
of $\KD(6)$ (see~\ref{K3inKD6}). Here is the Jones projection of
$\delta(K_{21})$ (see~\ref{KQ3section}):
\begin{Mexin}
pI := KQ3::e(1) + KQ3::q(0,2*PI/3,2):
\end{Mexin}
and its image in $\KD(6)$:
\begin{Mexin}
pIInKD6 := KQ3ToKQ6(KQ3::G(pI))
\end{Mexin}
\begin{Mexout}
                                             6           7
              1/4 B(1) + 1/4 B(a b) + 1/4 B(a ) + 1/4 B(a  b)
\end{Mexout}
Again, the result is trivial, and the expression in the matrix basis
of~\ref{K3inKD6} can be obtained with:
\begin{Mexin}
KQ6::M(pIInKD6)
\end{Mexin}
As expected, this projection generates a \sacig{} of dimension $4$:
\begin{Mexin}
nops(coidealAndAlgebraClosure([pIInKD6])), 1/pIInKD6::traceNormalized()
\end{Mexin}
\begin{Mexout}
                                  4, 4
\end{Mexout}

\subsection{Further directions}

Putting everything together, it is completely automatic to test
whether a projection is the Jones projection of a \sacig, and to
construct new Jones projections from previous ones by various
techniques: embeddings, self-duality, intersection and (completed)
union, etc. In the small examples we considered, this was sufficient
to actually construct semi-automatically all \sacigs. Furthermore,
given the Jones projection of two \sacigs $A\subset B$, it would be
straightforward to compute the Bratelli diagram of inclusion (this is
not yet implemented). The ultimate goal would be to compute
automatically all the Jones projections, and therefore the full
lattice of \sacigs, but we do not know yet how to achieve this.

\newpage

\bibliographystyle{alpha}
\bibliography{KD}

\end{document}